\providecommand{\printnomenclature}{\printglossary}
\providecommand{\makenomenclature}{\makeglossary}
\providecommand{\tabularnewline}{\\}
\numberwithin{section}{chapter}
\numberwithin{equation}{section}
\numberwithin{figure}{section}
\theoremstyle{plain}
\newtheorem{thm}{\protect\theoremname}
  \theoremstyle{remark}
  \newtheorem{rem}[thm]{\protect\remarkname}
  \theoremstyle{plain}
  \newtheorem{prop}[thm]{\protect\propositionname}
  \theoremstyle{plain}
  \newtheorem{lem}[thm]{\protect\lemmaname}
  \theoremstyle{definition}
  \newtheorem{defn}[thm]{\protect\definitionname}
  \theoremstyle{plain}
  \newtheorem{cor}[thm]{\protect\corollaryname}
\newenvironment{lyxlist}[1]
{\begin{list}{}
{\settowidth{\labelwidth}{#1}
 \setlength{\leftmargin}{\labelwidth}
 \addtolength{\leftmargin}{\labelsep}
 }}
{\end{list}}
  \theoremstyle{plain}
  \newtheorem*{thm*}{\protect\theoremname}
  \theoremstyle{definition}
  \newtheorem{example}[thm]{\protect\examplename}
\newcommand{\xyR}[1]{\xydef@\xymatrixrowsep@{#1}}
\newcommand{\xyC}[1]{\xydef@\xymatrixcolsep@{#1}}
\newcommand{\Hom}{\mathrm{Hom}}
\newcommand{\Mor}{\mathrm{Mor}}
\newcommand{\End}{\mathrm{End}}
\newcommand{\Aut}{\mathrm{Aut}}
\newcommand{\Int}{\mathrm{Int}}
\newcommand{\Lie}{\mathrm{Lie}}
\newcommand{\Gal}{\mathrm{Gal}}
\newcommand{\Gr}{\mathrm{Gr}}
\newcommand{\Fi}{\mathrm{Fil}}
\newcommand{\Spec}{\mathrm{Spec}}
\newcommand{\Sym}{\mathrm{Sym}}
\newcommand{\rank}{\mathrm{rank}}
\newcommand{\Res}{\mathrm{Res}}
\newcommand{\loc}{\mathrm{loc}}
\newcommand{\can}{\mathrm{can}}
\newcommand{\Rep}{\mathsf{Rep}}
\newcommand{\Vect}{\mathsf{Vect}}
\newcommand{\Gra}{\mathsf{Gr}}
\newcommand{\Fil}{\mathsf{Fil}}
\newcommand{\Sch}{\mathsf{Sch}}
\newcommand{\Set}{\mathsf{Set}}
\newcommand{\Group}{\mathsf{Group}}
\newcommand{\Ring}{\mathsf{Ring}}
\newcommand{\QCoh}{\mathsf{QCoh}}
\newcommand{\LF}{\mathsf{LF}}
\newcommand{\Norm}{\mathsf{Norm}}
\newcommand{\HV}{\mathsf{HV}}
  \providecommand{\corollaryname}{Corollary}
  \providecommand{\definitionname}{Definition}
  \providecommand{\examplename}{Example}
  \providecommand{\lemmaname}{Lemma}
  \providecommand{\propositionname}{Proposition}
  \providecommand{\remarkname}{Remark}
  \providecommand{\theoremname}{Theorem}
\providecommand{\theoremname}{Theorem}
\begin{document}

\title{Filtrations and Buildings}

\author{Christophe Cornut}

\keywords{Filtrations, Buildings and the Tannakian formalism.}

\subjclass[2000]{14L15, 18D10, 20E42, 20G05, 51E24.}

\date{September, 2016}

\dedicatory{En hommage à Alexander Grothendieck}
\begin{abstract}
We construct and study a scheme theoretical version of the Tits vectorial
building, relate it to filtrations on fiber functors, and use them
to clarify various constructions pertaining to affine Bruhat-Tits
buildings, for which we also provide a Tannakian description.
\end{abstract}
\maketitle
\tableofcontents{}

\chapter{Introduction}

Spherical, affine and vectorial buildings are covered by apartments
which are respectively spheres, affine and vector spaces. They interact
with each other as spheres, affine and vector spaces do. 

The \emph{combinatorial} Tits building of a reductive group $G$ over
a field $K$ reflects the incidence relations between the parabolic
subgroups of $G$. The \emph{spherical} Tits building is the geometric
realization of the combinatorial one, obtained by gluing spheres along
common spherical sectors. Both buildings were defined by Tits in~\cite{Ti74},
they only depend upon the adjoint group of $G$, and they are not
functorial in $G$. The \emph{vectorial }Tits building was defined
by Rousseau in~\cite{Ro09}, and it does depend functorially upon
$G$. For a semi-simple group $G$, it may by defined as the cone
on the spherical building of $G$, obtained by gluing vector spaces
along common sectors. For a torus $G$, it is the group of $K$-rational
cocharacters of $G$, tensored with $\mathbb{R}$. This vectorial
Tits building is the unifying theme of our somewhat eclectic paper. 

When $K$ is a non-archimedean local field, there is also an \emph{affine
}building attached to $G$: the (extended) Bruhat-Tits building of
$G$, as defined in \cite{BrTi72,BrTi84}. It is obtained by gluing
affine spaces along common alcoves and it reflects the incidence relations
between bounded open subgroups of $G$. The combinatorial Tits building
of $G$ encodes the geometry of this affine Bruhat-Tits building at
infinity, and the combinatorial Tits buildings of the maximal reductive
quotients of the special fibers of various (non-necessarily reductive)
integral models of $G$ similarly encode the local geometry of the
affine Bruhat-Tits building of $G$. 

For classical groups, which come equipped with a standard faithful
representation, a global construction of the Bruhat-Tits building
is given in~\cite{BrTi84b,BrTi87}, generalizing the pioneering work
of Goldman and Iwahori in \cite{GoIw63}, which dealt with the case
of a general linear group and served as a model for the development
of the whole Bruhat-Tits theory. For these classical groups, the Bruhat-Tits
building of $G$ is cut out from the space of all non-archimedean
$K$-norms on the standard representation. My initial intention was
to expand such a global construction to arbitrary reductive groups,
and to clarify and canonify the aforementioned relations between the
Bruhat-Tits building of $G$ and the Tits buildings of various related
groups. 

Let me try to explain how I came to be interested in these questions.
Grassmannians, flag manifolds and their affine counterparts are essentially
orbits of $G$ acting on its related buildings, and they show up in
many branches of mathematics. In particular, integral $p$-adic Hodge
theory makes extensive use of filtrations and lattices, which are
respectively parametrized by flag manifolds and affine Grassmannians,
or by the corresponding larger ambient spaces: the vectorial Tits
building and the affine Bruhat-Tits building. Working on classical
results in $p$-adic Hodge theory \cite{CoNi16,Co13,Co15}, I eventually
realized that a natural transitive action of the former on the latter
explained many features of the interplay between filtrations and lattices
in this area of mathematics. For $G=GL_{n}$, this circle of ideas
was already more or less implicit in various works, for instance in
Laffaille \cite{Laf80} or Fontaine and Rapoport \cite{FoRa05}. But
for more general groups, sound foundations seemed to be lacking or
scattered in the existing literature: I needed a flexible and extensive
dictionary connecting building-theoretical, geometric and metric notions
and tools to properties of the relevant objects in linear algebra,
filtrations and lattices. 

For instance the aforementioned action itself was most definitely
well-known to authors working in metric geometry. Here, the cone $\mathcal{C}(\partial X)$
on the visual boundary $\partial X$ of a $CAT(0)$-metric space $(X,d)$
(see \cite{BrHa99} for these notions) acts on $X$ by non-expanding
maps as follows: an element of the cone is a pair $(\xi,\ell)$ where
$\xi\in\partial X$ is an asymptotic class of unit speed geodesic
rays in $X$ and $\ell\geq0$ is some length (or speed); it acts on
a point $x$ of $X$ by moving it at distance $\ell$ along the unique
geodesic ray in $\xi$ emanating from $x$. Taking $X$ to be the
Bruhat-Tits building of $G$ over $K$, it was also known since Bruhat
and Tits that $X$ can be equipped with a non-canonical metric $d$
which turns it into a $CAT(0)$-space, and for which the visual boundary
$\partial X$ is a realization of the spherical building of $G$ over
$K$ -- this geometric statement was initially encapsulated in the
group-theoretical notion of double Tits systems, see \cite[5.1.33]{BrTi72}
and \cite[11.10]{Ro09}. But there were no clear-cut identifications
between filtrations for $G$ and the cone $\mathcal{C}(\partial X)$,
or between lattices (and norms) for $G$ and its Bruhat-Tits building
$X$; moreover, having to rely on the artificial choice of a metric
to define the action furthermore blurred its naturality. This paper
provides proper definitions of these notions, all the required canonical
identifications, an explicit formula for the action of filtrations
(which are elements of the vectorial Tits building) on norms (which
are elements of the Bruhat-Tits building), along with many properties
of these objects and constructions.

There is a general Grothendieckian recipe to pass from $G=GL_{n}$
to more general algebraic groups: replace the natural standard representation
by the entire category of all algebraic representations. This has
become a very common method in $p$-adic Hodge theory \cite{Ko85,DaOrRa10},
and as far as filtrations are concerned, it was already implemented
for reductive groups over arbitrary fields in the foundational work
on Tannakian categories, Saavedra Rivano's thesis~\cite{SaRi72}.
For norms and lattices, it was completed more recently by Haines's
student Wilson~\cite{Wi10} for split reductive groups over complete
discrete valuation fields. This Tannakian formalism also has many
advantages: it has build-in functorialities, it works for arbitrary
affine groups over arbitrary base schemes, it provides a conceptual
framework for many algebraic constructions, and it gives rise to various
interesting representable sheaves. 

In chapter~$2$, we thus actually start with a reductive group $G$
over an arbitrary base scheme $S$. For a totally ordered commutative
group $\Gamma=(\Gamma,+,\leq)$, we introduce there our fundamental
$G$-equivariant cartesian diagram of $S$-schemes 
\[
\xymatrix{\mathbb{G}^{\Gamma}(G)\ar[r]^{\Fi}\ar[d]_{F} & \mathbb{F}^{\Gamma}(G)\ar[r]^{t}\ar[d]_{F} & \mathbb{C}^{\Gamma}(G)\ar[d]_{F}\\
\mathbb{OPP}(G)\ar[r]^{p_{1}} & \mathbb{P}(G)\ar[r]^{t} & \mathbb{O}(G)
}
\]
where $\mathbb{P}(G)$ and $\mathbb{OPP}(G)$ are respectively the
$S$-schemes of parabolic subgroups $P$ of $G$ and pairs of opposed
parabolic subgroups $(P,P')$ of $G$, $\mathbb{O}(G)$ is the $S$-scheme
of $G$-orbits in $\mathbb{P}(G)$ or $\mathbb{OPP}(G)$, $\mathbb{G}^{\Gamma}(G)=\underline{\Hom}(\mathbb{D}_{S}(\Gamma),G)$
where $\mathbb{D}_{S}(\Gamma)$ is the diagonalized multiplicative
group over $S$ with character group $\Gamma$, while $\mathbb{F}^{\Gamma}(G)$
and $\mathbb{C}^{\Gamma}(G)$ are suitable quotients of $\mathbb{G}^{\Gamma}(G)$.
The \emph{facet} morphisms $F$ are surjective and locally constant
in the étale topology on their base, the $p_{1}$ and $\Fi$ morphisms
are affine smooth surjective with geometrically connected fibers and
the \emph{type} morphisms $t$ are projective smooth surjective with
geometrically connected fibers. Since $\mathbb{O}(G)$ is finite étale
over $S$, all of the above schemes are smooth, separated and surjective
over $S$. We also equip $\mathbb{C}^{\Gamma}(G)$ and $\mathbb{O}(G)$
with $S$-\emph{monoid structures}, and the facet map $F:\mathbb{C}^{\Gamma}(G)\rightarrow\mathbb{O}(G)$
is compatible with them. We finally define two related partial orders
on the $S$-monoid $\mathbb{C}^{\Gamma}(G)$, the \emph{weak and strong
dominance orders}. 

For $\Gamma=\mathbb{Z}$, $\mathbb{D}_{S}(\Gamma)=\mathbb{G}_{m,S}$\nomenclature[G_m]{$\mathbb{G}_m$}{Multiplicative group over $\Spec (\mathbb{Z})$, $\mathbb{G}_m (R)=R^\times$.}\nomenclature[G_a]{$\mathbb{G}_a$}{Additive group over $\Spec (\mathbb{Z})$, $\mathbb{G}_a (R)=R$.}
and $\mathbb{G}^{\Gamma}(G)$ is the $S$-scheme of cocharacters of
$G$, whose conjugacy classes are classified by $\mathbb{C}^{\Gamma}(G)$.
For $\Gamma=\mathbb{R}$, $\mathbb{F}^{\Gamma}(G)$ is a scheme theoretical
version of the Tits vectorial building defined by Rousseau in \cite{Ro09}
and $\mathbb{C}^{\Gamma}(G)$ is a scheme theoretical version of a
closed Weyl chamber. More general $\Gamma$'s, for instance the valuation
groups of valuation fields of height $>1$ may also be useful in connection
with recent developements in $p$-adic geometry. 

In chapter~$3$, we show that $\mathbb{G}^{\Gamma}(G)$ and $\mathbb{F}^{\Gamma}(G)$
represent functors respectively related to $\Gamma$-graduations and
$\Gamma$-filtrations on a variety of fiber functors. The main difficulty
here is to show that the $\Gamma$-filtrations split fpqc-locally
on the base scheme. For $\Gamma=\mathbb{Z}$, this was essentially
established in the thesis of Saavedra Rivano \cite{SaRi72}, at least
when $S$ is the spectrum of a field. We strictly follow Saavedra's
proof (which he attributes to Deligne), adding a considerable amount
of details and some patch when needed. We advise our reader to read
both texts side by side, only switching to ours when he feels uncomfortable
with the necessary generalizations of Saavedra's arguments%
\footnote{For $\Gamma=\mathbb{Z}$, Ziegler recently established the fpqc-splitting
of $\mathbb{Z}$-filtrations on fiber functors on arbitrary Tannakian
categories \cite{Zi12}, thereby proving a conjecture which was left
open after Saavedra's thesis. In particular, the $\mathbb{Z}$-filtrations
we consider have fpqc-splittings even when $G$ is not reductive,
but defined over a field. In the reductive case, the final arguments
in Ziegler's proof simplify those of Saavedra's, but rely more on
the Saavedra-Deligne theorem that all fiber functors on Tannakian
categories are fpqc-locally isomorphic~\cite{De90}. According to
D.~Schäppi, it follows from his own work \cite{Sc12,Sc13} and Lurie's
note on Tannaka duality that the same result holds for any $\otimes$-functor
$\Rep^{fp}(G)(S)\rightarrow\QCoh(T)$ where: $S$ is affine, $T$
is an $S$-scheme, $G$ is affine flat over $S$, $\Rep^{fp}(G)(S)$
is the $\otimes$-category of algebraic representations of $G$ on
finitely presented $\mathcal{O}_{S}$-modules, and $G$ has the resolution
property: any finitely presented algebraic representation of $G$
is covered by another one which is locally free. It then seems likely
that Ziegler's proof could yield a common generalization of his result
($\Gamma=\mathbb{Z}$, $G$ affine over a field) and ours ($\Gamma$
and $S$ arbitrary, but $G$ reductive) on the existence of fpqc-splittings
of $\Gamma$-filtrations, using a hefty dose of the stack formalism.
We have chosen to stick to the constructive, down-to-earth original
proof of Saavedra/Deligne -- and to reductive groups as well.%
}. Various constructions of chapter $2$ have counterparts in this
Tannakian framework, which are reviewed in section~\ref{sec:ConsequencesTan}.
In particular, we show that the first line of our fundamental diagram
is functorial in the reductive group $G$ over $S$. The weak dominance
order on $\mathbb{C}^{\Gamma}(G)$ is compatible with this functoriality,
but we would like to already emphasize here that the monoid structure
is not. 

In chapter $4$, we study the sections of our schemes over a local
ring $\mathcal{O}$. We first equip $\mathbf{F}^{\Gamma}(G)=\mathbb{F}^{\Gamma}(G)(\mathcal{O})$
with a collection of \emph{apartments} $\mathbf{F}^{\Gamma}(S)$ indexed
by the maximal split subtori $S$ of $G$, and with the collection
of \emph{facets} $F^{-1}(P)$ indexed by the parabolic subgroups $P$
of $G$. The key properties of the resulting combinatorial structure
are well-known when $\mathcal{O}$ is a field and $\Gamma=\mathbb{R}$,
in which case $\mathbf{F}^{\Gamma}(G)$ is the Tits vectorial building,
but most of them carry over to this more general situation, thanks
to the wonderful last chapter of SGA3~\cite{SGA3.3r}. We describe
the behavior of these auxiliary structures under specialization (when
$\mathcal{O}$ is Henselian) or generization (when $\mathcal{O}$
is a valuation ring). When $\Gamma$ is a subring of $\mathbb{R}$,
we also attach to every finite free faithful representation $\tau$
of $G$ a partially defined \emph{scalar product} on $\mathbf{F}^{\Gamma}(G)$
and the corresponding \emph{distance }and\emph{ angle} functions,
and we study their basic properties. When $\mathcal{O}$ is a field,
a theorem of Borel and Tits \cite{BrTi72} implies that these functions
are defined everywhere, and one thus retrieves the aforementioned
non-canonical distances on the vectorial Tits building $\mathbf{F}(G)=\mathbf{F}^{\mathbb{R}}(G)$.

Over a field $K$ and with $\Gamma=\mathbb{R}$, we next define a
notion of \emph{affine $\mathbf{F}(G)$-spaces}, which interact with
the vectorial Tits building $\mathbf{F}(G)$ as affine spaces do with
their underlying vector space. Strongly influenced by the formalism
set up by Rousseau in \cite{Ro09} and Parreau in \cite{Pa10}, we
introduce various axioms that these spaces may satisfy, leading to
the more restricted class of \emph{affine $\mathbf{F}(G)$-buildings}.
Most of the abstract definitions of affine buildings that have already
been proposed \cite{KlLe97,Pa99,Ro09} also add a euclidean metric
into the picture, and involve a covering atlas of charts, which are
isometries from a given fixed euclidean affine space onto subsets
of the building (its apartments) subject to various conditions. Our
definition also involves a covering by apartments, but their affine
structure is inherited from a globally defined $G(K)$-equivariant
transitive operation $(x,\mathcal{F})\mapsto x+\mathcal{F}$ of the
vectorial building $\mathbf{F}(G)$ on the given affine $\mathbf{F}(G)$-space.
It is therefore essentially a boundary-based formalism for affine
buildings, as opposed to the more usual apartment-based formalism. 

Eventhough our affine $\mathbf{F}(G)$-buildings have no fixed metric,
they are equipped with a canonical metrizable topology and a canonical
vector valued convex distance $\mathbf{d}$, taking values in $\mathbf{C}(G)=t(\mathbf{F}(G))$.
The choice of a faithful representation $\tau$ of $G$ eventually
equips them with a convex distance $d_{\tau}=\left\Vert \mathbf{d}\right\Vert _{\tau}$
in the usual sense, for which they often become $CAT(0)$-metric spaces
as defined in \cite{BrHa99}. But the finer and canonical vectorial
distance $\mathbf{d}$ really is a key feature of our buildings: it
retrieves and generalizes many classical invariants in various set-up
(such as types of filtrations and relative positions of lattices or
quadrics), and its formal properties imply most, if not all, of the
known inequalities among these invariants. 

Of course $\mathbf{F}(G)$ is itself an affine $\mathbf{F}(G)$-building,
with a distinguished point. When $K$ is equipped with a non-trivial,
non-archimedean absolute value, we show in chapter~$6$ that the
(extended) affine building $\mathbf{B}^{e}(G)$ constructed by Bruhat
and Tits \cite{BrTi72,BrTi84} is canonically equipped with a structure
of affine $\mathbf{F}(G)$-building in our sense. This is our precise
formalization of the ``combinatorial'' assertion that the visual
boundary of the Bruhat-Tits building is a geometric realization of
the combinatorial Tits building. This being done, we may fix a base
point $\circ_{G}$ in $\mathbf{B}^{e}(G)$ and try to describe the
whole building as a quotient of $\mathbf{F}(G)$ using the surjective
map $\mathbf{F}(G)\ni\mathcal{F}\mapsto\circ_{G}+\mathcal{F}\in\mathbf{B}^{e}(G)$.
We do this in the last section, assuming that our base point $\circ_{G}$
is hyperspecial, i.e.~corresponds to a reductive group $G$ over
the valuation ring $\mathcal{O}$ of $K$, which we also assume to
be Henselian. Note that the existence of an hyperspecial point amounts
to an assumption on $G_{K}$ \cite[2.4]{Ti79}. 

More precisely, we first define a space of $K$-norms on the fiber
functor 
\[
\omega_{G}^{\circ}:\Rep^{\circ}(G)(\mathcal{O})\rightarrow\Vect(K)
\]
where $\Rep^{\circ}(G)(\mathcal{O})$ is the category of algebraic
representations of $G$ on finite free $\mathcal{O}$-modules. This
space is equipped with a $G(K)$-action, an explicit $G(K)$-equivariant
operation of $\mathbf{F}(G_{K})$ and a base point $\alpha_{G}$ fixed
by $G(\mathcal{O})$. We show that the map $\circ_{G}+\mathcal{F}\mapsto\alpha_{G}+\mathcal{F}$
is well-defined, injective, $G(K)$-equivariant and compatible with
the operations of $\mathbf{F}(G_{K})$. It thus defines an isomorphism
$\boldsymbol{\alpha}$ of affine $\mathbf{F}(G_{K})$-buildings from
$\mathbf{B}^{e}(G_{K})$ to a set $\mathbf{B}(\omega_{G}^{\circ},K)=\alpha_{G}+\mathbf{F}(G_{K})$
of $K$-norms on $\omega_{G}^{\circ}$.

This Tannakian description of the extended Bruhat-Tits building immediately
implies that the assignment $G\mapsto\mathbf{B}^{e}(G_{K})$ is functorial
in the reductive group $G$ over $\mathcal{O}$. Such a functoriality
was already established by Landvogt~\cite{La00}, with fewer assumptions
on $G_{K}$ but more assumptions on $K$. It also suggests a possible
definition of Bruhat-Tits buildings for reductive groups over valuation
rings of height greater than $1$, as well as a similar Tannakian
description of symmetric spaces (in the archimedean case, see \ref{sub:ExampSymSpac}).
It is related to previous constructions as follows. 

Our canonical isomorphism $\boldsymbol{\alpha}:\mathbf{B}^{e}(G_{K})\rightarrow\mathbf{B}(\omega_{G}^{\circ},K)$
assigns to a point $x$ in $\mathbf{B}^{e}(G_{K})$ and to any algebraic
representation $\tau$ of $G$ on a flat $\mathcal{O}$-module $V(\tau)$
a $K$-norm $\boldsymbol{\alpha}(x)(\tau)$ on $V_{K}(\tau)=V(\tau)\otimes K$.
For the adjoint representation $\tau_{\mathrm{ad}}$ of $G$ on $\mathfrak{g}=\Lie(G)$,
the adjoint-regular and regular representations $\rho_{\mathrm{adj}}$
and $\rho_{\mathrm{reg}}$ of $G$ on $\mathcal{A}(G)=\Gamma(G,\mathcal{O}_{G})$,
we obtain respectively: a $K$-norm $\alpha_{\mathrm{ad}}(x)$ on
$\mathfrak{g}_{K}=\Lie(G_{K})$ whose closed balls give the Moy-Prasad
filtration of $x$ on $\mathfrak{g}_{K}$ \cite{MoPr94}, the $K$-norm
$\alpha_{\mathrm{adj}}(x)$ in $G_{K}^{\mathrm{an}}$ constructed
in~\cite{ReThWe10}, and an embedding $x\mapsto\alpha_{\mathrm{reg}}(x)$
of the extended Bruhat-Tits building in the analytic Berkovich space
$G_{K}^{\mathrm{an}}$ attached to $G_{K}$. Our isomorphism $\boldsymbol{\alpha}$
also induces an explicit $G(\mathcal{O})$-equivariant identification
between the ``tangent space'' of $\mathbf{B}^{e}(G_{K})$ at $\circ_{G}$
and the vectorial Tits building $\mathbf{F}(G_{k})$ of the special
fiber $G_{k}$ of $G$ over the residue field $k$ of $\mathcal{O}$,
as expected from \cite[4.6.35-45]{BrTi84}. 

In Wilson's Tannakian formalism for Bruhat-Tits buildings \cite{Wi10},
alcoves and their parahorics played the leading role. His Moy-Prasad
filtrations are the lattice chains of closed balls of our norms. We
owe to his work the essential shape of our formalism, if not the very
idea that such a formalism was indeed possible: we were first naively
looking for a base-point free description of the Bruhat-Tits buildings.
His point of view is more adapted to the study of the simplicial structure
of these buildings, but only covers split groups over discrete valuation
rings. Our approach covers unramified groups over fields equipped
with a Henselian absolute value. It lacks an intrinsic description
of $(1)$ the equivalence relation on $\mathbf{F}(G_{K})$ defined
by $\mathcal{F}\sim\mathcal{F}'\iff\circ_{G}+\mathcal{F}=\circ_{G}+\mathcal{F}'$,
and of $(2)$ the image $\mathbf{B}(\omega_{G}^{\circ},K)=\alpha_{G}+\mathbf{F}(G_{K})$
of $\boldsymbol{\alpha}$ in the larger space of all $K$-norms on
the fiber functor $\omega_{G}^{\circ}$. \nomenclature[e]{$e$}{$2.71828182846...$}\nomenclature[pi]{$\pi$}{$3.14159265359...$}

Finally, we would like to mention that some of our results should
extend to more general fiber functors, using the Schäppi/Lurie generalization
of Deligne's theorem as mentioned in the previous footnote. 

~

\thanks{This work grew out of a question by J-F.~Dat and many discussions
with D.~Mauger on buildings and cocharacters. I am very grateful
to G.~Rousseau and A.~Parreau, who always had answers to my numerous
questions. Apart from the emphasis on the boundary, most of the definitions
and results of chapter~$5$ are either taken from his survey~\cite{Ro09}
or from her preprint~\cite{Pa10}. P.~Deligne kindly provided the
patch at the very end of the proof of the splitting theorem, dealing
with groups of type $G2$ in characteristic $2$, and M.~Hils the
proof of lemma~\ref{lem:ExistGoodExtensionValued}.}\maketitle

\chapter{The group theoretical formalism\label{Chapter:GroupThForm}}

For a reductive group scheme $G$ over an arbitrary base scheme $S$,
we will define and study a cartesian diagram of smooth and separated
schemes over $S$, 
\[
\xyC{2pc}\xymatrix{\mathbb{G}^{\Gamma}(G)\ar@{->>}[r]^{\Fi}\ar[d]_{F} & \mathbb{F}^{\Gamma}(G)\ar@{->>}[r]^{t}\ar[d]_{F} & \mathbb{C}^{\Gamma}(G)\ar[d]_{F}\\
\mathbb{OPP}(G)\ar@{->>}[r]^{p_{1}} & \mathbb{P}(G)\ar@{->>}[r]^{t} & \mathbb{O}(G)
}
\]
Our main background reference for this chapter is SGA3 \cite{SGA3.1r,SGA3.2,SGA3.3r}.

\section{$\Gamma$-graduations on smooth affine groups}
\begin{thm}
\label{thm:RepGr}Let $H$ and $G$ be group schemes over a base scheme
$S$, with $H$ of multiplicative type and quasi-isotrivial, $G$
smooth and affine. Then the functor 
\[
\underline{\Hom}_{S-\Group}(H,G):\left(\Sch/S\right)^{\circ}\rightarrow\Set,\qquad T\mapsto\Hom_{T-\Group}(H_{T},G_{T})
\]
is representable by a smooth and separated scheme over $S$.\nomenclature[Hom]{$\underline{\Hom}$}{Sheafified version of Hom.}\nomenclature[Group_S]{$S-\Group$}{Category of group schemes over $S$.}\nomenclature[Sch]{$\Sch$}{Category of schemes.}\nomenclature[Sch_S]{$\Sch /S$}{Category of schemes over $S$.}\nomenclature[Set]{$\Set$}{Category of sets.}\nomenclature[X_T]{$X_T$}{Pull-back or base change of some $X$ over $S$ through $T \rightarrow S$.}\nomenclature[Co]{$\mathsf{C} ^\circ $}{Category opposed to $\mathsf{C}$.}\end{thm}
\begin{rem}
When $H$ is of finite type, it is quasi-isotrivial by \cite[X 4.5]{SGA3.2}.
The theorem is then due to Grothendieck, see \cite[XI 4.2]{SGA3.2}.
The proof given there relies on the density theorem of \cite[IX 4.7]{SGA3.2},
definitely a special feature of finite type multiplicative groups.
When $H$ is trivial, we may still reduce the proof of the above theorem
to the finite type case, as explained in remark~\ref{Rem:AltProof}
below. For the general case, we have to find another road through
SGA3, passing through \cite[X 5.6]{SGA3.2} which has no finite type
assumption on $H$ but requires $H$ and $G$ to be of multiplicative
type and quasi-isotrivial:\end{rem}
\begin{prop}
\label{prop:RepGspCaseTorus}Let $H$ and $G$ be group schemes of
multiplicative type over $S$, with $H$ quasi-isotrivial and $G$
of finite type. Then $\underline{\Hom}_{S-\Group}(H,G)$ is representable
by a quasi-isotrivial twisted constant group scheme $X$ over $S$. \end{prop}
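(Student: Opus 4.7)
The plan is to invoke \cite[X 5.6]{SGA3.2} essentially verbatim. That statement asserts exactly the desired representability whenever both $H$ and $G$ are quasi-isotrivial multiplicative type group schemes --- with no finite type assumption anywhere. Since $H$ is so by hypothesis, the only missing verification is that $G$ is quasi-isotrivial, which follows at once from \cite[X 4.5]{SGA3.2}: a multiplicative type $S$-group of finite type is automatically quasi-isotrivial.

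For context, I would recall the shape of the representing scheme $X$. Pick an étale cover $S' \to S$ over which $H$ and $G$ both split as $\D_{S'}(N)$ and $\D_{S'}(M)$, with $N = X^{*}(H)$ an arbitrary abelian group and $M = X^{*}(G)$ finitely generated. Cartier duality identifies, for every $S'$-scheme $T$,
\[
\Hom_{T-\mathrm{Gr}}(H_{T}, G_{T}) \;=\; \Hom_{\mathbb{Z}}(M, N),
\]
so the restriction of our functor to $(\Sch/S')$ is the constant group functor with value $\Hom_{\mathbb{Z}}(M,N)$. Étale descent along $S' \to S$ then realizes the $S$-functor as a quasi-isotrivial twisted constant scheme $X$, smooth and separated since it is étale-locally a disjoint union of copies of $S$, with group structure inherited from pointwise addition on $\Hom_{\mathbb{Z}}(M,N)$.

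The main technical content hidden inside \cite[X 5.6]{SGA3.2} --- and the step I would expect to be the actual obstacle --- is the passage from a locally constant étale sheaf on $S$ to a genuine \emph{scheme} when the stalk $\Hom_{\mathbb{Z}}(M,N)$ is infinite. The resolution is to decompose this set into orbits under the finite automorphism group of $S'/S$ (after possibly refining $S'$ so as to be Galois); each orbit is finite and descends to a quasi-compact twisted constant component of $X$, and $X$ itself is then the (possibly infinite, but still schematic) disjoint union of these components. Everything else in the argument is formal Cartier duality and descent.
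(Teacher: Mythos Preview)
Your proof is correct and matches the paper's exactly: the paper's entire argument is the single sentence ``This is \cite[X 5.6]{SGA3.2}, since $G$ is also quasi-isotrivial by \cite[X 4.5]{SGA3.2}.'' Your second and third paragraphs are helpful context but go well beyond what the paper provides; the paper simply defers the descent and infinite-disjoint-union issues to SGA3.
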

\begin{proof}
This is \cite[X 5.6]{SGA3.2}, since $G$ is also quasi-isotrivial
by \cite[X 4.5]{SGA3.2}.\end{proof}
\begin{lem}
\label{lem:StructQuasIsoTwistedSch}Let $X$ be a quasi-isotrivial
twisted constant scheme over $S$. Then $X$ is separated and étale
over $S$, satisfies the valuative criterion of properness, and:
\begin{enumerate}
\item If $S$ is irreducible and geometrically unibranch with generic point
$\eta$, then 
\[
X={\textstyle \coprod_{\lambda\in X_{\eta}}}X(\lambda)\quad\mbox{with}\quad X(\lambda)=\overline{\{\lambda\}}\mbox{ open and closed in }X,
\]
each $X(\lambda)$ is a connected finite étale cover of $S$ and $\Gamma(X/S)=\Gamma(X_{\eta}/\eta)$. 
\item If $S$ is local henselian with closed point $s$, then 
\[
X={\textstyle \coprod}_{x\in X_{s}}X(x)\quad\mbox{with}\quad X(x)=\Spec\,\mathcal{O}_{X,x}\mbox{ open and closed in }X,
\]
each $X(x)$ is a connected finite étale cover of $S$, and $\Gamma(X/S)=\Gamma(X_{s}/s)$.\nomenclature[Gamma(X/S)]{$\Gamma (X/S)$}{Sections of a morphism $X \rightarrow S$.}
\end{enumerate}
\end{lem}
\begin{proof}
The morphism $X\rightarrow S$ is separated by \cite[2.7.1]{EGA4.2}
and étale by \cite[17.7.3]{EGA4.4}. Since valuation rings are normal
integral domains, thus irreducible and geometrically unibranch, it
remains to establish $(1)$ and $(2)$. 

Suppose first that $S$ is irreducible and geometrically unibranch
with generic point $\eta$. Then by \cite[18.10.7]{EGA4.4} applied
to $X\rightarrow S$, 
\[
X={\textstyle \coprod_{\lambda\in X_{\eta}}}X(\lambda)\quad\mbox{with}\quad X(\lambda)=\overline{\{\lambda\}}\mbox{ open and closed in }X,
\]
thus $X(\lambda)$ is already étale over $S$. Fix an étale covering
$\{S_{i}\rightarrow S\}$ trivializing $X$, so that $X\times_{S}S_{i}=Q_{i,S_{i}}$
for some set $Q_{i}$. Using \cite[18.10.7]{EGA4.4} again, we may
assume that each $S_{i}$ is connected, in which case we obtain decompositions
\[
Q_{i}={\textstyle \coprod_{\lambda\in X_{\eta}}}Q_{i}(\lambda)\quad\mbox{with}\quad X(\lambda)\times_{S}S_{i}=Q_{i}(\lambda)_{S_{i}}.
\]
Since the generic fiber $\lambda\rightarrow\eta$ of $X(\lambda)\rightarrow S$
is finite of degree $n(\lambda)=[k(\lambda):k(\eta)]$, each $Q_{i}(\lambda)$
is a finite subset of $Q_{i}$ of order $n(\lambda)$, therefore $X(\lambda)\times_{S}S_{i}$
is finite over $S_{i}$ and $X(\lambda)$ is finite over $S$ by \cite[2.7.1]{EGA4.2}.
Being finite and étale over the connected $S$, $X(\lambda)$ is a
finite étale cover of $S$. Being irreducible, it is also connected.
By \cite[17.4.9]{EGA4.4}, the map which sends a section $g$ of $X\rightarrow S$
to its image $g(S)$ identifies $\Gamma(X/S)$ with the set of connected
components $X(\lambda)$ of $X$ for which $X(\lambda)\rightarrow S$
is an isomorphism, i.e.~such that $n(\lambda)=1$. Therefore $\Gamma(X/S)=\Gamma(X_{\eta}/\eta)$. 

Suppose next that $S$ is local henselian with closed point $s$.
Since $X\rightarrow S$ is quasi-finite at every $x\in X_{s}$ by
\cite[17.6.1]{EGA4.4}, it follows from \cite[18.5.11.c]{EGA4.4}
that 
\[
X\supset X'={\textstyle \coprod_{x\in X_{s}}}X(x)\quad\mbox{with}\quad X(x)=\Spec\,\mathcal{O}_{X,x}\,\mbox{open and closed in }X,
\]
and $X(x)$ is finite and étale over $S$. By assumption, there is
a surjective étale morphism $S_{0}\rightarrow S$ trivializing $X$,
so that $X\times_{S}S_{0}=Q_{S_{0}}$ for some set $Q$. Using \cite[18.5.11.c]{EGA4.4}
again, we may assume that $S_{0}$ is a local scheme, finite and étale
over $S$, say with closed point $s_{0}$ lying above $s$. Since
$X'\times_{S}S_{0}$ is open in $X\times_{S}S_{0}$ and contains its
special fiber $X_{s_{0}}$, we have $X'\times_{S}S_{0}=X\times_{S}S_{0}$,
thus actually $X'=X$ by \cite[2.7.1]{EGA4.2}. Finally $\Gamma(X/S)=\Gamma(X_{s}/s)$
by \cite[18.5.12]{EGA4.4}. \end{proof}
\begin{lem}
\label{lem:ImageMorphTor}Let $f:H\rightarrow G$ be a morphism of
group schemes over $S$, with $H$ of multiplicative type and $G$
separated of finite presentation. Then there is a unique closed multiplicative
subgroup $Q$ of $G$ such that $f$ factors through a faithfully
flat morphism $f':H\rightarrow Q$. Moreover $f'$ is also uniquely
determined by $f$.\end{lem}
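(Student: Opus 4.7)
The plan has three movements: extract uniqueness from the faithful-flatness requirement, use this uniqueness to work fpqc-locally on $S$ and diagonalize $H$, and finally use a limit argument to reduce to the case where $H$ is of finite type, in which the SGA3 machinery on images of multiplicative groups applies.

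First I would establish uniqueness, which is essentially formal. Any factorization $f = \iota \circ f'$ with $\iota\colon Q \hookrightarrow G$ a closed immersion and $f'\colon H \to Q$ faithfully flat forces $f'$ to be schematically dominant, so $Q$ must coincide with the scheme-theoretic image of $f$ inside $G$. Being intrinsic to $f$, $Q$ is then determined by $f$; and $f'$ is the unique lift of $f$ across the monomorphism $\iota$, hence also determined.

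For existence, I would first exploit this uniqueness to work fpqc-locally on $S$: by uniqueness, two local solutions agree after a common fpqc refinement, and hence glue as closed subschemes of $G$ by fpqc descent. So I may assume $H = \D_S(M)$ is diagonalizable. Writing $M = \varinjlim M_i$ as the filtered union of its finitely generated subgroups and $H_i = \D_S(M_i)$, we have $H = \varprojlim H_i$ with affine faithfully flat transition maps. Since $G/S$ is of finite presentation, the limit formalism of EGA~IV produces an index $i$ and an $S$-morphism $f_i\colon H_i \to G$ through which $f$ factors. The map $f_i$ is automatically a group homomorphism, because $G$ is separated and $H \times H \to H_i \times H_i$ is a faithfully flat, quasi-compact, hence epimorphic morphism: the two maps $f_i \circ m_{H_i}$ and $m_G \circ (f_i \times f_i)$ agree after pullback to $H \times H$, and therefore already on $H_i \times H_i$.

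This reduces existence to the case where $H$ is of multiplicative type and of finite type, which is directly handled by the image theorems for morphisms out of multiplicative groups in \cite[IX]{SGA3.2}: they produce a closed multiplicative subgroup $Q \subset G$ together with a faithfully flat surjection $H \twoheadrightarrow Q$. The hardest step in this plan is the limit reduction, where one has to verify carefully that the morphism delivered by EGA~IV is compatible with the group law; everything else is either formal (uniqueness, fpqc descent) or a direct citation from SGA3.
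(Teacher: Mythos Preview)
Your approach is essentially the same as the paper's: fpqc-localize to diagonalize $H$, use the EGA~IV limit formalism to factor through a finite-type $\D_S(M_i)$, then invoke \cite[IX 6.8]{SGA3.2}. The paper is terser---it simply asserts ``everything being local for the fpqc topology'' and establishes uniqueness at the end via the fpqc-sheaf image---whereas you front-load uniqueness to justify the descent and are explicit about why the induced $f_i$ is a group homomorphism; both organizations are fine.
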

\begin{proof}
Everything being local for the fpqc topology, we may assume that $S$
is affine and $H=\mathbb{D}_{S}(M)$\nomenclature[DSM]{$\mathbb{D}_S(M)$}{Diagonalizable group scheme over $S$ with character group $M$.}
for some abstract commutative group $M$. Then $M=\underrightarrow{\lim}M'$
where $M'$ runs through the filtered set $\mathcal{F}(M)$ of finitely
generated subgroups of $M$, thus also $\mathbb{D}_{S}(M)=\underleftarrow{\lim}\mathbb{D}_{S}(M')$.
Since $\mathbb{D}_{S}(M')$ is affine for all $M'$ and $G\rightarrow S$
is locally of finite presentation, it follows from \cite[8.13.1]{EGA4.3}
that $f$ factors through $f_{1}:\mathbb{D}_{S}(M')\rightarrow G$
for some $M'\in\mathcal{F}(M)$. Applying \cite[IX 6.8]{SGA3.2} to
$f_{1}$ yields a closed multiplicative subgroup $Q$ of $G$ such
that $f_{1}$ factors through a faithfully flat (and affine) morphism
$f'_{1}:\mathbb{D}_{S}(M')\rightarrow Q$, whose composite with the
faithfully flat (and affine) morphism $\mathbb{D}_{S}(M)\rightarrow\mathbb{D}_{S}(M')$
is the desired factorization. Since $Q$ is then also the image of
$f$ in the category of fpqc sheaves on $\Sch/S$, it is already unique
as a subsheaf of $G$. Since $Q\rightarrow G$ is a monomorphism,
also $f'$ is unique.\end{proof}
\begin{defn}
We call $Q$ the image of $f$ and denote it by $Q=\mathrm{im}(f)$. \end{defn}
\begin{lem}
\label{lem:CentrSmooth}Let $f:H\rightarrow G$ be a morphism of group
schemes over $S$, with $H$ of multiplicative type and $G$ smooth
and affine. Then the centralizer of $f$ is equal to the centralizer
of its image, and is representable by a closed smooth subgroup of
$G$. \end{lem}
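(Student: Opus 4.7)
The plan is to reduce to the centralizer of the closed subgroup $Q=\mathrm{im}(f)$ of $G$ provided by Lemma~\ref{lem:ImageMorphTor} (applicable because $G$ being smooth and affine is in particular separated and of finite presentation), and then invoke the standard SGA3 theorem on smoothness of centralizers of multiplicative-type subgroups inside smooth affine groups. Recall from the proof of that lemma that $f$ factors as $H\xrightarrow{f'} Q\hookrightarrow G$ with $f'$ faithfully flat, and that $Q$ is obtained as a faithfully flat quotient of some $\mathbb{D}_{S}(M')$ with $M'$ finitely generated; in particular $Q$ is a closed subgroup of $G$ of multiplicative type and of finite type, hence quasi-isotrivial by \cite[X 4.5]{SGA3.2}.

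To establish the equality $Z_{G}(f)=Z_{G}(Q)$, unwind definitions: letting $\iota:Q\hookrightarrow G$ denote the inclusion, a $T$-valued point $g$ of $G$ lies in $Z_{G}(f)(T)$ iff $\mathrm{Int}(g)\circ f_{T}=f_{T}$, and in $Z_{G}(Q)(T)$ iff $\mathrm{Int}(g)\circ\iota_{T}=\iota_{T}$. If $g$ centralizes $Q$, then
\[
\mathrm{Int}(g)\circ f = \mathrm{Int}(g)\circ\iota\circ f' = \iota\circ f' = f,
\]
so $g$ centralizes $f$. Conversely, from $\mathrm{Int}(g)\circ\iota\circ f'=\iota\circ f'$ we may cancel $f'_{T}$ on the right — it being faithfully flat and hence an epimorphism for the fpqc topology — to obtain $\mathrm{Int}(g)\circ\iota=\iota$, that is $g\in Z_{G}(Q)(T)$.

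It then remains to see that $Z_{G}(Q)$ is representable by a closed smooth subgroup of $G$, which is exactly the content of the classical SGA3 result on centralizers of closed multiplicative-type subgroups of smooth affine group schemes (cf.\ SGA3 XI, Cor.~5.3 and its variants). The main obstacle in the argument is precisely this smoothness input from SGA3; the passage from $Z_{G}(f)$ to $Z_{G}(Q)$ is a purely formal consequence of Lemma~\ref{lem:ImageMorphTor}, so no extra work is needed beyond the two steps above.
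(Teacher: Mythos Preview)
Your proof is correct and follows essentially the same route as the paper: factor $f=\iota\circ f'$ via Lemma~\ref{lem:ImageMorphTor}, use that $f'$ is a faithfully flat (hence epi) morphism to identify $Z_G(f)$ with $Z_G(Q)$, and then invoke \cite[XI 5.3]{SGA3.2} for the smoothness of the centralizer of the closed multiplicative subgroup $Q$. The paper's version is slightly terser, noting that $f'$ is affine (hence quasi-compact) so that it is an epimorphism already in the category of schemes, but the content is the same.
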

\begin{proof}
Let $f=\iota\circ f'$ be the factorization of the previous lemma.
Since $f'$ is faithfully flat (and quasi-compact, being a morphism
between affine $S$-schemes, therefore even affine), it is an epimorphism
in the category of schemes. It then follows from the definitions in
\cite[VIB §6]{SGA3.1r} that the centralizers of $f$, $\iota$ and
$\mathrm{im}(f)$ are equal. By \cite[XI 5.3]{SGA3.2}, the centralizer
of $\iota$ is a closed smooth subgroup of $G$. \end{proof}
\begin{lem}
\label{lem:LocusSurj}Let $f:H\rightarrow Q$ be a morphism of group
schemes of multiplicative type over $S$, with $Q$ of finite type.
Define $U=\{s\in S:f_{s}\mbox{ is faithfully flat}\}$. Then $U$
is open and closed in $S$ and $f_{U}:H_{U}\rightarrow Q_{U}$ is
faithfully flat. \end{lem}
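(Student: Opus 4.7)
\emph{Plan.} My plan is to first reduce the statement to a closed immersion via the image factorisation, and then to see that the condition becomes an étale-locally constant one on the character lattices. Applying Lemma~\ref{lem:ImageMorphTor} to $f$ yields a factorisation $f=\iota\circ f'$ in which $f':H\to P:=\mathrm{im}(f)$ is faithfully flat and $\iota:P\hookrightarrow Q$ is a closed immersion of a multiplicative subgroup; in particular $P$ is again of finite type multiplicative type, and every fibre of $f'$ remains faithfully flat. Since a closed immersion of affine schemes is faithfully flat iff it is an isomorphism, $U$ identifies with $\{s\in S:\iota_s\text{ is an isomorphism}\}$, so it is enough to show that this locus is open and closed and that $\iota_U$ is an isomorphism there.

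For the open-and-closed claim I would argue étale-locally. Both $P$ and $Q$ are quasi-isotrivial by \cite[X 4.5]{SGA3.2}, so around any $s_0\in S$ I can find a connected étale neighbourhood $(S',s_0')\to(S,s_0)$ on which both become diagonalisable with constant character lattices, say $P_{S'}=\mathbb{D}_{S'}(M_P)$ and $Q_{S'}=\mathbb{D}_{S'}(M_Q)$. The closed immersion $\iota_{S'}$ then corresponds to a single surjection $M_Q\twoheadrightarrow M_P$ of abstract finitely generated abelian groups; whether $\iota$ is an isomorphism on the fibre over any point of $S'$ amounts to whether this surjection is bijective, a condition that does not depend on the point. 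So $\iota_{S'}$ is either an isomorphism everywhere on this neighbourhood or nowhere on it, and this dichotomy descends to an open-and-closed piece of $S$ by étale descent of open-and-closed subsets.

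Finally, on the resulting open-and-closed $U\subset S$ the same local picture shows that $\iota_U$ becomes an isomorphism after the étale base change $U\times_S S'\to U$, hence $\iota_U$ itself is an isomorphism by fpqc descent; therefore $f_U=\iota_U\circ f'_U$ is a composition of an isomorphism with a faithfully flat morphism, and so is faithfully flat.

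I expect the main technical point to be the construction of the trivialising étale neighbourhood in the second step: one needs not only to trivialise $P$ and $Q$ simultaneously, but to arrange that their character lattices become literally constant on a connected component of $S'$, so that the question reduces to a single surjection of abstract abelian groups. This is standard for quasi-isotrivial multiplicative type groups of finite type, and can be extracted by combining \cite[X 4.5]{SGA3.2} with the structure of quasi-isotrivial twisted constant schemes recalled in Lemma~\ref{lem:StructQuasIsoTwistedSch}.
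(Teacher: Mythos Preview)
Your proposal is correct and follows the same reduction as the paper: factor $f$ through its image $P=\mathrm{im}(f)$ using Lemma~\ref{lem:ImageMorphTor}, observe that $U$ is the locus where the closed immersion $P\hookrightarrow Q$ is an isomorphism on fibres, and conclude. The only difference is that where you spell out the \'etale-local argument (trivialise $P$ and $Q$ simultaneously, reduce to a single surjection of finitely generated abelian groups, note this is constant on connected components), the paper simply cites \cite[IX~2.9]{SGA3.2} applied to $I\hookrightarrow Q$, which packages exactly this statement: for a monomorphism of finite type multiplicative groups, the locus where the fibres coincide is open and closed, and the monomorphism is an isomorphism over it. Your argument is essentially a proof of the needed special case of that reference, so the two are the same in substance.
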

\begin{proof}
Let $I$ be the image of $f$. Then $U$ is the set of points $s\in S$
where $I_{s}=Q_{s}$. Now apply \cite[IX 2.9]{SGA3.2} to $I\hookrightarrow Q$.
\end{proof}
\noindent We may now prove theorem~\ref{thm:RepGr}. Define presheaves
$A,B,C$ on $\Sch/S$ by 
\begin{eqnarray*}
C(S') & = & \left\{ \mbox{multiplicative subgroups \ensuremath{Q}}\mbox{ of }G_{S'}\right\} ,\\
B(S') & = & \left\{ (Q,f'):Q\in C(S')\mbox{ and }f:H_{S'}\rightarrow Q\mbox{ is a morphism}\right\} ,\\
A(S') & = & \left\{ (Q,f')\in B(S')\,\mbox{with }f'\mbox{ faithfully flat}\right\} .
\end{eqnarray*}
Then $C$ is representable, smooth and separated by \cite[XI 4.1]{SGA3.2},
$B\rightarrow C$ is relatively representable by étale and separated
morphisms by proposition~\ref{prop:RepGspCaseTorus} and lemma~\ref{lem:StructQuasIsoTwistedSch},
$A\rightarrow B$ is relatively representable by open and closed immersions
by lemma~\ref{lem:LocusSurj} and finally $A$ is isomorphic to $\underline{\Hom}_{S-\Group}(H,G)$
by lemma~\ref{lem:ImageMorphTor}, which is therefore indeed representable
by a smooth and separated scheme over $S$.
\begin{defn}
For an abstract commutative group $\Gamma=(\Gamma,+)$ and a smooth
and affine group scheme $G$ over $S$, we set \nomenclature[G^Gamma(G)]{$\mathbb{G}^\Gamma (G)$}{Scheme of $\Gamma$-graduations on $G$, defined page \nomrefpage}$\mathbb{G}^{\Gamma}(G)=\underline{\Hom}_{S-\Group}(\mathbb{D}_{S}(\Gamma),G)$.
Thus 
\[
\mathbb{G}^{\Gamma}(G):(\Sch/S)^{\circ}\rightarrow\Set
\]
is representable by a smooth and separated scheme over $S$. \end{defn}
\begin{prop}
\label{Pro:StructImagG}Let $f:\mathbb{D}_{S}(\Gamma)\rightarrow G$
be a morphism of group schemes over $S$, with $G$ separated and
of finite presentation. Then for each $s$ in $S$, 
\[
\Gamma(s)=\left\{ \gamma\in\Gamma:\gamma\mbox{ is trivial on }\ker(f_{s})\right\} 
\]
belongs to the set $\mathcal{F}(\Gamma)$ of finitely generated subgroups
of $\Gamma$. For each $\Lambda\in\mathcal{F}(\Gamma)$,
\[
S(\Lambda)=\left\{ s\in S:\Gamma(s)=\Lambda\right\} 
\]
is open and closed in $S$, and finally 
\[
\ker(f)_{S(\Lambda)}=\mathbb{D}_{S(\Lambda)}(\Gamma/\Lambda)\quad\mbox{and}\quad\mathrm{im}(f)_{S(\Lambda)}=\mathbb{D}_{S(\Lambda)}(\Lambda).
\]
\end{prop}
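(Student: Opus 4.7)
The plan is to reduce everything to a fiberwise analysis of the image $Q = \mathrm{im}(f)$ produced by Lemma~\ref{lem:ImageMorphTor}, and then deduce Zariski-local constancy of $s \mapsto \Gamma(s)$ from the quasi-isotrivial structure of finite-type multiplicative groups.

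First, by Lemma~\ref{lem:ImageMorphTor}, write $f = \iota \circ f'$ with $f': \mathbb{D}_S(\Gamma) \to Q$ faithfully flat and $\iota: Q \hookrightarrow G$ a closed immersion of multiplicative type; since faithful flatness is preserved by base change, $\mathrm{im}(f_s) = Q_s$ and $\ker(f_s) = \ker(f'_s)$ for every $s \in S$. The proof of that lemma also shows that, locally on $S$, $f$ factors through some $\mathbb{D}_S(\Lambda_0) \to G$ with $\Lambda_0 \in \mathcal{F}(\Gamma)$, so $Q$ is locally a faithfully flat quotient of $\mathbb{D}_S(\Lambda_0)$, hence locally of finite type. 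Cartier duality applied to $f'_s$ identifies $\Gamma(s)$ with the character group of the finite-type multiplicative group $Q_s$, which is finitely generated; this identification is Galois-equivariant with image in the constant $\Gamma$ (trivial Galois action), so $Q_s$ is diagonalizable and equal to $\mathbb{D}_s(\Gamma(s))$.

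To show each $S(\Lambda)$ is open and closed, establish that $s \mapsto \Gamma(s)$ is Zariski-locally constant on $S$. Fix $s_0 \in S$ and shrink to an affine open $V \ni s_0$ on which $f$ factors through $\mathbb{D}_V(\Lambda_0) \to G$. Then $Q|_V$ is multiplicative of finite type and quasi-isotrivial by \cite[X 4.5]{SGA3.2}, and by Proposition~\ref{prop:RepGspCaseTorus} applied to the relevant Hom-scheme, after sufficient \'etale refinement we obtain an \'etale cover $V' \to V$ with $V'$ a disjoint union of connected opens on each of which $Q_{V'_c} = \mathbb{D}_{V'_c}(M'')$ is constant diagonalizable and the surjection $\mathbb{D}_{V'_c}(\Lambda_0) \twoheadrightarrow \mathbb{D}_{V'_c}(M'')$ is dual to a fixed injection $\phi_c: M'' \hookrightarrow \Lambda_0 \subset \Gamma$. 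Hence $\Gamma(v') = \phi_c(M'') =: \Lambda_c$ is constant on $V'_c$, and the open image $V_c \subset V$ of $V'_c \to V$ satisfies $\Gamma|_{V_c} \equiv \Lambda_c$; since the $V_c$ cover $V$, we get $V \cap S(\Lambda) = \bigcup_{\Lambda_c = \Lambda} V_c$ open with open complement $\bigcup_{\Lambda_c \neq \Lambda} V_c$, whence $S(\Lambda)$ is open and closed in $S$.

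For the last claim, the character scheme of $Q|_V$ is \'etale over $V$ and embeds fiberwise injectively into the constant scheme $\Lambda_{0,V} = \coprod_{\gamma \in \Lambda_0} V$, so this embedding is an open immersion and yields the decomposition by opens $V_\gamma = \{s \in V : \gamma \in \Gamma(s)\} \subset V$. Over $S(\Lambda) \cap V$, one has $V_\gamma \cap S(\Lambda) = S(\Lambda) \cap V$ for $\gamma \in \Lambda$ and empty otherwise, so the character scheme of $Q|_{S(\Lambda)}$ is the constant sheaf on $\Lambda$, whence $Q|_{S(\Lambda)} = \mathbb{D}_{S(\Lambda)}(\Lambda)$; dualizing, the kernel of $f'|_{S(\Lambda)} : \mathbb{D}_{S(\Lambda)}(\Gamma) \twoheadrightarrow \mathbb{D}_{S(\Lambda)}(\Lambda)$ is $\mathbb{D}_{S(\Lambda)}(\Gamma/\Lambda) = \ker(f)|_{S(\Lambda)}$. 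The delicate point is upgrading \'etale-local constancy of the character sheaf of $Q$ to Zariski-local constancy of the set-valued function $s \mapsto \Gamma(s)$ without any Noetherian hypothesis on $S$: each connected component of a sufficiently fine \'etale trivializing cover picks out a definite subgroup of $\Gamma$ and maps to an open subset of $V$, so the local constancy descends Zariski-locally.
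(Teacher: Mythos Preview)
Your argument is correct, but it takes a considerably longer path than the paper's. The paper first replaces $G$ by the image $Q=\mathrm{im}(f)$ via Lemma~\ref{lem:ImageMorphTor} (so one may assume $G$ itself is of multiplicative type), then uses \cite[8.13.1]{EGA4.3} to factor $f$ through some $g:\mathbb{D}_S(\Lambda)\to G$ with $\Lambda\in\mathcal{F}(\Gamma)$, and finally invokes a single structural result, \cite[IX 2.11 (i)]{SGA3.2}: for a morphism $g$ of multiplicative groups with source of finite type, there is a finite clopen partition $S=\coprod S_i$ with $\ker(g)_{S_i}=\mathbb{D}_{S_i}(\Lambda/\Lambda_i)$ and $\mathrm{im}(g)_{S_i}\simeq\mathbb{D}_{S_i}(\Lambda_i)$. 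All three claims of the proposition drop out of this in one line. By contrast, you rebuild this clopen decomposition by hand: you pass to an \'etale trivializing cover of $Q$, read off $\Gamma(s)$ on connected components, push forward along the open \'etale map to get Zariski-local constancy, and then run a separate character-scheme argument (open immersion into $\Lambda_{0,S}$) to identify $Q\vert_{S(\Lambda)}$. This is a legitimate proof, and has the mild pedagogical benefit of making the mechanism behind \cite[IX 2.11]{SGA3.2} visible, but it is noticeably heavier.

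One small inaccuracy: your appeal to Proposition~\ref{prop:RepGspCaseTorus} ``applied to the relevant Hom-scheme'' is not what you actually use at that step. What you need there is simply that $Q$ is quasi-isotrivial (hence \'etale-locally diagonalizable), which is \cite[X 4.5]{SGA3.2}; the Hom-scheme proposition is irrelevant to trivializing $Q$ itself.
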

\begin{proof}
We may assume that $S$ is affine and $G$ is of multiplicative type
(using lemma~\ref{lem:ImageMorphTor} for the latter). Since $\mathbb{D}_{S}(\Gamma)=\underleftarrow{\lim}\mathbb{D}_{S}(\Lambda)$,
it follows again from \cite[8.13.1]{EGA4.3} that there is some $\Lambda$
in $\mathcal{F}(\Gamma)$ such that $f$ factors through $g:\mathbb{D}_{S}(\Lambda)\rightarrow G$,
i.e.~$\mathbb{D}_{S}(\Gamma/\Lambda)\subset\ker(f)$. But then $\Gamma(s)\subset\Lambda$
for every $s\in S$, which proves the first claim. Applying now \cite[IX 2.11 (i)]{SGA3.2}
to $g$ gives a finite partition of $S$ into open and closed subsets
$S_{i}$, together with a collection of distinct subgroups $\Lambda_{i}$
of $\Lambda$ such that $\ker(g)_{S_{i}}=\mathbb{D}_{S_{i}}(\Lambda/\Lambda_{i})$
and $\mathrm{im}(g)_{S_{i}}\simeq\mathbb{D}_{S_{i}}(\Lambda_{i})$.
But then $\ker(f)_{S_{i}}=\mathbb{D}_{S_{i}}(\Gamma/\Lambda_{i})$,
$\mathrm{im}(f)_{S_{i}}\simeq\mathbb{D}_{S_{i}}(\Lambda_{i})$ and
$S_{i}=S(\Lambda_{i})$, which proves the remaining claims.\end{proof}
\begin{cor}
\label{cor:imageofcocharIssubtorus}If $\Gamma$ is torsion free,
$\mathrm{im}(f)$ is a locally trivial subtorus of $G$. \end{cor}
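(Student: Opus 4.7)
The plan is to deduce the corollary directly from Proposition~\ref{Pro:StructImagG} by exploiting the torsion-freeness of $\Gamma$. First, I would apply that proposition to $f$: for every $s\in S$, the subgroup $\Gamma(s)\subset \Gamma$ is finitely generated, the locus $S(\Lambda)=\{s:\Gamma(s)=\Lambda\}$ is open and closed in $S$ for each $\Lambda\in\mathcal{F}(\Gamma)$, and
\[
\mathrm{im}(f)_{S(\Lambda)} \simeq \mathbb{D}_{S(\Lambda)}(\Lambda).
\]
Since every $s$ lies in some $S(\Gamma(s))$, the collection of non-empty $S(\Lambda)$ yields a Zariski covering of $S$ by open subsets.

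Next I would use the hypothesis that $\Gamma$ is torsion free. Any $\Lambda\in\mathcal{F}(\Gamma)$ is a finitely generated subgroup of a torsion-free abelian group, hence free of finite rank $r(\Lambda)\geq 0$. Consequently $\mathbb{D}_{S(\Lambda)}(\Lambda)\simeq\mathbb{G}_{m,S(\Lambda)}^{r(\Lambda)}$ is a split torus over $S(\Lambda)$.

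Putting the two together, $\mathrm{im}(f)$ is Zariski-locally on $S$ isomorphic to a split torus, i.e.\ a locally trivial subtorus of $G$. There is no real obstacle here; the only thing to be careful about is that the partition $S=\coprod_\Lambda S(\Lambda)$ given by the proposition really is a Zariski open covering (each piece being open and closed), so the local trivialization is Zariski and not merely \'etale, which matches the usual meaning of \emph{locally trivial} torus in the sense of SGA3.
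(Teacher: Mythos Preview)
Your argument is correct and is exactly the intended deduction: the paper states the corollary without proof, as an immediate consequence of Proposition~\ref{Pro:StructImagG}, and your reasoning (finitely generated subgroups of a torsion-free $\Gamma$ are free, so $\mathbb{D}_{S(\Lambda)}(\Lambda)$ is a split torus on each clopen piece) is precisely why it follows.
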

\begin{rem}
\label{Rem:AltProof}The above proposition suggests another proof
of theorem~\ref{thm:RepGr} when $H=\mathbb{D}_{S}(\Gamma)$. It
shows indeed that the Zariski sheaf $\mathbb{G}^{\Gamma}(G)$ is the
disjoint union of relatively open and closed subsheaves $\mathbb{G}^{\Gamma}(G)(\Lambda)$,
indexed by $\Lambda\in\mathcal{F}(\Gamma)$. Moreover, $\mathbb{G}^{\Gamma}(G)(\Lambda)$
is isomorphic to the subsheaf $\mathbb{G}^{\Lambda}(G)(\Lambda)$
of $\mathbb{G}^{\Lambda}(G)$, which is representable by a smooth
and separated scheme over $S$ by \cite[XI 4.2]{SGA3.2}. 
\end{rem}

\section{\label{sub:FiltrationsGroup}$\Gamma$-filtrations on reductive groups}

Let $S$ be a scheme, $G$ a reductive group over $S$, $\mathfrak{g}=\Lie(G)$\nomenclature[Lie(G)]{$\Lie (G)$}{Lie algebra of $G$.}
its Lie algebra. Let $\Gamma=(\Gamma,+,\leq)$ be a non-trivial totally
ordered commutative group.

\subsection{~}

Recall from \cite[XXVI 3.5 ]{SGA3.3r} that the sheaf\nomenclature[P(G)]{$\mathbb{P}(G)$}{Scheme of parabolic subgroups of $G$, defined page  \nomrefpage}
\[
\mathbb{P}(G):(\Sch/S)^{\circ}\rightarrow\Set
\]
whose sections over an $S$-scheme $T$ are given by 
\[
\mathbb{P}(G)(T)=\left\{ \mbox{parabolic subgroups }P\mbox{ of }G_{T}\right\} 
\]
is representable, smooth and projective over $S$, with Stein factorization
\[
\mathbb{P}(G)\stackrel{t}{\longrightarrow}\mathbb{O}(G)\rightarrow S
\]
where \nomenclature[O(G)]{$\mathbb{O}(G)$}{Scheme of types of parabolic subgroups of $G$, defined page  \nomrefpage}$\mathbb{O}(G)$
is the $S$-scheme of open and closed subschemes of the Dynkin $S$-scheme
\nomenclature[DYN(G)]{$\mathbb{DYN}(G)$}{Dynkin scheme of $G$, defined page  \nomrefpage}$\mathbb{DYN}(G)$
of the reductive group $G/S$, see \cite[XXIV 3.3]{SGA3.3r}. Both
$\mathbb{DYN}(G)$ and $\mathbb{O}(G)$ are twisted constant finite
schemes over $S$, thus finite étale over $S$ by \cite[2.7.1.xv]{EGA4.2}
and \cite[17.7.3]{EGA4.4}, and $\mathbb{O}(G)$ is actually a finite
étale cover of $S$. The morphism \nomenclature[t]{$t$}{Type morphism $t:\mathbb{P}(G) \rightarrow \mathbb{O}(G)$, defined page  \nomrefpage}$t$
is smooth, projective, with non-empty geometrically connected fibers;
it classifies the parabolic subgroups of $G$ in the following sense:
two parabolic subgroups $P_{1}$ and $P_{2}$ of $G$ are conjugated
locally in the fpqc topology on $S$ if and only if $t(P_{1})=t(P_{2})$.

\subsection{~}

For a parabolic subgroup $P$ of $G$ with unipotent radical $U$,
we denote by \nomenclature[Ro(P)]{$\overline{R}(P)$}{Radical of $P/U$, where $U$ is the unipotent radical of $P$.}$\overline{R}(P)$
the radical of $P/U$ \cite[XXII 4.3.6]{SGA3.3r}. For the universal
parabolic subgroup \nomenclature[P_u]{$P_u$}{Universal parabolic subgroup of $G_{\mathbb{P}(G)}$.}$P_{u}$
of $G_{\mathbb{P}(G)}$, we obtain a $\mathbb{P}(G)$-torus \nomenclature[R_P(G)]{$R_{\mathbb{P}(G)}$}{Radical $\overline{R}(P_u)$ of $P_u/U_u$, a torus over $\mathbb{P}(G)$.}$R_{\mathbb{P}(G)}=\overline{R}(P_{u})$.
We claim that it descends canonically to an $\mathbb{O}(G)$-torus
\nomenclature[R_O(G)]{$R_{\mathbb{O}(G)}$}{A torus over $\mathbb{O}(G)$ defined on  page  \nomrefpage}$R_{\mathbb{O}(G)}$
over $\mathbb{O}(G)$. Since $t$ is faithfully flat and quasi-compact,
it is a morphism of effective descent for affine group schemes by
\cite[VIII 2.1]{SGA1r}, thus also for tori by definition \cite[IX 1.3]{SGA3.2}.
Our claim now follows from:
\begin{lem}
\label{lem:CanoDescData}There exists a canonical descent datum on
$R_{\mathbb{P}(G)}$ with respect to $t$.\end{lem}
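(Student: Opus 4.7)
The plan is to exhibit, for the faithfully flat quasi-compact morphism $t:\mathbb{P}(G)\to\mathbb{O}(G)$, a canonical isomorphism
\[
\phi:p_{1}^{*}R_{\mathbb{P}(G)}\xrightarrow{\sim}p_{2}^{*}R_{\mathbb{P}(G)}
\]
over $T:=\mathbb{P}(G)\times_{\mathbb{O}(G)}\mathbb{P}(G)$ satisfying the usual cocycle condition on the triple fiber product $T^{(3)}$. Over $T$ we have two universal parabolic subgroups $\mathcal{P}_{1}=p_{1}^{*}P_{u}$ and $\mathcal{P}_{2}=p_{2}^{*}P_{u}$ of $G_{T}$, sharing a common type, and $p_{i}^{*}R_{\mathbb{P}(G)}$ is nothing but the radical $\overline{R}(\mathcal{P}_{i})$ of the corresponding reductive quotient.

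First I would introduce the transporter $N=\mathrm{Transp}_{G_{T}}(\mathcal{P}_{1},\mathcal{P}_{2})$, the closed subscheme of $G_{T}$ classifying those $g$ with $g\mathcal{P}_{1}g^{-1}=\mathcal{P}_{2}$. Because parabolic subgroups of the same type are fpqc-locally conjugate (as recalled just above) and because $\mathcal{P}_{1}$ is its own normalizer in $G_{T}$, the structural map $N\to T$ is a right $\mathcal{P}_{1}$-torsor, and in particular faithfully flat and quasi-compact. On $N$ the tautological section $g\in G(N)$ yields by conjugation an isomorphism $\Int(g):\mathcal{P}_{1,N}\xrightarrow{\sim}\mathcal{P}_{2,N}$ of group schemes, which carries unipotent radicals to unipotent radicals and hence induces an isomorphism between the reductive quotients $\mathcal{P}_{i,N}/U(\mathcal{P}_{i,N})$, from which we read off an isomorphism
\[
\psi_{N}:\overline{R}(\mathcal{P}_{1})_{N}\xrightarrow{\sim}\overline{R}(\mathcal{P}_{2})_{N}
\]
of tori over $N$.

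The main step is then to descend $\psi_{N}$ from $N$ down to $T$, for which the crucial (but elementary) computation is that $\psi_{N}$ is invariant under the right $\mathcal{P}_{1}$-action on $N$: replacing $g$ by $gp$ with $p\in\mathcal{P}_{1}$ changes $\Int(g)$ into $\Int(g)\circ\Int(p)$, and the extra factor $\Int(p)$ descends to an inner automorphism of the reductive group $\mathcal{P}_{1}/U(\mathcal{P}_{1})$, which necessarily acts trivially on the central torus $\overline{R}(\mathcal{P}_{1})=Z(\mathcal{P}_{1}/U(\mathcal{P}_{1}))^{0}$. This central-torus triviality is where all the content sits, and once it is granted, fpqc descent of morphisms along the $\mathcal{P}_{1}$-torsor $N\to T$ produces $\phi$ canonically out of $\psi_{N}$. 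Finally, the cocycle condition on $T^{(3)}$ is verified by working over the analogous cover $N_{12}\times_{\mathcal{P}_{2},T}N_{23}\to T^{(3)}$, on which it reduces to the tautology $\Int(g_{23}g_{12})=\Int(g_{23})\circ\Int(g_{12})$, and then descends.
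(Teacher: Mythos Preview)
Your proposal is correct and follows essentially the same approach as the paper: both arguments produce the canonical isomorphism $\overline{R}(P_{1})\simeq\overline{R}(P_{2})$ via conjugation by some $g$ with $\Int(g)(P_{1})=P_{2}$, and both identify the key point as the fact that the ambiguity in $g$ (right multiplication by $P_{1}$) induces only inner automorphisms of the reductive quotient $P_{1}/U_{1}$, hence acts trivially on its radical. The only difference is presentational: where the paper simply says ``by a descent argument'' one may assume a global $g\in G(S)$ exists and then checks independence of the choice, you make this explicit by introducing the transporter $N$ as a $\mathcal{P}_{1}$-torsor over $T$ and descending the tautological isomorphism $\psi_{N}$ along $N\to T$; this is the same proof, just with the descent machinery spelled out.
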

\begin{proof}
We have to show that for any $T\rightarrow S$ and any pair of parabolic
subgroups $P_{1}$ and $P_{2}$ of $G_{T}$ such that $t(P_{1})=t(P_{2})$,
there exists a canonical isomorphism $\overline{R}(P_{1})\simeq\overline{R}(P_{2})$.
Let $M_{i}=P_{i}/U_{i}$ be the maximal reductive quotient of $P_{i}$,
so that $R_{i}=\overline{R}(P_{i})$ is the radical of $M_{i}$. We
may assume that $T=S$ and, by a descent argument, that $P_{2}=\Int(g)(P_{1})$
for some $g\in G(S)$. Then $\Int(g)$ induces isomorphisms $P_{1}\rightarrow P_{2}$,
$M_{1}\rightarrow M_{2}$ and $R_{1}\rightarrow R_{2}$. Since $g$
is well-defined up to right multiplication by an element of $P_{1}(S)$
thanks to \cite[XXVI 1.2]{SGA3.3r}, $M_{1}\rightarrow M_{2}$ is
well-defined up to an inner automorphism of $M_{1}$ and $R_{1}\rightarrow R_{2}$
does not depend upon any choice: this is our canonical isomorphism.
\end{proof}

\subsection{~}

By \cite[XXVI 4.3.4 and 4.3.5]{SGA3.3r}, the formula\nomenclature[OPP(G)]{$\mathbb{OPP}(G)$}{Scheme of pairs of opposed parabolic subgroups of $G$,  page \nomrefpage}
\[
\mathbb{OPP}(G)(T)=\left\{ (P_{1},P_{2})\mbox{ pair of opposed parabolic subgroups of }G_{T}\right\} 
\]
defines an open subscheme $\mathbb{OPP}(G)$ of $\mathbb{P}(G)^{2}$
and the two projections
\[
p_{1},p_{2}:\mathbb{OPP}(G)\rightarrow\mathbb{P}(G)
\]
are isomorphic $U_{u}$-torsors, thus affine smooth surjective morphisms
with geometrically connected fibers. Here \nomenclature[U_u]{$U_u$}{Unipotent radical of  $P_u$.}$U_{u}$
is the unipotent radical of the universal parabolic subgroup $P_{u}$
of $G_{\mathbb{P}(G)}$, it acts by conjugation on the fibers, and
the isomorphism is the involution \nomenclature[iota]{$\iota$}{Opposition involution of  $\mathbb{OPP}(G)$ or $\mathbb{O}(G)$, defined page \nomrefpage}$\iota(P_{1},P_{2})=(P_{2},P_{1})$
of the $S$-scheme $\mathbb{OPP}(G)$. We denote by $(P_{u}^{1},P_{u}^{2})=(p_{1}^{\ast}P_{u},p_{2}^{\ast}P_{u})$
the universal pair of opposed parabolic subgroups of $G_{\mathbb{OPP}(G)}$,
by $U_{u}^{i}=p_{i}^{\ast}U_{u}$ the unipotent radical of $P_{u}^{i}$,
and by \nomenclature[R_OPP(G)]{$R_{\mathbb{OPP}(G)}$}{Radical of the universal Levi subgroup of $G_{\mathbb{OPP}(G)}$.}$R_{\mathbb{OPP}(G)}$
the radical of the corresponding universal Levi subgroup $L_{u}=P_{u}^{1}\cap P_{u}^{2}$
of $G_{\mathbb{OPP}(G)}$. Thus 
\[
L_{u}\simeq P_{u}^{i}/U_{u}^{i}\simeq p_{i}^{\ast}(P_{u}/U_{u})\quad\mbox{and}\quad R_{\mathbb{OPP}(G)}\simeq p_{i}^{\ast}R_{\mathbb{P}(G)}.
\]
We also denote by $\iota$ the opposition involution on $\mathbb{O}(G)$,
see~\cite[XXVI 4.3.1]{SGA3.3r}. Thus
\[
t\circ p_{2}=t\circ p_{1}\circ\iota=\iota\circ t\circ p_{1}.
\]

\subsection{~}

The $S$-scheme $\mathbb{G}^{\Gamma}(R_{\mathbb{OPP}(G)})$ represents
the functor mapping $T\rightarrow S$ to the set of triples $(P_{1},P_{2},f)$
where $(P_{1},P_{2})$ is a pair of opposed parabolic subgroups of
$G_{T}$ with Levi subgroup $L=P_{1}\cap P_{2}$, and $f:\mathbb{D}_{T}(\Gamma)\rightarrow L$
is a central morphism. The next proposition uses the total ordering
on $\Gamma=(\Gamma,+,\leq)$ to define a section
\[
\mathbb{G}^{\Gamma}(G)\hookrightarrow\mathbb{G}^{\Gamma}(R_{\mathbb{OPP}(G)}),\qquad f\mapsto(P_{f},P_{\iota f},f)
\]
of the obvious forgetful morphism of $S$-schemes 
\[
\mathbb{G}^{\Gamma}(R_{\mathbb{OPP}(G)})\rightarrow\mathbb{G}^{\Gamma}(G),\qquad(P_{1},P_{2},f)\mapsto f.
\]

\begin{prop}
\label{prop:DefPGroupCase}Let $f:\mathbb{D}_{S}(\Gamma)\rightarrow G$
be a morphism and write $\mathfrak{g}=\oplus_{\gamma}\mathfrak{g}_{\gamma}$
for the corresponding weight decomposition of $\mathrm{ad}\circ f:\mathbb{D}_{S}(\Gamma)\rightarrow GL_{S}(\mathfrak{g})$.
There exists a unique parabolic subgroup $P_{f}$ of $G$ containing
the centralizer $L_{f}$ of $f$ such that 
\[
\Lie(P_{f})=\oplus_{\gamma\geq0}\mathfrak{g}_{\gamma}.
\]
Moreover $L_{f}$ is a Levi subgroup of $P_{f}$, thus $P_{f}=U_{f}\rtimes L_{f}$
where $U_{f}$ is the unipotent radical of $P_{f}$. For $\iota f=f^{-1}$,
$P_{\iota f}$ is opposed to $P_{f}$, $L_{f}=P_{f}\cap P_{\iota f}$
and 
\[
\begin{array}{c}
\Lie(P_{f})=\oplus_{\gamma\geq0}\mathfrak{g}_{\gamma}\\
\Lie(U_{f})=\oplus_{\gamma>0}\mathfrak{g}_{\gamma}
\end{array}\quad\begin{array}{c}
\Lie(P_{\iota f})=\oplus_{\gamma\leq0}\mathfrak{g}_{\gamma}\\
\Lie(U_{\iota f})=\oplus_{\gamma<0}\mathfrak{g}_{\gamma}
\end{array}\quad\mbox{and}\quad\Lie(L_{f})=\mathfrak{g}_{0}.
\]
\end{prop}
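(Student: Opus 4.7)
The plan is to reduce the statement to the classical theory of dynamical parabolics attached to a single cocharacter, i.e.\ to the case $\Gamma = \mathbb{Z}$ developed in SGA3~XXVI. Zariski-locally on $S$, Proposition~\ref{Pro:StructImagG} lets me assume $\mathrm{im}(f) = \mathbb{D}_{S}(\Lambda)$ for some $\Lambda \in \mathcal{F}(\Gamma)$; since $\Gamma$ is torsion-free, $\Lambda \simeq \mathbb{Z}^{r}$ for some $r$ and inherits a compatible total order. The nonzero weights $\gamma_{1},\ldots,\gamma_{n}$ of $\mathfrak{g}$ under $f$ form a finite subset of $\Lambda\setminus\{0\}$. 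I would then produce a group homomorphism $\phi : \Lambda \to \mathbb{Z}$ with $\mathrm{sign}\,\phi(\gamma_{i}) = \mathrm{sign}\,\gamma_{i}$ for every $i$: this is a finite separation problem, solved by remarking that the positive cone of the total order on $\Lambda_{\mathbb{R}} = \Lambda\otimes\mathbb{R}$ is convex and disjoint from its opposite, so a hyperplane separating the positive $\gamma_{i}$ from the negative ones can be chosen with rational coefficients and cleared of denominators. Via $\mathbb{D}_{S}(\phi)$, $\phi$ yields a cocharacter $\lambda : \mathbb{G}_{m,S} \to \mathbb{D}_{S}(\Lambda) \hookrightarrow G$ whose weight decomposition refines that of $f$, namely $\mathfrak{g}_{n}^{\lambda} = \bigoplus_{\phi(\gamma)=n}\mathfrak{g}_{\gamma}$.

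The rest is a direct transport of the cocharacter picture. The dynamical construction of SGA3~XXVI supplies a parabolic $P(\lambda)$ with $\Lie P(\lambda) = \oplus_{n\geq 0}\mathfrak{g}_{n}^{\lambda}$, Levi decomposition $U(\lambda)\rtimes Z_{G}(\lambda)$, and opposite parabolic $P(\lambda^{-1})$ meeting $P(\lambda)$ in $Z_{G}(\lambda)$. Setting $P_{f} := P(\lambda)$ and $U_{f} := U(\lambda)$, the sign property of $\phi$ rewrites $\Lie P_{f}$, $\Lie U_{f}$, $\Lie P(\lambda^{-1})$ and $\Lie U(\lambda^{-1})$ as the four required sums, and gives $\Lie Z_{G}(\lambda) = \mathfrak{g}_{0}$. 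The equality $P_{\iota f} = P(\lambda^{-1})$ follows because $\iota f = f^{-1}$ corresponds via $\phi$ to $\lambda^{-1}$, its $\gamma$-weight space on $\mathfrak{g}$ being $\mathfrak{g}_{-\gamma}$. For the Levis, Lemma~\ref{lem:CentrSmooth} identifies $L_{f}$ with $Z_{G}(\mathrm{im}\,f)$; since $\mathrm{im}(\lambda)\subset\mathrm{im}(f)$ one has $L_{f} \subset Z_{G}(\lambda)$, an inclusion of smooth reductive subgroups of $G$ (centralizers of subtori of a reductive group) sharing the Lie algebra $\mathfrak{g}_{0}$, hence equal by a fiberwise dimension comparison. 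Uniqueness of $P_{f}$ is then the standard fact that a parabolic subgroup of a reductive $S$-group is determined by its Lie algebra.

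The heart of the argument lies in the construction of $\phi$: one must convert the abstract total order on $\Gamma$ into an explicit $\mathbb{Z}$-valued homomorphism on $\Lambda$ that separates by sign the finitely many weights of $f$ on $\mathfrak{g}$. This is the one place where the total-order hypothesis on $\Gamma$ is genuinely used; once $\lambda$ has been produced, the existence of $P_{f}$ and all the claimed identities on Lie algebras, Levis and opposites are pulled back from the classical cocharacter case.
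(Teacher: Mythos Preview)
Your proof is correct and follows the same route as the paper's: reduce via Proposition~\ref{Pro:StructImagG} to the case where $\mathrm{im}(f)\simeq\mathbb{D}_{S}(\Lambda)$ is a split subtorus with $\Lambda\subset\Gamma$ finitely generated, then invoke the parabolic theory of \cite[XXVI~6.1]{SGA3.3r}. The paper simply cites that reference once $Q$ is trivialized, whereas you spell out the passage to a single cocharacter via a sign-separating homomorphism $\phi:\Lambda\to\mathbb{Z}$; this is a legitimate and standard way to unpack that citation, and your identification $L_f=Z_G(\lambda)$ and uniqueness argument (parabolics containing $L_f$, hence a maximal torus, are determined by their Lie algebra via \cite[XXII~5.3.5]{SGA3.3r}) are fine.
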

\begin{proof}
Let $Q$ be the image of $f$. Then $L_{f}$ is the centralizer of
$Q$ by lemma~\ref{lem:CentrSmooth} and $Q$ is a locally trivial
subtorus of $G$ by proposition~\ref{Pro:StructImagG} (since $\Gamma$
is torsion free). We may assume that $Q$ is trivial, i.e.~$Q\simeq\mathbb{D}_{S}(\Lambda)$
for some finitely generated subgroup $\Lambda$ of $\Gamma$. The
proposition then follows from~\cite[XXVI 6.1]{SGA3.3r}.\end{proof}
\begin{prop}
\label{prop:GtoG(ROPP)isClopen}The morphism $\mathbb{G}^{\Gamma}(G)\rightarrow\mathbb{G}^{\Gamma}(R_{\mathbb{OPP}(G)})$
is an open and closed immersion, and $\mathbb{G}^{\Gamma}(G)\rightarrow\mathbb{OPP}(G)$
is a quasi-isotrivial twisted constant morphism. \end{prop}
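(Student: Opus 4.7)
The plan is first to show that the section is a clopen immersion by trivializing everything fpqc-locally on $\mathbb{OPP}(G)$ and reducing to a combinatorial check; the second assertion will then follow formally.

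Since the composite of the section $f \mapsto (P_f, P_{\iota f}, f)$ with the forgetful morphism $(P_1, P_2, f) \mapsto f$ is the identity, the section is in particular a monomorphism of $\mathbb{OPP}(G)$-schemes. By Proposition~\ref{prop:RepGspCaseTorus} applied over the base $\mathbb{OPP}(G)$ to the multiplicative-type pair $\mathbb{D}_{\mathbb{OPP}(G)}(\Gamma)$ and $R_{\mathbb{OPP}(G)}$ (a torus, hence quasi-isotrivial of finite type), the target $\mathbb{G}^{\Gamma}(R_{\mathbb{OPP}(G)})$ is a quasi-isotrivial twisted constant $\mathbb{OPP}(G)$-scheme. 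Since being an open and closed immersion is fpqc-local on the target, it suffices to exhibit an fpqc cover $V \to \mathbb{OPP}(G)$ over which the image of the section becomes a union of components of a constant scheme.

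Choose such a cover to simultaneously split $R_V := R_{\mathbb{OPP}(G)}|_V$ (say as $\mathbb{D}_V(M)$), to split a maximal torus $T_V \supset R_V$ of $L_u|_V$, and to trivialize the root datum of $(G_V, T_V)$. After restricting further along the clopen decomposition of Proposition~\ref{Pro:StructImagG}, we may also assume that all morphisms $f$ in sight factor through some fixed $\Lambda \in \mathcal{F}(\Gamma)$. Over $V$ the target becomes the constant scheme $\coprod_{\phi \in \Hom(M, \Gamma)} V$, and each $\phi$ corresponds to a morphism $f_\phi : \mathbb{D}_V(\Gamma) \to R_V$ whose weight on the root space $\mathfrak{g}_\alpha \subset \mathfrak{g}_V$ is $\phi(\alpha|_{R_V}) \in \Gamma$. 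By the characterization of $P_{f_\phi}$ in Proposition~\ref{prop:DefPGroupCase}, a direct translation identifies $\mathbb{G}^{\Gamma}(G) \times_{\mathbb{OPP}(G)} V$ with the sub-disjoint-union $\coprod_{\phi \in \mathcal{C}} V$, where $\mathcal{C} \subset \Hom(M, \Gamma)$ consists of those $\phi$ satisfying $\phi(\alpha|_{R_V}) \geq 0$ for every root $\alpha$ appearing in $\Lie(U_u^1|_V)$. This is visibly a clopen subscheme, establishing the first assertion.

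The second assertion is now formal: the section identifies $\mathbb{G}^{\Gamma}(G)$ with a clopen sub-$\mathbb{OPP}(G)$-scheme of the quasi-isotrivial twisted constant scheme $\mathbb{G}^{\Gamma}(R_{\mathbb{OPP}(G)})$, and any such clopen subscheme is itself quasi-isotrivial twisted constant, because on the étale cover where the ambient scheme becomes constant the clopen restricts to a sub-disjoint-union of components, which remains constant. The principal obstacle in this plan is arranging the simultaneous fpqc-splitting of $R_{\mathbb{OPP}(G)}$ and of the root datum of $G_V$ needed to make the Weyl-chamber description rigorous; once that combinatorial setup is in place, the clopen character of the image is essentially a restatement of Proposition~\ref{prop:DefPGroupCase}.
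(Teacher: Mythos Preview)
Your approach is valid but takes a different route from the paper's. You fpqc-localize on $\mathbb{OPP}(G)$ to split a maximal torus and trivialize the root datum, then argue combinatorially that the image is a union of components of a constant scheme. The paper instead works directly on the functor of points: given $(P_1,P_2,f)$ over $T$, it shows the locus where $(P_1,P_2)=(P_f,P_{\iota f})$ is closed (pullback of the diagonal of the separated $\mathbb{OPP}(G)$) and open (translating the condition, via the inclusion $L\subset L_f$ and \cite[XXII 5.3.5]{SGA3.3r}, into the vanishing of the locally free sheaf $\mathfrak{u}_1^-\oplus\mathfrak{u}_2^+$ built from the weight decomposition of $\Lie(U_i)$ under $f$). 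The paper's argument never needs to split $G$ or choose a maximal torus; it only uses the $\mathbb{D}(\Gamma)$-grading already present, so it is lighter. Your approach buys a clearer combinatorial picture at the cost of the extra splitting step you flag at the end.

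One small correction: your description of $\mathcal{C}$ should require $\phi(\alpha\vert_{R_V})>0$ for each root $\alpha$ in $\Lie(U_u^1\vert_V)$, not $\geq 0$. If equality holds for some such $\alpha$, then $-\alpha$ is a root of $U_u^2$ with $\phi(-\alpha\vert_{R_V})=0$, so $\mathfrak{g}_{-\alpha}\subset\Lie(P_{f_\phi})$ while $\mathfrak{g}_{-\alpha}\not\subset\Lie(P_u^1)$, whence $P_{f_\phi}\supsetneq P_u^1$. With that fix your combinatorial check is correct. The invocation of Proposition~\ref{Pro:StructImagG} is unnecessary here: once $R_V$ is split as $\mathbb{D}_V(M)$, the target is already the constant $\mathbb{OPP}(G)$-scheme on $\Hom(M,\Gamma)$ and no further reduction to a fixed $\Lambda$ is needed.
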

\begin{proof}
The second assertion follows from the first one by Grothendieck's
proposition~\ref{prop:RepGspCaseTorus}. Given a section $(P_{1},P_{2},f)$
of $\mathbb{G}^{\Gamma}(R_{\mathbb{OPP}(G)})$ over some $S$-scheme
$T$, we have to show that the condition $(P_{1},P_{2})=(P_{f},P_{\iota f})$
is representable by an open and closed subscheme of $T$. It is plainly
representable by the inverse image of the diagonal of $\mathbb{OPP}(G)$
under the $S$-morphism $T\rightarrow\mathbb{OPP}(G)^{2}$ defined
by our two pairs $(P_{1},P_{2})$ and $(P_{f},P_{\iota f})$, which
is a closed subscheme of $T$ since $\mathbb{OPP}(G)$ is separated
over $S$. On the other hand, since the Levi subgroup $L=P_{1}\cap P_{2}$
of $G$ is contained in $L_{f}=P_{f}\cap P_{\iota f}$, our condition
$(P_{1},P_{2})=(P_{f},P_{\iota f})$ is equivalent to $(\mathfrak{p}_{1},\mathfrak{p}_{2})=(\oplus_{\gamma\geq0}\mathfrak{g}_{\gamma},\oplus_{\gamma\leq0}\mathfrak{g}_{\gamma})$
where $\mathfrak{p}_{i}=\Lie(P_{i})$: this last claim is local in
the fpqc topology on $T$, we may thus assume that $L$ contains a
maximal torus of $G$ and then apply \cite[XXII 5.3.5]{SGA3.3r}.
Now write $\mathfrak{u}_{i}=\oplus\mathfrak{u}_{i,\gamma}$ for the
weight decomposition of the Lie algebra of the unipotent radical of
$P_{i}$, and set $\mathfrak{u}_{i}^{\pm}=\oplus_{\pm\gamma\geq0}\mathfrak{u}_{i,\gamma}$.
Then our Lie algebra condition is equivalent to the vanishing of the
locally free sheaf $\mathfrak{\mathfrak{u}}_{1}^{-}\oplus\mathfrak{u}_{2}^{+}$,
and it is therefore representable by the open complement of its support.\end{proof}
\begin{rem}
This gives yet another proof of theorem~\ref{thm:RepGr} (using Grothendieck's
proposition~\ref{prop:RepGspCaseTorus}), when $G$ is reductive
and $\Gamma$ torsion free (using \cite{Le42} to construct a total
order $\leq$ on $\Gamma$).
\end{rem}

\subsection{~\label{sub:DefFondDiagPrel}}

The cartesian diagram (in the fibered category of tori over schemes):
\[
\xyC{2pc}\xymatrix{R_{\mathbb{OPP}(G)}\ar@{->>}[r]^{p_{1}}\ar[d] & R_{\mathbb{P}(G)}\ar@{->>}[r]^{t}\ar[d] & R_{\mathbb{O}(G)}\ar[d]\\
\mathbb{OPP}(G)\ar@{->>}[r]^{p_{1}} & \mathbb{P}(G)\ar@{->>}[r]^{t} & \mathbb{O}(G)
}
\]
induces an analogous cartesian diagram (in the fibered category of
quasi-isotrivial twisted constant group schemes over schemes):
\[
\xyC{2pc}\xymatrix{\mathbb{G}^{\Gamma}(R_{\mathbb{OPP}(G)})\ar@{->>}[r]^{\Fi}\ar[d]_{F} & \mathbb{G}^{\Gamma}(R_{\mathbb{P}(G)})\ar@{->>}[r]^{t}\ar[d]_{F} & \mathbb{G}^{\Gamma}(R_{\mathbb{O}(G)})\ar[d]_{F}\\
\mathbb{OPP}(G)\ar@{->>}[r]^{p_{1}} & \mathbb{P}(G)\ar@{->>}[r]^{t} & \mathbb{O}(G)
}
\]
which is given on $T$-valued points by the following formulas: 
\[
\xyC{2pc}\xymatrix{(P_{1},P_{2},f)\ar@{|->}[r]^{\Fi}\ar@{|->}[d]_{F} & (P_{1},\overline{f})\ar@{|->}[r]^{t}\ar@{|->}[d]_{F} & (t(P_{1}),\overline{f})\ar@{|->}[d]_{F}\\
(P_{1},P_{2})\ar@{|->}[r]^{p_{1}} & P_{1}\ar@{|->}[r]^{t} & t(P_{1})
}
\]
Here $\overline{f}:\mathbb{D}_{T}(\Gamma)\rightarrow\overline{R}(P_{1})$
is defined by the diagram 
\[
\xymatrix{\mathbb{D}_{T}(\Gamma)\ar[r]^{f}\ar[dr]_{\overline{f}} & R(L)\ar@{^{(}->}[r]\ar[d]^{\simeq} & L\ar[d]^{\simeq}\\
 & \overline{R}(P_{1})\ar@{^{(}->}[r] & P_{1}/U_{1}
}
\]
where $L=P_{1}\cap P_{2}$ and $U_{1}$ is the unipotent radical of
$P_{1}$.
\begin{lem}
The open and closed subscheme $\mathbb{G}^{\Gamma}(G)$ of $\mathbb{G}^{\Gamma}(R_{\mathbb{OPP}(G)})$
is saturated with respect to $\mathbb{G}^{\Gamma}(R_{\mathbb{OPP}(G)})\rightarrow\mathbb{G}^{\Gamma}(R_{\mathbb{P}(G)})$
and $\mathbb{G}^{\Gamma}(R_{\mathbb{OPP}(G)})\rightarrow\mathbb{G}^{\Gamma}(R_{\mathbb{O}(G)})$. \end{lem}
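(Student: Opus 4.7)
The plan is to verify the saturation condition on $T$-valued points: since the targets are separated, a subfunctor $A \subset X$ is saturated with respect to a morphism $X \to Y$ iff for every $T$ and every $x,x' \in X(T)$ with the same image in $Y(T)$, one has $x\in A(T) \iff x'\in A(T)$. Combining Propositions~\ref{prop:DefPGroupCase} and~\ref{prop:GtoG(ROPP)isClopen}, the subfunctor $\mathbb{G}^{\Gamma}(G) \subset \mathbb{G}^{\Gamma}(R_{\mathbb{OPP}(G)})$ classifies triples $(P_{1},P_{2},f)$ with $P_{1}=P_{f}$ (which, since $L = P_{1}\cap P_{2}\subseteq L_{f}$ are both Levi subgroups of $P_{1}=P_{f}$, automatically forces $L=L_{f}$ and hence $P_{2}=P_{\iota f}$). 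Thus the task reduces to showing that the condition $P_{1}=P_{f}$ is invariant along the fibers of $\Fi$ and of $t\circ \Fi$.

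For $\Fi$: since $R_{\mathbb{OPP}(G)} = p_{1}^{*}R_{\mathbb{P}(G)}$, one has
\[
\mathbb{G}^{\Gamma}(R_{\mathbb{OPP}(G)}) = \mathbb{OPP}(G)\times_{\mathbb{P}(G)} \mathbb{G}^{\Gamma}(R_{\mathbb{P}(G)}),
\]
so the fiber of $\Fi$ over $(P_{1},\overline{f})$ is the $U_{1}$-torsor $p_{1}^{-1}(P_{1})$ of parabolics opposed to $P_{1}$. Given two points $(P_{1},P_{2},f)$ and $(P_{1},P_{2}',f')$ of this fiber, fpqc-locally there exists $u\in U_{1}(T)$ with $P_{2}' = \Int(u)(P_{2})$. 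Since $\Int(u)$ acts as the identity on $P_{1}/U_{1}$ (as $u\in U_{1}$), uniqueness of the lift of $\overline{f}$ through $L'=\Int(u)(L)\xrightarrow{\sim}P_{1}/U_{1}$ forces $f'=\Int(u)\circ f$. Then $P_{f'}=\Int(u)(P_{f})$, while $\Int(u)(P_{1})=P_{1}$ because parabolics are self-normalizing and $u\in U_{1}\subset P_{1}$; hence $P_{1}=P_{f}\iff P_{1}=P_{f'}$.

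For $t\circ \Fi$: if $(P_{1},P_{2},f)$ and $(P_{1}',P_{2}',f')$ lie in the same fiber, then $t(P_{1})=t(P_{1}')$, so fpqc-locally $P_{1}'=\Int(g)(P_{1})$ for some $g\in G(T)$. By the construction in the proof of Lemma~\ref{lem:CanoDescData}, the canonical isomorphism $\overline{R}(P_{1})\simeq \overline{R}(P_{1}')$ is induced by $\Int(g)$, so $\overline{f'}=\Int(g)\circ \overline{f}$. Consequently $(P_{1}',\Int(g)(P_{2}),\Int(g)\circ f)$ and $(P_{1}',P_{2}',f')$ have the same image under $\Fi$, and by the previous paragraph their membership in $\mathbb{G}^{\Gamma}(G)$ coincides. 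On the other hand, $P_{\Int(g)\circ f}=\Int(g)(P_{f})$ shows that the condition $P_{1}=P_{f}$ is transported by $\Int(g)$, so $(P_{1},P_{2},f)$ and $(P_{1}',\Int(g)(P_{2}),\Int(g)\circ f)$ share this membership as well. Chaining the two equivalences concludes the argument.

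The only delicate point is identifying the abstract descent isomorphism of Lemma~\ref{lem:CanoDescData} with conjugation by $g$, but this is precisely how that isomorphism was constructed, so no real obstacle arises; everything else is a direct unwinding of the cartesian diagram and of the conjugation-equivariance of $f\mapsto P_{f}$.
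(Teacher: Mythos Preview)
Your proof is correct and follows the same strategy as the paper's: transport the membership condition along fibers by conjugation, using that $f\mapsto P_f$ is $G$-equivariant and that the canonical descent isomorphism of Lemma~\ref{lem:CanoDescData} is induced by $\Int(g)$. The paper streamlines this slightly by observing at the outset that saturation for $t\circ\Fi$ already implies saturation for $\Fi$ (so only the second map needs an argument), and by conjugating the opposed pair $(P_f,P_{\iota f})$ directly to $(P_1,P_2)$ in one step via \cite[XXVI 4.3.4(iii)]{SGA3.3r} rather than conjugating $P_1$ first and then using the $U_1$-torsor structure to align $P_2$.
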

\begin{proof}
It is sufficient to establish that it is saturated with respect to
the second map. We have to show: for an $S$-scheme $T$, a morphism
$f:\mathbb{D}_{T}(\Gamma)\rightarrow G_{T}$, a pair of opposed parabolic
subgroups $(P_{1},P_{2})$ of $G_{T}$ with Levi $L=P_{1}\cap P_{2}$,
and a central morphism $h:\mathbb{D}_{T}(\Gamma)\rightarrow L$, if
$(P_{f},P_{\iota f},f)$ and $(P_{1},P_{2},h)$ have the same image
in $\mathbb{G}^{\Gamma}(R_{\mathbb{O}(G)})(T)$, then $(P_{1},P_{2})=(P_{h},P_{\iota h})$.
This is local in the fpqc topology on $T$. Since $t(P_{f})=t(P_{1})$
by assumption, we may assume that there is a $g\in G(T)$ such that
\nomenclature[Int(g)]{$\Int(g)$}{Inner automorphism $h \mapsto g h g^{-1}$.}$\Int(g)(P_{f},P_{\iota f})=(P_{1},P_{2})$
by \cite[4.3.4 iii]{SGA3.3r}. But then also $\Int(g)\circ f=h$ (by
assumption), thus $(P_{1},P_{2})=\Int(g)(P_{f},P_{\iota f})=(P_{h},P_{\iota h})$. 
\end{proof}

\subsection{~\label{sub:DefFondDiag}}

By an elementary case of fpqc descent (along $p_{1}$ and $t$), we
thus obtain a cartesian diagram of open and closed embeddings of smooth
$S$-schemes, \nomenclature[Fil]{$\Fi$}{Morphism $\Fi : \mathbb{G}^\Gamma (G) \rightarrow \mathbb{F}^\Gamma (G)$ defined on page \nomrefpage}\nomenclature[t_]{$t$}{Type morphism $t : \mathbb{F}^\Gamma (G) \rightarrow \mathbb{C}^\Gamma (G)$, defined page \nomrefpage}\nomenclature[F^\Gamma(G)]{$\mathbb{F}^\Gamma (G)$}{Scheme of $\Gamma$ filtrations on $G$,  page  \nomrefpage}\nomenclature[C^\Gamma(G)]{$\mathbb{C}^\Gamma (G)$}{Scheme of types of $\Gamma$-graduations of $\Gamma$-filtrations on $G$, page  \nomrefpage}\nomenclature[F]{$F$}{Facet morphism on $\mathbb{G}^\Gamma (G)$, $\mathbb{F}^\Gamma (G))$ or $\mathbb{C}^\Gamma (G)$, page  \nomrefpage}
\[
\xyC{2pc}\xymatrix{\mathbb{G}^{\Gamma}(G)\ar@{->>}[r]^{\Fi}\ar@{^{(}->}[d] & \mathbb{F}^{\Gamma}(G)\ar@{->>}[r]^{t}\ar@{^{(}->}[d] & \mathbb{C}^{\Gamma}(G)\ar@{^{(}->}[d]\\
\mathbb{G}^{\Gamma}(R_{\mathbb{OPP}(G)})\ar@{->>}[r]^{\Fi} & \mathbb{G}^{\Gamma}(R_{\mathbb{P}(G)})\ar@{->>}[r]^{t} & \mathbb{G}^{\Gamma}(R_{\mathbb{O}(G)})
}
\]
which in turns gives our fundamental cartesian diagram of smooth $S$-schemes
\[
\xyC{2pc}\xymatrix{\mathbb{G}^{\Gamma}(G)\ar@{->>}[r]^{\Fi}\ar[d]_{F} & \mathbb{F}^{\Gamma}(G)\ar@{->>}[r]^{t}\ar[d]_{F} & \mathbb{C}^{\Gamma}(G)\ar[d]_{F}\\
\mathbb{OPP}(G)\ar@{->>}[r]^{p_{1}} & \mathbb{P}(G)\ar@{->>}[r]^{t} & \mathbb{O}(G)
}
\]
The $S$-group scheme $G$ acts on both diagrams by conjugation and
their last column are the quotients of the other two columns by the
action of $G$ in the category of fpqc sheaves on $S$. The morphism
$\Fi:\mathbb{G}^{\Gamma}(G)\rightarrow\mathbb{F}^{\Gamma}(G)$ is
a $U_{\mathbb{F}^{\Gamma}(G)}$-torsor, where $U_{\mathbb{F}^{\Gamma}(G)}$
is the unipotent radical of the pull-back $P_{\mathbb{F}^{\Gamma}(G)}$
of the universal parabolic subgroup $P_{u}$ of $G_{\mathbb{P}(G)}$.
In particular, it is affine smooth surjective with geometrically connected
fibers. The morphism $t:\mathbb{F}^{\Gamma}(G)\rightarrow\mathbb{C}^{\Gamma}(G)$
is projective smooth surjective with geometrically connected fibers.
The three \emph{facet }morphisms $F$ are quasi-isotrivial twisted
constant (i.e.~locally constant in the étale topology on their base),
in particular they are separated and étale by lemma~\ref{lem:StructQuasIsoTwistedSch}.
We will see in due time that they are also surjective (\ref{sub:DefFacetsClosedOpen}).
Since $\mathbb{O}(G)$ is a finite étale cover of $S$, everyone is
smooth, surjective and separated over $S$. We denote by 
\[
0:S\rightarrow\mathbb{G}^{\Gamma}(G),\quad0:S\rightarrow\mathbb{F}^{\Gamma}(G)\quad\mbox{and}\quad0:S\rightarrow\mathbb{C}^{\Gamma}(G)
\]
the element of $\mathbb{G}^{\Gamma}(G)(S)$ corresponding to the trivial
morphism $\mathbb{D}_{S}(\Gamma)\rightarrow G$ or its images in $\mathbb{F}^{\Gamma}(G)(S)$
or $\mathbb{C}^{\Gamma}(G)(S)$. They respectively map to $(G,G)\in\mathbb{OPP}(G)(S)$,
$G\in\mathbb{P}(G)$ and $\mathbb{DYN}(G)\in\mathbb{O}(G)$. Being
sections of separated $S$-schemes, these $0$-sections are closed
immersions and the last one is also open. 

If $S$ is irreducible and geometrically unibranch or local henselian,
then so are the connected components of $\mathbb{O}(G)$ by \cite[18.10.1 and 18.5.10]{EGA4.4}.
Over each of them, the facet map $F:\mathbb{C}^{\Gamma}(G)\rightarrow\mathbb{O}(G)$
is then merely an infinite disjoint union of connected finite étale
covers, see lemma~\ref{lem:StructQuasIsoTwistedSch}. The same decomposition
then also holds for its pull-backs over $\mathbb{P}(G)$ or $\mathbb{OPP}(G)$.

\subsection{~\label{sub:DescParEqConj}}

For an $S$-scheme $T$ and morphisms $x,y:\mathbb{D}_{T}(\Gamma)\rightarrow G_{T}$,
we have
\begin{eqnarray*}
\Fi(x)=\Fi(y)\quad\mbox{in}\quad\mathbb{F}^{\Gamma}(G)(T) & \iff & \exists p\in P_{x}(T):\mbox{ }\Int(p)(x)=y\\
 & \iff & \exists u\in U_{x}(T):\mbox{ }\Int(u)(x)=y
\end{eqnarray*}
and then such a $u$ is unique. This equivalence relation is known
as the Par-equivalence and denoted by \nomenclature[sim_Par]{$\sim _{\mathrm{Par}}$}{Par-equivalence on $\mathbb{G}^\Gamma (G)$, defined page \nomrefpage}$x\sim_{\mathrm{Par}}y$.
If $T$ is an (absolutely) affine scheme, then 
\[
\mathbb{F}^{\Gamma}(G)(T)=\mathbb{G}^{\Gamma}(G)(T)/\sim_{\mathrm{Par}}
\]
by \cite[XXVI 2.2]{SGA3.3r}. On the other hand,
\[
\begin{array}{rl}
 & t\circ\Fi(x)=t\circ\Fi(y)\quad\mbox{in}\quad\mathbb{C}^{\Gamma}(G)(T)\\
\iff & \mbox{fpqc locally on }T,\,\exists g\in G(T):\mbox{ }\Int(g)(x)=y.
\end{array}
\]
If $T$ is semi-local, then by \cite[XXVI 5.2]{SGA3.3r}, 
\[
\begin{array}{rl}
 & t\circ\Fi(x)=t\circ\Fi(y)\quad\mbox{in}\quad\mathbb{C}^{\Gamma}(G)(T)\\
\iff & \exists g\in G(T):\mbox{ }\Int(g)(x)=y.
\end{array}
\]

\subsection{~\label{sub:NotationsPFandFbar}}

For an $S$-scheme $T$ and $\mathcal{F}$ in $\mathbb{F}^{\Gamma}(G)(T)$,
we denote by \nomenclature[P_F]{$P_\mathcal{F}$}{Parabolic subgroup of $G$ fixing $\mathcal{F}$, page \nomrefpage}\nomenclature[Foverlined]{$\overline{\mathcal{F}}$}{Morphism $\mathbb{D}(\Gamma) \rightarrow \overline{R}(P_\mathcal{F})$ attached to a $\Gamma$-filtration $\mathcal{F}$, page \nomrefpage}$(P_{\mathcal{F}},\overline{\mathcal{F}})$
the image of $\mathcal{F}$ in $\mathbb{G}^{\Gamma}(R_{\mathbb{P}(G)})(T)$.
Thus $P_{\mathcal{F}}=F(\mathcal{F})$ is a parabolic subgroup of
$G_{T}$, equal to the stabilizer of $\mathcal{F}$ in $G_{T}$ by
\cite[XXVI 1.2]{SGA3.3r} and $\overline{\mathcal{F}}:\mathbb{D}_{T}(\Gamma)\rightarrow\overline{R}(P_{\mathcal{F}})$
is a morphism of tori over $T$. We write \nomenclature[U_F]{$U_{\mathcal{F}}$}{Unipotent radical of $P_{\mathcal{F}}$.}$U_{\mathcal{F}}=R^{u}P_{\mathcal{F}}$
for the unipotent radical of $P_{\mathcal{F}}$, so that $\overline{R}(P_{\mathcal{F}})$
is the radical of $P_{\mathcal{F}}/U_{\mathcal{F}}$. If $L$ is a
Levi subgroup of $P_{\mathcal{F}}=U_{\mathcal{F}}\rtimes L$ and $f:\mathbb{D}_{T}(\Gamma)\rightarrow L$
is the corresponding central morphism lifting $\overline{\mathcal{F}}$,
then 
\[
\mathcal{F}=\Fi(f)\quad\mbox{and}\quad L_{f}=L.
\]
The inversion $f\mapsto f^{-1}$ yields compatible involutions on
$\mathbb{G}^{\Gamma}(G)$ and $\mathbb{C}^{\Gamma}(G)$, which we
shall both denote by $\iota$. By proposition~\ref{prop:DefPGroupCase},
they are also compatible with the eponymous involutions on $\mathbb{OPP}(G)$
and $\mathbb{O}(G)$: \nomenclature[iota]{$\iota$}{Opposition involution of  $\mathbb{G}^\Gamma (G)$ or $\mathbb{C}^\Gamma (G)$, defined page \nomrefpage}
\[
F\circ\iota=\iota\circ F\quad\mbox{on}\quad\mathbb{G}^{\Gamma}(G)\mbox{ or }\mathbb{C}^{\Gamma}(G).
\]

\subsection{Functoriality}

The formation of our fundamental diagram
\[
\xyC{2pc}\xymatrix{\mathbb{G}^{\Gamma}(G)\ar@{->>}[r]^{\Fi}\ar[d]_{F} & \mathbb{F}^{\Gamma}(G)\ar@{->>}[r]^{t}\ar[d]_{F} & \mathbb{C}^{\Gamma}(G)\ar[d]_{F}\\
\mathbb{OPP}(G)\ar@{->>}[r]^{p_{1}} & \mathbb{P}(G)\ar@{->>}[r]^{t} & \mathbb{O}(G)
}
\]
is plainly compatible with base change on $S$. We will see later
on (corollary~\ref{cor:FunctFirstLine}) that the first line is covariantly
functorial in $\Gamma$ and $G$. This is obvious for $\mathbb{G}^{\Gamma}(G)$
and easy for $\mathbb{C}^{\Gamma}(G)=G\backslash\mathbb{G}^{\Gamma}(G)$,
but not so for $\mathbb{F}^{\Gamma}(G)$: the $\Gamma$-functoriality
of $\mathbb{G}^{\Gamma}(G)$ is not compatible with the facet maps,
and the second line of our diagram is simply not functorial in $G$.
To showcase the first (bad) behavior, note that we will eventually
have an action of the set $\End(\Gamma,+,\leq)$ of non-decreasing
homomorphisms of $\Gamma$ by morphisms of $S$-schemes on the first
line, simply denoted by $(\lambda,x)\mapsto\lambda\cdot x$. Then
$x\mapsto0\cdot x$ is nothing but the structural morphism of the
$S$-scheme $\mathbb{G}^{\Gamma}(G)$, $\mathbb{F}^{\Gamma}(G)$ or
$\mathbb{C}^{\Gamma}(G)$, followed by the corresponding $0$-section.
Thus $F(0\cdot x)=\mathbb{DYN}(G)$ in $\mathbb{O}(G)$ for any $x\in\mathbb{C}^{\Gamma}(G)$.
However, for a monomorphism $\gamma:(\Gamma_{1},+,\leq)\hookrightarrow(\Gamma_{2},+,\leq)$,
the induced morphisms $\gamma$ in the commutative diagram 
\[
\xyC{2pc}\xymatrix{\mathbb{G}^{\Gamma_{1}}(G)\ar@{->>}[r]^{\Fi}\ar@{_{(}->}[d]_{\gamma} & \mathbb{F}^{\Gamma_{1}}(G)\ar@{->>}[r]^{t}\ar@{_{(}->}[d]_{\gamma} & \mathbb{C}^{\Gamma_{1}}(G)\ar@{_{(}->}[d]_{\gamma}\\
\mathbb{G}^{\Gamma_{2}}(G)\ar@{->>}[r]^{\Fi} & \mathbb{F}^{\Gamma_{2}}(G)\ar@{->>}[r]^{t} & \mathbb{C}^{\Gamma_{2}}(G)
}
\]
are open and closed immersions which commute with the facet maps:
this follows from proposition~\ref{Pro:StructImagG} and \ref{prop:DefPGroupCase},
given the construction of our fundamental diagram.

\subsection{$\mathbb{C}^{\Gamma}(G)$ is a commutative monoid\label{sub:CMonoid}}

There is natural structure of commutative monoid on the $S$-scheme
$\mathbb{O}(G)$, given by the intersection morphism
\[
\cap:\mathbb{O}(G)\times_{S}\mathbb{O}(G)\rightarrow\mathbb{O}(G)\qquad(a,b)\mapsto a\cap b
\]
Let $\mathbb{O}'(G)$ be the open and closed subscheme of $\mathbb{O}(G)\times_{S}\mathbb{O}(G)$
on which $a\cap b=a$, i.e. $a\subset b$. Let $p_{1}$ and $p_{2}:\mathbb{O}'(G)\rightarrow\mathbb{O}(G)$
be the two projections. We claim:
\begin{lem}
There exists a canonical morphism $p_{2}^{\ast}R_{\mathbb{O}(G)}\rightarrow p_{1}^{\ast}R_{\mathbb{O}(G)}$.\end{lem}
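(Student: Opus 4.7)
The plan is to produce the morphism by fpqc descent from an analogous construction on $\mathbb{P}'(G)$, the open-closed subscheme of $\mathbb{P}(G)\times_{S}\mathbb{P}(G)$ classifying pairs $(P_{1},P_{2})$ of parabolic subgroups of $G$ with $P_{1}\subset P_{2}$. This scheme sits above $\mathbb{O}'(G)$ via $(t,t)$, which is faithfully flat and quasi-compact, so by the argument of Lemma~\ref{lem:CanoDescData} it will suffice to construct a canonical morphism of $\mathbb{P}'(G)$-tori $q_{2}^{\ast}R_{\mathbb{P}(G)}\to q_{1}^{\ast}R_{\mathbb{P}(G)}$ (where $q_{1},q_{2}:\mathbb{P}'(G)\to\mathbb{P}(G)$ are the two projections) and then check that it is compatible with the canonical descent data of Lemma~\ref{lem:CanoDescData} on each factor.

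For the core construction, I would exploit the following local fact. If $P_{1}\subset P_{2}$ are parabolic subgroups of $G$, with unipotent radicals $U_{1}\supset U_{2}$, then $U_{2}$ is normal in $P_{1}$, and the image of $P_{1}$ in the Levi quotient $L_{2}=P_{2}/U_{2}$ is a parabolic subgroup $Q$ of $L_{2}$ with unipotent radical $U_{1}/U_{2}$ and Levi quotient canonically identified with $L_{1}=P_{1}/U_{1}$. Since $\overline{R}(P_{2})$ is central in $L_{2}$ it lies in $Q$, and its image under the surjection $Q\twoheadrightarrow L_{1}$ is a subtorus of $L_{1}$ commuting with every element of $L_{1}$ (any lift to $Q$ being centralized by $\overline{R}(P_{2})$), hence is contained in $\overline{R}(P_{1})$. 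Applied to the universal pair over $\mathbb{P}'(G)$, this furnishes the desired morphism $\overline{R}(P_{u}^{2})\to\overline{R}(P_{u}^{1})$ of $\mathbb{P}'(G)$-tori.

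For the descent compatibility, I would mirror the proof of Lemma~\ref{lem:CanoDescData}. Given two $T$-sections $(P_{1},P_{2})$ and $(P_{1}',P_{2}')$ of $\mathbb{P}'(G)$ with the same image in $\mathbb{O}'(G)$, I would reduce fpqc-locally to the case $P_{i}'=\mathrm{Int}(g)(P_{i})$ for $i=1,2$ and some $g\in G(T)$, using \cite[XXVI 4.3.4]{SGA3.3r}. The ambiguity in $g$ then lies in the simultaneous stabilizer $P_{1}(T)\cap P_{2}(T)=P_{1}(T)$, and conjugation by an element of $P_{1}$ acts trivially on $\overline{R}(P_{1})$ (since the latter is central in $L_{1}$) and, via the Levi-quotient identification, on $\overline{R}(P_{2})$ as well. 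The resulting isomorphisms of radicals are therefore the canonical ones of Lemma~\ref{lem:CanoDescData}, and their compatibility with the morphism built in the previous step follows from the naturality of the Levi-quotient construction under $\mathrm{Int}(g)$. The main obstacle is this last bookkeeping, entirely parallel to Lemma~\ref{lem:CanoDescData} and requiring no new ideas beyond patience.
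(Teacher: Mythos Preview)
Your proposal is correct and follows essentially the same route as the paper. The one notable difference is that the paper takes $\mathbb{P}'(G)$ to be the full inverse image of $\mathbb{O}'(G)$ in $\mathbb{P}(G)\times_S\mathbb{P}(G)$ (i.e.\ pairs with $t(P_1)\subset t(P_2)$, not necessarily $P_1\subset P_2$), and then reduces to the containment case inside the construction via \cite[XXVI 3.8]{SGA3.3r}: given $(P_1,P_2)$ with $t(P_1)\subset t(P_2)$, there is a unique $P_2'\supset P_1$ with $t(P_2')=t(P_2)$, and one uses the canonical isomorphism $\overline{R}(P_2)\simeq\overline{R}(P_2')$ of Lemma~\ref{lem:CanoDescData} before carrying out exactly your Levi-quotient argument. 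Because this construction involves no choices, compatibility with the descent data is automatic and need not be checked separately. Your version cuts down to the containment locus from the start---which, incidentally, is closed in $\mathbb{P}(G)^2$ but not open---and then verifies descent by hand; this works as well, though the correct reference for simultaneously conjugating nested pairs of the same types is \cite[XXVI 5.2 and 3.8]{SGA3.3r} rather than 4.3.4, which concerns opposed parabolics.
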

\begin{proof}
Let~$\mathbb{P}'(G)$ be the inverse image of $\mathbb{O}'(G)$ in
$\mathbb{P}(G)\times_{S}\mathbb{P}(G)$, and denote by $q_{1}$ and
$q_{2}:\mathbb{P}'(G)\rightarrow\mathbb{P}(G)$ the two projections.
Then $q_{i}^{\ast}(R_{\mathbb{P}(G)})=(p_{i}^{\ast}R_{\mathbb{O}(G)})_{\mathbb{P}'(G)}$
for $i\in\{1,2\}$. We have to show that there is a canonical morphism
\[
q_{2}^{\ast}R_{\mathbb{P}(G)}\rightarrow q_{1}^{\ast}R_{\mathbb{P}(G)}
\]
compatible with the descent data on both sides. This boils down to:
for any $S'\rightarrow S$ and $(P_{1},P_{2})\in\mathbb{P}'(G)(S')$,
there exists a canonical morphism $\overline{R}(P_{2})\rightarrow\overline{R}(P_{1})$.
We may assume that $S'=S$. Since $t(P_{1})\subset t(P_{2})$, there
exists by \cite[XXVI 3.8]{SGA3.3r} a unique parabolic subgroup $P'_{2}$
of $G$, containing $P_{1}$, such that $t(P_{2})=t(P'_{2})$. Using
the canonical isomorphism $\overline{R}(P'_{2})\simeq\overline{R}(P_{2})$,
we may thus assume that $P'_{2}=P_{2}$,~i.e. $P_{1}\subset P_{2}$.
Let $U_{i}$ be the unipotent radical of $P_{i}$, so that $U_{2}\subset U_{1}$
is a normal subgroup of $P_{1}$. Then $P_{1}/U_{2}$ is a parabolic
subgroup of $P_{2}/U_{2}$ with maximal reductive quotient $P_{1}/U_{1}$,
which reduces us further to the case where $G=P_{2}$. Then $P_{1}$
contains the radical \nomenclature[R(G)]{$R(G)$}{Radical of $G$.}\nomenclature[R^u(G)]{$R^u (P)$}{Unipotent radical of $P$.}$\overline{R}(G)=R(G)$
of $G$, and $P_{1}\rightarrow P_{1}/U_{1}$ maps $R(G)$ to the radical
$\overline{R}(P_{1})$ of $P_{1}/U_{1}$. This yields our canonical
morphism $\overline{R}(P_{2})\rightarrow\overline{R}(P_{1})$. 
\end{proof}
\noindent Pulling back the above morphism through 
\begin{eqnarray*}
\mathbb{O}(G)\times_{S}\mathbb{O}(G) & \rightarrow & \mathbb{O}'(G)\\
(a,b) & \mapsto & (a\cap b,b)
\end{eqnarray*}
we obtain a morphism $p_{2}^{\ast}R_{\mathbb{O}(G)}\rightarrow(\cap)^{\ast}R_{\mathbb{O}(G)}$
of tori over $\mathbb{O}(G)\times_{S}\mathbb{O}(G)$. By symmetry,
there is also a morphism $p_{1}^{\ast}R_{\mathbb{O}(G)}\rightarrow(\cap)^{\ast}R_{\mathbb{O}(G)}$.
The product of these two yields a morphism in the fibered category
of tori over $\Sch/S$,
\[
\xyR{1.5pc}\xymatrix{R_{\mathbb{O}(G)}\times_{S}R_{\mathbb{O}(G)}\ar[r]\ar[d] & R_{\mathbb{O}(G)}\times_{\mathbb{O}(G)}R_{\mathbb{O}(G)}\ar[d]\\
\mathbb{O}(G)\times_{S}\mathbb{O}(G)\ar[r]\sp(0.6){\cap} & \mathbb{O}(G)
}
\]
Composing it with the multiplication map on the $\mathbb{O}(G)$-torus
$R_{\mathbb{O}(G)}$, we obtain yet another such morphism, namely
\[
\xymatrix{R_{\mathbb{O}(G)}\times_{S}R_{\mathbb{O}(G)}\ar[r]\ar[d] & R_{\mathbb{O}(G)}\ar[d]\\
\mathbb{O}(G)\times_{S}\mathbb{O}(G)\ar[r]\sp(0.63){\cap} & \mathbb{O}(G)
}
\]
Applying now the $\mathbb{G}^{\Gamma}(-)$ construction to the latter
diagram yields a morphism 
\[
\xymatrix{\mathbb{G}^{\Gamma}(R_{\mathbb{O}(G)})\times_{S}\mathbb{G}^{\Gamma}(R_{\mathbb{O}(G)})\ar[r]\ar[d] & \mathbb{G}^{\Gamma}(R_{\mathbb{O}(G)})\ar[d]\\
\mathbb{O}(G)\times_{S}\mathbb{O}(G)\ar[r]\sp(0.6){\cap} & \mathbb{O}(G)
}
\]
in the fibered category of commutative group schemes over $\Sch/S$.
The top map of this diagram defines a commutative monoid structure
on the $S$-scheme $\mathbb{G}^{\Gamma}(R_{\mathbb{O}(G)})$. By construction,
the structural morphism $\mathbb{G}^{\Gamma}(R_{\mathbb{O}(G)})\rightarrow\mathbb{O}(G)$
is compatible with the monoid structures on both sides.
\begin{lem}
\label{lem:CsubMonoide}The $S$-scheme $\mathbb{C}^{\Gamma}(G)$
is a submonoid of $\mathbb{G}^{\Gamma}(R_{\mathbb{O}(G)})$.\end{lem}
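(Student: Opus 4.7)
The plan is to verify the two axioms defining a submonoid: the identity of $\mathbb{G}^{\Gamma}(R_{\mathbb{O}(G)})$ lies in $\mathbb{C}^{\Gamma}(G)$, and multiplication sends $\mathbb{C}^{\Gamma}(G)\times_{S}\mathbb{C}^{\Gamma}(G)$ into $\mathbb{C}^{\Gamma}(G)$. Both conditions can be tested fpqc-locally on the base of any $T$-point, since $\mathbb{C}^{\Gamma}(G)$ is open and closed in the fpqc sheaf $\mathbb{G}^{\Gamma}(R_{\mathbb{O}(G)})$ by section~\ref{sub:DefFondDiag}.

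The identity of $(\mathbb{O}(G),\cap)$ is $t(G)\in\mathbb{O}(G)(S)$ coming from $G\in\mathbb{P}(G)(S)$; the attached torus is $R(G)$ and its neutral cocharacter is the trivial one. This identity equals $t\circ\Fil$ applied to the trivial cocharacter $1\in\mathbb{G}^{\Gamma}(G)(S)$, for which $P_{1}=L_{1}=G$ and $\overline{1}=1$, so the monoid identity of $\mathbb{G}^{\Gamma}(R_{\mathbb{O}(G)})$ factors through $\mathbb{C}^{\Gamma}(G)$.

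For closure, fix $x_{1},x_{2}\in\mathbb{C}^{\Gamma}(G)(T)$. Since $t\circ\Fil$ is surjective (section~\ref{sub:DefFondDiag}), after passing to an fpqc cover I may lift each $x_{i}$ to $f_{i}\in\mathbb{G}^{\Gamma}(G)(T)$; by section~\ref{sub:DescParEqConj}, two such lifts of $x_i$ differ fpqc-locally by a $G$-conjugation, so I am free to replace each $f_i$ independently by an arbitrary $G$-conjugate on a further cover. I therefore arrange that $G_T$ admits a split Borel pair $(B,T_0)$ and that each $f_i$ factors through $T_0$ with $P_{f_i}\supset B$. I then set $f_3:=f_1\cdot f_2:\mathbb{D}_T(\Gamma)\to T_0\subset G_T$, the pointwise product in the commutative group $T_0$, which is an element of $\mathbb{G}^{\Gamma}(G)(T)$.

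In the $T_0$-weight decomposition of $\mathfrak{g}_T$, every root $\alpha$ satisfies $\alpha(f_3)=\alpha(f_1)+\alpha(f_2)$, and the dominance conditions $\alpha(f_i)\geq 0$ on roots of $B$ force $\alpha(f_3)=0$ if and only if $\alpha(f_1)=\alpha(f_2)=0$. Hence $L_{f_3}=L_{f_1}\cap L_{f_2}$, $P_{f_3}=P_{f_1}\cap P_{f_2}$ and $t(P_{f_3})=t(P_{f_1})\cap t(P_{f_2})$ in $\mathbb{O}(G)(T)$. Since $R(L_{f_i})$ centralises the smaller Levi $L_{f_3}\subset L_{f_i}$, it lies in its maximal central torus $R(L_{f_3})$, and unwinding the construction of section~\ref{sub:CMonoid} through its reduction to the case $G=P_{2}$ identifies the abstract transition morphism $R(L_{f_i})\to R(L_{f_3})$ with precisely this inclusion. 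Therefore $\overline{f_1}'\cdot\overline{f_2}'=f_1\cdot f_2=f_3=\overline{f_3}$ as morphisms into $R(L_{f_3})$, so $t(\Fil(f_3))=x_1\cdot x_2$ lies in $\mathbb{C}^{\Gamma}(G)(T)$. The main technical point, and the most error-prone step, is this last identification of the transition morphism of section~\ref{sub:CMonoid} with the natural inclusion of central subtori in the split dominant setting; everything else is a direct verification in root data.
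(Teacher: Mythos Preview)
Your proposal is correct and follows essentially the same argument as the paper's proof: lift $x_1,x_2$ to cocharacters $f_1,f_2$, pass to an fpqc cover where both factor through a common maximal torus and are dominant with respect to a common Borel, set $f=f_1\cdot f_2$, and verify via the root computation that $t(P_f)=t(P_{f_1})\cap t(P_{f_2})$ together with $\overline{f}=\mathrm{can}_1\circ\overline{f}_1\cdot\mathrm{can}_2\circ\overline{f}_2$. The paper phrases the reduction as finding an \'epinglage adapted to both $P_{f_1}$ and $P_{f_2}$ (citing SGA3 XXVI~1.14, XXIV~1.5, XXVI~1.6, 1.8) rather than conjugating each $f_i$ independently into a fixed split Borel pair, and it does not separately address the identity element; otherwise the two arguments are the same.
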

\begin{proof}
Using additive notations, we have to show that for $S'\rightarrow S$
and $c_{1},c_{2}$ in $\mathbb{C}^{\Gamma}(G)(S')$, there exists
an fpqc cover $S''\rightarrow S'$ and an element $f\in\mathbb{G}^{\Gamma}(G)(S'')$
such that $c_{1}+c_{2}=t\circ\Fi(f)$ in $\mathbb{G}^{\Gamma}(R_{\mathbb{O}(G)})$.
We may assume that $S'=S$ and $c_{i}=t\circ\Fi(f_{i})$ for some
$f_{i}:\mathbb{D}_{S}(\Gamma)\rightarrow G$. Using \cite[XXVI 1.14 and XXIV 1.5]{SGA3.3r},
we may also assume that there is an épinglage $(G,T,\Delta,\cdots)$
which is adapted to $P_{1}=P_{f_{1}}$ and $P_{2}=P_{f_{2}}$. Then
by \cite[XXVI 1.6 and 1.8]{SGA3.3r}, we may assume that $L_{1}=L_{f_{1}}$
and $L_{2}=L_{f_{2}}$ both contain the maximal torus $T$ of $G$,
so that both $f_{1}$ and $f_{2}$ factor through $T$. Let $f=f_{1}+f_{2}:\mathbb{D}_{S}(\Gamma)\rightarrow T\hookrightarrow G$
and $P=P_{f}$. We claim that $c_{1}+c_{2}=t\circ\Fi(f)$. By \cite[XXVI 3.2]{SGA3.3r},
$t(P_{i})=\Delta(P_{i})_{S}$ where $\Delta(P_{i})\subset\Delta\subset\Hom(T,\mathbb{G}_{m,S})$
is the set of simple roots occurring in $\Lie(L_{i})$, i.e.~$\Delta(P_{i})=\left\{ \alpha\in\Delta:\alpha\circ f_{i}=0\in\Gamma\right\} $.
By construction, $\alpha\circ f_{i}\geq0$ in $\Gamma$ for every
$\alpha\in\Delta$, thus also $\alpha\circ f=\alpha\circ f_{1}+\alpha\circ f_{2}\geq0$
in $\Gamma$ for every $\alpha\in\Delta$, with $\alpha\circ f=0$
if and only if $\alpha\circ f_{1}=0=\alpha\circ f_{2}$. It follows
that our épinglage is also adapted to $P$, with $\Delta(P)=\Delta(P_{1})\cap\Delta(P_{2})$,
i.e. $t(P)=t(P_{1})\cap t(P_{2})$ in $\mathbb{O}(G)(S)$. The inclusion
$P\subset P_{i}$ induces the canonical morphism $\mathrm{can}_{i}:\overline{R}(P_{i})\rightarrow\overline{R}(P)$
and one checks easily that $\overline{f}=\mathrm{can}_{1}\circ\overline{f}_{1}+\mathrm{can}_{2}\circ\overline{f}_{2}$.
Thus by definition, 
\[
c_{1}+c_{2}=t(P,\mathrm{can}_{1}\circ\overline{f}_{1}+\mathrm{can}_{2}\circ\overline{f}_{2})=t(P,\overline{f})=t\circ\Fi(f)
\]
as was to be shown.\end{proof}
\begin{rem}
The $0$-section of $\mathbb{C}^{\Gamma}(G)$ is the identity element
of its monoid structure. The latter is compatible with functoriality
in $\Gamma$, but not with functoriality in $G$: if $H$ is a subtorus
of $G$ and $f$ is a section of $\mathbb{G}^{\Gamma}(H)=\mathbb{F}^{\Gamma}(H)=\mathbb{C}^{\Gamma}(H)$,
then $f+\iota f$ is trivial in $\mathbb{C}^{\Gamma}(H)$, but not
necessarily in $\mathbb{C}^{\Gamma}(G)$. 
\end{rem}

\subsection{The split case\label{sub:C(G)_splitcase}}

Suppose that $(G,T,M,R)$ is a split reductive group over $S$ \cite[XXII 1.13]{SGA3.3r}:
$G$ is a reductive group over $S$, $M$ is a finite free $\mathbb{Z}$-module,
$T\subset G$ is a maximal subtorus of $G$ equipped with an isomorphism
$T\simeq\mathbb{D}_{S}(M)$, $R\subset M$ is a set of roots of $T$
in $G$ and for each $\alpha\in R$, the corresponding quasi-coherent
sub-sheaf $\mathfrak{g}_{\alpha}$ of $\mathfrak{g}=\Lie(G)$ is a
free $\mathcal{O}_{S}$-module (of rank $1$). Let 
\[
\mathscr{R}=(M,R,M^{\ast},R^{\ast})
\]
be the induced (reduced) root system \cite[XXII 1.14]{SGA3.3r} with
Weyl group $W=W(\mathscr{R})$ in $\Aut(M)$. Let $W_{G}(T)=N_{G}(T)/Z_{G}(T)$
be the Weyl group of $T$ in $G$, a constant group scheme over $S$
identified with $W_{S}$ through its action on $T$ \cite[XXII 3.4]{SGA3.3r}.
The composition of the isomorphism of group schemes over $S$ 
\[
\left(\Hom\left(M,\Gamma\right)\right)_{S}\simeq\Hom_{S-\Group}\left(\mathbb{D}_{S}(\Gamma),\mathbb{D}_{S}(M)\right)\simeq\mathbb{G}^{\Gamma}(T)
\]
from \cite[VIII 1.5]{SGA3.1r} with the morphism of $S$-schemes 
\[
\mathbb{G}^{\Gamma}(T)\hookrightarrow\mathbb{G}^{\Gamma}(G)\stackrel{\Fi}{\longrightarrow}\mathbb{F}^{\Gamma}(G)\stackrel{t}{\longrightarrow}\mathbb{C}^{\Gamma}(G)
\]
thus factors through a morphism of étale $S$-schemes, 
\[
\left(W\backslash\Hom\left(M,\Gamma\right)\right)_{S}\rightarrow\mathbb{C}^{\Gamma}(G).
\]
We claim that the latter is an isomorphism. Since both sides are étale
over $S$, it is sufficient to establish that for any geometric point
$\Spec(k)\rightarrow S$, the induced map 
\[
W\backslash\Hom\left(M,\Gamma\right)\rightarrow\mathbb{C}^{\Gamma}(G)(k)=G(k)\backslash\Hom_{k-\Group}\left(\mathbb{D}_{k}(\Gamma),G_{k}\right)
\]
is a bijection. Any $f:\mathbb{D}_{k}(\Gamma)\rightarrow G_{k}$ factors
through a maximal torus $T'$ of $G_{k}$ by corollary~\ref{cor:imageofcocharIssubtorus},
and $T'=\Int(g)(T_{k})$ for some $g\in G(k)$ by~\cite[XII 6.6.a]{SGA3.2}:
our map is surjective. For $\varphi,\varphi':M\rightarrow\Gamma$
giving $f,f':\mathbb{D}_{k}(\Gamma)\rightarrow T_{k}$ and $g\in G(k)$
such that $\Int(g)\circ f=f'$, $\Int(g)(T_{k})$ and $T_{k}$ are
maximal tori of $L_{f'}$, thus $\Int(hg)(T_{k})=T_{k}$ for some
$h\in L_{f'}(k)$; but then $n=hg\in N_{G}(T)(k)$ and $\Int(n)\circ f=\Int(h)\circ f'=f'$,
thus $\varphi'=w\varphi$ where $w$ is the image of $n$ in $W=W_{G}(T)(k)$:
our map is injective.

Fix a system of positive roots $R_{+}\subset R$ \cite[XXI 3.2.1]{SGA3.3r},
which corresponds to a Borel subgroup $B$ of $G$ by \cite[XXII 5.5.1]{SGA3.3r}.
By lemma~\ref{lem:DominantIsFundDom} below, the submonoid 
\[
\Hom^{+}(M,\Gamma)=\left\{ f\in\Hom(M,\Gamma):\forall\alpha\in R_{+},\, f(\alpha)\geq0\right\} 
\]
is a fundamental domain for the action of $W$ on $\Hom(M,\Gamma)$.
The isomorphism
\[
\left(\Hom^{+}\left(M,\Gamma\right)\right)_{S}\simeq\left(W\backslash\Hom\left(M,\Gamma\right)\right)_{S}\simeq\mathbb{C}^{\Gamma}(G)
\]
is then easily seen to be compatible with the $S$-monoid structures.

\subsection{$\mathbb{C}^{\Gamma}(G)$ is a partially ordered commutative monoid\label{sub:C(G)_dominorder}}

A partial order $\leq$ on an $S$-scheme $X$ is a subscheme $\mathcal{R}=\mathcal{R}(\leq)$
of $X\times_{S}X$ such that for every $S$-scheme $Y$, the subset
$\mathcal{R}(Y)$ of $X(Y)\times X(Y)$ defines a partial order (also
denoted by $\leq$) on $X(Y)$. We say that the partial order is open
(resp.~closed) if $\mathcal{R}\hookrightarrow X\times_{S}X$ is an
open (resp.~closed) immersion. A partial order on an $S$-monoid
$(X,\cdot)$ is a partial order on the underlying $S$-scheme such
that for any $S$-scheme $Y$ and $f_{1},f_{2},g$ in $X(Y)$, $f_{1}\leq f_{2}$
implies $f_{1}\cdot g\leq f_{2}\cdot g$ and $g\cdot f_{1}\leq g\cdot f_{2}$. 

If $\mathscr{R}=(M,R,M^{\ast},R^{\ast})$ is a (not necessarily reduced)
root system and $R_{+}\subset R$ is a system of positive roots, the
weak ($\leq$) and strong ($\preceq$) partial orders on the abstract
monoid $\Hom^{+}(M,\Gamma)$ defined in section~\ref{sub:AppendixRoots}
below induce open and closed partial orders on the constant $S$-monoid
$\left(\Hom^{+}\left(M,\Gamma\right)\right)_{S}$. If $\mathscr{R}=\mathscr{R}(G,T,M,R)$
is the root system of a split reductive group $(G,T,M,R)$, we thus
obtain open and closed partial orders on the $S$-monoid $\mathbb{C}^{\Gamma}(G)$.
These partial orders then do not depend upon the chosen auxiliary
data $(T,M,R;R^{+})$: this may be checked on geometric points, where
all such data are indeed conjugated. Since every reductive group $G$
over $S$ is locally splittable in the étale topology on $S$ \cite[XXII 2.3]{SGA3.3r},
we finally obtain by étale descent: the $S$-monoid $\mathbb{C}^{\Gamma}(G)$
is canonically equipped with weak ($\leq$)\nomenclature[<=]{$\leq$}{Weak dominance partial order on $\mathbb {C}^\Gamma (G)$, page \nomrefpage}
and strong $(\preceq$)\nomenclature[<=s]{$\preceq$}{Strong dominance partial order on $\mathbb {C}^\Gamma (G)$, page \nomrefpage}
partial orders, both open and closed.

The weak and strong partial orders are functorial in $\Gamma$, and
coincide if $\Gamma$ is divisible. We will see later on that the
weak partial order is also functorial in $G$.

\subsection{Behavior under isogenies\label{sub:Isogenies}}

Suppose that the (torsion free) commutative group $\Gamma$ is (uniquely)
divisible, i.e.~that it is a $\mathbb{Q}$-vector space. 
\begin{prop}
The fundamental cartesian diagram 
\[
\xyC{2pc}\xymatrix{\mathbb{G}^{\Gamma}(G)\ar[r]^{\Fi}\ar[d]^{F} & \mathbb{F}^{\Gamma}(G)\ar[r]^{t}\ar[d]^{F} & \mathbb{C}^{\Gamma}(G)\ar[d]^{F}\\
\mathbb{OPP}(G)\ar[r]^{p_{1}} & \mathbb{P}(G)\ar[r]^{t} & \mathbb{O}(G)
}
\]
is invariant under central isogenies.\end{prop}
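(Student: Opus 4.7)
The plan is to fix a central isogeny $\pi:G\to G'$ of reductive $S$-groups, with kernel a finite multiplicative central subgroup $Z\subset G$, and to build a cube of canonical isomorphisms between the diagrams for $G$ and $G'$. For the bottom row this is essentially classical: central isogenies induce bijections between parabolic subgroups (SGA3~XXII), so $P\mapsto\pi(P)$ defines isomorphisms $\mathbb{P}(G)\simeq\mathbb{P}(G')$ and $\mathbb{OPP}(G)\simeq\mathbb{OPP}(G')$, while the Dynkin scheme is invariant, giving $\mathbb{O}(G)=\mathbb{O}(G')$. All of this is compatible with the morphisms $p_1$ and $t$.

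The heart of the matter is to show that composition with $\pi$ induces an isomorphism $\mathbb{G}^{\Gamma}(\pi):\mathbb{G}^{\Gamma}(G)\to\mathbb{G}^{\Gamma}(G')$. For injectivity, given $f_1,f_2:\mathbb{D}_T(\Gamma)\to G_T$ with $\pi\circ f_1=\pi\circ f_2$, the morphism $h(x)=f_1(x)f_2(x)^{-1}$ lands in the central $Z$, and a direct computation using the centrality of $Z$ shows that $h$ is a group homomorphism. Fpqc-locally on $T$ one has $Z\simeq\mathbb{D}_T(M)$ for $M$ finite abelian, so $h$ corresponds to $\mathrm{Hom}_{\mathbb{Z}}(M,\Gamma)=0$, since $M$ is torsion and $\Gamma$ is torsion-free. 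Hence $f_1=f_2$.

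For surjectivity, given $f':\mathbb{D}_T(\Gamma)\to G'_T$, pull back $\pi$ along $f'$ to obtain a commutative extension
\[
1\to Z_T\to\tilde{G}\to\mathbb{D}_T(\Gamma)\to 1
\]
in $T$-group schemes; a lift of $f'$ is exactly a section of $\tilde{G}\to\mathbb{D}_T(\Gamma)$. Working fpqc-locally on $T$ and restricting to the open and closed strata $S(\Lambda)$ of Proposition~\ref{Pro:StructImagG} (applied to $f'$) reduces to an extension of diagonalizable groups, so $\tilde{G}$ is of multiplicative type and is Cartier-dual to an extension $0\to\Lambda\to\widetilde{\Lambda}\to M\to 0$ of abelian groups with $\Lambda\in\mathcal{F}(\Gamma)$ and $M$ finite. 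A section corresponds to a retraction $\widetilde{\Lambda}\to\Gamma$ of the inclusion $\Lambda\hookrightarrow\Gamma$; since $\Gamma$ is a $\mathbb{Q}$-vector space it is injective as a $\mathbb{Z}$-module, so $\mathrm{Ext}^1_{\mathbb{Z}}(M,\Gamma)=0$ and such a retraction exists. The local lifts are unique by injectivity, so they descend to a global $f:\mathbb{D}_T(\Gamma)\to G_T$.

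The rest of the diagram is then obtained by transport. A central isogeny induces a central isogeny on the radical of every Levi, hence on the tori $R_{\mathbb{OPP}(G)}$, $R_{\mathbb{P}(G)}$, $R_{\mathbb{O}(G)}$ figuring in section~\ref{sub:DefFondDiagPrel}; the argument above applies verbatim to show $\mathbb{G}^{\Gamma}$ of each of these tori is invariant under $\pi$. The open and closed subscheme $\mathbb{G}^{\Gamma}(G)\subset\mathbb{G}^{\Gamma}(R_{\mathbb{OPP}(G)})$ of Proposition~\ref{prop:GtoG(ROPP)isClopen} matches the analogous subscheme for $G'$ under the induced isomorphism, since $\pi$ carries $P_f$ to $P_{\pi\circ f}$ (the weight decompositions of $\mathfrak{g}$ and $\mathfrak{g}'$ correspond under $d\pi$, which is an isomorphism on the sum of the non-zero weight spaces). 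The fpqc descents along $p_1$ and $t$ that define $\mathbb{F}^{\Gamma}$ and $\mathbb{C}^{\Gamma}$ in section~\ref{sub:DefFondDiag} are then compatible with $\pi$, yielding the required isomorphisms. The main obstacle is the surjectivity step, and specifically justifying that the pull-back extension $\tilde{G}$ is of multiplicative type so that the $\mathrm{Ext}^1$-vanishing argument makes sense --- this is precisely where one needs $\Gamma$ to be uniquely divisible rather than merely torsion-free.
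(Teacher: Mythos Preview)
Your proof is correct and follows the same overall architecture as the paper's: establish the invariance of the bottom row via the standard SGA3 facts on parabolics and Dynkin schemes under central isogenies, prove that $\mathbb{G}^{\Gamma}(\pi):\mathbb{G}^{\Gamma}(G)\to\mathbb{G}^{\Gamma}(G')$ is an isomorphism, and then propagate through the tori $R_{\mathbb{OPP}}$, $R_{\mathbb{P}}$, $R_{\mathbb{O}}$ and the fpqc descents defining $\mathbb{F}^{\Gamma}$ and $\mathbb{C}^{\Gamma}$.

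The only notable difference is in the execution of the key step. The paper treats the isomorphism $\mathbb{G}^{\Gamma}(G_1)\simeq\mathbb{G}^{\Gamma}(G_2)$ by first reducing to split tori (where it is the statement that $\Hom(M_1,\Gamma)\to\Hom(M_2,\Gamma)$ is an isomorphism for a finite-cokernel inclusion $M_2\hookrightarrow M_1$, using that $\Gamma$ is a $\mathbb{Q}$-vector space), and then handles the reductive case by factoring through the image torus via Lemma~\ref{lem:ImageMorphTor} together with the SGA3~XVII lifting result. You instead run a direct extension argument: pull back $\pi$ along $f'$, observe that on each stratum $S(\Lambda)$ the extension is of multiplicative type (because $\pi^{-1}(\mathrm{im}(f'))$ sits inside a maximal torus of $G$), and split it using $\mathrm{Ext}^1_{\mathbb{Z}}(M,\Gamma)=0$. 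These are really the same computation viewed from two sides: your $\mathrm{Ext}^1$-vanishing is exactly what makes the paper's torus case work, and your reduction to the image stratum $\mathbb{D}(\Lambda)$ is the content of the paper's appeal to Lemma~\ref{lem:ImageMorphTor}. Your route is a bit more self-contained (it unpacks the SGA3 citation), at the cost of having to justify explicitly that $\pi^{-1}(\mathbb{D}(\Lambda))$ is of multiplicative type --- a point you flag but do not fully argue, and which is most cleanly seen by noting it lies in a maximal torus of $G$.
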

\begin{proof}
The bottom line only depends upon the adjoint group \nomenclature[Z(G)]{$Z(G)$}{Center of $G$.}\nomenclature[G^ad]{$G^{\mathrm{ad}}$}{Adjoint group $G^{\mathrm{ad}} = G/Z(G)$ of $G$.}$G^{\mathrm{ad}}=G/Z(G)$:
this is true for $\mathbb{O}(G)$ because $\mathbb{DYN}(G)=\mathbb{DYN}(G^{\mathrm{ad}})$
by definition of the Dynkin $S$-scheme \cite[XXIV 3.3]{SGA3.3r}
in view of \cite[XXII 4.3.7]{SGA3.3r}, and the maps $P\mapsto P/Z(G)$
and $P^{\mathrm{ad}}\mapsto\mathrm{ad}^{-1}(P^{\mathrm{ad}})$ (where
\nomenclature[ad]{$\mathrm{ad}$}{Morphism $\mathrm{ad}:G \rightarrow G^\mathrm{ad}$.}$\mathrm{ad}:G\rightarrow G^{\mathrm{ad}}$
is the quotient map) induce mutually inverse bijections between parabolic
subgroups of $G$ and parabolic subgroups of $G^{\mathrm{ad}}$, which
are compatible with the type maps and with opposition. For the top
line, let $f:G_{1}\rightarrow G_{2}$ be a central isogeny \cite[XXII 4.2.9]{SGA3.3r}.
We first claim that composition with $f$ yields an isomorphism $\mathbb{G}^{\Gamma}(G_{1})\rightarrow\mathbb{G}^{\Gamma}(G_{2})$:
for split tori, this immediately follows from \cite[VIII 1.5]{SGA3.1r}
and our assumption on $\Gamma$; for tori, our claim is local in the
fpqc topology on $S$ by \cite[2.7.1]{EGA4.2}, which reduces us to
the previous case; for arbitrary reductive groups, use lemma~\ref{lem:ImageMorphTor}
and \cite[XVII 7.1.1]{SGA3.2}. If now $P_{1}$ is a parabolic subgroup
of $G_{1}$ with image $P_{2}$ in $G_{2}$, then $f$ induces an
isogeny $\overline{R}(P_{1})\rightarrow\overline{R}(P_{2})$. Thus
$f$ yields an isogeny $R_{f}:R_{\mathbb{P}(G_{1})}\rightarrow R_{\mathbb{P}(G_{2})}$
of tori over $\mathbb{P}(G_{1})\simeq\mathbb{P}(G_{2})$. The induced
isomorphism $\mathbb{G}^{\Gamma}(R_{\mathbb{P}(G_{1})})\simeq\mathbb{G}^{\Gamma}(R_{\mathbb{P}(G_{2})})$
is compatible with the morphisms $\mathbb{G}^{\Gamma}(G_{i})\rightarrow\mathbb{G}^{\Gamma}(R_{\mathbb{P}(G_{i})})$,
therefore also $\mathbb{F}^{\Gamma}(G_{1})\simeq\mathbb{F}^{\Gamma}(G_{2})$.
Since $R_{f}$ is also compatible with the canonical descent data
of lemma~\ref{lem:CanoDescData}, it descends to an isogeny $R_{f}:R_{\mathbb{O}(G_{1})}\rightarrow R_{\mathbb{O}(G_{2})}$
of tori over $\mathbb{O}(G_{1})\simeq\mathbb{O}(G_{2})$. The induced
isomorphism $\mathbb{G}^{\Gamma}(R_{\mathbb{O}(G_{1})})\simeq\mathbb{G}^{\Gamma}(R_{\mathbb{O}(G_{2})})$
is again compatible with the morphisms $\mathbb{F}^{\Gamma}(G_{i})\rightarrow\mathbb{G}^{\Gamma}(R_{\mathbb{O}(G_{i})})$,
therefore also $\mathbb{C}^{\Gamma}(G_{1})\simeq\mathbb{C}^{\Gamma}(G_{2})$.
\end{proof}
\noindent Plainly, the above diagrams are also compatible with products.
Considering the canonical diagram of central isogenies \cite[XXII 4.3 \& 6.2]{SGA3.3r}
\[
R(G)\times G^{\mathrm{der}}\rightarrow G\rightarrow G^{\mathrm{ab}}\times G^{\mathrm{ss}}\rightarrow G^{\mathrm{ab}}\times G^{\mathrm{ad}}
\]
where $R(G)$ is the radical of $G$, \nomenclature[G^der]{$G^\mathrm{der}$}{Derived group of $G$.}$G^{\mathrm{der}}$
its derived group, \nomenclature[G^ab]{$G^\mathrm{ab}$}{Abelianization $G^\mathrm{ab} = G/G^\mathrm{der}$  of $G$.}$G^{\mathrm{ab}}=G/G^{\mathrm{der}}$
its coradical, \nomenclature[G^ss]{$G^\mathrm{ss}$}{Semi-simplification $G^\mathrm{ss} = G/R(G)$  of $G$.}$G^{\mathrm{ss}}=G/R(G)$
its semi-simplification and $G^{\mathrm{ad}}=G/Z(G)$ its adjoint
group, we obtain compatible canonical decompositions\nomenclature[G^Gamma(G)^r]{$\mathbb{G}^\Gamma (G)^r$}{Reduced part of $\mathbb{G}^\Gamma (G)$, page \nomrefpage}\nomenclature[F^Gamma(G)^r]{$\mathbb{F}^\Gamma (G)^r$}{Reduced part of $\mathbb{F}^\Gamma (G)$, page \nomrefpage}\nomenclature[C^Gamma(G)^r]{$\mathbb{C}^\Gamma (G)^r$}{Reduced part of $\mathbb{C}^\Gamma (G)$, page \nomrefpage}\nomenclature[G^Gamma(G)^c]{$\mathbb{G}^\Gamma (G)^c$}{Central part of $\mathbb{G}^\Gamma (G)$, page \nomrefpage}\nomenclature[F^Gamma(G)^c]{$\mathbb{F}^\Gamma (G)^c$}{Central part of $\mathbb{F}^\Gamma (G)$, page \nomrefpage}\nomenclature[C^Gamma(G)^c]{$\mathbb{C}^\Gamma (G)^c$}{Central part of $\mathbb{C}^\Gamma (G)$, page \nomrefpage}
\[
\begin{array}{rcccc}
\mathbb{G}^{\Gamma}(G) & = & \mathbb{G}^{\Gamma}(G)^{r} & \times & \mathbb{G}^{\Gamma}(G)^{c}\\
\mathbb{F}^{\Gamma}(G) & = & \mathbb{F}^{\Gamma}(G)^{r} & \times & \mathbb{F}^{\Gamma}(G)^{c}\\
\mathbb{C}^{\Gamma}(G) & = & \mathbb{C}^{\Gamma}(G)^{r} & \times & \mathbb{C}^{\Gamma}(G)^{c}
\end{array}
\]
with $\mathbb{G}^{\Gamma}(G)^{c}=\mathbb{F}^{\Gamma}(G)^{c}=\mathbb{C}^{\Gamma}(G)^{c}=\mathbb{G}^{\Gamma}(R(G))=\mathbb{G}^{\Gamma}(G^{\mathrm{ab}})=\mathbb{G}^{\Gamma}(Z(G))$
and 
\[
\begin{array}{rccclcc}
\mathbb{G}^{\Gamma}(G)^{r} & = & \mathbb{G}^{\Gamma}(G^{\mathrm{der}}) & = & \mathbb{G}^{\Gamma}(G^{\mathrm{ss}}) & = & \mathbb{G}^{\Gamma}(G^{\mathrm{ad}}),\\
\mathbb{F}^{\Gamma}(G)^{r} & = & \mathbb{F}^{\Gamma}(G^{\mathrm{der}}) & = & \mathbb{F}^{\Gamma}(G^{\mathrm{ss}}) & = & \mathbb{F}^{\Gamma}(G^{\mathrm{ad}}),\\
\mathbb{C}^{\Gamma}(G)^{r} & = & \mathbb{C}^{\Gamma}(G^{\mathrm{der}}) & = & \mathbb{C}^{\Gamma}(G^{\mathrm{ss}}) & = & \mathbb{C}^{\Gamma}(G^{\mathrm{ad}}).
\end{array}
\]
The decomposition of $\mathbb{C}^{\Gamma}(G)$ is compatible with
the partially ordered (weak=strong) monoid structures: for $x=(x^{r},x^{c})$
and $y=(y^{r},y^{c})$ in $\mathbb{C}^{\Gamma}(G)=\mathbb{C}^{\Gamma}(G)^{r}\times\mathbb{C}^{\Gamma}(G)$,
\[
x+y=(x^{r}+y^{r},x^{c}+y^{c})\quad\mbox{and}\quad x\leq y\iff\left(x^{r}\leq y^{r}\mbox{ and }x^{c}=y^{c}\right).
\]
This is easily checked by reduction to the split case, cf.~section~\ref{sub:WeylConeIsProductAbstract}
below.

\section{Relative positions of $\Gamma$-filtrations\label{sub:RelativePositions}}

Let $G$ be a reductive group over $S$.

\subsection{Standard positions}

Recall that two parabolic subgroups $P_{1}$ and $P_{2}$ of $G$
are said to be in standard (relative) position if and only if they
satisfy the equivalent conditions of \cite[XXVI 4.5.1]{SGA3.3r},
in particular: $(i)$ $P_{1}\cap P_{2}$ is smooth over $S$, or $(ii)$
$P_{1}\cap P_{2}$ is a subgroup of type $(R)$ of $G$, or $(iv)$
$P_{1}\cap P_{2}$ contains, locally on $S$ for the Zariski topology,
a maximal torus of $G$. Then all such maximal tori are, locally on
$S$ for the étale topology, conjugated in $P_{1}\cap P_{2}$ \cite[XII 7.1]{SGA3.2}.
In any case, $P_{1}\cap P_{2}$ has geometrically connected fibers
\cite[4.5]{BoTi65}. For an $S$-scheme $T$, we set\nomenclature[STD(G)]{$\mathbb{STD}(G)$}{Scheme of pairs of parabolic subgroups of $G$ in standard relative position, defined page \nomrefpage}\nomenclature[TSTD(G)]{$\mathbb{TSDT}(G)$}{Scheme of types of pairs of parabolic subgroups of $G$ in standard relative position, defined page \nomrefpage}\nomenclature[t_2]{$t_2$}{Type morphism $t_2: \mathbb{STD}(G) \rightarrow \mathbb{TSTD}(G)$, defined page \nomrefpage}
\[
\mathbb{STD}(G)(T)=\left\{ (P_{1},P_{2})\in\mathbb{P}(G)^{2}(T):\mbox{ }P_{1}\mbox{ and }P_{2}\mbox{ are in standard position}\right\} .
\]
By~\cite[XXVI 4.5.3]{SGA3.3r}, this defines a representable subsheaf
of $\mathbb{P}(G)^{2}$ with Stein factorization
\[
\mathbb{STD}(G)\stackrel{t_{2}}{\longrightarrow}\mathbb{TSTD}(G)\longrightarrow S
\]
fitting in a commutative (but not cartesian) diagram
\[
\xymatrix{\mathbb{STD}(G)\ar@{^{(}->}[d]\ar[r]^{t_{2}} & \mathbb{TSTD}(G)\ar[d]_{q}\\
\mathbb{P}(G)^{2}\ar[r]^{t^{2}} & \mathbb{O}(G)^{2}
}
\]
where $q$ is a finite étale surjective morphism while $t_{2}$ is
a smooth, surjective, finitely presented morphism with geometrically
connected fibers which is a quotient of $\mathbb{STD}(G)$ by the
diagonal action of $G$ in the category of fpqc sheaves on $S$. By~\cite[XXVI 4.2.5 \& 4.4.3]{SGA3.3r},
the morphism $q$ has two canonical sections\nomenclature[tr]{$tr$}{Transverse section $tr:\mathbb{O}(G) \rightarrow \mathbb{TSTD}(G)$, defined page \nomrefpage}\nomenclature[os]{$os$}{Osculatory section $os:\mathbb{O}(G) \rightarrow \mathbb{TSTD}(G)$, defined page \nomrefpage}
\[
tr,os:\mathbb{O}(G)^{2}\rightarrow\mathbb{TSDT}(G)
\]
corresponding respectively to the transverse and osculatory (standard)
positions. By~\cite[XXVI 4.2.4]{SGA3.3r}, $t_{2}^{-1}(\mbox{im}(tr))$
is a relatively dense open $S$-subscheme \nomenclature[GEN(G)]{$\mathbb{GEN}(G)$}{Scheme of pairs of parabolic subgroups of $G$ in generic relative position, page \nomrefpage}$\mathbb{GEN}(G)$
of $\mathbb{P}(G)^{2}$. Pulling back everything through the surjective
étale facet morphism $F^{2}:\mathbb{C}^{\Gamma}(G)^{2}\rightarrow\mathbb{O}(G)^{2}$,
we thus obtain a commutative diagram\nomenclature[STD^Gamma(G)]{$\mathbb{STD}^\Gamma (G)$}{Scheme of pairs of $\Gamma$-filtrations on $G$ in standard relative position, defined page \nomrefpage}\nomenclature[TSTD^Gamma(G)]{$\mathbb{TSDT}^\Gamma (G)$}{Scheme of types of pairs of $\Gamma$-filtrations on $G$ in standard relative position, defined page \nomrefpage}
\[
\xymatrix{\mathbb{STD}^{\Gamma}(G)\ar@{^{(}->}[d]\ar[r]^{t_{2}} & \mathbb{TSTD}^{\Gamma}(G)\ar[d]_{q}\\
\mathbb{F}^{\Gamma}(G)^{2}\ar[r]^{t^{2}} & \mathbb{C}^{\Gamma}(G)^{2}\ar@/_{.5pc}/[u]_{tr,os}
}
\]
where $t_{2}$ \nomenclature[t_2_]{$t_2$}{Type morphism $t_2: \mathbb{STD}^\Gamma (G) \rightarrow \mathbb{TSTD}^\Gamma (G)$, defined page \nomrefpage}and
$q$ still have the properties listed above, together with a relatively
dense open $S$-subscheme $\mathbb{GEN}^{\Gamma}(G)$ of $\mathbb{F}^{\Gamma}(G)^{2}$\nomenclature[GEN^Gamma(G)]{$\mathbb{GEN}^\Gamma (G)$}{Scheme of pairs of $\Gamma$-filtrations on $G$ in generic relative position, page \nomrefpage}.
For a scheme $Z$ over $\mathbb{P}(G)^{2}$, we set\nomenclature[STD(Z)]{$\mathbb{STD}(Z)$}{Pull-back of $\mathbb{STD}(G) \hookrightarrow \mathbb{P}(G)^2$ through  $Z \rightarrow \mathbb{P}(G)^2$.}
\[
\mathbb{STD}(Z)=Z\times_{\mathbb{P}(G)^{2}}\mathbb{STD}(G).
\]
For instance, $\mathbb{STD}^{\Gamma}(G)=\mathbb{STD}\left(\mathbb{F}^{\Gamma}(G)\times_{S}\mathbb{F}^{\Gamma}(G)\right)$. 
\begin{rem}
\label{Rk:StdOverField}The monomorphisms $\mathbb{STD}(G)\hookrightarrow\mathbb{P}(G)^{2}$
and $\mathbb{STD}^{\Gamma}(G)\hookrightarrow\mathbb{F}^{\Gamma}(G)^{2}$
are surjective. More precisely, for any $S$-scheme $T=\Spec(k)$
with $k$ a field, 
\[
\mathbb{STD}(G)(k)=\mathbb{P}(G)^{2}(k)\quad\mbox{and}\quad\mathbb{STD}^{\Gamma}(G)(k)=\mathbb{F}^{\Gamma}(G)^{2}(k)
\]
by Bruhat's theorem \cite[XXVI 4.1.1]{SGA3.3r}.
\end{rem}

\subsection{The addition map on $\Gamma$-filtrations\label{sub:AdditionMap}}
\begin{prop}
There is an $S$-morphism 
\[
+:\mathbb{STD}^{\Gamma}(G)\rightarrow\mathbb{F}^{\Gamma}(G),\qquad(\mathcal{F},\mathcal{G})\mapsto\mathcal{F}+\mathcal{G}
\]
 such that for every $S$-scheme $T$, $(\mathcal{F},\mathcal{G})\in\mathbb{STD}^{\Gamma}(G)(T)$
and $\mathcal{H}\in\mathbb{F}^{\Gamma}(G)(T)$, 
\[
\mathcal{F}+\mathcal{G}=\mathcal{G}+\mathcal{F}\quad\mbox{and}\quad\mathcal{H}+0=0+\mathcal{H}=\mathcal{H}\quad\mbox{in}\quad\mathbb{F}^{\Gamma}(G)(T).
\]
\end{prop}
\begin{proof}
Since $(P_{\mathcal{F}},P_{\mathcal{G}})\in\mathbb{STD}(G)(T)$, there
is, locally on $T$ for the Zariski topology, a maximal torus $H$
of $G_{T}$ inside $P_{\mathcal{F}}\cap P_{\mathcal{G}}$ \cite[XXVI 4.5.1]{SGA3.3r}.
Let $L_{\mathcal{F}}$ and $L_{\mathcal{G}}$ be the Levi subgroups
of $P_{\mathcal{F}}$ and $P_{\mathcal{G}}$ containing $H$ \cite[XXVI 1.6]{SGA3.3r},
let $f:\mathbb{D}_{T}(\Gamma)\rightarrow L_{\mathcal{F}}$ and $g:\mathbb{D}_{T}(\Gamma)\rightarrow L_{\mathcal{G}}$
be the corresponding central morphisms lifting $\overline{\mathcal{F}}$
and $\overline{\mathcal{G}}$. Then $f$ and $g$ both factor through
$H$, and their product $f+g$ in the commutative group $H$ is a
group homomorphism $\mathbb{D}_{T}(\Gamma)\rightarrow G_{T}$. We
claim that $\mathcal{F}+\mathcal{G}=\Fi(f+g)$ does not depend upon
the choice of the maximal torus $H$ -- the whole construction is
then indeed local in the Zariski topology on $T$ as well as functorial
in the $S$-scheme $T$, and the resulting $S$-morphism $+:\mathbb{STD}^{\Gamma}(G)\rightarrow\mathbb{F}^{\Gamma}(G)$
obviously has the required properties. Let thus $H'$ be another maximal
torus of $G_{T}$ inside $K=P_{\mathcal{F}}\cap P_{\mathcal{G}}$,
giving rise to $f'$, $g'$ and $f'+g':\mathbb{D}_{T}(\Gamma)\rightarrow H'\subset G$.
Then, locally on $T$ for the étale topology, there is a $k\in K(T)$
such that $H'=\Int(k)(H)$ by \cite[XII 7.1]{SGA3.2}, in which case
also $f'=\Int(k)\circ f$, $g'=\Int(k)\circ g$ and $f'+g'=\Int(k)\circ(f+g)$.
It is thus sufficient to establish that $K\subset P_{f+g}$. This
second claim is again local in the étale topology on $T$, which reduces
us further to the following case: $(G,H,M,R)$ is a split group over
$S=T$ (i.e.~$H\simeq\mathbb{D}_{S}(M)$ and $R\subset M$ is the
set of roots of $H$ in $\Lie(G)$) with $f$ and $g$ respectively
induced by morphisms $f^{\sharp}$ and $g^{\sharp}:M\rightarrow\Gamma$,
so that $f+g$ is induced by $(f+g)^{\sharp}=f^{\sharp}+g^{\sharp}$.
For a closed subset $R'$ of $R$, we denote by $H_{R'}\supset H$
the corresponding subgroup of $G$ of type $(R)$, as in \cite[XXII 5.4]{SGA3.3r}
(thus $H=H_{\emptyset}$ and $G=H_{R}$). Then $P_{\mathcal{F}}=H_{R(f)}$,
$P_{\mathcal{G}}=H_{R(g)}$ and $P_{f+g}=H_{R(f+g)}$ where $R(h)=\{\alpha\in R:h^{\sharp}(\alpha)\geq0\}$
by definition of these parabolic subgroups of $G$. Thus $K=H_{R(f)\cap R(g)}$
is contained in $H_{R(f+g)}=P_{f+g}$ by \cite[XXII 5.4.5]{SGA3.3r}. \end{proof}
\begin{prop}
\label{prop:CompAdditionType}For any $S$-scheme $T$ and $(\mathcal{F},\mathcal{G})\in\mathbb{STD}^{\Gamma}(G)(T)$,
\[
t(\mathcal{F}+\mathcal{G})\preceq t(\mathcal{F})+t(\mathcal{G})\quad\mbox{in}\quad\mathbb{C}^{\Gamma}(G)(T)
\]
with equality if $\mathcal{F}$ and $\mathcal{G}$ are in osculatory
position.\end{prop}
\begin{proof}
We may assume that $T=s$ is a geometric point, with $\mathcal{F}$
and $\mathcal{G}$ lifting to morphisms $f,g:\mathbb{D}_{s}(\Gamma)\rightarrow H$
for some maximal (split) subtorus $H\simeq\mathbb{D}_{s}(M)$ of $G_{s}$,
corresponding to morphisms $f^{\sharp},g^{\sharp}:M\rightarrow\Gamma$
as above. Let $R\subset M$ be the roots of $H$ in $\Lie(G_{s})$.
By~\cite[XXI 3.3.6]{SGA3.3r}, there is a system of positive roots
$R_{+}\subset R$ such that $(f^{\sharp}+g^{\sharp})(R_{+})\subset\Gamma_{+}$,
i.e.~$f^{\sharp}+g^{\sharp}\in\Hom^{+}(M,\Gamma)$ in the notations
of section~\ref{sub:AppendixRoots}. Thus if $\vartheta:\Hom(M,\Gamma)\twoheadrightarrow\Hom^{+}(M,\Gamma)$
is the retraction from lemma~\ref{lem:DominantIsFundDom}, then $\vartheta(f^{\sharp}+g^{\sharp})=f^{\sharp}+g^{\sharp}$
and $f^{\sharp}\preceq\vartheta(f^{\sharp})$, $g^{\sharp}\preceq\vartheta(g^{\sharp})$
in $\Hom(M,\Gamma)$ by lemma~\ref{lem:CarDominantByDomOrder}, therefore
$\vartheta(f^{\sharp}+g^{\sharp})\preceq\vartheta(f^{\sharp})+\vartheta(g^{\sharp})$
in $\Hom^{+}(M,\Gamma)$, i.e.~$t(\mathcal{F}+\mathcal{G})\preceq t(\mathcal{F})+t(\mathcal{G})$
in $\mathbb{C}^{\Gamma}(G)(s)$ by definition. If $\mathcal{F}$ and
$\mathcal{G}$ are in osculatory position, there is a Borel subgroup
$B$ of $G_{s}$ inside $P_{\mathcal{F}}\cap P_{\mathcal{G}}$ \cite[XXVI 4.4.1]{SGA3.3r}.
We may then take $H$ inside $B$ and for $R_{+}$, the roots of $H$
in $\Lie(B)$, so that $f^{\sharp}$ and $g^{\sharp}$ already belong
to $\Hom^{+}(M,\Gamma)$, $\vartheta(f^{\sharp}+g^{\sharp})=\vartheta(f^{\sharp})+\vartheta(g^{\sharp})$
and indeed $t(\mathcal{F}+\mathcal{G})=t(\mathcal{F})+t(\mathcal{G})$.
\end{proof}

\subsection{~\label{sub:FunctLevi}}

We record here a special case of the functoriality of $\mathbb{F}^{\Gamma}(-)$.
\begin{prop}
\label{prop:FunctLevi}Let $L$ be a Levi subgroup of a parabolic
subgroup $P$ of $G$ with unipotent radical $U$. Then: $(1)$ for
$f:\mathbb{D}_{S}(\Gamma)\rightarrow L$ inducing $g:\mathbb{D}_{S}(\Gamma)\rightarrow G$
and $h:\mathbb{D}_{S}(\Gamma)\rightarrow P/U$, the parabolic subgroups
$P$ and $P_{g}$ of $G$ are in standard relative position, $K=P\cap P_{g}$
is a smooth subgroup scheme of $G$, $K\cdot U$ is a parabolic subgroup
of $G$ with Levi $L_{f}$, $K\cap L=(K\cdot U)\cap L=P_{f}$ and
$K\cdot U/U=P_{h}$ in $P/U$. $(2)$ There is a unique morphism $\iota:\mathbb{F}^{\Gamma}(L)\rightarrow\mathbb{F}^{\Gamma}(G)$
such that the diagram
\[
\xymatrix{\mathbb{G}^{\Gamma}(L)\ar@{^{(}->}[r]\ar[d]_{\Fi} & \mathbb{G}^{\Gamma}(G)\ar[d]^{\Fi}\\
\mathbb{F}^{\Gamma}(L)\ar[r]^{\iota} & \mathbb{F}^{\Gamma}(G)
}
\]
is commutative.\end{prop}
\begin{proof}
Everything in $(1)$ is local for the fpqc topology on $S$. We may
thus assume that \nomenclature[Z_G(x)]{$Z_G(x)$}{Centralizer of $x$ in $G$.}$L_{f}=Z_{L}(f)$
and $G$ are split with respect to a maximal torus $H$ of $G$ contained
in $L_{f}$ with $H$ trivial, i.e.~$H=\mathbb{D}_{S}(M)$ for some
finitely generated abelian group $M$~\cite[XXII 2.3]{SGA3.3r}.
Then $H\subset L_{f}=L\cap L_{g}\subset P\cap P_{g}$, thus $P$ and
$P_{g}$ are in standard relative position, $K=P\cap P_{g}$ is a
smooth subgroup of $G$ and $K\cdot U$ is a parabolic subgroup of
$G$ by~\cite[XXVI 4.5.1]{SGA3.3r} and its proof. More precisely,
let $R\subset M$ be the roots of $H$ in $\Lie(G)$, so that $R=R_{L}\coprod R_{U}\coprod-R_{U}$
where $R_{L}$ and $R_{U}$ are respectively the roots of $H$ in
$\Lie(L)$ and $\Lie(U)$. For $X$ in $\{\emptyset,L,U\}$ let $R_{X}=R_{X}^{0}\coprod R_{X}^{+}\coprod R_{X}^{-}$
be the decomposition of $R_{X}$ induced by $g$, i.e. 
\[
R_{X}^{\pm}=\{\alpha\in R_{X}:\pm\alpha\circ g>0\mbox{ in }\Gamma\}\quad\mbox{and}\quad R_{X}^{0}=\{\alpha\in R_{X}:\alpha\circ g=0\mbox{ in }\Gamma\}.
\]
This yields a decomposition of $R$ in nine pieces, as shown in the
following table:

\bigskip{}
\hfill %
\begin{tabular}{c|ccc}
$ $ & $L_{g}$ & $U_{g}$ & $U_{\iota g}$\tabularnewline
\hline 
$L$ & $R_{L}^{0}$ & $R_{L}^{+}$ & $R_{L}^{-}=-R_{L}^{+}$\tabularnewline
$U$ & $R_{U}^{0}$ & $R_{U}^{+}$ & $R_{U}^{-}$\tabularnewline
$ $ & $-R_{U}^{0}$ & $-R_{U}^{-}$ & $-R_{U}^{+}$\tabularnewline
\end{tabular}\hfill ~

\bigskip{}
\noindent For a closed subset $R'$ of $R$, let $H(R')$ be the
subgroup scheme of $G$ of type $(R)$ which is determined by $R'$,
see~\cite[XXII 5.4.2-7]{SGA3.3r}. Thus $L=H(R_{L})$, $P=H(R_{L}\cup R_{U})$,
$L_{g}=H(R^{0})$, $P_{g}=H(R^{0}\cup R^{+})$, $L_{f}=H(R_{L}^{0})$
and $P_{f}=H(R_{L}^{0}\cup R_{L}^{+})$ while
\begin{eqnarray*}
K & = & H(R_{L}^{0}\cup R_{L}^{+}\cup R_{U}^{0}\cup R_{U}^{+})\\
\mbox{and}\quad K\cdot U & = & H(R_{L}^{0}\cup R_{L}^{+}\cup R_{U}^{0}\cup R_{U}^{+}\cup R_{U}^{-}).
\end{eqnarray*}
By~\cite[XXVI 6.1]{SGA3.3r}, $L_{f}=H(R_{L}^{0})$ is a Levi subgroup
of $K\cdot U$. By \cite[XXII 5.4.5]{SGA3.3r}, $P_{f}\subset K$,
thus $P_{f}\subset K\cap L\subset(K\cdot U)\cap L$. But $(K\cdot U)\cap L$
is a parabolic subgroup of $L$ with Levi $L_{f}$, thus $P_{f}=K\cap L=(K\cdot U)\cap L$
and $P_{f}\cdot U=K\cdot U$ by repeated applications of \cite[XXVI 1.20]{SGA3.3r}.
Finally, $P_{f}$ maps to $P_{h}=P_{f}\cdot U/U=K\cdot U/U$ under
the isomorphism $L\simeq P/U$, which finishes the proof of $(1)$.
Then $(2)$ easily follows: if $(f,f')$ induce $(g,g')$ and $\Fi(f)=\Fi(f')$,
then $f'=\Int(p)\circ f$ for some $p\in P_{f}(S)$, thus also $g'=\Int(p)\circ g$
and $\Fi(g')=\Fi(g)$ since $P_{f}=L\cap P_{g}\subset P_{g}$. 
\end{proof}

\subsection{~\label{sub:defofGrMap}}

Let $G'=P_{u}/U_{u}$ where $P_{u}\subset G_{\mathbb{P}(G)}$ is the
universal parabolic subgroup with unipotent radical $U_{u}=R^{u}(P_{u})$.
Thus $G'$ is a reductive group over $\mathbb{P}(G)$ and 
\[
\mathbb{F}^{\Gamma}(G')(T)=\left\{ (P,\mathcal{F}):P\in\mathbb{P}(G)(T),\,\mathcal{F}\in\mathbb{F}^{\Gamma}(P/U)(T),\mbox{ }U=R^{u}(P)\right\} 
\]
for any $S$-scheme $T$. 
\begin{prop}
\label{prop:ExistCanoMapGrP}There is a canonical morphism of schemes
over $\mathbb{P}(G)$,\nomenclature[Gr_P(F)]{$\Gr_P (\mathcal{F})$}{$\Gamma$-filtration on $P/U$ induced by a $\Gamma$-filtration $\mathcal{F}$ on $G$ in standard relative position with a parabolic subgroup $P$ of $G$, page \nomrefpage}
\[
\mathbb{STD}\left(\mathbb{P}(G)\times_{S}\mathbb{F}^{\Gamma}(G)\right)\rightarrow\mathbb{F}^{\Gamma}(G')\qquad(P,\mathcal{F})\mapsto(P,\Gr_{P}(\mathcal{F})).
\]
\end{prop}
\begin{proof}
Start with $(P,\mathcal{F})\in\mathbb{STD}\left(\mathbb{P}(G)\times_{S}\mathbb{F}^{\Gamma}(G)\right)(T)$
and put $K=P\cap P_{\mathcal{F}}$. Then $K$ is a smooth subgroup
scheme of $G_{T}$ which contains, locally on $T$ for the Zariski
topology, a maximal subtorus $H$ of $G_{T}$ \cite[XXVI 4.5.1]{SGA3.3r}.
Let $L$ and $L_{\mathcal{F}}$ be the Levi subgroups of $P$ and
$P_{\mathcal{F}}$ containing $H$ \cite[XXVI 1.6]{SGA3.3r}. Let
$f:\mathbb{D}_{T}(\Gamma)\rightarrow L_{\mathcal{F}}$ be the central
morphism lifting $\overline{\mathcal{F}}:\mathbb{D}_{T}(\Gamma)\rightarrow\overline{R}(P_{\mathcal{F}})$,
so that $\mathcal{F}=\Fi(f)$ and $L_{f}=L_{\mathcal{F}}$. Then $f$
factors through the maximal subtorus $H$ of $L_{\mathcal{F}}$, which
is also a maximal subtorus of $L$. Let $h:\mathbb{D}_{T}(\Gamma)\rightarrow P/U$
be the induced morphism. By the previous proposition, $P_{h}=K\cdot U/U$,
thus $K$ fixes $\Fi(h)\in\mathbb{F}^{\Gamma}(P/U)(T)$. If $H'$
is another maximal subtorus of $G$ contained in $K$, then, locally
on $T$ for the étale topology, $H'=\Int(k)(H)$ for some $k\in K(T)$
by \cite[XII 7.1]{SGA3.2}. But then $L'=\Int(k)(L)$, $L'_{\mathcal{F}}=\Int(k)(L_{\mathcal{F}})$,
$f'=\Int(k)\circ f$ and $h'=\Int(k)\circ h$ are the objects associated
to $H'$ as above, thus $\Fi(h')=k\cdot\Fi(h)=\Fi(h)$ since $K$
fixes $\Fi(h)$. It follows that $\Gr_{P}(\mathcal{F})=\Fi(h)$ does
not depend upon the choice of $H$, and also that the whole construction
is indeed local in the Zariski topology on $T$. \end{proof}
\begin{rem}
\label{rk:sectionOfGrP}The pull-back of this morphism through $p_{1}:\mathbb{OPP}(G)\rightarrow\mathbb{P}(G)$
has a canonical section: for an $S$-scheme $T$, the latter is given
by the formula
\[
(P_{1},P_{2},\mathcal{F})\mapsto(P_{1},P_{2},\iota(\mathcal{F}_{L}))
\]
where $(P_{1},P_{2})=(U_{1}\rtimes L,U_{2}\rtimes L)$ is a pair of
opposed parabolic subgroups of $G_{T}$ with common Levi subgroup
$L=P_{1}\cap P_{2}$, $\mathcal{F}$ is an element of $\mathbb{F}^{\Gamma}(P_{1}/U_{1})(T)$,
$\mathcal{F}_{L}$ is its unique lift in $\mathbb{F}^{\Gamma}(L)(T)$,
and $\iota:\mathbb{F}^{\Gamma}(L)\rightarrow\mathbb{F}^{\Gamma}(G_{T})$
is the morphism of proposition~\ref{prop:FunctLevi} (thus indeed
$P_{1}$ and $P_{\iota(\mathcal{F}_{L})}$ are in standard relative
position).
\end{rem}

\section{\label{sub:AppendixRoots}Interlude on the dominance partial orders}

Let $\Gamma=(\Gamma,+,\leq)$ be a non-trivial totally ordered commutative
group. We set\nomenclature[Gamma_+]{$\Gamma _+$}{$\Gamma_{+}=\{\gamma\in\Gamma:\gamma\geq0\}$.}
\[
\Gamma_{+}=\{\gamma\in\Gamma:\gamma\geq0\}.
\]

\subsection{~}

Let $\mathscr{R}=(M,R,M^{\ast},R^{\ast})$ be a root system \cite[XXI 1.1.1]{SGA3.3r}
with Weyl group $W=W(\mathscr{R})$ \cite[XXI 1.1.8]{SGA3.3r}. Fix
a system of positive roots $R_{+}\subset R$ \cite[XXI 3.2.1]{SGA3.3r}
and let $\Delta\subset R_{+}$ be the corresponding simple roots \cite[XXI 3.2.8]{SGA3.3r}.
Then
\begin{lem}
\label{lem:DominantIsFundDom}The submonoid of dominant morphisms
in $\Hom(M,\Gamma)$,\nomenclature[Hom^+(M,Gamma)]{$\Hom ^+ (M,\Gamma)$}{Dominant morphisms in $\Hom (M, \Gamma)$.}
\begin{eqnarray*}
\Hom^{+}(M,\Gamma) & = & \left\{ f\in\Hom(M,\Gamma):\forall\alpha\in R_{+},\, f(\alpha)\geq0\right\} \\
 & = & \left\{ f\in\Hom(M,\Gamma):\forall\alpha\in\Delta,\, f(\alpha)\geq0\right\} 
\end{eqnarray*}
is a fundamental domain for the action of $W$ on $\Hom(M,\Gamma)$.\end{lem}
\begin{proof}
For any morphism $f:M\rightarrow\Gamma$, define 
\begin{eqnarray*}
R_{f\geq0} & = & \left\{ \alpha\in R:f(\alpha)\geq0\right\} ,\\
R_{f>0} & = & \left\{ \alpha\in R:f(\alpha)>0\right\} ,\\
R_{f=0} & = & \left\{ \alpha\in R:f(\alpha)=0\right\} .
\end{eqnarray*}
Thus $R_{f=0}$ is closed and symmetric, $R=R_{f>0}\coprod R_{f=0}\coprod-R_{f>0}$
and 
\[
f\mbox{ is dominant}\iff R_{+}\subset R_{f\geq0}\iff R_{f>0}\subset R_{+}.
\]
By \cite[XXI 3.3.6]{SGA3.3r}, there exists $w\in W$ such that 
\[
R_{+}\subset wR_{f\geq0}=R_{wf\geq0},
\]
therefore $wf$ is dominant. If $f$ and $wf$ are dominant, then
\[
R_{+}=R_{f>0}\coprod R_{+}^{1}\quad\mbox{and}\quad w^{-1}R_{+}=R_{f>0}\coprod R_{+}^{2}
\]
where $R_{+}^{1}=R_{+}\cap R_{f=0}$ and $R_{+}^{2}=w^{-1}R_{+}\cap R_{f=0}$
are systems of positive roots in the closed symmetric subset $R_{f=0}$
of $R$. Thus by \cite[XXI 3.4.1 and 3.3.7]{SGA3.3r}, there is an
$w_{0}$ in the Weyl group $W_{f}\subset W$ of $R_{f=0}$ such that
$w_{0}R_{+}^{2}=R_{+}^{1}$. Now $W_{f}$ is spanned by the reflections
$\left\{ s_{\alpha}:\alpha\in R_{f=0}\right\} $ and for any $m\in M$
and $\alpha\in R_{f=0}$, 
\[
(s_{\alpha}f)(m)=f(s_{\alpha}m)=f\left(m-\left\langle m,\alpha^{\ast}\right\rangle \alpha\right)=f(m)-\left\langle m,\alpha^{\ast}\right\rangle f(\alpha)=f(m),
\]
thus $w_{0}$ fixes $f$, stabilizes $R_{f>0}$ and maps $w^{-1}R_{+}$
to $R_{+}$. But then $w=w_{0}$ by \cite[XXI 5.4]{SGA3.3r} hence
$wf=w_{0}f=f$, which proves the lemma.
\end{proof}

\subsection{~\label{sub:MdisFundDomain}}

Applying the lemma to the dual root system $\mathscr{R}^{\ast}=(M^{\ast},R^{\ast},M,R)$
with $\Gamma=\mathbb{Z}$, we obtain the well known fact that the
cone of dominant weights\nomenclature[M_d]{$M_d$}{Dominant weights.}
\begin{eqnarray*}
M_{d} & = & \left\{ m\in M:\forall\alpha\in R_{+},\,\left\langle m,\alpha^{\ast}\right\rangle \geq0\right\} \\
 & = & \left\{ m\in M:\forall\delta\in\Delta,\,\left\langle m,\delta^{\ast}\right\rangle \geq0\right\} 
\end{eqnarray*}
is a fundamental domain for the action of $W$ on $M$.

\subsection{~}

The coroot cone and coroot lattice defined by\nomenclature[Gamma R_+^*]{$\Gamma _+ R _+ ^\ast$}{$\Gamma_+$-cone spanned by the positive coroots, page \nomrefpage}\nomenclature[Gamma R^*]{$\Gamma  R ^\ast$}{$\Gamma$-subgroup spanned by the coroots, page \nomrefpage}
\begin{eqnarray*}
\Gamma_{+}R_{+}^{\ast} & = & \left\{ m\mapsto{\textstyle \sum}_{\alpha\in R_{+}}\left\langle m,\alpha^{\ast}\right\rangle \gamma_{\alpha}:\forall\alpha\in R_{+},\,\gamma_{\alpha}\in\Gamma_{+}\right\} \\
\Gamma R^{\ast} & = & \left\{ m\mapsto{\textstyle \sum}_{\alpha\in R}\left\langle m,\alpha^{\ast}\right\rangle \gamma_{\alpha}:\forall\alpha\in R,\,\gamma_{\alpha}\in\Gamma\right\} 
\end{eqnarray*}
are a submonoid and a subgroup of $\Hom(M,\Gamma)$, and so are their
saturations\nomenclature[Gamma R_+^*_sat]{$(\Gamma _+ R _+ ^\ast)_{sat}$}{Saturation of $\Gamma_+ R_+ ^\ast$, page \nomrefpage}\nomenclature[Gamma R^*_sat]{$(\Gamma  R  ^\ast)_{sat}$}{Saturation of $\Gamma R^\ast$, page \nomrefpage}
\begin{eqnarray*}
\left(\Gamma_{+}R_{+}^{\ast}\right)_{sat} & = & \left\{ f\in\Hom(M,\Gamma):\exists n\in\mathbb{N}^{\times}\mbox{\,\ such that }nf\in\Gamma_{+}R_{+}^{\ast}\right\} \\
\left(\Gamma R^{\ast}\right)_{sat} & = & \left\{ f\in\Hom(M,\Gamma):\exists n\in\mathbb{N}^{\times}\mbox{ such that }nf\in\Gamma R^{\ast}\right\} 
\end{eqnarray*}
in $\Hom(M,\Gamma)$. Inside $\Hom(M,\Gamma\otimes\mathbb{Q})$, any
$f\in(\Gamma R^{\ast})_{sat}$ can be written as
\[
f(-)={\textstyle \sum}_{\delta\in\Delta}\left\langle -,\mathrm{ind}(\delta^{\ast})\right\rangle \gamma_{\delta}
\]
for a unique $(\gamma_{\delta})\in(\Gamma\otimes\mathbb{Q})^{\Delta}$,
where $\mathrm{ind}(\delta^{\ast})$ is the simple coroot corresponding
to $\delta\in\Delta$, namely $\mathrm{ind}(\delta^{\ast})=\delta^{\ast}$
if $2\delta\notin R$ and $\mathrm{ind}(\delta^{\ast})=\frac{1}{2}\delta^{\ast}=(2\delta)^{\ast}$
otherwise. Then
\begin{eqnarray*}
f\in(\Gamma_{+}R_{+}^{\ast})_{sat} & \iff & \exists n\in\mathbb{N}^{\times}\mbox{ such that }n(\gamma_{\delta})\in\Gamma_{+}^{\Delta},\\
f\in\Gamma R^{\ast} & \iff & (\gamma_{\delta})\in\Gamma^{\Delta},\\
f\in\Gamma_{+}R_{+}^{\ast} & \iff & (\gamma_{\delta})\in\Gamma_{+}^{\Delta}.
\end{eqnarray*}
In particular, $(\Gamma_{+}R_{+}^{\ast})_{sat}\cap-(\Gamma_{+}R_{+}^{\ast})_{sat}=\{0\}$.
Moreover by duality, 
\[
\left(\Gamma_{+}R_{+}^{\ast}\right)_{sat}=\left\{ f\in\Hom(M,\Gamma):\forall m\in M_{d},\, f(m)\geq0\right\} .
\]

\subsection{~}

The weak dominance partial order $\leq$ on $\Hom(M,\Gamma)$ is defined
by
\begin{eqnarray*}
f_{1}\leq f_{2} & \iff & \forall m\in M_{d}:\quad f_{1}(m)\leq f_{2}(m),\\
 & \iff & f_{2}-f_{1}\in\left(\Gamma_{+}R_{+}^{\ast}\right)_{sat}.
\end{eqnarray*}
The strong dominance partial order $\preceq$ on $\Hom(M,\Gamma)$
is defined by
\[
f_{1}\preceq f_{2}\iff f_{2}-f_{1}\in\Gamma_{+}R_{+}^{\ast}.
\]
They are both compatible with the addition map: for $f_{1},f_{2},g_{1},g_{2}\in\Hom(M,\Gamma)$,
\begin{eqnarray*}
\left(f_{1}\leq g_{1}\mbox{ and }f_{2}\leq g_{2}\right) & \Longrightarrow & f_{1}+f_{2}\leq g_{1}+g_{2},\\
\left(f_{1}\preceq g_{1}\mbox{ and }f_{2}\preceq g_{2}\right) & \Longrightarrow & f_{1}+f_{2}\preceq g_{1}+g_{2}.
\end{eqnarray*}
They are related as follows: for any $f_{1},f_{2}\in\Hom(M,\Gamma)$,
we have 
\begin{eqnarray*}
f_{1}\preceq f_{2} & \iff & f_{1}\leq f_{2}\quad\mbox{and}\quad\pi(f_{1})=\pi(f_{2})
\end{eqnarray*}
where $\pi:\Hom(M,\Gamma)\twoheadrightarrow\Hom(M,\Gamma)/\Gamma R^{\ast}$
is the projection. Note that 
\[
f_{1}\leq f_{2}\Longrightarrow\pi(f_{2}-f_{1})\in(\Gamma R^{\ast})_{sat}/\Gamma R^{\ast}.
\]
In particular since $(\Gamma R^{\ast})_{sat}/\Gamma R^{\ast}$ is
torsion, 
\[
f_{1}\leq f_{2}\iff\exists n\in\mathbb{N}^{\times}:\quad nf_{1}\preceq nf_{2}.
\]

\subsection{~}

Since $WM_{d}=M$, both partial orders restrict to the identity on
the fixed point set of $W$ in $\Hom(M,\Gamma)$: for any $W$-invariant
$f_{1},f_{2}\in\Hom(M,\Gamma)$,
\[
\xymatrix{f_{1}\preceq f_{2}\ar@{=>}[r] & f_{1}\leq f_{2}\ar@{=>}[r] & \forall m\in M_{d}:\, f_{1}(m)\leq f_{2}(m)\ar@{=>}[d]\\
f_{1}=f_{2}\ar@{=>}[u] &  & \forall m\in M:\, f_{1}(m)\leq f_{2}(m)\ar@{=>}[ll]
}
\]

\subsection{~}

These partial orders yield the following characterization of $\Hom^{+}(M,\Gamma)$: 
\begin{lem}
\label{lem:CarDominantByDomOrder}The projection $\pi$ is $W$-invariant
and for every $f\in\Hom(M,\Gamma)$, 
\begin{eqnarray*}
f\in\Hom^{+}(M,\Gamma) & \iff & \forall w\in W:\quad wf\leq f,\\
 & \iff & \forall w\in W:\quad wf\preceq f.
\end{eqnarray*}
In particular, $\Hom(M,\Gamma)^{W}\subset\Hom^{+}(M,\Gamma)$.\end{lem}
\begin{proof}
For any $f\in\Hom(M,\Gamma)$ and $\alpha\in R$, 
\[
f-s_{\alpha}f=\left\langle -,\alpha^{\ast}\right\rangle f(\alpha)\quad\mbox{in}\quad\Hom(M,\Gamma).
\]
Thus $\pi$ is $W$-invariant, $wf\leq f\iff wf\preceq f$ for $w\in W$
and
\begin{eqnarray*}
f\in\Hom^{+}(M,\Gamma) & \iff & \forall\alpha\in R_{+}:\quad s_{\alpha}f\preceq f.
\end{eqnarray*}
It remains to establish that
\begin{eqnarray*}
\forall\alpha\in R_{+}:\quad s_{\alpha}f\preceq f & \Longrightarrow & \forall w\in W:\quad wf\preceq f
\end{eqnarray*}
and we argue by induction on the length $\ell(w)$ of $w$ in the
coxeter group $(W,(s_{\alpha})_{\alpha\in\Delta})$. If $\ell(w)>1$,
then $w=w's_{\alpha}$ for some $\alpha\in\Delta$, $w'\in W$ with
$\ell(w')<\ell(w)$. Thus
\[
f-wf=\left(f-w'f\right)+w'\left(f-s_{\alpha}f\right)=\left(f-w'f\right)+\left\langle -,w'\alpha^{\ast}\right\rangle f(\alpha).
\]
Now $f-w'f\in\Gamma_{+}R_{+}^{\ast}$ by induction, $f(\alpha)\geq0$
by assumption and $w'\alpha=-w\alpha\in R^{+}$ by \cite[VI, \S 1, $\mathrm{n}^\circ 1.6$, Corollaire 2]{BoLie46},
therefore $f-wf\in\Gamma_{+}R_{+}^{\ast}$, i.e.~$wf\preceq f$. 
\end{proof}

\subsection{~\label{sub:minimalset}}

If $\Gamma$ is (uniquely) divisible, the weak and strong dominance
order coincide. Moreover, for any $f\in\Hom^{+}(M,\Gamma)$, lemma~\ref{lem:CarDominantByDomOrder}
implies that $f^{\flat}\leq f$ where 
\[
f^{\flat}={\textstyle \frac{1}{\sharp Wf}\sum_{f'\in Wf}}f'\in\Hom(M,\Gamma)^{W}\subset\Hom^{+}(M,\Gamma).
\]
Thus $\Hom(M,\Gamma)^{W}$ is then precisely the set of minimal elements
in $\Hom^{+}(M,\Gamma)$. For $\Gamma=\mathbb{Z}$ and $\mathscr{R}$
reduced, semi-simple and adjoint (i.e.~$\mathbb{Z}R=M$), the strong
dominance order on $\Hom^{+}(M,\mathbb{Z})$ is studied in \cite{St98}.
Its minimal elements are the linear forms $f:M\rightarrow\mathbb{Z}$
such that $f(R_{+})\in\{0,1\}$.

\subsection{~\label{sub:carDomCoweightsByWeyl}}

Applying lemma~\ref{lem:CarDominantByDomOrder} to the dual root
system $\mathscr{R}^{\ast}$ with $\Gamma=\mathbb{Z}$, we obtain:
$(1)$ Let $M'$ be the kernel of the coinvariant map $M\twoheadrightarrow M_{W}$.
Then $M'\subset\mathbb{Z}R$. Since also $\alpha\equiv-\alpha$ in
$M_{W}$ for every $\alpha\in R$, actually $2\mathbb{Z}R\subset M'\subset\mathbb{Z}R$,
therefore $M'$ and $\mathbb{Z}R$ have the same saturation in $M$.
And: $(2)$ For every $m\in M$, 
\[
m\in M_{d}\iff\forall w\in W:\quad m-wm\in\mathbb{N}R_{+}\quad(\mbox{or: }(\mathbb{N}R_{+})_{sat}).
\]
Returning to the original root system $\mathscr{R}$ and the general
$\Gamma$, we thus find: 
\[
\Hom(M,\Gamma)^{W}=\Hom(M/M',\Gamma)=\Hom(M/\mathbb{Z}R,\Gamma)=\Hom(M/(\mathbb{Z}R)_{sat},\Gamma)
\]
and for every $f\in\Hom^{+}(M,\Gamma)$ and $m\in M_{d}$, 
\[
f(m)=\max f(Wm)\quad\mbox{in}\quad\Gamma.
\]

\subsection{~}

This last property yields the following characterisation of the restriction
of the weak order to the cone $\Hom^{+}(M,\Gamma)$. Any morphism
$f:M\rightarrow\Gamma$ induces a ring homomorphism $f:\mathbb{Z}[M]\rightarrow\mathbb{Z}[\Gamma]$.
For $x\in\mathbb{Z}[\Gamma]$, we denote by $\max(x)\in\Gamma$ the
largest element in the finite support of $x$ if $x\neq0$, and set
$\max(0)=0$. Then:
\begin{lem}
\label{lem:CarDominantOrderByMax}For $f_{1},f_{2}\in\Hom^{+}(M,\Gamma)$,
\begin{eqnarray*}
f_{1}\leq f_{2} & \iff & \forall x\in\mathbb{N}[M]^{W},:\quad\max\left(f_{1}(x)\right)\leq\max\left(f_{2}(x)\right).
\end{eqnarray*}
\end{lem}
\begin{proof}
Since $M_{d}$ is a fundamental domain for the action of $W$ on $M$,
\[
\mathbb{N}[M]^{W}=\left\{ {\textstyle x=\sum_{m\in M_{d}}x_{m}e_{m}:x_{m}\in\mathbb{N},\,\{x_{m}\neq0\}\mbox{\,\ finite}}\right\} 
\]
where $e_{m}=\sum_{m'\in Wm}m'$. For any $f:M\rightarrow\Gamma$
and $x\in\mathbb{N}[M]^{W}$ with $x\neq0$, $f(x)$ is also nonzero
with support $\cup_{m\in M_{d},x_{m}\neq0}f(Wm)$. Thus if $f$ is
moreover dominant, 
\[
\max\left(f(x)\right)=\max\left\{ f(m):m\in M_{d},\, x_{m}\neq0\right\} .
\]
The lemma easily follows. 
\end{proof}

\subsection{~\label{sub:WeylConeIsProductAbstract}}

Let $\mathscr{R}_{ss}=(M_{ss},R_{ss},M_{ss}^{\ast},R_{ss}^{\ast})$
be the semi-simplification of $\mathscr{R}$, as defined in~\cite[XXI 6.5]{SGA3.3r}.
Thus $M_{ss}=\mathbb{Z}R_{sat}$, $R_{ss}=R$, $M_{ss}^{\ast}$ is
the dual of $M_{ss}$ and $R_{ss}^{\ast}$ is the image of $R^{\ast}$
under the transpose map $M^{\ast}\twoheadrightarrow M_{ss}^{\ast}$.
The restriction map $f\mapsto f_{ss}=f\vert M_{ss}$ yields an epimorphism
$\Hom(M,\Gamma)\twoheadrightarrow\Hom(M_{ss},\Gamma)$ with kernel
$\Hom(M,\Gamma)^{W}$, inducing epimorphism of monoids $\Hom^{+}(M,\Gamma)\twoheadrightarrow\Hom^{+}(M_{ss},\Gamma)$
and $\Gamma_{+}R_{+}^{\ast}\twoheadrightarrow\Gamma_{+}R_{ss,+}^{\ast}$
-- the former is therefore also compatible with the weak and strong
partial orders. If $\Gamma$ is divisible, the average map $f\mapsto f^{\flat}$
of section~\ref{sub:minimalset} gives a retraction of $\Hom(M,\Gamma)^{W}\hookrightarrow\Hom(M,\Gamma)$,
and it follows that $f\mapsto(f_{ss},f^{\flat})$ yields an isomorphism
of partially ordered monoids 
\[
\Hom^{+}(M,\Gamma)\simeq\Hom^{+}(M_{ss},\Gamma)\times\Hom(M,\Gamma)^{W}.
\]
The (weak=strong) partial order on the product is then given by 
\[
(f_{ss},f^{\flat})\leq(g_{ss},g^{\flat})\iff f_{ss}\leq g_{ss}\quad\mbox{and}\quad f^{\flat}=g^{\flat}.
\]

\subsection{~\label{sub:CaractDomByConvEnv}}

If $\Gamma=\mathbb{R}$, then $\Hom^{+}(M,\mathbb{R})$ is a closed
cone in the finite dimensional $\mathbb{R}$-vector space $\Hom(M,\mathbb{R})$.
The (weak=strong) dominance partial order then has the following intrinsic
characterisation: for every $f_{1},f_{2}\in\Hom^{+}(M,\mathbb{R})$,
\[
f_{1}\leq f_{2}\quad\iff\quad f_{1}\mbox{\,\ lies in the convex hull of }W\cdot f_{2}.
\]
Indeed, suppose first that $f_{1}\leq f_{2}$. If $f_{1}$ does not
belong to the convex hull of $W\cdot f_{2}$, there is a linear form
$F$ on $\Hom(M,\mathbb{R})$ such that $F(f_{1})>F(wf_{2})$ for
every $w\in W$, which means that there is an $x$ in $M\otimes\mathbb{R}$
such that $f_{1}(x)>f_{2}(wx)$ for every $w\in W$. Since $M\otimes\mathbb{Q}$
is dense in $\mathbb{R}$, we may assume that $x\in M\otimes\mathbb{Q}$,
and then rescaling that actually $x\in M$. Let $y=wx$ be the unique
element in $Wx\cap M_{d}$~(\ref{sub:MdisFundDomain}). Then $f_{1}(y)\geq f_{1}(x)$
since $f_{1}\in\Hom^{+}(M,\mathbb{R})$ (\ref{sub:carDomCoweightsByWeyl})
and $f_{1}(x)>f_{2}(y)$ by construction, thus $f_{1}(y)>f_{2}(y)$
with $y\in M_{d}$, a contradiction. Suppose conversely that 
\[
f_{1}={\textstyle \sum_{w\in W}}\lambda_{w}wf_{2}\mbox{ in }\Hom(M,\mathbb{R})\mbox{ with }\lambda_{w}\in[0,1],\,{\textstyle \sum}_{w\in W}\lambda_{w}=1.
\]
Since $f_{2}\in\Hom^{+}(M,\mathbb{R})$, $wf_{2}\leq f_{2}$ for every
$w\in W$ by lemma~\ref{lem:CarDominantByDomOrder}, thus 
\[
f_{1}={\textstyle \sum_{w\in W}}\lambda_{w}wf_{2}\leq{\textstyle \sum_{w\in W}}\lambda_{w}f_{2}=f_{2}\mbox{ in }\Hom(M,\mathbb{R}).
\]

\subsection{~}

The partial orders on $W\backslash\Hom(M,\Gamma)$ which are induced
by the restriction of $\preceq$ and $\leq$ to the fundamental domain
$\Hom^{+}(M,\Gamma)\simeq W\backslash\Hom(M,\Gamma)$ of lemma~\ref{lem:DominantIsFundDom}
do not depend upon the chosen system of positive roots $R^{+}$ --
indeed, all such systems are conjugated under $W$. The weak order
even does not depend upon the root system giving rise to $W$: any
orbit $[f]\in W\backslash\Hom(M,\Gamma)$ yields a well-defined function
$[f]:\mathbb{Z}[M]^{W}\rightarrow\mathbb{Z}[\Gamma]$, and for every
$[f_{1}],[f_{2}]\in W\backslash\Hom(M,\Gamma)$, 
\[
[f_{1}]\leq[f_{2}]\quad\iff\quad\forall x\in\mathbb{N}[M]^{W}:\quad\max[f_{1}](x)\leq\max[f_{2}](x)
\]
by lemma~\ref{lem:CarDominantOrderByMax}. The strong order moreover
depends upon $\Gamma R^{\ast}$:
\[
[f_{1}]\preceq[f_{2}]\quad\iff\quad[f_{1}]\leq[f_{2}]\quad\mbox{and}\quad\pi[f_{1}]=\pi[f_{2}]
\]
where $\pi:W\backslash\Hom(M,\Gamma)\twoheadrightarrow\Hom(M,\Gamma)/\Gamma R^{\ast}$
is the projection from lemma~\ref{lem:CarDominantByDomOrder}.

\subsection{~\label{sub:startcompabsoluterelativedomorder}}

We record here a technical result comparing the partial orders attached
to, respectively, the relative and absolute root systems of a reductive
group $G$ over a field $k$. Let $S$ a maximal split torus in $G$,
$T$ a maximal torus in the centralizer $Z_{G}(S)$ of $S$, $k^{s}$
a separable closure of $k$, $\Gal_{k}=\Gal(k^{s}/k)$. Denote by
\[
\overline{\mathscr{R}}=\mathscr{R}\left(G_{k^{s}},T_{k^{s}}\right)=\left(\overline{M},\overline{R},\overline{M}^{\ast},\overline{R}^{\ast}\right)\quad\mbox{and}\quad\mathscr{R}=\mathscr{R}(G,S)=\left(M,R,M^{\ast},R^{\ast}\right)
\]
the absolute and relative root systems \cite[XXVI 7.12]{SGA3.3r},
with Weyl groups
\[
\overline{W}=W(\overline{\mathscr{R}})=W(G_{k^{s}},T_{k^{s}})\quad\mbox{and}\quad W=W(\mathscr{R})=W(G,S).
\]
We will also consider the subgroups $\overline{W}_{S}^{0}\subset\overline{W}_{S}\subset\overline{W}$
defined by
\[
\mathcal{N}_{G}(T)\cap\mathcal{Z}_{G}(S)/\mathcal{Z}_{G}(T)\subset\mathcal{N}_{G}(T)\cap\mathcal{N}_{G}(S)/\mathcal{Z}_{G}(T)\subset\mathcal{N}_{G}(T)/\mathcal{Z}_{G}(T).
\]
The embedding $S\hookrightarrow T$ induces a pair of dual morphisms
\[
\xyC{4pc}\xymatrix{\overline{M}\times\overline{M}^{\ast}\ar@<-3ex>@{->>}[d]_{\mathrm{res}}\ar[r]\sp(0.57){\left\langle -,-\right\rangle } & \mathbb{Z}\ar@{=}[d]\\
M\times M^{\ast}\ar@<-2ex>@{_{(}->}[u]_{\mathrm{res}^{\ast}}\ar[r]\sp(0.57){\left\langle -,-\right\rangle } & \mathbb{Z}
}
\]
with $R\subset\mathrm{res}(\overline{R})\subset R\cup\{0\}$. Set
$\overline{R}(0)=\left\{ \overline{\alpha}\in\overline{R}:\mathrm{res}(\overline{\alpha})=0\right\} $,
so that 
\[
\overline{\mathscr{R}}(0)=\mathscr{R}\left(Z_{G}(S)_{k^{s}},T_{k^{s}}\right)=\left(\overline{M},\overline{R}(0),\overline{M}^{\ast},\overline{R}(0)^{\ast}\right)
\]
is the absolute root system of $Z_{G}(S)$, with Weyl group 
\[
\overline{W}_{S}^{0}=W\left(\overline{\mathscr{R}}(0)\right)=W\left(Z_{G}(S)_{k^{s}},T_{k^{s}}\right).
\]
By \cite[5.5]{BoTi65}, the natural map $\overline{W}_{S}\rightarrow W$
identifies $W$ with $\overline{W}_{S}/\overline{W}_{S}^{0}$. Since
the restriction $\mathrm{res}:\overline{M}\twoheadrightarrow M$ is
equivariant with respect to $\overline{W}_{S}\twoheadrightarrow W$,
it induces a map 
\[
W\backslash\Hom\left(M,\Gamma\right)\hookrightarrow\overline{W}_{S}\backslash\Hom\left(\overline{M},\Gamma\right)\twoheadrightarrow\overline{W}\backslash\Hom\left(\overline{M},\Gamma\right).
\]

\begin{prop}
\label{prop:compdomorders}The map $W\backslash\Hom\left(M,\Gamma\right)\rightarrow\overline{W}\backslash\Hom\left(\overline{M},\Gamma\right)$
is injective. Moreover for any $f,g\in W\backslash\Hom\left(M,\Gamma\right)$
with image $\overline{f},\overline{g}\in\overline{W}\backslash\Hom\left(\overline{M},\Gamma\right)$,
\[
f\preceq g\Longrightarrow\overline{f}\preceq\overline{g}\Longrightarrow\overline{f}\leq\overline{g}\iff f\leq g.
\]
\end{prop}
\begin{rem}
We do not know if $\overline{f}\preceq\overline{g}$ implies $f\preceq g$.
The proof given below relates this to the following question. Recall
that $G$ is simply connected if and only if $\mathbb{Z}\overline{R}^{\ast}=\overline{M}^{\ast}$.
Is it true that then also $\mathbb{Z}R^{\ast}=M^{\ast}$? The dual
question has a positive answer: $G$ is adjoint if and only if $\mathbb{Z}\overline{R}=\overline{M}$,
in which case also $\mathbb{Z}R=M$. 
\end{rem}

\subsection{~}

Let $G_{der}\subset G$ be the derived group of $G$ \cite[XXII 6.2]{SGA3.3r},
$\pi:G_{sc}\twoheadrightarrow G_{der}$ the simply connected cover
of $G_{der}$ \cite[A.4.11]{CoGaPr10}. Then $T_{der}=T\cap G_{der}$
is a maximal torus in $G_{der}$ \cite[XXII 6.2.8]{SGA3.3r} and $T_{sc}=\pi^{-1}(T_{der})$
is a maximal torus in $G_{sc}$ \cite[XVII 7.1.1]{SGA3.2}. Let $S_{der}\subset T_{der}$
and $S_{sc}\subset T_{sc}$ be their maximal split subtori and denote
by $\mathscr{R}_{der}$, $\mathscr{R}_{sc}$, $\overline{\mathscr{R}}_{der}$
and $\overline{\mathscr{R}}_{sc}$ the corresponding relative and
absolute root systems. By~\cite[XXII 6.2.7]{SGA3.3r} and the definition
of $G_{sc}$, the morphisms $T_{sc}\twoheadrightarrow T_{der}\hookrightarrow T$
induce compatible bijections $\overline{R}\simeq\overline{R}_{der}\simeq\overline{R}_{sc}$
and $\overline{R}_{sc}^{\ast}\simeq\overline{R}_{der}^{\ast}\simeq\overline{R}^{\ast}$.
They also induce morphisms $S_{sc}\twoheadrightarrow S_{der}\hookrightarrow S$,
and $S=S_{der}\cdot R(G)_{sp}$ since $T=T_{der}\cdot R(G)$ \cite[XXII 6.2.8]{SGA3.3r},
where $R(G)_{sp}$ is the maximal split subtorus of the radical $R(G)$
of $G$. Since $R$, $R_{der}$ and $R_{sc}$ are the nonzero restrictions
of the elements of $\overline{R}$, $\overline{R}_{der}$ and $\overline{R}_{sc}$
to respectively $S$, $S_{der}$ and $S_{sc}$, it follows that our
morphisms also induce bijections $R\simeq R_{der}\simeq R_{sc}$.
Finally, the morphisms $G_{sc}\twoheadrightarrow G_{der}\hookrightarrow G$
induce embeddings $W_{sc}\hookrightarrow W_{der}\hookrightarrow W$
between the Weyl groups of the maximal tori $S_{sc}\subset G_{sc}$,
$S_{der}\subset G_{der}$ and $S\subset G$. It then follows from
the unicity of the relative coroots, or from their actual construction
in \cite[XXVI 7.4]{SGA3.3r}, that $S_{sc}\twoheadrightarrow S_{der}\hookrightarrow S$
also induces compatible bijections $R_{sc}^{\ast}\simeq R_{der}^{\ast}\simeq R^{\ast}$
(and the Weyl groups maps are bijective). Since composition with $S_{sc}\hookrightarrow T_{sc}$
maps $\mathbb{Z}R_{sc}^{\ast}$ into $X_{\ast}(T_{sc})=\mathbb{Z}\overline{R}_{sc}^{\ast}$,
we obtain 
\[
\mathrm{res}^{\ast}(\mathbb{Z}R^{\ast})\subset\mathbb{Z}\overline{R}^{\ast}.
\]

\subsection{~}

Fix a minimal parabolic subgroup $Z_{G}(S)\subset P\subset G$, a
Borel subgroup $T_{k^{s}}\subset B\subset P_{k^{s}}\subset G_{k^{s}}$,
let $\Delta\subset R_{+}\subset R$ and $\overline{\Delta}\subset\overline{R}_{+}\subset\overline{R}$
be the corresponding simple and positive roots, $\overline{\Delta}(0)\subset\overline{R}(0)_{+}\subset\overline{R}(0)$
the simple and positive roots attached to $Z_{G}(S)_{k^{s}}\cap B$,
so that $\overline{\Delta}(0)=\overline{\Delta}\cap\overline{R}(0)$,
$\overline{R}(0)_{+}=\overline{R}_{+}\cap\overline{R}(0)$, 
\[
R_{+}\subset\mathrm{res}\left(\overline{R}_{+}\right)\subset R_{+}\cup\{0\}\quad\mbox{and}\quad\Delta\subset\mathrm{res}\left(\overline{\Delta}\right)\subset\Delta\cup\{0\}.
\]
In particular, the morphism $\mathrm{res}_{\Gamma}^{\ast}:\Hom\left(M,\Gamma\right)\hookrightarrow\Hom\left(\overline{M},\Gamma\right)$
maps $\Hom^{+}\left(M,\Gamma\right)$ to $\Hom^{+}\left(\overline{M},\Gamma\right)$.
The first assertion of Proposition~\ref{prop:compdomorders} thus
follows from Lemma~\ref{lem:DominantIsFundDom}. For the remaining
claims, we have to establish the following inclusions: 
\[
\Gamma_{+}R_{+}^{\ast}\subset\left(\mathrm{res}_{\Gamma}^{\ast}\right)^{-1}\left(\Gamma_{+}\overline{R}_{+}^{\ast}\right)\subset\left(\mathrm{res}_{\Gamma}^{\ast}\right)^{-1}\left(\left(\Gamma_{+}\overline{R}_{+}^{\ast}\right)_{sat}\right)=\left(\Gamma_{+}R_{+}^{\ast}\right)_{sat}.
\]
We may assume that $\Gamma=\mathbb{Z}$, in which case $\mathrm{res}_{\Gamma}^{\ast}=\mathrm{res}^{\ast}:M^{\ast}\hookrightarrow\overline{M}^{\ast}$
and we want:
\[
\mathbb{N}R_{+}^{\ast}\subset\left(\mathrm{res}^{\ast}\right)^{-1}\left(\mathbb{N}\overline{R}_{+}^{\ast}\right)\subset\left(\mathrm{res}^{\ast}\right)^{-1}\left(\left(\mathbb{N}\overline{R}_{+}^{\ast}\right)_{sat}\right)=\left(\mathbb{N}R_{+}^{\ast}\right)_{sat}.
\]
The central inclusion is obvious. Since we already know that $\mathrm{res}^{\ast}(\mathbb{Z}R^{\ast})\subset\mathbb{Z}\overline{R}^{\ast}$
and 
\[
\mathbb{N}R_{+}^{\ast}=\mathbb{Z}R^{\ast}\cap\left(\mathbb{N}R_{+}^{\ast}\right)_{sat},\qquad\mathbb{N}\overline{R}_{+}^{\ast}=\mathbb{Z}\overline{R}^{\ast}\cap\left(\mathbb{N}\overline{R}_{+}^{\ast}\right)_{sat}
\]
it only remains to establish the following lemma.
\begin{lem}
\label{lem:comprelativeandabsolutecoroot}With notations as above,
\[
(\mathrm{res}^{\ast})^{-1}\left((\mathbb{N}\overline{R}_{+}^{\ast})_{sat}\right)=(\mathbb{N}R_{+}^{\ast})_{sat}\quad\mbox{and}\quad\left(\mathrm{res}\left(\overline{M}_{d}\right)\right)_{sat}=M_{d}.
\]

\end{lem}
Note that the second formula follows from the first one by duality.

\subsection{~}

The dual and bidual cones of the coroot cones $\mathbb{N}R_{+}^{\ast}$
and $\mathbb{N}\overline{R}_{+}^{\ast}$ are respectively equal to
the cones of dominant weights $M_{d}$ and $\overline{M}_{d}$, and
to their own saturations $(\mathbb{N}R_{+}^{\ast})_{sat}$ and $(\mathbb{N}\overline{R}_{+}^{\ast})_{sat}$.
By \ref{sub:carDomCoweightsByWeyl}, the restriction map $\mathrm{res}:\overline{M}\twoheadrightarrow M$
sends $\overline{M}_{d}$ into $M_{d}$. Indeed for $m=\mathrm{res}(\overline{m})$
with $\overline{m}\in\overline{M}_{d}$ and any $w\in W$ lifting
to $\overline{w}\in\overline{W}_{S}$, $m-wm=\mathrm{res}(\overline{m}-\overline{w}\overline{m})$
belongs to $\mathbb{N}R_{+}$ since $\overline{m}-\overline{w}\overline{m}$
belongs to $\mathbb{N}\overline{R}_{+}$. Passing to the bidual cones,
we thus obtain the easiest inclusion: 
\[
\mathrm{res}^{\ast}\left(\mathbb{N}R_{+}^{\ast}\right)_{sat}\subset\left(\mathbb{N}\overline{R}_{+}^{\ast}\right)_{sat}.
\]

\subsection{~\label{sub:DescTwistedActionRelRoots}}

For the opposite inclusion, we will need a few more notations:
\begin{enumerate}
\item Recall from \cite[XXI 1.2.1]{SGA3.3r} that the formulas 
\[
p(x)={\textstyle \sum_{\overline{\alpha}\in\overline{R}}}\left\langle x,\overline{\alpha}^{\ast}\right\rangle \overline{\alpha}^{\ast}\quad\mbox{and}\quad\ell(x)=\left\langle x,p(x)\right\rangle \quad(x\in\overline{M})
\]
define a morphism $p:\overline{M}\rightarrow\overline{M}^{\ast}$
and a map $\ell:\overline{M}\rightarrow\mathbb{N}$ such that 
\[
\forall\overline{\alpha}\in\overline{R}:\qquad\ell(\overline{\alpha})>0\quad\mbox{and}\quad2p(\overline{\alpha})=\ell(\overline{\alpha})\overline{\alpha}^{\ast},
\]

\item The Galois group $\Gal_{k}$ acts on $\overline{M}$, $\overline{M}^{\ast}$,
$\overline{W}$, $\overline{W}_{S}$ and $\overline{W}_{S}^{0}$,
the morphisms $\mathrm{res}$ and $\mathrm{res}^{\ast}$ are $\overline{W}_{S}\rtimes\Gal_{k}$-equivariant,
the latter identifies $M^{\ast}$ with $(\overline{M}^{\ast})^{\Gal_{k}}$,
the subset $\overline{R}\subset\overline{M}$ and $\overline{R}^{\ast}\subset\overline{M}^{\ast}$
are $\overline{W}\rtimes\Gal_{k}$-stable, and 
\[
\ast:\overline{R}\rightarrow\overline{R}^{\ast},\quad p:\overline{M}\rightarrow\overline{M}^{\ast},\quad\ell:\overline{M}\rightarrow\mathbb{N}
\]
are also $\overline{W}\rtimes\Gal_{k}$-equivariant (with the trivial
action on $\mathbb{N}$). 
\item For every $\gamma\in\Gal_{k}$, there is a unique $w_{\gamma}\in\overline{W}_{S}^{0}$
such that 
\[
w_{\gamma}\gamma\overline{R}(0)_{+}=\overline{R}(0)_{+}
\]
 by \cite[XXI 3.3.7]{SGA3.3r}, in which case also $w_{\gamma}\gamma\overline{R}_{+}=\overline{R}_{+}$
since 
\[
\overline{R}_{+}\setminus\overline{R}(0)_{+}=\overline{R}\cap\mathrm{res}^{-1}(R_{+})
\]
 is already stable under $\overline{W}_{S}^{0}\rtimes\Gal_{k}$. The
twisted action of $\Gal_{k}$ on $\overline{\mathscr{R}}$~\cite[6.2]{BoTi65}
is given by $\gamma\cdot=w_{\gamma}\gamma$. The above maps are equivariant
for the twisted action, which moreover preserves $\overline{\Delta}$
and $\overline{\Delta}(0)$. For every $\alpha\in\Delta$, the twisted
action is transitive on $\overline{\Delta}(\alpha)=\mathrm{res}^{-1}(\alpha)\cap\overline{\Delta}$
by \cite[6.4.2 \& 6.8]{BoTi65}. 
\item A root $\overline{\alpha}\in\overline{R}$ maps to $\alpha\in\Delta$
if and only if it is the sum of a (unique) simple root $\overline{\delta}\in\overline{\Delta}(\alpha)$
and some element of $\mathbb{N}\overline{R}(0)_{+}$. This yields
a partition of $\overline{R}(\alpha)=\overline{R}\cap\mathrm{res}^{-1}(\alpha)$
indexed by $\overline{\Delta}(\alpha)$, whose parts are permuted
transitively by the twisted action of $\Gal_{k}$. It follows that
\[
{\textstyle \sum}_{\overline{\alpha}\in\overline{R}(\alpha)}\overline{\alpha}=n_{\alpha}\cdot{\textstyle \sum}_{\overline{\delta}\in\overline{\Delta}(\alpha)}\overline{\delta}+\widetilde{\alpha}_{0}\quad\mbox{in}\quad\overline{M}
\]
with $n_{\alpha}\in\mathbb{N}^{\times}$ and $\widetilde{\alpha}_{0}\in\mathbb{N}\overline{R}(0)_{+}$.
Applying the morphism $2p$, we obtain
\[
{\textstyle \sum}_{\overline{\alpha}\in\overline{R}(\alpha)}\ell(\overline{\alpha})\overline{\alpha}^{\ast}=n_{\alpha}\ell_{\alpha}\cdot{\textstyle \sum}_{\overline{\delta}\in\overline{\Delta}(\alpha)}\overline{\delta}^{\ast}+2p(\widetilde{\alpha}_{0})\quad\mbox{in}\quad\overline{M}^{\ast}
\]
with $p(\widetilde{\alpha}_{0})\in\mathbb{N}\overline{R}(0)_{+}^{\ast}$
and $\ell_{\alpha}=\ell(\overline{\delta})\in\mathbb{N}^{\times}$
for any $\overline{\delta}\in\overline{\Delta}(\alpha)$. 
\end{enumerate}

\subsection{~}

Fix $\alpha\in\Delta$ and change $(G,S,T,P,B)$ to $(H,S,T,P\cap H,B\cap H_{k^{s}})$
where $H$ is the unique reductive subgroup of $G$ containing $Z_{G}(S)$
with 
\[
\Lie(H)=\Lie(G)_{0}\oplus\oplus_{\beta\in\mathbb{Z}\alpha\cap R}\Lie(G)_{\beta}
\]
This changes our absolute and relative based root data to respectively
\[
\left(\overline{M},\overline{R}_{\alpha},\overline{M}^{\ast},\overline{R}_{\alpha}^{\ast};\overline{\Delta}(\alpha)\cup\overline{\Delta}(0)\right)\quad\mbox{and}\quad\left(M,\mathbb{Z}\alpha\cap R,M^{\ast},(\mathbb{Z}\alpha\cap R)^{\ast};\{\alpha\}\right)
\]
where $\overline{R}_{\alpha}=\left\{ \overline{\beta}\in\overline{R}:\mathrm{res}(\overline{\beta})\in\mathbb{Z}\alpha\right\} $.
We thus already know that 
\[
\mathrm{res}^{\ast}\left(\mathrm{ind}(\alpha^{\ast})\right)={\textstyle \sum_{\overline{\delta}\in\overline{\Delta}(\alpha)}}\lambda_{\overline{\delta}}\overline{\delta}^{\ast}+\widetilde{\alpha}_{0}^{\ast}\quad\mbox{in}\quad\mathbb{N}\overline{R}_{+}\subset\overline{M}^{\ast}
\]
with $\lambda_{\overline{\delta}}\in\mathbb{N}$ and $\widetilde{\alpha}_{0}^{\ast}\in\mathbb{N}\overline{R}(0)_{+}^{\ast}$.
Since $\mathrm{res}^{\ast}(\mathrm{ind}(\alpha^{\ast}))$ is fixed
by the twisted action, the coefficient map $\overline{\delta}\mapsto\lambda_{\overline{\delta}}$
is constant on the (twisted) $\Gal_{k}$-orbit $\Delta(\alpha)$,
thus
\begin{equation}
\mathrm{res}^{\ast}\left(\mathrm{ind}(\alpha^{\ast})\right)=\lambda_{\alpha}\cdot{\textstyle \sum_{\overline{\delta}\in\overline{\Delta}(\alpha)}}\overline{\delta}^{\ast}+\widetilde{\alpha}_{0}^{\ast}\quad\mbox{in}\quad\overline{M}^{\ast}\label{eq:formulaforres*ind*alpha*}
\end{equation}
with $\lambda_{\alpha}\in\mathbb{N}$, therefore also 
\[
n_{\alpha}\ell_{\alpha}\cdot\mathrm{res}^{\ast}\left(\mathrm{ind}(\alpha^{\ast})\right)=\lambda_{\alpha}\cdot{\textstyle \sum}_{\overline{\alpha}\in\overline{R}(\alpha)}\ell(\overline{\alpha})\overline{\alpha}^{\ast}+\left(n_{\alpha}\ell_{\alpha}\cdot\widetilde{\alpha}_{0}^{\ast}-2\lambda_{\alpha}\cdot p(\widetilde{\alpha}_{0})\right).
\]
Since $\mathrm{res}^{\ast}\left(\mathrm{ind}(\alpha^{\ast})\right)$
and ${\textstyle \sum}_{\overline{\alpha}\in\overline{R}(\alpha)}\ell(\overline{\alpha})\overline{\alpha}^{\ast}$
are fixed by the usual (untwisted) action of $\Gal_{k}$ on $\overline{M}^{\ast}$,
so is the remaining term, which thus belongs to $\mathrm{res}^{\ast}(M^{\ast})$.
But 
\[
\mathrm{res}^{\ast}(M^{\ast})\cap\mathbb{Z}\overline{R}(0)^{\ast}=0
\]
 since any element of $\mathrm{res}^{\ast}(M^{\ast})$ pairs trivially
with all of $\overline{R}(0)$ while the restriction of the pairing
$\overline{M}\times\overline{M}^{\ast}\rightarrow\mathbb{Z}$ to $\mathbb{Z}\overline{R}(0)\times\mathbb{Z}\overline{R}(0)^{\ast}$
is non-degenerate by~\cite[XXI 1.2.5]{SGA3.3r}. We thus obtain the
following equalities in $\overline{M}^{\ast}$: $n_{\alpha}\ell_{\alpha}\cdot\widetilde{\alpha}_{0}^{\ast}=2\lambda_{\alpha}\cdot p(\widetilde{\alpha}_{0})$
and
\[
n_{\alpha}\ell_{\alpha}\cdot\mathrm{res}^{\ast}\left(\mathrm{ind}(\alpha^{\ast})\right)=\lambda_{\alpha}\cdot{\textstyle \sum}_{\overline{\alpha}\in\overline{R}(\alpha)}\ell(\overline{\alpha})\overline{\alpha}^{\ast}.
\]
In particular, $\lambda_{\alpha}\in\mathbb{N}^{\times}$ since $\mathrm{res}^{\ast}\left(\mathrm{ind}(\alpha^{\ast})\right)\neq0$.

\subsection{~}

Suppose now that $x\in M^{\ast}\otimes\mathbb{Q}$ is such that 
\[
\mathrm{res}^{\ast}(x)={\textstyle \sum_{\overline{\delta}\in\overline{\Delta}}}y_{\overline{\delta}}\cdot\overline{\delta}^{\ast}\quad\mbox{with}\quad y_{\overline{\delta}}\in\mathbb{Q}_{+}.
\]
Since the left hand side is invariant under the twisted action of
$\Gal_{k}$,
\[
\mathrm{res}^{\ast}(x)={\textstyle \sum}_{\alpha\in\Delta}y_{\alpha}\cdot{\textstyle \sum_{\overline{\delta}\in\overline{\Delta}(\alpha)}\overline{\delta}^{\ast}+\widetilde{x}_{0}^{\ast}\quad\mbox{with}\quad y_{\alpha}\in\mathbb{N},\quad\widetilde{x}_{0}^{\ast}\in\mathbb{Q}_{+}\overline{R}(0)_{+}^{\ast}.}
\]
Using (\ref{eq:formulaforres*ind*alpha*}) and $\mathbb{Q}\overline{R}(0)^{\ast}\cap\mathrm{res}^{\ast}(M^{\ast}\otimes\mathbb{Q})=0$,
we obtain 
\[
\left(\widetilde{x}_{0}^{\ast}-{\textstyle \sum}_{\alpha\in\Delta}y_{\alpha}\lambda_{\alpha}^{-1}\cdot\widetilde{\alpha}_{0}^{\ast}\right)=\mathrm{res}^{\ast}\left(x-{\textstyle \sum}_{\alpha\in\Delta}{\textstyle y_{\alpha}\lambda_{\alpha}^{-1}}\cdot\mathrm{ind}(\alpha^{\ast})\right)=0
\]
thus $x=\sum_{\alpha\in\Delta}y_{\alpha}\lambda_{\alpha}^{-1}\cdot\mathrm{ind}(\alpha^{\ast})$
belongs to $\mathbb{Q}_{+}R_{+}$. It follows that
\[
\left(\mathrm{res}^{\ast}\right)^{-1}\left(\left(\mathbb{N}\overline{R}_{+}^{\ast}\right)_{sat}\right)\subset\left(\mathbb{N}R_{+}^{\ast}\right)_{sat}\quad\mbox{in}\quad M^{\ast},
\]
which completes the proof of lemma~\ref{lem:comprelativeandabsolutecoroot}
and proposition~\ref{prop:compdomorders}.

\chapter{The Tannakian formalism}

Let $G$ be an affine and flat group scheme over $S$ and let $\Gamma=(\Gamma,+,\leq)$
be a non-trivial, totally ordered commutative group. We will define
below an equivariant diagram of fpqc sheaves $(\Sch/S)^{\circ}\rightarrow\Group$\nomenclature[Group]{$\Group$}{Category of groups.}
or $(\Sch/S)^{\circ}\rightarrow\Set$:
\[
\xyC{1,1pc}\xymatrix{G\ar[d]^{\iota} & \mbox{acting on} & \mathbb{G}^{\Gamma}(G)\ar[d]^{\iota}\ar[r]^{\Fi} & \mathbb{F}^{\Gamma}(G)\ar[d]^{\iota}\\
\Aut^{\otimes}(V)\ar[d] & \cdots & \mathbb{G}^{\Gamma}(V)\ar[d]\ar[r]^{\Fi} & \mathbb{F}^{\Gamma}(V)\ar[d]\\
\Aut^{\otimes}(V^{\circ})\,\mbox{ or }\Aut^{\otimes}(\omega)\ar[d] & \cdots & \mathbb{G}^{\Gamma}(V^{\circ})\,\mbox{ or }\mathbb{G}^{\Gamma}(\omega)\ar[d]\ar[r]^{\Fi} & \mathbb{F}^{\Gamma}(V^{\circ})\,\mbox{ or }\mathbb{F}^{\Gamma}(\omega)\ar[d]\\
\Aut^{\otimes}(\omega^{\circ}) & \cdots & \mathbb{G}^{\Gamma}(\omega^{\circ})\ar[r]^{\Fi} & \mathbb{F}^{\Gamma}(\omega^{\circ})
}
\]
The main result of this chapter will then be the following theorem:
\begin{thm}
\label{thm:MainTan}If $G$ is a reductive group over $S$, then 
\[
\begin{array}{ccccccc}
G & = & \Aut^{\otimes}(V) & = & \Aut^{\otimes}(V^{\circ}) & = & \Aut^{\otimes}(\omega)\\
\mathbb{G}^{\Gamma}(G) & = & \mathbb{G}^{\Gamma}(V) & = & \mathbb{G}^{\Gamma}(V^{\circ}) & = & \mathbb{G}^{\Gamma}(\omega)\\
\mathbb{F}^{\Gamma}(G) & = & \mathbb{F}^{\Gamma}(V) & = & \mathbb{F}^{\Gamma}(V^{\circ}) & \subset & \mathbb{F}^{\Gamma}(\omega)
\end{array}
\]
If moreover $G$ is isotrivial and $S$ quasi-compact, then also
\[
G=\Aut^{\otimes}(\omega^{\circ}),\quad\mathbb{G}^{\Gamma}(G)=\mathbb{G}^{\Gamma}(\omega^{\circ})\quad\mbox{and}\quad\mathbb{F}^{\Gamma}(G)=\mathbb{F}^{\Gamma}(\omega)=\mathbb{F}^{\Gamma}(\omega^{\circ}).
\]

\end{thm}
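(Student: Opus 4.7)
The statement splits horizontally into three layers, which can be attacked top to bottom, with the second and third feeding off the first.

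\emph{Row 1 (group equalities).} The identifications $G = \Aut^{\otimes}(V) = \Aut^{\otimes}(V^{\circ}) = \Aut^{\otimes}(\omega)$ are a form of the classical Tannakian reconstruction theorem for reductive group schemes. I would work from the outside in: the equality $G = \Aut^{\otimes}(\omega)$ is the standard Saavedra--Deligne theorem, while the two preceding equalities follow from the reductive hypothesis via the classical observation that, fpqc-locally on $S$, every finitely presented representation of a reductive $G$ is a direct summand of a tensor expression (sums of tensor products of $V$ and $V^{\vee}$) built from a single faithful $V$. Consequently a tensor automorphism of $\omega$ is already determined by its values on $V$, giving the three equalities simultaneously.

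\emph{Row 2 (graduations).} A $\Gamma$-graduation on any fiber functor $F$ in the tower is by definition a homomorphism $\mathbb{D}_{S}(\Gamma) \to \Aut^{\otimes}(F)$. Substituting Row~1 into this description immediately collapses the four schemes $\mathbb{G}^{\Gamma}(V)$, $\mathbb{G}^{\Gamma}(V^{\circ})$, $\mathbb{G}^{\Gamma}(\omega)$ and $\mathbb{G}^{\Gamma}(G)$ into a single object. This step is formal once Row~1 is known.

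\emph{Row 3 (filtrations).} The critical ingredient is the fpqc-local splitting theorem for $\Gamma$-filtrations on fiber functors, promised by section~$3$: every such filtration arises, after an fpqc cover, from a $\Gamma$-graduation. Combined with the Par-equivalence description of section~\ref{sub:DescParEqConj}, this presents $\mathbb{F}^{\Gamma}(-)$ as an fpqc quotient of $\mathbb{G}^{\Gamma}(-)$ by the unipotent radical action, so the equalities $\mathbb{F}^{\Gamma}(G) = \mathbb{F}^{\Gamma}(V) = \mathbb{F}^{\Gamma}(V^{\circ})$ reduce to those of Row~2. The inclusion $\mathbb{F}^{\Gamma}(V^{\circ}) \subset \mathbb{F}^{\Gamma}(\omega)$ is then restriction of filtrations from the larger category to the smaller; the reverse inclusion fails in general because, without further hypotheses, a filtration on the big fiber functor $\omega$ need not be determined by its restriction to the tensor subcategory generated by $V$.

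\emph{The isotrivial, quasi-compact case.} Under isotriviality of $G$ and quasi-compactness of $S$, the plan is to pass to a finite étale cover $S' \to S$ which splits $G$, handle the split case using the standard weight decomposition under a maximal torus (where every representation and every filtration is visibly controlled by a cocharacter of that torus), and then descend. This upgrades the earlier containment to an equality and yields the final line $G = \Aut^{\otimes}(\omega^{\circ})$, $\mathbb{G}^{\Gamma}(G) = \mathbb{G}^{\Gamma}(\omega^{\circ})$ and $\mathbb{F}^{\Gamma}(G) = \mathbb{F}^{\Gamma}(\omega) = \mathbb{F}^{\Gamma}(\omega^{\circ})$. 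The main obstacle, as the introduction warns, is the splitting theorem for $\Gamma$-filtrations on fiber functors: Saavedra's original proof (attributed to Deligne) must be generalized from $\Gamma = \mathbb{Z}$ over a field to an arbitrary totally ordered $\Gamma$ over an arbitrary base $S$, with a delicate patch needed for groups of type $G_{2}$ in characteristic $2$. Everything else in the theorem is essentially formal machinery around that technical heart.
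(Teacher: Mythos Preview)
Your plan rests on a misreading of the notation. In this paper $V$, $V^{\circ}$, $\omega$, $\omega^{\circ}$ are not representations but four different fiber functors: $V:\Rep(G)\to\QCoh/S$ is the forgetful morphism of stacks on \emph{all} quasi-coherent representations, $V^{\circ}:\Rep^{\circ}(G)\to\LF/S$ is its restriction to finite locally free representations, and $\omega_T$, $\omega_T^{\circ}$ are the corresponding plain functors $\Rep(G)(S)\to\QCoh(T)$ and $\Rep^{\circ}(G)(S)\to\LF(T)$ obtained by evaluating over a fixed base. So the comparison $\Aut^{\otimes}(V)$ versus $\Aut^{\otimes}(V^{\circ})$ is not about a tensor subcategory generated by a single faithful module, but about whether automorphisms of the forgetful functor on the big category $\Rep(G)$ are determined by their restriction to the small category $\Rep^{\circ}(G)$; and $V$ versus $\omega$ is about whether the base-change compatibility axiom (A0) is automatic. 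Your ``tensor generation by a faithful $V$'' argument is therefore aimed at the wrong target; the paper instead uses the regular representation $\rho_{\mathrm{reg}}$ and the comodule embedding $c_\rho:\rho\hookrightarrow\rho_0\otimes\rho_{\mathrm{reg}}$ to show that any $\eta$, $\mathcal{G}$ or $\mathcal{F}$ is determined by its value on $\rho_{\mathrm{reg}}$. Passing from $\omega$ to $\omega^{\circ}$ (or $V$ to $V^{\circ}$) then hinges on whether $\rho_{\mathrm{reg}}$ lies in the ind-completion $\Rep'(G)(S)$ of $\Rep^{\circ}(G)(S)$, which is what the hypotheses $\mathrm{HYP}(V^{\circ})$ and $\mathrm{HYP}(\omega^{\circ})$ encode and what Proposition~\ref{Pro:Wedhorn} verifies for reductive groups.

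Your Row~3 logic is also inverted relative to the paper. You propose to prove $\mathbb{F}^{\Gamma}(G)=\mathbb{F}^{\Gamma}(V)=\mathbb{F}^{\Gamma}(V^{\circ})$ first for arbitrary reductive $G$ and then upgrade to $\omega^{\circ}$ under isotriviality, but the splitting theorem is only established for filtrations on $\omega^{\circ}$ in the isotrivial quasi-compact case. The paper therefore argues in the opposite order: one first assembles the chain of monomorphisms $\mathbb{F}^{\Gamma}(G)\hookrightarrow\mathbb{F}^{\Gamma}(V)\hookrightarrow\mathbb{F}^{\Gamma}(V^{\circ}),\,\mathbb{F}^{\Gamma}(\omega)\hookrightarrow\mathbb{F}^{\Gamma}(\omega^{\circ})$, then proves that $\mathbb{G}^{\Gamma}(G)\to\mathbb{F}^{\Gamma}(\omega^{\circ})$ is an fpqc epimorphism under the extra hypotheses (this is where the splitting theorem and the $G_2$ patch live), forcing all inclusions to collapse in that case; the equalities $\mathbb{F}^{\Gamma}(G)=\mathbb{F}^{\Gamma}(V)=\mathbb{F}^{\Gamma}(V^{\circ})$ for general reductive $G$ over arbitrary $S$ are then deduced by fpqc descent from the isotrivial quasi-compact situation. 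You did correctly identify the splitting theorem as the technical heart, but the surrounding architecture needs to be rebuilt around the actual meaning of the four functors.
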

\noindent More precisely, we will first show that for any affine
flat group scheme $G$ over $S$, 
\[
\begin{array}{ccccc}
G & = & \Aut^{\otimes}(V) & = & \Aut^{\otimes}(\omega)\\
\mathbb{G}^{\Gamma}(G) & = & \mathbb{G}^{\Gamma}(V) & = & \mathbb{G}^{\Gamma}(\omega)\\
\mathbb{F}^{\Gamma}(G) & \subset & \mathbb{F}^{\Gamma}(V) & \subset & \mathbb{F}^{\Gamma}(\omega)
\end{array}
\]
Then, under technical assumptions which are satisfied by all reductive
groups (resp. all isotrivial reductive groups over quasi-compact bases),
we will also establish that
\[
\begin{array}{ccc}
\Aut^{\otimes}(V) & = & \Aut^{\otimes}(V^{\circ})\\
\mathbb{G}^{\Gamma}(V) & = & \mathbb{G}^{\Gamma}(V^{\circ})\\
\mathbb{F}^{\Gamma}(V) & \subset & \mathbb{F}^{\Gamma}(V^{\circ})
\end{array}\quad\left(\mbox{resp. }\begin{array}{ccc}
\Aut^{\otimes}(\omega) & = & \Aut^{\otimes}(\omega^{\circ})\\
\mathbb{G}^{\Gamma}(\omega) & = & \mathbb{G}^{\Gamma}(\omega^{\circ})\\
\mathbb{F}^{\Gamma}(\omega),\mathbb{F}^{\Gamma}(V^{\circ}) & \subset & \mathbb{F}^{\Gamma}(\omega^{\circ})
\end{array}\right).
\]
We will finally show that for $G$ reductive and isotrivial over a
quasi-compact $S$, the morphism $\mathbb{G}^{\Gamma}(G)\rightarrow\mathbb{F}^{\Gamma}(\omega^{\circ})$
is an epimorphism of fpqc sheaves on $S$. Thus 
\[
\mathbb{F}^{\Gamma}(G)=\mathbb{F}^{\Gamma}(V)=\mathbb{F}^{\Gamma}(V^{\circ})=\mathbb{F}^{\Gamma}(\omega)=\mathbb{F}^{\Gamma}(\omega^{\circ})
\]
in this case, and the remaining statement, namely 
\[
\mathbb{F}^{\Gamma}(G)=\mathbb{F}^{\Gamma}(V)=\mathbb{F}^{\Gamma}(V^{\circ})
\]
for a reductive group $G$ over an arbitrary $S$ easily follows.
\begin{rem}
As will be clear from the definitions below, the assertions about
$\omega^{\circ}$ and $V$ correspond to the two extreme cases of
a variety of possible statements about filtrations on fiber functors.
These cases were not clearly distinguished in \cite[Chapitre IV]{SaRi72},
which lead us to revisit its proofs. Our definition of isotriviality
for reductive groups in section~\ref{sub:DefOfIsotriv} is tailor-made
to fit the $\omega^{\circ}$-case: it is not even local for the Zariski
topology on the base. The corresponding Zariski-local notion was defined
in \cite[XXIV 4.1.2]{SGA3.3r}. For a locally isotrivial reductive
group, the above theorem works with a suitably (Zariski) localized
version of the fiber functor $\omega^{\circ}$.  
\end{rem}

\section{$\Gamma$-graduations and $\Gamma$-filtrations on quasi-coherent
sheaves}

\subsection{~}

Let $\mathcal{M}$ be a quasi-coherent sheaf on a scheme $X$. 
\begin{defn}
A $\Gamma$-graduation on $\mathcal{M}$ is a collection $\mathcal{G}=(\mathcal{G}_{\gamma})_{\gamma\in\Gamma}$
of quasi-coherent subsheaves of $\mathcal{M}$ such that $\mathcal{M}=\oplus_{\gamma\in\Gamma}\mathcal{G}_{\gamma}$.
A $\Gamma$-filtration on $\mathcal{M}$ is a collection $\mathcal{F}=(\mathcal{F}^{\gamma})_{\gamma\in\Gamma}$
of quasi-coherent subsheaves of $\mathcal{M}$ such that, locally
on $X$ for the fpqc topology, there exists a $\Gamma$-graduation
$\mathcal{G}=(\mathcal{G}_{\gamma})_{\gamma\in\Gamma}$ on $\mathcal{M}$
for which $\mathcal{F}^{\gamma}=\oplus_{\eta\geq\gamma}\mathcal{G}_{\eta}$.
We call any such $\mathcal{G}$ a splitting of $\mathcal{F}$ and
write $\mathcal{F}=\Fi(\mathcal{G})$.\nomenclature[Fil(Gcal)]{$\Fi (\mathcal{G})$}{$\Gamma$-filtration induced by a $\Gamma$-graduation $\mathcal{G}$, page \nomrefpage}\nomenclature[Gr_F(M)]{$\Gr_\mathcal{F}(\mathcal{M})$}{$\Gamma$-graded quasi-coherent sheaf associated with a $\Gamma$-filtration $\mathcal{F}$ on a quasi-coherent sheaf $\mathcal{M}$, page \nomrefpage}
We set 
\[
\mathcal{F}_{+}^{\gamma}=\cup_{\eta>\gamma}\mathcal{F}^{\eta}\quad\mbox{and}\quad\Gr_{\mathcal{F}}^{\gamma}\mathcal{M}=\mathcal{F}^{\gamma}/\mathcal{F}_{+}^{\gamma}.
\]
\end{defn}
\begin{lem}
\label{lem:BasicPropFil}Let $\mathcal{F}$ be a $\Gamma$-filtration
on $\mathcal{M}$. Then $\gamma\mapsto\mathcal{F}^{\gamma}$ is non-increasing,
exhaustive ($\cup\mathcal{F}^{\gamma}=\mathcal{M}$), separated ($\cap\mathcal{F}^{\gamma}=0$),
and for every $\gamma\in\Gamma$, 
\[
0\rightarrow\mathcal{F}^{\gamma}\rightarrow\mathcal{M}\rightarrow\mathcal{M}/\mathcal{F}^{\gamma}\rightarrow0\quad\mbox{and}\quad0\rightarrow\mathcal{F}_{+}^{\gamma}\rightarrow\mathcal{F}^{\gamma}\rightarrow\Gr_{\mathcal{F}}^{\gamma}(\mathcal{M})\rightarrow0
\]
are pure exact sequences of quasi-coherent sheaves (see~\ref{sub:AppendixPureSub}).\end{lem}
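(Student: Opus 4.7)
The plan is to reduce every assertion to the case of a split filtration via fpqc descent, where all statements become elementary consequences of the structure of a graded sheaf.

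First, I would observe that all four claims are local on $X$ for the fpqc topology. Monotonicity is an inclusion of quasi-coherent subsheaves, which can be tested locally. Exhaustiveness $\bigcup_{\gamma}\mathcal{F}^{\gamma}=\mathcal{M}$ is an equality of quasi-coherent subsheaves of $\mathcal{M}$ (the directed union is quasi-coherent as a filtered colimit), likewise separatedness $\bigcap_{\gamma}\mathcal{F}^{\gamma}=0$. Exactness of a sequence of quasi-coherent sheaves descends along fpqc covers by faithful flatness, and purity is fpqc-local by the appendix \ref{sub:AppendixPureSub}. So by the very definition of a $\Gamma$-filtration, we may assume that $\mathcal{F}=\mathrm{Fil}(\mathcal{G})$ for an actual $\Gamma$-graduation $\mathcal{G}=(\mathcal{G}_{\eta})_{\eta\in\Gamma}$ of $\mathcal{M}$, with $\mathcal{F}^{\gamma}=\oplus_{\eta\geq\gamma}\mathcal{G}_{\eta}$.

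Now I would verify the three structural properties in the split case directly. Monotonicity is immediate from the inclusion of index sets. For exhaustiveness and separatedness, I would work Zariski-locally on affine opens $U=\Spec R$, where $\mathcal{M}(U)=\oplus_{\eta}\mathcal{G}_{\eta}(U)$ is an honest direct sum of $R$-modules, so every section $m$ is a finite sum $\sum_{\eta\in F}m_{\eta}$ with $F\subset\Gamma$ finite. Exhaustiveness then follows by taking $\gamma_{0}=\min F$, which gives $m\in\mathcal{F}^{\gamma_{0}}(U)$. For separatedness, note that since $\Gamma$ is non-trivial and totally ordered it has no largest element (iterate any positive element); if $m\in\bigcap_{\gamma}\mathcal{F}^{\gamma}(U)$ were non-zero, the condition $F\subseteq[\gamma,\infty)$ for every $\gamma\in\Gamma$ would force $\min F$ to be an upper bound of $\Gamma$, contradiction, so $m=0$.

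For the two exact sequences, the splitting $\mathcal{G}$ exhibits each one as literally split: the graduation gives direct sum decompositions
\[
\mathcal{M}=\mathcal{F}^{\gamma}\oplus\bigl(\oplus_{\eta<\gamma}\mathcal{G}_{\eta}\bigr),\qquad \mathcal{F}^{\gamma}=\mathcal{F}_{+}^{\gamma}\oplus\mathcal{G}_{\gamma},
\]
identifying $\mathcal{M}/\mathcal{F}^{\gamma}\simeq\oplus_{\eta<\gamma}\mathcal{G}_{\eta}$ and $\mathrm{Gr}^{\gamma}_{\mathcal{F}}(\mathcal{M})\simeq\mathcal{G}_{\gamma}$. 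Split short exact sequences of quasi-coherent sheaves are in particular pure, by the formalism recalled in \ref{sub:AppendixPureSub}. Descending back along the original fpqc cover, we obtain both exact sequences and their purity on $X$.

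The only delicate point is the legitimacy of the initial reduction, specifically that (i) the formation of $\mathcal{F}_{+}^{\gamma}=\bigcup_{\eta>\gamma}\mathcal{F}^{\eta}$ and $\bigcup_{\gamma}\mathcal{F}^{\gamma}$ as quasi-coherent subsheaves commutes with fpqc base change (a filtered colimit of flat pullbacks), and (ii) purity of a sequence of quasi-coherent sheaves descends along fpqc covers. Both are standard and handled in the appendix on pure submodules; given these, nothing else in the argument is more than bookkeeping about the totally ordered set $\Gamma$.
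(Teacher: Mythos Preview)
Your proof is correct and follows exactly the same approach as the paper, which simply states ``Everything is local in the fpqc topology on $X$, trivial if $\mathcal{F}$ has a splitting.'' You have just spelled out the details of that one-line argument.
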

\begin{proof}
This is local in the fpqc topology on $X$, trivial if $\mathcal{F}$
has a splitting.
\end{proof}

\subsection{~}

These definitions give rise to a diagram of fpqc stacks over $\Sch$\nomenclature[Gr^GammaQCoh]{$\Gra^\Gamma \QCoh$}{Category of $\Gamma$-graded quasi-coherent sheaves on schemes, page \nomrefpage}\nomenclature[Fil^GammaQCoh]{$\Fil^\Gamma \QCoh$}{Category of $\Gamma$-filtered quasi-coherent sheaves on schemes, page \nomrefpage}\nomenclature[QCoh]{$\QCoh$}{Category of quasi-coherent sheaves on schemes, page \nomrefpage}\nomenclature[Fil]{$\Fi$}{Functor $\Fi : \Gra ^\Gamma \QCoh \rightarrow \Fil ^\Gamma \QCoh$, page \nomrefpage}\nomenclature[Gr]{$\Gr$}{Functor $\Gr : \Fil ^\Gamma \QCoh \rightarrow \Gra ^\Gamma \QCoh$, page \nomrefpage}
\[
\xyC{3pc}\xymatrix{\Gra^{\Gamma}\QCoh\ar@<2pt>[r]^{\Fi} & \Fil^{\Gamma}\QCoh\ar@<2pt>[l]^{\Gr}\ar[r]\sp(0.55){\mathrm{forg}} & \QCoh}
\]
whose fiber over a scheme $X$ is the diagram of exact $\otimes$-functors\nomenclature[Gr^GammaQCoh(X)]{$\Gra^\Gamma \QCoh(X)$}{Category of $\Gamma$-graded quasi-coherent sheaves on $X$, page \nomrefpage}\nomenclature[Fil^GammaQCoh(X)]{$\Fil^\Gamma \QCoh(X)$}{Category of $\Gamma$-filtered quasi-coherent sheaves on $X$, page \nomrefpage}\nomenclature[QCoh(X)]{$\QCoh(X)$}{Category of quasi-coherent sheaves on $X$, page \nomrefpage}
\[
\xymatrix{\Gra^{\Gamma}\QCoh(X)\ar@<2pt>[r]^{\Fi} & \Fil^{\Gamma}\QCoh(X)\ar@<2pt>[l]^{\Gr}\ar[r]\sp(0.55){\mathrm{forg}} & \QCoh(X)}
\]
where $\QCoh(X)$ is the abelian $\otimes$-category of quasi-coherent
sheaves $\mathcal{M}$ on $X$, $\Gra^{\Gamma}\QCoh(X)$ is the abelian
$\otimes$-category of $\Gamma$-graded quasi-coherent sheaves $(\mathcal{M},\mathcal{G})$
on $X$, and $\Fil^{\Gamma}\QCoh(X)$ is the exact (in Quillen's sense)
$\otimes$-category of $\Gamma$-filtered quasi-coherent sheaves $(\mathcal{M},\mathcal{F})$
on $X$. The morphisms in these last two categories are the morphisms
of the underlying quasi-coherent sheaves which preserve the given
collections of subsheaves, and the $\otimes$-products are given by
the usual formulas
\[
\begin{array}{rcl}
(\mathcal{M}_{1},\mathcal{G}_{1})\otimes(\mathcal{M}_{2},\mathcal{G}_{2})=(\mathcal{M}_{1}\otimes\mathcal{M}_{2},\mathcal{G}) & \mbox{with} & \mathcal{G}_{\gamma}=\oplus_{\gamma_{1}+\gamma_{2}=\gamma}\mathcal{G}_{1,\gamma_{1}}\otimes\mathcal{G}_{2,\gamma_{2}},\\
(\mathcal{M}_{1},\mathcal{F}_{1})\otimes(\mathcal{M}_{2},\mathcal{F}_{2})=(\mathcal{M}_{1}\otimes\mathcal{M}_{2},\mathcal{F}) & \mbox{with} & \mathcal{F}^{\gamma}=\sum_{\gamma_{1}+\gamma_{2}=\gamma}\mathcal{F}_{1}^{\gamma_{1}}\otimes\mathcal{F}_{2}^{\gamma_{2}}.
\end{array}
\]
The second formula makes sense by the purity mentioned above, and
indeed defines a $\Gamma$-filtration on $\mathcal{M}_{1}\otimes\mathcal{M}_{2}$:
if $\mathcal{G}_{i}$ splits $\mathcal{F}_{i}$ for $i\in\{1,2\}$,
then $\mathcal{G}$ splits $\mathcal{F}$. We have
\begin{eqnarray*}
\mathcal{F}_{+}^{\gamma} & = & {\textstyle \sum_{\gamma_{1}+\gamma_{2}>\gamma}}\mathcal{F}_{1}^{\gamma_{1}}\otimes\mathcal{F}_{2}^{\gamma_{2}}\\
\mbox{and}\quad\Gr_{\mathcal{F}}^{\gamma}(\mathcal{M}_{1}\otimes\mathcal{M}_{2}) & \simeq & \oplus_{\gamma_{1}+\gamma_{2}=\gamma}\Gr_{\mathcal{F}_{1}}^{\gamma_{1}}(\mathcal{M}_{1})\otimes\Gr_{\mathcal{F}_{2}}^{\gamma_{2}}(\mathcal{M}_{2}).
\end{eqnarray*}
The first formula is trivial and gives the morphism (from right to
left) in the second formula, which is easily seen to be an isomorphism
by localization to an fpqc cover of $X$ over which $\mathcal{F}_{1}$
and $\mathcal{F}_{2}$ both acquire a splitting. The neutral objects
for $\otimes$ are
\[
1_{X}=(\mathcal{O}_{X},\mathcal{G}\mbox{ of }\mathcal{F})\mbox{ with }\mathcal{G}_{\gamma}=\begin{cases}
\mathcal{O}_{X} & \mbox{for }\gamma=0,\\
0 & \mbox{otherwise}
\end{cases}\quad\mbox{and}\quad\mathcal{F}^{\gamma}=\begin{cases}
\mathcal{O}_{X} & \mbox{for }\gamma\leq0,\\
0 & \mbox{otherwise.}
\end{cases}
\]
A morphism $(\mathcal{M}_{1},\mathcal{F}_{1})\rightarrow(\mathcal{M}_{2},\mathcal{F}_{2})$
is strict if $\mathrm{Im}(\mathcal{F}_{1}^{\gamma})=\mathcal{F}_{2}^{\gamma}\cap\mathrm{Im}(\mathcal{M}_{1})$
in $\mathcal{M}_{2}$ for every $\gamma\in\Gamma$. The short exact
sequences of $\Fil^{\Gamma}\QCoh(X)$ are those made of strict arrows
whose underlying sequence of sheaves is short exact. The formulas
\[
\Fi(\mathcal{M},\mathcal{G})=(\mathcal{M},\Fi(\mathcal{G})),\quad\Gr(\mathcal{M},\mathcal{F})=\oplus_{\gamma}\Gr_{\mathcal{F}}^{\gamma}\mathcal{M}\quad\mbox{and}\quad\mathrm{forg(\mathcal{M},-)=\mathcal{M}}
\]
define the exact $\otimes$-functors between our three categories.
Finally the ``base change functors'' defining the fibered category
structures on $\Gra^{\Gamma}\QCoh$ and $\Fil^{\Gamma}\QCoh$ are
induced by the base change functors on $\QCoh$ (thanks to the purity
of the subsheaves). It is well-known that $\QCoh$ is an fpqc stack
over $\Sch$ (see for instance~\cite[Theorem 4.23]{Vi05}) and it
follows rather formally from their definitions that the other two
fibered categories are also fpqc stacks over $\Sch$. We denote by
\[
\xymatrix{\Gra^{\Gamma}\QCoh/S\ar@<2pt>[r]^{\Fi} & \Fil^{\Gamma}\QCoh/S\ar@<2pt>[l]^{\Gr}\ar[r]\sp(0.58){\mathrm{forg}} & \QCoh}
/S
\]
the corresponding stacks over $\Sch/S$ where $S$ is any base scheme.

\section{$\Gamma$-graduations and $\Gamma$-filtrations on fiber functors}

\subsection{~}

Let $s:G\rightarrow S$ be an affine and flat group scheme. We denote
by \nomenclature[Rep(G)]{$\Rep (G)$}{Fibered category of algebraic representations of $G$ on quasi-coherent sheaves, page \nomrefpage}$\Rep(G)$
the fpqc stack over $\Sch/S$ whose fiber over $T\rightarrow S$ is
the abelian $\otimes$-category $\Rep(G)(T)$ \nomenclature[Rep(G)(X)]{$\Rep (G)(X)$}{Category of algebraic representations of $G$ on quasi-coherent sheaves over $X$, page \nomrefpage}of
quasi-coherent $G_{T}$\emph{-}$\mathcal{O}_{T}$-modules as defined
in \cite[I 4.7.1]{SGA3.1r}. Then\nomenclature[A(G)]{$\mathcal{A} (G)$}{Hopf algebra of $G$.}
\[
\mathcal{A}(G)=s_{\ast}\mathcal{O}_{G}
\]
is a quasi-coherent Hopf algebra over $S$ and $\Rep(G)(T)$ is $\otimes$-equivalent
to the category of quasi-coherent $\mathcal{A}(G_{T})$-comodules
where $\mathcal{A}(G_{T})=\mathcal{A}(G)_{T}$. Let\nomenclature[V]{$V$}{Fiber functor $V: \Rep (G) \rightarrow \QCoh / S$, page \nomrefpage}
\[
V:\Rep(G)\rightarrow\QCoh/S
\]
be the forgetful functor. For any $S$-scheme $q:T\rightarrow S$,
we denote by\nomenclature[omega_X]{$\omega _X$}{Fiber functor $\omega _X: \Rep (G)(S) \rightarrow \QCoh (X) $, page \nomrefpage}
\[
V_{T}:\Rep(G_{T})\rightarrow\QCoh/T\quad\mbox{and}\quad\omega_{T}:\Rep(G)(S)\rightarrow\QCoh(T)
\]
the induced morphism of fpqc stack over $\Sch/T$ and fiber functor.
Note that $\omega_{T}$ is a right exact $\otimes$-functor. It also
commutes with arbitrary colimits and preserves pure monomorphisms
and pure short exact sequences, where purity in $\Rep(G)(S)$ refers
to purity of the underlying objects in $\QCoh(S)$.

\subsection{~}

A $\Gamma$-graduation $\mathcal{G}$ on $V_{T}:\Rep(G_{T})\rightarrow\QCoh/T$
is a factorization
\[
\xymatrix{\Rep(G_{T})\ar[r]\sp(0.45){\mathcal{G}} & \Gra^{\Gamma}\QCoh/T\ar[r]\sp(0.55){\mathrm{forg}} & \QCoh/T}
\]
 of $V_{T}$ such that if $\mathcal{G}_{\gamma}:\Rep(G_{T})\rightarrow\QCoh/T$
is the $\gamma$-component of $\mathcal{G}$, 
\begin{lyxlist}{MMM}
\item [{(G0)}] For every $T$-morphism $f:X\rightarrow Y$, $\rho\in\Rep(G)(Y)$
and $\gamma\in\Gamma$,
\[
f^{\ast}(\mathcal{G}_{\gamma}(\rho))=\mathcal{G}_{\gamma}(f^{\ast}\rho).
\]

\item [{(G1)}] For every $T$-scheme $X\rightarrow T$, $\rho_{1},\rho_{2}\in\Rep(G)(X)$
and $\gamma\in\Gamma$, 
\[
\mathcal{G}_{\gamma}(\rho_{1}\otimes\rho_{2})=\oplus_{\gamma_{1}+\gamma_{2}=\gamma}\mathcal{G}_{\gamma_{1}}(\rho_{1})\otimes\mathcal{G}_{\gamma_{2}}(\rho_{2}).
\]

\end{lyxlist}
Thus (G0) says that each $\mathcal{G}_{\gamma}$ is a morphism of
fibered categories over $\Sch/T$. Then (G1) implies that $\mathcal{G}_{0}(\rho)=\mathcal{M}$
and $\mathcal{G}_{\gamma}(\rho)=0$ for $\gamma\neq0$ when $\rho$
is the trivial representation of $G_{X}$ on $\mathcal{M}\in\QCoh(X)$
(one proves it first for $\mathcal{M}=\mathcal{O}_{X}$).

\subsection{~}

A $\Gamma$-graduation $\mathcal{G}$ on $\omega_{T}:\Rep(G)(S)\rightarrow\QCoh(T)$
is a factorization
\[
\xymatrix{\Rep(G)(S)\ar[r]\sp(0.45){\mathcal{G}} & \Gra^{\Gamma}\QCoh(T)\ar[r]\sp(0.55){\mathrm{forg}} & \QCoh(T)}
\]
of $\omega_{T}$ such that if $\mathcal{G}_{\gamma}:\Rep(G)(S)\rightarrow\QCoh(T)$
is the $\gamma$-component of $\mathcal{G}$, 
\begin{lyxlist}{MMM}
\item [{(G1)}] For every $\rho_{1},\rho_{2}\in\Rep(G)(S)$ and $\gamma\in\Gamma$,
\[
\mathcal{G}_{\gamma}(\rho_{1}\otimes\rho_{2})=\oplus_{\gamma_{1}+\gamma_{2}=\gamma}\mathcal{G}_{\gamma_{1}}(\rho_{1})\otimes\mathcal{G}_{\gamma_{2}}(\rho_{2}).
\]

\item [{(G2)}] For the trivial representation $\rho$ of $G$ on $\mathcal{M}\in\QCoh(S)$,
\[
\mathcal{G}_{0}(\rho)=\mathcal{M}\quad\mbox{and}\quad\mathcal{G}_{\gamma}(\rho)=0\mbox{\,\ if }\gamma\neq0.
\]

\end{lyxlist}
Note that each $\mathcal{G}_{\gamma}$ is right exact, commutes with
arbitrary colimits and preserves pure monomorphisms and pure short
exact sequences.

\subsection{~}

A $\Gamma$-filtration $\mathcal{F}$ on $V_{T}:\Rep(G_{T})\rightarrow\QCoh/T$
is a factorization
\[
\xymatrix{\Rep(G_{T})\ar[r]\sp(0.45){\mathcal{F}} & \Fil^{\Gamma}\QCoh/T\ar[r]\sp(0.55){\mathrm{forg}} & \QCoh/T}
\]
of $V_{T}$ such that if $\mathcal{F}^{\gamma}:\Rep(G_{T})\rightarrow\QCoh/T$
is the $\gamma$-component of $\mathcal{F}$, 
\begin{lyxlist}{MMM}
\item [{(F0)}] For every $T$-morphism $f:X\rightarrow Y$, $\rho\in\Rep(G)(Y)$
and $\gamma\in\Gamma$,
\[
f^{\ast}(\mathcal{F}^{\gamma}(\rho))=\mathcal{F}^{\gamma}(f^{\ast}\rho).
\]

\item [{(F1)}] For every $X\rightarrow T$, $\rho_{1},\rho_{2}\in\Rep(G)(X)$
and $\gamma\in\Gamma$, 
\[
\mathcal{F}^{\gamma}(\rho_{1}\otimes\rho_{2})={\textstyle \sum_{\gamma_{1}+\gamma_{2}=\gamma}}\mathcal{F}^{\gamma_{1}}(\rho_{1})\otimes\mathcal{F}^{\gamma_{2}}(\rho_{2}).
\]

\item [{(F3)}] For every $X\rightarrow T$ and $\gamma\in\Gamma$, $\mathcal{F}^{\gamma}:\Rep(G)(X)\rightarrow\QCoh(X)$
is exact.
\end{lyxlist}
Thus (F0) says that each $\mathcal{F}^{\gamma}$ is a morphism of
fibered categories over $\Sch/T$. Then again (F1) and (F3) imply
that $\mathcal{F}^{\gamma}(\rho)=\mathcal{M}$ for $\gamma\leq0$
and $\mathcal{F}^{\gamma}(\rho)=0$ for $\gamma>0$ when $\rho$ is
the trivial representation of $G$ on $\mathcal{M}\in\QCoh(X)$. \nomenclature[Gr_F^g(tau)]{$\Gr _\mathcal{F} ^\gamma (\tau)$}{Graded piece of the $\Gamma$-filtration $\mathcal{F}(\tau)$ on $V(\tau)$.}

\subsection{~}

A $\Gamma$-filtration $\mathcal{F}$ on $\omega_{T}:\Rep(G)(S)\rightarrow\QCoh(T)$
is a factorization
\[
\xymatrix{\Rep(G)(S)\ar[r]\sp(0.45){\mathcal{F}} & \Fil^{\Gamma}\QCoh(T)\ar[r]\sp(0.55){\mathrm{forg}} & \QCoh(T)}
\]
of $\omega_{T}$ such that if $\mathcal{F}^{\gamma}:\Rep(G)(S)\rightarrow\QCoh(T)$
is the $\gamma$-component of $\mathcal{F}$, 
\begin{lyxlist}{MMM}
\item [{(F1)}] For every $\rho_{1},\rho_{2}\in\Rep(G)(S)$ and $\gamma\in\Gamma$,
\[
\mathcal{F}^{\gamma}(\rho_{1}\otimes\rho_{2})={\textstyle \sum_{\gamma_{1}+\gamma_{2}=\gamma}}\mathcal{F}^{\gamma_{1}}(\rho_{1})\otimes\mathcal{F}^{\gamma_{2}}(\rho_{2}).
\]

\item [{(F2)}] For the trivial representation $\rho$ of $G$ on $\mathcal{M}\in\QCoh(S)$,
\[
\mathcal{F}^{\gamma}(\rho)=\mathcal{M}\mbox{ if }\gamma\leq0\quad\mbox{and}\quad\mathcal{F}^{\gamma}(\rho)=0\mbox{ if }\gamma>0.
\]

\item [{(F3)}] For every $\gamma\in\Gamma$, $\mathcal{F}^{\gamma}:\Rep(G)(S)\rightarrow\QCoh(T)$
is right exact.
\end{lyxlist}
Since $\mathcal{F}^{\gamma}$ preserves arbitrary direct sums (as
a subfunctor of $\omega_{T}$ which does), this last axiom implies
that $\mathcal{F}^{\gamma}$ commutes with arbitrary colimits. It
also preserves pure monomorphisms and pure short exact sequences.

\subsection{~}

We may now introduce a diagram of fpqc sheaves $(\Sch/S)^{\circ}\rightarrow\Set$,\nomenclature[G^Gamma(V)]{$\mathbb{G}^\Gamma (V)$}{Sheaf of $\Gamma$-graduations on $V$, page \nomrefpage}\nomenclature[G^Gamma(omega)]{$\mathbb{G}^\Gamma (\omega )$}{Sheaf of $\Gamma$-graduations on $\omega$, page \nomrefpage}\nomenclature[F^Gamma(V)]{$\mathbb{F}^\Gamma (V)$}{Sheaf of $\Gamma$-filtrations on $V$, page \nomrefpage}\nomenclature[F^Gamma(omega)]{$\mathbb{F}^\Gamma (\omega )$}{Sheaf of $\Gamma$-filtrations on $\omega$, page \nomrefpage}\nomenclature[Fil]{$\Fi$}{Morphism $\Fi : \mathbb{G}^\Gamma (X) \rightarrow \mathbb{F}^\Gamma (X)$ for $X=V$ or $\omega$, page \nomrefpage}
\[
\xyC{2pc}\xymatrix{\mathbb{G}^{\Gamma}(V)\ar[r]^{\mathrm{res}}\ar[d]_{\Fi} & \mathbb{G}^{\Gamma}(\omega)\ar[d]_{\Fi}\\
\mathbb{F}^{\Gamma}(V)\ar[r]^{\mathrm{res}} & \mathbb{F}^{\Gamma}(\omega)
}
\]
The four presheaves map an $S$-scheme $T$ to the corresponding set
of $\Gamma$-graduations or $\Gamma$-filtrations on $V_{T}$ or $\omega_{T}$,
the $\Fil$-morphisms are given by post-composition with the eponymous
functors, and the $\mathrm{res}$ morphisms map $\mathcal{G}$ or
$\mathcal{F}$ on $V_{T}$ to 
\[
\Rep(G)(S)\rightarrow\Rep(G)(T)\stackrel{\mathcal{G}_{T}}{\longrightarrow}\Gra^{\Gamma}\QCoh(T)\quad\mbox{or}\quad\cdots\stackrel{\mathcal{F}_{T}}{\longrightarrow}\Fil^{\Gamma}\QCoh(T).
\]
The fact that all four presheaves are actually fpqc sheaves on $S$
is essentially a formal consequence of the fact that the corresponding
fibered categories of $\Gamma$-graded and $\Gamma$-filtered quasi-coherent
sheaves are fpqc stacks over $\Sch/S$.

\subsection{~}

The above diagram is equivariant with respect to a morphism\nomenclature[Aut^o(V)]{$\Aut^{\otimes}(V)$}{Sheaf of tensor automorphisms of $V$, page \nomrefpage}\nomenclature[Aut^o(omega)]{$\Aut^{\otimes}(\omega )$}{Sheaf of tensor automorphisms of $\omega$, page \nomrefpage}
\[
\xymatrix{\Aut^{\otimes}(V)\ar[r]^{\mathrm{res}} & \Aut^{\otimes}(\omega)}
\]
of fpqc sheaves of groups on $\Sch/S$, with $\Aut^{\otimes}(\star)$
acting on $\mathbb{G}^{\Gamma}(\star)$ and $\mathbb{F}^{\Gamma}(\star)$
and mapping an $S$-scheme $T$ to a group $\Aut^{\otimes}(\star_{T})$
defined as follows: $\Aut^{\otimes}(V_{T})$ is the group of all automorphisms
$\eta:V_{T}\rightarrow V_{T}$ such that:
\begin{lyxlist}{MMM}
\item [{(A0)}] For every $T$-morphism $f:X\rightarrow Y$ and $\rho\in\Rep(G)(Y)$,
\[
\eta_{f^{\ast}(\rho)}=f^{\ast}(\eta_{\rho}).
\]

\item [{(A1)}] For every $T$-scheme $X\rightarrow T$ and $\rho_{1},\rho_{2}\in\Rep(G)(X)$,
\[
\eta_{\rho_{1}\otimes\rho_{2}}=\eta_{\rho_{1}}\otimes\eta_{\rho_{2}}.
\]

\end{lyxlist}
These conditions imply as above that $\eta_{\rho}=\mathrm{Id}_{\mathcal{M}}$
when $\rho$ is the trivial representation of $G_{X}$ on a quasi-coherent
$\mathcal{O}_{X}$-module $\mathcal{M}$. Similarly, $\Aut^{\otimes}(\omega_{T})$
is the group of all automorphisms $\eta:\omega_{T}\rightarrow\omega_{T}$
such that: 
\begin{lyxlist}{MMM}
\item [{(A1)}] For every $\rho_{1},\rho_{2}\in\Rep(G)(S)$, 
\[
\eta_{\rho_{1}\otimes\rho_{2}}=\eta_{\rho_{1}}\otimes\eta_{\rho_{2}}.
\]

\item [{(A2)}] For the trivial representation $\rho$ of $G$ on $\mathcal{M}\in\QCoh(S)$,
\[
\eta_{\rho}=\mathrm{Id}_{\mathcal{M}}.
\]

\end{lyxlist}
The fact that these two presheaves are actually fpqc sheaves on $S$
is essentially a formal consequence of the fact that $\QCoh/S$ is
a stack over $\Sch/S$. The morphism between them sends $\eta\in\Aut^{\otimes}(V_{T})$
to the automorphism of $\omega_{T}$ which maps $\rho$ in $\Rep(G)(S)$
to the automorphism $\eta_{\rho_{T}}$ of $V(\rho_{T})=\omega_{T}(\rho)$,
the actions mentioned above are the obvious ones, and the claimed
equivariance is equally straightforward.

\subsection{~\label{sub:Aut(F)Aut(G)GrF}}

For $\star\in\{V,\omega\}$ and $\mathcal{X}\in\mathbb{G}^{\Gamma}(\star)(T)$
or $\mathbb{F}^{\Gamma}(\star)(T)$, we denote by\nomenclature[Aut^o(G)]{$\Aut^{\otimes}(\mathcal{G})$}{Sheaf of tensor automorphisms of a fiber functor $\mathcal{X}$ preserving a $\Gamma$-graduation $\mathcal{G}$ on $\mathcal{X}$, page \nomrefpage}\nomenclature[Aut^o(F)]{$\Aut^{\otimes}(\mathcal{F})$}{Sheaf of tensor automorphisms of a fiber functor $\mathcal{X}$ preserving a $\Gamma$-filtration $\mathcal{F}$ on $\mathcal{X}$, page \nomrefpage}
\[
\Aut^{\otimes}(\mathcal{X}):(\Sch/T)^{\circ}\rightarrow\Group
\]
the stabilizer of $\mathcal{X}$ in the restriction $\Aut^{\otimes}(\star)\vert_{T}$
of $\Aut^{\otimes}(\star)$ to $\Sch/T$. It is an fpqc subsheaf of
$\Aut^{\otimes}(\star)\vert_{T}$. For $\mathcal{X}=\mathcal{F}$
in $\mathbb{F}^{\Gamma}(\star)(T)$, there is also a morphism\nomenclature[Gr^b]{$\Gr ^\bullet $}{Morphism $\Gr ^\bullet : \Aut ^\otimes (\mathcal{F}) \rightarrow \Aut ^\otimes (\Gr ^\bullet _\mathcal{F})$, page \nomrefpage}\nomenclature[Gr^bb]{$\mathrm{Grr}$}{Growling sound indicative of frustration with useless generalities.}
\[
\Gr^{\bullet}:\Aut^{\otimes}(\mathcal{F})\rightarrow\Aut^{\otimes}(\Gr_{\mathcal{F}}^{\bullet}).
\]
Here $\Aut^{\otimes}(\Gr_{\mathcal{F}}^{\bullet})$ is an fpqc sheaf
of groups on $\Sch/T$ which maps $X\rightarrow T$ to a group of
automorphisms of $\Gr_{\mathcal{F}_{X}}^{\bullet}=\Gr^{\bullet}\circ\mathcal{F}_{X}$
subject to conditions whose precise formulation will be left to the
reader. The kernel of this morphism is an fpqc sheaf
\[
\Aut^{\otimes!}(\mathcal{F}):(\Sch/T)^{\circ}\rightarrow\Group.
\]
If $\mathcal{G}$ is a splitting of $\mathcal{F}$, then $\Gr_{\mathcal{F}}^{\bullet}\simeq\mathcal{G}$,
thus $\Aut^{\otimes}(\Gr_{\mathcal{F}}^{\bullet})\simeq\Aut^{\otimes}(\mathcal{G})$
and 
\[
\Aut^{\otimes}(\mathcal{F})\simeq\Aut^{\otimes!}(\mathcal{F})\rtimes\Aut^{\otimes}(\mathcal{G}).
\]

\subsection{~\label{sub:defiota}}

There is finally another equivariant diagram of fpqc sheaves on $S$,
\[
\xyR{4pt}\xymatrix{G\ar[dd]^{\iota} &  & \mathbb{G}^{\Gamma}(G)\ar[r]^{\Fi}\ar[dd]^{\iota} & \mathbb{F}^{\Gamma}(G)\ar[dd]^{\iota}\\
 & \mbox{acting on}\\
\Aut^{\otimes}(V) &  & \mathbb{G}^{\Gamma}(V)\ar[r]^{\Fi} & \mathbb{F}^{\Gamma}(V)
}
\]
The morphism $\iota:G\rightarrow\Aut^{\otimes}(V)$ sends $g\in G(T)$
to the automorphism $\iota(g)$ of $V_{T}$ which maps $\rho\in\Rep(G)(X)$
to the automorphism $\rho(g_{X})$ of $V(\rho)$ -- for an $S$-scheme
$T$ and a $T$-scheme $X$. The morphism $\iota:\mathbb{F}^{\Gamma}(G)\hookrightarrow\mathbb{F}^{\Gamma}(V)$
is the image of
\[
\xymatrix{\mathbb{G}^{\Gamma}(G)\ar[r]^{\iota} & \mathbb{G}^{\Gamma}(V)\ar[r]^{\Fi} & \mathbb{F}^{\Gamma}(V)}
\]
where $\iota:\mathbb{G}^{\Gamma}(G)\rightarrow\mathbb{G}^{\Gamma}(V)$
is defined as follows. Recall from \cite[I 4.7.3]{SGA3.1r} that the
fpqc stacks $\Gra^{\Gamma}\QCoh$ and $\Rep\,\mathbb{D}(\Gamma)$
over $\Sch$ are $\otimes$-equivalent: A $\Gamma$-graded quasi-coherent
sheaf $\mathcal{M}=\oplus_{\gamma\in\Gamma}\mathcal{G}_{\gamma}$
on a scheme $X$ is mapped to the unique representation $\rho$ of
$\mathbb{D}_{X}(\Gamma)$ on $\mathcal{M}$ such that for every $f:Y\rightarrow X$
and \nomenclature[Gamma(X,F)]{$\Gamma (X,\mathcal{F})$}{Sections of a sheaf $\mathcal{F}$ over $X$.}$\alpha:\Gamma\rightarrow\Gamma(Y,\mathcal{O}_{Y}^{\ast})$
in $\mathbb{D}_{X}(\Gamma)(Y)$, $\rho(\alpha)(x)$ equals $\alpha(\gamma)\cdot x$
for every $\gamma\in\Gamma$ and $x\in\Gamma(Y,f^{\ast}\mathcal{G}_{\gamma})$.
Conversely, a representation $\rho$ of $\mathbb{D}_{X}(\Gamma)$
on a quasi-coherent $\mathcal{O}_{X}$-module $\mathcal{M}$ is sent
to the $\Gamma$-grading on $\mathcal{M}$ defined by the eigenspace
decomposition of $\rho$. Then $\iota$ maps a morphism $\chi:\mathbb{D}_{T}(\Gamma)\rightarrow G_{T}$
in $\mathbb{G}^{\Gamma}(G)(T)$ to the $\Gamma$-graduation on $V_{T}$
defined by
\[
\xyC{2.5pc}\xymatrix{\Rep(G_{T})\ar[r]\sp(0.3){-\circ\chi} & \Rep(\mathbb{D}_{T}(\Gamma))\simeq\Gra^{\Gamma}\QCoh/T\ar[r]\sp(0.65){\mathrm{forg}} & \QCoh/T.}
\]

\begin{rem}
We will show in corollary~\ref{cor:F(G)same} that for a \emph{reductive}
group $G$, the definition of the fpqc sheaf $\mathbb{F}^{\Gamma}(G)$
on $\Sch/S$ given here (image of $\mathbb{G}^{\Gamma}(G)\rightarrow\mathbb{F}^{\Gamma}(V)$)
coincides with the definition of section~\ref{sub:FiltrationsGroup}
(image of $\mathbb{G}^{\Gamma}(G)\rightarrow\mathbb{G}^{\Gamma}(R_{\mathbb{P}(G)})$). 
\end{rem}

\section{The subcategories of rigid objects}

We briefly discuss the $-^{\circ}$ variants of the above definitions,
mostly mentioning the new features.

\subsection{Finite locally free sheaves}

Let $\LF\rightarrow\Sch$ be the fibered category \nomenclature[Gr^GammaLF]{$\Gra^\Gamma \LF$}{Category of $\Gamma$-graded finite locally free sheaves on schemes, page \nomrefpage}\nomenclature[Fil^GammaLF]{$\Fil^\Gamma \LF$}{Category of $\Gamma$-filtered finite locally free sheaves on schemes, page \nomrefpage}\nomenclature[LF]{$\LF$}{Category of finite locally free sheaves on schemes, page \nomrefpage}\nomenclature[Fil]{$\Fi$}{Functor $\Fi : \Gra ^\Gamma \LF \rightarrow \Fil ^\Gamma \LF$, page \nomrefpage}\nomenclature[Gr]{$\Gr$}{Functor $\Gr : \Fil ^\Gamma \LF \rightarrow \Gra ^\Gamma \LF$, page \nomrefpage}whose
fiber over $X$ is the full subcategory $\LF(X)$ of $\QCoh(X)$ whose
objects are the finite locally free sheaves on $X$. \nomenclature[Gr^GammaLF(X)]{$\Gra^\Gamma \LF(X)$}{Category of $\Gamma$-graded finite locally free sheaves on $X$, page \nomrefpage}\nomenclature[Fil^GammaLF(X)]{$\Fil^\Gamma \LF(X)$}{Category of $\Gamma$-filtered finite locally free sheaves on $X$, page \nomrefpage}\nomenclature[LF(X)]{$\LF(X)$}{Category of finite locally free sheaves on $X$, page \nomrefpage}Then
$\LF$ is a substack of $\QCoh$ by \cite[2.5.2]{EGA4.2}. Pulling
back through $\LF\hookrightarrow\QCoh$, we obtain a diagram of fpqc
stacks over $\Sch$, 

\[
\xyC{3pc}\xymatrix{\Gra^{\Gamma}\LF\ar@<2pt>[r]^{\Fi} & \Fil^{\Gamma}\LF\ar@<2pt>[l]^{\Gr}\ar[r]\sp(0.55){\mathrm{forg}} & \LF}
\]
whose fiber over a scheme $X$ is a diagram of exact (in Quillen's
sense) $\otimes$-functors
\[
\xymatrix{\Gra^{\Gamma}\LF(X)\ar@<2pt>[r]^{\Fi} & \Fil^{\Gamma}\LF(X)\ar@<2pt>[l]^{\Gr}\ar[r]\sp(0.55){\mathrm{forg}} & \LF(X).}
\]
An alternative and useful description of the objects of $\Fil^{\Gamma}\LF(X)$
is provided by proposition~\ref{prop:FilonLF} below, which also
implies that the $\Gr$ functor is indeed well-defined. Over a base
scheme $S$, there is the corresponding diagram of fpqc stacks: 
\[
\xyC{3pc}\xymatrix{\Gra^{\Gamma}\LF/S\ar@<2pt>[r]^{\Fi} & \Fil^{\Gamma}\LF/S\ar@<2pt>[l]^{\Gr}\ar[r]\sp(0.55){\mathrm{forg}} & \LF/S}
\]

\subsection{~}

These categories have compatible inner Hom's and duals given by
\[
\underline{\Hom}(x,y)=x^{\vee}\otimes y\quad\mbox{with}\quad(\mathcal{M},\mathcal{G})^{\vee}=(\mathcal{M}^{\vee},\mathcal{G}^{\vee})\quad\mbox{and}\quad(\mathcal{M},\mathcal{F})^{\vee}=(\mathcal{M}^{\vee},\mathcal{F}^{\vee})
\]
where $\mathcal{M}^{\vee}$ is the dual of $\mathcal{M}$, $(\mathcal{G}^{\vee})_{\gamma}=(\mathcal{G}_{-\gamma})^{\vee}$
and $(\mathcal{F}^{\vee})^{\gamma}=(\mathcal{F}_{+}^{-\gamma})^{\bot}=(\mathcal{M}/\mathcal{F}_{+}^{-\gamma})^{\vee}$.
Thus if $\mathcal{G}$ is a splitting of $\mathcal{F}$, then $\mathcal{G}^{\vee}$
is a splitting of $\mathcal{F}^{\vee}$. Moreover, we have
\[
(\mathcal{F}^{\vee})_{+}^{\gamma}=(\mathcal{F}^{-\gamma})^{\bot}\simeq(\mathcal{M}/\mathcal{F}^{-\gamma})^{\vee}\quad\mbox{and}\quad\Gr_{\mathcal{F}^{\vee}}^{\gamma}(\mathcal{M}^{\vee})\simeq\Gr_{\mathcal{F}}^{-\gamma}(\mathcal{M})^{\vee}.
\]
For the inner Homs, we obtain the following formula:
\[
\Gr_{\mathcal{F}}^{\gamma}\left(\underline{\Hom}(\mathcal{M}_{1},\mathcal{M}_{2})\right)\simeq\oplus_{\gamma_{2}-\gamma_{1}=\gamma}\underline{\Hom}\left(\Gr_{\mathcal{F}_{1}}^{\gamma_{1}}(\mathcal{M}_{1}),\Gr_{\mathcal{F}_{2}}^{\gamma_{2}}(\mathcal{M}_{2})\right).
\]

\subsection{$\Gamma$-filtrations on finite locally free sheaves}
\begin{prop}
\label{prop:FilonLF}Let $\mathcal{M}$ be a finite locally free sheaf
on $X$. Let $(\mathcal{F}^{\gamma})_{\gamma\in\Gamma}$ be a non-increasing
collection of quasi-coherent subsheaves of $\mathcal{M}$. Then the
following conditions are equivalent:
\begin{enumerate}
\item For every affine open subset $U$ of $X$, there is a $\Gamma$-graduation
\[
\mathcal{M}_{U}=\oplus_{\gamma\in\Gamma}\mathcal{G}_{\gamma}
\]
 such that $\mathcal{F}_{U}^{\gamma}=\oplus_{\eta\geq\gamma}\mathcal{G}_{\eta}$
for every $\gamma\in\Gamma$. 
\item Locally on $X$ for the Zariski topology, there is a $\Gamma$-graduation
\[
\mathcal{M}=\oplus_{\gamma\in\Gamma}\mathcal{G}_{\gamma}
\]
 such that $\mathcal{F}^{\gamma}=\oplus_{\eta\geq\gamma}\mathcal{G}_{\eta}$
for every $\gamma\in\Gamma$.
\item Locally on $X$ for the fpqc topology, there exists a $\Gamma$-graduation
\[
\mathcal{M}=\oplus_{\gamma\in\Gamma}\mathcal{G}_{\gamma}
\]
 such that $\mathcal{F}^{\gamma}=\oplus_{\eta\geq\gamma}\mathcal{G}_{\eta}$
for every $\gamma\in\Gamma$, i.e. $\mathcal{F}$ is a $\Gamma$-filtration
on $\mathcal{M}$.
\item For every $\gamma\in\Gamma$, $\Gr_{\mathcal{F}}^{\gamma}(\mathcal{M})$
is finite locally free and for every $x\in X$,
\[
\dim_{k(x)}\mathcal{M}(x)={\textstyle \sum_{\gamma}}\dim_{k(x)}\Gr_{\mathcal{F}(x)}^{\gamma}(\mathcal{M}(x)).
\]

\end{enumerate}
In $(4)$, $\mathcal{F}(x)$ is the image of $\mathcal{F}$ in $\mathcal{M}(x)=\mathcal{M}\otimes k(x)$
and $\Gr_{\mathcal{F}}^{\gamma}(\mathcal{M})$, $\Gr_{\mathcal{F}(x)}^{\gamma}(\mathcal{M}(x))$
are defined as usual. Under the above equivalent conditions, for all
$\gamma\in\Gamma$: $\mathcal{F}^{\gamma}$, $\mathcal{F}_{+}^{\gamma}$
and $\Gr_{\mathcal{F}}^{\gamma}(\mathcal{M})$ are finite locally
free sheaves on $X$ and for every $x\in X$, 
\[
\mathcal{F}^{\gamma}(x)\simeq\mathcal{F}^{\gamma}\otimes k(x),\quad\mathcal{F}_{+}^{\gamma}(x)\simeq\mathcal{F}_{+}^{\gamma}\otimes k(x),\quad\Gr_{\mathcal{F}(x)}^{\gamma}(\mathcal{M}(x))\simeq\Gr_{\mathcal{F}}^{\gamma}(\mathcal{M})\otimes k(x).
\]
\end{prop}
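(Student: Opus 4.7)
The plan is to prove the cycle $(1) \Rightarrow (2) \Rightarrow (3) \Rightarrow (4) \Rightarrow (1)$, together with the ``moreover'' assertions at the end. The first two implications are immediate: affine opens form a basis for the Zariski topology on $X$, and every Zariski cover is fpqc.

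For $(3) \Rightarrow (4)$, I would pick an fpqc cover $p : X' \to X$ over which $\mathcal{F}$ acquires a splitting $\mathcal{M}|_{X'} = \bigoplus_{\gamma} \mathcal{G}_{\gamma}$. Then $\Gr_{\mathcal{F}}^{\gamma}(\mathcal{M})|_{X'} \simeq \mathcal{G}_{\gamma}$, which is a direct summand of the finite locally free sheaf $\mathcal{M}|_{X'}$, hence itself finite locally free; by fpqc descent of this property (\cite[2.5.2]{EGA4.2}), $\Gr^{\gamma}_{\mathcal{F}}(\mathcal{M})$ is finite locally free on $X$. The dimension identity in $(4)$ is obtained by pulling back to a point $x' \in X'$ lying over $x$ and using local constancy of ranks. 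The same argument shows that $\mathcal{F}^{\gamma}$ and $\mathcal{F}_{+}^{\gamma}$ (locally the direct summands $\bigoplus_{\eta \geq \gamma} \mathcal{G}_{\eta}$ and $\bigoplus_{\eta > \gamma} \mathcal{G}_{\eta}$) are finite locally free, and gives the claimed compatibility with the base change $- \otimes k(x)$, since on $X'$ direct sums commute with tensor products and the formation of each $\mathcal{G}_{\gamma}$ is already fibrewise on $X'$.

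The substantive implication is $(4) \Rightarrow (1)$. Reducing to $U = X = \Spec A$ with $\mathcal{M}$ finite locally free, I first check that (4) forces $\mathcal{F}^{\gamma}$ and $\mathcal{F}_{+}^{\gamma}$ to be finite locally free submodules of $\mathcal{M}$ whose formation commutes with base change: the dimension identity in (4) applied at each fibre bounds the ranks and shows that the short exact sequences
\[
0 \to \mathcal{F}_{+}^{\gamma} \to \mathcal{F}^{\gamma} \to \Gr_{\mathcal{F}}^{\gamma}(\mathcal{M}) \to 0
\]
stay exact on fibres, so all three terms are finite locally free of the expected ranks. Since $\Gr_{\mathcal{F}}^{\gamma}(\mathcal{M})$ is then finite locally free, hence projective over $A$, each such sequence splits as $A$-modules. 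The remaining task is to choose a coherent family of such splittings $\mathcal{G}_{\gamma} \hookrightarrow \mathcal{F}^{\gamma}$ so that $\mathcal{M} = \bigoplus_{\gamma \in \Gamma} \mathcal{G}_{\gamma}$ and $\mathcal{F}^{\gamma} = \bigoplus_{\eta \geq \gamma} \mathcal{G}_{\eta}$.

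The main obstacle is this last compatibility step: on $\Spec A$ the support $\Lambda = \{\gamma : \Gr^{\gamma}(\mathcal{M}) \neq 0\}$ may be globally infinite even though it is finite at each point (bounded by $\mathrm{rank}\,\mathcal{M}(x)$), and $\Gamma$ need not be well-ordered, so naive descending induction on $\gamma$ fails. My plan is to apply Zorn's lemma to the poset of \emph{partial splittings}, i.e.~ families $(\mathcal{G}_{\eta})_{\eta > \gamma}$ satisfying $\mathcal{F}^{\eta} = \mathcal{G}_{\eta} \oplus \mathcal{F}_{+}^{\eta}$ and $\bigoplus_{\eta > \gamma} \mathcal{G}_{\eta} = \mathcal{F}^{\gamma}_{+}$, ordered by extension of the threshold $\gamma$. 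Chains admit unions as upper bounds, and a maximal element must be a full splitting: if some $\gamma_{0}$ remained uncovered, one could extend further by choosing a projective complement of the already-constructed $\bigoplus_{\eta > \gamma_{0}} \mathcal{G}_{\eta}$ inside $\mathcal{F}^{\gamma_{0}}$ using the projectivity of $\Gr^{\gamma_{0}}(\mathcal{M})$, contradicting maximality. This produces the required Zariski (indeed global) splitting over $U$, giving (1), and the final assertions about $\mathcal{F}^{\gamma}$, $\mathcal{F}_{+}^{\gamma}$ and base change follow at once from the existence of the splitting.
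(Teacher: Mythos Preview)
Your treatment of $(1)\Rightarrow(2)\Rightarrow(3)\Rightarrow(4)$ and of the final assertions is fine and essentially matches the paper. The problem is in $(4)\Rightarrow(1)$.

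A minor point first: your preliminary claim that (4) alone forces $\mathcal{F}^\gamma$ and $\mathcal{F}_+^\gamma$ to be finite locally free is not justified (over a non-Noetherian ring, submodules of finitely generated modules need not be finitely generated, and the ``exactness on fibres'' you invoke is precisely the base-change compatibility the proposition is trying to establish). Fortunately this claim is unnecessary: the splitting $\mathcal{F}^\gamma=\mathcal{G}_\gamma\oplus\mathcal{F}_+^\gamma$ follows solely from the projectivity of $\Gr^\gamma_{\mathcal{F}}(\mathcal{M})$ over the affine, with no hypothesis on $\mathcal{F}^\gamma$ itself.

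The real gap is your Zorn's lemma. Your poset of partial splittings is indexed by a threshold $\gamma\in\Gamma$, and you assert chains have upper bounds via unions. But if $\Gamma=\mathbb{Q}$ and a decreasing chain of thresholds converges to $\sqrt{2}$, the union lives on $\{\eta\in\mathbb{Q}:\eta>\sqrt{2}\}$, which is not $\{\eta>\gamma\}$ for any $\gamma\in\mathbb{Q}$; there is no upper bound in your poset. Enlarging the poset to arbitrary upward-closed index sets $S$ repairs the chain condition but breaks the extension step, since $\Gamma\setminus S$ then need not have a maximal element at which to adjoin a new $\mathcal{G}_{\gamma_0}$. Such chains genuinely occur: take $\Spec\prod_n k_n$ with a filtration whose single jump on the $n$-th factor lies at a rational $\gamma_n\searrow\sqrt{2}$.

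The paper sidesteps the transfinite issue entirely. Over the affine, it chooses for each $\gamma$ \emph{independently} a complement $\mathcal{G}_\gamma$ with $\mathcal{F}^\gamma=\mathcal{G}_\gamma\oplus\mathcal{F}_+^\gamma$. The identities $\mathcal{M}=\bigoplus_\gamma\mathcal{G}_\gamma$ and $\mathcal{F}^\gamma=\bigoplus_{\eta\geq\gamma}\mathcal{G}_\eta$ are then checked \emph{Zariski-locally}: near a fixed point $x$ only the finitely many jumps $\Gamma(x)=\{\gamma_1<\cdots<\gamma_r\}$ matter, and the dimension identity in (4) combined with Nakayama shows that $x$ lies in a nested sequence of open sets $U_0\supset\cdots\supset U_r$ on which successively $\mathcal{M}=\mathcal{F}^{\gamma_1}$, $\mathcal{F}_+^{\gamma_1}=\mathcal{F}^{\gamma_2}$, \dots, $\mathcal{F}_+^{\gamma_r}=0$, whence $\mathcal{M}=\mathcal{G}_{\gamma_1}\oplus\cdots\oplus\mathcal{G}_{\gamma_r}$ on $U_r$. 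This reduces the problem to a finite one at each point and is where (4) actually does its work.
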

\begin{proof}
Plainly $(1)\Rightarrow(2)\Rightarrow(3)$. Moreover $(3)\Rightarrow(4)$
is easy (using~\cite[2.5.2.iii]{EGA4.2}) and the last assertions
follow from $(1)$. To prove that $(4)\Rightarrow(1)$, we may assume
that $X=U$ is affine. Since $\Gr_{\mathcal{F}}^{\gamma}(\mathcal{M})$
is finite locally free by assumption, it is then projective in $\QCoh(X)$
by \cite[Corollary of 7.12]{Ma89}. Therefore, there exists a quasi-coherent
subsheaf $\mathcal{G}_{\gamma}$ of $\mathcal{F}_{\gamma}$ such that
$\mathcal{F}^{\gamma}=\mathcal{G}_{\gamma}\oplus\mathcal{F}_{+}^{\gamma}$.
We will show that
\[
\mathcal{M}=\oplus_{\gamma\in\Gamma}\mathcal{G}_{\gamma}\quad\mbox{and}\quad\forall\gamma:\,\mathcal{F}^{\gamma}=\oplus_{\eta\geq\gamma}\mathcal{G}_{\eta}.
\]
This being now a local question in the Zariski topology of $X$, we
may assume that the rank of $\mathcal{M}$ is constant on $X$, and
also nonzero. Fix $x\in X$ and define
\[
\Gamma(x)=\{\gamma:\Gr_{\mathcal{F}(x)}^{\gamma}(\mathcal{M}(x))\neq0\}=\{\gamma_{1}<\cdots<\gamma_{r}\}.
\]
Define $U_{0}=\mathrm{Supp}(\mathcal{M}/\mathcal{F}^{\gamma_{1}})^{c}$,
$U_{i}=\mathrm{Supp}(\mathcal{F}_{+}^{\gamma_{i}}/\mathcal{F}^{\gamma_{i+1}})^{c}\cap U_{i-1}$
for $0<i<r$ and $U_{r}=\mathrm{Supp}(\mathcal{F}_{+}^{\gamma_{r}})^{c}\cap U_{r-1}$.
Since $\mathcal{M}$ is finite locally free, $\mathcal{M}/\mathcal{F}^{\gamma_{1}}$
is finitely generated and $U_{0}$ is open in $X$. Since $\mathcal{M}=\mathcal{F}^{\gamma_{1}}$
over $U_{0}$ and $\mathcal{F}^{\gamma_{1}}=\mathcal{F}_{+}^{\gamma_{1}}\oplus\mathcal{G}_{\gamma_{1}}$
over $X$, $\mathcal{M}=\mathcal{F}_{+}^{\gamma_{1}}\oplus\mathcal{G}_{\gamma_{1}}$
over $U_{0}$. Therefore $\mathcal{F}_{+}^{\gamma_{1}}$ is finite
locally free over $U_{0}$. Repeating this argument successively with
$(\mathcal{M},X)$ replaced by $(\mathcal{F}_{+}^{\gamma_{1}},U_{0})$,
$(\mathcal{F}_{+}^{\gamma_{2}},U_{1})$ etc\ldots{} we obtain: $U_{r}$
is open in $X$, $\mathcal{M}=\oplus_{i}\mathcal{G}_{\gamma_{i}}$
and $\mathcal{F}^{\gamma}=\oplus_{i:\gamma_{i}\geq\gamma}\mathcal{G}_{\gamma_{i}}$
over $U_{r}$ for every $\gamma\in\Gamma$, with everyone finite locally
free over $U_{r}$. All we have to do now is to show that the formula
of $(4)$ implies that $x$ belongs to $U_{r}$. The formula is equivalent
to: 
\[
\mathcal{F}^{\gamma}(x)=\begin{cases}
\mathcal{M}(x) & \mbox{if }\gamma\leq\gamma_{1},\\
\mathcal{F}^{\gamma_{i+1}}(x) & \mbox{if }\gamma\in]\gamma_{i},\gamma_{i+1}],\\
0 & \mbox{if }\gamma>\gamma_{r}.
\end{cases}
\]
Since $\mathcal{M}$ is finitely generated over $X$, $\mathcal{F}^{\gamma_{1}}(x)=\mathcal{M}(x)$
implies $\mathcal{F}_{x}^{\gamma_{1}}=\mathcal{M}_{x}$ by Nakayama's
lemma, thus $x$ belongs to $U_{0}$. Since $\mathcal{M}=\mathcal{F}^{\gamma_{1}}=\mathcal{F}_{+}^{\gamma_{1}}\oplus\mathcal{G}_{\gamma_{1}}$
over $U_{0}$, $\mathcal{F}_{+}^{\gamma_{1}}(x)=\mathcal{F}^{\gamma_{2}}(x)$
in $\mathcal{M}(x)$ implies $\mathcal{F}_{+,x}^{\gamma_{1}}=\mathcal{F}_{x}^{\gamma_{2}}$
by Nakayama's lemma, therefore $x$ belongs to $U_{1}$. Repeating
the argument, we find that indeed $x$ belongs to $U_{r}$. \end{proof}
\begin{rem}
The whole proof becomes much simpler over a Noetherian base.\end{rem}
\begin{lem}
\label{lem:SumsOfFilsOnLF}Let $\mathcal{M}_{\alpha}$ be a finite
collection of locally free sheaves of finite rank on $X$ and for
each $\alpha$, let $(\mathcal{F}_{\alpha}^{\gamma})_{\gamma\in\Gamma}$
be a non-increasing collection of quasi-coherent subsheaves of $\mathcal{M}_{\alpha}$.
Set $\mathcal{M}=\oplus\mathcal{M}_{\alpha}$ and $\mathcal{F}^{\gamma}=\oplus\mathcal{F}_{\alpha}^{\gamma}$.
Then $(\mathcal{M},(\mathcal{F}^{\gamma}))$ satisfies the above equivalent
conditions if and only if each $(\mathcal{M}_{\alpha},(\mathcal{F}_{\alpha}^{\gamma}))$
does. \end{lem}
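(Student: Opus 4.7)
The backward implication is immediate: since the index set $\{\alpha\}$ is finite, one can find a common fpqc cover of $X$ over which each $\mathcal{F}_\alpha$ admits a splitting $\mathcal{M}_\alpha=\oplus_\gamma \mathcal{G}_{\alpha,\gamma}$, and then $\mathcal{G}_\gamma:=\oplus_\alpha\mathcal{G}_{\alpha,\gamma}$ splits $\mathcal{F}$ on that cover.

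For the forward implication, the plan is to verify criterion $(4)$ of Proposition~\ref{prop:FilonLF} for each pair $(\mathcal{M}_\alpha,(\mathcal{F}_\alpha^\gamma))$. All the operations in sight --- formation of unions and intersections of subsheaves, of quotients, and of fibers at a point $x\in X$ --- commute with finite direct sums of quasi-coherent sheaves, so that
\[
\Gr^\gamma_\mathcal{F}(\mathcal{M}) \;=\; \bigoplus_\alpha \Gr^\gamma_{\mathcal{F}_\alpha}(\mathcal{M}_\alpha)
\quad\text{and}\quad
\Gr^\gamma_{\mathcal{F}(x)}(\mathcal{M}(x)) \;=\; \bigoplus_\alpha \Gr^\gamma_{\mathcal{F}_\alpha(x)}(\mathcal{M}_\alpha(x)).
\]
Since $\Gr^\gamma_\mathcal{F}(\mathcal{M})$ is finite locally free by hypothesis, each summand $\Gr^\gamma_{\mathcal{F}_\alpha}(\mathcal{M}_\alpha)$ is a quasi-coherent direct summand of a finite locally free sheaf, hence itself finite locally free: being simultaneously a quotient and a subsheaf of such a sheaf, it is finitely presented, and its stalks are direct summands of finite free modules over local rings, hence free of finite rank.

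For the numerical part of criterion~$(4)$, I would invoke Lemma~\ref{lem:BasicPropFil} applied to $\mathcal{F}$, which yields $\cup_\gamma\mathcal{F}^\gamma=\mathcal{M}$ and $\cap_\gamma\mathcal{F}^\gamma=0$; since union and intersection commute with the decomposition over the finite index set $\{\alpha\}$, the analogous statements hold for each $\mathcal{F}_\alpha$. Passing to a fiber at $x\in X$, the induced decreasing filtration $\mathcal{F}_\alpha^\gamma(x)$ on the finite-dimensional vector space $\mathcal{M}_\alpha(x)$ is thus exhaustive and separated, so elementary linear algebra gives $\dim_{k(x)}\mathcal{M}_\alpha(x)=\sum_\gamma\dim_{k(x)}\Gr^\gamma_{\mathcal{F}_\alpha(x)}(\mathcal{M}_\alpha(x))$. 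Criterion~$(4)$ of Proposition~\ref{prop:FilonLF} then applies to $(\mathcal{M}_\alpha,(\mathcal{F}_\alpha^\gamma))$, completing the proof. No serious obstacle is expected; the only mildly delicate point is the assertion that a quasi-coherent direct summand of a finite locally free sheaf is again finite locally free, and everything else is a formal bookkeeping exercise in the compatibility of the constructions in Proposition~\ref{prop:FilonLF} with finite direct sums.
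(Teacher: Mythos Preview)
Your backward implication is fine. For the forward implication, the direct-sum decomposition of the graded pieces and the argument that each $\Gr^\gamma_{\mathcal{F}_\alpha}(\mathcal{M}_\alpha)$ is finite locally free are correct. The gap is in the separatedness step: from $\cap_\gamma\mathcal{F}_\alpha^\gamma=0$ at the sheaf level you conclude that the fiber filtration $\mathcal{F}_\alpha^\gamma(x)$ is separated, but taking images in $\mathcal{M}_\alpha(x)=\mathcal{M}_\alpha\otimes k(x)$ does not commute with infinite intersections, so this inference is not valid as written. (The conclusion happens to be true here because locally $\mathcal{F}^\gamma=0$ for $\gamma\gg 0$, hence also $\mathcal{F}_\alpha^\gamma=0$ for $\gamma\gg 0$ --- but you would need to say this, and at that point invoking Lemma~\ref{lem:BasicPropFil} is beside the point.)

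The paper sidesteps this entirely. It observes that for \emph{any} decreasing filtration on a finite-dimensional vector space one has the inequality
\[
\dim_{k(x)}\mathcal{M}_\alpha(x)\;\geq\;{\textstyle\sum_\gamma}\dim_{k(x)}\Gr^\gamma_{\mathcal{F}_\alpha(x)}(\mathcal{M}_\alpha(x)),
\]
with no hypotheses on exhaustiveness or separatedness. Summing over $\alpha$ and comparing with the equality in condition~$(4)$ for $(\mathcal{M},\mathcal{F})$ forces each of these inequalities to be an equality. This is shorter and avoids the delicate passage to fibers.
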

\begin{proof}
For every $\gamma\in\Gamma$ and $x\in X$, $\Gr_{\mathcal{F}}^{\gamma}(\mathcal{M})=\oplus_{\alpha}\Gr_{\mathcal{F}^{\alpha}}^{\gamma}(\mathcal{M}_{\alpha})$
and 
\[
\mathcal{M}(x)=\oplus_{\alpha}\mathcal{M}_{\alpha}(x),\quad\Gr_{\mathcal{F}(x)}^{\gamma}(\mathcal{M}(x))=\oplus_{\alpha}\Gr_{\mathcal{F}_{\alpha}(x)}^{\gamma}(\mathcal{M}_{\alpha}(x)).
\]
Moreover for every $\alpha$ and $x\in X$,
\[
\dim_{k(x)}\mathcal{M}_{\alpha}(x)\geq{\textstyle \sum_{\gamma}}\dim_{k(x)}\Gr_{\mathcal{F}_{\alpha}(x)}^{\gamma}(\mathcal{M}_{\alpha}(x)).
\]
The lemma easily follows.
\end{proof}

\subsection{~}

Let $\Rep^{\circ}(G)\rightarrow\Sch/S$ be the substack \nomenclature[Repo(G)]{$\Rep ^{\circ}  (G)$}{Fibered category of algebraic representations of $G$ on finite locally free sheaves, page \nomrefpage}of
$\Rep(G)\rightarrow\Sch/S$ whose fiber over $T\rightarrow S$ is
the exact, rigid, full sub-$\otimes$-category $\Rep^{\circ}(G)(T)$
\nomenclature[Repo(G)(X)]{$\Rep ^{\circ} (G)(X)$}{Category of algebraic representations of $G$ on finite locally free sheaves over $X$, page \nomrefpage}of
$\Rep(G)(T)$ whose objects are the representations of $G_{T}$ on
finite locally free sheaves on $T$. We write\nomenclature[Vo]{$V^\circ$}{Fiber functor $V^\circ: \Rep ^\circ (G) \rightarrow \LF / S$, page \nomrefpage}
\[
V^{\circ}:\Rep^{\circ}(G)\rightarrow\LF/S
\]
for the forgetful functor. For an $S$-scheme $T\rightarrow S$, we
denote by\nomenclature[omega_X^o]{$\omega ^\circ _X$}{Fiber functor $\omega ^\circ _X: \Rep ^\circ (G)(S) \rightarrow \LF (X) $, page \nomrefpage}
\[
V_{T}^{\circ}:\Rep^{\circ}(G_{T})\rightarrow\LF/T\quad\mbox{and}\quad\omega_{T}^{\circ}:\Rep^{\circ}(G)(S)\rightarrow\LF(T)
\]
the induced morphism of fpqc stack over $\Sch/T$ and fiber functor.
Note that $\omega_{T}^{\circ}$ is now an exact $\otimes$-functor,
since all short exact sequences in $\Rep^{\circ}(G)(S)$ are pure.

\subsection{~}

We obtain yet another equivariant diagram of fpqc sheaves on $S$,
\nomenclature[G^Gamma(Vo)]{$\mathbb{G}^\Gamma (V^\circ )$}{Sheaf of $\Gamma$-graduations on $V^\circ$, page \nomrefpage}\nomenclature[G^Gamma(omegao)]{$\mathbb{G}^\Gamma (\omega ^\circ )$}{Sheaf of $\Gamma$-graduations on $\omega ^\circ$, page \nomrefpage}\nomenclature[F^Gamma(Vo)]{$\mathbb{F}^\Gamma (V^\circ )$}{Sheaf of $\Gamma$-filtrations on $V^\circ $, page \nomrefpage}\nomenclature[F^Gamma(omegao)]{$\mathbb{F}^\Gamma (\omega^\circ  )$}{Sheaf of $\Gamma$-filtrations on $\omega^\circ $, page \nomrefpage}\nomenclature[Fil]{$\Fi$}{Morphism $\Fi : \mathbb{G}^\Gamma (X) \rightarrow \mathbb{F}^\Gamma (X)$ for $X=V^\circ $ or $\omega^\circ $, page \nomrefpage}\nomenclature[Aut^o(Vo)]{$\Aut^{\otimes}(V^\circ )$}{Sheaf of tensor automorphisms of $V^\circ $, page \nomrefpage}\nomenclature[Aut^o(omegao)]{$\Aut^{\otimes}(\omega^\circ  )$}{Sheaf of tensor automorphisms of $\omega^\circ $, page \nomrefpage}
\[
\xyC{2pc}\xyR{4pt}\xymatrix{\Aut^{\otimes}(V^{\circ})\ar[dd]^{\mathrm{res}} &  & \mathbb{G}^{\Gamma}(V^{\circ})\ar[r]^{\Fi}\ar[dd]^{\mathrm{res}} & \mathbb{F}^{\Gamma}(V^{\circ})\ar[dd]^{\mathrm{res}}\\
 & \mbox{acting on}\\
\Aut^{\otimes}(\omega^{\circ}) &  & \mathbb{G}^{\Gamma}(\omega^{\circ})\ar[r]^{\Fi} & \mathbb{F}^{\Gamma}(\omega^{\circ})
}
\]
where everything is defined as before, using $V^{\circ}$ and $\omega^{\circ}$
instead of $V$ and $\omega$. The only differences worth mentioning
are as follows: for any $S$-scheme $T$, the $\Gamma$-graduations
or $\Gamma$-filtrations on $\omega_{T}^{\circ}$ are automatically
compatible with inner Homs and duals, and there $\gamma$-components
are exact functors. We also have equivariant diagrams
\[
\xyR{4pt}\xymatrix{\Aut^{\otimes}(V)\ar[dd]^{\mathrm{res}} &  & \mathbb{G}^{\Gamma}(V)\ar[r]^{\Fi}\ar[dd]^{\mathrm{res}} & \mathbb{F}^{\Gamma}(V)\ar[dd]^{\mathrm{res}}\\
 & \mbox{acting on}\\
\Aut^{\otimes}(V^{\circ}) &  & \mathbb{G}^{\Gamma}(V^{\circ})\ar[r]^{\Fi} & \mathbb{F}^{\Gamma}(V^{\circ})
}
\]
and similarly for $\omega$ and $\omega^{\circ}$, where all the vertical
maps are induced by pre-composition with the full embedding $\Rep^{\circ}(G)\hookrightarrow\Rep(G)$.

\subsection{~}

Finally, the definitions of $\Aut^{\otimes}(\mathcal{G})$, $\Aut^{\otimes}(\mathcal{F})$,
$\Aut^{\otimes!}(\mathcal{F})$ and $\Aut^{\otimes}(\Gr_{\mathcal{F}}^{\bullet})$
given in section~\ref{sub:Aut(F)Aut(G)GrF} carry over to the situation
considered here.

\section{Skalar extensions}

The whole diagram at the beginning of this section has now been defined.
It is covariantly functorial in $G$ but not entirely compatible with
base change on $S$: if $\tilde{S}\rightarrow S$ is any morphism,
$\tilde{G}=G\times_{S}\tilde{S}$ and $\tilde{V}$, $\tilde{\omega}$
\ldots{} are the relevant functors for $\tilde{G}$, then $\mathbb{G}^{\Gamma}(\tilde{G})=\mathbb{G}^{\Gamma}(G)\vert_{\tilde{S}}$,
$\mathbb{F}^{\Gamma}(\tilde{G})=\mathbb{F}^{\Gamma}(G)\vert_{\tilde{S}}$
and 
\[
\Aut^{\otimes}(\tilde{X})=\Aut^{\otimes}(X)\vert_{\tilde{S}},\quad\mathbb{G}^{\Gamma}(\tilde{X})=\mathbb{G}^{\Gamma}(X)\vert_{\tilde{S}}\quad\mbox{and}\quad\mathbb{F}^{\Gamma}(\tilde{X})=\mathbb{F}^{\Gamma}(X)\vert_{\tilde{S}}
\]
for $X\in\{V,V^{\circ}\}$, but the natural morphisms of fpqc sheaves
on $\tilde{S}$,
\[
\Aut^{\otimes}(\tilde{Y})\rightarrow\Aut^{\otimes}(Y)\vert_{\tilde{S}},\quad\mathbb{G}^{\Gamma}(\tilde{Y})\rightarrow\mathbb{G}^{\Gamma}(Y)\vert_{\tilde{S}}\quad\mbox{and}\quad\mathbb{F}^{\Gamma}(\tilde{Y})\rightarrow\mathbb{F}^{\Gamma}(Y)\vert_{\tilde{S}}
\]
may not be isomorphisms for $Y\in\{\omega,\omega^{\circ}\}$. We investigate
this issue.

\subsection{~}

When $\mathsf{C}$ is a category and $\mathcal{B}$ is a ring object
in $\mathsf{C}$, we can form the category $\mathsf{C}(\mathcal{B})$
of (left) $\mathcal{B}$-modules in $\mathsf{C}$. Here $\mathsf{C}$
will be an additive $\otimes$-category and the ring object will be
given by its multiplication morphism $\mu:\mathcal{B}\otimes\mathcal{B}\rightarrow\mathcal{B}$
and unit $1\rightarrow\mathcal{B}$, where $1$ is the neutral object
for the tensor product, the abelian group structure on $\mathcal{B}$
being provided by the additive structure of $\mathsf{C}$. Then $\mathsf{C}(\mathcal{B})$
is the category of pairs $(\mathcal{M},\nu)$ where $\mathcal{M}$
is an object of $\mathsf{C}$ and $\nu:\mathcal{B}\otimes\mathcal{M}\rightarrow\mathcal{M}$
is a morphism in $\mathsf{C}$ subject to certain natural conditions.
There is an adjunction 
\[
f^{\ast}:\mathsf{C}\leftrightarrow\mathsf{C}(\mathcal{B}):f_{\ast}\quad\mbox{given by}\quad f_{\ast}(\mathcal{M},\nu)=\mathcal{M}\mbox{ and }f^{\ast}(\mathcal{N})=(\mathcal{B}\otimes\mathcal{N},\mu\otimes\mathrm{Id}).
\]
In many cases, it is also possible to equip $\mathsf{C}(\mathcal{B})$
with a $\otimes$-product inherited from the $\otimes$-product on
$\mathsf{C}$, with $(\mathcal{B},\mu)$ as neutral object. Instead
of trying to develop this formal theory more rigorously, let us list
some of the relevant examples:

\subsection*{$\mathsf{C}=\QCoh(S)$ and $\mathcal{B}=f_{\ast}\mathcal{O}_{T}$
where $f:T\rightarrow S$ is an affine morphism. }

There is an equivalence of $\otimes$-categories $\mathsf{C}(\mathcal{B})\simeq\QCoh(T)$
which is compatible with the usual adjunctions $f^{\ast}:\QCoh(S)\leftrightarrow\QCoh(T):f_{\ast}$,
see \cite[1.4]{EGA2}.

\subsection*{$\mathsf{C}=\Gra^{\Gamma}\QCoh(S)$ and $\mathcal{B}$ as above with
the trivial $\Gamma$-graduation.}

The first example induces an equivalence of $\otimes$-categories
$\mathsf{C}(\mathcal{B})\simeq\Gra^{\Gamma}\QCoh(T)$ which is again
compatible with the natural adjunctions.

\subsection*{$\mathsf{C}=\Fil^{\Gamma}\QCoh(S)$ and $\mathcal{B}$ as above with
the trivial $\Gamma$-filtration. }

The first example now only induces a fully faithful exact $\otimes$-functor
$\mathsf{C}(\mathcal{B})\hookrightarrow\Fil^{\Gamma}\QCoh(T)$. The
essential image is made of those $\Gamma$-filtered quasi-coherent
sheaves $(\mathcal{M},\mathcal{F})$ on $T$ such that, locally on
$S$ (as opposed to $T$) for the fpqc topology, $\mathcal{F}$ has
a splitting.

\subsection*{$\mathsf{C}=\Rep(G)(S)$ and $\mathcal{B}$ as above with the trivial
action of $G$. }

The first example again induces an equivalence of $\otimes$-categories
$\mathsf{C}(\mathcal{B})\simeq\Rep(G)(T)$ which is compatible with
the adjunctions given on the comodules by the following formulas:
\begin{eqnarray*}
f^{\ast}\left(V(\rho)\stackrel{c_{\rho}}{\longrightarrow}V(\rho)\otimes_{\mathcal{O}_{S}}\mathcal{A}(G)\right) & = & \left(V(f^{\ast}\rho)\stackrel{c_{f^{\ast}\rho}}{\longrightarrow}V(f^{\ast}\rho)\otimes_{\mathcal{O}_{T}}\mathcal{A}(G_{T})\right),\\
f_{\ast}\left(V(\rho)\stackrel{c_{\rho}}{\longrightarrow}V(\rho)\otimes_{\mathcal{O}_{T}}\mathcal{A}(G_{T})\right) & = & \left(V(f_{\ast}\rho)\stackrel{c_{f_{\ast}\rho}}{\longrightarrow}V(f_{\ast}\rho)\otimes_{\mathcal{O}_{S}}\mathcal{A}(G)\right).
\end{eqnarray*}

\subsection*{$\mathsf{C}=\LF(S)$ and $\mathcal{B}=f_{\ast}\mathcal{O}_{T}$ where
$f:T\rightarrow S$ is a finite étale morphism. }

The first example induces an equivalence of $\otimes$-categories
$\mathsf{C}(\mathcal{B})\simeq\LF(T)$. We have to show that for a
quasi-coherent sheaf $\mathcal{M}$ on $T$, $\mathcal{M}$ is a finite
locally free $\mathcal{O}_{T}$-module if and only if $f_{\ast}\mathcal{M}$
is a finite locally free $\mathcal{O}_{S}$-module (the direct implication
is easy, and only requires $f$ to be finite and locally free). By~\cite[2.5.2]{EGA4.2},
our claim is local in the fpqc topology on $S$. But, locally on $S$
for the étale topology, our finite étale morphism $f$ is simply a
finite disjoint union of open and closed embeddings (this follows
from \cite[17.9.3]{EGA4.4}), for which the claim is now obvious. 

Combining this last example with the previous three, we obtain:

\subsection*{$\mathsf{C}=\Gra^{\Gamma}\LF(S)$ and $\mathcal{B}$ as above with
the trivial $\Gamma$-graduation.}

Then 
\[
\mathsf{C}(\mathcal{B})\simeq\Gra^{\Gamma}\LF(T).
\]

\subsection*{$\mathsf{C}=\Fil^{\Gamma}\LF(S)$ and $\mathcal{B}$ as above with
the trivial $\Gamma$-filtration.}

Then 
\[
\mathsf{C}(\mathcal{B})\simeq\Fil^{\Gamma}\LF(T).
\]

\subsection*{$\mathsf{C}=\Rep^{\circ}(G)(S)$ and $\mathcal{B}$ as above with
the trivial action. }

Then 
\[
\mathsf{C}(\mathcal{B})\simeq\Rep^{\circ}(G)(T).
\]

\subsection{~}

The point of this abstract nonsense is that, if $\alpha:\mathsf{C}\rightarrow\mathsf{D}$
is a $\otimes$-functor and $\mathcal{B}$ is a ring object in $\mathsf{C}$,
then $\alpha(\mathcal{B})$ is a ring object in $\mathsf{D}$ and
$\alpha$ extends to a $\otimes$-functor $\alpha(\mathcal{B}):\mathsf{C}(\mathcal{B})\rightarrow\mathsf{D}(\alpha(\mathcal{B}))$
which we call the skalar extension of $\alpha$. Similarly, if $\eta$
is a $\otimes$-automorphism of $\alpha$ such that $\eta_{\mathcal{B}}$
is the identity of $\alpha(\mathcal{B})$, then $\eta$ extends to
a $\otimes$-automorphism $\eta(\mathcal{B})$ of $\alpha(\mathcal{B})$
which we call the skalar extension of $\eta$. 
\begin{prop}
\label{prop:SkalarExt}$(1)$ Let $f:\widetilde{S}\rightarrow S$
be a finite étale morphism and denote by $\tilde{\omega}$ the fiber
functors for $\tilde{G}=G_{\tilde{S}}$. Then we have isomorphisms
of fpqc sheaves on $\tilde{S}$:
\[
\Aut^{\otimes}(\omega^{\circ})\vert_{\widetilde{S}}=\Aut^{\otimes}(\tilde{\omega}^{\circ}),\quad\mathbb{G}^{\Gamma}(\omega^{\circ})\vert_{\widetilde{S}}=\mathbb{G}^{\Gamma}(\tilde{\omega}^{\circ})\quad\mbox{and}\quad\mathbb{F}^{\Gamma}(\omega^{\circ})\vert_{\widetilde{S}}=\mathbb{F}^{\Gamma}(\tilde{\omega}^{\circ}).
\]
$(2)$ If $f$ is merely affine, then $\mathbb{F}^{\Gamma}(\omega)\vert_{\tilde{S}}=\mathbb{F}^{\Gamma}(\tilde{\omega})$. \end{prop}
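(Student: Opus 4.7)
My approach is to construct an inverse to the canonical morphism of fpqc sheaves $\mathbb{F}^{\Gamma}(\tilde{\omega}) \to \mathbb{F}^{\Gamma}(\omega)|_{\tilde{S}}$ (and, in part~(1), to its counterparts for $\Aut^{\otimes}$ and $\mathbb{G}^{\Gamma}$) on each $\tilde{S}$-scheme $T$. That canonical morphism is pre-composition with the base-change $\otimes$-functor $f^{*}\colon \Rep^{(\circ)}(G)(S) \to \Rep^{(\circ)}(\tilde{G})(\tilde{S})$; it is well-defined because $\omega_{T}^{(\circ)} = \tilde{\omega}_{T}^{(\circ)} \circ f^{*}$ whenever $T \to S$ factors through $\tilde{S}$. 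Everything being functorial in $T$, it suffices to exhibit a $T$-wise inverse.

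The inverse is produced by the skalar extension formalism of the preceding subsection, applied to the quasi-coherent $\mathcal{O}_{S}$-algebra $\mathcal{B} = f_{*}\mathcal{O}_{\tilde{S}}$, viewed as a ring object in $\Rep(G)(S)$ with trivial $G$-action. For part~(1), $f$ finite étale makes $\mathcal{B}$ finite locally free, so I may invoke the equivalences
\[
\Rep^{\circ}(G)(S)(\mathcal{B}) \simeq \Rep^{\circ}(\tilde{G})(\tilde{S}), \qquad \LF(T)(\mathcal{B}|_{T}) \simeq \LF(\tilde{S}\times_{S}T),
\]
together with their $\Gra^{\Gamma}\LF$ and $\Fil^{\Gamma}\LF$ variants from the list of examples. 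Given $\eta \in \Aut^{\otimes}(\omega_{T}^{\circ})$, axiom (A2) applied to the trivial representation $\mathcal{B}$ yields $\eta_{\mathcal{B}} = \mathrm{Id}$, so the skalar extension gives a $\otimes$-automorphism $\eta(\mathcal{B})$ of the functor $\omega_{T}^{\circ}(\mathcal{B})\colon \Rep^{\circ}(\tilde{G})(\tilde{S}) \to \LF(\tilde{S}\times_{S}T)$. The $\tilde{S}$-structure on $T$ provides a section $\iota\colon T \hookrightarrow \tilde{S}\times_{S}T$ of the first projection; since that projection is finite étale, $\iota$ is an open-and-closed immersion, and applying the exact $\otimes$-functor $\iota^{*}$ to $\eta(\mathcal{B})$ — using the affine base-change identity $\iota^{*}\circ\omega_{T}^{\circ}(\mathcal{B}) = \tilde{\omega}_{T}^{\circ}$ — yields the sought-after $\tilde{\eta} \in \Aut^{\otimes}(\tilde{\omega}_{T}^{\circ})$. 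Replacing (A2) by (G2) or (F2) and $\LF$ by $\Gra^{\Gamma}\LF$ or $\Fil^{\Gamma}\LF$ gives the analogous constructions for $\mathbb{G}^{\Gamma}(\omega^{\circ})$ and $\mathbb{F}^{\Gamma}(\omega^{\circ})$.

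For part~(2) the same argument goes through with $\LF$ replaced by $\QCoh$ and $\Fil^{\Gamma}\LF$ by $\Fil^{\Gamma}\QCoh$: only the quasi-coherent skalar extension equivalences $\Rep(G)(S)(\mathcal{B}) \simeq \Rep(\tilde{G})(\tilde{S})$ and $\QCoh(T)(\mathcal{B}|_{T}) \simeq \QCoh(\tilde{S}\times_{S}T)$ are needed, and these require only $f$ affine. The section $\iota$ is now merely a closed immersion, but $\iota^{*}$ still sends $\Gamma$-filtrations to $\Gamma$-filtrations, because an fpqc-local splitting pulls back along any morphism. A minor subtlety: the $\Fil^{\Gamma}\QCoh$ skalar extension example only yields a fully faithful embedding $\Fil^{\Gamma}\QCoh(T)(\mathcal{B}|_{T}) \hookrightarrow \Fil^{\Gamma}\QCoh(\tilde{S}\times_{S}T)$ rather than an equivalence, but this is harmless as I only ever use the forward map.

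The principal obstacle is verifying that the construction $\eta \mapsto \tilde{\eta}$ (and its graded and filtered analogues) really does invert pre-composition with $f^{*}$. This reduces to the two natural identifications $\iota^{*}\omega_{T}^{(\circ)}(\mathcal{B})(f^{*}\rho) \simeq \omega_{T}^{(\circ)}(\rho)$ for $\rho \in \Rep^{(\circ)}(G)(S)$ and $\iota^{*}\omega_{T}^{(\circ)}(\mathcal{B})(\tilde{\rho}) \simeq \tilde{\omega}_{T}^{(\circ)}(\tilde{\rho})$ for $\tilde{\rho} \in \Rep^{(\circ)}(\tilde{G})(\tilde{S})$, both following from affine base change applied to the cartesian square defined by $\iota$, combined with the equivalence $\Rep^{(\circ)}(G)(S)(\mathcal{B}) \simeq \Rep^{(\circ)}(\tilde{G})(\tilde{S})$. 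Compatibility of these identifications with $\otimes$-products, with gradations, and with filtered strictness is then a matter of naturality.
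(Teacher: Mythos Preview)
Your proof is correct and follows essentially the same route as the paper: the forward map is pre-composition with $f^{*}\colon \Rep^{(\circ)}(G)(S)\to\Rep^{(\circ)}(\tilde G)(\tilde S)$, and the inverse is built by skalar extension along $\mathcal{B}=f_{*}\mathcal{O}_{\tilde S}$ followed by restriction along the diagonal section $\iota\colon T\to\tilde T=T\times_{S}\tilde S$. Your discussion is in fact slightly more explicit than the paper's about why the skalar extension is applicable (axioms (A2)/(G2)/(F2) force triviality on $\mathcal{B}$) and about the harmless defect of the $\Fil^{\Gamma}\QCoh$ example in part~(2).
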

\begin{proof}
$(1)$ Let $T$ be an $\tilde{S}$-scheme. We have to define mutually
inverse maps 
\[
\alpha:\begin{array}{rcl}
\Aut^{\otimes}(\tilde{\omega}^{\circ})(T) & \longleftrightarrow & \Aut^{\otimes}(\omega^{\circ})(T)\\
\mathbb{G}^{\Gamma}(\tilde{\omega}^{\circ})(T) & \longleftrightarrow & \mathbb{G}^{\Gamma}(\omega^{\circ})(T)\\
\mathbb{F}^{\Gamma}(\tilde{\omega}^{\circ})(T) & \longleftrightarrow & \mathbb{F}^{\Gamma}(\omega^{\circ})(T)
\end{array}:\beta
\]
functorial in $T$. The $\alpha$ maps are induced by precomposition
with the base change map $\Rep^{\circ}(G)(S)\rightarrow\Rep^{\circ}(G)(\widetilde{S})$.
The $\beta$ maps are defined by composing the skalar extension maps
with the base change maps for the $\tilde{S}$-section $\iota:T\rightarrow\tilde{T}$
of the projection $f_{T}:\tilde{T}=T\times_{S}\tilde{S}\rightarrow T$
given by the structural morphism $T\rightarrow\tilde{S}$:
\[
\beta:\begin{array}{ccccc}
\Aut^{\otimes}(\omega^{\circ})(T) & \longrightarrow & \Aut^{\otimes}(\tilde{\omega}^{\circ})(\tilde{T}) & \longrightarrow & \Aut^{\otimes}(\tilde{\omega}^{\circ})(T)\\
\mathbb{G}^{\Gamma}(\omega^{\circ})(T) & \longrightarrow & \mathbb{G}^{\Gamma}(\tilde{\omega}^{\circ})(\tilde{T}) & \longrightarrow & \mathbb{G}^{\Gamma}(\tilde{\omega}^{\circ})(T)\\
\mathbb{F}^{\Gamma}(\omega^{\circ})(T) & \longrightarrow & \mathbb{F}^{\Gamma}(\tilde{\omega}^{\circ})(\tilde{T}) & \longrightarrow & \mathbb{F}^{\Gamma}(\tilde{\omega}^{\circ})(T)
\end{array}
\]
Explicitly, for $\eta$, $\mathcal{G}$ and $\mathcal{F}$ in the
source sets and $\tilde{\rho}\in\Rep^{\circ}(\tilde{G})(\tilde{S})$,
we first view $f_{\ast}\tilde{\rho}$ as a $\mathcal{B}$-module in
$\Rep^{\circ}(G)(S)$ where $\mathcal{B}=f_{\ast}\mathcal{O}_{\tilde{S}}$
with trivial $G$-action. Then:
\begin{itemize}
\item $\eta_{f_{\ast}\tilde{\rho}}$ is a $\mathcal{B}_{T}$-linear isomorphism
of $\omega_{T}^{\circ}(f_{\ast}\tilde{\rho})=(f_{T})_{\ast}\tilde{\omega}_{\widetilde{T}}^{\circ}(\tilde{\rho})$.
It thus corresponds to an isomorphism of $\tilde{\omega}_{\tilde{T}}^{\circ}(\tilde{\rho})$
whose pull-back to $\iota^{\ast}\tilde{\omega}_{\tilde{T}}^{\circ}(\tilde{\rho})=\tilde{\omega}_{T}^{\circ}(\tilde{\rho})$
is an isomorphism $\beta(\eta)_{\tilde{\rho}}$. By construction,
there is a commutative diagram
\[
\xyR{2pc}\xymatrix{\omega_{T}^{\circ}(f_{\ast}\tilde{\rho})=\tilde{\omega}_{T}^{\circ}(f^{\ast}f_{\ast}\tilde{\rho})\ar@{->>}[r]\ar[d]_{\eta_{f_{\ast}\tilde{\rho}}} & \tilde{\omega}_{T}^{\circ}(\tilde{\rho})\ar[d]^{\beta(\eta)_{\tilde{\rho}}}\\
\omega_{T}^{\circ}(f_{\ast}\tilde{\rho})=\tilde{\omega}_{T}^{\circ}(f^{\ast}f_{\ast}\tilde{\rho})\ar@{->>}[r] & \tilde{\omega}_{T}^{\circ}(\tilde{\rho})
}
\]
where the horizontal map comes from the adjunction morphism 
\[
f^{\ast}f_{\ast}\tilde{\rho}\rightarrow\tilde{\rho}.
\]

\item $\mathcal{G}(f_{\ast}\tilde{\rho})$ is a $\mathcal{B}_{T}$-stable
$\Gamma$-graduation on $(f_{T})_{\ast}\tilde{\omega}_{\tilde{T}}^{\circ}(\tilde{\rho})$,
giving a $\Gamma$-graduation on $\tilde{\omega}_{\tilde{T}}^{\circ}(\tilde{\rho})$
whose pull-back is a $\Gamma$-graduation $\beta(\mathcal{G})(\tilde{\rho})$
on $\tilde{\omega}_{T}^{\circ}(\tilde{\rho})$. Thus $\beta(\mathcal{G})_{\gamma}(\tilde{\rho})$
is the image of $\mathcal{G}_{\gamma}(f_{\ast}\tilde{\rho})$ under
the adjunction $\omega_{T}^{\circ}(f_{\ast}\tilde{\rho})\twoheadrightarrow\tilde{\omega}_{T}^{\circ}(\tilde{\rho})$. 
\item $\mathcal{F}(f_{\ast}\tilde{\rho})$ is a $\mathcal{B}_{T}$-stable
$\Gamma$-filtration on $(f_{T})_{\ast}\tilde{\omega}_{\tilde{T}}^{\circ}(\tilde{\rho})$,
giving a $\Gamma$-filtration on $\tilde{\omega}_{\tilde{T}}^{\circ}(\tilde{\rho})$
whose pull-back is a $\Gamma$-filtration $\beta(\mathcal{F})(\tilde{\rho})$
on $\tilde{\omega}_{T}^{\circ}(\tilde{\rho})$. Thus $\mathcal{B}(\mathcal{F})^{\gamma}(\tilde{\rho})$
is the image of $\mathcal{F}^{\gamma}(f_{\ast}(\tilde{\rho}))$ under
the adjunction $\omega_{T}^{\circ}(f_{\ast}\tilde{\rho})\twoheadrightarrow\tilde{\omega}_{T}^{\circ}(\tilde{\rho})$. 
\end{itemize}
One checks easily that $\alpha\circ\beta=\mathrm{Id}$ and $\beta\circ\alpha=\mathrm{Id}$.
The proof of $(2)$ is similar.\end{proof}
\begin{rem}
We have not mentioned $\Aut^{\otimes}(\omega)$ and $\mathbb{G}^{\Gamma}(\omega)$
in part $(2)$ of the proposition, because we will establish a stronger
result for them in the next section.
\end{rem}

\section{The regular representation}

The single most important representation of $G$ is the regular representation
$\rho_{\mathrm{reg}}$. We shall use it to establish the classical:
\begin{thm}
\label{thm:RecThm}The above morphisms of fpqc sheaves induce isomorphisms
\[
G\simeq\Aut^{\otimes}(V)\simeq\Aut^{\otimes}(\omega)\quad\mbox{and}\quad\mathbb{G}^{\Gamma}(G)\simeq\mathbb{G}^{\Gamma}(V)\simeq\mathbb{G}^{\Gamma}(\omega).
\]

\end{thm}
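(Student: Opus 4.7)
The plan is to prove everything via the classical Tannakian trick with the regular representation, first establishing $G=\Aut^{\otimes}(\omega)$ and then deducing the other reconstructions essentially formally. Let $\mathcal{A}=\mathcal{A}(G)=s_{\ast}\mathcal{O}_{G}$ and let $\rho_{\mathrm{reg}}\in\Rep(G)(S)$ be the right regular representation, whose structure comes from the coproduct $\Delta:\mathcal{A}\to\mathcal{A}\otimes\mathcal{A}$. The key observation is that the multiplication $m:\mathcal{A}\otimes\mathcal{A}\to\mathcal{A}$ and the unit $\mathcal{O}_{S}\to\mathcal{A}$ are morphisms of $G$-representations (for the right regular action), so for any $\eta\in\Aut^{\otimes}(\omega)(T)$ the automorphism $\eta_{\mathcal{A}}$ of $\mathcal{A}_{T}$ is an $\mathcal{O}_{T}$-algebra automorphism. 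Composing with the counit $\epsilon:\mathcal{A}\to\mathcal{O}_{S}$ yields an algebra homomorphism $g_{\eta}:=\epsilon_{T}\circ\eta_{\mathcal{A}}:\mathcal{A}_{T}\to\mathcal{O}_{T}$, i.e.\ an element $g_{\eta}\in G(T)$.

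Next I would show that $\eta=\iota(g_{\eta})$ by exploiting the coaction. For any $\rho\in\Rep(G)(S)$, the coaction $c_{\rho}:V(\rho)\to V(\rho)\otimes\mathcal{A}$ is a morphism in $\Rep(G)$, where the target carries the trivial action on $V(\rho)$ and the right regular action on $\mathcal{A}$. Since $\eta$ is the identity on trivially-acted factors by axiom (A2), $\eta_{V(\rho)\otimes\mathcal{A}}=\mathrm{Id}_{V(\rho)}\otimes\eta_{\mathcal{A}}$, and compatibility with $c_{\rho}$ yields
\[
c_{\rho}\circ\eta_{V(\rho)}=(\mathrm{Id}_{V(\rho)}\otimes\eta_{\mathcal{A}})\circ c_{\rho}.
\]
Applying $\mathrm{Id}\otimes\epsilon_{T}$ on the left and using $(\mathrm{Id}\otimes\epsilon)\circ c_{\rho}=\mathrm{Id}_{V(\rho)}$, we obtain $\eta_{V(\rho)}=(\mathrm{Id}\otimes g_{\eta})\circ c_{\rho}=\rho(g_{\eta})=\iota(g_{\eta})_{\rho}$. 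Injectivity of $\iota:G\to\Aut^{\otimes}(\omega)$ is immediate from this formula applied to $\rho_{\mathrm{reg}}$. This gives $G\simeq\Aut^{\otimes}(\omega)$, and the same argument carried out over every $T$-scheme $X$ (using axiom (A0) to reduce $\eta_{\mathcal{A}_{X}}$ to the base change of $\eta_{\mathcal{A}_{T}}$, since $\mathcal{A}(G_{X})=\mathcal{A}_{X}$ is pulled back from $T$) also gives $G\simeq\Aut^{\otimes}(V)$.

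For the graduation statements, I would use the $\otimes$-equivalence $\Gra^{\Gamma}\QCoh\simeq\Rep(\mathbb{D}(\Gamma))$ recalled in section~\ref{sub:defiota}. A $\Gamma$-graduation $\mathcal{G}$ on $\omega_{T}$ satisfying (G1)--(G2) is exactly a factorization of $\omega_{T}$ through $\Rep(\mathbb{D}_{T}(\Gamma))$ compatible with $\otimes$ and trivial objects, i.e.\ a $\otimes$-isomorphism from $\omega_{T}$ to $\omega_{T}$ twisted by an action of $\mathbb{D}_{T}(\Gamma)$, which is the same datum as a group scheme morphism $\mathbb{D}_{T}(\Gamma)\to\Aut^{\otimes}(\omega_{T})$. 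Under the identification $\Aut^{\otimes}(\omega_{T})=G_{T}$ already obtained, this becomes an element of $\mathbb{G}^{\Gamma}(G)(T)$, and one verifies that the composition recovers exactly the definition of $\iota$ in~\ref{sub:defiota}. The same argument with (G0)--(G1) in place of (G1)--(G2) gives $\mathbb{G}^{\Gamma}(G)\simeq\mathbb{G}^{\Gamma}(V)$.

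The main (minor) obstacle is bookkeeping: one must carefully check that the equivariance of $m$, the unit, and $c_{\rho}$ for the right regular representation, together with the tensor and trivial-object axioms (A0)--(A2), (G0)--(G2), are exactly what is needed to upgrade the Hopf-algebraic manipulations above into morphisms of fpqc sheaves functorial in $T$ (and in $X\to T$ for the $V$-versions). Everything else is formal; once $\eta_{\mathcal{A}}$ is shown to be an algebra automorphism, the reconstruction of $g_{\eta}$ and the identity $\eta=\iota(g_{\eta})$ follow from purely diagrammatic arguments that go through over any base scheme with no flatness or finite-generation hypothesis beyond $G$ being affine flat over $S$.
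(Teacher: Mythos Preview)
Your proposal is correct and follows essentially the same approach as the paper: the paper also uses the regular representation, observes that $\eta_{\rho_{\mathrm{reg}}}$ is an $\mathcal{O}_T$-algebra automorphism (since the unit and multiplication are $G$-equivariant), defines the inverse $s(\eta)=1_{G,T}^{\natural}\circ\eta_{\mathrm{reg}}$, and verifies $\iota\circ s=\mathrm{Id}$ via the coaction $c_\rho$ exactly as you do; for $\mathbb{G}^{\Gamma}$ it likewise invokes the $\otimes$-equivalence $\Gra^{\Gamma}\QCoh\simeq\Rep(\mathbb{D}(\Gamma))$ and the already-established $G=\Aut^{\otimes}(\omega)$. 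The only cosmetic difference is that the paper deduces the $V$-statements from the $\omega$-statements by first noting that the restriction maps $\Aut^{\otimes}(V)\to\Aut^{\otimes}(\omega)$ and $\mathbb{G}^{\Gamma}(V)\to\mathbb{G}^{\Gamma}(\omega)$ are monomorphisms (again via $c_\rho$ and (A0)), rather than rerunning the argument over each $X\to T$.
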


\subsection{~}

The regular representation $\rho_{\mathrm{reg}}$ \nomenclature[rho_reg]{$\rho _ \mathrm{reg}$}{Regular representation of $G$ on $\mathcal{A}(G)$, page \nomrefpage}of
$G$ on $V(\rho_{\mathrm{reg}})=\mathcal{A}(G)$ is defined by 
\[
(g\cdot a)(h)=a(hg)
\]
for $T\rightarrow S$, $a\in\Gamma(T,\mathcal{A}(G)_{T})=\Gamma(G_{T},\mathcal{O}_{G_{T}})$
and $g,h\in G(T)$. The corresponding $\mathcal{A}(G)$-comodule structure
morphism is the comultiplication map:\nomenclature[mu^nat_G]{$\mu^\natural _G$}{Comultiplication $\mu ^\natural _{G}: \mathcal{A}(G) \rightarrow \mathcal{A}(G) \otimes \mathcal{A}(G)$ of $\mathcal{A}(G)$.}
\[
\left(V(\rho_{\mathrm{reg}})\stackrel{c_{\mathrm{reg}}}{\longrightarrow}V(\rho_{\mathrm{reg}})\otimes_{\mathcal{O}_{S}}\mathcal{A}(G)\right)=\left(\mathcal{A}(G)\stackrel{\mu^{\natural}}{\longrightarrow}\mathcal{A}(G)\otimes_{\mathcal{O}_{S}}\mathcal{A}(G)\right)
\]
The $\mathcal{O}_{S}$-algebra structure morphisms on $\mathcal{A}(G)$,
namely the unit $\mathcal{O}_{S}\rightarrow\mathcal{A}(G)$ and the
multiplication $\mathcal{A}(G)\otimes\mathcal{A}(G)\rightarrow\mathcal{A}(G)$
correspond to $G$-equivariant morphisms 
\[
1_{S}\rightarrow\rho_{\mathrm{reg}}\quad\mbox{and}\quad\rho_{\mathrm{reg}}\otimes\rho_{\mathrm{reg}}\rightarrow\rho_{\mathrm{reg}}.
\]
For any $\rho\in\Rep(G)(S)$, we denote by $\rho_{0}\in\Rep(G)(S)$
the trivial representation \nomenclature[rho_0]{$\rho_0$}{Trivial representation of $G$ on $V(\rho)$.}of
$G$ on $V(\rho_{0})=V(\rho)$. We may then view the $\mathcal{A}(G)$-comodule
structure morphism $c_{\rho}:V(\rho)\rightarrow V(\rho)\otimes_{\mathcal{O}_{S}}\mathcal{A}(G)$
of $\rho$ as a $G$-equivariant morphism in $\Rep(G)(S)$\nomenclature[c_rho]{$c_\rho$}{Comodule structure morphism $V(\rho ) \rightarrow V(\rho ) \otimes \mathcal{A}(G)$ of $\rho$.}
\[
c_{\rho}:\rho\rightarrow\rho_{0}\otimes\rho_{\mathrm{reg}}
\]
The underlying morphism of quasi-coherent sheaves on $S$ is a split
monomorphism since $(\mathrm{Id}\otimes1_{G}^{\natural})\circ c_{\rho}=\mathrm{Id}$
on $V(\rho)$ where $1_{G}^{\natural}:\mathcal{A}(G)\rightarrow\mathcal{O}_{S}$
is the counit of $\mathcal{A}(G)$.\nomenclature[1^nat_G]{$1^\natural _G$}{Counit $1_{G}^{\natural}:\mathcal{A}(G)\rightarrow\mathcal{O}_{S}$ of $\mathcal{A}(G)$.}

\subsection{~\label{sub:G(w)F(w)detbyrhoreg}}

It follows that any $\eta\in\Aut^{\otimes}(\omega_{T})$, $\mathcal{G}\in\mathbb{G}^{\Gamma}(\omega_{T})$
or $\mathcal{F}\in\mathbb{F}^{\Gamma}(\omega_{T})$ is uniquely determined
by its value $\eta_{\mathrm{reg}}$, $\mathcal{G}_{\mathrm{reg}}$
or $\mathcal{F}_{\mathrm{reg}}$ on $\rho_{\mathrm{reg}}$. Indeed
for any $\rho\in\Rep(G)(S)$, $\eta_{\rho}$, $\mathcal{G}(\rho)$
and $\mathcal{F}(\rho)$ will then be the automorphism, $\Gamma$-graduation
and $\Gamma$-filtration on 
\[
\xyC{4pc}\xymatrix{\omega_{T}(\rho)\ar@{^{(}->}[r]\sp(0.35){\omega_{T}(c_{\rho})} & \omega_{T}(\rho_{0})\otimes\omega_{T}(\rho_{\mathrm{reg}})}
\]
which are respectively induced by the corresponding objects for $\rho_{0}\otimes\rho_{\mathrm{reg}}$,
namely
\begin{eqnarray*}
\eta_{\rho_{0}\otimes\rho_{\mathrm{reg}}} & = & \mathrm{Id}\otimes\eta_{\mathrm{reg}},\\
\mathcal{G}(\rho_{0}\otimes\rho_{\mathrm{reg}}) & = & \omega_{T}(\rho_{0})\otimes\mathcal{G}_{\mathrm{reg}},\\
\mathcal{F}(\rho_{0}\otimes\rho_{\mathrm{reg}}) & = & \omega_{T}(\rho_{0})\otimes\mathcal{F}_{\mathrm{reg}}.
\end{eqnarray*}
We have here used the defining axioms (A1) and (A2) for $\eta$, (G1)
and (G2) for $\mathcal{G}$ and (F1) and (F2) for $\mathcal{F}$,
as well as the fact that for every $\gamma\in\Gamma$, the functors
$\mathcal{G}_{\gamma}$ and $\mathcal{F}^{\gamma}:\Rep(G)(S)\rightarrow\QCoh(T)$
both preserve pure short exact sequences.

\subsection{~\label{sub:injofVtoomega}}

By the same token, we find that the morphisms of fpqc sheaves
\[
\Aut^{\otimes}(V)\rightarrow\Aut^{\otimes}(\omega),\quad\mathbb{G}^{\Gamma}(V)\rightarrow\mathbb{G}^{\Gamma}(\omega)\quad\mbox{and}\quad\mathbb{F}^{\Gamma}(V)\rightarrow\mathbb{F}^{\Gamma}(\omega)
\]
are monomorphisms. For instance if $\eta\in\Aut^{\otimes}(V_{T})$
induces the identity of $\omega_{T}$, then for any $f:X\rightarrow T$
and $\rho\in\Rep(G)(X)$, $\eta_{\rho}$ is the identity of $V(\rho)$
because 
\[
\eta_{\rho_{0}\otimes\rho_{\mathrm{reg},X}}=\eta_{\rho_{0}}\otimes\eta_{\rho_{\mathrm{reg},X}}=\mathrm{Id}_{V(\rho_{0})}\otimes f^{\ast}(\eta_{\rho_{\mathrm{reg},T}})
\]
and $\eta_{\rho_{\mathrm{reg},T}}$ is the trivial automorphism of
$V(\rho_{\mathrm{reg},T})=\omega_{T}(\rho_{\mathrm{reg}})$.

\subsection{\label{sub:RecThG}~}

We show that $G=\Aut^{\otimes}(\omega)$. Fix an $S$-scheme $T$
and $\eta\in\Aut^{\otimes}(\omega_{T})$. Recall that $\eta_{\mathrm{reg}}$
is the $\mathcal{O}_{T}$-linear automorphism of $\omega_{T}(\rho_{\mathrm{reg}})=\mathcal{A}(G_{T})$
induced by $\eta$. Since $\eta_{1_{S}}=\mathrm{Id}_{\mathcal{O}_{T}}$
on $\omega_{T}(1_{S})=\mathcal{O}_{T}$ by (A2) and $\eta_{\rho_{\mathrm{reg}}\otimes\rho_{\mathrm{reg}}}=\eta_{\mathrm{reg}}\otimes\eta_{\mathrm{reg}}$
on 
\[
\omega_{T}(\rho_{\mathrm{reg}}\otimes\rho_{\mathrm{reg}})=\mathcal{A}(G_{T})\otimes\mathcal{A}(G_{T})
\]
by (A1), the functoriality of $\eta$ applied to $1_{S}\rightarrow\rho_{\mathrm{reg}}$
and $\rho_{\mathrm{reg}}\otimes\rho_{\mathrm{reg}}\rightarrow\rho_{\mathrm{reg}}$
implies that $\eta_{\mathrm{reg}}$ is an automorphism of the quasi-coherent
$\mathcal{O}_{T}$-algebra $\mathcal{A}(G_{T})$. Similarly for any
$\rho\in\Rep(G)(S)$, the $G$-equivariant morphism $c_{\rho}:\rho\rightarrow\rho_{0}\otimes\rho_{\mathrm{reg}}$
induces a commutative diagram of quasi-coherent $\mathcal{O}_{T}$-modules
\[
\xyC{3pc}\xymatrix{\omega_{T}(\rho)\ar[r]\sp(.34){(c_{\rho})_{T}}\ar[d]_{\eta_{\rho}} & \omega_{T}(\rho_{0})\otimes_{\mathcal{O}_{T}}\mathcal{A}(G_{T})\ar[d]^{\mathrm{Id}\otimes\eta_{\mathrm{reg}}}\\
\omega_{T}(\rho)\ar[r]\sp(.34){(c_{\rho})_{T}} & \omega_{T}(\rho_{0})\otimes_{\mathcal{O}_{T}}\mathcal{A}(G_{T})
}
\]
Composing $\eta_{\mathrm{reg}}$ with the counit $1_{G,T}^{\natural}:\mathcal{A}(G)_{T}\rightarrow\mathcal{O}_{T}$,
we obtain a morphism of $\mathcal{O}_{T}$-algebras $s(\eta)^{\natural}:\mathcal{A}(G)_{T}\rightarrow\mathcal{O}_{T}$,
i.e.~a $T$-valued point $s(\eta)\in G(T)$. Now for any $g\in G(T)$
corresponding to $g^{\natural}:\mathcal{A}(G)_{T}\rightarrow\mathcal{O}_{T}$
and mapping to $\iota(g)\in\Aut^{\otimes}(\omega_{T})$, the automorphism
$\iota(g)_{\rho}=\rho_{T}(g)$ of $\omega_{T}(\rho)$ is obtained
by composing $(c_{\rho})_{T}$ with 
\[
\mathrm{Id}\otimes g^{\natural}:\omega_{T}(\rho)\otimes_{\mathcal{O}_{T}}\mathcal{A}(G)_{T}\rightarrow\omega_{T}(\rho).
\]
We thus find that $s\circ\iota(g)=g$ since {\small{
\[
s\circ\iota(g)^{\natural}=1_{G,T}^{\natural}\circ\iota(g)_{\mathrm{reg}}=1_{G_{T}}^{\natural}\circ(\mathrm{Id}\otimes g^{\natural})\circ\mu_{T}^{\natural}=(1_{G_{T}}^{\natural}\otimes g^{\natural})\circ\mu_{T}^{\natural}=(1_{G_{T}}\cdot g)^{\natural}=g^{\natural}.
\]
}}On the other hand, $\iota\circ s(\eta)=\eta$ since for any $\rho\in\Rep(G)(S)$,
\begin{eqnarray*}
(\iota\circ s)(\eta)_{\rho} & = & \left(\mathrm{Id}\otimes1_{G,T}^{\natural}\right)\circ\left(\mathrm{Id}\otimes\eta_{\mathrm{reg}}\right)\circ(c_{\rho})_{T}\\
 & = & \left(\mathrm{Id}\otimes1_{G,T}^{\natural}\right)\circ(c_{\rho})_{T}\circ\eta_{\rho}\\
 & = & \rho(1_{G})_{T}^{\natural}\circ\eta_{\rho}=\eta_{\rho}.
\end{eqnarray*}
Thus $G=\Aut^{\otimes}(\omega)$ and by \ref{sub:injofVtoomega},
also $G=\Aut^{\otimes}(V)$.

\subsection{~\label{sub:PrG(G)=00003DG(omega)}}

We show that $\mathbb{G}^{\Gamma}(G)=\mathbb{G}^{\Gamma}(\omega)$.
Let $T$ be an $S$-scheme, $\mathcal{G}\in\mathbb{G}^{\Gamma}(\omega_{T})$.
Then for any $T$-scheme $X$, the $\Gamma$-graduation $\mathcal{G}$
on $\omega_{T}$ and the $\otimes$-equivalence 
\[
\Gra^{\Gamma}\QCoh(T)\simeq\Rep(\mathbb{D}_{T}(\Gamma))(T)
\]
together induce a factorization 
\[
\omega_{X}^{1}:\Rep(G)(S)\stackrel{\mathcal{G}'}{\longrightarrow}\Rep(\mathbb{D}_{T}(\Gamma))(T)\stackrel{\omega_{X}^{2}}{\longrightarrow}\QCoh(X)
\]
of the fiber functor $\omega_{X}^{1}$ for the group scheme $G$ over
$S$ through the fiber functor $\omega_{X}^{2}$ for the group scheme
$\mathbb{D}_{T}(\Gamma)$ over $T$. Moreover $\mathcal{G}'$ is a
$\otimes$-functor preserving trivial representations by (G1) and
(G2). It thus induces a group homomorphism
\[
\mathbb{D}_{T}(\Gamma)(X)\stackrel{\ref{sub:RecThG}}{\simeq}\Aut^{\otimes}(\omega_{X}^{2})\rightarrow\Aut^{\otimes}(\omega_{X}^{1})\stackrel{\ref{sub:RecThG}}{\simeq}G(X).
\]
The latter being functorial in $X$ gives a morphism $s(\mathcal{G}):\mathbb{D}_{T}(\Gamma)\rightarrow G_{T}$
of group schemes over $T$, i.e.~an element $s(\mathcal{G})$ of
$\mathbb{G}^{\Gamma}(G)(T)$. Since $\mathcal{G}\mapsto s(\mathcal{G})$
is itself functorial in $T$, it gives a morphism of fpqc sheaves
$s:\mathbb{G}^{\Gamma}(\omega)\rightarrow\mathbb{G}^{\Gamma}(G)$
which is the inverse of $\iota:\mathbb{G}^{\Gamma}(G)\rightarrow\mathbb{G}^{\Gamma}(\omega)$.
Thus $\mathbb{G}^{\Gamma}(G)=\mathbb{G}^{\Gamma}(\omega)$ and by
\ref{sub:injofVtoomega}, also $\mathbb{G}^{\Gamma}(G)=\mathbb{G}^{\Gamma}(V)$.

\section{Relating $\Rep(G)(S)$ and $\Rep^{\circ}(G)(S)$}

While $\Rep(G)(S)$ already contains the interesting regular representation,
it could be that $\Rep^{\circ}(G)(S)$ contains no representations
beyond the trivial ones, in which case $\Aut^{\otimes}(\omega^{\circ})$,
$\mathbb{G}^{\Gamma}(\omega^{\circ})$ and $\mathbb{F}^{\Gamma}(\omega^{\circ})$
are the trivial sheaves represented by $S$. For instance, let $S$
be one of the two curves considered in \cite[X 6.4]{SGA3.2}, whose
enlarged fundamental group equals $\mathbb{Z}$. Let $n\geq2$ and
$A\in GL_{n}(\mathbb{Z})$ be any matrix with no roots of unity as
eigenvalue. Then by \cite[X 7.1]{SGA3.2}, this determines an $n$-dimensional
torus $G$ over $S$, and all representations $\rho\in\Rep^{\circ}(G)(S)$
are trivial because $\mathbb{Z}^{n}$ contains no finite $A$-orbit
except $\{0\}$. 

When $S$ is quasi-compact, we also consider the intermediate full
subcategory\nomenclature[Repop(G)(S)]{$\Rep ' (G)(S)$}{Full sub-category of $\Rep (G)(S)$ defined on page \nomrefpage}
\[
\Rep^{\circ}(G)(S)\subset\Rep'(G)(S)\subset\Rep(G)(S)
\]
whose objects are the representations $\rho$ for which $\rho=\underrightarrow{\lim}\,\tau$
where $\tau$ runs through the partially ordered set $X(\rho)$ \nomenclature[X(rho)]{$X(\rho)$}{Filtered set of subrepresentations of $\rho$ on finite locally free subsheaves of $V(\rho)$, page \nomrefpage}of
all subrepresentations of $\rho$ which belong to $\Rep^{\circ}(G)(S)$.
For such $\rho$'s, $V(\rho)=\underrightarrow{\lim}\, V(\tau)$ is
a flat $\mathcal{O}_{S}$-module and the quasi-compactness of $S$
implies that $X(\rho)$ is a filtered set. This subcategory is stable
under tensor product and the $\rho\mapsto\rho_{0}$ construction,
it contains $\Rep^{\circ}(G)(S)$ as a full subcategory, and for any
$\rho_{1},\rho_{2}\in\Rep'(G)(S)$,
\begin{equation}
\Hom_{\Rep(G)}(\rho_{1},\rho_{2})=\underleftarrow{\lim}_{\tau_{1}\in X(\rho_{1})}\underrightarrow{\lim}_{\tau_{2}\in X(\rho_{2})}\Hom_{\Rep(G)}(\tau_{1},\tau_{2}).\label{eq:HomsAndLims}
\end{equation}
We denote by $\omega'_{T}:\Rep'(G)(S)\rightarrow\QCoh(T)$ the restriction
of $\omega_{T}$ to $\Rep'(G)(S)$ and define the fpqc sheaf $\Aut^{\otimes}(\omega'):(\Sch/S)^{\circ}\rightarrow\Group$
as before, with automorphisms of $\omega'_{T}$ satisfying the axioms
(A1) and (A2), thus obtaining a factorization 
\[
\Aut^{\otimes}(\omega)\rightarrow\Aut^{\otimes}(\omega')\rightarrow\Aut^{\otimes}(\omega^{\circ}).
\]
On the other hand, it is obvious that $\Aut^{\otimes}(\omega')=\Aut^{\otimes}(\omega^{\circ})$.

\subsection{~}

The following assumption implies that $\Rep^{\circ}(G)(S)$ is sufficiently
big:
\begin{lyxlist}{00.00.0000}
\item [{$\mathrm{HYP}(\omega^{\circ})$}] There exists a covering $\{S_{i}\rightarrow S\}$
by finite étale morphisms such that for every $i$, $G_{S_{i}}/S_{i}$
satisfies $\mathrm{HYP}'(\omega^{\circ})$ where:
\item [{$\mathrm{HYP}'(\omega^{\circ})$}] $S$ is quasi-compact and $\rho_{\mathrm{reg}}$
belongs to $\Rep'(G)(S)$.\end{lyxlist}
\begin{prop}
\label{Pro:Aut(w0)G(w0)F(w0)}If $G/S$ satisfies $\mathrm{HYP}(\omega^{\circ})$,
then 
\[
G=\Aut^{\otimes}(\omega^{\circ}),\quad\mathbb{G}^{\Gamma}(G)=\mathbb{G}^{\Gamma}(\omega^{\circ})\quad\mbox{and}\quad\mathbb{F}^{\Gamma}(\omega)\subset\mathbb{F}^{\Gamma}(\omega^{\circ}).
\]
\end{prop}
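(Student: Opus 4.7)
The approach combines three ingredients: finite \'etale descent via Proposition~\ref{prop:SkalarExt}, the principle of~\S\ref{sub:G(w)F(w)detbyrhoreg} that $\otimes$-data on $\omega$ are determined by their value on $\rho_{\mathrm{reg}}$, and the observation that under $\mathrm{HYP}'(\omega^{\circ})$ this value can be recovered from $\omega^{\circ}$ via the filtered presentation $\rho_{\mathrm{reg}}=\underrightarrow{\lim}_{\tau\in X(\rho_{\mathrm{reg}})}\tau$ with $\tau\in\Rep^{\circ}(G)(S)$. To reduce to $\mathrm{HYP}'(\omega^{\circ})$, note that all six sheaves in the conclusion are compatible with finite \'etale base change: $G$ and $\mathbb{G}^{\Gamma}(G)$ trivially, $\mathbb{F}^{\Gamma}(\omega)$ by Proposition~\ref{prop:SkalarExt}(2) (a finite \'etale morphism being affine), and $\Aut^{\otimes}(\omega^{\circ})$, $\mathbb{G}^{\Gamma}(\omega^{\circ})$, $\mathbb{F}^{\Gamma}(\omega^{\circ})$ by Proposition~\ref{prop:SkalarExt}(1). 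Since equalities and inclusions of fpqc sheaves can be checked after an fpqc covering, the cover $\{S_{i}\to S\}$ provided by $\mathrm{HYP}(\omega^{\circ})$ lets us assume $\mathrm{HYP}'(\omega^{\circ})$.

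\emph{Inclusion $\mathbb{F}^{\Gamma}(\omega)\subset\mathbb{F}^{\Gamma}(\omega^{\circ})$.} It suffices to show the restriction map $\mathbb{F}^{\Gamma}(\omega_{T})\to\mathbb{F}^{\Gamma}(\omega^{\circ}_{T})$ is injective for each $T/S$. Two filtrations $\mathcal{F}_{1},\mathcal{F}_{2}\in\mathbb{F}^{\Gamma}(\omega_{T})$ agreeing on $\Rep^{\circ}(G)(S)$ agree on $\rho_{\mathrm{reg}}$, because axiom~(F3) and the remark following it force each $\mathcal{F}_{i}^{\gamma}$ to commute with arbitrary colimits, so
\[
\mathcal{F}_{i}^{\gamma}(\rho_{\mathrm{reg}})=\underrightarrow{\lim}_{\tau}\mathcal{F}_{i}^{\gamma}(\tau)
\]
depends only on $\mathcal{F}_{i}|_{\Rep^{\circ}(G)(S)}$; and \S\ref{sub:G(w)F(w)detbyrhoreg} then yields $\mathcal{F}_{1}=\mathcal{F}_{2}$.

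\emph{Equalities for $\Aut^{\otimes}$ and $\mathbb{G}^{\Gamma}$.} By Theorem~\ref{thm:RecThm} the remaining claims reduce to proving that the restriction maps $\Aut^{\otimes}(\omega)\to\Aut^{\otimes}(\omega^{\circ})$ and $\mathbb{G}^{\Gamma}(\omega)\to\mathbb{G}^{\Gamma}(\omega^{\circ})$ are isomorphisms of fpqc sheaves. Injectivity follows exactly as above (the value on $\rho_{\mathrm{reg}}$ is a filtered colimit, and $\omega_{T}$, $\mathcal{G}_{\gamma}$ commute with it). For surjectivity of the first map, given $\eta\in\Aut^{\otimes}(\omega^{\circ}_{T})$ the naturality of $\eta$ on the system $X(\rho_{\mathrm{reg}})$ lets us define $\tilde{\eta}_{\mathrm{reg}}:=\underrightarrow{\lim}_{\tau}\eta_{\tau}$ on $\omega_{T}(\rho_{\mathrm{reg}})=\mathcal{A}(G)_{T}$. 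Granting (see below) that $\tilde{\eta}_{\mathrm{reg}}$ is an $\mathcal{O}_{T}$-algebra automorphism, the construction $s$ of \S\ref{sub:RecThG} produces $g=s(\tilde{\eta})\in G(T)$; one then verifies $\iota(g)|_{\omega^{\circ}_{T}}=\eta$ by factoring the split embedding $c_{\rho}:\rho\hookrightarrow\rho_{0}\otimes\rho_{\mathrm{reg}}$ of any $\rho\in\Rep^{\circ}(G)(S)$ through $\rho_{0}\otimes\tau'$ for some $\tau'\in X(\rho_{\mathrm{reg}})$ (using finite presentation of $\rho$ and \eqref{eq:HomsAndLims}) and invoking naturality of both $\iota(g)$ and $\eta$ together with their agreement on $\tau'$. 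The argument for $\mathbb{G}^{\Gamma}(G)=\mathbb{G}^{\Gamma}(\omega^{\circ})$ is parallel, via the equivalence $\Gra^{\Gamma}\QCoh\simeq\Rep\,\mathbb{D}(\Gamma)$ used in \S\ref{sub:PrG(G)=00003DG(omega)}.

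\emph{Main obstacle.} The crux is showing that $\tilde{\eta}_{\mathrm{reg}}$ respects the algebra structure of $\mathcal{A}(G)$, i.e.~the unit $1_{S}\to\rho_{\mathrm{reg}}$ and the multiplication $\rho_{\mathrm{reg}}\otimes\rho_{\mathrm{reg}}\to\rho_{\mathrm{reg}}$. These are morphisms in $\Rep(G)(S)$ that do not a priori live in $\Rep^{\circ}(G)(S)$; however, since $\rho_{\mathrm{reg}},\rho_{\mathrm{reg}}\otimes\rho_{\mathrm{reg}}\in\Rep'(G)(S)$ (tensor commutes with filtered colimits), formula~\eqref{eq:HomsAndLims} together with the finite presentation of each $\tau_{1}\otimes\tau_{2}$ shows that, for any fixed $\tau_{1},\tau_{2}\in X(\rho_{\mathrm{reg}})$, the composite $\tau_{1}\otimes\tau_{2}\hookrightarrow\rho_{\mathrm{reg}}\otimes\rho_{\mathrm{reg}}\to\rho_{\mathrm{reg}}$ factors through some $\tau''\in X(\rho_{\mathrm{reg}})$, and similarly for the unit. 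Applying axioms~(A1) and~(A2) of $\eta$ on $\Rep^{\circ}(G)(S)$ to these factorizations and passing to the colimit yields the required algebra compatibility of $\tilde{\eta}_{\mathrm{reg}}$. This quasi-compactness-plus-filteredness argument is the only place where $\mathrm{HYP}'(\omega^{\circ})$ is genuinely used.
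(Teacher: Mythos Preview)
Your proof is correct and follows essentially the same route as the paper's: reduce to $\mathrm{HYP}'(\omega^{\circ})$ via Proposition~\ref{prop:SkalarExt}, then exploit that $\rho_{\mathrm{reg}}\in\Rep'(G)(S)$ so that $\otimes$-data on $\omega^{\circ}$ determine those on $\rho_{\mathrm{reg}}$ by colimit, and rerun the reconstruction argument of \S\ref{sub:RecThG}--\ref{sub:PrG(G)=00003DG(omega)}. The paper streamlines your ``main obstacle'' step by introducing the intermediate fiber functor $\omega'$ and observing once and for all that $\Aut^{\otimes}(\omega')=\Aut^{\otimes}(\omega^{\circ})$, so that the proof of Theorem~\ref{thm:RecThm} applies verbatim to $\omega'$; your explicit factorization of the multiplication and unit through $X(\rho_{\mathrm{reg}})$ is precisely what makes that identification ``obvious''.
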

\begin{proof}
These being fpqc sheaves on $S$, it is sufficient to establish the
proposition for their restriction to the $S_{i}$'s, which by proposition~\ref{prop:SkalarExt}
reduces us to the case where $S$ is quasi-compact and $\rho_{\mathrm{reg}}$
belongs to $\Rep'(G)(S)$. The proof of theorem~\ref{thm:RecThm}
then shows that $G=\Aut^{\otimes}(\omega'_{T})$. Thus $G=\Aut^{\otimes}(\omega^{\circ})$.
To prove that $\mathbb{G}^{\Gamma}(G)=\mathbb{G}^{\Gamma}(\omega^{\circ})$,
we may test this on quasi-compact schemes, and then the proof of section~\ref{sub:PrG(G)=00003DG(omega)}
carries over to this case. Finally: a $\Gamma$-filtration $\mathcal{F}$
on $\omega_{T}$ is uniquely determined by its value on $\rho_{\mathrm{reg}}$
by~\ref{sub:injofVtoomega}, thus $\mathbb{F}^{\Gamma}(\omega)\subset\mathbb{F}^{\Gamma}(\omega^{\circ})$
since $\rho_{\mathrm{reg}}\in\Rep'(G)(S)$.
\end{proof}

\subsection{~}

For the $V^{\circ}$ variants of these, one needs a weaker assumption:
\begin{lyxlist}{00.00.0000}
\item [{$\mathrm{HYP}(V^{\circ})$}] Locally on $S$ for the fpqc topology,
$\rho_{\mathrm{reg}}$ belongs to $\Rep'(G)(S)$. \end{lyxlist}
\begin{prop}
\label{Pro:Aut(V0)G(V0)F(V0)}If $G/S$ satisfies $\mathrm{HYP}(V^{\circ})$,
then
\[
G=\Aut^{\otimes}(V^{\circ}),\quad\mathbb{G}^{\Gamma}(G)=\mathbb{G}^{\Gamma}(V^{\circ})\quad\mbox{and}\quad\mathbb{F}^{\Gamma}(V)\subset\mathbb{F}^{\Gamma}(V^{\circ}).
\]
\end{prop}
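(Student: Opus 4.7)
The plan is to adapt the proof of Proposition~\ref{Pro:Aut(w0)G(w0)F(w0)} (and hence of Theorem~\ref{thm:RecThm}) to the $V^{\circ}$ setting, exploiting the fact that $\rho_{\mathrm{reg}}$ is fpqc-locally a filtered colimit of rigid subrepresentations. The first step is to reduce to the case where $\rho_{\mathrm{reg}}\in\Rep'(G)(S)$. Unlike their $\omega^{\circ}$ counterparts, the three sheaves $\Aut^{\otimes}(V^{\circ})$, $\mathbb{G}^{\Gamma}(V^{\circ})$ and $\mathbb{F}^{\Gamma}(V^{\circ})$ are fpqc sheaves compatible with arbitrary base change on~$S$, as noted at the start of the Skalar extensions subsection. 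So under $\mathrm{HYP}(V^{\circ})$ we may pass to an fpqc cover and assume $\rho_{\mathrm{reg}}=\underrightarrow{\lim}_{\tau\in X(\rho_{\mathrm{reg}})}\tau$ with each $\tau$ rigid and $X(\rho_{\mathrm{reg}})$ filtered.

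Next, for $G=\Aut^{\otimes}(V^{\circ})$, given $T\rightarrow S$ and $\eta\in\Aut^{\otimes}(V_{T}^{\circ})$, I would define $\eta_{\mathrm{reg}}\in\End_{\mathcal{O}_{T}}\mathcal{A}(G_{T})$ as the filtered colimit of the $\eta_{\tau_{T}}$'s. This is well-defined because axiom~(A0) applied to every inclusion $\tau\hookrightarrow\tau'$ in $X(\rho_{\mathrm{reg}})$ forces compatibility of the $\eta_{\tau_{T}}$ along the system. The multiplication and unit morphisms $\rho_{\mathrm{reg}}\otimes\rho_{\mathrm{reg}}\rightarrow\rho_{\mathrm{reg}}$ and $1_{S}\rightarrow\rho_{\mathrm{reg}}$ arise, on each finite piece, from morphisms between rigid representations. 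Combined with (A1) and the fact that $\eta$ fixes trivial representations, this shows that $\eta_{\mathrm{reg}}$ is an $\mathcal{O}_{T}$-algebra automorphism of $\mathcal{A}(G_{T})$, and so corresponds to some $s(\eta)\in G(T)$. To check $\iota(s(\eta))_{\rho}=\eta_{\rho}$ for rigid $\rho\in\Rep^{\circ}(G)(T)$, embed $\rho$ via the comodule map into some $\rho_{0}\otimes\tau$ with $\tau\in X(\rho_{\mathrm{reg}})$ (possible since $V(\rho)$ is of finite type, so the image of $c_{\rho}$ lands in a finitely generated, hence rigid, subcomodule) and then copy verbatim the computation of \ref{sub:G(w)F(w)detbyrhoreg}--\ref{sub:RecThG}. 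The equality $\mathbb{G}^{\Gamma}(G)=\mathbb{G}^{\Gamma}(V^{\circ})$ follows by the same colimit extension of the argument of \ref{sub:PrG(G)=00003DG(omega)}.

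For $\mathbb{F}^{\Gamma}(V)\subset\mathbb{F}^{\Gamma}(V^{\circ})$, I only need injectivity of the restriction map. Two $\Gamma$-filtrations on $V_{T}$ that agree on $V_{T}^{\circ}$ have the same value on every rigid $\tau\in X(\rho_{\mathrm{reg}})$; since each $\mathcal{F}^{\gamma}$ commutes with arbitrary colimits, they take the same value $\mathcal{F}_{\mathrm{reg}}^{\gamma}=\underrightarrow{\lim}\mathcal{F}^{\gamma}(\tau_{T})$ on $\rho_{\mathrm{reg},T}$, and then agree everywhere by the regular-representation determining principle of \ref{sub:G(w)F(w)detbyrhoreg}.

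The main obstacle is verifying that the colimit $\eta_{\mathrm{reg}}:=\underrightarrow{\lim}\eta_{\tau_{T}}$ really produces an algebra automorphism: axiom~(A1) for $\eta$ is only available on pairs of rigid representations, so one must carefully exhibit the ring structure maps on $\mathcal{A}(G_{T})$, on each finitely generated chunk, via morphisms internal to the filtered system $X(\rho_{\mathrm{reg}})$ and its tensor squares, and then pass to the colimit. Granting this purely formal but careful bookkeeping, the rest of the argument is a transcription of the corresponding steps for $\omega$ and $\omega^{\circ}$.
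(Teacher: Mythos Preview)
Your approach is correct and is essentially an explicit unpacking of the paper's two-line proof. The paper fpqc-localizes exactly as you do, then simply observes that once $\rho_{\mathrm{reg}}\in\Rep'(G)(S)$, the base change $G_T/T$ satisfies $\mathrm{HYP}'(\omega^{\circ})$ for every quasi-compact $T$, so the previous Proposition (the $\omega^{\circ}$ version) applied to $G_T/T$ gives the result ``easily''. Your colimit argument is precisely what lies behind that word: you are redoing the proof of the $\omega^{\circ}$ proposition fiberwise rather than invoking it as a black box.

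One point deserves more care than you give it. An element $\eta\in\Aut^{\otimes}(V_T^{\circ})$ is data over \emph{all} $T$-schemes $X$, not just over $T$ itself; so checking $\iota(s(\eta))_\rho=\eta_\rho$ only for $\rho\in\Rep^{\circ}(G)(T)$ is not literally enough. The fix is immediate: for $\rho\in\Rep^{\circ}(G)(X)$ with $X$ quasi-compact, the comodule map $c_\rho$ factors through $\rho_0\otimes\tau_X$ for some $\tau\in X(\rho_{\mathrm{reg}})$, and axiom (A0) gives $\eta_{\tau_X}=f^{\ast}(\eta_{\tau_T})$; this reduces the check over $X$ to the check over $T$ that you already did. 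For general $X$, pass to a quasi-compact open cover. This is the genuine ``bookkeeping'' step, not the algebra-automorphism verification you flagged (which is routine, as you suspected: the multiplication restricted to $\tau\otimes\tau'$ lands in some $\tau''\in X(\rho_{\mathrm{reg}})$ and functoriality plus (A1) does the rest).
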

\begin{proof}
This being local in the fpqc topology on $S$, we may assume that
$S$ is quasi-compact and $\rho_{\mathrm{reg}}$ is in $\Rep'(G)(S)$,
then $G_{T}/T$ satisfies $\mathrm{HYP}'(\omega^{\circ})$ for every
quasi-compact $T$ over $S$ and the proposition easily follows from
the previous one.
\end{proof}

\subsection{~\label{sub:DefOfIsotriv}}

It remains to give some cases where our assumptions are met.
\begin{defn}
A reductive group $G$ over $S$ is called isotrivial if and only
if there exists a covering $\{S_{i}\rightarrow S\}$ by finite étale
morphisms such that each $G_{S_{i}}$ is splittable. 
\end{defn}
\noindent For tori, this definition is slightly more general than
that given in \cite[IX 1.1]{SGA3.2}, which requires a single finite
étale cover $S'\rightarrow S$. If $S$ is quasi-compact or connected,
both notions coincide. For arbitrary reductive groups, \cite[XXIV 4.1]{SGA3.3r}
only defines local and semi-local isotriviality. If $S$ is local,
these two notions coincide with ours.
\begin{prop}
\label{pro:LocalUnibImpliesIsotriv}If $S$ is local and either geometrically
unibranch or henselian, then every reductive group $G$ over $S$
is isotrivial.\end{prop}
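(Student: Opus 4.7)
The strategy is to construct the required finite étale cover in three stages, trivializing successively a maximal torus, its splitting field, and a Borel subgroup containing it; each stage feeds its output as a new base to the next, and we rely on the fact that a connected finite étale cover of a local geometrically unibranch scheme is again local and geometrically unibranch (by EGA IV 18.10.7, already invoked in Lemma~\ref{lem:StructQuasIsoTwistedSch}).

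First I would produce a maximal torus after a finite étale base change. The $S$-scheme $\mathrm{TOR}(G)$ of maximal tori of $G$ is smooth and surjective over $S$ (SGA3 XII~1.10). Since $S$ is local with closed point $s$, the fibre $\mathrm{TOR}(G)_{s}$ is a non-empty smooth $k(s)$-scheme, hence has a point over some finite separable extension of $k(s)$. By the smoothness of $\mathrm{TOR}(G)\rightarrow S$ and Hensel's lemma for the local ring $\mathcal{O}_{S,s}$, this point lifts to a section of $\mathrm{TOR}(G)$ over some finite étale cover $S_{1}\rightarrow S$, and after restricting to a connected component lying over $s$ we may assume $S_{1}$ is itself local and geometrically unibranch. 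This yields a maximal torus $T\subset G_{S_{1}}$.

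Next I would split $T$. Since $S_{1}$ is local and geometrically unibranch, SGA3 X~5.16 applies and says that every torus over $S_{1}$ is isotrivial: there exists a finite étale cover $S_{2}\rightarrow S_{1}$ on which $T_{S_{2}}$ becomes split. Passing again to a connected component above $s$, $S_{2}$ is local and geometrically unibranch, so the same hypotheses are preserved. On $S_{2}$, the root datum of $(G_{S_{2}},T_{S_{2}})$ is locally constant in the usual sense of SGA3 XXII, and to make the triple $(G,T,B)$ into a split one we only need to exhibit a Borel subgroup of $G_{S_{2}}$ containing $T_{S_{2}}$. The $S_{2}$-scheme parametrising such Borels is a torsor under the (constant) Weyl group, so it is finite étale over $S_{2}$; Hensel's lemma again provides a section after a further finite étale cover $S_{3}\rightarrow S_{2}$. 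On $S_{3}$, the group $G_{S_{3}}$ is split in the sense of SGA3 XXII~1.13, hence splittable (the remaining choice of root-group trivialisations required for a full épinglage trivialises over $S_{3}$ by SGA3 XXIII~1.1). The composite $S_{3}\rightarrow S$ is finite étale and $G_{S_{3}}$ is splittable, so $G$ is isotrivial in the sense of our definition.

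The main obstacle is the middle step: it is the torus-splitting result SGA3 X~5.16 that both requires, and crucially uses, the geometrically unibranch hypothesis, via the interpretation of tori over such a base in terms of continuous representations of the étale fundamental group on the character lattice. The bookkeeping around preservation of ``local'' and ``geometrically unibranch'' through finite étale covers is routine once one knows that such covers decompose into connected components each finite étale over $S$ and each inheriting the hypotheses, but it needs to be tracked carefully so that Hensel's lemma is available at every stage.
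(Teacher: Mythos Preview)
Your argument has two genuine gaps, both hiding in the ``routine bookkeeping'' you flag at the end.

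First, $S$ is only assumed local, not Henselian, so Hensel's lemma is simply not available: you cannot lift a closed-fibre point of $\mathrm{TOR}(G)$ to a section over a finite \'etale cover this way. This step is in fact unnecessary: over a local base, $G$ already contains a maximal torus by \cite[XIV 3.20]{SGA3.2} (the paper uses this repeatedly), so no cover is needed for Step~1. Second, a connected finite \'etale cover of a local scheme is \emph{not} local in general, only semi-local (e.g.\ the integral closure of $\mathbb{Z}_{(p)}$ in a number field unramified at $p$ with several primes above $p$). So your iteration, which assumes the base stays local so that Hensel's lemma can be invoked again, breaks down. This too is repairable: semi-local suffices, since line bundles over a semi-local ring are free, hence once the maximal torus is split the root spaces $\mathfrak{g}_\alpha$ are free and $G$ is d\'eployable --- no Borel-finding step is needed.

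The paper's proof takes a completely different and much shorter route. It first reduces to the case where $G$ is a torus by \cite[XXIV 4.1.5]{SGA3.3r}, and then checks the isotriviality criterion of \cite[X 5.11]{SGA3.2}: the connected components of the character scheme $\underline{\Hom}_S(G,\mathbb{G}_{m,S})$ must be open and finite over $S$. But this scheme is a quasi-isotrivial twisted constant scheme by Proposition~\ref{prop:RepGspCaseTorus}, and Lemma~\ref{lem:StructQuasIsoTwistedSch} --- which is exactly where the geometrically unibranch hypothesis enters --- says that over an irreducible geometrically unibranch base, such a scheme is a disjoint union of connected finite \'etale covers of $S$. That is the whole proof. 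Your approach, once patched, essentially rederives by hand what \cite[XXIV 4.1.5]{SGA3.3r} already packages, and outsources the torus case to \cite[X 5.16]{SGA3.2} where the paper prefers to stay self-contained via its own Lemma~\ref{lem:StructQuasIsoTwistedSch}.
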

\begin{proof}
We may assume that $G$ is a torus by \cite[XXIV 4.1.5]{SGA3.3r}.
We then have to show that the connected components of $R=\underline{\Hom}_{S}(G,\mathbb{G}_{m,S})$
are open and finite over $S$ by \cite[X 5.11]{SGA3.1r}, and this
follows from proposition~\ref{prop:RepGspCaseTorus} and lemma~\ref{lem:StructQuasIsoTwistedSch}.
The henselian case also follows directly from~\cite[X 4.6]{SGA3.2}
or \cite[XXIV 1.21]{SGA3.3r}.\end{proof}
\begin{prop}
\label{Pro:Wedhorn}$(1)$ If $S=\Spec(A)$ for a Prüfer domain $A$
and $\rho\in\Rep(G)(S)$,
\[
\rho\in\Rep'(G)(S)\iff V(\rho)\mbox{\,\ is a flat }\mathcal{O}_{S}\mbox{-module.}
\]

$(2)$ A split reductive group over a quasi-compact $S$ satisfies
$\mathrm{HYP}'(\omega^{\circ})$.

$(3)$ An isotrivial reductive group over a quasi-compact $S$ satisfies
$\mathrm{HYP}(\omega^{\circ})$.

$(4)$ A reductive group over any $S$ satisfies $\mathrm{HYP}(V^{\circ})$.\end{prop}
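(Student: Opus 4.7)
The plan is a cascade starting from part (1), which is the essential content: part (2) reduces to (1) by descent from a Chevalley $\mathbb{Z}$-model, part (3) follows from (2) by unwinding the definition of isotriviality, and part (4) reduces to (2) via \'etale-local splitness of reductive groups.

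For (1), the forward implication is immediate since a filtered colimit of finite locally free modules is flat. For the converse I would use the standard fact that for any affine flat group scheme, every representation $\rho\in\Rep(G)(S)$ is the filtered union of its finite-type $G$-subrepresentations: given generators $m_{1},\dots,m_{n}$ of a finitely generated quasi-coherent $\mathcal{O}_{S}$-submodule of $V(\rho)$, write $c_{\rho}(m_{j})=\sum_{k}v_{jk}\otimes a_{jk}$ using the comultiplication $c_{\rho}\colon V(\rho)\to V(\rho)\otimes_{\mathcal{O}_{S}}\mathcal{A}(G)$; coassociativity and the counit identity force the submodule generated by the finite set $\{v_{jk}\}$ to be a finite-type $G$-subrepresentation containing the original one. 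Assume now that $V(\rho)$ is flat over the Pr\"ufer domain $A$. Then $V(\rho)$ is torsion-free, so for every finite-type subrepresentation $\tau$ the sheaf $V(\tau)$ is finitely generated and torsion-free; by the classical characterization of Pr\"ufer domains, such a module is projective, hence finite locally free. Thus $\tau\in\Rep^{\circ}(G)(S)$ and $\rho=\varinjlim\tau\in\Rep'(G)(S)$.

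For (2), a split reductive $G/S$ descends to a Chevalley group $G_{0}/\Spec\mathbb{Z}$, giving $\mathcal{A}(G)=\mathcal{A}(G_{0})\otimes_{\mathbb{Z}}\mathcal{O}_{S}$. Since $\mathbb{Z}$ is a PID (a fortiori Pr\"ufer) and $G_{0}$ is $\mathbb{Z}$-flat, part (1) yields $\mathcal{A}(G_{0})=\varinjlim\tau_{i}$ with $\tau_{i}$ finite free $G_{0}$-subrepresentations. Tensoring with $\mathcal{O}_{S}$ commutes with filtered colimits and preserves finite freeness, so $\rho_{\mathrm{reg}}=\varinjlim(\tau_{i}\otimes\mathcal{O}_{S})$ with pieces in $\Rep^{\circ}(G)(S)$; quasi-compactness of $S$ makes $X(\rho_{\mathrm{reg}})$ filtered. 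For (3), the definition of isotriviality supplies a finite \'etale cover $\{S_{i}\to S\}$ with each $G_{S_{i}}$ split, and each $S_{i}$ is quasi-compact over the quasi-compact $S$, so (2) gives $\mathrm{HYP}'(\omega^{\circ})$ for $G_{S_{i}}/S_{i}$ and hence $\mathrm{HYP}(\omega^{\circ})$ for $G/S$. For (4), any reductive $G/S$ is \'etale-locally split by SGA3; refining the cover to affine opens yields a quasi-compact fpqc cover on which (2) applies, so $\rho_{\mathrm{reg}}$ lies in $\Rep'(G)(-)$ locally, i.e.\ $\mathrm{HYP}(V^{\circ})$ holds. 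The only substantive step is (1), and there the work consists merely in combining two classical facts (the comultiplication-based generation of representations by finite-type subrepresentations, and the Pr\"ufer-domain characterization of finitely generated torsion-free modules as projective), so no deep obstacle arises.
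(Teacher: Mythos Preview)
Your approach is essentially the same as the paper's. The paper cites Wedhorn \cite[Corollary 5.10]{We04} for (1), deduces (2) by descent to a Chevalley $\mathbb{Z}$-model, and derives (3) and (4) from (2) exactly as you do; your direct argument for (1) simply unpacks the content of Wedhorn's result along the expected lines. One small point in (2): a split reductive group over $S$ need not globally descend to $\Spec(\mathbb{Z})$ unless it is of constant type, so you should first invoke \cite[XXII 2.8]{SGA3.3r} to decompose the quasi-compact $S$ into finitely many clopen pieces on each of which $G$ has constant type, and then apply \cite[XXIII 5.2, XXV 1.2]{SGA3.3r} as you do.
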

\begin{proof}
$(1)$ is exactly \cite[Corollary 5.10]{We04}. For $(2)$, we may
assume that $G$ is of constant type \cite[XXII 2.8]{SGA3.3r}, thus
isomorphic \cite[XXIII 5.2]{SGA3.3r} to the base change of a reductive
group $G_{0}$ over $\Spec(\mathbb{Z})$ \cite[XXV 1.2]{SGA3.3r}
to which $(1)$ now applies. Obviously $(2)\Rightarrow(3)$, and $(2)\Rightarrow(4)$
by \cite[XXII 2.3]{SGA3.3r}.
\end{proof}

\subsection{\label{sub:extF}}

Together with theorem$\,$\ref{thm:RecThm}, proposition~\ref{Pro:Aut(w0)G(w0)F(w0)}
and \ref{Pro:Aut(V0)G(V0)F(V0)} give many cases where automorphisms
or $\Gamma$-graduations automatically extend from $\omega^{\circ}$
or $V^{\circ}$ to $\omega$ or $V$. Assuming that $S$ is quasi-compact,
we will now do something similar for $\Gamma$-filtrations.

\subsection{~}

Let $\mathcal{F}$ be a $\Gamma$-filtration on $\omega_{T}^{\circ}$.
For each $\gamma\in\Gamma$, we may extend 
\[
\mathcal{F}^{\gamma}:\Rep^{\circ}(G)(S)\rightarrow\LF(T)\quad\mbox{to}\quad\mathcal{F}^{\gamma}:\Rep'(G)(S)\rightarrow\QCoh(T)
\]
by the formula $\mathcal{F}^{\gamma}(\rho)={\textstyle \underrightarrow{\lim}}\,\mathcal{F}^{\gamma}(\tau)$,
where $\tau$ runs through $X(\rho)$. It defines a functor by (\ref{eq:HomsAndLims}),
and gives back $\mathcal{F}^{\gamma}(\rho)=\mathcal{F}^{\gamma}(\tau)$
when $\rho=\tau$ belongs to $\Rep^{\circ}(G)(S)$. In general, $\mathcal{F}^{\gamma}(\rho)$
is a pure quasi-coherent subsheaf of $V(\rho)_{T}=\underrightarrow{\lim}\, V(\tau)_{T}$
since filtered colimits are exact and commute with base change. While
$\gamma\rightarrow\mathcal{F}^{\gamma}(\rho)$ is non-increasing,
it may not be a $\Gamma$-filtration on $V(\rho)_{T}$ in our sense.
However:
\begin{lem}
\label{lem:PropOfExtFil}We have the following properties:
\begin{lyxlist}{MMM}
\item [{\emph{(F1)}}] For every $\rho_{1},\rho_{2}\in\Rep'(G)(S)$ and
$\gamma\in\Gamma$, 
\[
\mathcal{F}^{\gamma}(\rho_{1}\otimes\rho_{2})={\textstyle \sum_{\gamma_{1}+\gamma_{2}=\gamma}}\mathcal{F}^{\gamma_{1}}(\rho_{1})\otimes\mathcal{F}^{\gamma_{2}}(\rho_{2}).
\]

\item [{\emph{(F2)}}] For a trivial representation $\rho\in\Rep'(G)(S)$
on $\mathcal{M}\in\QCoh(S)$,
\[
\mathcal{F}^{\gamma}(\rho)=\mathcal{M}\mbox{ if }\gamma\leq0\quad\mbox{and}\quad\mathcal{F}^{\gamma}(\rho)=0\mbox{ if }\gamma>0.
\]

\item [{\emph{(F3r)}}] If $\rho\twoheadrightarrow\tau$ is an epimorphism
with $\rho\in\Rep'(G)(S)$ and $\tau\in\Rep^{\circ}(G)(S)$, then
$\mathcal{F}^{\gamma}(\rho)\twoheadrightarrow\mathcal{F}^{\gamma}(\tau)$
is an epimorphism in $\QCoh(T)$ for every $\gamma\in\Gamma$. 
\item [{\emph{(F3l)}}] If $\rho_{\mathrm{reg}}$ belongs to $\Rep'(G)(S)$
and $\rho_{1}\hookrightarrow\rho_{2}$ is a pure monomorphism in $\Rep'(G)(S)$,
then $\mathcal{F}^{\gamma}(\rho_{1})=\mathcal{F}^{\gamma}(\rho_{2})\cap V_{T}(\rho_{1})$
in $V_{T}(\rho_{2})$ for every $\gamma\in\Gamma$. 
\end{lyxlist}
\end{lem}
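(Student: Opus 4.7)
All four assertions concern quasi-coherent subsheaves (or surjections) of $V(\rho)_{T}$ and are local for the fpqc topology on $T$; moreover the defining formula $\mathcal{F}^{\gamma}(\rho)=\underrightarrow{\lim}_{\tau\in X(\rho)}\mathcal{F}^{\gamma}(\tau)$ commutes with flat base change, so we may localize freely. Axiom (F2) is immediate: any flat trivial representation is a filtered colimit of its finite locally free trivial subobjects, and (F2) on $\Rep^{\circ}$ transfers termwise. For (F3r), given $\rho\twoheadrightarrow\tau$ with $\tau\in\Rep^{\circ}$, quasi-compactness of $S$ combined with finite generation of $V(\tau)_{T}$ and filteredness of $X(\rho)$ produces a single $\sigma\in X(\rho)$ with $\sigma\twoheadrightarrow\tau$; the kernel being a direct summand of $\sigma$ and thus in $\Rep^{\circ}$, exactness of $\mathcal{F}^{\gamma}$ on $\Rep^{\circ}$ yields $\mathcal{F}^{\gamma}(\sigma)\twoheadrightarrow\mathcal{F}^{\gamma}(\tau)$, which composes with $\mathcal{F}^{\gamma}(\sigma)\hookrightarrow\mathcal{F}^{\gamma}(\rho)$.

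For (F1), the technical heart is cofinality of $\{\tau_{1}\otimes\tau_{2}:\tau_{i}\in X(\rho_{i})\}$ in $X(\rho_{1}\otimes\rho_{2})$: any $\sigma\in X(\rho_{1}\otimes\rho_{2})$ is a finite-type subsheaf of $V(\rho_{1})_{T}\otimes V(\rho_{2})_{T}=\underrightarrow{\lim}_{(\tau_{1},\tau_{2})}V(\tau_{1})_{T}\otimes V(\tau_{2})_{T}$, and a finite affine cover of $S$ together with filteredness of the $X(\rho_{i})$ locates $V(\sigma)_{T}$ in a single $V(\tau_{1})_{T}\otimes V(\tau_{2})_{T}$, so $\sigma\hookrightarrow\tau_{1}\otimes\tau_{2}$ in $\Rep^{\circ}$. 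Combining cofinality with (F1) on $\Rep^{\circ}$ and the commutation of $\otimes$ and of sums of submodules with filtered colimits gives (F1) on $\Rep'$.

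The delicate assertion is (F3l). I would use the Saavedra coaction $c_{\rho_{i}}:\rho_{i}\hookrightarrow\rho_{i,0}\otimes\rho_{\mathrm{reg}}$, a $G$-equivariant split monomorphism on the underlying sheaves which belongs to $\Rep'$ (by the assumption $\rho_{\mathrm{reg}}\in\Rep'$). Applying functoriality of the extended $\mathcal{F}^{\gamma}$ to the commutative square
\[
\xymatrix{V(\rho_{1})_{T}\ar@{^{(}->}[r]\ar[d]_{c_{\rho_{1}}} & V(\rho_{2})_{T}\ar[d]^{c_{\rho_{2}}}\\
V(\rho_{1})_{T}\otimes\mathcal{A}(G)_{T}\ar@{^{(}->}[r] & V(\rho_{2})_{T}\otimes\mathcal{A}(G)_{T}}
\]
and using the identity $\mathcal{F}^{\gamma}(\rho_{i,0}\otimes\rho_{\mathrm{reg}})=V(\rho_{i})_{T}\otimes\mathcal{F}^{\gamma}(\rho_{\mathrm{reg}})$, itself a consequence of the (F1) and (F2) just proved, reduces (F3l) to the pure-intersection identity
\[
(V(\rho_{1})_{T}\otimes\mathcal{A}(G)_{T})\cap(V(\rho_{2})_{T}\otimes\mathcal{F}^{\gamma}(\rho_{\mathrm{reg}}))=V(\rho_{1})_{T}\otimes\mathcal{F}^{\gamma}(\rho_{\mathrm{reg}})
\]
inside $V(\rho_{2})_{T}\otimes\mathcal{A}(G)_{T}$ (available because $V(\rho_{1})_{T}\hookrightarrow V(\rho_{2})_{T}$ is pure by hypothesis and $\mathcal{F}^{\gamma}(\rho_{\mathrm{reg}})\hookrightarrow\mathcal{A}(G)_{T}$ is pure as a filtered colimit of the split inclusions $\mathcal{F}^{\gamma}(\tau)\hookrightarrow V(\tau)_{T}$ provided by Proposition~\ref{prop:FilonLF}) and to the \emph{recovery formula} $\mathcal{F}^{\gamma}(\rho_{1})=c_{\rho_{1}}^{-1}(V(\rho_{1})_{T}\otimes\mathcal{F}^{\gamma}(\rho_{\mathrm{reg}}))$.

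The recovery formula is the main obstacle. For $\rho_{1}\in\Rep^{\circ}$ I would establish it as follows: by finite generation of $V(\rho_{1})_{T}$ the coaction $c_{\rho_{1}}$ factors through $V(\rho_{1})_{T}\otimes V(\tau)_{T}$ for some $\tau\in X(\rho_{\mathrm{reg}})$, producing a short exact sequence $0\to\rho_{1}\to\rho_{1,0}\otimes\tau\to Q\to 0$ in $\Rep^{\circ}$ whose underlying sequence is split by the counit, so that $Q$ is finite locally free. Exactness of $\mathcal{F}^{\gamma}$ on $\Rep^{\circ}$ together with the identity $\mathcal{F}^{\gamma}(\rho_{1,0}\otimes\tau)=V(\rho_{1})_{T}\otimes\mathcal{F}^{\gamma}(\tau)$ yields $\mathcal{F}^{\gamma}(\rho_{1})=c_{\rho_{1}}^{-1}(V(\rho_{1})_{T}\otimes\mathcal{F}^{\gamma}(\tau))$, and passing to the filtered colimit over $\tau\in X(\rho_{\mathrm{reg}})$ gives the recovery formula for $\rho_{1}\in\Rep^{\circ}$. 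The case $\rho_{1}\in\Rep'$ follows by further filtered colimit along $\sigma\in X(\rho_{1})$, using that the coactions are compatible with the inclusions $\sigma\hookrightarrow\rho_{1}$ and that filtered colimits commute with pullbacks and tensor products. The rest is routine manipulation with pure subsheaves and filtered colimits, plus fpqc descent for the final identities.
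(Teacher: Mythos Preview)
Your proof is correct and follows essentially the same route as the paper's own argument: (F1), (F2), (F3r) via cofinality of the obvious index sets, and (F3l) by first proving the ``recovery formula'' $\mathcal{F}^{\gamma}(\rho)=c_{\rho}^{-1}\bigl(V(\rho)_{T}\otimes\mathcal{F}^{\gamma}(\rho_{\mathrm{reg}})\bigr)$ --- first for $\rho\in\Rep^{\circ}$ by factoring $c_{\rho}$ through some $\rho_{0}\otimes\tau$ with $\tau\in X(\rho_{\mathrm{reg}})$ and invoking exactness of $\mathcal{F}^{\gamma}$ on $\Rep^{\circ}$, then for $\rho\in\Rep'$ by a filtered colimit --- and then combining it with the purity of $V(\rho_{1})_{T}\hookrightarrow V(\rho_{2})_{T}$. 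The only cosmetic difference is that the paper phrases the final step as a diagram with exact rows and injective rightmost vertical, whereas you phrase it as a pure-intersection identity; these are equivalent.
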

\begin{proof}
(F2) is obvious and (F1), (F3r) follow from the eponymous properties
of $\mathcal{F}$ on $\omega_{T}^{\circ}$ because, since $S$ is
quasi-compact, $\{\tau_{1}\otimes\tau_{2}:(\tau_{1},\tau_{2})\in X(\rho_{1})\times X(\rho_{2})\}$
and $\{\tau'\in X(\rho):\tau'\twoheadrightarrow\tau\}$ are respectively
cofinal in $X(\rho_{1}\otimes\rho_{2})$ and $X(\rho)$. For (F3l),
we first treat the special case of the pure monomorphism $c_{\rho}:\rho\hookrightarrow\rho_{0}\otimes\rho_{\mathrm{reg}}$
for an arbitrary $\rho\in\Rep'(G)(S)$. Given (F1) and (F2), we have
to show that
\[
\mathcal{F}^{\gamma}(\rho)=\ker\left[\omega_{T}(\rho)\stackrel{\omega_{T}(c_{\rho})}{\longrightarrow}\omega_{T}(\rho_{0})\otimes\left(\omega_{T}(\rho_{\mathrm{reg}})/\mathcal{F}^{\gamma}(\rho_{\mathrm{reg}})\right)\right].
\]
Since both sides are filtered limits over $\tau\in X(\rho)$, we may
assume that $\rho$ belongs to $\Rep^{\circ}(G)(S)$. The right hand
side is then the filtered limit of 
\[
\ker\left[\omega_{T}(\rho)\stackrel{\omega_{T}(c_{\rho,\tau})}{\longrightarrow}\omega_{T}(\rho_{0})\otimes\left(\omega_{T}(\tau)/\mathcal{F}^{\gamma}(\tau)\right)\right]=\mathcal{F}^{\gamma}(\rho,\tau)
\]
where $\tau$ ranges through the cofinal set $X'$ of $X(\rho_{\mathrm{reg}})$
defined by 
\[
X'=\left\{ \tau:c_{\rho}\mbox{ factors as }\rho\stackrel{c_{\rho,\tau}}{\longrightarrow}\rho_{0}\otimes\tau\hookrightarrow\rho_{0}\otimes\rho_{\mathrm{reg}}\right\} .
\]
Note that $\rho_{0}\otimes\tau\hookrightarrow\rho_{0}\otimes\rho_{\mathrm{reg}}$
since $V(\rho_{0})$ is a flat $\mathcal{O}_{S}$-module. For each
$\tau$ in $X'$, the cokernel $\sigma_{\rho,\tau}$ of $c_{\rho,\tau}:\rho\hookrightarrow\rho_{0}\otimes\tau$
is an object of $\Rep^{\circ}(G)(S)$: the counit $1_{G}^{\natural}:\mathcal{A}(G)\rightarrow\mathcal{O}_{S}$
gives a retraction of $V(c_{\rho,\tau})$, whose kernel is a direct
factor of $V(\rho_{0}\otimes\tau)$ isomorphic to $V(\sigma_{\rho,\tau})$.
Since $\mathcal{F}^{\gamma}$ is exact on $\Rep^{\circ}(G)(S)$, it
follows that 
\[
\mathcal{F}^{\gamma}(\rho)=\ker\left[\omega_{T}(\rho)\stackrel{\omega_{T}(c_{\rho,\tau})}{\longrightarrow}\omega_{T}(\rho_{0}\otimes\tau)/\mathcal{F}^{\gamma}(\rho_{0}\otimes\tau)\right]=\mathcal{F}^{\gamma}(\rho,\tau)
\]
for every $\tau\in X'$, which proves our claim. For any morphism
$\rho_{1}\rightarrow\rho_{2}$ in $\Rep'(G)(S)$ and any $\gamma\in\Gamma$,
we now have a commutative diagram with exact rows
\[
\begin{array}{ccccccc}
0 & \rightarrow & \mathcal{F}^{\gamma}(\rho_{1}) & \rightarrow & \omega_{T}(\rho_{1}) & \rightarrow & \omega_{T}(\rho_{1,0})\otimes\omega_{T}(\rho_{\mathrm{reg}})/\mathcal{F}^{\gamma}(\rho_{\mathrm{reg}})\\
 &  & \downarrow &  & \downarrow &  & \downarrow\\
0 & \rightarrow & \mathcal{F}^{\gamma}(\rho_{2}) & \rightarrow & \omega_{T}(\rho_{2}) & \rightarrow & \omega_{T}(\rho_{2,0})\otimes\omega_{T}(\rho_{\mathrm{reg}})/\mathcal{F}^{\gamma}(\rho_{\mathrm{reg}})
\end{array}
\]
If $V(\rho_{1})\rightarrow V(\rho_{2})$ is a pure monomorphism, the
vertical maps are monomorphisms, therefore $\mathcal{F}^{\gamma}(\rho_{1})=\mathcal{F}^{\gamma}(\rho_{2})\cap\omega_{T}(\rho_{1})$
in $\omega_{T}(\rho_{2})$: this proves (F3l).
\end{proof}

\subsection{~}

As before, for every $\rho\in\Rep'(G)(S)$ and $\gamma\in\Gamma$,
we define 
\[
\mathcal{F}_{+}^{\gamma}(\rho)=\cup_{\eta>\gamma}\mathcal{F}^{\eta}(\rho)\quad\mbox{and}\quad\Gr_{\mathcal{F}}^{\gamma}(\rho)=\mathcal{F}^{\gamma}(\rho)/\mathcal{F}_{+}^{\gamma}(\rho).
\]
Since again filtered limits are exact, we find that 
\[
\mathcal{F}_{+}^{\gamma}(\rho)=\underrightarrow{\lim}\,\mathcal{F}_{+}^{\gamma}(\tau)\quad\mbox{and}\quad\Gr_{\mathcal{F}}^{\gamma}(\rho)=\underrightarrow{\lim}\,\Gr_{\mathcal{F}}^{\gamma}(\tau)
\]
where $\tau$ ranges through $X(\rho)$. In particular, the formula
\[
\Gr_{\mathcal{F}}^{\gamma}(\rho_{1}\otimes\rho_{2})\simeq\oplus_{\gamma_{1}+\gamma_{2}=\gamma}\Gr_{\mathcal{F}}^{\gamma_{1}}(\rho_{1})\otimes\Gr_{\mathcal{F}}^{\gamma_{2}}(\rho_{2})
\]
also holds for $\rho_{1}$ and $\rho_{2}$ in $\Rep'(G)(S)$. All
of the above constructions commute with arbitrary base change on $T$.
Finally if the original $\Gamma$-filtration $\mathcal{F}$ on $\omega_{T}^{\circ}$
already was the restriction of some $\Gamma$-filtration $\mathcal{F}'$
on $\omega_{T}$, the restriction of the latter is equal to the extension
of the former on $\omega'_{T}$ since $\mathcal{F}^{\prime\gamma}$
commutes with arbitrary colimits.

\subsection{~}

We first use the above device to show that:
\begin{prop}
\label{Pro:F(V0)2F(w0)inj}If $G/S$ satisfies $\mathrm{HYP}(\omega^{\circ})$,
then $\mathbb{F}^{\Gamma}(V^{\circ})\hookrightarrow\mathbb{F}^{\Gamma}(\omega^{\circ})$.\end{prop}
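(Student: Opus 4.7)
The plan is to imitate the argument used in section~\ref{sub:injofVtoomega} to prove $\mathbb{F}^{\Gamma}(V)\hookrightarrow\mathbb{F}^{\Gamma}(\omega)$, but working entirely inside $\Rep^{\circ}$ by approximating $\rho_{\mathrm{reg}}$ (which need not lie in $\Rep^{\circ}(G)(S)$) from inside by its finite-rank subrepresentations. Since $\mathrm{res}:\mathbb{F}^{\Gamma}(V^{\circ})\to\mathbb{F}^{\Gamma}(\omega^{\circ})$ is a morphism of fpqc sheaves, injectivity can be tested on sections, and equality of sections can be verified after fpqc base change. Combining this with Proposition~\ref{prop:SkalarExt}(1) applied to the finite \'etale cover supplied by $\mathrm{HYP}(\omega^{\circ})$, I reduce to the case where $S$ is quasi-compact and $\rho_{\mathrm{reg}}=\underrightarrow{\lim}_{\tau\in X(\rho_{\mathrm{reg}})}\tau$ in $\Rep'(G)(S)$.

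Suppose $\mathcal{F}_1,\mathcal{F}_2\in\mathbb{F}^{\Gamma}(V^{\circ})(T)$ have a common restriction $\mathcal{F}$ on $\omega_T^{\circ}$. I want $\mathcal{F}_1^{\gamma}(\tilde\rho)=\mathcal{F}_2^{\gamma}(\tilde\rho)$ for every $X\to T$ and every $\tilde\rho\in\Rep^{\circ}(G_X)$; this may be tested fpqc-locally on $X$, so I assume $X$ is quasi-compact. The coaction yields a pure monomorphism $c_{\tilde\rho}:\tilde\rho\hookrightarrow\tilde\rho_0\otimes\rho_{\mathrm{reg},X}$ split at the sheaf level by $\mathrm{Id}\otimes 1_{G,X}^{\natural}$. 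Since $V(\tilde\rho)$ is finitely presented and $\tilde\rho_0\otimes\rho_{\mathrm{reg},X}$ is the filtered colimit of the $\tilde\rho_0\otimes\tau_X$, the map $c_{\tilde\rho}$ factors through some $c_{\tilde\rho,\tau}:\tilde\rho\hookrightarrow\tilde\rho_0\otimes\tau_X$ with $\tau\in X(\rho_{\mathrm{reg}})$; equivariance of $c_{\tilde\rho,\tau}$ is automatic because $\tilde\rho_0\otimes\tau_X$ is a subcomodule. The restricted retraction $\mathrm{Id}\otimes 1_{\tau,X}^{\natural}$ splits $c_{\tilde\rho,\tau}$ at the sheaf level, so its cokernel $\sigma_{\tilde\rho,\tau}$ is a direct sheaf-summand of $\tilde\rho_0\otimes\tau_X$, hence lies in $\Rep^{\circ}(G_X)$, and the short exact sequence $0\to\tilde\rho\to\tilde\rho_0\otimes\tau_X\to\sigma_{\tilde\rho,\tau}\to 0$ in $\Rep^{\circ}(G_X)$ is pure.

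Feeding this sequence into the exactness axiom (F3) for the $\mathcal{F}_i^{\gamma}$ identifies
\[
\mathcal{F}_i^{\gamma}(\tilde\rho)=V(\tilde\rho)\cap\mathcal{F}_i^{\gamma}(\tilde\rho_0\otimes\tau_X)
\]
inside $V(\tilde\rho)\otimes V(\tau_X)$. Axiom (F1) combined with the trivial-representation computation (derived from (F1) and (F3)) simplifies the second factor to $V(\tilde\rho)\otimes\mathcal{F}_i^{\gamma}(\tau_X)$. Finally $\tau_X$ is the base change of $\tau\in\Rep^{\circ}(G)(S)$ along $X\to S$, so axiom (F0) together with the very definition of the restriction map gives $\mathcal{F}_i^{\gamma}(\tau_X)$ as the pullback to $X$ of $\mathcal{F}_i^{\gamma}(\tau_T)=\mathcal{F}^{\gamma}(\tau)$, which is independent of $i$. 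Hence $\mathcal{F}_1^{\gamma}(\tilde\rho)=\mathcal{F}_2^{\gamma}(\tilde\rho)$.

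The main obstacle is the factorization step: one must simultaneously secure a finite-rank target $\tau$ (requiring $X$ quasi-compact, and using $\mathrm{HYP}'(\omega^{\circ})$ through the $\Rep'$-presentation of $\rho_{\mathrm{reg}}$), verify that the factored coaction is still equivariant and split by the restricted counit, and conclude that its cokernel is finite locally free so that the resulting sequence is a genuine pure exact sequence in $\Rep^{\circ}(G_X)$ to which (F3) applies. Once these points are settled, the conclusion is a routine assembly of the filtration axioms.
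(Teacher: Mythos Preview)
Your proof is correct and follows essentially the same route as the paper's: reduce via Proposition~\ref{prop:SkalarExt} to the case where $\rho_{\mathrm{reg}}\in\Rep'(G)(S)$, then for an arbitrary $\tilde\rho$ over $X$ use the coaction to cut down to values on base-changes $\tau_X$ of objects $\tau\in X(\rho_{\mathrm{reg}})$, which by (F0) are determined by the common restriction to $\omega_T^{\circ}$. The only organizational difference is that the paper routes the middle step through the extension formalism of \S\ref{sub:extF} and the derived property (F3l) of Lemma~\ref{lem:PropOfExtFil}, whereas you inline that argument by factoring $c_{\tilde\rho}$ through a single $\tilde\rho_0\otimes\tau_X$ and invoking the full exactness axiom (F3) for $V^{\circ}$ directly on the resulting split short exact sequence in $\Rep^{\circ}(G_X)$; this is exactly the content of the proof of (F3l), so the two arguments are the same in substance.
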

\begin{proof}
By proposition~\ref{prop:SkalarExt}, we may assume: $S$ is quasi-compact
and $\rho_{\mathrm{reg}}$ is in $\Rep'(G)(S)$. We have to show that
for an $S$-scheme $T$ and $\mathcal{F}\in\mathbb{F}^{\Gamma}(V_{T}^{\circ})$
with image $\tilde{\mathcal{F}}\in\mathbb{F}^{\Gamma}(\omega_{T}^{\circ})$,
for any $U\rightarrow T$, the $\Gamma$-filtration $\mathcal{F}_{U}$
on $\Rep^{\circ}(G_{U})(U)\rightarrow\LF(U)$ induced by $\mathcal{F}$
is determined by $\tilde{\mathcal{F}}$. We may assume that $T$ and
$U$ are quasi-compact. Then: $\mathcal{F}_{U}$ is determined by
its extension to $\Rep'(G_{U})(U)\rightarrow\QCoh(U)$, which itself
is determined by its value on $\rho_{\mathrm{reg},U}\in\Rep'(G_{U})(U)$
thanks to (F1-2) and (F3l) applied to the pure monomorphisms $c_{\rho}:\rho\rightarrow\rho_{0}\otimes\rho_{\mathrm{reg},U}$
for all $\rho$'s in $\Rep'(G_{U})(U)$. Since $U$ is quasi-compact,
$X(\rho_{\mathrm{reg}})_{U}=\left\{ \tau_{U}:\tau\in X(\rho_{\mathrm{reg}})\right\} $
is cofinal in $X(\rho_{\mathrm{reg},U})$, thus $\mathcal{F}{}_{U}(\rho_{\mathrm{reg,U}})$
is determined by the restriction of $\mathcal{F}_{U}$ to $X(\rho_{\mathrm{reg}})_{U}$.
By the axiom (F0) for $\mathcal{F}$, the latter is determined by
the values of $\mathcal{F}_{T}$ on $X(\rho_{\mathrm{reg}})_{T}$,
which are the values of $\tilde{\mathcal{F}}$ on $X(\rho_{\mathrm{reg}})$.
Thus $\tilde{\mathcal{F}}$ determines $\mathcal{F}_{U}$ and $\mathcal{F}$
uniquely.
\end{proof}

\subsection{~\label{sub:LinearGroups}}

Here is another useful assumption: we say that $G/S$ is linear if
there exists $\tau\in\Rep^{\circ}(G)(S)$ inducing a closed immersion
$\tau:G\hookrightarrow GL(V(\tau))$. Note that upon replacing $\tau$
with $\tau\oplus(\det\tau)^{-1}$, we may then also assume that $\det\tau=1$. 
\begin{lem}
\label{lem:AssLin}The affine and flat group $G$ over $S$ is linear
in the following cases:
\begin{enumerate}
\item $G$ is of finite type over a noetherian regular $S$ with $\dim S\leq2$.
\item $\mathrm{HYP}(\omega^{\circ})$ holds and $S$ is quasi-compact and
quasi-separated.
\item $G$ is an isotrivial reductive group over a quasi-compact $S$.
\item $G$ is a reductive group of adjoint type over any $S$.
\end{enumerate}
\end{lem}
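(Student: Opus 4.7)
The plan is to treat the four cases separately, each by exhibiting a finite locally free sub-comodule of $\mathcal{A}(G)$ whose associated representation yields a closed immersion $G \hookrightarrow GL(V)$. I would start with the easiest, case~(4): for reductive $G$ of adjoint type, the center $Z(G)$ is trivial by definition, so the adjoint representation $\mathrm{ad}: G \to GL(\mathfrak{g})$ on $\mathfrak{g} = \Lie(G)$ has trivial kernel, and by the SGA3 description of automorphism group schemes of adjoint semi-simple groups it is already a closed immersion. Since $G/S$ is smooth of finite presentation, $\mathfrak{g}$ is finite locally free, so $\mathrm{ad} \in \Rep^\circ(G)(S)$ is of the desired form.

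Cases~(3) and~(2) both proceed by finite \'etale descent. In case~(3), isotriviality and quasi-compactness of $S$ yield a finite \'etale cover $\pi : S' = \coprod_{i\in I} S_i \to S$ with $I$ finite and each $G_{S_i}$ split. Split reductive groups descend to $\Spec(\mathbb{Z})$ by \cite[XXV 1.2]{SGA3.3r}, where linearity is classical, producing a faithful $\tau' \in \Rep^\circ(G_{S'})(S')$. The pushforward $\pi_* V(\tau')$ is finite locally free on $S$ (since $\pi$ is finite \'etale, hence finite locally free), carries a natural $G$-action from the functoriality of skalar extension of section~\ref{sub:defiota}, and the induced map $G \to GL(\pi_* V(\tau'))$ is a closed immersion, as can be checked after the faithfully flat base change to $S'$, where $\pi_* V(\tau')_{S'}$ splits with $V(\tau')$ as a factor. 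Case~(2) follows the same template: reduce via $\mathrm{HYP}(\omega^\circ)$ to $\mathrm{HYP}'(\omega^\circ)$, then use that $\rho_{\mathrm{reg}} = \varinjlim_{\tau \in X(\rho_{\mathrm{reg}})} V(\tau)$ combined with finite generation of $\mathcal{A}(G)$ as an $\mathcal{O}_S$-algebra (on a quasi-compact quasi-separated base) to conclude that finitely many algebra-generators lie in a single $V(\tau)$, which supplies the closed embedding.

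For case~(1), my plan is first to pick finitely many $\mathcal{O}_S$-algebra generators of $\mathcal{A}(G)$ and close them up under the comultiplication, producing a finitely generated quasi-coherent sub-comodule $\mathcal{M} \subset \mathcal{A}(G)$ still generating $\mathcal{A}(G)$ as an algebra; finiteness of the closure follows from Noetherianity of $S$. The hard step is to upgrade $\mathcal{M}$ to a finite locally free sub-comodule $V(\tau)$, and this is where the regularity of $S$ together with the bound $\dim S \leq 2$ become essential: I would invoke the resolution property in this setting, namely that finitely generated $G$-comodules over such $S$ admit finite locally free $G$-resolutions of length $\leq 2$, to extract a $\tau \in \Rep^\circ(G)(S)$ equipped with a $G$-equivariant morphism $V(\tau) \to \mathcal{A}(G)$ whose image still generates as an algebra.

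I expect case~(1) to be the main obstacle. Refining a finitely generated comodule to a finite locally free one is a non-trivial resolution-property statement, and maintaining both Hopf-algebra stability and algebra-generation through the chosen resolution is the technical heart. The regularity and low-dimension hypotheses on $S$ are precisely what is needed to control the relevant syzygies, whereas the other three cases avoid this by exploiting extra structure (adjoint type, splitness, or an a priori filtered colimit presentation of $\rho_{\mathrm{reg}}$).
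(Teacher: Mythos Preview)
Your handling of cases (2), (3), and (4) is correct and essentially identical to the paper's proof. The paper also uses the adjoint representation for (4), and for (2) and (3) passes to a finite \'etale cover where one can find a locally free sub-comodule $V(\tau')\subset\mathcal{A}(G')$ generating it as an algebra, then pushes $\tau'$ forward. One refinement worth noting: to check that $f_*\tau'$ yields a closed immersion, the paper observes that $f^*f_*\tau' \simeq \rho\otimes\tau'$ with $\rho=f^*f_*1_{G'}$ trivial, so the induced map factors as $G'\stackrel{\tau'}{\hookrightarrow}GL(V(\tau'))\stackrel{\mathrm{id}\otimes-}{\hookrightarrow}GL(V(\rho)\otimes V(\tau'))$, a composite of closed immersions. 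Your ``splits with $V(\tau')$ as a direct factor'' is the same fact read additively after a further localization.

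For case (1) you are working far too hard, and the sketch has a genuine gap. The paper disposes of (1) in one line by citing \cite[VI$_B$ 13.2]{SGA3.1r}. Your proposed argument invokes an \emph{equivariant} resolution property---that finitely generated $G$-comodules admit finite locally free $G$-resolutions---but this is not what regularity and $\dim S\le 2$ give you: they only give the resolution property for coherent $\mathcal{O}_S$-modules. Promoting the non-equivariant resolution property to an equivariant one is exactly the substance of the SGA3 argument, and is close enough to linearity itself that you cannot take it as input. Your first step (closing a finite generating set under comultiplication to get a coherent sub-comodule $\mathcal{M}$) is the right opening move, but the passage from $\mathcal{M}$ to a locally free sub-comodule must be carried out using only the $\mathcal{O}_S$-level resolution property, and you have not indicated how. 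Rather than redoing this, simply cite the SGA3 result as the paper does.
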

\begin{proof}
$(1)$ is \cite[VIB 13.2]{SGA3.1r}. For $(2)$, let $f:S'\rightarrow S$
be a finite étale cover such that $\mathrm{HYP}'(\omega^{\circ})$
holds for $G'=G_{S'}$. Then $S'$ is also quasi-compact and quasi-separated,
thus by \cite[1.7.9]{EGA4.1}, the finitely generated quasi-coherent
$\mathcal{O}_{S'}$-algebra $\mathcal{A}(G')$ is generated by a finitely
generated quasi-coherent $\mathcal{O}_{S'}$-submodule $\mathcal{E}$.
By assumption $\mathrm{HYP}'(\omega^{\circ})$ for $G'$, we may replace
$\mathcal{E}$ by a larger $V(\tau')$ for some $\tau'$ in $X(f^{\ast}\rho_{\mathrm{reg}})$.
The proof of \cite[VIB 13.2]{SGA3.1r} then shows that $\tau':G'\rightarrow GL(V(\tau'))$
is a closed immersion. Put $\tau=f_{\ast}\tau'$, so that $\tau$
belongs to $\Rep^{\circ}(G)(S)$. Then $\tau:G\rightarrow GL(V(\tau))$
is a closed immersion. Indeed, it is sufficient to show that $f^{\ast}\tau:G'\rightarrow GL(V(f^{\ast}\tau))$
is a closed immersion by \cite[2.7.1]{EGA4.2}. But $f^{\ast}\tau=\rho\otimes\tau'$
in $\Rep^{\circ}(G')(S')$, where $\rho=f^{\ast}f_{\ast}1_{G'}$ is
the trivial representation on $V(\rho)=f^{\ast}f_{\ast}\mathcal{O}_{S'}$,
i.e.~$f^{\ast}\tau$ is the composition
\[
\xyC{3pc}\xymatrix{G'\ar[r]\sp(0.4){\rho'} & GL(V(\tau'))\ar[r]\sp(0.4){\mathrm{Id}\otimes-} & GL(V(\rho)\otimes V(\tau'))}
\]
of two closed immersions, therefore itself a closed immersion. For
$(3)$: it is well-known that the Chevalley groups over $\Spec\,\mathbb{Z}$
are linear (a complete overkill: use $(1)$), so are therefore also
the split reductive groups over any base by \cite[XXII 2.8, XXIII 5.2 and XXV 1.2]{SGA3.3r},
to which one reduces as in $(2)$. For $(4)$, simply take $\tau$
to be the adjoint representation $\rho_{\mathrm{ad}}$ of $G$ on
its Lie algebra $\Lie(G)=\mathfrak{g}=V(\rho_{\mathrm{ad}})$.
\end{proof}

\section{The stabilizer of a $\Gamma$-filtration, I}

\subsection{~}

Let now $G$ be a reductive group over $S$ and let $\rho_{\mathrm{ad}}\in\Rep^{\circ}(G)(S)$
be the adjoint representation of $G$ on $V(\rho_{\mathrm{ad}})=\mathfrak{g}=\Lie(G)$.
\nomenclature[rho_ad]{$\rho _\mathrm{ad}$}{Adjoint representation of $G$ on $\Lie (G)$.}Let
$T$ be an $S$-scheme.
\begin{thm}
\label{thm:StabIsParV}Let $\mathcal{F}$ be a $\Gamma$-filtration
on $V_{T}$. Then $\Aut^{\otimes}(\mathcal{F})$ is a parabolic subgroup
$P_{\mathcal{F}}$ of $G_{T}$ with unipotent radical $U_{\mathcal{F}}\subset\Aut^{\otimes!}(\mathcal{F})$.
Moreover, 
\[
\Lie(U_{\mathcal{F}})=\mathcal{F}_{+}^{0}(\rho_{\mathrm{ad}})\quad\mbox{and}\quad\Lie(P_{\mathcal{F}})=\mathcal{F}^{0}(\rho_{\mathrm{ad}})\quad\mbox{in}\quad V_{T}(\rho_{\mathrm{ad}})=\mathfrak{g}_{T}.
\]
\end{thm}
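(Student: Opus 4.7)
The plan is to reduce, fpqc-locally on $T$, to a situation where $\mathcal{F}$ admits a splitting coming from an honest cocharacter $\chi$, and then to extract the parabolic/Levi/unipotent structure from Proposition~\ref{prop:DefPGroupCase}. Both the parabolic structure and the Lie-algebra identifications are fpqc-local on $T$: parabolic subgroups of $G_{T}$ and their unipotent radicals are controlled by the smooth representable functors $\mathbb{P}(G)$ and $\mathbb{OPP}(G)$ of section~\ref{sub:DefFondDiag}, and subsheaves of $V_{T}(\rho_{\mathrm{ad}})=\mathfrak{g}_{T}$ can be tested after faithfully flat base change. After passing to a suitable cover I would assume that $\mathcal{F}$ admits a splitting $\mathcal{G}$ as a $\Gamma$-graduation on $V_{T}$; via the identification $\mathbb{G}^{\Gamma}(G)=\mathbb{G}^{\Gamma}(V)$ of Theorem~\ref{thm:RecThm}, such a $\mathcal{G}$ corresponds to a unique $\chi\in\mathbb{G}^{\Gamma}(G)(T)$ with $\mathcal{F}=\Fi(\iota(\chi))$. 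Proposition~\ref{prop:DefPGroupCase} then attaches to $\chi$ the parabolic $P_{\chi}=U_{\chi}\rtimes L_{\chi}$ of $G_{T}$ with $L_{\chi}=Z_{G_{T}}(\chi)$ and with Lie algebras $\Lie(P_{\chi})=\oplus_{\gamma\geq0}\mathfrak{g}_{\gamma}$, $\Lie(U_{\chi})=\oplus_{\gamma>0}\mathfrak{g}_{\gamma}$, which are canonically $\mathcal{F}^{0}(\rho_{\mathrm{ad}})$ and $\mathcal{F}_{+}^{0}(\rho_{\mathrm{ad}})$ respectively.

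Next I would identify these group schemes with the Tannakian stabilizers. For the Levi, an element $g$ preserves the graduation on every representation iff $\rho(g)$ commutes with $\rho(\chi(t))$ for every $\rho$ and every $t\in\mathbb{D}(\Gamma)$, which by the Tannakian reconstruction $G=\Aut^{\otimes}(V)$ from~\ref{sub:RecThG} is equivalent to $g\in Z_{G_{T}}(\chi)=L_{\chi}$; hence $\Aut^{\otimes}(\mathcal{G})=L_{\chi}$. For the parabolic inclusion $P_{\chi}\subseteq\Aut^{\otimes}(\mathcal{F})$ I would localize further so that $L_{\chi}$ contains a maximal torus of $G_{T}$ and use the root decomposition of $\mathfrak{g}_{T}$ refining the $\chi$-weight decomposition: $U_{\chi}$ is then generated by root subgroups $U_{\alpha}$ with $\alpha\circ\chi>0$, and each $U_{\alpha}$ sends the $\chi$-weight-$\gamma$ piece of any $V(\rho)$ into $\oplus_{\eta>\gamma}V(\rho)_{\eta}$. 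This simultaneously shows that $U_{\chi}$ preserves $\mathcal{F}$ on every $\rho$ and acts trivially on $\Gr_{\mathcal{F}}^{\bullet}$, so $U_{\chi}\subseteq\Aut^{\otimes!}(\mathcal{F})$ and $P_{\chi}=U_{\chi}\cdot L_{\chi}\subseteq\Aut^{\otimes}(\mathcal{F})$.

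For the reverse inclusion I would compute the Lie algebra of $\Aut^{\otimes}(\mathcal{F})$ using the dual numbers: an element $X\in\mathfrak{g}_{T}$ lies in $\Lie(\Aut^{\otimes}(\mathcal{F}))$ iff $1+\epsilon X$ preserves $\mathcal{F}(\rho)$ for every $\rho$; evaluating on $\rho_{\mathrm{ad}}$ alone yields $\Lie(\Aut^{\otimes}(\mathcal{F}))\subseteq\mathcal{F}^{0}(\rho_{\mathrm{ad}})=\Lie(P_{\chi})$. Combined with the scheme-theoretic inclusion $P_{\chi}\subseteq\Aut^{\otimes}(\mathcal{F})$ and the smoothness plus self-normalizing property of $P_{\chi}\subseteq G_{T}$ (\cite[XXII 5.8.5]{SGA3.3r}), this forces $\Aut^{\otimes}(\mathcal{F})=P_{\chi}$. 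Since both $\Aut^{\otimes}(\mathcal{F})$ and the subsheaves $\mathcal{F}^{0}(\rho_{\mathrm{ad}})$, $\mathcal{F}_{+}^{0}(\rho_{\mathrm{ad}})$ of $\mathfrak{g}_{T}$ depend only on $\mathcal{F}$ and not on the chosen splitting, descent back down to $T$ is automatic and produces the parabolic $P_{\mathcal{F}}$ with unipotent radical $U_{\mathcal{F}}$ satisfying the announced Lie-algebra identities.

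The main obstacle is the reverse inclusion $\Aut^{\otimes}(\mathcal{F})\subseteq P_{\chi}$: \emph{a priori}, $\Aut^{\otimes}(\mathcal{F})$ is only a closed subsheaf of $G_{T}$, not visibly smooth, so one cannot compare it to $P_{\chi}$ by a naive Lie-algebra argument. The cleanest route is likely to first establish a set-theoretic inclusion on geometric points (where the characterization of $P_{\chi}$ by preservation of weight filtrations in every representation is classical for reductive groups over a field), then upgrade to a scheme-theoretic equality using the Lie-algebra coincidence and the smoothness of $P_{\chi}$. A secondary technical point is justifying the fpqc-local existence of a functorial splitting $\mathcal{G}$ of $\mathcal{F}$ as a graduation on $V_{T}$: this goes beyond the pointwise splitting of $\mathcal{F}(\rho)$ for individual representations and must be extracted from the $\otimes$-compatibility together with the identification $\mathbb{G}^{\Gamma}(G)=\mathbb{G}^{\Gamma}(V)$.
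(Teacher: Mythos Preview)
Your plan has a genuine circularity. The step you call a ``secondary technical point'' --- producing, fpqc-locally on $T$, a $\Gamma$-graduation $\mathcal{G}$ on $V_T$ (equivalently a $\chi\in\mathbb{G}^{\Gamma}(G)(T')$) splitting $\mathcal{F}$ --- is precisely the main splitting theorem of this chapter (Theorem~\ref{thm:MainTan}, established only in \S3.9--3.10). And the proof of that splitting theorem \emph{uses} the fact that $\Aut^{\otimes}(\mathcal{F})$ is parabolic: see \S\ref{sub:splitonadisenough} and the paragraph that follows, where one first invokes the $\omega^{\circ}$-analog Theorem~\ref{thm:Stab0IsPar} (proved by exactly the same method as Theorem~\ref{thm:StabIsParV}) to obtain an \'epinglage adapted to $P_{\mathcal{F}}$, and only then constructs the splitting cocharacter. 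So at this point in the logical order you are not allowed to assume $\mathcal{F}$ splits, and there is no independent shortcut: nothing in $\mathbb{G}^{\Gamma}(G)=\mathbb{G}^{\Gamma}(V)$ alone tells you that a given element of $\mathbb{F}^{\Gamma}(V)(T)$ lies in the image of $\mathbb{G}^{\Gamma}(V)$.

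The paper's proof avoids any splitting of $\mathcal{F}$ and works directly with the Hopf algebra. One defines closed subschemes $U_{\mathcal{F}}\subset P_{\mathcal{F}}\subset G_T$ by the ideals of $\mathcal{A}(G_T)$ generated by $\mathcal{F}^{0}(\rho_{\mathrm{adj}}^{\circ})$ and $\mathcal{F}^{0}_{+}(\rho_{\mathrm{adj}}^{\circ})$; these are subgroup schemes because the comultiplication and inversion are morphisms $\rho_{\mathrm{adj}}\to\rho_{\mathrm{adj}}\otimes\rho_{\mathrm{adj}}$ and $\rho_{\mathrm{adj}}\to\rho_{\mathrm{adj}}$ in $\Rep(G)(S)$. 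Using only the comodule map $c_{\rho}:\rho\to\rho\otimes\rho_{\mathrm{adj}}$ and axioms (F1)--(F3), one then obtains the chain $P_{\mathcal{F}}\subset\Aut^{\otimes}(\mathcal{F})\subset N(P_{\mathcal{F}})$ and $U_{\mathcal{F}}\subset\Aut^{\otimes!}(\mathcal{F})$. A Jacobian-type smoothness criterion on the defining ideals yields smoothness of the neutral components with the announced Lie algebras; parabolicity of $P_{\mathcal{F}}^{\circ}$ is checked on geometric fibers via Saavedra's criterion together with the identity $[\rho_{\mathrm{ad}}]=[\rho_{\mathrm{ad}}^{\vee}]$ in $K_0(G)$; and finally $P_{\mathcal{F}}^{\circ}=N(P_{\mathcal{F}}^{\circ})$ squeezes $\Aut^{\otimes}(\mathcal{F})$ to $P_{\mathcal{F}}$. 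Your argument is essentially the content of the Remark and Corollary~\ref{cor:F(G)same} that the paper records \emph{after} the theorem, once a splitting is in hand; it is correct as a consequence, but not as a proof.
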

\begin{rem}
Let $\chi:\mathbb{D}_{T}(\Gamma)\rightarrow G_{T}$ be a morphism,
$\mathcal{G}$ the corresponding $\Gamma$-graduation and $\mathcal{F}$
the induced $\Gamma$-filtration. Let $P_{\chi}=U_{\chi}\rtimes L_{\chi}$
be the subgroups of $G_{T}$ defined in proposition~\ref{prop:DefPGroupCase}.
Since $\Aut^{\otimes}(\mathcal{F})=\Aut^{\otimes!}(\mathcal{F})\rtimes\Aut^{\otimes}(\mathcal{G})$
with $\Aut^{\otimes}(\mathcal{G})$ equal to $L_{\chi}$ and isomorphic
to $\Aut^{\otimes}(\Gr_{\mathcal{F}}^{\bullet})$ (because $\mathcal{G}\simeq\Gr_{\mathcal{F}}^{\bullet})$,
the theorem implies 
\[
P_{\chi}=\Aut^{\otimes}(\mathcal{F}),\quad U_{\chi}=\Aut^{\otimes!}(\mathcal{F})\quad\mbox{and}\quad P_{\chi}/U_{\mathcal{\chi}}\simeq\Aut^{\otimes}(\Gr_{\mathcal{F}}^{\bullet}).
\]
\end{rem}
\begin{cor}
\label{cor:F(G)same}The quotients $\Fi:\mathbb{G}^{\Gamma}(G)\twoheadrightarrow\mathbb{F}^{\Gamma}(G)$
of $\mathbb{G}^{\Gamma}(G)$ defined in sections~\ref{sub:FiltrationsGroup}
and \ref{sub:defiota} are canonically isomorphic, and for any $\mathcal{F}\in\mathbb{F}^{\Gamma}(G)(T)$,
\[
P_{\mathcal{F}}=\Aut^{\otimes}(\iota\mathcal{F}),\quad U_{\mathcal{F}}=\Aut^{\otimes!}(\iota\mathcal{F})\quad\mbox{and}\quad P_{\mathcal{F}}/U_{\mathcal{F}}\simeq\Aut^{\otimes}(\Gr_{\iota\mathcal{F}}^{\bullet})
\]
where $\iota\mathcal{F}$ is the image of $\mathcal{F}$ in $\mathbb{F}^{\Gamma}(V_{T})$. \end{cor}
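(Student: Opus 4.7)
The plan is to compare the two surjections $\Fi:\mathbb{G}^{\Gamma}(G)\twoheadrightarrow\mathbb{F}^{\Gamma}(G)$ of sections~\ref{sub:FiltrationsGroup} and~\ref{sub:defiota} by showing they have the same fibers as fpqc sheaves; write $\Fi_{\mathrm{grp}}$ for the first and $\Fi_{\mathrm{tan}}$ for the (image of the) second in $\mathbb{F}^{\Gamma}(V)$. By~\ref{sub:DescParEqConj}, the equivalence for $\Fi_{\mathrm{grp}}$ is: for $x,y\in\mathbb{G}^{\Gamma}(G)(T)$, $\Fi_{\mathrm{grp}}(x)=\Fi_{\mathrm{grp}}(y)$ iff $\exists\,u\in U_{x}(T):\Int(u)(x)=y$. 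I will prove the same characterizes $\Fi_{\mathrm{tan}}(x)=\Fi_{\mathrm{tan}}(y)$, i.e.~$\iota\Fi(x)=\iota\Fi(y)$ in $\mathbb{F}^{\Gamma}(V_{T})$; this yields the asserted canonical isomorphism of the two versions of $\mathbb{F}^{\Gamma}(G)$.

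The direction $(\Leftarrow)$ is immediate: if $\Int(u)(x)=y$ with $u\in U_{x}(T)$, equivariance of $\iota$ gives $\iota\Fi(y)=\iota(u)\cdot\iota\Fi(x)$, and Theorem~\ref{thm:StabIsParV} together with its Remark yields $U_{x}=\Aut^{\otimes!}(\iota\Fi(x))\subset\Aut^{\otimes}(\iota\Fi(x))$, so $\iota(u)$ fixes $\iota\Fi(x)$.

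For the converse, set $\mathcal{F}=\iota\Fi(x)=\iota\Fi(y)$. Applying Theorem~\ref{thm:StabIsParV} and its Remark separately to $x$ and to $y$ produces $P_{x}=\Aut^{\otimes}(\mathcal{F})=P_{y}$ and $U_{x}=\Aut^{\otimes!}(\mathcal{F})=U_{y}$; call these $P$ and $U$. Then $L_{x}$ and $L_{y}$ are Levi subgroups of $P$, uniquely conjugate by some $u\in U(T)$ by~\cite[XXVI 1.8]{SGA3.3r}. Setting $x'=\Int(u)(x)$, both $x'$ and $y$ factor through $L_{y}$, and since the composite $L_{y}\hookrightarrow P\twoheadrightarrow P/U$ is an isomorphism, it suffices to show that $x'$ and $y$ have the same image in $P/U$.

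The key technical step is that this image depends only on $\mathcal{F}$. Using the Remark after Theorem~\ref{thm:StabIsParV} to identify $P/U\simeq\Aut^{\otimes}(\Gr_{\mathcal{F}}^{\bullet})$, I will check that $x$ and $y$ both induce the canonical cocharacter which scales $\Gr_{\mathcal{F}}^{\gamma}(V_{T})$ by the character $\gamma$. Indeed the grading $\iota(x)$ on $V_{T}$ splits $\mathcal{F}$, so the weight-$\gamma$ eigensheaf $\mathcal{G}_{x,\gamma}$ maps isomorphically onto $\Gr_{\mathcal{F}}^{\gamma}(V_{T})$ under $\mathcal{F}^{\gamma}\twoheadrightarrow\Gr_{\mathcal{F}}^{\gamma}$, and $x$ acts as $\gamma$ on both sides; the same applies to $y$. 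Since $u\in U$ does not alter images mod $U$, $x'=\Int(u)(x)$ and $y$ project to the same cocharacter of $P/U$, forcing $x'=y$ and providing the required $u\in U_{x}(T)$. The remaining assertions $P_{\mathcal{F}}=\Aut^{\otimes}(\iota\mathcal{F})$, $U_{\mathcal{F}}=\Aut^{\otimes!}(\iota\mathcal{F})$ and $P_{\mathcal{F}}/U_{\mathcal{F}}\simeq\Aut^{\otimes}(\Gr_{\iota\mathcal{F}}^{\bullet})$ are now immediate from Theorem~\ref{thm:StabIsParV} and its Remark transported through the just-established isomorphism. The main obstacle is the intrinsic identification of the Levi cocharacter purely in terms of $\mathcal{F}$; everything else is a standard SGA3 Levi–parabolic manipulation.
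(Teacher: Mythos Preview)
Your proof is correct and follows essentially the same strategy as the paper's. Both directions hinge on the Remark after Theorem~\ref{thm:StabIsParV} to identify $P_{x}=P_{y}=\Aut^{\otimes}(\mathcal{F})$ and $U_{x}=U_{y}=\Aut^{\otimes!}(\mathcal{F})$, and the converse direction in both proofs rests on the observation that the cocharacter induced on $\Gr_{\mathcal{F}}^{\bullet}$ depends only on $\mathcal{F}$, not on the choice of splitting.

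The only noteworthy difference is in how the conjugating element is produced in the converse. You go through SGA3's Levi uniqueness to find $u\in U(T)$ with $\Int(u)(L_{x})=L_{y}$, and then check $x'=y$ by comparing images in $P/U\simeq\Aut^{\otimes}(\Gr_{\mathcal{F}}^{\bullet})$. The paper instead constructs the conjugating element directly and Tannakianly: the composite $\mathcal{G}_{1}\simeq\Gr_{\mathcal{F}}^{\bullet}\simeq\mathcal{G}_{2}$ of canonical isomorphisms is an automorphism of the $\otimes$-functor $V_{T}$, hence by $G=\Aut^{\otimes}(V)$ an element $p\in G(T)$; it visibly preserves $\mathcal{F}$ (so $p\in P(T)$) and sends $\chi_{1}$ to $\chi_{2}$. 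This is a one-line construction that bypasses the Levi manipulation, but your route is equally valid and perhaps more transparent from the group-theoretic side. One small point: for the final three identities you should note, as the paper does, that they are local in the fpqc topology on $T$ (so that $\mathcal{F}$ lifts to some $\chi$ and the Remark applies); your phrase ``transported through the just-established isomorphism'' elides this.
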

\begin{proof}
For the first assertion, we have to show that for $\chi_{1},\chi_{2}:\mathbb{D}_{T}(\Gamma)\rightarrow G_{T}$,
\[
\chi_{1}\sim_{\mathrm{Par}}\chi_{2}\iff\Fi\circ\iota(\chi_{1})=\Fi\circ\iota(\chi_{2})\mbox{ in }\mathbb{F}^{\Gamma}(V_{T}).
\]
Put $\mathcal{G}_{i}=\iota(\chi_{i})$, $\mathcal{F}_{i}=\Fi(\mathcal{G}_{i})$
and $P_{i}=\Aut^{\otimes}(\mathcal{F}_{i})=P_{\chi_{i}}$. If $\mathcal{\chi}_{1}\sim_{\mathrm{Par}}\chi_{2}$,
then $\chi_{2}=\Int(p)\circ\chi_{1}$ for some $p\in P_{1}(T)$, thus
$\mathcal{F}_{2}=p\mathcal{F}_{1}=\mathcal{F}_{1}$. If $\mathcal{F}_{1}=\mathcal{F}_{2}=\mathcal{F}$,
then $P_{1}=P_{2}=P$ and the canonical isomorphism $\mathcal{G}_{1}\simeq\Gr_{\mathcal{F}}^{\bullet}\simeq\mathcal{G}_{2}$
gives an element of $\Aut^{\otimes}(V_{T})$ preserving $\mathcal{F}$
and mapping $\mathcal{G}_{1}$ to $\mathcal{G}_{2}$, i.e.~an element
$p\in P(T)$ such that $\chi_{2}=\Int(p)\circ\chi_{1}$, thus $\chi_{1}\sim_{\mathrm{Par}}\chi_{2}$.
The remaining assertions are local in the fpqc topology on $T$ and
thus follow from the above remark. 
\end{proof}

\subsection{~}

For $\Gamma$-filtrations on $\omega_{T}$, we need a technical assumption
on $G/S$: 
\begin{lyxlist}{MMM}
\item [{TA}] There exists an fpqc cover $\{f_{i}:S_{i}\rightarrow S\}$
such that (a) each $f_{i}$ is an affine morphism, and (b) each $G_{i}=G_{S_{i}}$
is linear (\ref{sub:LinearGroups}).
\end{lyxlist}
This is true for \emph{any} reductive group $G$ over a \emph{separated}
$S$: starting from a Zariski covering of $S$ by affine $U_{i}$'s,
we pick fpqc covers $\{U_{i,j}\rightarrow U_{i}\}$ splitting $G_{U_{i}}$,
and again cover the $U_{i,j}$'s by affine $U_{i,j,k}$'s. The resulting
fpqc cover $\{U_{i,j,k}\rightarrow S\}$ satisfies our assumption:
$U_{i,j,k}\rightarrow U_{i}$ is affine as a morphism between affine
schemes, $U_{i}\hookrightarrow S$ is affine because $S$ is separated,
and $G_{U_{i,j,k}}$ is linear by lemma~\ref{lem:AssLin} since it
is split.
\begin{thm}
\label{thm:StabIsParw}Assume $TA$. Let $\mathcal{F}$ be a $\Gamma$-filtration
on $\omega_{T}$. Then $\Aut^{\otimes}(\mathcal{F})$ is a parabolic
subgroup $P_{\mathcal{F}}$ of $G_{T}$ with unipotent radical $U_{\mathcal{F}}\subset\Aut^{\otimes!}(\mathcal{F})$.
Moreover, 
\[
\Lie(U_{\mathcal{F}})=\mathcal{F}_{+}^{0}(\rho_{\mathrm{ad}})\quad\mbox{and}\quad\Lie(P_{\mathcal{F}})=\mathcal{F}^{0}(\rho_{\mathrm{ad}})\quad\mbox{in}\quad V_{T}(\rho_{\mathrm{ad}})=\mathfrak{g}_{T}.
\]

\end{thm}

\subsection{~}

If $\mathcal{F}'$ is a $\Gamma$-filtration on $V_{T}$ and $\mathcal{F}$
is the induced $\Gamma$-filtration on $\omega_{T}$, then $\Aut^{\otimes}(\mathcal{F}')=\Aut^{\otimes}(\mathcal{F})$
as subsheaves of $G_{T}$ by \ref{sub:injofVtoomega} and theorem~\ref{thm:RecThm}.
Therefore: (a) theorem~\ref{thm:StabIsParw} holds without the technical
assumption for such filtrations on $\omega_{T}$, and (b) theorem~\ref{thm:StabIsParV},
which is local on $S$, follows from theorem~\ref{thm:StabIsParw}
applied to any affine cover of $S$. We thus only have to consider
the case of a $\Gamma$-filtration $\mathcal{F}$ on $\omega_{T}$.
The technical assumption will be used only once below, in section~\ref{sub:finitegen}.

\subsection{\label{sub:adjreg}~}

The adjoint-regular representation $\rho_{\mathrm{adj}}$ of $G$
on $V(\rho_{\mathrm{adj}})=\mathcal{A}(G)$ is given by\nomenclature[rho_adj]{$\rho _{\mathrm{adj}}$}{Adjoint representation of $G$ on $\mathcal{A} (G)$, page \nomrefpage}
\[
(g\cdot a)(h)=a(g^{-1}hg)
\]
for $T\rightarrow S$, $a\in\Gamma(T,\mathcal{A}(G_{T}))$ and $g,h\in G(T)$.
The unit, counit $1_{G}^{\natural}$, multiplication, comultiplication
$\mu^{\natural}$ and inversion $\mathrm{inv}^{\natural}$ of $\mathcal{A}(G)$
define morphisms in $\Rep(G)(S)$: 
\[
1_{S}\rightarrow\rho_{\mathrm{adj}},\quad\rho_{\mathrm{adj}}\rightarrow1_{S},\quad\rho_{\mathrm{adj}}\otimes\rho_{\mathrm{adj}}\rightarrow\rho_{\mathrm{adj}},\quad\rho_{\mathrm{adj}}\rightarrow\rho_{\mathrm{adj}}\otimes\rho_{\mathrm{adj}},\quad\rho_{\mathrm{adj}}\rightarrow\rho_{\mathrm{adj}}.
\]
For any $\rho$ in $\Rep(G)(S)$, we may also view $c_{\rho}$ as
a split monomorphism
\[
c_{\rho}:\rho\rightarrow\rho\otimes\rho_{\mathrm{adj}}\quad\mbox{in}\quad\Rep(G)(S).
\]
If $\tau$ belongs to $\Rep^{\circ}(G)(S)$, $c_{\tau}$ gives a morphism
$\tau^{\vee}\otimes\tau\rightarrow\rho_{\mathrm{adj}}$ which induces
a $G$-equivariant morphism of quasi-coherent $G-\mathcal{O}_{S}$-algebras
\[
\Sym^{\bullet}(\tau^{\vee}\otimes\tau)\rightarrow\rho_{\mathrm{adj}}
\]
whose underlying morphism of quasi-coherent $\mathcal{O}_{S}$-algebras
is given by 
\[
\Sym_{\mathcal{O}_{S}}^{\bullet}(V(\tau)^{\vee}\otimes V(\tau))\hookrightarrow\Sym_{\mathcal{O}_{S}}^{\bullet}\left(\underline{\End}_{\mathcal{O}_{S}}(\tau)\right)\left[{\textstyle \frac{1}{\det}}\right]=\mathcal{A}\left(GL(V(\tau))\right)\stackrel{\tau^{\natural}}{\longrightarrow}\mathcal{A}(G)
\]
where $\tau^{\natural}$ is the morphism attached to $\tau:G\rightarrow GL(V(\rho))$.
In particular, if the latter is a closed embedding and $\det(\tau)=1$,
then $\Sym^{\bullet}(\tau^{\vee}\otimes\tau)\twoheadrightarrow\rho_{\mathrm{adj}}$
is an epimorphism.

\subsection{~\label{sub:adjregquo}}

Let $\rho_{\mathrm{adj}}^{\circ}$ be the kernel\nomenclature[rho_adj^o]{$\rho ^\circ  _\mathrm{adj}$}{Adjoint representation of $G$ on $\mathcal{I} (G)$, page \nomrefpage}
of $1_{G}^{\natural}:\rho_{\mathrm{adj}}\rightarrow1_{S}$\nomenclature[1_S]{$1_S$}{Trivial representation of $G$ on $\mathcal{O}_S$.}.
Thus $\rho_{\mathrm{adj}}=\rho_{\mathrm{adj}}^{\circ}\oplus1_{S}$
and $V(\rho_{\mathrm{adj}}^{\circ})$ is the augmentation ideal $\mathcal{I}(G)$
of $\mathcal{A}(G)$\nomenclature[I(G)]{$\mathcal{I}(G)$}{Augmentation ideal of $\mathcal{A}(G)$.}.
For any $n\geq1$, the multiplication map $\mathcal{I}(G)^{\otimes n+1}\rightarrow\mathcal{I}(G)$
defines a morphism $(\rho_{\mathrm{adj}}^{\circ})^{\otimes n+1}\rightarrow\rho_{\mathrm{adj}}^{\circ}$
in $\Rep(G)(S)$. We denote by $\rho^{n}\in\Rep^{\circ}(G)(S)$ its
cokernel\nomenclature[rho^n]{$\rho ^n$}{Adjoint representation of $G$ on $\mathcal{I} (G)/\mathcal{I} (G)^{n+1}$, page \nomrefpage},
a representation of $G$ on $V(\rho^{n})=\mathcal{I}(G)/\mathcal{I}(G)^{n+1}$,
and by $\rho_{n}=(\rho^{n})^{\vee}\in\Rep^{\circ}(G)(S)$ the dual
of $\rho^{n}$\nomenclature[rho_n]{$\rho _n$}{Dual of $\rho ^n$, page \nomrefpage}.
Thus $\rho_{1}=\rho_{\mathrm{ad}}$, the adjoint representation of
$G$ on $V(\rho_{\mathrm{ad}})=\mathfrak{g}$.

\subsection{~\label{sub:defUFPF}}

Let now $\mathcal{I}(\mathcal{F})$ and $\mathcal{J}(\mathcal{F})$
be the quasi-coherent ideals of $\mathcal{A}(G_{T})$ which are respectively
generated by the quasi-coherent subsheaves $\mathcal{F}_{+}^{0}(\rho_{\mathrm{adj}}^{\circ})$
and $\mathcal{F}^{0}(\rho_{\mathrm{adj}}^{\circ})$ of the augmentation
ideal $\mathcal{I}(G_{T})=\omega{}_{T}(\rho_{\mathrm{adj}}^{\circ})$
of $\mathcal{A}(G_{T})$. Then 
\[
U_{\mathcal{F}}\stackrel{\mathrm{def}}{=}\Spec\left(\mathcal{A}(G_{T})/\mathcal{J}(\mathcal{F})\right)\hookrightarrow P_{\mathcal{F}}\stackrel{\mathrm{def}}{=}\Spec\left(\mathcal{A}(G_{T})/\mathcal{I}(\mathcal{F})\right)
\]
are closed subgroup schemes of $G_{T}$, because $\mathcal{J}(\mathcal{F})$
and $\mathcal{I}(\mathcal{F})$ are compatible with the comultiplication
$\mu_{T}^{\natural}$ and inversion $\mathrm{inv}_{T}^{\natural}$
of $\mathcal{A}(G_{T})$, since $\mu^{\natural}:\rho_{\mathrm{adj}}\rightarrow\rho_{\mathrm{adj}}\otimes\rho_{\mathrm{adj}}$
and $\mathrm{inv}^{\natural}:\rho_{\mathrm{adj}}\rightarrow\rho_{\mathrm{adj}}$
are morphisms in $\Rep(G)(S)$. It follows from their definition that
the formation of $U_{\mathcal{F}}$ and $P_{\mathcal{F}}$ commutes
with arbitrary base change on $T$.

\subsection{~\label{sub:PropPFUF}}

Let $N(U_{\mathcal{F}})$ and $N(P_{\mathcal{F}})$ \nomenclature[N_G(x)]{$N_G(x)$}{Normalizer of $x$ in $G$.}be
the normalizers of $U_{\mathcal{F}}$ and $P_{\mathcal{F}}$ in $G_{T}$.
Then 
\[
P_{\mathcal{F}}\subset\Aut^{\otimes}(\mathcal{F})\subset N(U_{\mathcal{F}}),N(P_{\mathcal{F}})\quad\mbox{and}\quad U_{\mathcal{F}}\subset\Aut^{\otimes!}(\mathcal{F})
\]
as fpqc subsheaves of $G_{T}$. We have to check this on sections
over an arbitrary $T$-scheme $X$, but we may assume that $X=T$.
Since $G=\Aut^{\otimes}(\omega)$ by theorem~\ref{thm:RecThm}, 
\begin{eqnarray*}
\Aut^{\otimes}(\mathcal{F})(T) & = & \left\{ g\in G(T)\vert\,\forall\rho,\gamma:\rho(g)\left(\mathcal{F}^{\gamma}(\rho)\right)=\mathcal{F}^{\gamma}(\rho)\right\} .
\end{eqnarray*}
On the other hand, for any $\rho$ in $\Rep(G)(S)$, the morphism
$c_{\rho}:\rho\rightarrow\rho\otimes\rho_{\mathrm{adj}}$ gives a
morphism $\omega_{T}(c_{\rho}):\omega_{T}(\rho)\rightarrow\omega_{T}(\rho)\otimes\omega_{T}(\rho_{\mathrm{adj}})$
in $\QCoh(T)$ mapping $\mathcal{F}^{\gamma}(\rho)$ into
\[
\mathcal{F}^{\gamma}(\rho\otimes\rho_{\mathrm{adj}})={\textstyle \sum_{\alpha+\beta=\gamma}}\mathcal{F}^{\alpha}(\rho)\otimes\mathcal{F}^{\beta}(\rho_{\mathrm{adj}}).
\]

(a) For $g$ in $\Aut^{\otimes}(\mathcal{F})(T)$, $\rho_{\mathrm{adj}}^{\circ}(g)$
fixes $\mathcal{F}_{+}^{0}(\rho_{\mathrm{adj}}^{\circ})=\cup_{\gamma>0}\mathcal{F}^{\gamma}(\rho_{\mathrm{adj}}^{\circ})$
as well as the $\mathcal{A}(G_{T})$-ideal $\mathcal{I}(\mathcal{F})$
which it spans. It follows that the inner automorphism of $G_{T}$
defined by $g$ fixes $P_{\mathcal{F}}$. Thus $g$ belongs to $N(P_{\mathcal{F}})(T)$.
Similarly, $g\in N(U_{\mathcal{F}})(T)$.

(b) For $g$ in $P_{\mathcal{F}}(T)$, $g^{\natural}:\mathcal{A}(G_{T})\rightarrow\mathcal{O}_{T}$
is trivial on $\mathcal{F}^{\beta}(\rho_{\mathrm{adj}})$ for every
$\beta>0$ and thus $\rho(g)=(\mathrm{Id}\otimes g^{\natural})\circ\omega_{T}(c_{\rho})$
maps $\mathcal{F}^{\gamma}(\rho)$ into $\sum_{\alpha\geq\gamma}\mathcal{F}^{\alpha}(\rho)=\mathcal{F}^{\gamma}(\rho)$.
Since $g^{-1}$ also belongs to $P_{\mathcal{F}}(T)$, $\rho(g)$
fixes $\mathcal{F}^{\gamma}(\rho)$. Thus $g$ belongs to $\Aut^{\otimes}(\mathcal{F})(T)$. 

(c) For $g$ in $U_{\mathcal{F}}(T)$, $g^{\natural}-1^{\natural}:\mathcal{A}(G_{T})\rightarrow\mathcal{O}_{T}$
is trivial on $\mathcal{F}^{0}(\rho_{\mathrm{adj}})=\mathcal{O}_{T}\oplus\mathcal{F}^{0}(\rho_{\mathrm{adj}}^{\circ})$,
thus $\rho(g)-\rho(1)=\left(\mathrm{Id}\otimes\left(g^{\natural}-1^{\natural}\right)\right)\circ\omega_{T}(c_{\rho})$
maps $\mathcal{F}^{\gamma}(\rho)$ into $\sum_{\alpha>\gamma}\mathcal{F}^{\alpha}(\rho)=\mathcal{F}_{+}^{\gamma}(\rho)$.
Therefore $g$ belongs to $\Aut^{\otimes!}(\mathcal{F})(T)$.

\subsection{~}

We will establish below that the neutral components \cite[VIB 3.1]{SGA3.1r}
$U_{\mathcal{F}}^{\circ}$ and $P_{\mathcal{F}}^{\circ}$ of $U_{\mathcal{F}}$
and $P_{\mathcal{F}}$ are smooth over $S$, using the following criterion:\nomenclature[H^o]{$H^\circ$}{Neutral component of a group scheme $H$.}
\begin{prop}
Let $G$ be affine smooth over $S$, $\mathcal{A}=\mathcal{A}(G)$
and $\mathcal{I}=\mathcal{I}(G)$. Let $H\subset G$ be a closed subgroup
defined by a quasi-coherent ideal $\mathcal{J}$ of $\mathcal{A}$
such that
\begin{enumerate}
\item $\mathcal{J}$ is finitely generated,
\item $\mathcal{J}\cap\mathcal{I}^{2}=\mathcal{I}\cdot\mathcal{J}$ in $\mathcal{A}$,
and
\item $\mathcal{I}/\mathcal{J}+\mathcal{I}^{2}$ is finite locally free
on $S$.
\end{enumerate}
Then $H^{\circ}$ is representable by a smooth open subgroup scheme
of $H$. \end{prop}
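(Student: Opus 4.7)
The plan is to show that $H$ is smooth over $S$ in a Zariski-open neighborhood of the unit section $\varepsilon\colon S\hookrightarrow H$, and then invoke the standard SGA3 criterion that the neutral component of a locally finitely presented group scheme which is smooth along its unit section is representable by a smooth open subgroup scheme (see \cite[VI$_B$]{SGA3.1r}). Since $G$ is smooth, $\mathcal{I}/\mathcal{I}^{2}=\varepsilon^{\ast}\Omega_{G/S}$ is finite locally free, and since $\mathcal{J}$ is finitely generated by~(1), $H$ is locally of finite presentation over $S$, so the criterion will apply once smoothness along $\varepsilon(S)$ is established.

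The first step is a conormal analysis at the unit section. The standard identification $\mathcal{J}/(\mathcal{J}\cap\mathcal{I}^{2})\simeq(\mathcal{J}+\mathcal{I}^{2})/\mathcal{I}^{2}$ together with hypothesis~(2) produces a short exact sequence of $\mathcal{O}_{S}$-modules
\[
0\rightarrow\mathcal{J}/\mathcal{I}\mathcal{J}\rightarrow\mathcal{I}/\mathcal{I}^{2}\rightarrow\mathcal{I}/(\mathcal{J}+\mathcal{I}^{2})\rightarrow0.
\]
The middle term is finite locally free, and the right-hand term is finite locally free by~(3), so the sequence splits locally on $S$. Combined with the finite generation of $\mathcal{J}$ from~(1), which makes $\mathcal{J}/\mathcal{I}\mathcal{J}$ finitely generated, we conclude that $\mathcal{J}/\mathcal{I}\mathcal{J}$ itself is finite locally free on $S$, of some rank $n$, while $\mathcal{I}/(\mathcal{J}+\mathcal{I}^{2})$ has rank $r=\dim_{S}(G)-n$.

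I would then produce adapted local coordinates. Working locally on $S$, pick sections $f_{1},\ldots,f_{n}\in\mathcal{J}$ lifting a basis of $\mathcal{J}/\mathcal{I}\mathcal{J}$ and $g_{1},\ldots,g_{r}\in\mathcal{I}$ lifting a basis of $\mathcal{I}/(\mathcal{J}+\mathcal{I}^{2})$. Nakayama's lemma applied along $\varepsilon(S)$, together with the finite generation~(1), guarantees that after shrinking $G$ to an open neighborhood $G'$ of $\varepsilon(S)$, the $f_{i}$ generate $\mathcal{J}|_{G'}$ and the differentials $df_{1},\ldots,df_{n},dg_{1},\ldots,dg_{r}$ form a basis of $\Omega_{G'/S}$. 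The Jacobian criterion then shows that the resulting morphism $\varphi=(f_{i},g_{j})\colon G'\rightarrow\mathbb{A}_{S}^{n+r}$ is \'etale; composing with the smooth projection $\pi\colon\mathbb{A}_{S}^{n+r}\rightarrow\mathbb{A}_{S}^{n}$ onto the first $n$ coordinates yields a smooth morphism $\pi\circ\varphi$ whose fiber over the zero section is exactly $H\cap G'$. Thus $H\cap G'\rightarrow S$ is smooth by base change, which is the desired smoothness of $H$ along $\varepsilon(S)$, and the quoted SGA3 criterion furnishes $H^{\circ}$.

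The main obstacle is the passage from pointwise data at $\varepsilon(S)$ to an honest Zariski-open neighborhood: the finite generation of~(1) is what allows Nakayama to promote a basis of $\mathcal{J}/\mathcal{I}\mathcal{J}$ to global generators of $\mathcal{J}$ near $\varepsilon(S)$, and the transversality relation $\mathcal{J}\cap\mathcal{I}^{2}=\mathcal{I}\mathcal{J}$ in~(2), combined with~(3), is what makes $\mathcal{J}/\mathcal{I}\mathcal{J}$ a local direct summand of $\mathcal{I}/\mathcal{I}^{2}$, so that the defining equations of $H$ genuinely extend to part of a regular system of parameters on $G$ rather than merely to generators of a subsheaf.
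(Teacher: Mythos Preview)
Your proposal is correct and follows essentially the same strategy as the paper: reduce via \cite[VI$_{\mathrm{B}}$~3.10]{SGA3.1r} to smoothness of $H$ along the unit section, then use conditions (1)--(3) to show that the conormal sequence $0\to\mathcal{J}/\mathcal{I}\mathcal{J}\to\mathcal{I}/\mathcal{I}^{2}\to\mathcal{I}/(\mathcal{J}+\mathcal{I}^{2})\to0$ is locally split, so that the defining equations of $H$ are transversal. The only cosmetic difference is that you package the final step as an explicit construction of \'etale coordinates $\varphi:G'\to\mathbb{A}_{S}^{n+r}$, whereas the paper invokes the Jacobian criterion directly, checking that $\mathcal{J}_{x}/\mathcal{J}_{x}^{2}\otimes_{\mathcal{O}_{H,x}}k\to\Omega^{1}_{\mathcal{O}_{G,x}/\mathcal{O}_{S,s}}\otimes_{\mathcal{O}_{G,x}}k$ is injective by reducing to the injectivity (indeed purity) of $\mathcal{J}/\mathcal{I}\mathcal{J}\hookrightarrow\mathcal{I}/\mathcal{I}^{2}$; these are two equivalent formulations of the same transversality.
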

\begin{proof}
By \cite[VIB 3.10]{SGA3.1r}, we have to show that $H$ is smooth
at all points of its unit section. Let thus $x\in H$ be the image
of $s\in S$ under $1_{H}:S\rightarrow H$\nomenclature[1_G]{$1_G$}{Unit section $1_G:S\rightarrow G$ of a group scheme $G$ over $S$.}.
By~\cite[1.4.3 and 1.4.5]{EGA4.1}, we already know from $(1)$ that
$H$ is locally of finite presentation over $S$. Thus by \cite[17.5.1]{EGA4.4}
and the Jacobian criterion \cite[0$_{IV}$ 22.6.4]{EGA4.1}, we have
to show that $\mathcal{J}_{x}/\mathcal{J}_{x}^{2}\otimes_{\mathcal{O}_{H,x}}k\rightarrow\Omega_{\mathcal{O}_{G,x}/\mathcal{O}_{S,s}}^{1}\otimes_{\mathcal{O}_{G,x}}k$
is injective, where $k$ is the common residue field of $s$ and $x,$
and the morphism is induced by the universal derivation $d:\mathcal{O}_{G,x}\rightarrow\Omega_{\mathcal{O}_{G,x}/\mathcal{O}_{S,s}}^{1}$.
This map factors through the corresponding map for $\mathcal{I}_{x}$,
namely $\mathcal{I}_{x}/\mathcal{I}_{x}^{2}\otimes_{\mathcal{O}_{S,s}}k\rightarrow\Omega_{\mathcal{O}_{G,x}/\mathcal{O}_{S,s}}^{1}\otimes_{\mathcal{O}_{G,x}}k$,
which is injective (because $\mathcal{O}_{G,x}/\mathcal{I}_{x}=\mathcal{O}_{S,s}$
is formally smooth over itself!). We thus have to show that 
\[
\mathcal{J}_{x}/\mathcal{J}_{x}^{2}\otimes_{\mathcal{O}_{H,x}}k=\mathcal{J}_{x}/m_{x}\mathcal{J}_{x}\rightarrow\mathcal{I}_{x}/m_{x}\mathcal{I}_{x}=\mathcal{I}_{x}/\mathcal{I}_{x}^{2}\otimes_{\mathcal{O}_{S,s}}k
\]
is injective, where $m_{x}$ is the maximal ideal of $\mathcal{O}_{G,x}$.
The latter map is base-changed from the morphism $\mathcal{J}_{x}/\mathcal{J}_{x}\mathcal{I}_{x}\rightarrow\mathcal{I}_{x}/\mathcal{I}_{x}^{2}$,
which itself is the localization at $x$ of the morphism $\mathcal{J}/\mathcal{J}\mathcal{I}\rightarrow\mathcal{I}/\mathcal{I}^{2}$,
which is a pure monomorphism by assumption. 
\end{proof}

\subsection{~\label{sub:finitegen}}

We show that $\mathcal{I}(\mathcal{F})$ and $\mathcal{J}(\mathcal{F})$
are finitely generated, focusing on $\mathcal{I}(\mathcal{F})$ to
simplify the exposition. Let $\{S_{i}\rightarrow S\}$ be an fpqc
cover as in assumption (TA), $\{f_{i}:T_{i}\rightarrow T\}$ the corresponding
fpqc cover of $T$, $\omega_{i}$ the fiber functor for $G_{i}=G_{S_{i}}$
and $\mathcal{F}_{i}$ the extension of $\mathcal{F}_{T_{i}}$ to
a $\Gamma$-filtration on $\omega_{i,T_{i}}$ -- which exists by~proposition~\ref{prop:SkalarExt}
since $f_{i}$ is affine. By \cite[2.5.2]{EGA4.2}, it is sufficient
to show that $f_{i}^{\ast}\mathcal{I}(\mathcal{F})$ is finitely generated.
Since $f_{i}$ is flat, $f_{i}^{\ast}\mathcal{I}(\mathcal{F})=\mathcal{I}(\mathcal{F}_{T_{i}})$
and obviously $\mathcal{I}(\mathcal{F}_{T_{i}})=\mathcal{I}(\mathcal{F}_{i})$.
We may thus assume that $G$ is linear over $S$: there exists $\tau\in\Rep^{\circ}(G)(S)$
inducing a closed embedding $\tau:G\hookrightarrow GL(V(\tau))$ with
$\det\tau\equiv1$, thus also an epimorphism $S^{\bullet}(\tau)=\Sym^{\bullet}(\tau^{\vee}\otimes\tau)\twoheadrightarrow\rho_{\mathrm{adj}}$
of quasi-coherent $G$-$\mathcal{O}_{S}$-algebras. By the axiom (F3)
for $\mathcal{F}$, $\mathcal{I}(\mathcal{F})$ is the image of the
ideal $\mathcal{I}(\tau)$ spanned by $\mathcal{F}_{+}^{0}(S^{\bullet}(\tau))$
in $V(S^{\bullet}(\tau))_{T}$. Using~proposition~\ref{prop:FilonLF},
we may assume that there is a splitting $V(\tau^{\vee}\otimes\tau)_{T}=\oplus_{\gamma}\mathcal{G}_{\gamma}$
of $\mathcal{F}$ on $\tau^{\vee}\otimes\tau$. By the axioms (F1)
and (F3), it induces a splitting of $\mathcal{F}$ on $S^{\bullet}(\tau)$,
\[
V(S^{n}(\tau))_{T}=\oplus_{\gamma}\oplus_{\gamma_{1}+\cdots+\gamma_{n}=\gamma}\mathcal{G}_{\gamma_{1}}\cdots\mathcal{G}_{\gamma_{n}}.
\]
It follows easily that $\mathcal{I}(\tau)$ is spanned by the finite
locally free subsheaf $\oplus_{\gamma>0}\mathcal{G}_{\gamma}$ of
$V(S^{1}(\tau))_{T}=V(\tau^{\vee}\otimes\tau)_{T}$, therefore $\mathcal{I}(\tau)$
and $\mathcal{I}(\mathcal{F})$ are indeed finitely generated.

\subsection{~\label{sub:intersecprop}}

We show that $\mathcal{I}(\mathcal{F})\cap\mathcal{I}(G_{T})^{2}=\mathcal{I}(\mathcal{F})\cdot\mathcal{I}(G_{T})$
-- the proof for $\mathcal{J}(\mathcal{F})$ is similar. Plainly,
$\mathcal{I}(\mathcal{F})\cdot\mathcal{I}(G_{T})\subset\mathcal{I}(\mathcal{F})\cap\mathcal{I}(G_{T})^{2}$.
For the other inclusion, we may assume that $T$ is affine and work
with global sections. Let thus $s$ be a (global) section of $\mathcal{I}(\mathcal{F})$,
so that $s=a+b$ with $a$ a section of $\mathcal{F}_{+}^{0}(\rho_{\mathrm{adj}}^{\circ})$
and $b$ a section of 
\[
\mathcal{I}(G_{T})\cdot\mathcal{F}_{+}^{0}(\rho_{\mathrm{adj}}^{\circ})\subset\mathcal{I}(G_{T})\cdot\mathcal{I}(\mathcal{F})\subset\mathcal{I}(G_{T})^{2}.
\]
Then $s$ belongs to $\mathcal{I}(G_{T})^{2}$ if and only $a$ does,
i.e.~$a$ is a section of $\mathcal{F}_{+}^{0}(\rho_{\mathrm{adj}}^{\circ})\cap\mathcal{I}(G_{T})^{2}$.
The pure short exact sequence and epimorphism of quasi-coherent sheaves
on $S$ 
\[
0\rightarrow\mathcal{I}(G)^{2}\rightarrow\mathcal{I}(G)\rightarrow\mathcal{I}(G)/\mathcal{I}(G)^{2}\rightarrow0\quad\mbox{and}\quad\mathcal{I}(G)^{\otimes2}\twoheadrightarrow\mathcal{I}(G)^{2}
\]
correspond to a pure short exact sequence and epimorphism in $\Rep(G)(S)$,
\[
0\rightarrow\rho_{\mathrm{adj}}^{\circ(2)}\rightarrow\rho_{\mathrm{adj}}^{\circ}\rightarrow\rho^{1}\rightarrow0\quad\mbox{and}\quad(\rho_{\mathrm{adj}}^{\circ})^{\otimes2}\twoheadrightarrow\rho_{\mathrm{adj}}^{\circ(2)}
\]
which together give, using the axioms (F1) and (F3) for $\mathcal{F}$,
\[
\mathcal{F}_{+}^{0}(\rho_{\mathrm{adj}}^{\circ})\cap\mathcal{I}(G_{T})^{2}=\mathcal{F}_{+}^{0}(\rho_{\mathrm{adj}}^{\circ(2)})={\textstyle \sum_{\gamma_{1}+\gamma_{2}>0}}\mathcal{F}^{\gamma_{1}}(\rho_{\mathrm{adj}}^{\circ})\cdot\mathcal{F}^{\gamma_{2}}(\rho_{\mathrm{adj}}^{\circ})
\]
which is contained in $\mathcal{I}(\mathcal{F})\cdot\mathcal{I}(G_{T})$,
thus $\mathcal{I}(\mathcal{F})\cap\mathcal{I}(G_{T})^{2}\subset\mathcal{I}(\mathcal{F})\cdot\mathcal{I}(G_{T})$.

\subsection{~\label{sub:finlocfree}}

We show that $\mathcal{I}(G_{T})/\mathcal{I}(\mathcal{F})+\mathcal{I}(G_{T})^{2}$
is finite locally free -- the proof for $\mathcal{J}(\mathcal{F})$
is similar. By the axiom (F3), $\mathcal{I}(\mathcal{F})+\mathcal{I}(G_{T})^{2}/\mathcal{I}(G_{T})^{2}$
is the $\mathcal{A}(G_{T})$-submodule of $\mathcal{I}(G_{T})/\mathcal{I}(G_{T})^{2}=\omega_{T}(\rho^{1})$
generated by $\mathcal{F}_{+}^{0}(\rho^{1})$, i.e.~this $\mathcal{O}_{T}$-submodule
itself since $\mathcal{A}(G_{T})$ acts on $\mathcal{I}(G_{T})/\mathcal{I}(G_{T})^{2}$
through $\mathcal{O}_{T}$. We are thus claiming that $\omega_{T}(\rho^{1})/\mathcal{F}_{+}^{0}(\rho^{1})$
is finite locally free, which follows from proposition~\ref{prop:FilonLF}.

\subsection{~\label{sub:LieUFPF}}

We have just established that $U_{\mathcal{F}}^{\circ}$ and $P_{\mathcal{F}}^{\circ}$
are representable by smooth open subschemes of $U_{\mathcal{F}}$
and $P_{\mathcal{F}}$. They are also finitely presented over $T$:
they are separated over $T$ as compositions of affine morphisms and
open immersions, and they are quasi-compact over $T$ by \cite[VIB 3.9]{SGA3.1r},
since $U_{\mathcal{F}}$ and $P_{\mathcal{F}}$ are finitely presented
over $S$, being locally of finite presentation by~\ref{sub:finitegen}
and \cite[1.4.5]{EGA4.1}, and affine by definition. From~\ref{sub:PropPFUF},
we obtain the following chain of inclusions
\[
\begin{array}{ccccc}
U_{\mathcal{F}}^{\circ} & \subset & U_{\mathcal{F}} & \subset & \Aut^{\otimes!}(\mathcal{F})\\
\cap &  & \cap &  & \cap\\
P_{\mathcal{F}}^{\circ} & \subset & P_{\mathcal{F}} & \subset & \Aut^{\otimes}(\mathcal{F})
\end{array}\quad\mbox{and}\quad\begin{array}{ccccc}
\Aut^{\otimes}(\mathcal{F}) & \subset & N(P_{\mathcal{F}}) & \subset & N(P_{\mathcal{F}}^{\circ})\\
\parallel\\
\Aut^{\otimes}(\mathcal{F}) & \subset & N(U_{\mathcal{F}}) & \subset & N(U_{\mathcal{F}}^{\circ})
\end{array}
\]
The Lie algebras of $U_{\mathcal{F}}^{\circ}\subset U_{\mathcal{F}}$
and $P_{\mathcal{F}}^{\circ}\subset P_{\mathcal{F}}$ are respectively
given by
\[
\begin{array}{rccccl}
 & \Lie(U_{\mathcal{F}}^{\circ}) & = & \Lie(U_{\mathcal{F}}) & = & \left(\mathcal{I}(G_{T})/\mathcal{J}(\mathcal{F})+\mathcal{I}(G_{T})^{2}\right)^{\vee}\\
\mbox{and} & \Lie(P_{\mathcal{F}}^{\circ}) & = & \Lie(P_{\mathcal{F}}) & = & \left(\mathcal{I}(G_{T})/\mathcal{I}(\mathcal{F})+\mathcal{I}(G_{T})^{2}\right)^{\vee}
\end{array}
\]
As quasi-coherent $\mathcal{O}_{T}$-submodules of 
\[
\Lie(G_{T})=\mathfrak{g}_{T}=\left(\mathcal{I}(G_{T})/\mathcal{I}(G_{T})^{2}\right)^{\vee}
\]
they correspond to the $\mathcal{O}_{T}$-linear forms on $\omega_{T}(\rho^{1})=\mathcal{I}(G_{T})/\mathcal{I}(G_{T})^{2}$
vanishing on 
\[
\mathcal{F}^{0}(\rho^{1})=\mathcal{J}(\mathcal{F})+\mathcal{I}(G_{T})^{2}/\mathcal{I}(G_{T})^{2}\quad\mbox{and}\quad\mathcal{F}_{+}^{0}(\rho^{1})=\mathcal{I}(\mathcal{F})+\mathcal{I}(G_{T})^{2}/\mathcal{I}(G_{T})^{2}
\]
respectively. We thus find that, as $\mathcal{O}_{T}$-submodules
of $\mathfrak{g}_{T}=\omega_{T}(\rho_{\mathrm{ad}})=\omega_{T}(\rho_{1})$,
\[
\Lie(U_{\mathcal{F}}^{\circ})=\Lie(U_{\mathcal{F}})=\mathcal{F}_{+}^{0}(\rho_{\mathrm{ad}})\quad\mbox{and}\quad\Lie(P_{\mathcal{F}}^{\circ})=\Lie(P_{\mathcal{F}})=\mathcal{F}^{0}(\rho_{\mathrm{ad}}).
\]

\subsection{~\label{sub:ParUFPF}}

We show that $P_{\mathcal{F}}^{\circ}$ is a parabolic subgroup of
$G_{T}$ with unipotent radical $U_{\mathcal{F}}^{\circ}$. Since
both groups are finitely presented and smooth over $T$ with $P_{\mathcal{F}}^{\circ}\subset N(U_{\mathcal{F}}^{\circ})$,
we may assume that $T=\Spec(k)$ for some algebraically closed field
$k$ by~\cite[XXVI 1.1 and 1.6]{SGA3.3r}. Since then $T\rightarrow S$
is affine, we may \emph{also }assume that $S=\Spec(k)$ by part $(2)$
of proposition~\ref{prop:SkalarExt}, in which case $G$ is linear
by lemma~\ref{lem:AssLin}. Using the criterion of \cite[IV 2.4.3.1]{SaRi72},
we now have to verify that 
\[
(a)\,\dim U_{\mathcal{F}}^{\circ}=\dim G/P_{\mathcal{F}}^{\circ}\quad\mbox{and}\quad(b)\, U_{\mathcal{F}}^{\circ}\mbox{ is unipotent.}
\]
The equality of dimensions follows from proposition~\ref{pro:trivkappaGF}
below since 
\[
\dim U_{\mathcal{F}}^{\circ}=\dim_{k}\Lie(U_{\mathcal{F}}^{\circ})=\dim_{k}\mathcal{F}_{+}^{0}(\rho_{\mathrm{ad}})={\textstyle \sum_{\gamma>0}}\dim_{k}\Gr_{k}^{\gamma}(\rho_{\mathrm{ad}})
\]
and 
\[
\dim G/P_{\mathcal{F}}^{\circ}=\dim_{k}\mathfrak{g}/\mathcal{F}^{0}(\rho_{\mathrm{ad}})=\dim_{k}\mathcal{F}_{+}^{0}(\rho_{\mathrm{ad}}^{\vee})={\textstyle \sum_{\gamma>0}}\dim_{k}\Gr_{k}^{\gamma}(\rho_{\mathrm{ad}}).
\]
For $(b)$, pick a finite dimensional faithful representation $\tau$
of $G$. Then 
\[
U_{\mathcal{F}}^{\circ}\subset U_{\mathcal{F}}\subset\Aut^{\otimes!}(\mathcal{F})\subset U(\mathcal{F}(\tau))
\]
where $U(\mathcal{F}(\tau))$ is the unipotent subgroup of $GL(V(\tau))$
defined by the $\Gamma$-filtration $\mathcal{F}(\tau)$ on $V(\tau)$.
Therefore $U_{\mathcal{F}}^{\circ}$ is unipotent by \cite[XVII 2.2.ii]{SGA3.2}.

\subsection{~\label{sub:EndProfAut(F)Parab}}

By \cite[XXII 5.8.5]{SGA3.3r}, $P_{\mathcal{F}}^{\circ}=N(P_{\mathcal{F}}^{\circ})$,
therefore also
\[
P_{\mathcal{F}}^{\circ}=P_{\mathcal{F}}=\Aut^{\otimes}(\mathcal{F})=N(P_{\mathcal{F}})=N(P_{\mathcal{F}}^{\circ}).
\]
On the other hand, the above proof of $(b)$ shows that $U_{\mathcal{F}}$
has unipotent geometric fibers, and then so does its quotient $U_{\mathcal{F}}/U_{\mathcal{F}}^{\circ}$
by \cite[XVII 2.2.iii]{SGA3.2}. Since $U_{\mathcal{F}}/U_{\mathcal{F}}^{\circ}$
is also normal in the reductive group $P_{\mathcal{F}}^{\circ}/U_{\mathcal{F}}^{\circ}$,
it must be trivial, thus $U_{\mathcal{F}}=U_{\mathcal{F}}^{\circ}$
and this finishes the proof of our theorem. Note that we can not say
much more about $\Aut^{\otimes!}(\mathcal{F})$ at this point -- we
do not even know that it is actually representable.

\section{Grothendieck groups\label{sub:GrothendieckGroups}}

Let again $G$ be affine and flat over $S$. Let $T$ be an $S$-scheme
and let $\mathcal{F}$ be a $\Gamma$-filtration on $\omega_{T}^{\circ}$.
Since $\mathcal{F}$ and $\Gr$ are exact $\otimes$-functors, 
\[
\Gr_{\mathcal{F}}^{\bullet}:\Rep^{\circ}(G)(S)\stackrel{\mathcal{F}}{\longrightarrow}\Fil^{\Gamma}\LF(T)\stackrel{\Gr}{\longrightarrow}\Gra^{\Gamma}\LF(T)
\]
is also an exact $\otimes$-functor. It thus induces a morphism between
the Grothendieck ring $K_{0}(G)$ \nomenclature[K0(G)]{$K_0(G)$}{Grothendieck ring of $\Rep^\circ (G)(S)$, page \nomrefpage}of
$\Rep^{\circ}(G)(S)$ and the Grothendieck ring of $\Gra^{\Gamma}\LF(T)$.
The rank function on finite locally free sheaves over $T$ defines
a morphism from the latter ring to the ring $\mathcal{C}(T,\mathbb{Z}[\Gamma])$
of locally constant functions on $T$ with values in the group ring
$\mathbb{Z}[\Gamma]$ of $\Gamma$. The $\Gamma$-filtration $\mathcal{F}$
on $\omega_{T}^{\circ}$ thus defines a ring homomorphism 
\[
\kappa(\mathcal{F}):K_{0}(G)\rightarrow\mathcal{C}(T,\mathbb{Z}[\Gamma])
\]
which maps the class of $\rho\in\Rep^{\circ}(G)(S)$ in $K_{0}(G)$
to the function 
\[
t\mapsto{\textstyle \sum_{\gamma\in\Gamma}}\dim_{k(t)}\left(\Gr_{\mathcal{F}}^{\gamma}(\rho)\otimes k(t)\right)\cdot e^{\gamma}
\]
where $e^{\gamma}$ is the basis element of $\mathbb{Z}[\Gamma]$
corresponding to $\gamma$. We have:
\[
\forall z\in K_{0}(G):\qquad\kappa(\mathcal{F})(z^{\vee})=\kappa(\mathcal{F})(z)^{\vee}
\]
where the involutions $z\mapsto z^{\vee}$ are induced by the duality
$\rho\mapsto\rho^{\vee}$ \nomenclature[rho^vee]{$\rho ^\vee$}{Dual of $\rho$.}on
$\Rep^{\circ}(G)(S)$ and by $\sum x_{\lambda}e^{\lambda}\mapsto\sum x_{\lambda}e^{-\lambda}$
on $\mathbb{Z}[\Gamma]$. When $G$ is smooth over $S$, we define
\begin{eqnarray*}
\kappa(G) & = & [\rho_{\mathrm{ad}}]-[\rho_{\mathrm{ad}}^{\vee}]\in K_{0}(G)\\
\mbox{and}\quad\kappa(G,\mathcal{F}) & = & \mbox{image of }\kappa(G)\mbox{ in }\mathcal{C}(T,\mathbb{Z}[\Gamma])
\end{eqnarray*}
The formation of $\kappa(G,\mathcal{F})$ is compatible with arbitrary
base change on $T$.
\begin{prop}
\label{pro:trivkappaGF}If $(1)$ $G$ is an isotrivial reductive
group over a quasi-compact $S$, or $(2)$ $G$ is a reductive group
over $S$ and $\mathcal{F}$ comes from a filtration on $\omega_{T}$,
then 
\[
\kappa(G,\mathcal{F})=0\quad\mbox{in}\quad\mathcal{C}(T,\mathbb{Z}[\Gamma]).
\]
\end{prop}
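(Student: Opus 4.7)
My plan is to reduce the statement, via the locality of $\kappa(G,\mathcal{F})$ and the skalar-extension compatibilities of Proposition~\ref{prop:SkalarExt}, to the case where $\mathcal{F}$ is induced by a cocharacter $\chi\colon\mathbb{D}_k(\Gamma)\to G_k$ over an algebraically closed field $k$, and then to conclude from the symmetry of the root system of $G_k$. Since each $\Gr^{\gamma}_{\mathcal{F}}(\rho_{\mathrm{ad}})$ is finite locally free on $T$ (cf.~Proposition~\ref{prop:FilonLF}), the $\mathbb{Z}[\Gamma]$-valued function $\kappa(G,\mathcal{F})$ is locally constant on $T$, so it suffices to check its vanishing at each geometric point $\bar t$. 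Using Proposition~\ref{prop:SkalarExt}(1) along a finite \'etale cover of $S$ trivializing $G$ in case~(1), and Proposition~\ref{prop:SkalarExt}(2) along a sufficiently fine affine cover in case~(2), together with the fact that reductive groups over an algebraically closed field are automatically split, I may reduce to the situation $S=T=\Spec(k)$ with $k$ algebraically closed and $G_k$ split reductive.

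Next I would argue that over such a $k$ the filtration $\mathcal{F}$ admits a splitting by a $\Gamma$-graduation $\mathcal{G}$ of the corresponding fiber functor. In case~(2) this is the classical Saavedra--Deligne splitting theorem for $\otimes$-filtrations on fiber functors of Tannakian categories over a field, extended from $\Gamma=\mathbb{Z}$ to general $\Gamma$ by restricting to the finitely generated subgroup of $\Gamma$ supporting the nonzero graded pieces attached to a generating family of representations. Case~(1) reduces to case~(2) using that under $\mathrm{HYP}(\omega^{\circ})$ we have $\rho_{\mathrm{reg}}\in\Rep'(G)(k)$, so any splitting of the restriction of $\mathcal{F}$ to $\omega^{\circ}$ extends to one on $\omega'$ through the construction of section~\ref{sub:extF}. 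Granted this splitting, Theorem~\ref{thm:RecThm} identifies $\mathcal{G}$ with a cocharacter $\chi\colon\mathbb{D}_k(\Gamma)\to G_k$ for which $\Gr^{\gamma}_{\mathcal{F}}(\rho_{\mathrm{ad}})$ is precisely the weight-$\gamma$ eigenspace $\mathfrak{g}_\gamma$ of $\Ad\circ\chi$ on $\mathfrak{g}$.

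The image of $\chi$ is a subtorus of $G_k$ and, $k$ being algebraically closed, is contained in some split maximal torus $T_0\subset G_k$; with respect to $T_0$ one has the root-space decomposition $\mathfrak{g}=\Lie(T_0)\oplus\bigoplus_{\alpha\in\Phi}\mathfrak{g}_\alpha$ with each $\dim\mathfrak{g}_\alpha=1$, so $\mathfrak{g}_\gamma=\bigoplus_{\alpha\circ\chi=\gamma}\mathfrak{g}_\alpha$ for $\gamma\neq 0$. The symmetry $\Phi=-\Phi$ then yields $\dim\mathfrak{g}_\gamma=\dim\mathfrak{g}_{-\gamma}$ for every $\gamma$, and hence
\[
\kappa(G,\mathcal{F})(\bar t)=\sum_{\gamma\in\Gamma}\dim\mathfrak{g}_\gamma\cdot(e^{\gamma}-e^{-\gamma})=0.
\]
The hard part is the splitting step: since the general splitting theorem (Theorem~\ref{thm:MainTan}) will itself invoke this proposition through~\ref{sub:ParUFPF}, one must avoid circularity and appeal only to the classical splitting available over an algebraically closed field, applied directly to the reductive $G_k$; everything else in the argument is formal.
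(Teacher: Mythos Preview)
Your reduction in the first paragraph matches the paper's, but the splitting step in the second paragraph cannot be made non-circular in the way you propose. The proof that a $\otimes$-filtration on the fiber functor of a reductive group over a field admits a splitting --- in Saavedra's thesis for $\Gamma = \mathbb{Z}$ and in this paper for general $\Gamma$ --- passes through showing that $\Aut^\otimes(\mathcal{F})$ is parabolic, and the key dimension count $\dim U_\mathcal{F}^\circ = \dim G/P_\mathcal{F}^\circ$ in that argument (see~\ref{sub:ParUFPF}) \emph{is} the equality $\sum_{\gamma>0}\dim\Gr_\mathcal{F}^\gamma(\rho_{\mathrm{ad}}) = \sum_{\gamma>0}\dim\Gr_\mathcal{F}^\gamma(\rho_{\mathrm{ad}}^\vee)$, i.e.\ essentially $\kappa(G,\mathcal{F}) = 0$. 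You correctly flag the risk, but your escape route does not work: the ``classical'' splitting already consumes this input, and your reduction to $\Gamma = \mathbb{Z}$ via a finitely generated subgroup fails since such a subgroup is isomorphic to $\mathbb{Z}^n$ with a typically non-lexicographic induced total order, not to $\mathbb{Z}$.

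The paper's proof bypasses the splitting of $\mathcal{F}$ entirely. After reducing to split $G$ via Proposition~\ref{prop:SkalarExt} exactly as you do, it shows (Lemma~\ref{lem:trivkappaG}) that $\kappa(G) = [\rho_{\mathrm{ad}}] - [\rho_{\mathrm{ad}}^\vee]$ already vanishes in $K_0(G)$: restrict to a split maximal torus $H$, use Serre's injection $K_0(G) \hookrightarrow K_0(H) \simeq \mathbb{Z}[M]$, and note that $\rho_{\mathrm{ad}}|_H \simeq \rho_{\mathrm{ad}}^\vee|_H$ because $R = -R$. Since $\kappa(\mathcal{F}): K_0(G) \to \mathcal{C}(T,\mathbb{Z}[\Gamma])$ is a ring homomorphism by construction, $\kappa(G,\mathcal{F}) = \kappa(\mathcal{F})(\kappa(G)) = 0$ follows with no further knowledge of $\mathcal{F}$. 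The root-system symmetry is the same ingredient you use in your last paragraph, but applied at the level of $K_0(G)$ rather than after a splitting that is not yet available.
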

\begin{proof}
$(1)$ Let $\{S_{i}\rightarrow S\}$ be a covering of $S$ by finite
étale morphisms such that each $G_{i}=G_{S_{i}}$ splits. Let $\{T_{i}\rightarrow T\}$
be the corresponding covering of $T$. By part $(1)$ of proposition~\ref{prop:SkalarExt},
$\mathcal{F}_{T_{i}}$ extends to a $\Gamma$-filtration $\mathcal{F}_{i}$
on $\omega_{i}^{\circ}:\Rep^{\circ}(G_{i})(S_{i})\rightarrow\LF(T_{i})$,
and obviously $\kappa(G_{i},\mathcal{F}_{i})=\kappa(G,\mathcal{F})\circ(T_{i}\rightarrow T)$.
We may thus assume that $G$ splits over $S$, in which case the proposition
follows from lemma~\ref{lem:trivkappaG} below. The proof of $(2)$
is similar: let $\{t\rightarrow T\}$ be a covering of $T$ by geometric
points, thus $G_{t}$ splits. By part $(2)$ of proposition~\ref{prop:SkalarExt},
$\mathcal{F}_{t}$ extends to a $\Gamma$-filtration on $\omega_{t}^{\circ}:\Rep(G_{t})(t)\rightarrow\LF(t)$
which we also denote by $\mathcal{F}_{t}$, and obviously $\kappa(G_{t},\mathcal{F}_{t})=\kappa(G,\mathcal{F})\circ(t\rightarrow T)$. \end{proof}
\begin{lem}
\label{lem:trivkappaG}If $G$ is a split reductive group over a quasi-compact
$S$, then 
\[
\kappa(G)=0\quad\mbox{in}\quad K_{0}(G).
\]
\end{lem}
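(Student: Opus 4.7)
The plan is to reduce to the universal Chevalley situation over $\Spec(\mathbb{Z})$ and then exploit restriction to a split maximal torus. Since $G$ is split reductive over the quasi-compact $S$, by \cite[XXIII 5.2 and XXV 1.2]{SGA3.3r} (as invoked in the proof of Proposition~\ref{Pro:Wedhorn}) it is the base change of a Chevalley group $G_0$ of the same type over $\Spec(\mathbb{Z})$. The base change functor $\Rep^{\circ}(G_0)(\Spec\,\mathbb{Z})\rightarrow\Rep^{\circ}(G)(S)$ is an exact $\otimes$-functor sending $\rho_{\mathrm{ad}}$, $\rho_{\mathrm{ad}}^{\vee}$ for $G_0$ to their $G$-analogues, so the induced ring homomorphism $K_0(G_0)\rightarrow K_0(G)$ carries $\kappa(G_0)$ to $\kappa(G)$, reducing the lemma to $\kappa(G_0)=0$ in $K_0(G_0)$.

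Next, I would fix a split maximal torus $T_0\hookrightarrow G_0$. Since $T_0=\mathbb{D}_{\Spec(\mathbb{Z})}(X^{*}(T_0))$ is diagonalizable and $\Spec(\mathbb{Z})$ is connected, every object of $\Rep^{\circ}(T_0)(\Spec\,\mathbb{Z})$ decomposes canonically into a finite direct sum of weight-eigenspaces, giving $K_0(\Rep^{\circ}(T_0)(\Spec\,\mathbb{Z}))=\mathbb{Z}[X^{*}(T_0)]$; restriction along $T_0\subset G_0$ then defines a ring homomorphism $\mathrm{res}:K_0(G_0)\rightarrow\mathbb{Z}[X^{*}(T_0)]$. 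The root-space decomposition $\mathfrak{g}_0=\Lie(T_0)\oplus\bigoplus_{\alpha\in\Phi}\mathfrak{g}_{0,\alpha}$ yields
\[
\mathrm{res}([\rho_{\mathrm{ad}}])=r+\sum_{\alpha\in\Phi}e^{\alpha}\quad\text{and}\quad\mathrm{res}([\rho_{\mathrm{ad}}^{\vee}])=r+\sum_{\alpha\in\Phi}e^{-\alpha},
\]
where $r=\mathrm{rank}(T_0)$. Since the root system satisfies $\Phi=-\Phi$, these two images coincide, so $\kappa(G_0)$ lies in $\ker(\mathrm{res})$.

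The main obstacle is to upgrade this to $\kappa(G_0)=0$ in $K_0(G_0)$, for which I would establish injectivity of $\mathrm{res}$ via the Chevalley--Jantzen theory of Weyl modules over $\mathbb{Z}$: every object of $\Rep^{\circ}(G_0)(\Spec\,\mathbb{Z})$ admits a finite filtration whose graded pieces are among the standard Weyl modules $V(\lambda)$ indexed by dominant weights $\lambda\in X^{*}(T_0)^{+}$, so that the classes $[V(\lambda)]$ generate $K_0(G_0)$ as a $\mathbb{Z}$-module, and their characters $\chi_{\lambda}=e^{\lambda}+\sum_{\mu<\lambda}a_{\lambda,\mu}e^{\mu}$ are upper triangular for the dominance order, hence $\mathbb{Z}$-linearly independent in $\mathbb{Z}[X^{*}(T_0)]$. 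A cleaner alternative bypassing the injectivity argument altogether is to invoke a Chevalley involution $\sigma$ of $G_0$ inducing $-1$ on $X^{*}(T_0)$: it satisfies $\sigma^{*}[\rho]=[\rho^{\vee}]$ in $K_0(G_0)$ for every $\rho$, while the differential $d\sigma:\mathfrak{g}_0\rightarrow\mathfrak{g}_0$ provides a $G_0$-equivariant isomorphism $\sigma^{*}\rho_{\mathrm{ad}}\simeq\rho_{\mathrm{ad}}$, whence $[\rho_{\mathrm{ad}}]=\sigma^{*}[\rho_{\mathrm{ad}}]=[\rho_{\mathrm{ad}}^{\vee}]$ in $K_0(G_0)$.
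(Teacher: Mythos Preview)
Your overall strategy coincides with the paper's: reduce to a Chevalley group $G_0$ over $\Spec(\mathbb{Z})$, restrict to a split maximal torus, and exploit $\Phi=-\Phi$. One minor omission in your reduction: a split reductive $G$ over a quasi-compact $S$ need not be of constant type, so before invoking \cite[XXIII~5.2, XXV~1.2]{SGA3.3r} you must first decompose $S=\coprod_i S_i$ into clopen pieces of constant type, as the paper does. The substantive difference is how injectivity of $\mathrm{res}:K_0(G_0)\to\mathbb{Z}[X^*(T_0)]$ is secured. The paper does not argue directly over $\mathbb{Z}$: it applies Serre's \cite[Th\'eor\`eme~5]{Se68b} to pass from the PID $\mathbb{Z}$ to a field $K$, and then \cite[Th\'eor\`eme~4]{Se68b} identifies $K_0(G_K)$ with $\mathbb{Z}[M]^W$, whence the restriction map is injective and the lemma follows at once.

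Both of your proposed routes over $\mathbb{Z}$ have gaps. The Weyl-filtration claim is false as stated: it is \emph{not} true that every object of $\Rep^\circ(G_0)(\Spec\,\mathbb{Z})$ admits a filtration with Weyl-module subquotients---this already fails after reduction modulo a prime $p$, where a general finite $G_{0,\mathbb{F}_p}$-module need not have a good filtration---so you cannot conclude from it that the $[V(\lambda)]$ generate $K_0(G_0)$. The Chevalley-involution route is circular as written: the identity $\sigma^*[\rho]=[\rho^{\vee}]$ in $K_0(G_0)$ for an \emph{arbitrary} $\rho$ is exactly what injectivity of $\mathrm{res}$ would give you (both sides have the same image in $\mathbb{Z}[X^*(T_0)]$), but you offer no independent justification for it over $\mathbb{Z}$, and there is no evident natural isomorphism $\sigma^*\rho\simeq\rho^{\vee}$ in general. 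Your correct observation $\sigma^*\rho_{\mathrm{ad}}\simeq\rho_{\mathrm{ad}}$ via $d\sigma$ then only yields $[\rho_{\mathrm{ad}}]=\sigma^*[\rho_{\mathrm{ad}}]$, which is not yet $[\rho_{\mathrm{ad}}^{\vee}]$. The cleanest fix is simply the paper's: reduce to a field via Serre before invoking the torus.
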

\begin{proof}
By~\cite[XXII 2.8]{SGA3.3r}, there is a decomposition $S=\coprod_{i\in I}S_{i}$
into open and closed subschemes $S_{i}\neq\emptyset$ of $S$ such
that for each $i\in I$, $G_{S_{i}}$ is of constant type, thus isomorphic
\cite[XXIII 5.2]{SGA3.3r} to the base change of a split reductive
group $G_{0,i}$ over $\Spec(\mathbb{Z})$ \cite[XXV 1.2]{SGA3.3r}.
Since $S$ is quasi-compact, the indexing set $I$ is finite and $K_{0}(G)\simeq\otimes_{i\in I}K_{0}(G_{S_{i}})$
with $\kappa(G)=\sum_{i\in I}\kappa(G)_{i}$ where $\kappa(G)_{i}$
is the image of $\kappa(G_{0,i})$ under $K_{0}(G_{0,i})\rightarrow K_{0}(G_{S_{i}})\rightarrow K_{0}(G)$.
We may thus assume that $S=\Spec(A)$ where $A$ a principal ideal
domain. By \cite[Théorème 5]{Se68b}, we may even assume that $A=K$
is a field. Let $H$ be a split maximal torus in $G$, with character
group $M$ and Weyl group $W$. The restriction $\Rep^{\circ}(G)\rightarrow\Rep^{\circ}(H)$
induces a ring homomorphism $K_{0}(G)\rightarrow K_{0}(H)\simeq\mathbb{Z}[M]$
which yields an isomorphism from $K_{0}(G)$ to $\mathbb{Z}[M]^{W}$
by \cite[Théorème 4]{Se68b}. Let $R\subset M$ be the set of roots
of $H$ in the Lie algebra $\mathfrak{g}=V(\rho_{\mathrm{ad}})$.
The weight decomposition of $\rho_{\mathrm{ad}}\vert H$ is then given
by $\mathfrak{g}=\mathfrak{g}_{0}\oplus\oplus_{\alpha\in R}\mathfrak{g}_{\alpha}$
with $\dim_{K}\mathfrak{g}_{\alpha}=1$ for $\alpha\in R$ and $\mathfrak{g}_{0}=\mathfrak{h}$
is the Lie algebra of $H$. Since $R=-R$, we find that $\rho_{\mathrm{ad}}\vert H\simeq\rho_{\mathrm{ad}}^{\vee}\vert H$.
Thus indeed $\kappa(G)=0$ in $K_{0}(G)$. 
\end{proof}

\section{The stabilizer of a $\Gamma$-filtration, II}

We have the following variant of theorem~\ref{thm:StabIsParV} and
\ref{thm:StabIsParw}. Let $G$ be an isotrivial reductive group over
a quasi-compact $S$. 
\begin{thm}
\label{thm:Stab0IsPar}For an $S$-scheme $T$ and a $\Gamma$-filtration
$\mathcal{F}$ on $V_{T}^{\circ}$ or $\omega_{T}^{\circ}$, $\Aut^{\otimes}(\mathcal{F})$
is a parabolic subgroup $P_{\mathcal{F}}$ of $G_{T}$ with unipotent
radical $U_{\mathcal{F}}\subset\Aut^{\otimes!}(\mathcal{F})$. Moreover,
\[
\Lie(U_{\mathcal{F}})=\mathcal{F}_{+}^{0}(\rho_{\mathrm{ad}})\quad\mbox{and}\quad\Lie(P_{\mathcal{F}})=\mathcal{F}^{0}(\rho_{\mathrm{ad}})\quad\mbox{in}\quad V_{T}(\rho_{\mathrm{ad}})=\mathfrak{g}_{T}.
\]
\end{thm}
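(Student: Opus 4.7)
The plan is to transplant the proof of Theorem~\ref{thm:StabIsParw} to the rigid setting, using the extension device of section~\ref{sub:extF} as a substitute for direct access to $\omega_T$. I first reduce the $V^\circ$ case to the $\omega^\circ$ case: if $\mathcal{F}$ is a $\Gamma$-filtration on $V_T^\circ$ and $\mathrm{res}\,\mathcal{F}$ its restriction to $\omega_T^\circ$, then the same argument as in~\ref{sub:injofVtoomega} applied to the split monomorphisms $c_\rho\colon\rho\hookrightarrow\rho_0\otimes\rho_{\mathrm{reg}}$ (valid once $\rho_{\mathrm{reg}}\in\Rep'(G)(S)$, which holds fpqc-locally by Proposition~\ref{Pro:Wedhorn}(2)) identifies $\Aut^\otimes(\mathcal{F})$ and $\Aut^{\otimes!}(\mathcal{F})$ with their counterparts for $\mathrm{res}\,\mathcal{F}$. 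Since everything in the statement is fpqc-local on $S$ and compatible with the base-change isomorphisms of Proposition~\ref{prop:SkalarExt}(1), I further descend along the finite étale cover provided by the isotriviality assumption to the case where $G$ is split over a quasi-compact $S$; then $G$ is linear by Lemma~\ref{lem:AssLin}(3) and both $\rho_{\mathrm{reg}}$ and $\rho_{\mathrm{adj}}$ belong to $\Rep'(G)(S)$.

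Next, section~\ref{sub:extF} extends $\mathcal{F}$ to functors $\mathcal{F}^\gamma\colon\Rep'(G)(S)\to\QCoh(T)$ satisfying properties (F1), (F2), (F3r), (F3l) of Lemma~\ref{lem:PropOfExtFil}; this supplies nested quasi-coherent ideals $\mathcal{I}(\mathcal{F})\subset\mathcal{J}(\mathcal{F})$ of $\mathcal{A}(G_T)$ generated respectively by $\mathcal{F}_+^0(\rho_{\mathrm{adj}}^\circ)$ and $\mathcal{F}^0(\rho_{\mathrm{adj}}^\circ)$, hence closed subgroups $U_{\mathcal{F}}\subset P_{\mathcal{F}}$ of $G_T$ defined exactly as in~\ref{sub:defUFPF}. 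The verifications of sections~\ref{sub:PropPFUF}--\ref{sub:EndProfAut(F)Parab} then transcribe essentially verbatim: the inclusions $P_{\mathcal{F}}\subset\Aut^\otimes(\mathcal{F})\subset N(P_{\mathcal{F}}),N(U_{\mathcal{F}})$ and $U_{\mathcal{F}}\subset\Aut^{\otimes!}(\mathcal{F})$ follow from $G=\Aut^\otimes(\omega^\circ)$ (Proposition~\ref{Pro:Aut(w0)G(w0)F(w0)}) combined with (F1); finite generation of the ideals is established as in~\ref{sub:finitegen} using linearity of $G$ and the fpqc-local splittability of $\mathcal{F}$ on $\tau^\vee\otimes\tau\in\Rep^\circ(G)(S)$ from Proposition~\ref{prop:FilonLF}; local freeness of $\mathcal{I}(G_T)/(\mathcal{I}(\mathcal{F})+\mathcal{I}(G_T)^2)$ is treated as in~\ref{sub:finlocfree} since $\rho^1\in\Rep^\circ(G)(S)$; the Lie algebra identities of~\ref{sub:LieUFPF} are purely formal; and the parabolicity criterion of~\ref{sub:ParUFPF} reduces to the dimension equality $\dim U_{\mathcal{F}}^\circ=\dim G/P_{\mathcal{F}}^\circ$, which is precisely Proposition~\ref{pro:trivkappaGF}(1) under the hypotheses of the theorem.

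The only step that genuinely requires attention is the intersection property $\mathcal{I}(\mathcal{F})\cap\mathcal{I}(G_T)^2=\mathcal{I}(\mathcal{F})\cdot\mathcal{I}(G_T)$ of section~\ref{sub:intersecprop}: the original argument invoked the exactness axiom (F3), which is stronger than what Lemma~\ref{lem:PropOfExtFil} provides. The obstacle is, however, superficial. That argument reduces to the chain of equalities
\[
\mathcal{F}_+^0(\rho_{\mathrm{adj}}^\circ)\cap\mathcal{I}(G_T)^2 = \mathcal{F}_+^0(\rho_{\mathrm{adj}}^{\circ(2)}) = \sum_{\gamma_1+\gamma_2>0}\mathcal{F}^{\gamma_1}(\rho_{\mathrm{adj}}^\circ)\cdot\mathcal{F}^{\gamma_2}(\rho_{\mathrm{adj}}^\circ),
\]
whose two steps are supplied respectively by (F3l) applied to the pure monomorphism $\rho_{\mathrm{adj}}^{\circ(2)}\hookrightarrow\rho_{\mathrm{adj}}^\circ$ in $\Rep'(G)(S)$, and by (F3r) combined with (F1) applied to the epimorphism $(\rho_{\mathrm{adj}}^\circ)^{\otimes 2}\twoheadrightarrow\rho_{\mathrm{adj}}^{\circ(2)}$. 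With this single clarification, the proof of Theorem~\ref{thm:StabIsParw} carries over and yields Theorem~\ref{thm:Stab0IsPar}.
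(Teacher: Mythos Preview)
Your overall strategy is right and matches the paper's, but there is a real gap in how you handle the right-exactness issues. The property (F3r) of Lemma~\ref{lem:PropOfExtFil} requires the \emph{target} of the epimorphism to lie in $\Rep^\circ(G)(S)$, not merely in $\Rep'(G)(S)$. Your proposed application of (F3r) to $(\rho_{\mathrm{adj}}^\circ)^{\otimes 2}\twoheadrightarrow\rho_{\mathrm{adj}}^{\circ(2)}$ therefore fails: $\rho_{\mathrm{adj}}^{\circ(2)}$ has underlying sheaf $\mathcal{I}(G)^2$, which is not finite locally free. The same problem affects your treatment of~\ref{sub:finitegen}, which you say ``is established as in~\ref{sub:finitegen}'': that section uses (F3) for the epimorphism $\Sym^\bullet(\tau^\vee\otimes\tau)\twoheadrightarrow\rho_{\mathrm{adj}}$, whose target is again not in $\Rep^\circ$. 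And for your (F3l) step you need $\rho_{\mathrm{adj}}^{\circ(2)}\in\Rep'(G)(S)$, which you have not justified over a general quasi-compact base.

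The paper closes these gaps by a further reduction you have omitted: after passing to the split case it descends to a Chevalley model $G_0$ over $\Spec(\mathbb{Z})$, where Wedhorn's criterion (Proposition~\ref{Pro:Wedhorn}(1)) says $\rho\in\Rep'$ iff $V(\rho)$ is torsion-free. Then for each $\sigma\in X(\rho_{\mathrm{adj}})$ (resp.\ $X(\rho_{\mathrm{adj}}^{\circ(2)})$) the preimage under $A$ (resp.\ $B$) has torsion-free underlying module, hence lies in $\Rep'$, and one applies (F3r) to the finite-rank quotients $\sigma$ followed by a cofinal-colimit argument (section~\ref{sub:dealingRightExact}). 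Without this extra descent to $\mathbb{Z}$, neither the intersection property nor the finite-generation step goes through as you have written.
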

\begin{cor}
For any $S$-scheme $T$ and $\mathcal{F}\in\mathbb{F}^{\Gamma}(G)(T)$,
\[
P_{\mathcal{F}}=\Aut^{\otimes}(\iota\mathcal{F}),\quad U_{\mathcal{F}}=\Aut^{\otimes!}(\iota\mathcal{F})\quad\mbox{and}\quad P_{\mathcal{F}}/U_{\mathcal{F}}=\Aut^{\otimes}(\Gr_{\iota\mathcal{F}}^{\bullet})
\]
where $\iota\mathcal{F}$ stands for the image of $\mathcal{F}$ in
either $\mathbb{F}^{\Gamma}(V_{T}^{\circ})$ or $\mathbb{F}^{\Gamma}(\omega_{T}^{\circ})$. 
\end{cor}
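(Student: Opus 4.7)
The plan is to reduce the statement fpqc-locally on $T$ to the case where $\mathcal{F}$ comes from a morphism $\chi:\mathbb{D}_T(\Gamma)\to G_T$, in which case the statement follows from Theorem~\ref{thm:Stab0IsPar} together with Proposition~\ref{prop:DefPGroupCase}, exactly as Corollary~\ref{cor:F(G)same} was deduced from Theorem~\ref{thm:StabIsParV}. The claim being an equality between fpqc subsheaves of $G_T$, and $\Fi:\mathbb{G}^{\Gamma}(G)\twoheadrightarrow\mathbb{F}^{\Gamma}(G)$ being an fpqc epimorphism (section~\ref{sub:DefFondDiag}), we may replace $T$ by some fpqc cover and assume $\mathcal{F}=\Fi(\chi)$.

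Put $\mathcal{G}=\iota\chi$, the $\Gamma$-graduation on $V_T^{\circ}$ (or $\omega_T^{\circ}$) attached to $\chi$ via~\ref{sub:defiota}; then $\iota\mathcal{F}=\Fi(\mathcal{G})$ by definition. The chosen splitting gives a decomposition
\[
\Aut^{\otimes}(\iota\mathcal{F})\simeq\Aut^{\otimes!}(\iota\mathcal{F})\rtimes\Aut^{\otimes}(\mathcal{G})
\]
as in~\ref{sub:Aut(F)Aut(G)GrF}, where $\Aut^{\otimes}(\mathcal{G})\simeq\Aut^{\otimes}(\Gr_{\iota\mathcal{F}}^{\bullet})$. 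The central step is to identify $\Aut^{\otimes}(\mathcal{G})$ as a subgroup of $G_T$. Under the tannakian isomorphism $G_T=\Aut^{\otimes}(V_T^{\circ})$ from Theorem~\ref{thm:RecThm} (or Proposition~\ref{Pro:Aut(w0)G(w0)F(w0)} for $\omega^{\circ}$, which applies by Proposition~\ref{Pro:Wedhorn}(3) since $G$ is isotrivial over a quasi-compact $S$), the subsheaf $\Aut^{\otimes}(\mathcal{G})\subset\Aut^{\otimes}(V_T^{\circ})$ consists of automorphisms preserving each weight space, i.e.\ it is exactly the centralizer $L_\chi$ of $\chi$.

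Next I would apply Theorem~\ref{thm:Stab0IsPar} to $\iota\mathcal{F}$: it gives a parabolic subgroup $\Aut^{\otimes}(\iota\mathcal{F})$ with Lie algebra $(\iota\mathcal{F})^{0}(\rho_{\mathrm{ad}})=\oplus_{\gamma\geq0}\mathfrak{g}_\gamma$, where the equality uses that $\iota\mathcal{F}$ is split by $\mathcal{G}=\iota\chi$ and that $\iota\chi$ acts on $\rho_{\mathrm{ad}}$ through the weight decomposition of $\mathrm{ad}\circ\chi$. By Proposition~\ref{prop:DefPGroupCase}, this is also $\Lie(P_\chi)=\Lie(P_\mathcal{F})$. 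Since a parabolic subgroup of a reductive group is determined by its Lie algebra (being its own normalizer, \cite[XXII 5.8.5]{SGA3.3r}), we obtain $\Aut^{\otimes}(\iota\mathcal{F})=P_\chi=P_\mathcal{F}$. Passing to unipotent radicals (which are the respective kernels of $P_\mathcal{F}\twoheadrightarrow P_\mathcal{F}/U_\mathcal{F}\simeq L_\chi\simeq\Aut^{\otimes}(\mathcal{G})$) then gives $U_\mathcal{F}=\Aut^{\otimes!}(\iota\mathcal{F})$ and the identification of $P_\mathcal{F}/U_\mathcal{F}$ with $\Aut^{\otimes}(\Gr_{\iota\mathcal{F}}^{\bullet})$.

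All three identities descend from the fpqc cover back to $T$ because they are statements about equalities of fpqc subsheaves of $G_T$. The case of $\iota\mathcal{F}\in\mathbb{F}^{\Gamma}(V_T^{\circ})$ and of its image in $\mathbb{F}^{\Gamma}(\omega_T^{\circ})$ are tied together by the injectivity of $\mathbb{F}^{\Gamma}(V^{\circ})\hookrightarrow\mathbb{F}^{\Gamma}(\omega^{\circ})$ from Proposition~\ref{Pro:F(V0)2F(w0)inj}, so $\Aut^{\otimes}(\iota\mathcal{F})$ is unambiguous. The main work was Theorem~\ref{thm:Stab0IsPar}; the only genuine point to verify here is the Lie-algebra computation matching $\Lie(P_\chi)$ with $(\iota\mathcal{F})^{0}(\rho_{\mathrm{ad}})$, which is a direct comparison of the weight decomposition with the induced filtration.
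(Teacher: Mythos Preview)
Your proof is correct and follows the same route as the paper, which simply says the argument is ``identical to that of its earlier counterpart'' (Corollary~\ref{cor:F(G)same} and the Remark preceding it): fpqc-localize so that $\mathcal{F}=\Fi(\chi)$, identify $\Aut^{\otimes}(\mathcal{G})=L_\chi$, and combine the semidirect decomposition $\Aut^{\otimes}(\iota\mathcal{F})=\Aut^{\otimes!}(\iota\mathcal{F})\rtimes L_\chi$ with Theorem~\ref{thm:Stab0IsPar} and the Lie-algebra match from Proposition~\ref{prop:DefPGroupCase}. One minor citation slip: the identification $G_T=\Aut^{\otimes}(V_T^{\circ})$ comes from Proposition~\ref{Pro:Aut(V0)G(V0)F(V0)}, not Theorem~\ref{thm:RecThm} (which treats $V$ and $\omega$, not $V^{\circ}$).
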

\noindent The proof of the corollary is identical to that of its
earlier counterpart.

\subsection{~}

By~propositions~\ref{Pro:Wedhorn}, \ref{Pro:Aut(w0)G(w0)F(w0)},
\ref{Pro:Aut(V0)G(V0)F(V0)} and \ref{Pro:F(V0)2F(w0)inj}, it is
sufficient to establish the theorem for a $\Gamma$-filtration $\mathcal{F}$
on $\omega_{T}^{\circ}$. For any $T$-scheme $X$, we have {\small{
\begin{eqnarray*}
\Aut^{\otimes}(\mathcal{F})(X) & = & \left\{ g\in G(X)\vert\forall\tau,\gamma\in\Rep^{\circ}(G)(S)\times\Gamma:\,\rho_{X}(g)\left(\mathcal{F}^{\gamma}(\tau)_{X}\right)=\mathcal{F}^{\gamma}(\tau)_{X}\right\} ,\\
 & = & \left\{ g\in G(X)\vert\forall\rho,\gamma\in\Rep^{\prime}(G)(S)\times\Gamma:\,\rho_{X}(g)\left(\mathcal{F}^{\gamma}(\rho)_{X}\right)=\mathcal{F}^{\gamma}(\rho)_{X}\right\} .
\end{eqnarray*}
}}We have to show that the fpqc subsheaf $\Aut^{\otimes}(\mathcal{F}):(\Sch/T)^{0}\rightarrow\Set$
of $G_{T}$ is representable by a parabolic subgroup with the specified
Lie algebra: this is a local question in the fpqc topology on $T$.
Let $\{S_{i}\rightarrow S\}$ be a covering of $S$ by finite étale
morphisms such that $G_{i}=G_{S_{i}}$ is split, let $\{T_{i}\rightarrow T\}$
be the induced covering of $T$, let $\omega_{i}$ denote the fiber
functors for $G_{i}$ and let $\mathcal{F}_{i}$ be the unique extension
of $\mathcal{F}_{T_{i}}$ to a $\Gamma$-graduation on $\omega_{i,T_{i}}^{\circ}$.
Going back to its actual definition in the proof of proposition~\ref{prop:SkalarExt},
one checks easily that $\Aut^{\otimes}(\mathcal{F})\vert_{T_{i}}=\Aut^{\otimes}(\mathcal{F}_{T_{i}})$
is equal to $\Aut^{\otimes}(\mathcal{F}_{i})$ as a subsheaf of $G\vert_{T_{i}}=\Aut^{\otimes}(\omega^{\circ})\vert_{T_{i}}=\Aut^{\otimes}(\omega_{i}^{\circ})\vert_{T_{i}}$.
We may (and do) therefore assume that $G$ is a split reductive group
over a quasi-compact $S$. By~\cite[XXII 2.8, XXIII 5.2 and XXV 1.2]{SGA3.3r},
we then have a finite partition of $S=\coprod S_{i}$ into open and
closed subschemes such that each $G_{i}=G_{S_{i}}$ arises from a
split group over $\Spec(\mathbb{Z})$, and repeating the above argument
with that covering, we may thus also assume that $G$ is the base
change of a split reductive group $G_{0}$ over $\Spec(\mathbb{Z})$.

\subsection{~}

In particular, the proof of part $(2)$ of proposition~\ref{Pro:Wedhorn}
now shows that with $\rho_{\mathrm{reg}}$, also $\rho_{\mathrm{adj}}$
and $\rho_{\mathrm{adj}}^{\circ}$ belong to $\Rep'(G)(S)$, to which
we have extended $\mathcal{F}$ in section~\ref{sub:extF}. We may
thus define subschemes $U_{\mathcal{F}}$ and $P_{\mathcal{F}}$ of
$G_{T}$ as in section~\ref{sub:defUFPF}, and try to follow from
there on the subsequent steps of the proof of theorem~\ref{thm:StabIsParV}.
Of course, we have to check that we are only using our filtration
where it is defined, namely on $\Rep'(G)(S)$, and that whenever the
axiom (F3) was used, we could have replaced it with the weaker left
and right properties (F3l) or (F3r).

\subsection{~\label{sub:dealingRightExact}}

In \ref{sub:finitegen} and \ref{sub:intersecprop}, we used the right
exactness of $\mathcal{F}$ for (respectively) 
\[
A:S^{\bullet}(\tau)=\Sym^{\bullet}(\tau^{\vee}\otimes\tau)\twoheadrightarrow\rho_{\mathrm{adj}}\quad\mbox{and}\quad B:(\rho_{\mathrm{adj}}^{\circ})^{\otimes2}\twoheadrightarrow\rho_{\mathrm{adj}}^{\circ(2)}.
\]
To deal with the first one, it would be sufficient to know that there
is a cofinal set $\Sigma\in X(\rho_{\mathrm{adj}})$ such that for
all $\sigma\in\Sigma$, $A^{-1}(\sigma)$ is still in $\Rep'(G)(S)$:
then 
\begin{eqnarray*}
\mathcal{F}^{\gamma}(\rho_{\mathrm{adj}}) & = & \underrightarrow{\lim}\,\mathcal{F}^{\gamma}(\sigma)\stackrel{\mathrm{F3r}}{=}\underrightarrow{\lim}\, A(\mathcal{F}^{\gamma}(A^{-1}(\sigma)))\\
 & = & A(\mathcal{F}^{\gamma}(\underrightarrow{\lim}\, A^{-1}(\sigma)))=A(\mathcal{F}^{\gamma}(S^{\bullet}(\tau))).
\end{eqnarray*}
Over a Dedekind domain, we have Wedhorn's criterion: a $\rho$ is
in $\Rep'(G)(S)$ if and only $V(\rho)$ is flat, i.e.~torsion free:
thus over such a domain, $A^{-1}(\sigma)$ still belongs to $\Rep'(G)(S)$
for any $\sigma\in X(\rho_{\mathrm{adj}})$. Applying this to $G_{0}$
and choosing $\tau$ in \ref{sub:finitegen} to also be defined over
$\Spec(\mathbb{Z})$ settles the case of $A$, and that of $B$ is
similar.

\subsection{~}

Everything then goes through up to \ref{sub:ParUFPF}: $U_{\mathcal{F}}^{\circ}$
and $P_{\mathcal{F}}^{\circ}$ are smooth subgroups of $G_{T}$ with
the good Lie algebras, etc\ldots{} In \ref{sub:ParUFPF}, we may
still reduce to the case where $T=\Spec(k)$ for some algebraically
closed field $k$ and use the criterion of \cite[IV 2.4.3.1]{SaRi72},
but we can not change $S$ to $\Spec(k)$. However, since we have
already reduced to the split case, proposition~\ref{pro:trivkappaGF}
(or lemma~\ref{lem:trivkappaG}) deals perfectly well with condition
$(a)$, and lemma~\ref{lem:AssLin} with condition $(b)$.

\section{Splitting filtrations}

We now come to the main statement of theorem~\ref{thm:MainTan}.
Let thus $G$ be a reductive isotrivial group over a quasi-compact
$S$, let $T$ be an $S$-scheme and let $\mathcal{F}$ be a $\Gamma$-filtration
on $\omega_{T}^{\circ}$. We will then show that: locally on $T$
for the étale topology, $\mathcal{F}$ has a splitting $\chi:\mathbb{D}_{T}(\Gamma)\rightarrow G_{T}$.

\subsection{~\label{sub:red2split}}

Let $f:\tilde{S}\rightarrow S$ be a finite étale cover splitting
$G$ and denote by $\tilde{\mathcal{F}}$ the unique extension of
$\mathcal{F}_{\tilde{T}}$ to a $\Gamma$-filtration on $\tilde{\omega}_{\tilde{T}}^{\circ}$
(see proposition~\ref{prop:SkalarExt}), where $\tilde{T}=T_{\tilde{S}}$
and $\tilde{\omega}$ is the fiber functor for $\tilde{G}=G_{\tilde{S}}$.
If $\mathcal{\chi}:\mathbb{D}_{\tilde{T}}(\Gamma)\rightarrow G_{\tilde{T}}$
is a splitting of $\tilde{\mathcal{F}}$, it is \emph{a fortiori}
a splitting of $\mathcal{F}_{\tilde{T}}$: we may thus assume that
$G$ splits over $S$.

\subsection{~}

For a positive integer $k$, there is a cartesian diagram of fpqc
sheaves on $S$, 
\[
\xymatrix{\mathbb{G}^{\Gamma}(G)\ar@{=}[r]^{\mathrm{Prop.\,}\ref{Pro:Aut(w0)G(w0)F(w0)}}\ar[d]_{k_{1}} & \mathbb{G}^{\Gamma}(\omega^{\circ})\ar[r]^{\Fi}\ar[d]_{k_{2}} & \mathbb{F}^{\Gamma}(\omega^{\circ})\ar[d]_{k_{3}}\\
\mathbb{G}^{\Gamma}(G)\ar@{=}[r]^{\mathrm{Prop.\,}\ref{Pro:Aut(w0)G(w0)F(w0)}} & \mathbb{G}^{\Gamma}(\omega^{\circ})\ar[r]^{\Fi} & \mathbb{F}^{\Gamma}(\omega^{\circ})
}
\]
where the $k_{i}$'s map $\chi$, $\mathcal{G}$ and $\mathcal{F}$
to respectively $k_{1}(\chi)=\chi\circ\mathbb{D}_{T}(k)=\chi^{k}$,
\[
k_{2}(\mathcal{G})_{\gamma}(\rho)=\begin{cases}
0 & \mbox{if }\gamma\notin k\Gamma,\\
\mathcal{G}_{\eta}(\rho) & \mbox{if }\gamma=k\eta,
\end{cases}\quad\mbox{and}\quad k_{3}(\mathcal{F})^{\gamma}(\rho)=\cup_{k\eta\geq\gamma}\mathcal{F}^{\eta}(\rho).
\]
They are all obviously well-defined monomorphisms, and the image of
$k_{2}$ is the subsheaf of $\mathbb{G}^{\Gamma}(\omega^{\circ})$
made of those $\Gamma$-graduation $\mathcal{G}'$ for which $\mathcal{G}'_{\gamma}\equiv0$
for $\gamma\notin k\Gamma$. The diagram is cartesian because if $\mathcal{G}'$
splits $k_{3}(\mathcal{F})$, then $\mathcal{G}'_{\gamma}\simeq\Gr_{k_{3}(\mathcal{F})}^{\gamma}\equiv0$
for $\gamma\notin k\Gamma$, thus $\mathcal{G}'=k_{2}(\mathcal{G})$
for a unique $\mathcal{G}$, which has to also split $\mathcal{F}$
since 
\[
k_{3}(\mathcal{F})=\Fi(\mathcal{G}')=\Fi\left(k_{2}(\mathcal{G})\right)=k_{3}(\Fi(\mathcal{G})).
\]

\subsection{~\label{sub:red2prod}}

For a central isogeny $f:G\rightarrow G'$, there is a commutative
diagram
\[
\xymatrix{\mathbb{G}^{\Gamma}(G)\ar@{=}[r]^{\mathrm{Prop.\,}\ref{Pro:Aut(w0)G(w0)F(w0)}}\ar[d]_{f_{1}} & \mathbb{G}^{\Gamma}(\omega^{\circ})\ar[r]^{\Fi}\ar[d]_{f_{2}} & \mathbb{F}^{\Gamma}(\omega^{\circ})\ar[d]_{f_{3}}\\
\mathbb{G}^{\Gamma}(G')\ar@{=}[r]^{\mathrm{Prop.\,}\ref{Pro:Aut(w0)G(w0)F(w0)}} & \mathbb{G}^{\Gamma}(\omega^{\prime\circ})\ar[r]^{\Fi} & \mathbb{F}^{\Gamma}(\omega^{\prime\circ})
}
\]
where $\omega'=\omega\circ f^{\ast}$ denotes the fiber functor for
$G'$ and the $f_{i}$'s map $\chi$, $\mathcal{G}$ and $\mathcal{F}$
to respectively $f_{1}(\chi)=f\circ\chi$, $f_{2}(\mathcal{G})=\mathcal{G}\circ f^{\ast}$
and $f_{3}(\mathcal{F})=\mathcal{F}\circ f^{\ast}$, with
\[
f^{\ast}:\Rep(G')(S)\rightarrow\Rep(G)(S)\qquad f^{\ast}(\rho)=\rho\circ f.
\]
We claim that $(1)$ all $f_{i}$'s are monomorphisms, and $(2)$
the diagram is cartesian. This is local in the finite étale topology
on $S$ by proposition~\ref{prop:SkalarExt}, and we may thus assume
that the kernel $C$ of $f$ is isomorphic to $\mathbb{D}_{S}(X)$
for some finite commutative group $X$. We fix an $S$-scheme $T$
and consider sections of the above sheaves over $T$. If $f\circ\chi_{1}=f\circ\chi_{2}$,
then $\chi_{1}^{-1}\chi_{2}$ is a morphism $\mathbb{D}_{T}(\Gamma)\rightarrow C_{T}$,
which has to be trivial since $X$ is finite and $\Gamma$ torsion
free: $f_{1}$ is injective. Any $\rho\in\Rep^{\circ}(G)(S)$ has
a finite sum decomposition $\rho=\oplus\rho(x)$ according to the
characters $x\in X$ of $C$, and $C$ acts trivially on $\rho(x)^{\otimes k(x)}$
where $k(x)\geq1$ is the order of $x$ in $X$. If two $\Gamma$-filtrations
$\mathcal{F}_{1}$ and $\mathcal{F}_{2}$ on $\omega_{T}^{\circ}$
induce the same $\Gamma$-filtration on $\omega{}_{T}^{\prime\circ}$,
then $\mathcal{F}_{1}(\rho)=\mathcal{F}_{2}(\rho)$ for every $\rho$
on which $C$ acts trivially, thus $\mathcal{F}_{1}(\rho(x))=\mathcal{F}_{2}(\rho(x))$
for every $\rho$ and $x$ by lemma~\ref{lem:FiltOntensorprod} below,
therefore $\mathcal{F}_{1}(\rho)=\mathcal{F}_{2}(\rho)$ since $\rho=\oplus\rho(x)$:
$f_{3}$ is injective. Similarly: $f_{2}$ is injective. Finally,
suppose that $\mathcal{G}'$ splits $f_{3}(\mathcal{F})$. Let $\chi':\mathbb{D}_{T}(\Gamma)\rightarrow G'_{T}$
be the corresponding morphism. Fix $k\geq1$ such that $k_{1}(\chi')$
lifts to $\chi_{k}:\mathbb{D}_{T}(\Gamma)\rightarrow G_{T}$, giving
a $\Gamma$-graduation $\mathcal{G}_{k}$ and a $\Gamma$-filtration
$\mathcal{F}_{k}$ on $\omega_{T}^{\circ}$. They respectively map
to 
\[
f_{2}(\mathcal{G}_{k})=f_{2}\circ\iota(\chi_{k})=\iota\circ f_{1}(\chi_{k})=\iota\circ k_{1}(\chi')=k_{2}\circ\iota(\chi')=k_{2}(\mathcal{G}')
\]
where $\iota$ is the isomorphism $\mathbb{G}^{\Gamma}(G)\simeq\mathbb{G}^{\Gamma}(\omega^{\circ})$,
and 
\[
f_{3}(\mathcal{F}_{k})=f_{3}\circ\Fi(\mathcal{G}_{k})=\Fi\circ f_{2}(\mathcal{G}_{k})=\Fi\circ k_{2}(\mathcal{G}')=k_{3}\circ\Fi(\mathcal{G}')=k_{3}\circ f_{3}(\mathcal{F}).
\]
Thus $f_{3}(\mathcal{F}_{k})=f_{3}\circ k_{3}(\mathcal{F})$ and $\mathcal{F}_{k}=k_{3}(\mathcal{F})$
since $f_{3}$ is a monomorphism. Since $\mathcal{G}_{k}$ splits
$k_{3}(\mathcal{F})$, there is a unique $\mathcal{G}$ such that
$\mathcal{F}=\Fi(\mathcal{G})$ and $k_{2}(\mathcal{G})=\mathcal{G}_{k}$
by the cartesian diagram of the previous subsection. Moreover $f_{2}(\mathcal{G})=\mathcal{G}'$
since 
\[
k_{2}\circ f_{2}(\mathcal{G})=f_{2}\circ k_{2}(\mathcal{G})=f_{2}(\mathcal{G}_{k})=k_{2}(\mathcal{G}')
\]
and $k_{2}$ is a monomorphism: our diagram is indeed cartesian. 
\begin{lem}
\label{lem:FiltOntensorprod}Let $\mathcal{M}$ be a finite locally
free sheaf on a scheme $S$, $k\geq1$.
\begin{enumerate}
\item Let $\mathcal{F}_{1}$ and $\mathcal{F}_{2}$ be local direct factors
of $\mathcal{M}$. Then:
\[
\mathcal{F}_{1}^{\otimes k}=\mathcal{F}_{2}^{\otimes k}\mbox{ in }\mathcal{M}^{\otimes k}\,\Longrightarrow\,\mathcal{F}_{1}=\mathcal{F}_{2}\mbox{ in }\mathcal{M}.
\]

\item Let $\mathcal{F}_{1}$ and $\mathcal{F}_{2}$ be $\Gamma$-filtrations
on $\mathcal{M}$. Then:
\[
\mathcal{F}_{1}^{\otimes k}=\mathcal{F}_{2}^{\otimes k}\mbox{ on }\mathcal{M}^{\otimes k}\,\Longrightarrow\,\mathcal{F}_{1}=\mathcal{F}_{2}\mbox{ on }\mathcal{M}.
\]

\end{enumerate}
\end{lem}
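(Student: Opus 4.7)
Both assertions are local on $S$, so we reduce to $S=\Spec(R)$ with $R$ local. Then $\mathcal{M}$ is free of rank $n$, and in part (2) each $\mathcal{F}_i$ acquires a splitting by Proposition~\ref{prop:FilonLF}; local direct factors of a free module over a local ring are themselves free. The case $k=1$ is trivial, so assume $k\geq 2$.

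For (1): let $r$ denote the common rank of $F_1$ and $F_2$, equal since $F_i^{\otimes k}$ has rank $r_i^k$; the case $r=0$ is immediate. Pick a basis $e_1,\ldots,e_r$ of $F_1$ and extend it to a basis $e_1,\ldots,e_n$ of $M$. Let $\phi\in (M^{\otimes(k-1)})^{\vee}$ be dual to $e_1^{\otimes(k-1)}$ in the induced tensor basis. The canonical contraction
\[
\psi_{\phi}\colon M^{\otimes k}\to M,\qquad v_1\otimes v_2\otimes\cdots\otimes v_k\mapsto \phi(v_2\otimes\cdots\otimes v_k)\,v_1
\]
sends $F^{\otimes k}$ into $F$ for every direct summand $F\subset M$, since $v_1\in F$ in every elementary tensor of $F^{\otimes k}$. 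Evaluating on the tensor basis of $F_1^{\otimes k}$ shows $\psi_{\phi}(F_1^{\otimes k})=F_1$: the tensor $e_i\otimes e_1^{\otimes(k-1)}\in F_1^{\otimes k}$ is mapped to $e_i$ for $1\leq i\leq r$, and all other basis tensors of $F_1^{\otimes k}$ map to $0$. Hence
\[
F_1=\psi_{\phi}(F_1^{\otimes k})=\psi_{\phi}(F_2^{\otimes k})\subset F_2,
\]
and by symmetry $F_1=F_2$.

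For (2): induct on the cardinality of the support $\{\gamma:\Gr^{\gamma}\mathcal{F}_1\neq 0\}$, which is locally finite by Proposition~\ref{prop:FilonLF}. Let $\delta$ be its maximum. Any decomposition $\gamma_1+\cdots+\gamma_k=k\delta$ with some $\gamma_j>\delta$ kills its summand in $(\mathcal{F}_i^{\otimes k})^{k\delta}=\sum_{\gamma_1+\cdots+\gamma_k=k\delta}\mathcal{F}_i^{\gamma_1}\otimes\cdots\otimes\mathcal{F}_i^{\gamma_k}$, so this weight piece collapses to $(\mathcal{F}_i^{\delta})^{\otimes k}$. The hypothesis then gives $(\mathcal{F}_1^{\delta})^{\otimes k}=(\mathcal{F}_2^{\delta})^{\otimes k}$, which is nonzero, so $\mathcal{F}_2$ has the same maximum $\delta$; part (1) yields $\mathcal{F}_1^{\delta}=\mathcal{F}_2^{\delta}$. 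Set $\mathcal{M}'=\mathcal{M}/\mathcal{F}_1^{\delta}$ (finite locally free) and let $\mathcal{F}_i'$ be the induced $\Gamma$-filtration on $\mathcal{M}'$, whose support has cardinality one less. The quotient map $\mathcal{M}^{\otimes k}\twoheadrightarrow \mathcal{M}'^{\otimes k}$ carries $(\mathcal{F}_i^{\otimes k})^{\alpha}$ onto $(\mathcal{F}_i'^{\otimes k})^{\alpha}$, so the hypothesis descends. By induction $\mathcal{F}_1'=\mathcal{F}_2'$, and since the two quotient maps $\mathcal{M}\to\mathcal{M}'$ now coincide, this lifts back to $\mathcal{F}_1=\mathcal{F}_2$.

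The delicate point is the construction of $\psi_{\phi}$ in part (1). An elementwise argument --- trying to deduce ``$v\in F_2$'' from ``$v^{\otimes k}\in F_2^{\otimes k}$'' --- fails over a non-reduced base, because nilpotents can satisfy $w^{\otimes k}=0$ with $w\neq 0$. Working instead at the level of the whole submodule $F_1^{\otimes k}$ via a basis-adapted contraction sidesteps this difficulty: $\psi_{\phi}(F_1^{\otimes k})$ recovers all of $F_1$ simultaneously, using only that any basis of the direct summand $F_1$ can be extended to a basis of $M$.
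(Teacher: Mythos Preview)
Your approach to both parts matches the paper's: contract with a well-chosen linear functional in (1), then peel off the top piece and induct in (2). Part (1) is correct; your contraction $\psi_\phi$ is essentially the paper's map $\mathrm{Id}\otimes f^{\otimes(k-1)}$, with the cosmetic difference that you adapt $\phi$ to $F_1$ alone and recover the other inclusion by symmetry, whereas the paper chooses a single linear form $f$ with $f(\mathcal{F}_1)=\mathcal{O}_S=f(\mathcal{F}_2)$ and gets both at once.

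In part (2), however, your argument is circular at the key step. You let $\delta$ be the maximum of the support of $\mathcal{F}_1$ and then assert that ``any decomposition $\gamma_1+\cdots+\gamma_k=k\delta$ with some $\gamma_j>\delta$ kills its summand'' in $(\mathcal{F}_i^{\otimes k})^{k\delta}$, for \emph{both} $i$. This is only justified for $i=1$: the claim $\mathcal{F}_2^{\gamma_j}=0$ for $\gamma_j>\delta$ requires knowing that $\delta$ is also the maximum for $\mathcal{F}_2$, which you only ``deduce'' afterwards --- and even that deduction is wrong: $(\mathcal{F}_2^\delta)^{\otimes k}\neq 0$ only gives $\mathcal{F}_2^\delta\neq 0$, i.e.\ the maximum of the support of $\mathcal{F}_2$ is $\geq\delta$, not $=\delta$. (Concretely, if $\mathcal{F}_2$ had support $\{0,2\delta\}$ with $\delta>0$ and $k=2$, then $(\mathcal{F}_2^{\otimes 2})^{2\delta}$ contains $\mathcal{F}_2^{2\delta}\otimes\mathcal{F}_2^0+\mathcal{F}_2^0\otimes\mathcal{F}_2^{2\delta}$, which strictly contains $(\mathcal{F}_2^\delta)^{\otimes 2}$.)

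The fix, which is exactly what the paper does, is to first compare where the tensor filtrations vanish: $(\mathcal{F}_i^{\otimes k})^\gamma\neq 0$ precisely for $\gamma\leq k\gamma_i$ where $\gamma_i$ is the top of the support of $\mathcal{F}_i$. The hypothesis then forces $k\gamma_1=k\gamma_2$, hence $\gamma_1=\gamma_2$ since the totally ordered group $\Gamma$ is torsion-free. \emph{Now} the collapse formula holds for both filtrations, and your induction goes through.
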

\begin{proof}
$(1)$ Fix $s\in S$ with residue field $k(s)$. We have to show that
$\mathcal{F}_{1}=\mathcal{F}_{2}$ in a neighborhood of $s$. Shrinking
$S$ if necessary, we may assume that $\mathcal{F}_{1}$ and $\mathcal{F}_{2}$
are free of constant rank $n_{1}$ and $n_{2}$. By assumption, $n_{1}^{k}=n_{2}^{k}$,
therefore $n_{1}=n_{2}=n$. If $n=0$, $\mathcal{F}_{1}=0=\mathcal{F}_{2}$
and we are done. Suppose $n>0$, and choose a linear form $f:\mathcal{M}(s)\rightarrow k(s)$
which is non-zero on $\mathcal{F}_{1}(s)$ and $\mathcal{F}_{2}(s)$.
Shrinking $S$ further, we may lift $f$ to an $\mathcal{O}_{S}$-linear
map $f:\mathcal{M}\rightarrow\mathcal{O}_{S}$ such that $f(\mathcal{F}_{1})=\mathcal{O}_{S}=f(\mathcal{F}_{2})$.
Then for the $\mathcal{O}_{S}$-linear map $F=\mathrm{Id}\otimes f^{k-1}:\mathcal{M}^{\otimes k}\rightarrow\mathcal{M}$,
we have 
\[
\mathcal{F}_{1}=F(\mathcal{F}_{1}^{\otimes k})=F(\mathcal{F}_{2}^{\otimes k})=\mathcal{F}_{2}.
\]
$(2)$ The question is local for the Zariski topology on $S$. By
proposition~\ref{prop:FilonLF}, we may thus assume that both filtrations
split, say 
\[
\mathcal{F}_{1}^{\gamma}=\oplus_{\eta\geq\gamma}\mathcal{G}_{1}^{\eta}\quad\mbox{and}\quad\mathcal{F}_{2}^{\gamma}=\oplus_{\eta\geq\gamma}\mathcal{G}_{2}^{\eta}
\]
with $\mathcal{G}_{i}^{\gamma}$ locally free of constant rank $n_{i}^{\gamma}$
for every $i\in\{1,2\}$ and $\gamma\in\Gamma$. We then argue by
induction on the constant rank $n=\sum n_{1}^{\gamma}=\sum n_{2}^{\gamma}$
of $\mathcal{M}$. For $n=0$, there is nothing to prove. Suppose
$n>0$. By assumption, for every $\gamma\in\Gamma$, 
\[
\sum_{a_{1}+\cdots+a_{k}=\gamma}\mathcal{F}_{1}^{a_{1}}\otimes\cdots\otimes\mathcal{F}_{1}^{a_{k}}=\sum_{a_{1}+\cdots+a_{k}=\gamma}\mathcal{F}_{2}^{a_{1}}\otimes\cdots\otimes\mathcal{F}_{2}^{a_{k}}
\]
 which means that
\[
\bigoplus_{a_{1}+\cdots+a_{k}\geq\gamma}\mathcal{G}_{1}^{a_{1}}\otimes\cdots\otimes\mathcal{G}_{1}^{a_{k}}=\bigoplus_{a_{1}+\cdots+a_{k}\geq\gamma}\mathcal{G}_{2}^{a_{1}}\otimes\cdots\otimes\mathcal{G}_{2}^{a_{k}}
\]
Let $\gamma_{i}$ be the largest element of the (non-empty!) finite
set $\{a:\mathcal{G}_{i}^{a}\neq0\}$. Then 
\[
\oplus_{a_{1}+\cdots+a_{k}\geq\gamma}\mathcal{G}_{i}^{a_{1}}\otimes\cdots\otimes\mathcal{G}_{i}^{a_{k}}=\begin{cases}
0 & \mbox{if }\gamma>k\gamma_{i},\\
\mathcal{G}_{i}^{\gamma_{i}}\otimes\cdots\otimes\mathcal{G}_{i}^{\gamma_{i}} & \mbox{for }\gamma=k\gamma_{i},\\
\neq0 & \mbox{if }\gamma\leq k\gamma_{i}.
\end{cases}
\]
Thus $k\gamma_{1}=k\gamma_{2}$, $\gamma_{1}=\gamma_{2}=\gamma_{0}$
and $\mathcal{G}_{1}^{\gamma_{0}}\otimes\cdots\otimes\mathcal{G}_{1}^{\gamma_{0}}=\mathcal{G}_{2}^{\gamma_{0}}\otimes\cdots\otimes\mathcal{G}_{2}^{\gamma_{0}}$
in $\mathcal{M}^{\otimes k}$, therefore $\mathcal{F}_{1}^{\gamma_{0}}=\mathcal{G}_{1}^{\gamma_{0}}=\mathcal{G}_{2}^{\gamma_{0}}=\mathcal{F}_{2}^{\gamma_{0}}=\mathcal{N}$
in $\mathcal{M}$ by the previous lemma. We conclude by our induction
hypothesis applied to the images of $\mathcal{F}_{1}$ and $\mathcal{F}_{2}$
in $\mathcal{M}/\mathcal{N}$. 
\end{proof}

\subsection{~\label{sub:redFromprod}}

Suppose that $G=G_{1}\times_{S}G_{2}$. Let $\mathcal{F}$ be a $\Gamma$-filtration
on $\omega_{T}^{\circ}$. Then $\mathcal{F}$ induces a $\Gamma$-filtration
$\mathcal{F}_{i}$ on the fiber functor $\omega_{i,T}^{\circ}$ for
$G_{i}$ by the formulas: 
\[
\mathcal{F}_{1}^{\gamma}(\rho_{1})=\mathcal{F}^{\gamma}(\rho_{1}\boxtimes1_{G_{2}})\quad\mbox{and}\quad\mathcal{F}_{2}^{\gamma}(\rho_{2})=\mathcal{F}^{\gamma}(1_{G_{1}}\boxtimes\rho_{2})
\]
We claim that if $\chi_{i}$ splits $\mathcal{F}_{i}$, then $\chi=(\chi_{1},\chi_{2})$
splits $\mathcal{F}$. Indeed, we may as above assume that $G_{1}$
and $G_{2}$ are split, and we extend $\mathcal{F}$ to $\Rep'(G)(S)$.
We then have to show that the $\Gamma$-filtration $\mathcal{F}'$
associated to $\chi$ equals $\mathcal{F}$ on $\rho_{\mathrm{reg}}$.
Since 
\[
\rho_{\mathrm{reg}}=\rho_{1,\mathrm{reg}}\boxtimes\rho_{2,\mathrm{reg}}=\underrightarrow{\lim}\,\tau_{1}\boxtimes\tau_{2}
\]
where $\rho_{i,\mathrm{reg}}$ is the regular representation of $G_{i}$
and the colimit is over $\tau_{i}\in X(\rho_{i,\mathrm{reg}})$, it
is also sufficient to establish that $\mathcal{F}'$ equals $\mathcal{F}$
on $\rho=\tau_{1}\boxtimes\tau_{2}$, $\tau_{i}\in\Rep^{\circ}(G)(S)$.
Note that $\rho=\rho_{1}\otimes\rho_{2}$ where $\rho_{1}=\tau_{1}\boxtimes1_{G_{2}}$
and $\rho_{2}=1_{G_{1}}\boxtimes\tau_{2}$. We thus find
\begin{eqnarray*}
\mathcal{F}^{\gamma}(\rho) & = & {\textstyle \sum_{\gamma_{1}+\gamma_{2}=\gamma}}\mathcal{F}^{\gamma_{1}}(\rho_{1})\otimes\mathcal{F}^{\gamma_{2}}(\rho{}_{2})\\
 & = & {\textstyle \sum_{\gamma_{1}+\gamma_{2}=\gamma}}\mathcal{F}_{1}^{\gamma_{1}}(\tau_{1})\otimes\mathcal{F}_{2}^{\gamma_{2}}(\tau{}_{2})\\
 & = & {\textstyle \oplus_{\gamma_{1}+\gamma_{2}\geq\gamma}}\mathcal{G}_{1}^{\gamma_{1}}(\tau_{1})\otimes\mathcal{G}_{2}^{\gamma_{2}}(\tau_{2})\\
 & = & \oplus_{\eta\geq\gamma}\mathcal{G}^{\eta}(\tau_{1}\boxtimes\tau_{2})\\
 & = & \mathcal{F}^{\prime\gamma}(\rho)
\end{eqnarray*}
where $\mathcal{G}$ and the $\mathcal{G}_{i}$'s are the $\Gamma$-graduations
induced by $\chi$ and the $\chi_{i}$'s.

\subsection{~}

Applying \ref{sub:red2split}, \ref{sub:red2prod} with the central
isogeny from $G$ to the product of its adjoint group and its coradical,
and finally \ref{sub:redFromprod}, we may assume that $G$ is either
a split torus or a split reductive group of adjoint type.

\subsection{~\label{sub:Splitting:caseoftori}}

Let thus first $G=\mathbb{D}_{S}(M)$ for some $M\simeq\mathbb{Z}^{d}$
and let $\mathcal{F}$ be a $\Gamma$-filtration on $\omega_{T}^{\circ}$
for an $S$-scheme $T$, which we may assume to be (absolutely) affine.
Let $\rho_{m}$ be the representation of $G$ on $V(\rho_{m})=\mathcal{O}_{S}$
on which $G$ acts by the character $m\in M$. By proposition~\ref{prop:FilonLF},
there exists a $\Gamma$-graduation $\mathcal{O}_{T}=\oplus_{\gamma}\mathcal{I}_{\gamma}(m)$
such that 
\[
\forall\gamma\in\Gamma:\qquad\mathcal{F}^{\gamma}(\rho_{m})=\oplus_{\eta\geq\gamma}\mathcal{I}_{\eta}(m).
\]
Let $T_{\gamma}(m)$ be the support of $\mathcal{I}_{\gamma}(m)$,
so that $T=\coprod_{\gamma}T_{\gamma}(m)$ and $T_{\gamma}(m)$ is
open and closed in $T$. For $t\in T$ and $m\in M$, we denote by
$f(t)(m)$ the unique element $\gamma$ in $\Gamma$ such that $t$
belongs to $T_{\gamma}(m)$. Thus $\mathcal{F}_{t}^{\gamma}(\rho_{m})=k(t)$
if $\gamma\leq f(t)(m)$ and $0$ otherwise, where $k(t)$ is the
residue field at $t$. Since $\rho_{0}=1_{G}$, $f(t)(0)=0$ by the
axiom (F2) for $\mathcal{F}$. Since $\rho_{m_{1}}\otimes\rho_{m_{2}}=\rho_{m_{1}+m_{2}}$,
$f(t)(m_{1}+m_{2})=f(t)(m_{1})+f(t)(m_{2})$ by the axiom (F1) for
$\mathcal{F}$. Therefore $f(t):M\rightarrow\Gamma$ is a group homomorphism.
Since $M$ is finitely generated, $f:T\rightarrow\Hom_{\Group}(M,\Gamma)$
is locally constant, and thus corresponds to a global section $\chi:\mathbb{D}_{T}(\Gamma)\rightarrow G_{T}$
of the locally constant sheaf (see \cite[VIII 1.5]{SGA3.2})
\[
\Hom(M,\Gamma)_{T}=\underline{\Hom}(M_{T},\Gamma_{T})=\underline{\Hom}(\mathbb{D}_{T}(\Gamma),\mathbb{D}_{T}(M))=\underline{\Hom}(\mathbb{D}_{T}(\Gamma),G_{T}).
\]
Let $\mathcal{F}'$ be the corresponding $\Gamma$-filtration on $\omega_{T}$.
For any morphism $\phi:M\rightarrow\Gamma$, let $T(\phi)$ be the
open and closed subset of $T$ where $f\equiv\phi$, so that $T=\coprod T(\phi)$
and 
\[
T(\phi)=\cap_{m\in M}T_{\phi(m)}(m)=\cap_{i=1}^{r}T_{\phi(m_{i})}(m_{i})
\]
if $\{m_{1},\ldots,m_{r}\}\subset M$ spans $M$. On $T(\phi)$, we
find that 
\[
\mathcal{F}_{T(\phi)}^{\prime\gamma}(\rho_{m})=\left\{ \begin{array}{ll}
\mathcal{O}_{T(\phi)} & \mbox{if }\gamma\leq\phi(m)\\
0 & \mbox{if }\gamma>\phi(m)
\end{array}\right\} =\mathcal{F}_{T(\phi)}^{\gamma}(\rho_{m}).
\]
Thus $\mathcal{F}'(\rho_{m})=\mathcal{F}(\rho_{m})$ for every $m$.
Extending $\mathcal{F}$ as in~\ref{sub:extF}, also $\mathcal{F}'(\rho_{\mathrm{reg}})=\mathcal{F}(\rho_{\mathrm{reg}})$
since $\rho_{\mathrm{reg}}=\oplus_{m\in M}\rho_{m}$ . Finally $\mathcal{F}'(\rho)=\mathcal{F}(\rho)$
for any $\rho$ by (F3l) applied to $c_{\rho}$. Therefore $\chi$
is a splitting of $\mathcal{F}$ -- it is in fact the unique such
splitting.

\subsection{~\label{sub:splitonadisenough}}

Suppose finally that $G$ is a split reductive group of adjoint type
over $S$, let $T$ be an $S$-scheme, and let $\mathcal{F}$ be a
$\Gamma$-filtration on $\omega_{T}^{\circ}$. We have just recalled
that $\mathcal{F}$ is uniquely determined by the value of its extension
to $\Rep'(G)(S)$ on $\rho_{\mathrm{reg}}$, but we now also have
this: there is at most one $\mbox{\ensuremath{\Gamma}}$-filtration
$\mathcal{F}'$ on $\omega_{T}$ which equals $\mathcal{F}$ on the
adjoint representation $\rho_{\mathrm{ad}}$ of $G$ on $V(\rho_{\mathrm{ad}})=\mathfrak{g}=\Lie(G)$.
In particular, any morphism $\chi:\mathbb{D}_{T}(\Gamma)\rightarrow G_{T}$
inducing $\mathcal{F}$ on $\rho_{\mathrm{ad}}$ is a splitting of
$\mathcal{F}$. To establish our claim, we consider the $G$-equivariant
epimorphism of quasi-coherent $G$-$\mathcal{O}_{S}$-algebras 
\[
f:\Sym_{\mathcal{O}_{S}}^{\bullet}(\rho_{\mathrm{ad},0}^{\vee}\otimes\rho_{\mathrm{ad}})\twoheadrightarrow\rho_{\mathrm{reg}}
\]
which is defined as in section~\ref{sub:adjreg}, starting from $c_{\mathrm{ad}}:\rho_{\mathrm{ad}}\rightarrow\rho_{\mathrm{ad},0}\otimes\rho_{\mathrm{reg}}$
for the closed embedding $\rho_{\mathrm{ad}}:G\rightarrow GL(\mathfrak{g})$.
If $\mathcal{F}'$ equals $\mathcal{F}$ on $\rho_{\mathrm{ad}}$,
they are also equal on $\Sym^{\bullet}\left(\rho_{\mathrm{ad},0}^{\vee}\otimes\rho_{\mathrm{ad}}\right)$
by the axioms (F1-3) for $\Gamma$-filtrations on $\omega_{T}^{\circ}$,
thus also
\[
\mathcal{F}^{\prime\gamma}(\rho_{\mathrm{reg}})\subset\mathcal{F}^{\gamma}(\rho_{\mathrm{reg}})
\]
for every $\gamma\in\Gamma$ by the axiom (F3) for the $\Gamma$-filtration
$\mathcal{F}'$ on $\omega_{T}$ --- it is not yet known to be satisfied
by the extension of $\mathcal{F}$ to $\Rep'(G)(S)$, unless we appeal
to the arguments of section~\ref{sub:dealingRightExact}, which is
not necessary: then $\mathcal{F}^{\prime\gamma}(\rho)\subset\mathcal{F}^{\gamma}(\rho)$
for every $\rho$ in $\Rep^{\circ}(G)(S)$ by (F3l) with $c_{\rho}$,
therefore also $\mathcal{F}_{+}^{\prime\gamma}(\rho)\subset\mathcal{F}_{+}^{\gamma}(\rho)$;
applying the latter inclusion to $\rho^{\vee}$ and dualizing gives
$\mathcal{F}^{\gamma}(\rho)\subset\mathcal{F}^{\prime\gamma}(\rho)$.
Thus $\mathcal{F}=\mathcal{F}'$ on $\omega_{T}^{\circ}$.

\subsection{~}

By theorem~\ref{thm:Stab0IsPar}, $P_{\mathcal{F}}=\Aut^{\otimes}(\mathcal{F})$
is a parabolic subgroup of $G_{T}$. Since our problem is local for
the étale topology on $T$, we may assume that $T$ is affine and
the pair $(G_{T},P_{\mathcal{F}})$ has an épinglage $\mathcal{E}=(H,M,R,\cdots)$
\cite[XXVI 1.14]{SGA3.3r}. Thus $H=\mathbb{D}_{T}(M)$ is a trivialized
split maximal torus of $G_{T}$ contained in $P_{\mathcal{F}}$, $R\subset M$
is the set of roots of $H$ in $\mathfrak{g}_{T}$ and if $\mathfrak{g}_{T}=\mathfrak{g}_{0}\oplus\oplus_{\alpha\in R}\mathfrak{g}_{\alpha}$
is the corresponding weight decomposition (so that $\mathfrak{g}_{0}=\Lie(H)$),
then $\Lie(P_{\mathcal{F}})=\mathfrak{g}_{0}\oplus\oplus_{\alpha\in R'}\mathfrak{g}_{\alpha}$
for some subset $R'$ of $R$ as in \cite[XXVI 1.4]{SGA3.3r}. The
maximal torus $H\subset P_{\mathcal{F}}$ gives rise to a Levi decomposition
$P_{\mathcal{F}}=U_{\mathcal{F}}\rtimes L_{\mathcal{F}}$ with $H\subset L_{\mathcal{F}}$,
$\Lie(L_{\mathcal{F}})=\mathfrak{g}_{0}\oplus\oplus_{\alpha\in R'_{1}}\mathfrak{g}_{\alpha}$
and $\Lie(U_{\mathcal{F}})=\oplus_{\alpha\in R'_{2}}\mathfrak{g}_{\alpha}$
where $R'_{1}=\{\alpha\in R':-\alpha\in R'\}$ and $R'_{2}=\{\alpha\in R':-\alpha\notin R'\}$
\cite[XXII 5.11.3]{SGA3.3r}. We will then show that $\mathcal{F}$
has a splitting $\chi:\mathbb{D}_{T}(\Gamma)\rightarrow G_{T}$.

\subsection{~}

Since $H\subset P_{\mathcal{F}}=\Aut^{\otimes}(\mathcal{F})$, the
$\Gamma$-filtration $\mathcal{F}$ is stable under $H$ and 
\[
\forall\gamma\in\Gamma,\rho\in\Rep^{\circ}(G)(S):\qquad\mathcal{F}^{\gamma}(\rho)=\oplus_{m\in M}\mathcal{F}_{m}^{\gamma}(\rho)
\]
where $\mathcal{F}_{m}^{\gamma}(\rho)$ is the $m$-th eigenspace
of $\mathcal{F}^{\gamma}(\rho)$, viewed as a representation of $H$.
Since $\Lie(U_{\mathcal{F}})=\mathcal{F}_{+}^{0}(\rho_{\mathrm{ad}})$
and $\Lie(P_{\mathcal{F}})=\mathcal{F}^{0}(\rho_{\mathrm{ad}})$ by
theorem~\ref{thm:Stab0IsPar}, $\mathcal{F}_{\alpha}^{\gamma}(\rho_{\mathrm{ad}})=0$
for ($\gamma>0$ and $\alpha\notin R_{2}^{\prime}$) or ($\gamma=0$
and $\alpha\notin R'\cup\{0\}$) while $\mathcal{F}_{\alpha}^{\gamma}(\rho_{\mathrm{ad}})=\mathfrak{g}_{\alpha}$
when $\gamma\leq0$ and $\alpha\in R'\cup\{0\}$. This determines
$\mathcal{F}_{\alpha}^{\gamma}(\rho_{\mathrm{ad}})$ for $\alpha\in R'_{1}\cup\{0\}$:
\[
\forall\alpha\in R'_{1}\cup\{0\}:\quad\mathcal{F}_{\alpha}^{\gamma}(\rho_{\mathrm{ad}})=\begin{cases}
\mathfrak{g}_{\alpha} & \mbox{if }\gamma\leq0,\\
0 & \mbox{if }\gamma>0.
\end{cases}
\]
For the remaining $\alpha$'s (those in $\pm R_{2}^{\prime}$), $\mathfrak{g}_{\alpha}$
is free of rank $1$. Using lemma~\ref{lem:SumsOfFilsOnLF}, we obtain
a partition $T=\coprod T(f)$ into non-empty open and closed subschemes
$T(f)$ of $T$ indexed by certain functions $f:\pm R'_{2}\rightarrow\Gamma$
such that, over $T(f)$, 
\[
\forall\alpha\in\pm R'_{2}:\quad\mathcal{F}_{\alpha}^{\gamma}(\rho_{\mathrm{ad}})=\begin{cases}
\mathfrak{g}_{\alpha} & \mbox{if }\gamma\leq f(\alpha),\\
0 & \mbox{if }\gamma>f(\alpha).
\end{cases}
\]
We extend these functions to $R\cup\{0\}$ by setting $f(R'_{1}\cup\{0\})=0$.
Thus over $T(f)$,
\[
\mathcal{F}^{\gamma}(\rho_{\mathrm{ad}})=\oplus_{\alpha\in R\cup\{0\}:f(\alpha)\geq\gamma}\mathfrak{g}_{\alpha}
\]
Moreover $f(\alpha)>0$ (resp. $<0$) if and only if $\alpha\in R'_{2}$
(resp. $-R'_{2}$).

\subsection{~}

We will establish below that each of these $f$'s extends to a group
homomorphism $f:M\rightarrow\Gamma$. The locally constant function
$T\rightarrow\Hom(M,\Gamma)$ mapping $t\in T(f)$ to $f$ thus defines
a morphism $\chi:\mathbb{D}_{T}(\Gamma)\rightarrow\mathbb{D}_{T}(M)=H\hookrightarrow G_{T}$.
By construction, $\chi$ splits $\mathcal{F}$ on $\rho_{\mathrm{ad}}$,
therefore $\chi$ splits $\mathcal{F}$ everywhere by~\ref{sub:splitonadisenough}.

\subsection{~}

To show that $f$ extends to a group homomorphism $f:M\rightarrow\Gamma$,
we may assume that $T=T(f)=\Spec(k)$ where $k$ is a field. By the
definition of adjoint groups in \cite[XXII 4.3.3]{SGA3.3r} and using
\cite[XXI 3.5.5]{SGA3.3r}, we have to show that 
\begin{enumerate}
\item $f(-\alpha)=-f(\alpha)$ for every $\alpha\in R$ and 
\item $f(\alpha+\beta)=f(\alpha)+f(\beta)$ for every $\alpha,\beta\in R$
such that also $\alpha+\beta\in R$. 
\end{enumerate}

\subsection{~\label{sub:useofK(G,F)=00003D0}}

Since $H\subset P_{\mathcal{F}}=\Aut^{\otimes}(\mathcal{F})$ fixes
$\mathcal{F}$, there is a factorization of $\Gr_{\mathcal{F}}^{\bullet}$:
\[
\xyC{2pc}\xymatrix{\Rep^{\circ}(G)(S)\ar[r] & \Gra^{\Gamma}\Rep^{\circ}(H)(k)\ar[r] & \Gra^{\Gamma}\LF(k)}
\]
where $\Gra^{\Gamma}\Rep^{\circ}(H)(k)$ is the abelian $\otimes$-category
of $\Gamma$-graded objects in $\Rep^{\circ}(H)(k)$. Both functors
are exact $\otimes$-functors, and we thus obtain a factorization
of $\kappa(\mathcal{F})$:
\[
\xymatrix{K_{0}(G)\ar[r]\sp(0.35){\kappa} & K_{0}(H)[\Gamma]=\mathbb{Z}[M][\Gamma]\ar[r] & \mathbb{Z}[\Gamma]}
\]
The morphism $\kappa$ maps the class of $\rho\in\Rep^{\circ}(G)(S)$
to 
\[
\kappa[\rho]={\textstyle \sum_{m,\gamma}}x_{m}^{\gamma}[\rho]\cdot\epsilon^{m}e^{\gamma}
\]
where $\epsilon^{m}\in\mathbb{Z}[M]$ and $e^{\gamma}\in\mathbb{Z}[\Gamma]$
are the basis elements corresponding to $m\in M$ and $\gamma\in\Gamma$
and $x_{m}^{\gamma}[\rho]$ is the dimension of the $m$-th eigenspace
of $\Gr_{\mathcal{F}}^{\gamma}(\rho)$. Thus 
\[
\kappa[\rho_{\mathrm{ad}}]=\left(\dim_{k}(\mathfrak{g}_{0})\cdot\epsilon^{0}+\sum_{\alpha\in R'_{1}}\epsilon^{\alpha}\right)\cdot e^{0}+\sum_{\alpha\in\pm R'_{2}}\epsilon^{\alpha}e^{f(\alpha)}.
\]
Since the above functors are compatible with dualities, 
\begin{eqnarray*}
\kappa[\rho_{\mathrm{ad}}^{\vee}] & = & \left(\dim_{k}(\mathfrak{g}_{0})\cdot\epsilon^{0}+\sum_{\alpha\in R'_{1}}\epsilon^{-\alpha}\right)\cdot e^{0}+\sum_{\alpha\in\pm R'_{2}}\epsilon^{-\alpha}e^{-f(\alpha)}\\
 & = & \left(\dim_{k}(\mathfrak{g}_{0})\cdot\epsilon^{0}+\sum_{\alpha\in R'_{1}}\epsilon^{\alpha}\right)\cdot e^{0}+\sum_{\alpha\in\pm R'_{2}}\epsilon^{\alpha}e^{-f(-\alpha)}.
\end{eqnarray*}
Since $[\rho_{\mathrm{ad}}]=[\rho_{\mathrm{ad}}^{\vee}]$ in $K_{0}(G)$
by lemma~\ref{lem:trivkappaG}, $f(-\alpha)=-f(\alpha)$ for every
$\alpha\in R$.

\subsection{~}

We have already defined the dual $\rho_{n}$ of 
\[
\rho^{n}=\mathrm{Coker}((\rho_{\mathrm{adj}}^{\circ})^{\otimes n+1}\rightarrow\rho_{\mathrm{adj}}^{\circ})
\]
in section~\ref{sub:adjregquo}. These representations act compatibly
(as $n$ varies), functorialy (as $\rho$ varies) and $G$-equivariantly
on any representation $\rho\in\Rep(G)(S)$ by
\[
\xymatrix{\rho_{n}\otimes\rho\ar[r]\sp(0.4){\mathrm{Id}\otimes c_{\rho}} & \rho_{n}\otimes\rho\otimes\rho_{\mathrm{adj}}\ar@{->>}[r]^{\mathrm{Id}\otimes\mathrm{proj}} & \rho_{n}\otimes\rho\otimes\rho_{\mathrm{adj}}^{\circ}\ar@{->>}[r]^{\mathrm{Id}\otimes\mathrm{proj}} & \rho_{n}\otimes\rho\otimes\rho^{n}\ar@{->>}[r]\sp(0.67){\mathrm{eval}_{n}} & \rho}
\]
For $n=1$, we retrieve the usual adjoint $G$-equivariant action
\[
\mathrm{ad}(\rho):\rho_{\mathrm{ad}}\otimes\rho\rightarrow\rho
\]
of $\mathfrak{g}$ on $V(\rho)$, which for $\rho=\rho_{\mathrm{ad}}$
is nothing but the usual Lie bracket
\[
[-,-]:\rho_{\mathrm{ad}}\otimes\rho_{\mathrm{ad}}\rightarrow\rho_{\mathrm{ad}}.
\]
We also denote by $[-,-]:\rho_{n}\otimes\rho_{\mathrm{ad}}\rightarrow\rho_{\mathrm{ad}}$
the above actions on $\rho_{\mathrm{ad}}$. Thus 
\[
\forall\gamma\in\Gamma,\,\forall\alpha,\beta\in M:\qquad[\mathcal{F}_{\alpha}^{\gamma}(\rho_{n}),\mathfrak{g}_{\beta}]\subset\mathcal{F}_{\alpha+\beta}^{\gamma+f(\beta)}(\rho_{\mathrm{ad}}).
\]
In particular, $[\mathcal{F}_{\alpha}^{\gamma}(\rho_{n}),\mathfrak{g}_{\beta}]\neq0$
implies $\alpha+\beta,\beta\in R\cup\{0\}$ and 
\[
f(\alpha+\beta)\geq\gamma+f(\beta).
\]

\subsection{~}

Suppose that $\alpha$, $\beta$ and $\alpha+\beta$ all belong to
$R$, with $\ell(\alpha)\leq\ell(\beta)$ where $\ell$ is the length.
Let $q$ and $p$ be the positive integers (with $2\leq p+q\leq4$)
such that 
\[
\left\{ \beta+n\alpha\in R:n\in\mathbb{Z}\right\} =\left\{ \beta-(p-1)\alpha,\cdots,\beta,\beta+\alpha,\cdots,\beta+q\alpha\right\} 
\]
see~\cite[XXI 2.3.5 and 1]{SGA3.3r}. By Chevalley's rule \cite[XXIII 6.5]{SGA3.3r},
\[
[\mathfrak{g}_{\alpha},\mathfrak{g}_{\beta}]=p\mathfrak{g}_{\alpha+\beta}\quad\mbox{and}\quad[\mathfrak{g}_{-\alpha},\mathfrak{g}_{-\beta}]=p\mathfrak{g}_{-\alpha-\beta}.
\]
Thus if $p\neq0$ in $k$, $[\mathfrak{g}_{\alpha},\mathfrak{g}_{\beta}]\neq0$
and $[\mathfrak{g}_{-\alpha},\mathfrak{g}_{-\beta}]\neq0$, therefore
\[
f(\alpha+\beta)\geq f(\alpha)+f(\beta)\quad\mbox{and}\quad f(-\alpha-\beta)\geq f(-\alpha)+f(-\beta)
\]
which implies $(2)$ by $(1)$, i.e. 
\[
f(\alpha+\beta)=f(\alpha)+f(\beta).
\]
If $q=1$, Chevalley's rule gives $[\mathfrak{g}_{\alpha},\mathfrak{g}_{-\alpha-\beta}]\neq0$
and $[\mathfrak{g}_{-\alpha},\mathfrak{g}_{\alpha+\beta}]\neq0$,
thus again $f(\alpha+\beta)=f(\alpha)+f(\beta)$. This leaves a single
case: $p=q=2=\mathrm{char}(k)$, where the same method already gives
$f(\beta)=f(\beta-\alpha)+f(\alpha)$. We will see below that also
\[
[\mathcal{F}_{2\alpha}^{2f(\alpha)}(\rho_{2}),\mathfrak{g}_{\beta-\alpha}]=\mathfrak{g}_{\alpha+\beta}\quad\mbox{and}\quad[\mathcal{F}_{-2\alpha}^{-2f(\alpha)}(\rho_{2}),\mathfrak{g}_{\alpha+\beta}]=\mathfrak{g}_{\beta-\alpha}.
\]
Therefore $f(\alpha+\beta)=2f(\alpha)+f(\beta-\alpha)$, thus again
$f(\alpha+\beta)=f(\alpha)+f(\beta)$.

\subsection{~\label{sub:patchG2car21}}

The pure short exact sequences of finite locally free sheaves on $S$
\[
\begin{array}{ccccccccc}
0 & \rightarrow & \Sym_{\mathcal{O}_{S}}^{2}\left(\frac{\mathcal{I}(G)}{\mathcal{I}(G)^{2}}\right) & \rightarrow & \frac{\mathcal{I}(G)}{\mathcal{I}(G)^{3}} & \rightarrow & \frac{\mathcal{I}(G)}{\mathcal{I}(G)^{2}} & \rightarrow & 0\\
0 & \rightarrow & \mathrm{ker} & \rightarrow & \left(\frac{\mathcal{I}(G)}{\mathcal{I}(G)^{2}}\right)^{\otimes2} & \rightarrow & \Sym_{\mathcal{O}_{S}}^{2}\left(\frac{\mathcal{I}(G)}{\mathcal{I}(G)^{2}}\right) & \rightarrow & 0
\end{array}
\]
give rise to pure short exact sequences in $\Rep^{\circ}(G)(S)$ which
dualize to
\[
\begin{array}{ccccccccc}
0 & \rightarrow & \rho_{\mathrm{ad}} & \rightarrow & \rho_{2} & \rightarrow & \Gamma^{2}(\rho_{\mathrm{ad}}) & \rightarrow & 0\\
0 & \rightarrow & \Gamma^{2}(\rho_{\mathrm{ad}}) & \rightarrow & \rho_{\mathrm{ad}}^{\otimes2} & \rightarrow & \Lambda^{2}(\rho_{\mathrm{ad}}) & \rightarrow & 0
\end{array}
\]
where $\Gamma^{2}(\rho)=\Sym^{2}(\rho^{\vee})^{\vee}=\ker\left(\rho^{\otimes2}\rightarrow\Lambda^{2}(\rho)\right)$.
Therefore
\[
[\rho_{2}]=[\rho_{\mathrm{ad}}]+[\rho_{\mathrm{ad}}]^{2}-[\Lambda^{2}(\rho_{\mathrm{ad}})]\quad\mbox{in}\quad K_{0}(G).
\]
Since $\mathfrak{g}_{2\alpha}=0=\Lambda^{2}(\mathfrak{g})_{2\alpha}$,
the coefficients of $\epsilon^{2\alpha}$ in $\kappa[\rho_{2}]$ and
$\kappa[\rho_{\mathrm{ad}}^{\otimes2}]=\kappa[\rho_{\mathrm{ad}}]^{2}$
are both equal to $e^{2f(\alpha)}$. Thus if $\mathfrak{d}=\oplus\mathfrak{d}_{m}$
is the weight decomposition of $\mathfrak{d}=\omega_{k}^{\circ}(\rho_{2})$,
then $\mathfrak{d}_{2\alpha}$ is $1$-dimensional and contained in
$\mathcal{F}^{\gamma}(\rho_{2})$ if and only if $\gamma\leq2f(\alpha)$.
In particular, $\mathcal{F}_{2\alpha}^{2f(\alpha)}(\rho_{2})=\mathfrak{d}_{2\alpha}$,
and similarly for $-\alpha$. We thus want: 
\[
[\mathfrak{d}_{2\alpha},\mathfrak{g}_{\beta-\alpha}]=\mathfrak{g}_{\beta+\alpha}\quad\mbox{and}\quad[\mathfrak{d}_{-2\alpha},\mathfrak{g}_{\beta+\alpha}]=\mathfrak{g}_{\beta-\alpha}.
\]

\subsection{~\label{sub:PatchG2Char2.2}}

This now only involves the split group $G_{k}$ and its épinglage,
all of which descends to $\Spec(\mathbb{Z})$ by~\cite[XXIII 5.1 and XXV 1.2]{SGA3.3r}.
We may thus assume that $G$ and $\mathcal{E}=(H,M,R,\cdots)$ are
defined over $S=\Spec(\mathbb{Z})$. The épinglage comes along with
simple roots $\Delta\subset R$ and, for each $\alpha\in R$, a basis
$X_{\alpha}$ of $\mathfrak{g}_{\alpha}$, which extends to a Chevalley
system $\{X_{\alpha}:\alpha\in R\}$ by \cite[XXIII 6.2]{SGA3.3r},
giving rise to isomorphisms $u_{\alpha}(t)=\exp(tX_{\alpha})$ from
$\mathbb{G}_{a}=\Spec(\mathbb{Z}[t])$ to the root subgroup $U_{\alpha}$
of $\alpha\in R$. As a linear form on $\mathcal{I}(G)/I(G)^{2}$,
$X_{\alpha}$ is the composition of $u_{\alpha}^{\natural}:\mathcal{I}(G)\rightarrow\mathcal{I}(\mathbb{G}_{a})$
with the linear form on $\mathcal{I}(\mathbb{G}_{a})=t\mathbb{Z}[t]$
given by the coefficient of $t$. If instead we take the coefficient
of $t^{2}$, we obtain a linear form on $\mathcal{I}(G)/\mathcal{I}(G)^{3}$
which is a basis $X_{2\alpha}$ of $\mathfrak{d}_{2\alpha}$. The
action of $X_{\alpha}$ on the regular representation is given by
\[
\mathcal{A}(G)\rightarrow\mathcal{A}(G\times\mathbb{G}_{a})=\mathcal{A}(G)[t]\rightarrow\mathcal{A}(G)
\]
where the first map takes $f$ in $\mathcal{A}(G)$ to the function
$(g,t)\mapsto f\left(u_{\alpha}(t)gu_{\alpha}^{-1}(t)\right)$, and
the second takes the coefficient of $t$ (or evaluates $\frac{d}{dt}$
at $t=0$). The action of $X_{2\alpha}$ is obtained by replacing
the second map with the coefficient of $t^{2}$, thus $2X_{2\alpha}=X_{\alpha}^{2}$
on $\rho_{\mathrm{reg}}$, therefore $2X_{2\alpha}=X_{\alpha}^{2}$
on all $\rho$'s. Let us now return to our chain of roots 
\[
\{\beta-\alpha,\beta,\beta+\alpha,\beta+2\alpha\}\subset R.
\]
By Chevalley's rule \cite[XXIII 6.5]{SGA3.3r} 
\[
[X_{\alpha},X_{\beta-\alpha}]=\pm X_{\beta}\quad\mbox{and}\quad[X_{\alpha},X_{\beta}]=\pm2X_{\beta+\alpha}.
\]
Therefore $[X_{2\alpha},X_{\beta-\alpha}]=\pm X_{\beta+\alpha}$ since
(we are now over $\mathbb{Z}$!) 
\[
2[X_{2\alpha},X_{\beta-\alpha}]=[2X_{2\alpha},X_{\beta-\alpha}]=[X_{\alpha},[X_{\alpha},X_{\beta-\alpha}]]=\pm2X_{\beta+\alpha}.
\]
Similarly, $[X_{-2\alpha},X_{\beta+\alpha}]=\pm X_{\beta-\alpha}$,
and this completes our proof.

\section{Consequences\label{sec:ConsequencesTan}}

Let $G$ be a reductive group over $S$.

\subsection{Proof of theorem~\ref{thm:MainTan}}

The assertions concerning automorphisms and $\Gamma$-graduations
follow from theorem~\ref{thm:RecThm} and propositions~\ref{Pro:Aut(w0)G(w0)F(w0)},
\ref{Pro:Aut(V0)G(V0)F(V0)} and \ref{Pro:Wedhorn}. If $G$ is an
isotrivial reductive group over a quasi-compact $S$, we have monomorphisms
\[
\xyC{4pc}\xyR{0.8pc}\xymatrix{ &  & \mathbb{F}^{\Gamma}(V^{\circ})\ar@{^{(}->}[dr]^{\mathrm{Prop.\,}\ref{Pro:F(V0)2F(w0)inj}}\\
\mathbb{F}^{\Gamma}(G)\ar@{^{(}->}[r]^{\mathrm{Cor.\,\ref{cor:F(G)same}}} & \mathbb{F}^{\Gamma}(V)\ar@{^{(}->}[ur]^{\mathrm{Prop.\,}\ref{Pro:Aut(V0)G(V0)F(V0)}}\ar@{^{(}->}[dr]_{\ref{sub:injofVtoomega}} &  & \mathbb{F}^{\Gamma}(\omega^{\circ})\\
 &  & \mathbb{F}^{\Gamma}(\omega)\ar@{^{(}->}[ur]_{\mathrm{Prop.\,}\ref{Pro:Aut(w0)G(w0)F(w0)}}
}
\]
and we have just seen that $\mathbb{G}^{\Gamma}(G)\rightarrow\mathbb{F}^{\Gamma}(G)\rightarrow\mathbb{F}^{\Gamma}(\omega^{\circ})$
is an epimorphism, therefore
\[
\mathbb{F}^{\Gamma}(G)=\mathbb{F}^{\Gamma}(V)=\mathbb{F}^{\Gamma}(V^{\circ})=\mathbb{F}^{\Gamma}(\omega)=\mathbb{F}^{\Gamma}(\omega^{\circ})
\]
in this case, from which easily follows that also
\[
\mathbb{F}^{\Gamma}(G)=\mathbb{F}^{\Gamma}(V)=\mathbb{F}^{\Gamma}(V^{\circ})
\]
for any reductive group over any $S$ -- and this is contained in
$\mathbb{F}^{\Gamma}(\omega)$ by \ref{sub:injofVtoomega}.

\subsection{~}

Since the $S$-scheme $\mathbb{G}^{\Gamma}(G)$ and $\mathbb{F}^{\Gamma}(G)$
of chapter~\ref{Chapter:GroupThForm} represent the functors indicated
in theorem~\ref{thm:MainTan}, there is a universal $\Gamma$-graduation
$\mathcal{G}_{\mathrm{univ}}$ on $V_{\mathbb{G}^{\Gamma}(G)}$ (inducing
universal $\Gamma$-graduations on $V_{\mathbb{G}^{\Gamma}(G)}^{\circ}$,
$\omega_{\mathbb{G}^{\Gamma}(G)}$ and $\omega_{\mathbb{G}^{\Gamma}(G)}^{\circ}$)
and a universal $\Gamma$-filtration $\mathcal{F}_{\mathrm{univ}}$
on $V_{\mathbb{F}^{\Gamma}(G)}$ (inducing universal $\Gamma$-filtrations
on $V_{\mathbb{F}^{\Gamma}(G)}^{\circ}$, $\omega_{\mathbb{F}^{\Gamma}(G)}$
and $\omega_{\mathbb{F}^{\Gamma}(G)}^{\circ}$) from which all other
$\Gamma$-graduations or $\Gamma$-filtrations over some base $T$
can be retrieved by pull-back through unique morphisms $T\rightarrow\mathbb{G}^{\Gamma}(G)$
or $T\rightarrow\mathbb{F}^{\Gamma}(G)$ -- for the $\omega$ or $\omega^{\circ}$
variants, we have to assume that $G$ is isotrivial and $S$ quasi-compact,
or that the $\Gamma$-graduations or $\Gamma$-filtrations (over $T$)
extend to $V$ or $V^{\circ}$. The $S$-scheme $\mathbb{C}^{\Gamma}(G)$
is a coarse moduli scheme for either $\Gamma$-graduations or $\Gamma$-filtrations
(on the various fiber functors): two such objects (over $T$) are
fpqc locally (on $T$) isomorphic if and only if the induced morphisms
$T\rightarrow\mathbb{C}^{\Gamma}(G)$ are equal.

\subsection{~}

From this perspective, we may either deduce non-trivial properties
of the $S$-schemes constructed in chapter~\ref{Chapter:GroupThForm}
from easier properties of $\Gamma$-graduations and $\Gamma$-filtrations,
or non-trivial properties of the latter from already established properties
of the former. For instance, theorem~\ref{thm:MainTan} implies that
$\Gamma$-filtrations split over affine bases, a strengthening of
the splitting results that we have established:
\begin{cor}
\label{cor:SplitOnAffine}Suppose that $S$ is affine. Then every
$\Gamma$-filtration $\mathcal{F}$ on $V_{S}$ or $V_{S}^{\circ}$
splits over $S$, and so do the $\Gamma$-filtrations on $\omega_{S}$
or $\omega_{S}^{\circ}$ if $G$ is isotrivial. \end{cor}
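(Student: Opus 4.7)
The plan is to use Theorem~\ref{thm:MainTan} to reinterpret $\mathcal{F}$ as an $S$-section of the scheme $\mathbb{F}^{\Gamma}(G)$, and then to lift it through the torsor $\Fi:\mathbb{G}^{\Gamma}(G)\to\mathbb{F}^{\Gamma}(G)$ using the affineness of $S$. First I would dispatch the interpretation step. For $V_{S}$ and $V_{S}^{\circ}$ the theorem gives $\mathbb{F}^{\Gamma}(G)=\mathbb{F}^{\Gamma}(V)=\mathbb{F}^{\Gamma}(V^{\circ})$, so $\mathcal{F}$ is literally an $S$-point of $\mathbb{F}^{\Gamma}(G)$. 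For $\omega_{S}^{\circ}$ in the isotrivial case, the same theorem (with $S$ quasi-compact as it is affine) gives $\mathbb{F}^{\Gamma}(G)=\mathbb{F}^{\Gamma}(\omega^{\circ})$, and we again get such a section. For $\omega_{S}$ with general reductive $G$, I would choose an \'etale cover $\{S_{i}\to S\}$ by affine schemes that splits $G$; then by the isotrivial case of the theorem, the restrictions $\mathcal{F}|_{S_{i}}$ are $S_{i}$-sections of $\mathbb{F}^{\Gamma}(G)$, and since $\mathbb{F}^{\Gamma}(G)\subset\mathbb{F}^{\Gamma}(\omega)$ is an inclusion of fpqc subsheaves, $\mathcal{F}$ itself is an $S$-section of $\mathbb{F}^{\Gamma}(G)$.

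Next, a splitting of $\mathcal{F}$ is exactly a lift of the section $\mathcal{F}:S\to\mathbb{F}^{\Gamma}(G)$ through $\Fi$. By section~\ref{sub:DefFondDiag}, $\Fi$ is a torsor under $U_{\mathbb{F}^{\Gamma}(G)}$, the unipotent radical of the pullback of the universal parabolic. Pulling back along $\mathcal{F}$ yields a torsor $E$ over $S$ under the unipotent radical $U_{\mathcal{F}}$ of the parabolic $P_{\mathcal{F}}=F(\mathcal{F})\subset G_{S}$, and the $S$-sections of $E$ are in bijection with the splittings of $\mathcal{F}$. Hence the problem reduces to showing that every $U_{\mathcal{F}}$-torsor over our affine $S$ admits an $S$-section.

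For this last step, I would invoke the structure of $U_{\mathcal{F}}$ from the SGA3 theory of parabolic subgroups: $U_{\mathcal{F}}$ is smooth affine over $S$ and admits, via its descending central series, a finite decreasing filtration
\[
U_{\mathcal{F}}=U_{0}\supset U_{1}\supset\cdots\supset U_{n}=1
\]
by smooth closed normal $S$-subgroup schemes whose successive quotients $U_{i}/U_{i+1}$ are vector groups $V(\mathcal{E}_{i})$ attached to finite locally free $\mathcal{O}_{S}$-modules $\mathcal{E}_{i}$. For each such vector group, $H^{1}_{\mathrm{fpqc}}(S,V(\mathcal{E}_{i}))=H^{1}_{\mathrm{Zar}}(S,\mathcal{E}_{i})=0$ since $S$ is affine and $\mathcal{E}_{i}$ quasi-coherent. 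Pushing $E$ forward along $U_{0}\twoheadrightarrow V(\mathcal{E}_{0})$ yields a trivial $V(\mathcal{E}_{0})$-torsor, hence an $S$-section whose preimage in $E$ is a $U_{1}$-torsor over $S$; iterating on $i$, we produce an $S$-section of $E$.

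The main technical point is invoking the filtration of $U_{\mathcal{F}}$ by vector groups and executing the non-abelian dévissage cleanly; everything else (the identifications via Theorem~\ref{thm:MainTan}, the torsor description of $\Fi$, the cohomological vanishing on an affine scheme) is on the shelf.
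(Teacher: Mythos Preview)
Your approach is correct and matches the paper's: identify $\mathcal{F}$ with an $S$-point of $\mathbb{F}^{\Gamma}(G)$ via Theorem~\ref{thm:MainTan}, then lift through the $U_{\mathbb{F}^{\Gamma}(G)}$-torsor $\Fi$. The paper compresses the lifting step into a citation of \cite[XXVI 2.2]{SGA3.3r} (via section~\ref{sub:DescParEqConj}), whose content is exactly your d\'evissage of the unipotent radical by vector groups and the resulting vanishing of $H^{1}$ over an affine base; your treatment of the $\omega_{S}$ case via an \'etale cover splitting $G$ (together with Proposition~\ref{prop:SkalarExt}(2) to pass from $\mathbb{F}^{\Gamma}(\omega)$ to $\mathbb{F}^{\Gamma}(\tilde{\omega})$) also fills in what the paper leaves implicit.
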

\begin{proof}
This follows from~\cite[XXVI 2.2]{SGA3.3r} as in section~\ref{sub:DescParEqConj}.
\end{proof}

\subsection{~}

In the other direction, we obtain the expected functoriality.
\begin{cor}
\label{cor:FunctFirstLine}The fundamental sequence of section~\ref{sub:DefFondDiag}
\[
\mathbb{G}^{\Gamma}(G)\stackrel{\Fi}{\longrightarrow}\mathbb{F}^{\Gamma}(G)\stackrel{t}{\longrightarrow}\mathbb{C}^{\Gamma}(G)
\]
is covariantly functorial on the fibered category of reductive groups
over schemes and covariantly functorial in the totally ordered commutative
group $\Gamma$.\end{cor}
\begin{proof}
We have to show that for a morphism $\varphi:G_{1}\rightarrow f^{\ast}G_{2}$
over $f:T_{1}\rightarrow T_{2}$ in the former category, there is
a canonical commutative diagram of schemes
\[
\xyR{2pc}\xyC{3pc}\xymatrix{\mathbb{G}^{\Gamma}(G_{1})\ar[r]^{\Fi}\ar[d]^{\varphi} & \mathbb{F}^{\Gamma}(G_{1})\ar[r]^{t}\ar[d]^{\varphi} & \mathbb{C}^{\Gamma}(G_{1})\ar[r]^{\mathrm{struct}}\ar[d]^{\varphi} & T_{1}\ar[d]^{f}\\
\mathbb{G}^{\Gamma}(G_{2})\ar[r]^{\Fi} & \mathbb{F}^{\Gamma}(G_{2})\ar[r]^{t} & \mathbb{C}^{\Gamma}(G_{2})\ar[r]^{\mathrm{struct}} & T_{2}
}
\]
In the Tannakian point of view, the first two vertical morphisms are
induced by pre-composition with the restriction functor $\Rep(f^{\ast}G_{2})\rightarrow\Rep(G_{1})$
which maps $\tau$ to $\tau\circ\varphi$. For the third one: if $T$
is a $T_{1}$-scheme and $x$ is a $T$-valued point of $\mathbb{C}^{\Gamma}(G_{1})$,
it lifts to a $\Gamma$-filtration over an fpqc covering $\{T_{i}\rightarrow T\}$
of $T$, and two such lifts become isomorphic over a common refinement
of the corresponding fpqc coverings. The image of these lifts in $\mathbb{F}^{\Gamma}(G_{2})$
thus yield a well-defined morphism $\varphi(x):T\rightarrow\mathbb{C}^{\Gamma}(G_{2})$,
and this defines the morphism $\varphi:\mathbb{C}^{\Gamma}(G_{1})\rightarrow\mathbb{C}^{\Gamma}(G_{2})$.
The proof of the covariance in $\Gamma$ is similar, using post-composition
with the morphisms
\[
\xymatrix{\Gra^{\Gamma_{1}}\QCoh\ar[r]^{\Fi}\ar[d]_{f} & \Fil^{\Gamma_{1}}\QCoh\ar[d]_{f}\\
\Gra^{\Gamma_{2}}\QCoh\ar[r]^{\Fi} & \Fil^{\Gamma_{2}}\QCoh
}
\]
of fpqc stacks induced by $f:(\Gamma_{1},+,\leq)\rightarrow(\Gamma_{2},+,\leq)$,
which are given by 
\[
f(\mathcal{G})_{\gamma_{2}}=\oplus_{f(\gamma_{1})=\gamma_{2}}\mathcal{G}_{\gamma_{1}}\quad\mbox{and}\quad f(\mathcal{F})^{\gamma_{2}}={\textstyle \sum_{f(\gamma_{1})\geq\gamma_{2}}}\mathcal{F}^{\gamma_{1}}
\]
for $T$ over $S$, $\mathcal{G}\in\Gra^{\Gamma_{1}}\QCoh(T)$, $\mathcal{F}\in\Fil^{\Gamma_{1}}\QCoh(T)$
and $\gamma_{2}\in\Gamma_{2}$. \end{proof}
\begin{cor}
\label{cor:GL(V)case}If $G=GL(\mathcal{V})$ for some $\mathcal{V}\in\LF(S)$
of rank $r\in\mathbb{N}^{\times}$, evaluation at the tautological
representation $\tau$ of $G$ on $\mathcal{V}$ identifies 
\[
\xymatrix{\mathbb{G}^{\Gamma}(G)\ar[r]^{\Fi} & \mathbb{F}^{\Gamma}(G)\ar[r]^{t} & \mathbb{C}^{\Gamma}(G)}
\]
with 
\[
\xymatrix{\mathbb{G}^{\Gamma}(\mathcal{V})\ar[r]^{\Fi} & \mathbb{F}^{\Gamma}(\mathcal{V})\ar[r]^{t} & \mathbb{C}^{\Gamma}(\mathcal{V})}
\]
where for any $S$-scheme $T$, 
\begin{eqnarray*}
\mathbb{G}^{\Gamma}(\mathcal{V})(T) & = & \left\{ \Gamma-\mbox{graduations on }\mathcal{V}_{T}\right\} \\
\mathbb{F}^{\Gamma}(\mathcal{V})(T) & = & \left\{ \Gamma-\mbox{filtrations on }\mathcal{V}_{T}\right\} \\
\mathbb{C}^{\Gamma}(\mathcal{V})(T) & = & \left\{ \mbox{locally constant functions }f:T\rightarrow\Gamma_{\geq}^{r}\right\} 
\end{eqnarray*}
where $\Gamma_{\geq}^{r}=\{(\gamma_{1}\geq\cdots\geq\gamma_{r})\in\Gamma^{r}\}$
and $t$ sends a $\Gamma$-filtration $\mathcal{F}$ on $\mathcal{V}_{T}$
to the function which maps $x\in T$ to the $r$-tuple with $\dim_{k(x)}\Gr_{\mathcal{F}}^{\gamma}(x)$
copies of $\gamma\in\Gamma$. \end{cor}
\begin{proof}
Evaluation at $\tau$ gives the morphisms $\tau_{g}$, $\tau_{f}$
of the diagram
\[
\xymatrix{\mathbb{G}^{\Gamma}(G)\ar[r]^{\Fi}\ar[d]_{\tau_{g}} & \mathbb{F}^{\Gamma}(G)\ar[r]^{t}\ar[d]_{\tau_{f}} & \mathbb{C}^{\Gamma}(G)\ar[d]_{\tau_{c}}\\
\mathbb{G}^{\Gamma}(\mathcal{V})\ar[r]^{\Fi} & \mathbb{F}^{\Gamma}(\mathcal{V})\ar[r]^{t} & \mathbb{C}^{\Gamma}(\mathcal{V})
}
\]
and the remaining morphism $\tau_{c}$ comes along by noting that
$t\circ\tau_{f}$ is $G$-invariant. Plainly, $\tau_{g}$ is an isomorphism:
a morphism $\mathbb{D}_{T}(\Gamma)\rightarrow G_{T}$ is nothing but
a representation of $\mathbb{D}_{T}(\Gamma)$ on $\mathcal{V}_{T}$,
i.e.~a $\Gamma$-graduation on $\mathcal{V}_{T}$. Since every $\Gamma$-filtration
on $\mathcal{V}_{T}$ splits locally for the fpqc topology on $T$
by definition (and locally for the Zariski topology by~proposition~\ref{prop:FilonLF}),
$\Fi:\mathbb{G}^{\Gamma}(\mathcal{V})\rightarrow\mathbb{F}^{\Gamma}(\mathcal{V})$
is an epimorphism of fpqc sheaves on $\Sch/S$, and so is therefore
also $\tau_{f}$. If $\mathcal{F}_{1},\mathcal{F}_{2}\in\mathbb{F}^{\Gamma}(G)(T)$
induce the same filtration $\mathcal{F}_{1}(\tau)=\mathcal{F}_{2}(\tau)$
on $\mathcal{V}_{T}=V_{T}(\tau)$, they also have the same image at
$\det\tau$ (a quotient of $\tau^{\otimes r}$) and $\tau'=\tau\oplus\det(\tau)^{-1}$.
Arguing as in~section~\ref{sub:splitonadisenough}, we obtain that
both filtrations agree on $\rho_{\mathrm{reg}}$, thus actually $\mathcal{F}_{1}=\mathcal{F}_{2}$.
It follows that $\tau_{f}$ is also a monomorphism, i.e.~it is an
isomorphism. One checks easily that $t:\mathbb{F}^{\Gamma}(\mathcal{V})\rightarrow\mathbb{C}^{\Gamma}(\mathcal{V})$
is an epimorphism of fpqc sheaves on $\Sch/S$, and so is therefore
also $\mathbb{C}^{\Gamma}(G)\rightarrow\mathbb{C}^{\Gamma}(\mathcal{V})$.
If $x,y\in\mathbb{C}^{\Gamma}(G)(T)$ have the same image in $\mathbb{C}^{\Gamma}(\mathcal{V})(T)$
and $\mathcal{X},\mathcal{Y}$ are chosen lifts of $x$ and $y$ to
$\mathbb{G}^{\Gamma}(G)(T')$ for some fpqc cover $T'\rightarrow T$,
then, locally on $T'$, $\mathcal{X}_{\gamma}(\tau)$ and $\mathcal{Y}_{\gamma}(\tau)$
are free with the same rank, thus isomorphic. Gluing these isomorphisms,
we obtain a $g\in G(T')$ which maps $\mathcal{X}$ to $\mathcal{Y}$,
thus $x_{T'}=t\circ\Fi(\mathcal{X})=t\circ\Fi(\mathcal{Y})=y_{T'}$
in $\mathbb{C}^{\Gamma}(G)(T')$ by section~\ref{sub:DescParEqConj}.
But then $x=y$ in $\mathbb{C}^{\Gamma}(G)(T)$, therefore $\tau_{c}$
is also a monomorphism, i.e. it is an isomorphism.\end{proof}
\begin{rem}
For $G=GL(\mathcal{V})$ as above, the weak and strong dominance orders
on $\mathbb{C}^{\Gamma}(G)$ are equal. They correspond to the following
order on $\mathbb{C}^{\Gamma}(\mathcal{V})$: for an $S$-scheme $T$
and locally constant functions $f_{1},f_{2}:T\rightarrow\Gamma_{\geq}^{r}$,
we have 
\[
f_{1}\leq f_{2}\quad\mbox{in}\quad\mathbb{C}^{\Gamma}(\mathcal{V})(T)\quad\iff\quad\forall t\in T:\quad f_{1}(t)\leq f_{2}(t)\quad\mbox{in}\quad\Gamma_{\geq}^{r}
\]
for the usual partial dominance order on $\Gamma_{\geq}^{r}$, given
by
\begin{multline*}
\qquad\qquad(\gamma_{1}\geq\cdots\geq\gamma_{r})\leq(\gamma'_{1}\geq\cdots\geq\gamma'_{r})\\
\iff\quad\begin{cases}
\forall1\leq i\leq r-1: & \gamma_{1}+\cdots+\gamma_{i}\leq\gamma'_{1}+\cdots+\gamma'_{i},\\
\mbox{and} & \gamma_{1}+\cdots+\gamma_{r}=\gamma'_{1}+\cdots+\gamma'_{r}.
\end{cases}\qquad\qquad
\end{multline*}
For a connected $T$, we will usually identify $\mathbb{C}^{\Gamma}(\mathcal{V})(T)$
and $\Gamma_{\geq}^{r}$.
\end{rem}

\subsection{~\label{sub:FunctWeakDom}}

We have already mentioned that the monoid structure on $\mathbb{C}^{\Gamma}(G)$
is not functorial in $G$. On the other hand, the weak dominance partial
order $\leq$ on $\mathbb{C}^{\Gamma}(G)$ defined in section~\ref{sub:C(G)_dominorder}
is functorial in $G$. 
\begin{prop}
\label{prop:FunctorWeakOrder}Let $\varphi:G\rightarrow H$ be a morphism
of reductive group over $S$, let $T$ be an $S$-scheme. Then for
any $t_{1},t_{2}\in\mathbb{C}^{\Gamma}(G)(T)$, 
\[
t_{1}\leq t_{2}\mbox{ in }\mathbb{C}^{\Gamma}(G)(T)\quad\Longrightarrow\quad\varphi(t_{1})\leq\varphi(t_{2})\mbox{ in }\mathbb{C}^{\Gamma}(H)(T).
\]
In particular for any $\tau:G\rightarrow GL(\mathcal{V})$ in $\Rep^{\circ}(G)(S)$,
with $\mathcal{V}=V(\tau)\in\LF(S)$, 
\[
t_{1}\leq t_{2}\mbox{ in }\mathbb{C}^{\Gamma}(G)(T)\quad\Longrightarrow\quad t_{1}(\tau)\leq t_{2}(\tau)\mbox{ in }\mathbb{C}^{\Gamma}(\mathcal{V})(T).
\]
\end{prop}
\begin{proof}
Since $\leq$ is open in $\mathbb{C}^{\Gamma}(H)$, we may assume
that $T$ is a geometric point. The proposition then follows from
the stronger proposition~\ref{prop:CaractTanDomOrder} below.
\end{proof}

\subsection{~\label{sub:CarWeakDomTan}}

Suppose that $G$ is isotrivial over a connected $S$. Then $G$ is
split by a single finite étale cover $\pi:S'\rightarrow S$, and we
may assume that $S'$ is connected and Galois over $S$ with Galois
group $\Theta=\Aut(S'/S)$. Let $\mathscr{R}=\mathscr{R}(G)=(M,R,M^{\ast},R^{\ast})$
be the constant type of $G$ \cite[XXII 6.8]{SGA3.3r} with Weyl group
$W=W(\mathscr{R})$ and fix a system of positive roots $R_{+}\subset R$,
giving rise to a based root data $\mathscr{R}_{+}=(M,R,M^{\ast},R^{\ast};\Delta)$.
Let $G_{0}=G_{\Spec(\mathbb{Z})}^{Ep}(\mathscr{R}_{+})$ be the corresponding
pinned Chevalley group over $\Spec(\mathbb{Z})$ \cite[XXV 1.2]{SGA3.3r}
and pick an isomorphism $\gamma:G_{0,S'}\simeq G_{S'}$. It exists
by \cite[XXIII 1.1]{SGA3.3r} and corresponds to a pinning $\mathscr{E}=(T,\iota:\mathbb{D}_{S'}(M)\stackrel{\simeq}{\rightarrow}T,(X_{\alpha})_{\alpha\in\Delta})$
of type $\mathscr{R}_{+}$ of $G_{S'}$ \cite[XXIV 1.0]{SGA3.3r}
by \cite[XXIV 1.20]{SGA3.3r}. For any $\theta$ in $\Theta$, the
pull-back $\theta^{\ast}\gamma$ is another isomorphism $G_{0,S'}\simeq G_{S'}$,
corresponding to the pinning $\theta^{\ast}\mathscr{E}=(\theta^{\ast}T,\theta^{\ast}\iota,(\theta^{\ast}X_{\alpha})_{\alpha\in\Delta})$
of type $\mathscr{R}_{+}$ of $G_{S'}$. By~\cite[XXIV 1.5]{SGA3.3r},
there is a unique inner automorphism $u_{\theta}$ of $G_{S'}$ mapping
$\theta^{\ast}\mathscr{E}$ back to $\mathscr{E}$, inducing an automorphism
$v_{\theta}$ of $\mathscr{R}$. This defines an action of $\Theta$
on $\mathscr{R}_{+}$, analogous to the twisted action considered
in section~\ref{sub:DescTwistedActionRelRoots}, which itself induces
compatible actions of $\Theta$ on $M_{d}$, $\Hom^{+}(M,\Gamma)$,
$\mathbb{N}[M]^{W}$\ldots{}

For $\tau\in\Rep^{\circ}(G)(S')$, we may use our fixed pinning $\mathscr{E}$
to view the restriction of $\tau$ to the maximal torus $T$ of $G_{S'}$
as a representation of $\mathbb{D}_{S'}(M)$. We denote by $ch_{\mathscr{E}}(\tau)$
the corresponding element of $\mathbb{N}[M]^{W}$, i.e.~$ch_{\mathscr{E}}(\tau)=\sum\rank V(\tau)_{m}\cdot e^{m}$
where $e^{m}$ is the basis element of $\mathbb{Z}[M]$ corresponding
to $m\in M$ and $V(\tau)_{m}$ is the $m$-th eigenspace of $\tau\vert T\circ\iota$.
For any $\theta$ in $\Theta$, the pull-back $\theta^{\ast}\tau$
also belongs to $\Rep^{\circ}(G)(S')$ and plainly $ch_{\theta^{\ast}\mathscr{E}}(\theta^{\ast}\tau)=ch_{\mathscr{E}}(\tau)$.
On the other hand 
\[
ch_{\mathscr{E}}(\tau)=ch_{\mathscr{E}}(\tau\circ u_{\theta})=v_{\theta}\left(ch_{\sigma^{\ast}\mathscr{E}}(\tau)\right)\quad\mbox{in}\quad\mathbb{N}[M]^{W},
\]
for every $\tau$, thus $ch_{\mathscr{E}}(\theta^{\ast}\tau)=\theta\cdot ch_{\mathscr{E}}(\tau)$
in $\mathbb{N}[M]^{W}$. In particular $ch_{\mathscr{E}}(\tau)$ is
fixed by $\Theta$ if $\theta^{\ast}\tau\vert T\simeq\tau\vert T$,
for instance if $\tau$ comes from a representation in $\Rep^{\circ}(G)(S)$. 

Let $\tau_{0,\lambda,\mathbb{Q}}\in\Rep^{\circ}(G_{0})(\mathbb{Q})$
be the irreducible representation of $G_{0,\mathbb{Q}}$ with highest
weight $\lambda\in M_{d}$ \cite[Lemme 5]{Se68b}, let $\tau_{0,\lambda}\in\Rep^{\circ}(G_{0})(\mathbb{Z})$
be any extension of $\tau_{0,\lambda,\mathbb{Q}}$ to a representation
of $G_{0}$ \cite[Lemme 2]{Se68b}, let $\tau'_{\lambda}\in\Rep^{\circ}(G)(S')$
be the corresponding representation of $G_{S'}$ and set $\tau_{\lambda}=\pi_{\ast}\tau'_{\lambda}\in\Rep^{\circ}(G)(S)$.
Then
\[
\tau_{\lambda,S'}=\pi^{\ast}\tau_{\lambda}\simeq\oplus_{\theta\in\Theta}\theta^{\ast}\tau'_{\lambda}\quad\mbox{in}\quad\Rep^{\circ}(G)(S').
\]
Thus $ch_{\mathscr{E}}(\tau_{\lambda,S'})=\sum_{\theta\in\Theta}\theta\cdot ch_{\mathscr{E}}(\tau'_{\lambda})$
with $ch_{\mathscr{E}}(\tau'_{\lambda})=ch_{\mathscr{E}_{0}}(\tau_{0,\lambda})=ch_{\mathscr{E}_{0,\mathbb{Q}}}(\tau_{0,\lambda,\mathbb{Q}})$
in $\mathbb{N}[M]^{W}$, where $\mathscr{E}_{0}=(T_{0},\iota_{0},\cdots)$
is the pinning of $G_{0}$. Since the other weights of $\tau_{0,\lambda,\mathbb{Q}}$
are contained in $\lambda-\mathbb{N}\cdot R_{+}$, it follows that
for any $f\in\Hom^{+}(M,\Gamma)$, 
\[
\max f\left(ch_{\mathscr{E}}(\tau_{\lambda,S'})\right)=\max\left\{ f(\theta\cdot\lambda):\theta\in\Theta\right\} =\max\left\{ (\theta\cdot f)(\lambda):\theta\in\Theta\right\} .
\]

Our fixed pinning $\mathscr{E}$ also induces an isomorphism of partially
ordered commutative $S'$-monoid between $\mathbb{C}^{\Gamma}(G_{S'})$
and $\Hom^{+}(M,\Gamma)_{S'}$, and the resulting isomorphism $\mathbb{C}^{\Gamma}(G)(S')\simeq\Hom^{+}(M,\Gamma)$
is $\Theta$-equivariant, cf.~section~\ref{sub:C(G)_splitcase}.
If $t\in\mathbb{C}^{\Gamma}(G)(S')$ maps to $t_{\mathscr{E}}:M\rightarrow\Gamma$,
then for every $\tau\in\Rep^{\circ}(G)(S')$, we have 
\[
t(\tau)=t_{\mathscr{E}}(ch_{\mathscr{E}}(\tau))\quad\mbox{in}\quad\Gamma_{\geq}^{r(\tau)}\subset\mathbb{N}[\Gamma]
\]
under the natural identification of $\Gamma_{\geq}^{r(\tau)}$ with
the subset of $\mathbb{N}[\Gamma]$ made of those elements which have
degree $r(\tau)=\rank V(\tau)$ (if $\tau=0$, we set $\Gamma_{\geq}^{r(\tau)}=0$),
thus also
\[
\max t(\tau)=\max t_{\mathscr{E}}(ch_{\mathscr{E}}(\tau))\quad\mbox{in}\quad\Gamma.
\]
For $\tau=\tau_{\lambda,S'}$ as above we therefore obtain
\[
\max t(\tau_{\lambda})=\max t(\tau_{\lambda,S'})=\max\left\{ t_{\mathscr{E}}(\theta\cdot\lambda):\theta\in\Theta\right\} =\max\left\{ (\theta\cdot t_{\mathscr{E}})(\lambda):\theta\in\Theta\right\} .
\]
If moreover $t$ belongs to $\mathbb{C}^{\Gamma}(G)(S)$, $\theta\cdot t_{\mathscr{E}}=t_{\mathscr{E}}$
for all $\theta\in\Theta$, thus 
\[
\max t(\tau_{\lambda})=t_{\mathscr{E}}(\lambda)\quad\mbox{in}\quad\Gamma.
\]

\subsection{~}

We may now prove the following strenghtening of Proposition~\ref{prop:FunctorWeakOrder}.
\begin{prop}
\label{prop:CaractTanDomOrder}Suppose that $G$ is isotrivial over
a connected base scheme $S$. Then for every $t_{1},t_{2}\in\mathbb{C}^{\Gamma}(G)(S)$,
the following conditions are equivalent:
\begin{enumerate}
\item $t_{1}\leq t_{2}$ in $\mathbb{C}^{\Gamma}(G)(S)$.
\item For every $\tau\in\Rep^{\circ}(G)(S)$, $t_{1}(\tau)\leq t_{2}(\tau)$
in $\Gamma_{\geq}^{r(\tau)}$.
\item For every $\tau\in\Rep^{\circ}(G)(S)$, $\max t_{1}(\tau)\leq\max t_{2}(\tau)$
in $\Gamma$.
\end{enumerate}
In $(2)$, $r(\tau)$ is the constant rank of $V(\tau)$. In $(3)$,
$\max t(\tau)=0$ if $\tau=0$. \end{prop}
\begin{proof}
Let $t_{\mathscr{E},i}$ be the $\Theta$-invariant morphism in $\Hom^{+}(M,\Gamma)$
corresponding to the base change $t_{i,S'}\in\mathbb{C}^{\Gamma}(G)(S')$
of $t_{i}\in\mathbb{C}^{\Gamma}(G)(S)$. Then
\begin{eqnarray*}
t_{1}\leq t_{2}\mbox{ in }\mathbb{C}^{\Gamma}(G)(S) & \iff & t_{1,S'}\leq t_{2,S'}\mbox{ in }\mathbb{C}^{\Gamma}(G)(S')\\
 & \iff & t_{\mathscr{E},1}\leq t_{\mathscr{E},2}\mbox{ in }\Hom^{+}(M,\Gamma)\\
 & \iff & \forall\lambda\in M_{d}:\quad t_{\mathscr{E},1}(\lambda)\leq t_{\mathscr{E},2}(\lambda)\mbox{ in }\Gamma\\
 & \iff & \forall x\in\mathbb{N}[M]^{W}:\quad\max t_{\mathscr{E},1}(x)\leq\max t_{\mathscr{E},2}(x)\mbox{ in }\Gamma
\end{eqnarray*}
using~lemma~\ref{lem:CarDominantOrderByMax} for the last equivalence.
Thus $(1)\Rightarrow(3)$ with $x=ch_{\mathscr{E}}(\tau_{S'})$ and
$(3)\Rightarrow(1)$ with $\tau=\tau_{\lambda}$. Plainly $(2)\Rightarrow(3)$.
Moreover, the equivalence $(1)\Leftrightarrow(3)$ already implies
Proposition~\ref{prop:FunctorWeakOrder}, from which $(1)\Rightarrow(2)$
immediately follows. \end{proof}
\begin{rem}
For $\Gamma$-filtrations $\mathcal{F}_{1},\mathcal{F}_{2}\in\mathbb{F}^{\Gamma}(G)(S)$,
the proposition implies:
\begin{multline*}
\qquad\qquad t(\mathcal{F}_{1})\leq t(\mathcal{F}_{2})\mbox{ in }\mathbb{C}^{\Gamma}(G)(S)\quad\iff\\
\forall\tau\in\Rep^{\circ}(G)(S),\,\forall\gamma\in\Gamma:\quad\mathcal{F}_{2}^{\gamma}(\tau)=0\,\Rightarrow\,\mathcal{F}_{1}^{\gamma}(\tau)=0\qquad\qquad
\end{multline*}
Indeed $\max t(\mathcal{F}_{i})(\tau)=\max t(\mathcal{F}_{i}(\tau))=\max\{\gamma:\mathcal{F}_{i}^{\gamma}(\tau)\neq0\}$
if $\tau\neq0$.
\end{rem}

\subsection{~\label{sub:CarWeakDomTanwithSharp}}

Still assuming that $G$ is isotrivial over a connected base scheme
$S$, suppose moreover that $\Gamma$ is divisible. Let $T$ be any
connected $S$-scheme. We claim that the monomorphism $\mathbb{C}^{\Gamma}(G)(S)\hookrightarrow\mathbb{C}^{\Gamma}(G)(T)$
then has a canonical retraction
\[
\sharp:\mathbb{C}^{\Gamma}(G)(T)\twoheadrightarrow\mathbb{C}^{\Gamma}(G)(S)
\]
in the category of partially ordered commutative monoids, which is
also functorial in $T$. To see this, we first fix a geometric point
$s\rightarrow T$, giving rise to a morphism 
\[
\pi_{1}(T,s)\rightarrow\pi_{1}(S,s)
\]
between the profinite étale fundamental group which classify the finite
étale covers of $T$ and $S$. Since $\mathbb{C}^{\Gamma}(G)$ becomes
constant over the Galois cover $S'/S$, it is itself a disjoint union
of finite étale covers of $S$ (indexed by the orbits of $\Theta$
in $\Hom^{+}(M,\Gamma)$). Thus $\pi_{1}(S,s)$ (resp.~$\pi_{1}(T,s)$)
acts on $\mathbb{C}^{\Gamma}(G)(s)$ with finite orbits and fixed
point set $\mathbb{C}^{\Gamma}(G)(S)$ (resp.~$\mathbb{C}^{\Gamma}(G)(T)$).
These actions respect the auxilliary structures, and averaging over
the $\pi_{1}(S,s)$-orbits thus yields the desired retraction. If
$s'\rightarrow T$ is another geometric point, there is a non-canonical
equivariant diagram whose vertical maps are isomorphisms \cite[V 7]{SGA1r}
\[
\begin{array}{ccc}
\pi_{1}(T,s') & \rightarrow & \pi_{1}(S,s')\\
\simeq\downarrow\,\, &  & \,\,\downarrow\simeq\\
\pi_{1}(T,s) & \rightarrow & \pi_{1}(S,s)
\end{array}\quad\mbox{acting on}\quad\begin{array}{c}
\mathbb{C}^{\Gamma}(G,s')\\
\simeq\downarrow\,\,\\
\mathbb{C}^{\Gamma}(G,s)
\end{array}
\]
Our retraction is therefore independent of $s$, and thus also functorial
in $T$.
\begin{prop}
\label{prop:CarTanDomOrderwithSharp}Suppose that $\Gamma$ is divisible
and $G$ is isotrivial over a connected base scheme $S$. For every
connected $S$-scheme $T$ and $t_{1},t_{2}\in\mathbb{C}^{\Gamma}(G)(T)$,
consider the following conditions:
\begin{enumerate}
\item $t_{1}^{\sharp}\leq t_{2}^{\sharp}$ in $\mathbb{C}^{\Gamma}(G)(S)$.
\item For every $\tau\in\Rep^{\circ}(G)(S)$, $t_{1}(\tau_{T})\leq t_{2}(\tau_{T})$
in $\Gamma_{\geq}^{r(\tau)}$.
\item For every $\tau\in\Rep^{\circ}(G)(S)$, $\max t_{1}(\tau_{T})\leq\max t_{2}(\tau_{T})$
in $\Gamma$.
\end{enumerate}
Then $(2)\iff(3)\Longrightarrow(1)$ and $(1)\iff(2)\iff(3)$ if $t_{1}^{\sharp}=t_{1}$.\end{prop}
\begin{proof}
We may assume that $T=s$ is a geometric point of the connected finite
étale Galois cover $S'$ of $S$ splitting $G$, realizing $\Theta=\Aut(S'/S)$
as a quotient of $\pi_{1}(S,s)$ through which all of the above actions
factor. Let $t_{\mathscr{E},i}$ be the image of $t_{i}$ under $\mathbb{C}^{\Gamma}(G)(s)\simeq\mathbb{C}^{\Gamma}(G,S')\simeq\Hom^{+}(M,\Gamma)$.
Then $t_{i}^{\sharp}$ maps to the average of the $\Theta$-orbit
of $t_{\mathscr{E},i}$. Plainly $(2)\Rightarrow(3)$ and conversely
$(3)\Rightarrow(2)$ since
\[
t_{1}(\tau_{s})\leq t_{2}(\tau_{s})\iff\begin{cases}
\forall1\leq i\leq r(\tau) & \max t_{1}(\Lambda^{i}\tau_{s})\leq\max t_{2}(\Lambda^{i}\tau_{s}),\\
\mbox{and} & \max t_{1}(\Lambda^{r(\tau)}\tau_{s}^{\vee})\leq\max t_{2}(\Lambda^{r(\tau)}\tau_{s}^{\vee}).
\end{cases}
\]
 For the remaining implications, note that 
\begin{eqnarray*}
t_{1}^{\sharp}\leq t_{2}^{\sharp}\mbox{ in }\mathbb{C}^{\Gamma}(G)(S) & \iff & t_{1,S'}^{\sharp}\leq t_{2,S'}^{\sharp}\mbox{ in }\mathbb{C}^{\Gamma}(G)(S')\\
 & \iff & t_{\mathscr{E},1}^{\sharp}\leq t_{\mathscr{E},2}^{\sharp}\mbox{ in }\Hom^{+}(M,\Gamma)\\
 & \iff & \forall\lambda\in M_{d}:\, t_{\mathscr{E},1}^{\sharp}(\lambda)\leq t_{\mathscr{E},2}^{\sharp}(\lambda)\mbox{ in }\Gamma\\
 & \iff & \forall\lambda\in M_{d}:\, t_{\mathscr{E},1}(\lambda^{\sharp})\leq t_{\mathscr{E},2}(\lambda^{\sharp})\mbox{ in }\Gamma
\end{eqnarray*}
where $\lambda^{\sharp}\in M\otimes\mathbb{Q}$ is the average of
the $\Theta$-orbit of $\lambda$, thus also
\[
t_{1}^{\sharp}\leq t_{2}^{\sharp}\mbox{ in }\mathbb{C}^{\Gamma}(G)(S)\iff\forall\lambda\in M_{d}^{\Theta}:\quad t_{\mathscr{E},1}(\lambda)\leq t_{\mathscr{E},2}(\lambda)\mbox{ in }\Gamma
\]
since $M_{d}^{\Theta}\subset M_{d}^{\sharp}\subset\mathbb{Q}_{\geq}M_{d}^{\Theta}$.
Thus $(3)\Rightarrow(1)$ with $\tau=\tau_{\lambda}$ for $\lambda\in M_{d}^{\Theta}$,
since 
\[
\max t_{i}(\tau_{\lambda})=\max\left\{ t_{\mathscr{E},i}(\theta\cdot\lambda):\theta\in\Theta\right\} =t_{\mathscr{E},i}(\lambda)\mbox{ in }\Gamma.
\]
Suppose finally that $t_{1}^{\sharp}=t_{1}$. Then using lemma~\ref{lem:CarDominantOrderByMax},
we have
\begin{eqnarray*}
t_{1}\leq t_{2}^{\sharp}\mbox{ in }\mathbb{C}^{\Gamma}(G)(S) & \iff & \forall x\in\mathbb{N}[M]^{W}:\,\max t_{\mathscr{E},1}(x)\leq\max t_{\mathscr{E},2}^{\sharp}(x)\mbox{ in }\Gamma\\
 & \Longrightarrow & \forall x\in\mathbb{N}[M]^{W,\Theta}:\,\max t_{\mathscr{E},1}(x)\leq\max t_{\mathscr{E},2}(x)\mbox{ in }\Gamma
\end{eqnarray*}
since indeed for any $x\in\mathbb{N}[M]^{W}$ we have 
\[
\max t_{\mathscr{E},2}^{\sharp}(x)\leq{\textstyle \frac{1}{\sharp\Theta}\sum_{\theta\in\Theta}}\max(\theta\cdot t_{\mathscr{E},2})(x)={\textstyle \frac{1}{\sharp\Theta}\sum_{\theta\in\Theta}}\max t_{\mathscr{E},2}(\theta\cdot x).
\]
Thus $(1)\Rightarrow(3)$ if $t_{1}^{\sharp}=t_{1}$, with $x=ch_{\mathscr{E}}(\tau_{s})\in\mathbb{N}[M]^{W,\Theta}$
for $\tau\in\Rep^{\circ}(G)(S)$. 
\end{proof}

\subsection{~}

The results of sections~\ref{sub:FunctWeakDom}-\ref{sub:CarWeakDomTanwithSharp}
were inspired by propositions $6.3.9$ and $9.4.2$ of \cite{DaOrRa10}.
However, the latter is contradicted by the following example, which
shows that usually $(1)$ does not imply $(2)$ in proposition~\ref{prop:CarTanDomOrderwithSharp}.
Take 
\[
\Gamma=\mathbb{Q},\quad S=\Spec K,\quad T=\Spec L\quad\mbox{and}\quad G=\Res_{L/K}\mathbb{G}_{m,L}
\]
where $L$ is a quadratic extension of a field $K$. Then $\mathbb{C}^{\Gamma}(G)(L)=\mathbb{Q}^{2}$
with the trivial partial order. The non-trivial element $\iota$ of
$\Gal(L/K)$ acts by $(x,y)\mapsto(y,x)$, thus $(x,y)^{\sharp}\leq(x',y')^{\sharp}$
if and only if $x+y=x'+y'$. For $n,m\in\mathbb{Z}$, the formula
$z\mapsto z^{n}(\iota z)^{m}$ defines a $2$-dimensional representation
$\tau_{n,m}:G\rightarrow GL_{K}(L)$ which is irreducible if $m\neq n$.
It maps $(x,y)\in\mathbb{Q}^{2}$ to 
\[
(x,y)(\tau_{n,m})=(\max,\min)\{xn+ym,yn+xm\}\in\mathbb{Q}_{\geq}^{2}
\]
Thus for $t_{1}=(1,-1)$ and $t_{2}=(0,0)$, $t_{1}^{\sharp}=t_{2}^{\sharp}=0$
in $\mathbb{C}^{\Gamma}(G)(K)=\mathbb{Q}$ but for every $n,m\in\mathbb{Z}$
with $n\neq m$, $t_{1}(\tau_{n,m})=(\left|n-m\right|,-\left|n-m\right|)>(0,0)=t_{2}(\tau_{n,m})$
in $\mathbb{Q}_{\geq}^{2}$.

\subsection{~\label{sub:descPlusTan}}

The addition map of section~\ref{sub:AdditionMap} has the following
Tannakian description. For an $S$-scheme $T$, $(\mathcal{F},\mathcal{G})\in\mathbb{STD}^{\Gamma}(G)(T)$
and any $\rho\in\Rep(G)(T)$, 
\[
(\mathcal{F}+\mathcal{G})^{\gamma}(\rho)=\sum_{\gamma_{1}+\gamma_{2}=\gamma}\mathcal{F}^{\gamma_{1}}(\rho)\cap\mathcal{G}^{\gamma_{2}}(\rho).
\]
Indeed, the question is local on $T$ for the Zariski topology, thus
by definition of $\mathbb{STD}^{\Gamma}(G)$, we may assume that $P_{\mathcal{F}}\cap P_{\mathcal{G}}$
contains a maximal subtorus $H$ of $G_{T}$. Then $\mathcal{F},\mathcal{G}$
lift to $f,g:\mathbb{D}_{T}(\Gamma)\rightarrow H$ and $(\mathcal{F}+\mathcal{G})=\Fi(f+g)$.
Let $V(\rho)_{\gamma_{1},\gamma_{2}}$ be the subsheaf of $V(\rho\vert H)$
where $\mathbb{D}_{T}(\Gamma)$ acts by $\gamma_{1}$ through $f$
and $\gamma_{2}$ through $g$. Then 
\begin{eqnarray*}
\mathcal{F}^{\gamma_{1}}(\rho) & = & \oplus_{\eta\geq\gamma_{1}}\oplus_{\eta'}V(\rho)_{\eta,\eta'}\\
\mathcal{G}^{\gamma_{2}}(\rho) & = & \oplus_{\eta}\oplus_{\eta'\geq\gamma_{2}}V(\rho)_{\eta,\eta'}\\
(\mathcal{F}+\mathcal{G})^{\gamma}(\rho) & = & \oplus_{\eta+\eta'\geq\gamma}V(\rho)_{\eta,\eta'}
\end{eqnarray*}
thus indeed $(\mathcal{F}+\mathcal{G})^{\gamma}(\rho)=\sum_{\gamma_{1}+\gamma_{2}=\gamma}\mathcal{F}^{\gamma_{1}}(\rho)\cap\mathcal{G}^{\gamma_{2}}(\rho)$.
\begin{cor}
Let $\varphi:G\rightarrow H$ be a morphism of reductive groups over
$S$. Then for any $S$-scheme $T$ and $t_{1},t_{2}\in\mathbb{C}^{\Gamma}(G)(T)$,
\[
\varphi(t_{1}+t_{2})\preceq\varphi(t_{1})+\varphi(t_{2})\quad\mbox{in}\quad\mathbb{C}^{\Gamma}(H)(T).
\]
\end{cor}
\begin{proof}
We may assume that $T=s$ is a geometric point, and lift $(t_{1},t_{2})$
to a pair of $\Gamma$-filtrations $(\mathcal{F}_{1},\mathcal{F}_{2})\in\mathbb{F}^{\Gamma}(G)(s)$
in osculatory relative position. Then $\varphi(\mathcal{F}_{1})$
and $\varphi(\mathcal{F}_{2})$ also are in standard relative position
(cf.~Remark~\ref{Rk:StdOverField}) and the above formula shows
that $\varphi(\mathcal{F}_{1}+\mathcal{F}_{2})=\varphi(\mathcal{F}_{1})+\varphi(\mathcal{F}_{2})$
in $\mathbb{F}^{\Gamma}(H)(s)$. Thus
\[
\begin{array}{rclcl}
\varphi\left(t(\mathcal{F}_{1})+t(\mathcal{F}_{2})\right) & = & \varphi\left(t(\mathcal{F}_{1}+\mathcal{F}_{2})\right)\\
 & = & t\left(\varphi(\mathcal{F}_{1}+\mathcal{F}_{2})\right)\\
 & = & t\left(\varphi(\mathcal{F}_{1})+\varphi(\mathcal{F}_{2})\right) & \preceq & t\left(\varphi(\mathcal{F}_{1})\right)+t\left(\varphi(\mathcal{F}_{2})\right)
\end{array}
\]
by~proposition~\ref{prop:CompAdditionType}, i.e.~$\varphi(t_{1}+t_{2})\preceq\varphi(t_{1})+\varphi(t_{2})$
in $\mathbb{C}^{\Gamma}(H)(s)$. 
\end{proof}

\subsection{~\label{sub:descGrPTan}}

The morphism defined in section~\ref{sub:defofGrMap} has the following
Tannakian description. Let $P$ be a parabolic subgroup of $G$ with
unipotent radical $U$, and suppose that $P=P_{\mathcal{F}}$ for
some $\Gamma$-filtration $\mathcal{F}$ on $\omega_{S}$. For every
$\rho\in\Rep(G)(S)$ and $\gamma\in\Gamma$, we may view $\Gr_{\mathcal{F}}^{\gamma}(\rho)=\mathcal{F}^{\gamma}(\rho)/\mathcal{F}_{+}^{\gamma}(\rho)$
as a representation of $P/U$. Then for every $S$-scheme $T$ and
every $\Gamma$-filtration $\mathcal{G}$ on $\omega_{T}$ such that
$P_{T}$ and $P_{\mathcal{G}}$ are in standard relative position
(i.e.~$P_{T}\cap P_{\mathcal{G}}$ is a smooth subscheme of $G_{T}$),
\[
\Gr_{P}(\mathcal{G})\left(\Gr_{\mathcal{F}}^{\gamma}(\rho)\right)=\Gr_{\mathcal{F}}^{\gamma}(\mathcal{G},\rho)
\]
where $\Gr_{\mathcal{F}}^{\gamma}(\mathcal{G},\rho)$ is the $\Gamma$-filtration
on $\Gr_{\mathcal{F}}^{\gamma}(\rho)_{T}=\mathcal{F}^{\gamma}(\rho)_{T}/\mathcal{F}_{+}^{\gamma}(\rho)_{T}$
induced by the $\Gamma$-filtration $\mathcal{G}(\rho)$ on $V(\rho)_{T}$,
so that for every $\theta\in\Gamma$, 
\[
\Gr_{\mathcal{F}}^{\gamma}(\mathcal{G},\rho)^{\theta}=\left(\mathcal{F}^{\gamma}(\rho)_{T}\cap\mathcal{G}^{\theta}(\rho)+\mathcal{F}_{+}^{\gamma}(\rho)_{T}\right)/\mathcal{F}_{+}^{\gamma}(\rho)_{T}.
\]
This follows from the explicit description of $\Gr_{P}$ in the proof
of proposition~\ref{prop:ExistCanoMapGrP}.

\subsection{~\label{sub:ClosedImmersion}}

The functors $\mathbb{G}^{\Gamma}(-)$ and $\mathbb{F}^{\Gamma}(-)$
preserve closed immersions.
\begin{prop}
\label{prop:GandFpreserveClosedImm}Let $H\hookrightarrow G$ be a
closed immersion of reductive group schemes over $S$. Then the induced
morphisms 
\[
\mathbb{G}^{\Gamma}(H)\rightarrow\mathbb{G}^{\Gamma}(G)\quad\mbox{and}\quad\mathbb{F}^{\Gamma}(H)\rightarrow\mathbb{F}^{\Gamma}(G)
\]
are finitely presented closed immersions. \end{prop}
\begin{proof}
Plainly, $\mathbb{G}^{\Gamma}(H)\rightarrow\mathbb{G}^{\Gamma}(G)$
is a monomorphism. Let $x:T\rightarrow\mathbb{G}^{\Gamma}(G)$ be
a morphism corresponding to $f:\mathbb{D}(\Gamma)_{T}\rightarrow G_{T}$.
Put $Z=f^{-1}(H_{T})$, a closed subgroup scheme of $Y=\mathbb{D}(\Gamma)_{T}$.
For every morphism $a:T'\rightarrow T$, we have:
\[
\begin{array}{rl}
 & x\circ a:T'\rightarrow\mathbb{G}^{\Gamma}(G)\mbox{ factors through }\mathbb{G}^{\Gamma}(H)\\
\iff & f_{T'}:\mathbb{D}(\Gamma)_{T'}\rightarrow G_{T'}\mbox{ factors through \ensuremath{H_{T'}}}\\
\iff & Z_{T'}=Y_{T'}.
\end{array}
\]
This last condition is represented by a closed subscheme of $T$ by
\cite[VIII 6.3 \& 6.4]{SGA3.1r}. Thus $\mathbb{G}^{\Gamma}(H)\rightarrow\mathbb{G}^{\Gamma}(G)$
is relatively representable by closed immersions, i.e.~itself a closed
immersion. Since $\mathbb{G}^{\Gamma}(H)\rightarrow S$ and $\mathbb{G}^{\Gamma}(G)\rightarrow S$
are locally of finite presentation (by theorem~\ref{thm:RepGr}),
so is $\mathbb{G}^{\Gamma}(H)\hookrightarrow\mathbb{G}^{\Gamma}(G)$
by \cite[1.4.3.v]{EGA4.1}, which therefore is a finitely presented
closed immersion. 

The second morphism $\mathbb{F}^{\Gamma}(H)\rightarrow\mathbb{F}^{\Gamma}(G)$
is a monomorphism: we have seen that $\Gamma$-filtrations on $\omega_{H}$
are uniquely determined by their value on the regular reprentation
of $H$, which is a quotient of the restriction to $H$ of the regular
representation of $G$. Since $\mathbb{G}^{\Gamma}(H)\rightarrow\mathbb{G}^{\Gamma}(G)\rightarrow\mathbb{F}^{\Gamma}(G)$
is quasi-compact and $\mathbb{G}^{\Gamma}(H)\rightarrow\mathbb{F}^{\Gamma}(H)$
is surjective, $\mathbb{F}^{\Gamma}(H)\rightarrow\mathbb{F}^{\Gamma}(G)$
is quasi-compact \cite[1.1.3]{EGA4.1}. Since $\mathbb{F}^{\Gamma}(H)\rightarrow S$
and $\mathbb{F}^{\Gamma}(G)\rightarrow S$ are separated and locally
of finite presentation, so is $\mathbb{F}^{\Gamma}(H)\rightarrow\mathbb{F}^{\Gamma}(G)$
by \cite[5.5.1.v]{EGA1} and \cite[1.4.3.v]{EGA4.1}. Since moreover
$\mathbb{F}^{\Gamma}(H)\rightarrow S$ satisfies the valuative criterion
of properness, so does $\mathbb{F}^{\Gamma}(H)\rightarrow\mathbb{F}^{\Gamma}(G)$,
which thus is a proper morphism by \cite[7.3.8]{EGA2} and a (finitely
presented) closed immersion by~\cite[18.12.6]{EGA4.4}.
\end{proof}

\section{Ranks and relative positions\label{sub:RanksRelativePos}}

Let $G$ be a reductive group over $S$.

\subsection{~}

Recall from section~\ref{sub:GrothendieckGroups} that for every
$S$-scheme $T$ and $\mathcal{F}\in\mathbb{F}^{\Gamma}(G)(T)$, the
exact $\otimes$-functor $\Gr_{\mathcal{F}}^{\bullet}:\Rep^{\circ}(G)(T)\rightarrow\Gra^{\Gamma}\LF(T)$
yields a ring homomorphism 
\[
K_{0}(G_{T})\rightarrow\mathcal{C}(T,\mathbb{Z}[\Gamma])
\]
mapping the class of $\tau\in\Rep^{\circ}(G)(T)$ in $K_{0}(G_{T})$
to the function
\[
t\mapsto\sum_{\gamma}\dim_{k(t)}\left(\Gr_{\mathcal{F}}^{\gamma}(\tau)\otimes k(t)\right)\cdot e^{\gamma}
\]
where $e^{\gamma}$ is the basis element of $\mathbb{Z}[\Gamma]$
corresponding to $\gamma\in\Gamma$. This construction is functorial
in $T$ and invariant under the action of $G$ on $\mathbb{F}^{\Gamma}(G)(T)$.
It therefore induces a morphism of fpqc sheaves of commutative rings
$(\Sch/S)^{\circ}\rightarrow\Ring$,\nomenclature[Ring]{$\Ring$}{Category of commutative rings.}
\[
\kappa:\underline{K}_{0}(G)\rightarrow\underline{\Mor}\left(\mathbb{C}^{\Gamma}(G),\mathbb{Z}[\Gamma]_{S}\right)
\]
where $\underline{K}_{0}(G)$ is the fpqc sheaf associated to the
presheaf $T\mapsto K_{0}(G_{T})$ while $\underline{\Mor}\left(\mathbb{C}^{\Gamma}(G),\mathbb{Z}[\Gamma]_{S}\right)$
is the fpqc sheaf of morphisms of $S$-schemes from the cone $\mathbb{C}^{\Gamma}(G)$
to the constant sheaf of rings $\mathbb{Z}[\Gamma]_{S}$.

\subsection{~}

Let now $(\mathcal{F}_{1},\mathcal{F}_{2})\in\mathbb{STD}^{\Gamma}(G)(T)$
be a pair of $\Gamma$-filtrations in standard relative position~(cf.~\ref{sub:RelativePositions}).
Then the formula
\[
\Gr_{\mathcal{F}_{1},\mathcal{F}_{2}}^{\gamma_{1},\gamma_{2}}(\tau)=\frac{\mathcal{F}_{1}^{\gamma_{1}}(\tau)\cap\mathcal{F}_{2}^{\gamma_{2}}(\tau)}{\mathcal{F}_{1,+}^{\gamma_{1}}(\tau)\cap\mathcal{F}_{2}^{\gamma_{2}}(\tau)+\mathcal{F}_{1}^{\gamma_{1}}(\tau)\cap\mathcal{F}_{2,+}^{\gamma_{2}}(\tau)}
\]
also defines an exact $\otimes$-functor 
\[
\Gr_{\mathcal{F}_{1},\mathcal{F}_{2}}^{\bullet,\bullet}:\Rep^{\circ}(G)(T)\longrightarrow\Gra^{\Gamma\times\Gamma}\LF(T).
\]
Indeed, we have to show that $\Gr_{\mathcal{F}_{1},\mathcal{F}_{2}}^{\gamma_{1},\gamma_{2}}(\tau)$
is locally free of finite rank, exact in $\tau$, and such that for
every $\tau',\tau''\in\Rep^{\circ}(G)(T)$ and $\gamma_{1},\gamma_{2}\in\Gamma$,
the natural map 
\[
\oplus_{(\gamma'_{1},\gamma'_{2})+(\gamma''_{1},\gamma''_{2})=(\gamma_{1},\gamma_{2})}\Gr_{\mathcal{F}_{1},\mathcal{F}_{2}}^{\gamma'_{1},\gamma'_{2}}(\tau')\otimes\Gr_{\mathcal{F}_{1},\mathcal{F}_{2}}^{\gamma''_{1},\gamma''_{2}}(\tau'')\rightarrow\Gr_{\mathcal{F}_{1},\mathcal{F}_{2}}^{\gamma_{1},\gamma_{2}}(\tau'\otimes\tau'')
\]
is an isomorphism. All this is local in the fpqc topology on $T$.
We may thus assume that $P_{\mathcal{F}_{1}}\cap P_{\mathcal{F}_{2}}$
contains a maximal torus $H$ of $G$ which is split, i.e.~$H=\mathbb{D}_{T}(M)$
for some finitely generated free abelian group $M$, in which case
$\mathcal{F}_{1}$ and $\mathcal{F}_{2}$ are split by morphisms $\mathcal{G}_{1}$
and $\mathcal{G}_{2}:\mathbb{D}_{T}(\Gamma)\rightarrow\mathbb{D}_{T}(M)$.
If $V(\tau)=\oplus_{m\in M}V(\tau)_{m}$ is the $H$-eigenspace decomposition
of $\tau\vert_{H}$, we then have a canonical isomorphism 
\[
\Gr_{\mathcal{F}_{1},\mathcal{F}_{2}}^{\gamma_{1},\gamma_{2}}(\tau)\simeq\oplus_{m\in M:(m\circ\mathcal{G}_{1},m\circ\mathcal{G}_{2})=(\gamma_{1},\gamma_{2})}V(\tau)_{m}
\]
and our claim easily follows. We thus obtain a ring homomorphism
\[
K_{0}(G_{T})\rightarrow\mathcal{C}(T,\mathbb{Z}[\Gamma\times\Gamma])
\]
which maps the class of $\tau\in\Rep^{\circ}(G)(T)$ in $K_{0}(G_{T})$
to the function
\[
t\mapsto\sum_{\gamma_{1},\gamma_{2}}\dim_{k(t)}\left(\Gr_{\mathcal{F}_{1},\mathcal{F}_{2}}^{\gamma_{1},\gamma_{2}}(\tau)\otimes k(t)\right)\cdot e^{\gamma_{1}}\otimes e^{\gamma_{2}}
\]
where $e^{\gamma_{1}}\otimes e^{\gamma_{2}}$ is the basis element
of $\mathbb{Z}[\Gamma\times\Gamma]=\mathbb{Z}[\Gamma]\otimes\mathbb{Z}[\Gamma]$
corresponding to the element $(\gamma_{1},\gamma_{2})$ of $\Gamma\times\Gamma$.

\subsection{~}

The above construction is again functorial in $T$ and invariant under
the diagonal action of $G$ on $\mathbb{STD}^{\Gamma}(G)$. It therefore
induces a morphism of fpqc sheaves of commutative rings $(\Sch/S)^{\circ}\rightarrow\Ring$,
\[
\kappa:\underline{K}_{0}(G)\rightarrow\underline{\Mor}\left(\mathbb{TSTD}^{\Gamma}(G),\mathbb{Z}[\Gamma\times\Gamma]_{S}\right).
\]

\subsection{~}

If now $f:\mathbb{Z}[\Gamma\times\Gamma]_{S}\rightarrow X$ is a morphism
of $S$-schemes, we denote by
\[
\begin{array}{rrcl}
\left\langle -,-\right\rangle _{f}: & \underline{K}_{0}(G) & \rightarrow & \underline{\Mor}\left(\mathbb{STD}^{\Gamma}(G),X\right)\\
\left\langle -,-\right\rangle _{f}^{os}: & \underline{K}_{0}(G) & \rightarrow & \underline{\Mor}\left(\mathbb{C}^{\Gamma}(G)^{2},X\right)\\
\left\langle -,-\right\rangle _{f}^{tr}: & \underline{K}_{0}(G) & \rightarrow & \underline{\Mor}\left(\mathbb{C}^{\Gamma}(G)^{2},X\right)
\end{array}
\]
the morphisms of fpqc sheaves on $S$ which are obtained by post-composition
of $\kappa$ with the obvious morphisms induced by $f$ and, respectively:
the quotient map 
\[
t_{2}:\mathbb{STD}^{\Gamma}(G)\twoheadrightarrow\mathbb{TSTD}^{\Gamma}(G)
\]
and the osculatory and transverse sections 
\[
os\quad\mbox{and}\quad tr:\mathbb{C}^{\Gamma}(G)^{2}\hookrightarrow\mathbb{TSTD}^{\Gamma}(G)
\]
of section~\ref{sub:RelativePositions}. For $\tau\in\underline{K}_{0}(G)(S)$,
we thus obtain morphisms of $S$-schemes
\[
\begin{array}{rrcl}
\left\langle -,-\right\rangle _{f,\tau}: & \mathbb{STD}^{\Gamma}(G) & \rightarrow & X\\
\left\langle -,-\right\rangle _{f,\tau}^{os}: & \mathbb{C}^{\Gamma}(G)^{2} & \rightarrow & X\\
\left\langle -,-\right\rangle _{f,\tau}^{tr}: & \mathbb{C}^{\Gamma}(G)^{2} & \rightarrow & X
\end{array}
\]
By construction, for every $S$-scheme $T$ and $(\mathcal{F}_{1},\mathcal{F}_{2})\in\mathbb{GEN}^{\Gamma}(G)(T)$,
\[
\left\langle \mathcal{F}_{1},\mathcal{F}_{2}\right\rangle _{f,\tau}=\left\langle t(\mathcal{F}_{1}),t(\mathcal{F}_{2})\right\rangle _{f,\tau}^{tr}\quad\mbox{in}\quad X(T).
\]

\subsection{~}

We will only consider these constructions in the following situation:
$\Gamma$ is a subgroup of $\mathbb{R}$, $X$ is the constant scheme
$\mathbb{R}_{S}$, $f$ is induced by the bilinear form $\Gamma\times\Gamma\ni(\gamma_{1},\gamma_{2})\mapsto\gamma_{1}\gamma_{2}\in\mathbb{R}$
and $\tau$ is a genuine representation in $\Rep^{\circ}(G)(S)$.
Then for any $S$-scheme $T$ and $(\mathcal{F}_{1},\mathcal{F}_{2})\in\mathbb{STD}^{\Gamma}(G)(T)$,
$\left\langle \mathcal{F}_{1},\mathcal{F}_{2}\right\rangle _{\tau}=\left\langle \mathcal{F}_{1},\mathcal{F}_{2}\right\rangle _{f,\tau}$
is the locally constant function $T\rightarrow\mathbb{R}$ given by
\[
t\mapsto\sum_{\gamma_{1},\gamma_{2}}\dim_{k(t)}\left(\Gr_{\mathcal{F}_{1},\mathcal{F}_{2}}^{\gamma_{1},\gamma_{2}}(\tau)\otimes k(t)\right)\cdot\gamma_{1}\gamma_{2}.
\]

\section{Appendix: pure subsheaves\label{sub:AppendixPureSub}}

Let $X$ be a scheme.
\begin{lem}
\label{lem:PureSeq1stLemma}For $\mathcal{A}\rightarrow\mathcal{B}\rightarrow\mathcal{C}$
in $\QCoh(X)$, consider the following conditions: 
\begin{enumerate}
\item For every quasi-coherent sheaf $\mathcal{F}$ on $X$,
\[
\makebox[50ex][c]{\ensuremath{0\rightarrow\mathcal{A}\otimes\mathcal{F}\rightarrow\mathcal{B}\otimes\mathcal{F}\rightarrow\mathcal{C}\otimes\mathcal{F}\rightarrow0}}\mbox{is exact in }\QCoh(X).
\]

\item For every morphism $f:Y\rightarrow X$,
\[
\makebox[50ex][c]{\ensuremath{0\rightarrow f^{\ast}\mathcal{A}\rightarrow f^{\ast}\mathcal{B}\rightarrow f^{\ast}\mathcal{C}\rightarrow0}}\mbox{is exact in }\QCoh(Y).
\]

\item For every morphism $f:Y\rightarrow X$ and quasi-coherent sheaf $\mathcal{F}$
on $Y$,
\[
\makebox[50ex][c]{\ensuremath{0\rightarrow f^{\ast}\mathcal{A}\otimes\mathcal{F}\rightarrow f^{\ast}\mathcal{B}\otimes\mathcal{F}\rightarrow f^{\ast}\mathcal{C}\otimes\mathcal{F}\rightarrow0}}\mbox{is exact in }\QCoh(Y).
\]

\end{enumerate}
Then $(1)\Leftarrow(2)\Leftrightarrow(3)$ and $(1)\Leftrightarrow(2)\Leftrightarrow(3)$
if $X$ is quasi-separated. \end{lem}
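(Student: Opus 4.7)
Since pullback of quasi-coherent sheaves and tensor product with a quasi-coherent sheaf are both right exact, each of the three sequences in (1), (2), (3) is automatically exact at the middle and right terms once we know $\mathcal{A}\to\mathcal{B}\to\mathcal{C}\to 0$ is exact -- which follows from any one of the three conditions by taking $\mathcal{F}=\mathcal{O}_{X}$ (resp.~$\mathcal{O}_{Y}$) or $f=\mathrm{id}_{X}$. So the real content of each condition is the injectivity of the leftmost map, and I only have to compare the three injectivity statements. The implication $(3)\Rightarrow(2)$ is immediate by setting $\mathcal{F}=\mathcal{O}_{Y}$, and $(3)\Rightarrow(1)$ by setting $f=\mathrm{id}_{X}$; combined with $(2)\Rightarrow(3)$ below, this yields $(2)\Rightarrow(1)$ as well.

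For $(2)\Rightarrow(3)$, injectivity being a Zariski-local property on $Y$, I may assume $Y=\Spec(R)$ affine; write $A$ and $B$ for the $R$-modules corresponding to $f^{*}\mathcal{A}$ and $f^{*}\mathcal{B}$, and let $P$ be the $R$-module corresponding to a given $\mathcal{F}\in\QCoh(Y)$. The task is to show that $A\otimes_{R}P\to B\otimes_{R}P$ is injective, knowing by (2) (applied to compositions $\Spec(R')\to Y\to X$) that $A\otimes_{R}R'\to B\otimes_{R}R'$ is injective for every $R$-algebra $R'$. The trick is to turn the module $P$ into an algebra: form the square-zero extension $R':=R\oplus P$ with multiplication
\[
(r_{1},p_{1})\cdot(r_{2},p_{2})=(r_{1}r_{2},\,r_{1}p_{2}+r_{2}p_{1}),
\]
which is a commutative $R$-algebra with underlying $R$-module $R\oplus P$. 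Tensoring over $R$, the injection $A\otimes_{R}R'\hookrightarrow B\otimes_{R}R'$ decomposes as
\[
A\oplus(A\otimes_{R}P)\;\hookrightarrow\;B\oplus(B\otimes_{R}P),
\]
and this map respects the $R$-linear direct-sum decompositions, so restricting to the second summand on each side gives the desired $A\otimes_{R}P\hookrightarrow B\otimes_{R}P$.

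It remains to establish $(1)\Rightarrow(2)$ under the assumption that $X$ is quasi-separated. Again the question is Zariski local on $Y$, so I reduce to the case where $Y=\Spec(R)$ is affine and $f(Y)$ is contained in some affine open $U=\Spec(A)\subset X$; writing $\mathcal{A}\vert_{U}=\widetilde{M}$ and $\mathcal{B}\vert_{U}=\widetilde{N}$, the goal is the injectivity of $M\otimes_{A}R\to N\otimes_{A}R$. The key geometric point is to produce a quasi-coherent sheaf on all of $X$ that restricts on $U$ to $\widetilde{R}$. Since $X$ is quasi-separated and $U$ is affine, the open immersion $j:U\hookrightarrow X$ is quasi-compact and quasi-separated, so $j_{*}$ preserves quasi-coherence; I take $\mathcal{F}:=j_{*}\widetilde{R}\in\QCoh(X)$, which satisfies $\mathcal{F}\vert_{U}=\widetilde{R}$. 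Applying (1) to this $\mathcal{F}$ and restricting back to $U$ yields exactly the required injection $M\otimes_{A}R\hookrightarrow N\otimes_{A}R$.

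The main obstacle is the last step: the implication $(1)\Rightarrow(2)$ needs a way to realise quasi-coherent modules supported near an affine open of $X$ as globally defined quasi-coherent sheaves on $X$, and this is precisely the role of quasi-separatedness via $j_{*}$. Without it, the argument breaks down because the quasi-coherent sheaves on $X$ tested by (1) may not see the local $R$-module structure coming from $Y$. The square-zero extension trick powering $(2)\Rightarrow(3)$, by contrast, is purely algebraic and requires no hypothesis on $X$.
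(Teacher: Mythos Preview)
Your proof is correct and follows essentially the same approach as the paper's own argument: the square-zero extension trick for $(2)\Rightarrow(3)$ (you do it affine-locally in module language, the paper does it globally via the relative $\Spec$ of $\mathcal{O}_{Y}[\mathcal{F}]$, but this is the same idea), and the pushforward of the structure sheaf along the quasi-compact quasi-separated open immersion $U\hookrightarrow X$ for $(1)\Rightarrow(2)$ under quasi-separatedness.
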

\begin{proof}
Obviously $(3)\Rightarrow(1)$ and $(2)$. Suppose $(2)$ holds. Let
$f:Y\rightarrow X$ be a morphism, $\mathcal{F}$ a quasi-coherent
sheaf on $Y$, $g:Z\rightarrow Y$ the structural morphism of $Z=\Spec(\mathcal{O}_{Y}[\mathcal{F}])$
where $\mathcal{O}_{Y}[\mathcal{F}]=\mathcal{O}_{Y}\oplus\mathcal{F}$
is the quasi-coherent $\mathcal{O}_{Y}$-algebra defined by $\mathcal{F}\cdot\mathcal{F}=0$.
By assumption, $0\rightarrow h^{\ast}\mathcal{A}\rightarrow h^{\ast}\mathcal{B}\rightarrow h^{\ast}\mathcal{C}\rightarrow0$
is an exact sequence of quasi-coherent sheaves on $Z$, where $h=f\circ g$.
Since $g$ is affine, 
\[
0\rightarrow g_{\ast}h^{\ast}\mathcal{A}\rightarrow g_{\ast}h^{\ast}\mathcal{B}\rightarrow g_{\ast}h^{\ast}\mathcal{C}\rightarrow0
\]
is an exact sequence of quasi-coherent sheaves on $Y$. But 
\[
g_{\ast}h^{\ast}\mathcal{X}=g_{\ast}g^{\ast}f^{\ast}\mathcal{X}=f^{\ast}\mathcal{X}\oplus f^{\ast}\mathcal{X}\otimes\mathcal{F}
\]
for any $\mathcal{X}$ in $\QCoh(X)$, therefore 
\[
0\rightarrow f^{\ast}\mathcal{A}\otimes\mathcal{F}\rightarrow f^{\ast}\mathcal{B}\otimes\mathcal{F}\rightarrow f^{\ast}\mathcal{C}\otimes\mathcal{F}\rightarrow0
\]
 is exact and $(2)\Rightarrow(3)$. Suppose now that $X$ is quasi-separated
and $(1)$ holds. Let $f:Y\rightarrow X$ be any morphism. Let $\{X_{i}\}$
and $\{Y_{i,j}\}$ be open coverings of $X$ and $Y$ by affine schemes
such that $f(Y_{i,j})\subset X_{i}$ and let $f_{i,j}:Y_{i,j}\rightarrow X_{i}$
be the induced morphism. Since $(f^{\ast}\mathcal{X})\vert_{Y_{i,j}}=f_{i,j}^{\ast}(\mathcal{X}\vert_{X_{i}})$
for every $\mathcal{X}\in\QCoh(X)$, we have to show that $0\rightarrow f_{i,j}^{\ast}(\mathcal{A}_{i})\rightarrow f_{i,j}^{\ast}(\mathcal{B}_{i})\rightarrow f_{i,j}^{\ast}(\mathcal{C}_{i})\rightarrow0$
is exact on $Y_{i,j}$ for every $i,j$, with $\mathcal{X}_{i}=\mathcal{X}\vert_{X_{i}}$.
Since $Y_{i,j}$ and $X_{i}$ are affine, this amounts to showing
that 
\[
0\rightarrow\mathcal{A}_{i}\otimes\mathcal{O}_{i,j}\rightarrow\mathcal{B}_{i}\otimes\mathcal{O}_{i,j}\rightarrow\mathcal{C}_{i}\otimes\mathcal{O}_{i,j}\rightarrow0
\]
is exact on $X_{i}$ for every $i,j$, for the quasi-coherent sheaf
$\mathcal{O}_{i,j}=(f_{i,j})_{\ast}\mathcal{O}_{Y_{i,j}}$ on $X_{i}$.
Since $X$ is quasi-separated, the immersion $\iota_{i}:X_{i}\hookrightarrow X$
is quasi-compact and quasi-separated by \cite[1.2.2.i \& 1.2.7.b]{EGA4.1},
thus $\mathcal{F}_{i,j}=(\iota_{i})_{\ast}\mathcal{O}_{i,j}$ is a
quasi-coherent sheaf on $X$ by \cite[1.7.4]{EGA4.1} and $0\rightarrow\mathcal{A}\otimes\mathcal{F}_{i,j}\rightarrow\mathcal{B}\otimes\mathcal{F}_{i,j}\rightarrow\mathcal{C}\otimes\mathcal{F}_{i,j}\rightarrow0$
is an exact sequence on $X$ by assumption. Pulling back through the
exact restriction functor $\iota_{i}^{\ast}:\QCoh(X)\rightarrow\QCoh(X_{i})$
yields the desired result. \end{proof}
\begin{defn}
We say that the sequence $0\rightarrow\mathcal{A}\stackrel{\iota}{\rightarrow}\mathcal{B}\rightarrow\mathcal{C}\rightarrow0$
is pure exact, or that $\iota$ is a pure monomorphism, or that $\iota(\mathcal{A})$
is a pure (quasi-coherent) subsheaf of $\mathcal{B}$ if the above
condition $(2)$ holds. \end{defn}
\begin{lem}
Let $\mathcal{B}$ be a quasi-coherent sheaf on $X$. Then 
\[
\mathcal{P}:(\Sch/X)^{\circ}\rightarrow\Set\qquad T\mapsto\left\{ \mbox{pure quasi-coherent subsheaves }\mathcal{A}\mbox{ of }\mathcal{B}_{T}\right\} 
\]
is an fpqc sheaf on $\Sch/X$. \end{lem}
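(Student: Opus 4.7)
The plan is to present $\mathcal{P}$ as a subfunctor of the functor of all quasi-coherent subsheaves of $\mathcal{B}$, and then verify that the extra condition of purity descends along fpqc covers.

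First I would note that the purely set-theoretic part of the sheaf condition is automatic. Indeed, since $\QCoh$ is an fpqc stack over $\Sch$ (see~\cite[Theorem 4.23]{Vi05}), pulling back a quasi-coherent subsheaf $\mathcal{A}\subset\mathcal{B}_{T}$ along $T'\to T$ yields a well-defined quasi-coherent subsheaf $\mathcal{A}_{T'}\subset\mathcal{B}_{T'}$, so $\mathcal{P}$ is at least a well-defined presheaf; and given an fpqc cover $\{T_{i}\to T\}$ together with a compatible family of quasi-coherent subsheaves $\mathcal{A}_{i}\subset\mathcal{B}_{T_{i}}$, the stack property produces a unique quasi-coherent subsheaf $\mathcal{A}\subset\mathcal{B}_{T}$ pulling back to $\mathcal{A}_{i}$ on each $T_{i}$ (the monomorphism condition descends because kernels of morphisms between quasi-coherent sheaves descend). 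Thus the whole issue is to show that if each $\mathcal{A}_{i}$ is \emph{pure} in $\mathcal{B}_{T_{i}}$, then $\mathcal{A}$ is pure in $\mathcal{B}_{T}$.

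To check this, I would use condition $(2)$ of Lemma~\ref{lem:PureSeq1stLemma} as the definition of purity: writing $\mathcal{C}=\mathcal{B}_{T}/\mathcal{A}$, one must show that for every morphism $f:Y\to T$ the sequence
\[
0\to f^{\ast}\mathcal{A}\to f^{\ast}\mathcal{B}_{T}\to f^{\ast}\mathcal{C}\to 0
\]
is exact in $\QCoh(Y)$. Form the base change $Y_{i}=Y\times_{T}T_{i}$ with induced maps $g_{i}:Y_{i}\to Y$ and $f_{i}:Y_{i}\to T_{i}$. Then $\{g_{i}:Y_{i}\to Y\}$ is an fpqc cover of $Y$, and since exactness of a sequence of quasi-coherent sheaves is itself fpqc local on the base (again because $\QCoh$ is a stack), it suffices to show that the pullback of the displayed sequence along each $g_{i}$ is exact. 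Using $g_{i}^{\ast}f^{\ast}=f_{i}^{\ast}(T_{i}\to T)^{\ast}$ and the identifications $\mathcal{A}_{T_{i}}=\mathcal{A}_{i}$, $\mathcal{B}_{T_{i}}$, $\mathcal{C}_{T_{i}}=\mathcal{B}_{T_{i}}/\mathcal{A}_{i}$ coming from descent, this pullback is canonically identified with
\[
0\to f_{i}^{\ast}\mathcal{A}_{i}\to f_{i}^{\ast}\mathcal{B}_{T_{i}}\to f_{i}^{\ast}(\mathcal{B}_{T_{i}}/\mathcal{A}_{i})\to 0,
\]
which is exact precisely because $\mathcal{A}_{i}$ is pure in $\mathcal{B}_{T_{i}}$. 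This proves purity of $\mathcal{A}\subset\mathcal{B}_{T}$ and therefore the sheaf property of $\mathcal{P}$.

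The only genuinely non-formal ingredient in this argument is fpqc descent for quasi-coherent sheaves (stack property of $\QCoh$ and its standard corollary that exactness of quasi-coherent sequences is fpqc local on the base); everything else is bookkeeping about compatibility of pullback with the descent data, so I do not expect any real obstacle. A minor cautionary point worth checking is that the identification of $\mathcal{C}_{T_{i}}$ with $\mathcal{B}_{T_{i}}/\mathcal{A}_{i}$ uses right-exactness of pullback, which of course always holds.
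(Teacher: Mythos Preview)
Your proposal is correct and follows essentially the same approach as the paper: glue the $\mathcal{A}_{i}$ using fpqc descent for quasi-coherent sheaves, then verify purity of the glued $\mathcal{A}\subset\mathcal{B}_{T}$ by pulling back along an arbitrary $f:Y\to T$, base-changing to the cover $Y_{i}=Y\times_{T}T_{i}$, and using that monomorphism/exactness is fpqc local. The only small point you gloss over is that $\mathcal{P}$ being a presheaf requires the pullback of a \emph{pure} subsheaf to remain pure (not just a subsheaf); the paper checks this explicitly, but it is immediate from condition~(2) of the preceding lemma by composing morphisms, so this is not a real gap.
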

\begin{proof}
It is a functor: if $\mathcal{A}\in\mathcal{P}(T)$ and $\alpha:T'\rightarrow T$
is an $X$-morphism, the monomorphism $\alpha^{\ast}(\mathcal{A}\hookrightarrow\mathcal{B}_{T})$
identifies $\alpha^{\ast}(\mathcal{A})$ with a quasi-coherent subsheaf
of $\alpha^{\ast}(\mathcal{B}_{T})=\mathcal{B}_{T'}$, which is pure
since for any morphism $f':Y\rightarrow T'$, if $f=\alpha\circ f'$,
then $f^{\prime\ast}\circ\alpha^{\ast}(\mathcal{A}\hookrightarrow\mathcal{B}_{T})=f^{\ast}(\mathcal{A}\hookrightarrow\mathcal{B}_{T})$
is a monomorphism of quasi-coherent sheaves on $Y$ since $\mathcal{A}$
is pure in $\mathcal{B}_{T}$. It is an fpqc sheaf: if $\{T_{i}\rightarrow T\}$
is an fpqc cover and $\mathcal{A}_{i}\in\mathcal{P}(T_{i})$ have
the same image $\mathcal{A}_{i,j}\in\mathcal{P}(T_{i}\times_{T}T_{j})$,
then the quasi-coherent subsheaves $\mathcal{A}_{i}$ of $\mathcal{B}_{T_{i}}$
glue to a quasi-coherent subsheaf $\mathcal{A}$ of $\mathcal{B}_{T}$
which is pure since for any $f:Y\rightarrow T$, $f^{\ast}(\mathcal{A}\hookrightarrow\mathcal{B}_{T})$
is a monomorphism of quasi-coherent sheaves on $Y$ as it becomes
so in the fpqc cover $\{Y\times_{T}T_{i}\rightarrow Y\}$ of $Y$.\end{proof}
\begin{lem}
\label{lem:PureSubsAndDirFact}Let $\mathcal{A}$ be a quasi-coherent
subsheaf of $\mathcal{B}$.
\begin{enumerate}
\item Suppose that locally on $X$ for the fpqc topology, $\mathcal{A}$
is a direct factor of $\mathcal{B}$. Then $\mathcal{A}$ is a pure
subsheaf of $\mathcal{B}$. 
\item Suppose that $\mathcal{A}$ is a pure subsheaf of $\mathcal{B}$ and
$\mathcal{C}=\mathcal{B}/\mathcal{A}$ is finitely presented. Then
locally on $X$ for the Zariski topology, $\mathcal{A}$ is a direct
factor of $\mathcal{B}$. 
\end{enumerate}
\end{lem}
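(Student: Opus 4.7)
For (1), the Lemma directly above shows that the assignment sending an $X$-scheme $T$ to the set of pure quasi-coherent subsheaves of $\mathcal{B}_{T}$ is an fpqc sheaf on $\Sch/X$. A direct factor is trivially pure: if $\mathcal{B}=\mathcal{A}\oplus\mathcal{D}$ then for every morphism $f:Y\rightarrow X$ the sequence $0\rightarrow f^{\ast}\mathcal{A}\rightarrow f^{\ast}\mathcal{B}\rightarrow f^{\ast}\mathcal{D}\rightarrow0$ is split exact, verifying condition~(2) of the preceding Lemma. Hence an fpqc-local direct factor is fpqc-locally pure, and therefore globally pure by the sheaf property.

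For (2), the assertion is Zariski-local on $X$, so one reduces to the case $X=\Spec(A)$ affine; then $\mathcal{A},\mathcal{B},\mathcal{C}$ correspond to $A$-modules $A'\subset B'$ with $A'$ pure in $B'$ and $C'=B'/A'$ finitely presented. Choose a finite presentation $A^{m}\stackrel{\alpha}{\rightarrow}A^{n}\stackrel{\pi}{\rightarrow}C'\rightarrow0$. Since $A^{n}$ is projective, $\pi$ lifts to a homomorphism $\tilde{\pi}:A^{n}\rightarrow B'$; the composite $\tilde{\pi}\circ\alpha$ lands in $A'$ and thus factors as $\iota\circ\beta$, with $\beta:A^{m}\rightarrow A'$ and $\iota:A'\hookrightarrow B'$ the inclusion.

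The plan is to produce $\gamma:A^{n}\rightarrow A'$ with $\gamma\circ\alpha=\beta$; then $\tilde{\pi}-\iota\circ\gamma:A^{n}\rightarrow B'$ annihilates $\ker(\pi)=\mathrm{image}(\alpha)$ and descends to a splitting $s:C'\rightarrow B'$ of the projection, exhibiting $A'$ as a direct factor. To find $\gamma$, set $M=\mathrm{coker}(\alpha^{T}:A^{n}\rightarrow A^{m})$, a finitely presented $A$-module, and apply $\Hom_{A}(-,N)$ to the right-exact sequence $A^{n}\stackrel{\alpha^{T}}{\rightarrow}A^{m}\rightarrow M\rightarrow0$. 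Using the canonical isomorphisms $\Hom_{A}(A^{k},N)\cong N\otimes_{A}A^{k}$, one identifies the cokernel of the dual map $\alpha^{\ast}:\Hom_{A}(A^{n},N)\rightarrow\Hom_{A}(A^{m},N)$ with $N\otimes_{A}M$. Consequently the obstruction to lifting $\beta\in\Hom_{A}(A^{m},A')$ across $\alpha$ is a class $\bar{\beta}\in A'\otimes_{A}M$; its image in $B'\otimes_{A}M$ vanishes because $\beta$ does lift inside $B'$, namely to $\tilde{\pi}$. By purity, $A'\otimes_{A}M\hookrightarrow B'\otimes_{A}M$, so $\bar{\beta}=0$ and the desired $\gamma$ exists.

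The essential content of the proof is the reduction of the splitting problem to the injectivity of the single tensor-product map $A'\otimes_{A}M\rightarrow B'\otimes_{A}M$ for the finitely presented module $M$ built from the chosen presentation of $C'$; once that reduction is in place, purity concludes. Everything else, including the passage from a module-level splitting to a direct factor decomposition of the sheaves and the initial Zariski localization, is formal.
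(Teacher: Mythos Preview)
Your proof is correct and, for part~(1), identical to the paper's. For part~(2), both you and the paper reduce to the affine case $X=\Spec(R)$; the paper then simply invokes \cite[Theorem~7.14]{Ma89} (a pure $R$-submodule with finitely presented quotient is a direct summand), whereas you supply a self-contained proof of that result via the obstruction class $\bar{\beta}\in A'\otimes_{A}M$. Your argument is essentially the standard proof of Matsumura's theorem, so the two approaches coincide in substance; yours has the virtue of being self-contained, while the paper's citation is shorter. One minor expository point: the phrase ``apply $\Hom_{A}(-,N)$'' is slightly misleading since that functor is contravariant; what you actually use is the identification $\Hom_{A}(A^{k},N)\cong N\otimes_{A}A^{k}$ together with right-exactness of $N\otimes_{A}-$, exactly as you then make explicit.
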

\begin{proof}
$(1)$ A direct factor being obviously pure, this follows from the
previous lemma. As for $(2)$: the assumptions are local in the Zariski
topology by the previous lemma, we may thus assume that $X=\Spec(R)$
for some ring $R$. Then $A=\Gamma(X,\mathcal{A})$ is a pure $R$-submodule
of $B=\Gamma(X,\mathcal{B})$ in the sense of \cite[Appendix to \S 7]{Ma89}
by $(2)\Rightarrow(1)$ of lemma~\ref{lem:PureSeq1stLemma}, and
$C=B/A$ is a finitely presented $R$-module. Therefore $A$ is a
direct factor of $B$ by \cite[Theorem 7.14]{Ma89}, i.e.~$\mathcal{A}$
is a direct factor of $\mathcal{B}$.\end{proof}

\chapter{The vectorial Tits building $\mathbf{F}^{\Gamma}(G)$\label{chap:VectTitsBuild}}

Let $\mathcal{O}$ be a local ring, $G$ a reductive group over $\Spec(\mathcal{O})$.
We shall here take a closer look at the set $\mathbf{F}^{\Gamma}(G)=\mathbb{F}^{\Gamma}(G)(\mathcal{O})$
of sections of $\mathbb{F}^{\Gamma}(G)$ over $\Spec(\mathcal{O})$.

\section{Combinatorial structures}

\subsection{~}

We say that a morphism of posets $f:(\mathbf{X},\leq)\rightarrow(\mathbf{Y},\leq)$
is nice if
\[
\forall x,y\in\mathbf{X}\times\mathbf{Y}\mbox{\,\ with }f(x)\leq y,\mbox{\,\ there is a unique }x'\in f^{-1}(y)\mbox{ with }x\leq x'.
\]
We say that it is very nice if also
\[
\forall x,y\in\mathbf{X}\times\mathbf{Y}\mbox{\,\ with }f(x)\geq y,\mbox{\,\ there is an }x'\in f^{-1}(y)\mbox{ with }x\geq x'.
\]

\subsection{~}

We will define below an $\Aut(G)$-equivariant sequence of nice surjective
morphisms of posets 
\[
\xyC{2pc}\xymatrix{\mathbf{SBP}(G)\ar@{->>}^{a}[r] & \mathbf{SP}(G)\ar@{->>}^{b}[r] & \mathbf{OPP}(G)\ar@{->>}[r]^{p_{1}} & \mathbf{P}(G)\ar@{->>}[r]^{t} & \mathbf{O}(G)}
\]
The group $\mathsf{G}=G(\mathcal{O})$ acts on it through $\Int:\mathsf{G}\rightarrow\Aut(G)$,
and we will see that 
\[
\mathsf{G}\backslash\mathbf{SBP}(G)=\mathsf{G}\backslash\mathbf{SP}(G)=\mathsf{G}\backslash\mathbf{OPP}(G)=\mathsf{G}\backslash\mathbf{P}(G)=\mathbf{O}(G).
\]

\subsection{~}

We first define our posets. We will use the following notations:\nomenclature[S(G)]{$\mathbf{S}(G)$}{Set of all maximal split tori of $G$, page \nomrefpage}\nomenclature[B(G)]{$\mathbf{B}(G)$}{Set of all minimal parabolic subgroups of $G$, page \nomrefpage}\nomenclature[P(G)]{$\mathbf{P}(G)$}{Set of all parabolic subgroups of $G$, page \nomrefpage}\nomenclature[SP(G)]{$\mathbf{SP}(G)$}{Set of all pairs $(S,P)$ in $\mathbf{S}(G) \times \mathbf{P}(G)$ with  $Z_G(S) \subset P$, page \nomrefpage}\nomenclature[SBP(G)]{$\mathbf{SBP}(G)$}{Set of triples $(S,B,P)$ in $\mathbf{S}(G) \times \mathbf{B}(G) \times \mathbf{P}(G)$ with  $Z_G(S) \subset B \subset P$, page \nomrefpage}\nomenclature[OPP(G)]{$\mathbf{OPP}(G)$}{Set of all pairs of opposed parabolic subgroups of $G$, page \nomrefpage}
\begin{eqnarray*}
\mathbf{S}(G) & = & \left\{ S:\mbox{maximal split torus of }G\right\} \\
\mathbf{B}(G) & = & \left\{ B:\mbox{minimal parabolic subgroup of }G\right\} \\
\mathbf{P}(G) & = & \left\{ P:\mbox{parabolic subgroup of }G\right\} \\
\mathbf{SP}(G) & = & \left\{ (S,P):Z_{G}(S)\subset P\right\} \\
\mathbf{SBP}(G) & = & \left\{ (S,B,P):Z_{G}(S)\subset B\subset P\right\} \\
\mathbf{OPP}(G) & = & \left\{ (P,P'):\mbox{opposed parabolic subgroups of }G\right\} 
\end{eqnarray*}
Thus $\mathbf{P}(G)=\mathbb{P}(G)(\mathcal{O})$ and $\mathbf{OPP}(G)=\mathbb{OPP}(G)(\mathcal{O})$.
In addition, we set\nomenclature[O(G)]{$\mathbf{O}(G)$}{Set of all types of parabolic subgroups of $G$, page \nomrefpage}
\[
\mathbf{O}(G)=\mbox{image of }t:\mathbb{P}(G)(\mathcal{O})\rightarrow\mathbb{O}(G)(\mathcal{O}).
\]
We endow $\mathbf{P}(G)$ and $\mathbf{O}(G)$ with their natural
partial orders and the remaining three sets $\mathbf{SBP}(G)$, $\mathbf{SP}(G)$
and $\mathbf{OPP}(G)$ with the following ones:
\begin{eqnarray*}
(S_{1},B_{1},P_{1})\leq(S_{2},B_{2},P_{2}) & \iff & S_{1}=S_{2},\, B_{1}=B_{2}\mbox{ and }P_{1}\subset P_{2}\\
(S_{1},P_{1})\leq(S_{2},P_{2}) & \iff & S_{1}=S_{2}\mbox{ and }P_{1}\subset P_{2}\\
(P_{1},P'_{1})\leq(P_{2},P'_{2}) & \iff & P_{1}\subset P_{2}\mbox{ and }P'_{1}\subset P'_{2}
\end{eqnarray*}

\subsection{~}

The morphism $t:\mathbf{P}(G)\rightarrow\mathbf{O}(G)$ maps $P$
to its type $t(P)$. It is plainly a morphism of posets. It is surjective
by definition of $\mathbf{O}(G)$, nice by \cite[XXVI 3.8]{SGA3.3r}
and even very nice by~\cite[XXVI 5.5]{SGA3.3r}. The group $\mathsf{G}$
acts trivially on $\mathbf{O}(G)$, and $\mathsf{G}\cdot P=t^{-1}t(P)$
by \cite[XXVI 5.2]{SGA3.3r}, thus $\mathsf{G}\backslash\mathbf{P}(G)=\mathbf{O}(G)$.

\subsection{~}

The morphism $p_{1}:\mathbf{OPP}(G)\rightarrow\mathbf{P}(G)$ maps
$(P,P')$ to $P$. It is plainly a morphism of posets, and it is surjective
by \cite[XXVI 2.3 \& 4.3.2]{SGA3.3r}. Consider now $(P,P')\in\mathbf{OPP}(G)$,
$Q\in\mathbf{P}(G)$ and suppose first that $P\subset Q$. Since $t$
is nice, there is a unique $Q'\in\mathbf{P}(G)$ with $P'\subset Q'$
and $t(Q')=\iota t(Q)$, where $\iota$ is the opposition involution
of $\mathbf{O}(G)$. We have $(Q,Q')\in\mathbf{OPP}(G)$ by \cite[XXVI 4.3.2 \& 4.2.1]{SGA3.3r},
thus $p_{1}$ is nice. If $Q\subset P$, then $Q_{L}=Q\cap L$ is
a parabolic subgroup of $L=P\cap P'$ and its Levi subgroups are the
Levi subgroups of $Q$ contained in $L$ by \cite[XXVI 1.20]{SGA3.3r}.
Since $p_{1}:\mathbf{OPP}(L)\rightarrow\mathbf{P}(L)$ is surjective,
there is a parabolic subgroup $Q'_{L}$ of $L$ opposed to $Q_{L}$.
Then $Q'_{L}=Q'\cap L$ for a unique parabolic subgroup $Q'$ of $G$
contained in $P'$, and $(Q,Q')\in\mathbf{OPP}(G)$ since $Q\cap Q'=Q_{L}\cap Q'_{L}$
is a Levi subgroup of $Q_{L}$ and $Q'_{L}$, thus also of $Q$ and
$Q'$. Therefore $p_{1}$ is very nice. Finally, the stabilizer of
$P$ in $\mathsf{G}$ is $\mathbf{\mathsf{P}}=P(\mathcal{O})$ by
\cite[XXVI 1.2]{SGA3.3r}, and $\mathsf{P}\cdot(P,P')=p_{1}^{-1}(P)$
by \cite[XXVI 1.8 \& 4.3.2]{SGA3.3r}, thus $\mathsf{G}\backslash\mathbf{OPP}(G)=\mathsf{G}\backslash\mathbf{P}(G)$.

\subsection{~}

The morphism $b:\mathbf{SP}(G)\rightarrow\mathbf{OPP}(G)$ maps $(S,P)$
to $(P,\iota_{S}P)$, where $\iota_{S}P$ is defined in the next lemma,
which also says that $b$ is a morphism of posets. \nomenclature[iota_S]{$\iota _S$}{Opposition involution on the apartment of $S$ in $\mathbf{P}(G)$, page \nomrefpage}
\begin{lem}
\label{lem:LeviAndAppart}For $S\in\mathbf{S}(G)$ and $P\in\mathbf{P}(G)$
with $Z_{G}(S)\subset P$, there exists a unique Levi subgroup $L$
of $P$ and a unique parabolic subgroup $\iota_{S}P$ of $G$ opposed
to $P$ with $Z_{G}(S)\subset L,\iota_{S}P$. Moreover $L=P\cap\iota_{S}P$
and $P\mapsto\iota_{S}P$ preserves inclusions. \end{lem}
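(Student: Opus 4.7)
The plan is to exploit the standard bijection, over a local ring, between Levi subgroups of $P$ and parabolic subgroups of $G$ opposed to $P$, reducing everything to the existence of a unique Levi of $P$ containing $Z_{G}(S)$. Uniqueness, being a local statement, will allow us to construct the objects after a faithfully flat \'etale base change splitting $S$ and a maximal torus of $G$ containing it, and then descend.

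To construct $L$, I would first pick (after a suitable \'etale cover of $\mathcal{O}$) a maximal torus $T$ of $G$ with $S\subset T\subset Z_{G}(S)$; such a $T$ exists because $Z_{G}(S)$ is reductive of the same absolute rank as $G$ by the smoothness of centralizers of tori \cite[XIX 2.5]{SGA3.2}. The chain $T\subset Z_{G}(S)\subset P$ then produces, by \cite[XXVI 1.8]{SGA3.3r}, a unique Levi $L$ of $P$ containing $T$. Passing to a further \'etale cover splitting $T$, I get the familiar root decomposition of $\Lie(G)$, and $L$ is generated by $T$ and the $U_{\alpha}$ for $\alpha\in R(P)\cap(-R(P))$, where $R(P)=\{\alpha:U_{\alpha}\subset P\}$. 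For every root $\alpha$ trivial on $S$, one has $U_{\pm\alpha}\subset Z_{G}(S)\subset P$, hence $\pm\alpha\in R(P)$; consequently $Z_{G}(S)$, being generated by $T$ and these $U_{\alpha}$, lies in $L$. Uniqueness is then formal: any Levi $L'$ of $P$ with $Z_{G}(S)\subset L'$ contains $T$, so $L'=L$ by the uniqueness in \cite[XXVI 1.8]{SGA3.3r}. By \cite[XXVI 4.3.2]{SGA3.3r}, $L$ determines in turn a unique parabolic $\iota_{S}P$ opposed to $P$ with $L=P\cap\iota_{S}P$, so that automatically $Z_{G}(S)\subset L\subset\iota_{S}P$. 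Any parabolic $P'$ opposed to $P$ containing $Z_{G}(S)$ gives a Levi $P\cap P'$ of $P$ containing $Z_{G}(S)$, which must equal $L$, so $P'=\iota_{S}P$ by the same bijection. As $L$ and $\iota_{S}P$ are each unique, they descend from the \'etale cover back to $\mathcal{O}$.

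For the monotonicity $P_{1}\subset P_{2}\Rightarrow\iota_{S}P_{1}\subset\iota_{S}P_{2}$, I would again work on the \'etale cover splitting $T$. The inclusion $P_{1}\subset P_{2}$ translates to $R(P_{1})\subset R(P_{2})$, whence $R(L_{1})=R(P_{1})\cap(-R(P_{1}))\subset R(P_{2})\cap(-R(P_{2}))=R(L_{2})$, so $L_{1}\subset L_{2}$. A root $\alpha\in R(\iota_{S}P_{1})=R(L_{1})\cup(-R(U_{1}))$ thus lies either in $R(L_{2})\subset R(\iota_{S}P_{2})$, or else equals $-\beta$ with $\beta\in R(P_{1})\subset R(P_{2})=R(L_{2})\cup R(U_{2})$, so that $\alpha=-\beta\in R(L_{2})\cup(-R(U_{2}))=R(\iota_{S}P_{2})$. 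Either way $R(\iota_{S}P_{1})\subset R(\iota_{S}P_{2})$, hence $\iota_{S}P_{1}\subset\iota_{S}P_{2}$, and this again descends to $\mathcal{O}$ by faithfully flat descent.

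The only real obstacle is the availability of a maximal torus $T\subset Z_{G}(S)$ over $\mathcal{O}$ itself, which need not exist integrally; but since the objects we construct are uniquely determined, an \'etale cover followed by effective descent handles this routinely.
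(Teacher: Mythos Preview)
Your argument is correct, but it diverges from the paper's in two places and is more laborious than necessary. First, you do not need the \'etale cover at all: over a local base, $Z_{G}(S)$ already contains a maximal torus $T$ of $G$ by \cite[XIV~3.20]{SGA3.2}, so one can work integrally throughout. Second, to show $Z_{G}(S)\subset L$ the paper does not pass to a splitting cover and unpack roots; instead it invokes \cite[XXVI~6.11]{SGA3.3r}, which says $Z_{G}(S)\subset L$ is equivalent to $R_{sp}(L)\subset S$, and observes that $R(L)\subset T$ forces $R_{sp}(L)\subset T_{sp}=S$ by maximality of $S$. For the monotonicity $P_{1}\subset P_{2}\Rightarrow\iota_{S}P_{1}\subset\iota_{S}P_{2}$, the paper avoids any root computation: the niceness of $p_{1}:\mathbf{OPP}(G)\to\mathbf{P}(G)$ (already proved just before this lemma) gives a unique $(P_{2},Q')\in\mathbf{OPP}(G)$ with $\iota_{S}P_{1}\subset Q'$, and since $Z_{G}(S)\subset\iota_{S}P_{1}\subset Q'$ one has $Q'=\iota_{S}P_{2}$. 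Your route via explicit root sets $R(P)$, $R(L)$, $R(U)$ works fine once you split $T$, and has the virtue of being self-contained; the paper's approach is shorter because it leans on the structural results of SGA3~XXVI already in play.
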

\begin{proof}
By \cite[XIV 3.20]{SGA3.2}, there is a maximal torus $T$ in $Z_{G}(S)$.
It is also maximal in $G$ and $P$. By~\cite[XXVI 1.6]{SGA3.3r},
there is a unique Levi subgroup $L$ of $P$ with $T\subset L$. We
have to show that $Z_{G}(S)\subset L$. By \cite[XXVI 6.11]{SGA3.3r},
this is equivalent to $R_{sp}(L)\subset S$, where $R_{sp}(L)$ is
the split radical of $L$, i.e.~the maximal split subtorus $R(L)_{sp}$
of the radical $R(L)$ of $L$. Since $T$ is a maximal torus in $L$,
$R(L)$ is contained in $T$, thus $R_{sp}(L)$ is contained in the
maximal split subtorus $T_{sp}$ of $T$, which obviously contains
$S$ and in fact equals $S$ by maximality of $S$. This proves the
existence and uniqueness of $L$. That of $\iota_{S}P$ follows from
\cite[XXVI 4.3.2]{SGA3.3r} which also shows that $L=P\cap\iota_{S}P$.
If $P\subset Q$, there is a unique $(Q,Q')\in\mathbf{OPP}(G)$ with
$\iota_{S}P\subset Q'$ because $p_{1}$ is nice, and obviously $\iota_{S}Q=Q'$,
thus $\iota_{S}P\subset\iota_{S}Q$.
\end{proof}
Starting with $(P,P')\in\mathbf{OPP}(G)$ put $L=P\cap P'$ and let
$S$ be a maximal split torus in $G$ containing the split radical
$R_{sp}(L)$ of $L$. Then $Z_{G}(S)$ is contained in $Z_{G}(R_{sp}(L))$
which equals $L$ by \cite[XXVI 6.11]{SGA3.3r}, thus $(S,P)\in\mathbf{SP}(G)$
and $b(S,P)$ equals $(P,P')$, i.e.~$b$ is surjective. It is obviously
nice, although not \emph{very }nice. The stabilizer of $b(S,P)$ in
$\mathsf{G}$ is $\mathsf{L}=L(\mathcal{O})$ where $L=P\cap\iota_{S}P$,
and $\mathsf{L}\cdot(S,P)=b^{-1}b(S,P)$ by \cite[XXVI 6.16]{SGA3.3r},
thus $\mathsf{G}\backslash\mathbf{SP}(G)=\mathsf{G}\backslash\mathbf{OPP}(G)$.
The opposition involution $\iota(P_{1},P_{2})=(P_{2},P_{1})$ of $\mathbf{OPP}(G)$
lifts to the involution $\iota(S,P)=(S,\iota_{S}P)$ of $\mathbf{SP}(G)$.

\subsection{~}

The morphism $a:\mathbf{SBP}(G)\rightarrow\mathbf{SP}(G)$ maps $(S,B,P)$
to $(S,P)$. It is plainly a nice morphism of poset, although not
\emph{very} nice. Fix $(S,P)\in\mathbf{SP}(G)$, let $L=P\cap\iota_{S}P$.
Then \cite[XXVI 1.20]{SGA3.3r} sets up a bijection between: the set
of minimal parabolic subgroup $B$ of $G$ with $Z_{G}(S)\subset B\subset P$
(the fiber $a^{-1}(S,P)$) and the set of minimal parabolic subgroups
$B_{L}=B\cap L$ of $L$ with $Z_{G}(S)\subset B_{L}$. The latter
set is not empty by~\cite[XXVI 6.16]{SGA3.3r}, thus $a$ is surjective.
The stabilizer of $(S,P)$ in $\mathsf{G}$ equals $\mathsf{N}_{L}(S)=N_{L}(S)(\mathcal{O})$
and $\mathsf{N}_{L}(S)\cdot(S,B,P)=a^{-1}(S,P)$ by \cite[XXVI 7.2]{SGA3.3r}
applied to $Z_{G}(S)\subset L$, thus $\mathsf{G}\backslash\mathbf{SBP}(G)=\mathsf{G}\backslash\mathbf{SP}(G)$.
The stabilizer of $(S,B,P)$ in $\mathsf{G}$ is the stabilizer of
$(S,B)$, namely $\mathsf{Z}_{G}(S)=Z_{G}(S)(\mathcal{O})$ since
$Z_{G}(S)=B\cap\iota_{S}B$.

\subsection{~}

By~\cite[XXVI 5.7]{SGA3.3r}, there is a smallest element $\circ$
in $\mathbf{O}(G)$\nomenclature[o]{$\circ$}{Smallest element of $\mathbf{O}(G)$, page \nomrefpage}.
For 
\[
\mathbf{X}\quad\mbox{in}\quad\{\mathbf{SBP}(G),\mathbf{SP}(G),\mathbf{OPP}(G),\mathbf{P}(G)\},
\]
the morphism $f:\mathbf{X}\rightarrow\mathbf{O}(G)$ is very nice.
We've proved it already in the last two cases. Since $f$ is nice,
our assertion is equivalent to: $\mathbf{X}_{\mathrm{min}}=f^{-1}(\circ)$
where $\mathbf{X}_{\mathrm{min}}$ is the set of minimal elements
in $\mathbf{X}$. This is obvious for $\mathbf{SBP}(G)$, and also
for $\mathbf{SP}(G)$ since $a$ is surjective. For any $x\in\mathbf{X}_{\mathrm{min}}=f^{-1}(\circ)$,
there is then a unique section 
\[
\xymatrix{(\mathbf{X},\leq)\ar@{->>}[r]_{f} & (\mathbf{O}(G),\leq)\ar@/_{.5pc}/[l]_{s_{x}}}
\]
with $s_{x}(\circ)=x$, and these sections cover $\mathbf{X}$.

\subsection{~\label{sub:TitsBuildDiag}}

Let now $\Gamma=(\Gamma,+,\leq)$ be a non-trivial totally ordered
commutative group and form the $\Aut(G)$-equivariant cartesian diagram
of sets: 
\[
\xyR{2pc}\xymatrix{\mathbf{ACF}^{\Gamma}(G)\ar@{->>}[r]^{a}\ar[d]^{F} & \mathbf{AF}^{\Gamma}(G)\ar@{->>}[r]^{b}\ar[d]^{F} & \mathbf{G}^{\Gamma}(G)\ar@{->>}[r]^{\Fi}\ar[d]^{F} & \mathbf{F}^{\Gamma}(G)\ar@{->>}[r]^{t}\ar[d]^{F} & \mathbf{C}^{\Gamma}(G)\ar[d]^{F}\\
\mathbf{SBP}(G)\ar@{->>}^{a}[r] & \mathbf{SP}(G)\ar@{->>}^{b}[r] & \mathbf{OPP}(G)\ar@{->>}[r]^{p_{1}} & \mathbf{P}(G)\ar@{->>}[r]^{t} & \mathbf{O}(G)
}
\]
where $\mathbf{C}^{\Gamma}(G)$\nomenclature[C^Gamma(G)]{$\mathbf{C}^{\Gamma}(G)$}{Cone of types of $\Gamma$-filtrations or $\Gamma$-graduations on $G$, page \nomrefpage}
is the inverse image of $\mathbf{O}(G)$ under $F:\mathbb{C}^{\Gamma}(G)(\mathcal{O})\rightarrow\mathbb{O}(G)(\mathcal{O})$.
Of course we may and do identify $\mathbf{F}^{\Gamma}(G)$\nomenclature[F^Gamma(G)]{$\mathbf{F}^{\Gamma}(G)$}{Set of all $\Gamma$-filtrations on $G$, page \nomrefpage}
with $\mathbb{F}^{\Gamma}(G)(\mathcal{O})$ and $\mathbf{G}^{\Gamma}(G)$\nomenclature[G^Gamma(G)]{$\mathbf{G}^{\Gamma}(G)$}{Set of all $\Gamma$-graduations on $G$, page \nomrefpage}
with $\mathbb{G}^{\Gamma}(G)(\mathcal{O})$, see section~\ref{sub:DefFondDiag}.
With these identifications, we find:\nomenclature[AF^Gamma(G)]{$\mathbf{AF}^{\Gamma}(G)$}{Set of all pairs $(S,\mathcal{F})$ with $S \in \mathbf{S}(G)$ and $\mathcal{F} \in \mathbf{F}^\Gamma (S)$, page \nomrefpage}\nomenclature[ACF^Gamma(G)]{$\mathbf{ACF}^{\Gamma}(G)$}{Set of all triples $(S,B,\mathcal{F})$ with $S \in \mathbf{S}(G)$ and $\mathcal{F} \in F^{-1}(B) \subset \mathbf{F}^\Gamma (S)$, page \nomrefpage}
\begin{eqnarray*}
\mathbf{AF}^{\Gamma}(G) & = & \left\{ (S,\mathcal{F})\in\mathbf{S}(G)\times\mathbf{F}^{\Gamma}(G)\mbox{ with }Z_{G}(S)\subset P_{\mathcal{F}}\right\} \\
 & = & \left\{ (S,\mathcal{G})\in\mathbf{S}(G)\times\mathbf{G}^{\Gamma}(G)\mbox{ with }Z_{G}(S)\subset L_{\mathcal{G}}\right\} \\
 & = & \left\{ (S,\mathcal{G}):S\in\mathbf{S}(G),\,\mathcal{G}\in\mathbf{G}^{\Gamma}(S)\right\} \\
\mathbf{ACF}^{\Gamma}(G) & = & \left\{ (S,B,\mathcal{F}):\mathbf{S}(G)\times\mathbf{B}(G)\times\mathbf{F}^{\Gamma}(G)\mbox{ with }Z_{G}(S)\subset B\subset P_{\mathcal{F}}\right\} \\
 &  & \left\{ (S,B,\mathcal{G}):S\in\mathbf{S}(G),\,\mathcal{G}\in\mathbf{G}^{\Gamma}(S)\mbox{ with }Z_{G}(S)\subset B\subset P_{\mathcal{G}}\right\} .
\end{eqnarray*}
The opposition involution $\iota$ of $\mathbf{G}^{\Gamma}(G)$ lifts
to an involution of $\mathbf{AF}^{\Gamma}(G)$, given by 
\[
\iota(S,\mathcal{F})=(S,\iota_{S}\mathcal{F})\quad\mbox{or}\quad\iota(S,\mathcal{G})=(S,\iota\mathcal{G}).
\]
Here $\iota\mathcal{G}=\mathcal{G}^{-1}$ in $\mathbf{G}^{\Gamma}(S)$
and $(\Fi(\mathcal{G}),\Fi(\iota\mathcal{G}))=(\mathcal{F},\iota_{S}\mathcal{F})$.\nomenclature[iota_S]{$\iota _S$}{Opposition involution on the apartment of $S$  in $\mathbf{F}^\Gamma (G)$, page \nomrefpage}

\subsection{\label{sub:descFacet}~}

Fix $S\in\mathbf{S}(G)$. Let $M$ be its group of characters, $R\subset M$
the roots of $S$ in $\mathfrak{g}=\Lie(G)$ and $\mathfrak{g}=\mathfrak{g}_{0}\oplus\oplus_{\alpha\in R}\mathfrak{g}_{\alpha}$
the corresponding decomposition of $\mathfrak{g}$. Put\nomenclature[W_G(S)]{$W_G(S)$}{Weyl group of $S$ in $G$, $W_G(S)=N_G(S)/Z_G(S)$.}
\[
W=(N_{G}(S)/Z_{G}(S))(\mathcal{O})=N_{G}(S)(\mathcal{O})/Z_{G}(S)(\mathcal{O}).
\]
By~\cite[XXVI 7.4]{SGA3.3r}, there exists a unique root datum $\mathscr{R}=(M,R,M^{\ast},R^{\ast})$
with Weyl group $W$ and a $W$-equivariant bijection $B\leftrightarrow R_{+}$
between the set of all $B\in\mathbf{B}(G)$ with $Z_{G}(S)\subset B$
and the set of all systems of positive roots $R_{+}\subset R$, given
by
\[
\Lie(B)=\mathfrak{g}_{0}\oplus\oplus_{\alpha\in R_{+}}\mathfrak{g}_{\alpha}.
\]

Fix one such $B$ and let $\Delta\subset R_{+}$ be the corresponding
set of simple roots. By~\cite[XXVI 7.7]{SGA3.3r}, there is an inclusion
preserving bijection $P\leftrightarrow A$ between the set of all
$P\in\mathbf{P}(G)$ with $B\subset P$ and the set of all subsets
$A$ of $\Delta$, given by 
\[
\Lie(P)=\mathfrak{g}_{0}\oplus\oplus_{\alpha\in R_{A}}\mathfrak{g}_{\alpha}
\]
where $R_{A}=R_{+}\coprod(\mathbb{Z}A\cap R_{-})$ is the set of roots
in $R=R_{+}\coprod R_{-}$ which are either positive or in the group
spanned by $A$. We write $P_{A}$ for the parabolic associated to
$A$. Since $f:\mathbf{SBP}(G)\rightarrow\mathbf{O}(G)$ is (very)
nice, we obtain a poset bijection
\[
f_{S,B}:\left(\{A\subset\Delta\},\subset\right)\rightarrow(\mathbf{O}(G),\leq),\quad A\mapsto t(P_{A}).
\]

Fix one such $P=P_{A}$. Then the fiber of $F:\mathbf{ACF}^{\Gamma}(G)\rightarrow\mathbf{SBP}(G)$
above $(S,B,P)$ is the set of all $(S,B,\mathcal{G})$ with $\mathcal{G}\in\mathbf{G}^{\Gamma}(S)=\Hom(M,\Gamma)$
such that 
\[
\forall\alpha\in\Delta:\quad\begin{cases}
\mathcal{G}(\alpha)=0 & \mbox{if }\alpha\in A,\\
\mathcal{G}(\alpha)>0 & \mbox{if }\alpha\notin A.
\end{cases}
\]
Since the elements of $\Delta$ are linearly independent and $\Gamma$
is non-trivial, this fiber is not empty and $F:\mathbf{ACF}^{\Gamma}(G)\rightarrow\mathbf{SBP}(G)$
is surjective.

\subsection{~\label{sub:DefFacetsClosedOpen}}

It follows that the five $F$'s in our diagram are surjective. Their
fibers are called facets, the type of a facet is its image in $\mathbf{O}(G)$,
and all facets of the same type are canonically isomorphic. The facets
of type $\circ$ are called chambers. For any $f':\mathbf{X}'\rightarrow\mathbf{C}^{\Gamma}(G)$
over $f:\mathbf{X}\rightarrow\mathbf{O}(G)$ in our diagram, the closed
facet of $x\in\mathbf{X}$ is $F^{-1}(\overline{x})\subset\mathbf{X}'$
where $\overline{x}=\{y\geq x\}$. It is a disjoint union of finitely
many facets. Since $x=\min FF^{-1}(\overline{x})$, closed facets
have a well-defined type and those of the same type are canonically
isomorphic. We equip the set of closed facets with the partial order
given by inclusion, which is opposite to the partial order on $\mathbf{X}$.
A closed chamber is a maximal closed facet, and the set of all closed
chambers equals $\mathbf{X}_{\mathrm{min}}=f^{-1}(\circ)$. Since
$f$ is nice, every $x\in\mathbf{X}_{\mathrm{min}}$ defines compatible
sections
\[
\xymatrix{\mathbf{X}'\ar@{->>}[r]_{f'}\ar[d]_{F} & \mathbf{C}^{\Gamma}(G)\ar@/_{.5pc}/[l]_{s_{x}}\ar[d]^{F}\\
\mathbf{X}\ar@{->>}[r]_{f} & \mathbf{O}(G)\ar@/_{.5pc}/[l]_{s_{x}}
}
\]
and the closed chamber $F^{-1}(\overline{x})$ is the image of $s_{x}:\mathbf{C}^{\Gamma}(G)\rightarrow\mathbf{X}'$.
Since $f$ is very nice, any $x'\in\mathbf{X}'$ belongs to some closed
chamber. Since $\mathsf{G}\backslash\mathbf{X}'=\mathbf{C}^{\Gamma}(G)$,
any closed chamber is a fundamental domain for the action of $\mathsf{G}$
on $\mathbf{X}'$.

\subsection{~}

The facets which are minimal among the set of non-minimal facets are
called panels. A panel $F^{-1}(x)$ bounds a chamber $F^{-1}(y)$
if $F^{-1}(x)\subset F^{-1}(\overline{y})$, i.e. $y\leq x$. Any
panel bounds at least $3$ chambers. Indeed, this means that a non-minimal
parabolic subgroup $P$ of $G$ contains at least $3$ minimal parabolic
subgroups. To establish this, fix a Levi subgroup $L$ of $P$ --
which exists by \cite[XXVI 2.3]{SGA3.3r} or the surjectivity of $p_{1}$.
Then $Q\mapsto L\cap Q$ yields a bijection between the parabolic
subgroups $Q$ of $G$ contained in $P$ and the parabolic subgroups
of $L$, by \cite[XXVI 1.20]{SGA3.3r}. Since $P$ is non-minimal,
$L$ is not a minimal parabolic subgroup of itself. By \cite[XXVI 5.11]{SGA3.3r},
it contains at least $3$ such subgroups, and so does $P$.

\subsection{\label{sub:apartments}~}

The apartment attached to $S\in\mathbf{S}(G)$ is the subset $\mathbf{F}^{\Gamma}(S)$
of all $\mathcal{F}$'s in $\mathbf{F}^{\Gamma}(G)$ such that $Z_{G}(S)\subset P_{\mathcal{F}}$.
It is canonically isomorphic to $\mathbf{G}^{\Gamma}(S)$ by the map
which sends $\mathcal{G}:\mathbb{D}_{\mathcal{O}}(\Gamma)\rightarrow S$
to $\Fil(\mathcal{G})$. Our notations are thus consistent since 
\[
\mathbf{F}^{\Gamma}(S)=\mathbf{G}^{\Gamma}(S)=\mathbb{G}^{\Gamma}(S)(\mathcal{O})=\mathbb{F}^{\Gamma}(S)(\mathcal{O}).
\]
Since $F:\mathbf{A}\mathbf{F}^{\Gamma}(G)\rightarrow\mathbf{SP}$
is surjective, $\mathbf{F}^{\Gamma}(S)$ is the disjoint union of
the facets $F^{-1}(P)$ with $Z_{G}(S)\subset P$. Since $Z_{G}(S)=B\cap B'$
for some pair of opposed minimal parabolic subgroups of $G$, $\mathbf{F}^{\Gamma}(S)$
determines $Z_{G}(S)=\cap_{F^{-1}(P)\subset\mathbf{F}^{\Gamma}(S)}P$
and its split radical $S$. Thus $S\mapsto\mathbf{F}^{\Gamma}(S)$
is an $\Aut(G)$-equivariant bijection from $\mathbf{S}(G)$ onto
the set $\mathbf{A}(G)$ of apartments in $\mathbf{F}^{\Gamma}(G)$.
In particular, 
\begin{eqnarray*}
\mathbf{AF}^{\Gamma}(G) & = & \left\{ (A,\mathcal{F}):A\in\mathbf{A}(G),\mathcal{F}\in A\right\} \\
\mathbf{ACF}^{\Gamma}(G) & = & \left\{ (A,C,\mathcal{F}):\mathcal{F}\in C=\mbox{closed chamber of }A\in\mathbf{A}(G)\right\} .
\end{eqnarray*}
Since $\mathbf{AF}^{\Gamma}(G)\rightarrow\mathbf{F}^{\Gamma}(G)$
is surjective, every $\mathcal{F}\in\mathbf{F}^{\Gamma}(G)$ belongs
to some $A\in\mathbf{A}(G)$. The stabilizer of $\mathbf{F}^{\Gamma}(S)$
in $\mathsf{G}=G(\mathcal{O})$ equals $\mathsf{N}_{G}(S)=N_{G}(S)(\mathcal{O})$
and its pointwise stabilizer equals $\mathsf{Z}_{G}(S)=Z_{G}(S)(\mathcal{O})$.
Thus $\mathsf{W}_{G}(S)=\mathsf{N}_{G}(S)/\mathsf{Z}_{G}(S)$ acts
on $\mathbf{F}^{\Gamma}(S)$, and this gives the usual action of $\mathsf{W}_{G}(S)$
on $\mathbf{F}^{\Gamma}(S)=\mathbf{G}^{\Gamma}(S)=\Hom(\mathbb{D}_{\mathcal{O}}(\Gamma),S)$.

\subsection{~}

A panel bounds exactly two chambers in any apartment which contains
it. Indeed, let $F^{-1}(Q)$ be a panel in $\mathbf{F}^{\Gamma}(S)$.
Given \cite[XXVI 1.20]{SGA3.3r}, we have to show that there are exactly
two minimal parabolic subgroups of $L=Q\cap\iota_{S}Q$ containing
$Z_{G}(S)$. By assumption, $\mathbf{O}(L)=\{\circ,t(L)\}$. Our claim
then follows from~\ref{sub:descFacet}.

\subsection{~\label{sub:StdAndApartments}}

For any $\mathcal{F}_{1},\mathcal{F}_{2}\in\mathbf{F}^{\Gamma}(G)$,
there is an apartment $A\in\mathbf{A}(G)$ containing $\mathcal{F}_{1}$
and $\mathcal{F}_{2}$ if and only if $P_{\mathcal{F}_{1}}$ and $P_{\mathcal{F}_{2}}$
are in standard position \cite[XXVI 4.5]{SGA3.3r}. Indeed if $\mathcal{F}_{1},\mathcal{F}_{2}\in\mathbf{F}^{\Gamma}(S)$
for some $S\in\mathbf{S}(G)$, then $Z_{G}(S)$ contains a maximal
torus $T$ by \cite[XIV 3.20]{SGA3.2}, thus $T\subset Z_{G}(S)\subset P_{\mathcal{F}_{1}}\cap P_{\mathcal{F}_{2}}$.
If conversely $T\subset P_{\mathcal{F}_{1}}\cap P_{\mathcal{F}_{2}}$
for some maximal torus $T$ of $G$, then $\mathcal{F}_{1},\mathcal{F}_{2}\in\mathbf{F}^{\Gamma}(S)$
for any $S\in\mathbf{S}(G)$ containing the maximal split torus $T_{sp}$
of $T$: if $R_{i}$ is the split radical of the unique Levi subgroup
$L_{i}$ of $P_{\mathcal{F}_{i}}$ containing $T$ \cite[XXVI 1.6]{SGA3.3r},
then $R_{i}\subset T_{sp}\subset S$, therefore $Z_{G}(S)\subset Z_{G}(R_{i})=L_{i}\subset P_{\mathcal{F}_{i}}$
by \cite[XXVI 6.11]{SGA3.3r}. We will denote by\nomenclature[Std(G)]{$\mathbf{Std}(G)$}{Set of all pairs of parabolic subgroups of $G$ in standard relative position, page \nomrefpage}\nomenclature[Std^Gamma(G)]{$\mathbf{Std}^{\Gamma}(G)$}{Set of all pairs of $\Gamma$-filtrations on $G$ in standard relative position, page \nomrefpage}
\[
\mathbf{Std}(G)=\mathbb{STD}(G)(\mathcal{O})\quad\mbox{and}\quad\mathbf{Std}^{\Gamma}(G)=\mathbb{STD}^{\Gamma}(G)(\mathcal{O})
\]
the corresponding subsets of $\mathbf{P}(G)^{2}$ and $\mathbf{F}^{\Gamma}(G)^{2}$,
so that
\[
\mathbf{Std}^{\Gamma}(G)=F^{-1}(\mathbf{Std}(G))=\cup_{S\in\mathbf{S}(G)}\mathbf{F}^{\Gamma}(S)\times\mathbf{F}^{\Gamma}(S)\subset\mathbf{F}^{\Gamma}(G)^{2}.
\]
For any $S\in\mathbf{S}(G)$, the map $+:\mathbf{Std}^{\Gamma}(G)\rightarrow\mathbf{F}^{\Gamma}(G)$
of section~\ref{sub:AdditionMap} induces the natural commutative
group structure on $\mathbf{F}^{\Gamma}(S)=\mathbf{G}^{\Gamma}(S)=\Hom(\mathbb{D}_{\mathcal{O}}(\Gamma),S)$.

\subsection{~\label{sub:DefGrPOnPoints}}

For $P\in\mathbf{P}(G)$ with unipotent radical $U$ and Levi $L$,
we also define
\[
\mathbf{Std}^{\Gamma}(P)=\left\{ \mathcal{F}\in\mathbf{F}^{\Gamma}(G):(P,P_{\mathcal{F}})\in\mathbf{Std}(G)\right\} =\cup_{Z_{G}(S)\subset P}\mathbf{F}^{\Gamma}(S).
\]
As explained in sections~\ref{sub:FunctLevi} and \ref{sub:defofGrMap},
the functorial map $\mathbf{F}^{\Gamma}(L)\rightarrow\mathbf{F}^{\Gamma}(G)$
lands in $\mathbf{Std}^{\Gamma}(P)$ and actually defines a section
of a $\mathsf{P}$-equivariant map
\[
\Gr_{P}:\mathbf{Std}^{\Gamma}(P)\twoheadrightarrow\mathbf{F}^{\Gamma}(P/U)
\]
which may be computed as follows: starting with $\mathcal{F}\in\mathbf{Std}^{\Gamma}(P)$,
pick $S\in\mathbf{S}(G)$ such that $Z_{G}(S)\subset P\cap P_{\mathcal{F}}$,
let $\mathcal{G}\in\mathbf{G}^{\Gamma}(S)$ be the corresponding splitting
of $\mathcal{F}$ and let $\overline{\mathcal{G}}$ be the image of
$\mathcal{G}$ in $\mathbf{G}^{\Gamma}(P/U)$. Then $\Gr_{P}(\mathcal{F})=\Fi(\overline{\mathcal{G}})$
in $\mathbf{F}^{\Gamma}(P/U)$. Thus for $\mathcal{F}\in F^{-1}(P)$,
$\mathcal{F}\in\mathbf{Std}^{\Gamma}(P)$ and $\Gr_{P}(\mathcal{F})=\overline{\mathcal{F}}$
with $\overline{\mathcal{F}}\in\mathbf{G}^{\Gamma}(\overline{R}(P))$
as in~\ref{sub:NotationsPFandFbar}.
\begin{thm*}
\cite[XXVI 4.1.1]{SGA3.3r} If $\mathcal{O}=K$ is a field, then $\mathbf{Std}(G)=\mathbf{P}(G)^{2}$,
thus also $\mathbf{Std}^{\Gamma}(G)=\mathbf{F}^{\Gamma}(G)^{2}$ and
$\Gr_{P}$ is defined on the whole of $\mathbf{F}^{\Gamma}(G)=\mathbf{Std}^{\Gamma}(P)$:
\[
\Gr_{P}:\mathbf{F}^{\Gamma}(G)\twoheadrightarrow\mathbf{F}^{\Gamma}(P/U)
\]

\end{thm*}

\subsection{~}

Suppose now that $\mathcal{O}$ is a Henselian local ring with residue
field $k$. 
\begin{prop}
\label{prop:Specialisation}The specialization from $\mathcal{O}$
to $k$ induces a map from the diagram of section~\ref{sub:TitsBuildDiag}
for $G$ to the similar diagram for $G_{k}$. In the resulting commutative
diagram, all the specialization maps $\mathbf{X}(G)\rightarrow\mathbf{X}(G_{k})$
are surjective, all the squares involving two $F$'s are cartesian,
and $\mathbf{O}(G)\simeq\mathbf{O}(G_{k})$, $\mathbf{C}^{\Gamma}(G)\simeq\mathbf{C}^{\Gamma}(G_{k})$.\end{prop}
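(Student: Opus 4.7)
The plan rests on the Henselian lifting principle in two forms: for a smooth (resp.~étale) $\mathcal{O}$-scheme $\mathbb{X}$, the specialisation $\mathbb{X}(\mathcal{O})\to\mathbb{X}(k)$ is surjective (resp.~bijective). All five $\mathcal{O}$-schemes $\mathbb{P}(G)$, $\mathbb{OPP}(G)$, $\mathbb{G}^{\Gamma}(G)$, $\mathbb{F}^{\Gamma}(G)$, $\mathbb{C}^{\Gamma}(G)$ are smooth over $\Spec\mathcal{O}$ and $\mathbb{O}(G)$ is finite étale by section~\ref{sub:DefFondDiag}. Together with the identifications $\mathbf{P}(G)=\mathbb{P}(G)(\mathcal{O})$, $\mathbf{OPP}(G)=\mathbb{OPP}(G)(\mathcal{O})$, $\mathbf{G}^{\Gamma}(G)=\mathbb{G}^{\Gamma}(G)(\mathcal{O})$, $\mathbf{F}^{\Gamma}(G)=\mathbb{F}^{\Gamma}(G)(\mathcal{O})$, this yields surjectivity of the corresponding specialisation maps and a bijection $\mathbb{O}(G)(\mathcal{O})\simeq\mathbb{O}(G_k)(k)$. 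The identification $\mathbf{O}(G)\simeq\mathbf{O}(G_k)$ then follows, because $\mathbf{O}(G)$ is by definition the image of the type map $\mathbf{P}(G)\twoheadrightarrow\mathbb{O}(G)(\mathcal{O})$: the source specialises surjectively and the target bijectively.

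For the isomorphism $\mathbf{C}^{\Gamma}(G)\simeq\mathbf{C}^{\Gamma}(G_k)$ and for the cartesian character of the three $F$-squares in the right-hand part of the diagram, associated to the facet maps $F:\mathbb{G}^{\Gamma}\to\mathbb{OPP}$, $F:\mathbb{F}^{\Gamma}\to\mathbb{P}$, $F:\mathbb{C}^{\Gamma}\to\mathbb{O}$, I would exploit that each of these facet maps is itself separated étale, being quasi-isotrivial twisted constant by section~\ref{sub:DefFondDiag} and Lemma~\ref{lem:StructQuasIsoTwistedSch}. Thus, for each $\mathcal{O}$-section $y$ of the base $\mathbb{Y}$, the pullback $\mathbb{X}\times_{\mathbb{Y},y}\Spec\mathcal{O}$ is étale over $\Spec\mathcal{O}$, so its $\mathcal{O}$- and $k$-points coincide. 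Summing over $y$ yields simultaneously the bijection $\mathbf{C}^{\Gamma}(G)\simeq\mathbf{C}^{\Gamma}(G_k)$ (via $\mathbf{O}(G)\simeq\mathbf{O}(G_k)$) and the cartesian property for the three right-hand facet squares.

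The main obstacle is the apartment portion of the diagram — $\mathbf{SP}$, $\mathbf{SBP}$, $\mathbf{AF}^{\Gamma}$, $\mathbf{ACF}^{\Gamma}$ — since these encode auxiliary data (a maximal split torus $S$ and, for SBP, a minimal parabolic $B$) that is not directly the section of a scheme already in play. To lift $(S_k,P_k)\in\mathbf{SP}(G_k)$ I would first lift $P_k$ to $P\in\mathbf{P}(G)$ by smoothness of $\mathbb{P}(G)$, then lift the canonical Levi $L_k=P_k\cap\iota_{S_k}P_k$ of $P_k$ to a Levi $L$ of $P$ using that Levi subgroups of $P$ form a torsor under the smooth unipotent radical of $P$ (\cite[XXVI 1.6]{SGA3.3r}), hence trivial over the Henselian $\mathcal{O}$. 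Setting $S=R_{\mathrm{sp}}(L)$ produces a split torus of $G$ with $Z_G(S)\subset P$, specialising to $R_{\mathrm{sp}}(L_k)=S_k$ by~\cite[XXVI 6.11]{SGA3.3r}; its maximality in $G$ follows because over Henselian $\mathcal{O}$ the relative split rank is preserved — any enlargement $S\subsetneq S'$ in $G$ would specialise to a corresponding enlargement of $S_k$, contradicting maximality of $S_k$ (with the converse inequality furnished by Theorem~\ref{thm:RepGr}, which makes the scheme of split subtori of fixed rank smooth, so that split subtori of $G_k$ lift). The case of $\mathbf{SBP}$ is then treated by additionally lifting $B_k\cap L_k$ as a minimal parabolic of $L$ via the smooth scheme of minimal parabolics. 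The surjectivities on $\mathbf{AF}^{\Gamma}$ and $\mathbf{ACF}^{\Gamma}$, and the cartesian property of the two remaining $F$-squares, then reduce to what has already been established on $\mathbf{G}^{\Gamma}$, since for a lifted split torus $S$ the canonical isomorphism $X^{\ast}(S)\simeq X^{\ast}(S_k)$ identifies $\mathbf{G}^{\Gamma}(S)=\Hom(X^{\ast}(S),\Gamma)$ with $\mathbf{G}^{\Gamma}(S_k)$.
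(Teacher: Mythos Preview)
Your first two paragraphs are correct. The use of smoothness plus Hensel for surjectivity on $\mathbf{P}$, $\mathbf{OPP}$, $\mathbf{G}^{\Gamma}$, $\mathbf{F}^{\Gamma}$ and of finite-\'etaleness for the bijection on $\mathbf{O}$ is exactly the paper's argument. Your direct appeal to the \'etaleness of the facet maps $F$ over the Henselian base to obtain $\mathbf{C}^{\Gamma}(G)\simeq\mathbf{C}^{\Gamma}(G_k)$ and the cartesian property of the right-hand $F$-squares is a valid shortcut; the paper instead establishes injectivity of $\mathbf{C}^{\Gamma}(G)\to\mathbf{C}^{\Gamma}(G_k)$ only at the end, by choosing a pair $(S,B)$ and comparing the induced sections $s:\mathbf{C}^{\Gamma}\hookrightarrow\mathbf{F}^{\Gamma}(S)$ over $\mathcal{O}$ and over $k$.

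There is, however, a genuine error in your lifting of $(S_k,P_k)\in\mathbf{SP}(G_k)$. For $P_k$ not minimal, the Levi $L_k=P_k\cap\iota_{S_k}P_k$ strictly contains $Z_{G_k}(S_k)$, so $S_k$ is \emph{not} central in $L_k$ and your claimed equality $R_{sp}(L_k)=S_k$ fails: one only has the inclusion $R_{sp}(L_k)\subset S_k$ (this is what the lemma preceding the definition of $b:\mathbf{SP}(G)\to\mathbf{OPP}(G)$ actually proves), with equality precisely when $L_k=Z_{G_k}(S_k)$, i.e.\ when $P_k$ is minimal. For instance, take $G_k=GL_n$, $S_k$ the diagonal torus, $P_k=G_k$; then $L_k=G_k$ and $R_{sp}(L_k)$ is the one-dimensional center. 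Hence your $S=R_{sp}(L)$ lifts only a proper subtorus of $S_k$, and the subsequent maximality argument collapses with it.

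The paper proceeds differently. It first shows that $\mathbf{S}(G)\to\mathbf{S}(G_k)$ is well-defined: for $S\in\mathbf{S}(G)$ one has $S=R_{sp}(Z_G(S))$, and the quotient $R(Z_G(S))/S$ remains anisotropic after reduction because $\underline{\Hom}(\mathbb{G}_m,R(Z_G(S))/S)$ is a quasi-isotrivial twisted constant scheme, hence has the same sections over $\mathcal{O}$ and over $k$ (Proposition~\ref{prop:RepGspCaseTorus}, Lemma~\ref{lem:StructQuasIsoTwistedSch}). Surjectivity is then obtained by a conjugation trick rather than a direct construction: lift a minimal $\overline{B}\supset Z_{G_k}(\overline{S})$ to $B\in\mathbf{B}(G)$, pick any $S'\in\mathbf{S}(G)$ with $Z_G(S')\subset B$, write $\overline{S}=\Int(\overline{b})(S'_k)$ for some $\overline{b}\in B(k)$, and lift $\overline{b}$ to $b\in B(\mathcal{O})$ by smoothness of $B$; then $S=\Int(b)(S')$ works. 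The same method handles $\mathbf{SP}$ and $\mathbf{SBP}$ simultaneously. Your approach does succeed in the special case $P_k$ minimal (where $L_k=Z_{G_k}(S_k)$ and $R_{sp}(L_k)=S_k$), so you could repair it by treating $\mathbf{S}(G)$ first via that case and then lifting $P_k$ separately among the parabolics containing $Z_G(S)$.
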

\begin{proof}
Since $\mathbb{G}^{\Gamma}$, $\mathbb{F}^{\Gamma}$, $\mathbb{C}^{\Gamma}$,
$\mathbb{OPP}$, $\mathbb{P}$ and $\mathbb{O}$ are smooth over $\Spec(\mathcal{O})$,
the specialization from $\mathcal{O}$ to $k$ induces a map from
the last two squares of our diagram for $G$ to the last two squares
of the analogous diagram for $G_{k}$, in which all specialization
maps $\mathbf{X}(G)\rightarrow\mathbf{X}(G_{k})$ are surjective by
\cite[18.5.17]{EGA4.4}. Since $\mathbb{O}$ is finite étale over
$\Spec(\mathcal{O})$, $\mathbf{O}(G)\rightarrow\mathbf{O}(G_{k})$
is also injective by \cite[18.5.4-5]{EGA4.4}, i.e.~$\mathbf{O}(G)=\mathbf{O}(G_{k})$.
It follows that $\mathbf{P}(G)\twoheadrightarrow\mathbf{P}(G_{k})$
induces $\mathbf{B}(G)\twoheadrightarrow\mathbf{B}(G_{k})$. If $S$
is a maximal split torus in $G$, then $Z_{G}(S)$ is a Levi subgroup
of a minimal parabolic subgroup $B$ of $G$, $S$ is the maximal
split subtorus of the radical $R$ of $Z_{G}(S)$, thus $R/S$ is
an anisotropic torus, i.e.~$\Hom(\mathbb{G}_{m,\mathcal{O}},R/S)=0$.
Then by proposition~\ref{prop:RepGspCaseTorus}, lemma~\ref{lem:StructQuasIsoTwistedSch}
and \cite[18.5.4-5]{EGA4.4}, also $\Hom(\mathbb{G}_{m,k},R_{k}/S_{k})=0$,
thus $S_{k}$ is the maximal split subtorus of the radical $R_{k}$
of the Levi subgroup $Z_{G}(S)_{k}=Z_{G_{k}}(S_{k})$ of the minimal
parabolic subgroup $B_{k}$ of $G_{k}$, in particular $S_{k}$ is
a maximal split subtorus of $G_{k}$ and the specialization map $\mathbf{S}(G)\rightarrow\mathbf{S}(G_{k})$
is well-defined. It is surjective: starting with $\overline{S}$ in
$\mathbf{S}(G_{k})$, choose $\overline{B}\in\mathbf{B}(G_{k})$ containing
$Z_{G_{k}}(\overline{S})$, lift $\overline{B}$ to some $B\in\mathbf{B}(G)$,
choose $S'\in\mathbf{S}(G)$ with $Z_{G}(S')\subset B$, write $\overline{S}=\Int(\overline{b})(S'_{k})$
for some $b\in B(k)$, lift $\overline{b}$ to some $b\in B(\mathcal{O})$
using \cite[18.5.17]{EGA4.4} and set $S=\Int(b)(S')$. Then $S\in\mathbf{S}(G)$
and $S_{k}=\overline{S}$. The same argument shows that $\mathbf{SBP}(G)\rightarrow\mathbf{SBP}(G_{k})$
and $\mathbf{SP}(G)\rightarrow\mathbf{SP}(G_{k})$ are well-defined
and surjective, from which follows that also $\mathbf{ACF}^{\Gamma}(G)\rightarrow\mathbf{ACF}^{\Gamma}(G_{k})$
and $\mathbf{AF}^{\Gamma}(G)\rightarrow\mathbf{AF}^{\Gamma}(G_{k})$
are well-defined. To establish all of the remaining claims, it is
sufficient to show that $\mathbf{C}^{\Gamma}(G)\twoheadrightarrow\mathbf{C}^{\Gamma}(G_{k})$
is also injective, which again follows from~\cite[18.5.4-5]{EGA4.4}
since $\mathbb{C}^{\Gamma}(G)$ is separated and étale over $\mathcal{O}$.
Alternatively, fix $(S,B)$ as above and let $s:\mathbf{C}^{\Gamma}(G)\hookrightarrow\mathbf{F}^{\Gamma}(G)$
and $s_{k}:\mathbf{C}^{\Gamma}(G_{k})\hookrightarrow\mathbf{F}^{\Gamma}(G_{k})$
be the corresponding sections. They are compatible with the specialization
maps and there images are respectively contained in the apartments
$\mathbf{F}^{\Gamma}(S)$ of $\mathbf{F}^{\Gamma}(G)$ and $\mathbf{F}^{\Gamma}(S_{k})$
of $\mathbf{F}^{\Gamma}(G_{k})$. Since $\mathbf{G}^{\Gamma}(S)\simeq\mathbf{G}^{\Gamma}(S_{k})$,
the specialization map $\mathbf{F}^{\Gamma}(G)\rightarrow\mathbf{F}^{\Gamma}(G_{k})$
restricts to a bijection $\mathbf{F}^{\Gamma}(S)\simeq\mathbf{F}^{\Gamma}(S_{k})$,
therefore $\mathbf{C}^{\Gamma}(G)\twoheadrightarrow\mathbf{C}^{\Gamma}(G_{k})$
is indeed injective.
\end{proof}

\subsection{~}

Suppose now that $\mathcal{O}$ is a valuation ring with fraction
field $K$.
\begin{prop}
\label{prop:Generisation}The generization from $\mathcal{O}$ to
$K$ induces a map from the diagram of section~\ref{sub:TitsBuildDiag}
for $G$ to the similar diagram for $G_{K}$. In the resulting commutative
diagram, all the generization maps $\mathbf{X}(G)\rightarrow\mathbf{X}(G_{K})$
are injective, they are bijective for $\mathbf{X}\in\{\mathbf{F}^{\Gamma},\mathbf{C}^{\Gamma},\mathbf{P},\mathbf{O}\}$
and all the squares involving two $F$'s are cartesian.\end{prop}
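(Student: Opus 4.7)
The plan is to follow the template of Proposition~\ref{prop:Specialisation}, replacing Hensel's lemma by the valuative criterion of properness (which applies since $\mathcal{O}$ is a valuation ring) and the use of~\cite[18.5.17]{EGA4.4} by Lemma~\ref{lem:StructQuasIsoTwistedSch}. The key observation is that $S=\Spec(\mathcal{O})$ is irreducible with generic point $\eta=\Spec(K)$ and normal (hence geometrically unibranch), so for every quasi-isotrivial twisted constant $S$-scheme $X$ one has $\Gamma(X/S)=\Gamma(X_{\eta}/\eta)$: $\mathcal{O}$-sections coincide with $K$-sections.

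First I would check well-definedness of the generization maps. For $\mathbf{P}$, $\mathbf{OPP}$, $\mathbf{G}^{\Gamma}$, $\mathbf{F}^{\Gamma}$ (all equal to $\mathbb{X}(G)(\mathcal{O})$) this is functoriality; for $\mathbf{O}$ and $\mathbf{C}^{\Gamma}$ it follows from their definitions as images. The delicate case is $\mathbf{S}$: given $S\in\mathbf{S}(G)$, the split torus $S_K$ must remain maximal in $G_K$, equivalently the $\mathcal{O}$-anisotropic quotient $Q=R(Z_G(S))/S$ must remain anisotropic over $K$. Since $\underline{\Hom}_S(\mathbb{G}_{m,S},Q)$ is quasi-isotrivial twisted constant by Proposition~\ref{prop:RepGspCaseTorus}, its $\mathcal{O}$- and $K$-sections coincide by Lemma~\ref{lem:StructQuasIsoTwistedSch}, so $Q_K$ is anisotropic and $S_K\in\mathbf{S}(G_K)$. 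Injectivity of every generization map is then uniform: each relevant $\mathbb{X}$ is separated over $S$, so two $\mathcal{O}$-sections coinciding over $K$ have closed equalizer containing the generic point, hence agree; the same argument applied to closed subgroup schemes of $G$ handles $\mathbf{S}$.

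For surjectivity on $\mathbf{P},\mathbf{O},\mathbf{C}^{\Gamma},\mathbf{F}^{\Gamma}$: since $\mathbb{P}(G)$ is projective over $S$, the valuative criterion uniquely lifts any $\overline{P}\in\mathbf{P}(G_K)$ to $P\in\mathbf{P}(G)$, and then $t(P)\in\mathbf{O}(G)$ lifts $t(\overline{P})$, giving bijectivity of $\mathbf{P}$ and $\mathbf{O}$. Since $\mathbb{C}^{\Gamma}(G)\to\mathbb{O}(G)$ is quasi-isotrivial twisted constant, its pullback along a lift $o\in\mathbf{O}(G)$ of the image of $\overline{c}\in\mathbf{C}^{\Gamma}(G_K)$ is by Lemma~\ref{lem:StructQuasIsoTwistedSch} a disjoint union of finite étale (hence proper) covers of $S$; $\overline{c}$ lies in one of these and extends uniquely. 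For $\mathbf{F}^{\Gamma}$, first lift $t(\overline{\mathcal{F}})\in\mathbf{C}^{\Gamma}(G_K)$ to $c\in\mathbf{C}^{\Gamma}(G)$ as above; the pullback $c^{\ast}\mathbb{F}^{\Gamma}(G)$ is projective over $S$, so the valuative criterion uniquely extends $\overline{\mathcal{F}}$.

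Finally, for cartesian-ness of the generization squares with two vertical $F$'s, consider such a square with $F:\mathbb{A}\to\mathbb{B}$ one of the three scheme-theoretic facet morphisms $\mathbb{G}^{\Gamma}\to\mathbb{OPP}$, $\mathbb{F}^{\Gamma}\to\mathbb{P}$, $\mathbb{C}^{\Gamma}\to\mathbb{O}$. Given compatible $(b,\overline{a})\in\mathbf{B}(G)\times\mathbf{A}(G_K)$, the pullback $b^{\ast}\mathbb{A}$ is again quasi-isotrivial twisted constant over $S$; Lemma~\ref{lem:StructQuasIsoTwistedSch} identifies its $\mathcal{O}$-sections with its $K$-sections and produces the unique required lift $a\in\mathbf{A}(G)$, uniqueness being injectivity. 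The $a$- and $b$-squares, which involve $\mathbf{SP}$ and $\mathbf{SBP}$, reduce to the above via the natural identifications $\mathbf{AF}^{\Gamma}(G)=\mathbf{SP}(G)\times_{\mathbf{P}(G)}\mathbf{F}^{\Gamma}(G)$ and $\mathbf{ACF}^{\Gamma}(G)=\mathbf{SBP}(G)\times_{\mathbf{P}(G)}\mathbf{F}^{\Gamma}(G)$, so cartesian-ness propagates from the $\mathbf{F}^{\Gamma}/\mathbf{P}$ square once well-definedness of the $\mathbf{S}$-component is known. The principal obstacle will be the well-definedness argument for $\mathbf{S}$, together with the bookkeeping around $\mathbf{O}$ and $\mathbf{C}^{\Gamma}$ being subsets rather than the full sets $\mathbb{O}(G)(\mathcal{O})$ and $\mathbb{C}^{\Gamma}(G)(\mathcal{O})$.
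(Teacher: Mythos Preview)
Your proof is correct and follows the paper's template closely: the well-definedness argument for $\mathbf{S}(G)\to\mathbf{S}(G_K)$ via anisotropy of $R(Z_G(S))/S$ using Proposition~\ref{prop:RepGspCaseTorus} and Lemma~\ref{lem:StructQuasIsoTwistedSch}, injectivity via separatedness, and bijectivity of $\mathbf{P}$ and $\mathbf{O}$ via properness are all exactly as in the paper.

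The one genuine difference lies in how you handle $\mathbf{C}^{\Gamma}$ and the cartesian squares. The paper reduces everything to surjectivity of $\mathbf{C}^{\Gamma}(G)\hookrightarrow\mathbf{C}^{\Gamma}(G_K)$, which it then proves by choosing a pair $(S,B)$, taking the associated sections $s,s_K$ of $t$, and using the evident bijection $\mathbf{G}^{\Gamma}(S)\simeq\mathbf{G}^{\Gamma}(S_K)$ on the apartment to conclude. You instead invoke Lemma~\ref{lem:StructQuasIsoTwistedSch} systematically: since $\Spec(\mathcal{O})$ is irreducible and geometrically unibranch (valuation rings being normal), the pullback of the quasi-isotrivial twisted constant morphism $F:\mathbb{C}^{\Gamma}(G)\to\mathbb{O}(G)$ along any $o\in\mathbf{O}(G)$ has $\Gamma(-/\mathcal{O})=\Gamma(-_K/K)$, which gives both the surjectivity of $\mathbf{C}^{\Gamma}$ and the cartesianness of all the $F$-squares in one stroke; you then treat $\mathbf{F}^{\Gamma}$ separately via projectivity of $t$. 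Your route is a bit more structural and avoids singling out an apartment, while the paper's apartment argument is more concrete and makes the parallel with Proposition~\ref{prop:Specialisation} literal. Both are equally short.
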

\begin{proof}
Since $\mathbb{G}^{\Gamma}$, $\mathbb{F}^{\Gamma}$, $\mathbb{C}^{\Gamma}$,
$\mathbb{OPP}$, $\mathbb{P}$ and $\mathbb{O}$ are separated over
$\Spec(\mathcal{O})$, the generization from $\mathcal{O}$ to $K$
induces a map from the last two squares of our diagram for $G$ to
the last two squares of the analogous diagram for $G_{K}$, in which
all generization maps $\mathbf{X}(G)\rightarrow\mathbf{X}(G_{K})$
are injective. Since $\mathbb{O}$ and $\mathbb{P}$ are proper over
$\Spec(\mathcal{O})$, the maps $\mathbf{P}(G)\rightarrow\mathbf{P}(G_{K})$
and $\mathbf{O}(G)\rightarrow\mathbf{O}(G_{K})$ are in fact bijective.
It follows that $\mathbf{P}(G)\simeq\mathbf{P}(G_{K})$ induces $\mathbf{B}(G)\simeq\mathbf{B}(G_{K})$.
If $S$ is a maximal split torus in $G$, then $Z_{G}(S)$ is a Levi
subgroup of a minimal parabolic subgroup $B$ of $G$, $S$ is the
maximal split subtorus of the radical $R$ of $Z_{G}(S)$, thus $R/S$
is an anisotropic torus, i.e.~$\Hom(\mathbb{G}_{m,\mathcal{O}},R/S)=0$.
Then by proposition~\ref{prop:RepGspCaseTorus} and lemma~\ref{lem:StructQuasIsoTwistedSch},
also $\Hom(\mathbb{G}_{m,K},R_{K}/S_{K})=0$, thus $S_{K}$ is the
maximal split subtorus of the radical $R_{K}$ of the Levi subgroup
$Z_{G}(S)_{K}=Z_{G_{K}}(S_{K})$ of the minimal parabolic subgroup
$B_{K}$ of $G_{K}$, in particular $S_{K}$ is a maximal split subtorus
of $G_{K}$ and the specialization map $\mathbf{S}(G)\rightarrow\mathbf{S}(G_{K})$
is well-defined. It is injective by \cite[XXII 5.8.3]{SGA3.3r}, so
are $\mathbf{SBP}(G)\rightarrow\mathbf{SBP}(G_{K})$ and $\mathbf{SP}(G)\rightarrow\mathbf{SP}(G_{K})$,
while $\mathbf{ACF}^{\Gamma}(G)\rightarrow\mathbf{ACF}^{\Gamma}(G_{K})$
and $\mathbf{AF}^{\Gamma}(G)\rightarrow\mathbf{AF}^{\Gamma}(G_{K})$
are well-defined. To establish the remaining claims, it is sufficient
to show that $\mathbf{C}^{\Gamma}(G)\hookrightarrow\mathbf{C}^{\Gamma}(G_{K})$
is also surjective, which again follows from lemma~\ref{lem:StructQuasIsoTwistedSch}
since $\mathbb{C}^{\Gamma}(G)$ is a quasi-isotrivial twisted constant
scheme over $\mathcal{O}$. Alternatively, fix $(S,B)$ as above and
let $s:\mathbf{C}^{\Gamma}(G)\hookrightarrow\mathbf{F}^{\Gamma}(G)$
and $s_{K}:\mathbf{C}^{\Gamma}(G_{k})\hookrightarrow\mathbf{F}^{\Gamma}(G_{k})$
be the corresponding sections. They are compatible with the generization
maps and there images are respectively contained in the apartments
$\mathbf{F}^{\Gamma}(S)$ of $\mathbf{F}^{\Gamma}(G)$ and $\mathbf{F}^{\Gamma}(S_{K})$
of $\mathbf{F}^{\Gamma}(G_{K})$. Since $\mathbf{G}^{\Gamma}(S)\simeq\mathbf{G}^{\Gamma}(S_{K})$,
the generization map $\mathbf{F}^{\Gamma}(G)\rightarrow\mathbf{F}^{\Gamma}(G_{K})$
restricts to a bijection $\mathbf{F}^{\Gamma}(S)\simeq\mathbf{F}^{\Gamma}(S_{K})$,
therefore $\mathbf{C}^{\Gamma}(G)\hookrightarrow\mathbf{C}^{\Gamma}(G_{K})$
is indeed surjective.\end{proof}
\begin{rem}
Under the identifications of theorem~\ref{thm:MainTan} (note that
$G$ is isotrivial over the valuation ring $\mathcal{O}$ by proposition~\ref{pro:LocalUnibImpliesIsotriv}),
the inverse of $\mathbf{F}^{\Gamma}(G)\rightarrow\mathbf{F}^{\Gamma}(G_{K})$
maps a $\Gamma$-filtration $\mathcal{F}_{K}$ on $\omega_{K}^{\circ}$
to the $\Gamma$-filtration $\mathcal{F}$ on $\omega^{\circ}$ defined
by 
\[
\forall\tau\in\Rep^{\circ}(G)(\mathcal{O}),\,\gamma\in\Gamma:\quad\mathcal{F}^{\gamma}(\tau)=\mathcal{F}_{K}^{\gamma}(\tau)\cap V(\tau)\mbox{ in }V_{K}(\tau).
\]
It is not at all obvious that this formula indeed defines a right
exact functor!
\end{rem}

\subsection{~}

Set $\mathbf{Gen}(G)=\mathbb{GEN}(G)(\mathcal{O})$, $\mathbf{Gen}^{\Gamma}(G)=\mathbb{GEN}^{\Gamma}(G)(\mathcal{O})$
and define
\begin{eqnarray*}
\mathbf{Gen}(Y) & = & \left\{ P\in\mathbf{P}(G):\{P\}\times Y\subset\mathbf{Gen}(G)\right\} \\
\mathbf{Gen}^{\Gamma}(X) & = & \left\{ \mathcal{F}\in\mathbf{F}^{\Gamma}(G):\{\mathcal{F}\}\times X\subset\mathbf{Gen}^{\Gamma}(G)\right\} 
\end{eqnarray*}
for $Y\subset\mathbf{P}(G)$ and $X\subset\mathbf{F}^{\Gamma}(G)$.
We say that $Y$ (resp. $X$) is thin if
\[
t(\mathbf{Gen}(Y))=\mathbf{O}(G)\quad(\mbox{resp. }t(\mathbf{Gen}^{\Gamma}(X))=\mathbf{C}^{\Gamma}(G)).
\]
Plainly, $F^{-1}(\mathbf{Gen}(Y))=\mathbf{Gen}^{\Gamma}(F^{-1}(Y))$
and $F(\mathbf{Gen}^{\Gamma}(X))=\mathbf{Gen}(F(X))$, thus
\[
\left(Y\mbox{ is thin}\Leftrightarrow F^{-1}(Y)\mbox{ is thin}\right)\quad\mbox{and}\quad\left(X\mbox{ is thin}\Leftrightarrow F(X)\mbox{ is thin}\right).
\]
Moreover $Y$ is thin $\Leftrightarrow$ $\circ\in t(\mathbf{Gen}(Y))$
and a thin subset of a thin set is thin.
\begin{lem}
Suppose that $\mathcal{O}$ is a strictly Henselian valuation ring
with residue field $k$ and fraction field $K$. Then any subset $Y_{K}$
of $\mathbf{P}(G_{K})$ (resp. $X_{K}$ of $\mathbf{F}^{\Gamma}(G_{K})$)
whose image in $\mathbf{P}(G_{k})$ is finite is a thin subset of
$\mathbf{P}(G_{K})$ (resp. $\mathbf{F}^{\Gamma}(G_{K})$).\end{lem}
\begin{proof}
It is sufficient to treat the case of a subset $Y_{K}$ of $\mathbf{P}(G_{K})$.
Let $Y\simeq Y_{K}$ be its pre-image in $\mathbf{P}(G)$ with finite
image $Y_{k}$ in $\mathbf{P}(G_{k})$. For every $y\in Y_{k}$, 
\[
U_{y}=t^{-1}(\circ_{k})\times\{y\}\cap\mathbb{GEN}(G_{k})
\]
is a non-empty open subscheme of the (geometrically) irreducible $k$-scheme
$t^{-1}(\circ_{k})$ by \cite[XXVI 4.2.4.iii]{SGA3.3r}, thus $(\cap_{y\in Y_{k}}U_{y})(k)=\mathbf{B}(G_{k})\cap\mathbf{Gen}(Y_{k})$
is not empty since $k$ is algebraically closed and $t^{-1}(\circ_{k})$
is of finite type over $k$. If $B\in\mathbf{B}(G)$ lifts $B_{k}\in\mathbf{B}(G_{k})\cap\mathbf{Gen}(Y_{k})$,
then $\{B\}\times Y\subset\mathbf{Gen}(G)$ since $\mathbb{GEN}(G)$
is open in $\mathbb{P}(G)^{2}$, thus also $\{B_{K}\}\times Y_{K}\subset\mathbf{Gen}(G_{K})$,
i.e.~$B_{K}\in\mathbf{Gen}(Y_{K})$. Since also $B_{K}\in\mathbf{B}(G_{K})$,
$Y_{K}$ is indeed a thin subset of $\mathbf{P}(G_{K})$.
\end{proof}

\section{Distances and angles}

Suppose from now on that $\Gamma$ is a subring of $\mathbb{R}$ with
the induced total order on the underlying commutative group.

\subsection{~}

Recall from theorem~\ref{thm:MainTan} that for $\tau\in\Rep^{\circ}(G)(\mathcal{O})$,
any $\mathcal{F}\in\mathbf{F}^{\Gamma}(G)$ defines a $\Gamma$-filtration
$\mathcal{F}(\tau)$ on the (free) $\mathcal{O}$-module $V(\tau)$.
For any $(\mathcal{F}_{1},\mathcal{F}_{2})\in\mathbf{Std}^{\Gamma}(G)$
and $\gamma_{1},\gamma_{2}\in\Gamma$, the $\mathcal{O}$-module 
\[
\Gr_{\mathcal{F}_{1},\mathcal{F}_{2}}^{\gamma_{1},\gamma_{2}}(\tau)=\frac{\mathcal{F}_{1}^{\gamma_{1}}(\tau)\cap\mathcal{F}_{2}^{\gamma_{2}}(\tau)}{\mathcal{F}_{1,+}^{\gamma_{1}}(\tau)\cap\mathcal{F}_{2}^{\gamma_{2}}(\tau)+\mathcal{F}_{1}^{\gamma_{1}}(\tau)\cap\mathcal{F}_{2,+}^{\gamma_{2}}(\tau)}
\]
is free of finite rank: if $\mathcal{F}_{i}=\Fi(\mathcal{G}_{i})$
with $\mathcal{G}_{i}\in\mathbf{G}^{\Gamma}(S)=\Hom(M,\Gamma)$ for
some $S$ in $\mathbf{S}(G)$ with $M=\Hom(S,\mathbb{G}_{m,\mathcal{O}})$,
then $\mathcal{F}_{i}(\tau)^{\gamma}=\oplus_{\mathcal{G}_{i}(m)\geq\gamma}V(\tau)_{m}$
for any $i\in\{1,2\}$ and $\gamma\in\Gamma$ where $V(\tau)=\oplus_{m\in M}V(\tau)_{m}$
is the eigenspace decomposition of $\tau\vert_{S}$, thus 
\[
\Gr_{\mathcal{F}_{1},\mathcal{F}_{2}}^{\gamma_{1},\gamma_{2}}(\tau)=\oplus_{m:\mathcal{G}_{i}(m)=\gamma_{i}}V(\tau)_{m}.
\]

\subsection{~\label{sub:constscalangdist}}

Since $\Gamma$ is a subring of $\mathbb{R}$:\nomenclature[<-,->_tau]{$\left\langle -,-\right\rangle _{\tau}$}{Scalar product on $\mathbf{Std}^\Gamma (G)$ defined page \nomrefpage}\nomenclature[<(]{$\left\Vert -\right\Vert _{\tau}$}{Length on $\mathbf{F}^\Gamma (G)$ or $\mathbf{C}^\Gamma (G)$ defined page \nomrefpage}\nomenclature[d_tau]{$d_{\tau}$}{Distance on $\mathbf{Std}^\Gamma (G)$ defined page \nomrefpage}\nomenclature[<(-,-)_tau]{$\measuredangle_{\tau}(-,-)$}{Angle on $\mathbf{Std}^\Gamma (G)$ defined page \nomrefpage}

$\bullet$ Any apartment is endowed with a canonical structure of
free $\Gamma$-module, and these structures are preserved by the action
of $\Aut(G)$ on $\mathbf{F}^{\Gamma}(G)$. Indeed, 
\[
\mathbf{F}^{\Gamma}(S)=\mathbf{G}^{\Gamma}(S)=\Hom(M(S),\Gamma)\quad\mbox{with}\quad M(S)=\Hom(S,\mathbb{G}_{m,\mathcal{O}}).
\]

$\bullet$ Any $\tau\in\Rep^{\circ}(G)(\mathcal{O})$ defines a $\mathsf{G}$-invariant
function 
\[
\left\langle -,-\right\rangle _{\tau}:\mathbf{Std}^{\Gamma}(G)\rightarrow\Gamma,\qquad\left\langle \mathcal{F}_{1},\mathcal{F}_{2}\right\rangle _{\tau}={\textstyle \sum_{\gamma_{1},\gamma_{2}}}\mathrm{rank}_{\mathcal{O}}\left(\Gr_{\mathcal{F}_{1},\mathcal{F}_{2}}^{\gamma_{1},\gamma_{2}}(\tau)\right)\cdot\gamma_{1}\gamma_{2}
\]
whose restriction to $\mathbf{F}^{\Gamma}(S)$ is bilinear, symmetric
and non-negative, given by
\[
\left\langle \mathcal{F}_{1},\mathcal{F}_{2}\right\rangle _{\tau}={\textstyle \sum_{m\in M(S)}}\mathrm{rank}_{\mathcal{O}}\left(V(\tau)_{m}\right)\cdot\mathcal{G}_{1}(m)\mathcal{G}_{2}(m)
\]
if $\mathcal{F}_{i}\in\mathbf{F}^{\Gamma}(S)$ corresponds to $\mathcal{G}_{i}\in\Hom(M(S),\Gamma)$.
Its kernel equals $\mathbf{G}^{\Gamma}(\ker(\tau\vert S))$, thus
$\left\langle -,-\right\rangle _{\tau}$ is positive definite when
$\tau$ is a faithful representation of $G$.

$\bullet$ Write $\left\Vert \mathcal{F}\right\Vert _{\tau}=\left\langle \mathcal{F},\mathcal{F}\right\rangle _{\tau}^{1/2}$.
Thus $\left\Vert -\right\Vert _{\tau}:\mathbf{F}^{\Gamma}(G)\rightarrow\mathbb{R}_{+}$
is a $\mathsf{G}$-invariant function. It descends to a $\mathsf{G}$-invariant
function $\left\Vert -\right\Vert _{\tau}:\mathbf{C}^{\Gamma}(G)\rightarrow\mathbb{R}_{+}$
with $\left\Vert \mathcal{F}\right\Vert _{\tau}=\left\Vert t(\mathcal{F})\right\Vert _{\tau}$.
We have the Cauchy-Schwartz inequality
\[
\forall(\mathcal{F}_{1},\mathcal{F}_{2})\in\mathbf{Std}^{\Gamma}(G):\qquad\left|\left\langle \mathcal{F}_{1},\mathcal{F}_{2}\right\rangle _{\tau}\right|\leq\left\Vert \mathcal{F}_{1}\right\Vert _{\tau}\left\Vert \mathcal{F}_{2}\right\Vert _{\tau}.
\]

$\bullet$ We may thus also define a $\mathsf{G}$-invariant angle
\[
\measuredangle_{\tau}(-,-):\mathbf{Std}^{\Gamma}(G)\rightarrow[0,\pi],\qquad\left\langle \mathcal{F}_{1},\mathcal{F}_{2}\right\rangle _{\tau}=\cos\left(\measuredangle_{\tau}(\mathcal{F}_{1},\mathcal{F}_{2})\right)\cdot\left\Vert \mathcal{F}_{1}\right\Vert _{\tau}\left\Vert \mathcal{F}_{2}\right\Vert _{\tau}
\]
and a $\mathsf{G}$-invariant function
\[
d_{\tau}(-,-):\mathbf{Std}^{\Gamma}(G)\rightarrow\mathbb{R}_{+},\qquad d_{\tau}(\mathcal{F}_{1},\mathcal{F}_{2})=\sqrt{\left\Vert \mathcal{F}_{1}\right\Vert _{\tau}^{2}+\left\Vert \mathcal{F}_{2}\right\Vert _{\tau}^{2}-2\left\langle \mathcal{F}_{1},\mathcal{F}_{2}\right\rangle _{\tau}}
\]
inducing the distance $d_{\tau}(\mathcal{F}_{1},\mathcal{F}_{2})=\left\Vert \mathcal{F}_{2}-\mathcal{F}_{1}\right\Vert _{\tau}$
on any apartment. 

$\bullet$ If $\tau$ is faithful, then $d_{\tau}(\mathcal{F}_{1},\mathcal{F}_{2})=0$
if and only if $\mathcal{F}_{1}=\mathcal{F}_{2}$ and the map $\mathcal{G}\mapsto(\Fi(\mathcal{G}),\Fi(\iota\mathcal{G}))$
induces a $\mathsf{G}$-equivariant bijection 
\[
\mathbf{G}^{\Gamma}(G)\simeq\left\{ (\mathcal{F}_{1},\mathcal{F}_{2})\in\mathbf{Std}^{\Gamma}(G):\left\Vert \mathcal{F}_{1}\right\Vert _{\tau}=\left\Vert \mathcal{F}_{2}\right\Vert _{\tau}\mbox{ and }\measuredangle_{\tau}(\mathcal{F}_{1},\mathcal{F}_{2})=\pi\right\} .
\]

\subsection{~}

For $(\mathcal{F}_{1},\mathcal{F}_{2})\in\mathbf{Std}^{\Gamma}(G)$,
we also have the following formula
\[
\left\langle \mathcal{F}_{1},\mathcal{F}_{2}\right\rangle _{\tau}={\textstyle \sum_{\gamma}\gamma\cdot\deg\,}\Gr_{\mathcal{F}_{1}}^{\gamma}(\mathcal{F}_{2},\tau)={\textstyle \sum_{\gamma}\gamma\cdot\deg}\,\Gr_{\mathcal{F}_{2}}^{\gamma}(\mathcal{F}_{1},\tau)
\]
where $\Gr_{\mathcal{F}}^{\gamma}(\mathcal{G},\tau)$ is the filtration
induced by $\mathcal{G}(\tau)$ on $\Gr_{\mathcal{F}}^{\gamma}(\tau)=\mathcal{F}^{\gamma}(\tau)/\mathcal{F}_{+}^{\gamma}(\tau)$,
i.e.
\[
\forall\theta\in\Gamma,\qquad\Gr_{\mathcal{F}}^{\gamma}(\mathcal{G},\tau)^{\theta}=\mathcal{F}^{\gamma}(\tau)\cap\mathcal{G}^{\theta}(\tau)+\mathcal{F}_{+}^{\gamma}(\tau)/\mathcal{F}_{+}^{\gamma}(\tau)
\]
and the degree of an $\mathbb{R}$-filtration $\mathcal{F}$ on a
finite free $\mathcal{O}$-module $V$ is given by 
\[
\deg(\mathcal{F})={\textstyle \sum_{\gamma}}\gamma\cdot\mathrm{rank}_{\mathcal{O}}(\Gr_{\mathcal{F}}^{\gamma}).
\]
Choosing a splitting of $\mathcal{F}$, one checks easily that also
\[
\deg(\mathcal{F})=\deg(\det\mathcal{F})
\]
where $\det\mathcal{F}$ is the $\mathbb{R}$-filtration on $\det V=\Lambda_{\mathcal{O}}^{r}V$,
$r=\mathrm{rank}_{\mathcal{O}}V$ defined by 
\[
(\det\mathcal{F})^{\gamma}=\mbox{span of }\left\{ v_{1}\wedge\cdots\wedge v_{r}:v_{i}\in\mathcal{F}^{\gamma_{i}},{\textstyle \sum}\gamma_{i}=\gamma\right\} .
\]

\subsection{~\label{sub:decisortho4F(G)}}

If $\Gamma$ is a $\mathbb{Q}$-vector space, the decomposition 
\[
\mathbb{F}^{\Gamma}(G)=\mathbb{F}^{\Gamma}(G^{\mathrm{der}})\times\mathbb{G}^{\Gamma}(Z(G))
\]
of section~\ref{sub:Isogenies} induces an analogous decomposition
\[
\mathbf{F}^{\Gamma}(G)=\mathbf{F}^{\Gamma}(G^{\mathrm{der}})\times\mathbf{G}^{\Gamma}(Z(G))
\]
which is orthogonal in the following sense: for $\mathcal{F}_{1},\mathcal{F}_{2}\in\mathbf{Std}^{\Gamma}(G)$
and $\mathcal{F}\in\mathbf{F}^{\Gamma}(G)$, 
\begin{eqnarray*}
\left\langle \mathcal{F}_{1},\mathcal{F}_{2}\right\rangle _{\tau} & = & \left\langle \mathcal{F}_{1}^{r},\mathcal{F}_{2}^{r}\right\rangle _{\tau}+\left\langle \mathcal{F}_{1}^{c},\mathcal{F}_{2}^{c}\right\rangle _{\tau}\\
d_{\tau}(\mathcal{F}_{1},\mathcal{F}_{2})^{2} & = & d_{\tau}(\mathcal{F}_{1}^{r},\mathcal{F}_{2}^{r})^{2}+d_{\tau}(\mathcal{F}_{1}^{c},\mathcal{F}_{2}^{c})^{2}\\
\left\Vert \mathcal{F}\right\Vert _{\tau}^{2} & = & \left\Vert \mathcal{F}^{r}\right\Vert _{\tau}^{2}+\left\Vert \mathcal{F}^{c}\right\Vert _{\tau}^{2}
\end{eqnarray*}
where $\mathcal{F}^{r}\in\mathbf{F}^{\Gamma}(G^{\mathrm{der}})\subset\mathbf{F}^{\Gamma}(G)$
and $\mathcal{F}^{c}\in\mathbf{G}^{\Gamma}(Z(G))\subset\mathbf{F}^{\Gamma}(G)$
are the components of $\mathcal{F}$. We prove the first formula.
Pick $S\in\mathbf{S}(G)$ with $\mathcal{F}_{1},\mathcal{F}_{2}\in\mathbf{F}^{\Gamma}(S)$.
Then also $\mathcal{F}_{i}^{r},\mathcal{F}_{i}^{c}\in\mathbf{F}^{\Gamma}(S)$
with $\mathcal{F}_{i}=\mathcal{F}_{i}^{r}+\mathcal{F}_{i}^{c}$ in
the apartment $\mathbf{F}^{\Gamma}(S)$, thus 
\[
\left\langle \mathcal{F}_{1},\mathcal{F}_{2}\right\rangle _{\tau}=\left\langle \mathcal{F}_{1}^{r},\mathcal{F}_{2}^{r}\right\rangle _{\tau}+\left\langle \mathcal{F}_{1}^{c},\mathcal{F}_{2}^{c}\right\rangle _{\tau}+\left\langle \mathcal{F}_{1}^{r},\mathcal{F}_{2}^{c}\right\rangle _{\tau}+\left\langle \mathcal{F}_{1}^{c},\mathcal{F}_{2}^{r}\right\rangle _{\tau}
\]
since $\left\langle -,-\right\rangle _{\tau}$ is a bilinear form
on $\mathbf{F}^{\Gamma}(S)$. It is therefore sufficient to show that
\[
\left\langle \mathcal{F},\mathcal{G}\right\rangle _{\tau}=\left\langle \mathcal{G},\mathcal{F}\right\rangle _{\tau}={\textstyle \sum_{\gamma}}\gamma\cdot\deg\Gr_{\mathcal{G}}^{\gamma}(\mathcal{F},\tau)=0
\]
for any $\mathcal{F}\in\mathbf{F}^{\Gamma}(G^{\mathrm{der}})$ and
$\mathcal{G}\in\mathbf{G}^{\Gamma}(Z(G))$. Since $\mathcal{G}:\mathbb{D}_{\mathcal{O}}(\Gamma)\rightarrow Z(G)$
is central in $G$, $\tau=\oplus\tau_{\gamma}$ with $V(\tau_{\gamma})=\mathcal{G}_{\gamma}(\tau)$
and $\Gr_{\mathcal{G}}^{\gamma}(\mathcal{F},\tau)\simeq\mathcal{F}(\tau_{\gamma})$
on $\Gr_{\mathcal{G}}^{\gamma}(\tau)\simeq\tau_{\gamma}$, thus 
\[
\deg\Gr_{\mathcal{G}}^{\gamma}(\mathcal{F},\tau)=\deg\mathcal{F}(\tau_{\gamma})=\deg\left(\det\mathcal{F}(\tau_{\gamma})\right)=\deg\left(\mathcal{F}(\det\tau_{\gamma})\right)=0
\]
because the restriction of $\det\tau_{\gamma}$ to $G^{\mathrm{der}}$
is trivial.

\subsection{~\label{sub:angleCGamma}}

For $x,y\in\mathbf{O}(G)$, there is a single $\mathsf{G}$-orbit
of $(P,Q)$'s in $t^{-1}(x)\times t^{-1}(y)$ such that $P$ and $Q$
are in osculatory (resp.~transverse) position \cite[XXVI 5.3-5]{SGA3.3r},
and this orbit is contained in $\mathbf{Std}(G)$. Thus for any $x,y\in\mathbf{C}^{\Gamma}(G)$,
there is a single $\mathsf{G}$-orbit of $(\mathcal{F}_{1},\mathcal{F}_{2})$'s
in $t^{-1}(x)\times t^{-1}(y)$ with the property that $P_{\mathcal{F}_{1}}$
and $P_{\mathcal{F}_{2}}$ are in osculatory (resp.~transverse) position,
and it is contained in $\mathbf{Std}^{\Gamma}(G)$. We set $\measuredangle_{\tau}^{os}(x,y)=\measuredangle_{\tau}(\mathcal{F}_{1},\mathcal{F}_{2})$
and $\left\langle x,y\right\rangle _{\tau}^{os}=\left\langle \mathcal{F}_{1},\mathcal{F}_{2}\right\rangle _{\tau}$
(resp.~$\measuredangle_{\tau}^{tr}(x,y)=\measuredangle_{\tau}(\mathcal{F}_{1},\mathcal{F}_{2})$
and $\left\langle x,y\right\rangle _{\tau}^{tr}=\left\langle \mathcal{F}_{1},\mathcal{F}_{2}\right\rangle _{\tau}$),
thus obtaining two other pairs of symmetric functions\nomenclature[<-,->_tau^os]{$\left\langle -,-\right\rangle _{\tau}^{os}$}{Osculatory scalar product on $\mathbf{C}^\Gamma (G)$ defined page \nomrefpage}\nomenclature[<-,->_tau^tr]{$\left\langle -,-\right\rangle _{\tau}^{tr}$}{Transverse scalar product on $\mathbf{C}^\Gamma (G)$ defined page \nomrefpage}\nomenclature[<(-,-)_tau^os]{$\measuredangle_{\tau} ^{os}(-,-)$}{Osculatory angle on $\mathbf{C}^\Gamma (G)$ defined page \nomrefpage}\nomenclature[<(-,-)_tau^tr]{$\measuredangle_{\tau} ^{tr}(-,-)$}{Transverse angle on $\mathbf{C}^\Gamma (G)$ defined page \nomrefpage}
\begin{eqnarray*}
\measuredangle_{\tau}^{os}(-,-):\mathbf{C}^{\Gamma}(G)\times\mathbf{C}^{\Gamma}(G)\rightarrow[0,\pi] & \mbox{and} & \left\langle -,-\right\rangle _{\tau}^{os}:\mathbf{C}^{\Gamma}(G)\times\mathbf{C}^{\Gamma}(G)\rightarrow\mathbb{R},\\
\measuredangle_{\tau}^{tr}(-,-):\mathbf{C}^{\Gamma}(G)\times\mathbf{C}^{\Gamma}(G)\rightarrow[0,\pi] & \mbox{and} & \left\langle -,-\right\rangle _{\tau}^{tr}:\mathbf{C}^{\Gamma}(G)\times\mathbf{C}^{\Gamma}(G)\rightarrow\mathbb{R}.
\end{eqnarray*}
They are of course related by the formulas
\begin{eqnarray*}
\left\langle x,y\right\rangle _{\tau}^{os} & = & \cos\left(\measuredangle_{\tau}^{os}(x,y)\right)\cdot\left\Vert x\right\Vert _{\tau}\left\Vert y\right\Vert _{\tau},\\
\left\langle x,y\right\rangle _{\tau}^{tr} & = & \cos\left(\measuredangle_{\tau}^{tr}(x,y)\right)\cdot\left\Vert x\right\Vert _{\tau}\left\Vert y\right\Vert _{\tau}.
\end{eqnarray*}
We also define yet another symmetric function
\[
d_{\tau}:\mathbf{C}^{\Gamma}(G)\times\mathbf{C}^{\Gamma}(G)\rightarrow\mathbb{R}_{+}\quad d_{\tau}(x,y)=\sqrt{\left\Vert x\right\Vert _{\tau}^{2}+\left\Vert y\right\Vert _{\tau}^{2}-2\left\langle x,y\right\rangle _{\tau}^{os}}.
\]
By construction, for $(\mathcal{F}_{1},\mathcal{F}_{2})$ in $\mathbf{Gen}^{\Gamma}(G)=\mathbb{GEN}^{\Gamma}(G)(\mathcal{O})$,
\[
\measuredangle_{\tau}(\mathcal{F}_{1},\mathcal{F}_{2})=\measuredangle_{\tau}^{tr}\left(t(\mathcal{F}_{1}),t(\mathcal{F}_{2})\right)\quad\mbox{and}\quad\left\langle \mathcal{F}_{1},\mathcal{F}_{2}\right\rangle _{\tau}=\left\langle \mathcal{F}_{1},\mathcal{F}_{2}\right\rangle _{\tau}^{tr}.
\]

\subsection{~}

The above constructions are merely special cases of those of section~\ref{sub:RanksRelativePos}.
In particular, our functions are induced by morphisms of schemes over
$\mathcal{O}$, and some of them (the ``bilinear forms'') still
make sense for an arbitrary $\tau$ in the Grothendieck ring $K_{0}(G)$
of $\Rep^{\circ}(G)(\mathcal{O})$, or even for a global section of
the sheafified version $\underline{K}_{0}(G)$ of that ring. However,
the positivity of these forms requires some sort of effectiveness/faithfulness
of the initial $\tau$, which we have not tried to axiomatize. From
now on until \ref{sub:EquivDistOnF}, we fix a faithful $\tau$ in
$\Rep^{\circ}(G)(\mathcal{O})$.

\subsection{~\label{sub:calcangleCGamma}}

Fix $(S,B)\in\mathbf{S}(G)\times\mathbf{B}(G)$ with $Z_{G}(S)\subset B$,
let $B'=\iota_{S}B$ be the minimal parabolic subgroup of $G$ containing
$Z_{G}(S)$ opposed to $B$, and denote by 
\[
s,s':\mathbf{C}^{\Gamma}(G)\hookrightarrow\mathbf{F}^{\Gamma}(G)
\]
the corresponding sections of $t:\mathbf{F}^{\Gamma}(G)\twoheadrightarrow\mathbf{C}^{\Gamma}(G)$.
Then for $x,y\in\mathbf{C}^{\Gamma}(G)$, 
\[
B\subset P_{s(x)}\cap P_{s(y)}\quad\mbox{and}\quad B'\subset P_{s'(x)}\cap P_{s'(y)},
\]
thus $P_{s(x)}$ and $P_{s(y)}$ are in osculatory position while
$P_{s(x)}$ and $P_{s'(y)}$ are in transverse position. It follows
that for every $x,y\in\mathbf{C}^{\Gamma}(G)$,
\[
\begin{array}{rclcrcl}
\measuredangle_{\tau}^{os}(x,y) & = & \measuredangle_{\tau}(s(x),s(y)) & \mbox{and} & \left\langle x,y\right\rangle _{\tau}^{os} & = & \left\langle s(x),s(y)\right\rangle _{\tau},\\
\measuredangle_{\tau}^{tr}(x,y) & = & \measuredangle_{\tau}(s(x),s'(y)) & \mbox{and} & \left\langle x,y\right\rangle _{\tau}^{tr} & = & \left\langle s(x),s'(y)\right\rangle _{\tau}.
\end{array}
\]
In particular, the ``scalar products'' are compatible with the monoid
structure:{\small{
\[
\begin{array}{rclcrcl}
\left\langle x_{1}+x_{2},y\right\rangle _{\tau}^{os} & = & \left\langle x_{1},y\right\rangle _{\tau}^{os}+\left\langle x_{2},y\right\rangle _{\tau}^{os} & \mbox{and} & \left\langle x,y_{1}+y_{2}\right\rangle _{\tau}^{os} & = & \left\langle x,y_{1}\right\rangle _{\tau}^{os}+\left\langle x,y_{2}\right\rangle _{\tau}^{os},\\
\left\langle x_{1}+x_{2},y\right\rangle _{\tau}^{tr} & = & \left\langle x_{1},y\right\rangle _{\tau}^{tr}+\left\langle x_{2},y\right\rangle _{\tau}^{tr} & \mbox{and} & \left\langle x,y_{1}+y_{2}\right\rangle _{\tau}^{tr} & = & \left\langle x,y_{1}\right\rangle _{\tau}^{tr}+\left\langle x,y_{2}\right\rangle _{\tau}^{tr}.
\end{array}
\]
}}Moreover, $d_{\tau}$ is a distance on $\mathbf{C}^{\Gamma}(G)$.

\subsection{~}

The following lemma is related to the angle rigidity axiom of \cite[4.1.2]{KlLe97}.
\begin{lem}
\label{lem:AngleRigid}For any $x,y\in\mathbf{C}^{\Gamma}(G)$, the
set 
\[
D_{\tau}(x,y)=\left\{ \measuredangle_{\tau}(\mathcal{F}_{1},\mathcal{F}_{2}):(\mathcal{F}_{1},\mathcal{F}_{2})\in\mathbf{Std}^{\Gamma}(G)\cap t^{-1}(x)\times t^{-1}(y)\right\} 
\]
is finite with 
\[
\min D_{\tau}(x,y)=\measuredangle_{\tau}^{os}(x,y)\quad\mbox{and}\quad\max D_{\tau}(x,y)=\measuredangle_{\tau}^{tr}(x,y).
\]
\end{lem}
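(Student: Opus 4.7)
The plan is to reduce the lemma to a computation inside a single fixed apartment and then invoke a classical dominance property for finite Euclidean reflection groups. I fix the pair $(S,B)$ and the section $s\colon\mathbf{C}^{\Gamma}(G)\to\mathbf{F}^{\Gamma}(G)$ of section~\ref{sub:angleCGamma}, noting that $s(\mathbf{C}^{\Gamma}(G))$ is the closed Weyl chamber $C_{B}$ of the apartment $\mathbf{F}^{\Gamma}(S)=\Hom(M(S),\Gamma)$ cut out by the non-negativity of the simple roots $\Delta\subset R$ attached to $B$ (compare~\ref{sub:descFacet}). Setting $\mathsf{W}=\mathsf{W}_{G}(S)$ and working inside the Euclidean space $V=\Hom(M(S),\mathbb{R})$ equipped with the positive definite $\mathsf{W}$-invariant real inner product extending $\langle-,-\rangle_{\tau}$, the group $\mathsf{W}$ acts as a finite reflection group with fundamental chamber $C_{B}$.

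The first step is reduction to the fixed apartment. Any $(\mathcal{F}_{1},\mathcal{F}_{2})\in\mathbf{Std}^{\Gamma}(G)$ lies by definition in a common apartment $\mathbf{F}^{\Gamma}(S')$; since $\mathsf{G}$ acts transitively on $\mathbf{S}(G)$ by~\cite[XXVI 6.16]{SGA3.3r} and $\measuredangle_{\tau}$ is $\mathsf{G}$-invariant, I may assume $S'=S$. Two filtrations in $\mathbf{F}^{\Gamma}(S)$ with the same type correspond to parabolic subgroups of $G$ that contain $Z_{G}(S)$ and are $\mathsf{G}$-conjugate, hence $\mathsf{N}_{G}(S)$-conjugate (any $g\in\mathsf{G}$ that conjugates one into the other sends the common Levi $Z_{G}(S)$ to a Levi in the image, and can be corrected by an element of the image parabolic to normalize $S$, via \cite[XXVI 1.8]{SGA3.3r}). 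Therefore $t^{-1}(x)\cap\mathbf{F}^{\Gamma}(S)=\mathsf{W}\cdot s(x)$ and similarly for $y$; factoring out the diagonal $\mathsf{W}$-action (realized inside $\mathsf{G}$ via $\mathsf{N}_{G}(S)$) yields
\[
D_{\tau}(x,y)=\bigl\{\measuredangle_{\tau}(s(x),w\cdot s(y)):w\in\mathsf{W}\bigr\},
\]
which is finite because $\mathsf{W}$ is the Weyl group of a finite root system.

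The second step shows that the minimum is attained at $w=1$. Since $\|w\cdot s(y)\|_{\tau}=\|s(y)\|_{\tau}$ and $\cos$ is strictly decreasing on $[0,\pi]$, the identity $\measuredangle_{\tau}(x,y)=\min D_{\tau}(x,y)$ is equivalent to the inequality
\[
\langle s(x),w\cdot s(y)\rangle_{\tau}\le\langle s(x),s(y)\rangle_{\tau}\qquad(w\in\mathsf{W})
\]
(the degenerate case $\|s(x)\|_{\tau}=0$ or $\|s(y)\|_{\tau}=0$ being trivial). Since $s(y)\in C_{B}$ is dominant, the classical dominance property for finite Euclidean reflection groups (Bourbaki, \emph{Groupes et alg\`ebres de Lie}, Ch.~VI, \S1, n$^{\circ}$6) expresses $s(y)-w\cdot s(y)$ as a non-negative $\mathbb{R}_{\ge0}$-combination of the simple roots in $\Delta$, while $s(x)\in C_{B}$ gives $\langle s(x),\alpha\rangle_{\tau}\ge0$ for every $\alpha\in\Delta$; pairing produces the required inequality.

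The main obstacle is the first step: controlling $\mathsf{G}$-conjugacy by $\mathsf{W}_{G}(S)$-conjugacy inside the apartment over the local base $\Spec(\mathcal{O})$, which requires a careful invocation of the relevant compatibility results from the last chapter of SGA3. Once that identification is in place, the second step is the standard angle rigidity for finite Euclidean reflection groups and is essentially a one-line computation.
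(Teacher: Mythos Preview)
Your proof is correct and follows essentially the same route as the paper: reduce to a single apartment via $\mathsf{G}$-transitivity on $\mathbf{S}(G)$, identify the standard pairs of the given types inside $\mathbf{F}^{\Gamma}(S)$ with the single $\mathsf{W}_{G}(S)$-orbit $\{(s(x),w\cdot s(y)):w\in\mathsf{W}_{G}(S)\}$, and conclude by the dominance inequality $\langle s(x),s(y)\rangle_{\tau}\ge\langle s(x),w\cdot s(y)\rangle_{\tau}$, which the paper obtains by a direct citation of \cite[Proposition~18]{BoLie46} rather than by your decomposition argument. One small caution: in your final step you write $\langle s(x),\alpha\rangle_{\tau}$ and speak of $s(y)-w\cdot s(y)$ as a combination of simple \emph{roots}, but $\Delta\subset M(S)$ lives on the character side while $s(x),s(y)\in\Hom(M(S),\Gamma)$; the argument is fine once you identify $\alpha$ with its preimage under the $\tau$-isomorphism $\mathbf{F}(S)\to M(S)\otimes\mathbb{R}$ (equivalently, work with simple coroots, or simply invoke Proposition~18 as the paper does).
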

\begin{proof}
Fix $(S,B)$ and $s,s':\mathbf{C}^{\Gamma}(G)\hookrightarrow\mathbf{F}^{\Gamma}(G)$
as above. Then any pair 
\[
(\mathcal{F}_{1},\mathcal{F}_{2})\in\mathbf{Std}^{\Gamma}(G)\cap t^{-1}(x)\times t^{-1}(y)
\]
 is $\mathsf{G}$-conjugated to some pair in $\mathsf{W}_{G}(S)\cdot s(x)\times\mathsf{W}_{G}(S)\cdot s(y)\subset\mathbf{F}^{\Gamma}(S)^{2}$,
thus 
\begin{eqnarray*}
D_{\tau}(x,y) & = & \left\{ \measuredangle_{\tau}(w_{1}\cdot s(x),w_{2}\cdot s(y)):\left(w_{1},w_{2}\right)\in\mathsf{W}_{G}(S)^{2}\right\} \\
 & = & \left\{ \measuredangle_{\tau}(s(x),w\cdot s(y)):w\in\mathsf{W}_{G}(S)\right\} 
\end{eqnarray*}
is finite. To establish our final claim, we have to show that 
\[
\left\langle s(x),s(y)\right\rangle _{\tau}\geq\left\langle s(x),w\cdot s(y)\right\rangle _{\tau}\geq\left\langle s(x),s'(y)\right\rangle _{\tau}
\]
for every $w\in\mathsf{W}_{G}(S)$, which follows from \cite[Proposition 18]{BoLie46}.\end{proof}
\begin{cor}
\label{cor:tIsnonexpanding}The type map $t:\mathbf{F}^{\Gamma}(G)\twoheadrightarrow\mathbf{C}^{\Gamma}(G)$
is compatible with the $d_{\tau}$'s: 
\[
\forall(\mathcal{F}_{1},\mathcal{F}_{2})\in\mathbf{Std}^{\Gamma}(G):\qquad d_{\tau}\left(t(\mathcal{F}_{1}),t(\mathcal{F}_{2})\right)\leq d_{\tau}(\mathcal{F}_{1},\mathcal{F}_{2}).
\]

\end{cor}

\subsection{~}

Let us use the above notions to show that
\begin{prop}
For a facet $F$, a chamber $C$ and apartments $A_{1},A_{2}$ in
$\mathbf{F}^{\Gamma}(G)$ with $F\cup C\subset A_{1}\cap A_{2}$,
there exists $g\in\mathsf{G}$ with $gA_{1}=A_{2}$ and $g\equiv1$
on $\overline{F}\cup\overline{C}$.\end{prop}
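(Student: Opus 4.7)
The plan is to produce the required $u \in \mathsf{G}$ explicitly from the local structure shared by $A_1$ and $A_2$. Write $A_i = \mathbf{F}^\Gamma(S_i)$ for $S_i \in \mathbf{S}(G)$, let $B \in \mathbf{B}(G)$ be the minimal parabolic attached to the chamber $C$, and let $P \in \mathbf{P}(G)$ be the parabolic attached to the facet $F$. The description of $\mathbf{AF}^\Gamma(G)$ in~\ref{sub:TitsBuildDiag} translates the hypotheses $C, F \subset A_1 \cap A_2$ into $L_i := Z_G(S_i) \subset B \cap P$ for $i \in \{1,2\}$; thus $L_1$ and $L_2$ are two Levi subgroups of $B$, both contained in $P$.

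The core step is to find $u \in (U \cap P)(\mathcal{O})$ with $\Int(u) L_1 = L_2$, where $U$ denotes the unipotent radical of $B$. Levi conjugacy \cite[XXVI~1.8]{SGA3.3r} in the parabolic $B$ first produces some $u \in U(\mathcal{O})$ with $\Int(u) L_1 = L_2$, unique because $U \cap L_1 = 1$ makes the $U$-action on Levi subgroups of $B$ free. Next, using $L_1 \subset B \cap P$ and the Levi decomposition $B = L_1 \ltimes U$, one checks that $B \cap P = L_1 \ltimes (U \cap P)$ is a smooth closed subgroup of $G$ with unipotent radical $U \cap P$. A dimension count together with $L_2 \cap U = 1$ then shows that the reductive subgroup $L_2$ is another Levi complement to $U \cap P$ in $B \cap P$. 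Applying Levi conjugacy to the smooth group $B \cap P$ over $\mathcal{O}$ yields some $u' \in (U \cap P)(\mathcal{O})$ with $\Int(u') L_1 = L_2$, and the uniqueness already observed inside $U(\mathcal{O})$ forces $u = u'$, so $u \in P(\mathcal{O})$. Because $S_i$ is the (characteristic) split radical of $L_i$, we obtain $\Int(u) S_1 = S_2$ and hence $u \cdot A_1 = A_2$.

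It remains to check that this $u$ fixes $\overline{F} \cup \overline{C}$ pointwise. For any $\mathcal{F} \in \overline{C}$, the definition of the closed chamber in~\ref{sub:DefFacetsClosedOpen} gives $P_\mathcal{F} \supset B$, hence $u \in B(\mathcal{O}) \subset P_\mathcal{F}(\mathcal{O})$; by Theorem~\ref{thm:StabIsParV}, $P_\mathcal{F}$ is the stabilizer of $\mathcal{F}$ in $G$, so $u \cdot \mathcal{F} = \mathcal{F}$. Symmetrically, any $\mathcal{F} \in \overline{F}$ satisfies $P_\mathcal{F} \supset P$, whence $u \in P(\mathcal{O}) \subset P_\mathcal{F}(\mathcal{O})$ again fixes $\mathcal{F}$. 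The main obstacle is in the second paragraph: the structural claims about $B \cap P$ (smoothness and the Levi decomposition $L_1 \ltimes (U \cap P)$) and the application of Levi conjugacy to this smooth group, which is not itself a parabolic subgroup of $G$ and so lies slightly outside the direct scope of \cite[XXVI~1.8]{SGA3.3r}. This can be handled either by a Mostow-type Levi conjugacy over the local base $\mathcal{O}$, or by choosing a Levi $M$ of $P$ containing $L_1$, realizing $B \cap M$ as a minimal parabolic of $M$, and combining \cite[XXVI~1.20]{SGA3.3r} with Levi conjugacy inside the two parabolics $M \subset P$ of \cite[XXVI~1.8]{SGA3.3r}.
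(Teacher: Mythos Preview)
Your approach is genuinely different from the paper's, and it is worth spelling out the contrast. The paper first observes that the statement does not depend on $\Gamma$ and takes $\Gamma=\mathbb{R}$. It then uses only the transitivity $\mathsf{G}\backslash\mathbf{SP}(G)=\mathbf{O}(G)$ to produce $g\in\mathsf{B}$ with $\Int(g)S_1=S_2$, and proves $g\in\mathsf{P}$ by a scalar-product argument: for $\mathcal{F}\in F^{-1}(P)$ and any $\mathcal{F}'$ in the open chamber $F^{-1}(B)\subset\mathbf{F}^{\mathbb{R}}(S_2)$ one has $\langle\mathcal{F},\mathcal{F}'\rangle_\tau=\langle g\mathcal{F},g\mathcal{F}'\rangle_\tau=\langle g\mathcal{F},\mathcal{F}'\rangle_\tau$, whence $g\mathcal{F}=\mathcal{F}$ since the chamber spans the Euclidean apartment. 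This sidesteps entirely the structure of $B\cap P$.

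Your route is more hands-on: you try to exhibit directly $u\in (U\cap P)(\mathcal{O})$ conjugating $L_1$ to $L_2$. The decomposition $B\cap P=L_1\ltimes(U\cap P)$ and the analogous one for $L_2$ are correct, and the uniqueness trick to conclude $u=u'$ is nice. The real difficulty is exactly where you flag it: Levi conjugacy inside $B\cap P$. This group is not a parabolic of any reductive group, so \cite[XXVI~1.8]{SGA3.3r} does not apply as stated. Your second proposed fix has a problem: $B\cap M$ is a parabolic of $M$ only when $B\subset P$ (this is the content of \cite[XXVI~1.20]{SGA3.3r}), which is not assumed here; and even if you produce $p\in P(\mathcal{O})$ with $\Int(p)L_1=L_2$ via Levis of $P$, showing that this forces the specific $u\in U(\mathcal{O})$ to lie in $P$ still requires an extra argument. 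The first fix (``Mostow-type'' conjugacy) can be made to work: since $S_1$ is central in $L_1$ and acts with nonzero weights on each graded piece of $U\cap P$, one gets $H^1(L_1,-)=0$ on those pieces, and the torsor of Levi complements trivializes over the Henselian local base. But this is a genuine additional verification (including the smoothness of $U\cap P$ over $\mathcal{O}$, via its description as a directed product of root groups for the closed set $R_+\cap R_P$), not something that falls out of the references you cite. The paper's scalar-product argument trades all of this for one line of Euclidean geometry.
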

\begin{proof}
In group theoretical terms, this means that for $P\in\mathbf{P}(G)$,
$B\in\mathbf{B}(G)$ and $S_{1},S_{2}\in\mathbf{S}(G)$ with $Z_{G}(S_{i})\subset B\cap P$,
there is a $g\in\mathsf{G}$ such that $\Int(g)(S_{1})=S_{2}$ and
$g\in\mathsf{B}\cap\mathsf{P}$ with $\mathsf{B}=B(\mathcal{O})$,
$\mathsf{P}=P(\mathcal{O})$. This does not depend upon $\Gamma$,
and we may thus assume that $\Gamma=\mathbb{R}$. Since $(S_{1},B)$
and $(S_{2},B)\in\mathbf{SP}(G)$ have the same image in $\mathbf{O}(G)$,
there exists an element $g\in\mathsf{G}$ with $g(S_{1},B)=(S_{2},B)$,
i.e. $\Int(g)(S_{1})=S_{2}$ and $g\in\mathsf{B}$. We will show that
also $g\in\mathsf{P}$, i.e.~$g\mathcal{F}=\mathcal{F}$ for any
$\mathcal{F}\in F^{-1}(P)\subset\mathbf{F}^{\mathbb{R}}(G)$. Note
that $\mathcal{F}$, $g\mathcal{F}$ and the chamber $C=F^{-1}(B)$
are all contained in the apartment $\mathbf{F}^{\mathbb{R}}(S_{2})$.
Fix a faithful $\tau\in\Rep^{\circ}(G)(\mathcal{O})$. Then
\[
\left\langle \mathcal{F},\mathcal{F}'\right\rangle _{\tau}=\left\langle g\mathcal{F},g\mathcal{F}'\right\rangle _{\tau}=\left\langle g\mathcal{F},\mathcal{F}'\right\rangle _{\tau}
\]
for all $\mathcal{F}'\in F^{-1}(B)$, thus $\mathcal{F}=g\mathcal{F}$
because $F^{-1}(B)$ is a non-empty open subset of the Euclidean space
$\left(\mathbf{F}^{\mathbb{R}}(S_{2}),\left\langle -,-\right\rangle _{\tau}\right)$
by \ref{sub:descFacet}.
\end{proof}

\subsection{~\label{sub:FiloverFieldsisTitsBuild}}

Suppose for this and the next subsection that our local ring $\mathcal{O}=k$
is a field. Then every pair $(\mathcal{F}_{1},\mathcal{F}_{2})\in\mathbf{F}^{\Gamma}(G)$
is contained in some apartment since 
\begin{thm}
\cite[XXVI 4.1.1]{SGA3.3r}\label{thm:Std=00003DAlloverField} $\mathbf{Std}(G)=\mathbf{P}(G)^{2}$
and $\mathbf{Std}^{\Gamma}(G)=\mathbf{F}^{\Gamma}(G)^{2}$.\end{thm}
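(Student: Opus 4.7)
The second equality ``thus also $\mathbf{Std}^{\Gamma}(G)=\mathbf{F}^{\Gamma}(G)^{2}$'' is immediate from the definitions in section~\ref{sub:TitsBuildDiag}: since $\mathbf{Std}^{\Gamma}(G)=F^{-1}(\mathbf{Std}(G))$ and $\mathbf{F}^{\Gamma}(G)^{2}=F^{-1}(\mathbf{P}(G)^{2})$, it follows formally once $\mathbf{Std}(G)=\mathbf{P}(G)^{2}$ is known. The content of the theorem, cited as~\cite[XXVI 4.1.1]{SGA3.3r}, is therefore: any two parabolic subgroups $P_{1},P_{2}$ of the reductive $k$-group $G$ contain a common maximal torus, equivalently, $P_{1}\cap P_{2}$ contains a maximal torus of $G$ defined over $k$.

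The main obstacle, which I would address first, is showing that \emph{$P_{1}\cap P_{2}$ is a smooth closed $k$-subgroup scheme of $G$ with geometrically connected fibres}. After base changing to $\bar{k}$ and fixing a pair $T_{0}\subset B\subset G_{\bar{k}}$, every parabolic has the form $gP_{I}g^{-1}$ for a standard parabolic $P_{I}$, so I may write $P_{1}=g_{1}P_{I_{1}}g_{1}^{-1}$ and $P_{2}=g_{2}P_{I_{2}}g_{2}^{-1}$. The Bruhat decomposition $G_{\bar{k}}=\coprod_{w\in W_{I_{1}}\backslash W/W_{I_{2}}}P_{I_{1}}\,w\,P_{I_{2}}$ lets me factor $g_{1}^{-1}g_{2}=p_{1}\cdot w\cdot p_{2}$ with $p_{i}\in P_{I_{i}}(\bar{k})$, and then $T'=(g_{1}p_{1}w)\,T_{0}\,(g_{1}p_{1}w)^{-1}$ is a maximal torus of $G_{\bar{k}}$ contained in both $g_{1}P_{I_{1}}g_{1}^{-1}=P_{1}$ and $g_{2}P_{I_{2}}g_{2}^{-1}=P_{2}$. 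With such a torus $T'\subset(P_{1}\cap P_{2})_{\bar{k}}$ in hand, the root space decomposition of $\mathfrak{g}$ with respect to $T'$ identifies $(P_{1}\cap P_{2})_{\bar{k}}$ with the semidirect product of $Z_{G_{\bar{k}}}(T')$ and the unipotent subgroup attached to the roots appearing in both $\Lie(P_{1})$ and $\Lie(P_{2})$; smoothness and geometric connectedness are then clear, and descend to $k$.

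Once smoothness and connectedness are established, the theorem follows by a short argument. By~\cite[XIV 3.20]{SGA3.2}, already invoked in the text, the smooth connected affine $k$-group $P_{1}\cap P_{2}$ admits a maximal torus $T$ defined over $k$. It remains to check that $T$ is maximal \emph{in $G$}: over $\bar{k}$, all maximal tori of the smooth connected group $(P_{1}\cap P_{2})_{\bar{k}}$ are conjugate and hence share a common dimension; but the torus $T'$ exhibited above is maximal both in $(P_{1}\cap P_{2})_{\bar{k}}$ and in $G_{\bar{k}}$, so this common dimension is the rank of $G$. Consequently $T_{\bar{k}}$, being maximal in $(P_{1}\cap P_{2})_{\bar{k}}$, is maximal in $G_{\bar{k}}$, and $T$ is the sought common maximal torus of $P_{1}$ and $P_{2}$, proving $(P_{1},P_{2})\in\mathbf{Std}(G)$.
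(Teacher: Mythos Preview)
The paper gives no proof here: the theorem is simply quoted from \cite[XXVI 4.1.1]{SGA3.3r} (the Borel--Tits theorem) and then used as a black box in the corollaries that follow. Your sketch is essentially the standard argument behind that reference, so there is nothing in the paper itself to compare it against.

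One point to tighten. Your description of $(P_1\cap P_2)_{\bar k}$ as ``the semidirect product of $Z_{G_{\bar k}}(T')$ and the unipotent subgroup attached to the common roots'' is not correct as stated: since $T'$ is maximal, $Z_{G_{\bar k}}(T')=T'$, and if for instance $P_1=P_2$ is a proper parabolic then the intersection is certainly not of the form $T'\rtimes U$ with $U$ unipotent. What is true is that the set $R_1\cap R_2$ of $T'$-roots common to $\Lie(P_1)$ and $\Lie(P_2)$ is closed, so the subgroup $H$ generated by $T'$ and the $U_\alpha$ for $\alpha\in R_1\cap R_2$ is smooth and connected with $\Lie(H)=\Lie(P_1)\cap\Lie(P_2)$. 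The delicate step, and the actual content of the cited result, is that $H$ coincides with the \emph{scheme-theoretic} intersection $P_1\cap P_2$ (not merely with $(P_1\cap P_2)^\circ_{\mathrm{red}}$); in positive characteristic this does not follow from a Lie-algebra dimension count alone. Once that is in hand, your descent via \cite[XIV~3.20]{SGA3.2} and the conjugacy of maximal tori over $\bar k$ is fine.
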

\begin{cor}
\label{cor:AxiomeR(i)4F(G)}For any apartments $A_{1},A_{2}$ in $\mathbf{F}^{\Gamma}(G)$
and facets $F_{1},F_{2}$ in $A_{1}\cap A_{2}$, there exists $g\in\mathsf{G}$
mapping $A_{1}$ to $A_{2}$ with $g\equiv1$ on $\overline{F}_{1}\cup\overline{F}_{2}$.\end{cor}
\begin{proof}
Fix closed chambers $F_{1}\subset\overline{C}_{1}\subset A_{1}$ and
$F_{2}\subset\overline{C}_{2}\subset A_{2}$ and choose an apartment
$A_{3}$ containing $C_{1}$ and $C_{2}$. The previous proposition
shows that there exists elements $g_{1},g_{2}\in\mathsf{G}$ such
that $g_{1}A_{1}=A_{3}=g_{2}A_{2}$, $g_{1}\equiv1$ on $\overline{C}_{1}\cup\overline{F}_{2}$
and $g_{2}\equiv1$ on $\overline{C}_{2}\cup\overline{F}_{1}$. Then
$g=g_{2}^{-1}g_{1}$ maps $A_{1}$ to $A_{2}$ and $g\equiv1$ on
$\overline{F}_{1}\cup\overline{F}_{2}$.\end{proof}
\begin{cor}
\label{cor:FmonoIsmono}For a monomorphism $f:G_{1}\rightarrow G_{2}$
of reductive groups over $k$, the induced map $f:\mathbf{F}^{\Gamma}(G_{1})\rightarrow\mathbf{F}^{\Gamma}(G_{2})$
is injective. \end{cor}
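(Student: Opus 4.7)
The plan is to work entirely on the Tannakian side. By Theorem~\ref{thm:MainTan}, for either $i \in \{1,2\}$ we identify $\mathbf{F}^{\Gamma}(G_i) = \mathbb{F}^{\Gamma}(G_i)(k) = \mathbb{F}^{\Gamma}(V^{\circ}_{G_i})(k)$ with the set of $\Gamma$-filtrations on the fiber functor $V^{\circ}_{G_i,k} : \Rep^{\circ}(G_i)(k) \to \LF(k)$; under this identification, the map induced by $f$ sends a filtration $\mathcal{F}$ to its precomposition $\mathcal{F} \circ f^{\ast}$ with the restriction functor $f^{\ast} : \Rep^{\circ}(G_2)(k) \to \Rep^{\circ}(G_1)(k)$.

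First I would reduce to the case where $f$ is a closed immersion. Since $G_1$ and $G_2$ are reductive (in particular affine of finite type) over the field $k$, the image $f(G_1)$ is a closed subgroup of $G_2$, and the induced bijective morphism from $G_1$ onto its (smooth connected) image is an isomorphism; thus $f$ identifies $G_1$ with a closed subgroup of $G_2$. Next I would invoke the standard subquotient property of restriction: for a closed immersion $G_1 \hookrightarrow G_2$ of affine algebraic groups over $k$, every $\rho \in \Rep^{\circ}(G_1)(k)$ is a subquotient of $f^{\ast}\sigma$ for some $\sigma \in \Rep^{\circ}(G_2)(k)$. Concretely, $\rho$ embeds via its comodule structure into $\rho_0 \otimes \rho_{\mathrm{reg},G_1}$; the surjection of Hopf algebras $\mathcal{A}(G_2) \twoheadrightarrow \mathcal{A}(G_1)$ exhibits $\rho_{\mathrm{reg},G_1}$ as a $G_1$-equivariant quotient of $f^{\ast}\rho_{\mathrm{reg},G_2}$; and one carves out a finite-dimensional $G_2$-submodule $W \subset \rho_{\mathrm{reg},G_2}$ large enough that the image of $V(\rho) \otimes W$ in $\rho_0 \otimes \rho_{\mathrm{reg},G_1}$ covers $\rho$, taking $\sigma = \rho_0 \otimes W$ (with trivial $G_2$-action on $V(\rho) = V(\rho_0)$).

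Finally, assume $\mathcal{F}_1,\mathcal{F}_2 \in \mathbf{F}^{\Gamma}(G_1)$ have $f(\mathcal{F}_1) = f(\mathcal{F}_2)$, i.e.\ $\mathcal{F}_1(f^{\ast}\sigma) = \mathcal{F}_2(f^{\ast}\sigma)$ for every $\sigma \in \Rep^{\circ}(G_2)(k)$. Given $\rho \in \Rep^{\circ}(G_1)(k)$, write $\rho = B/A$ with $A \subset B \subset f^{\ast}\sigma$ in $\Rep^{\circ}(G_1)(k)$. The axiom (F3) says that each functor $\mathcal{F}_i^{\gamma}$ is exact on $\Rep^{\circ}(G_1)(k)$, so
\[
\mathcal{F}_i^{\gamma}(B) = \mathcal{F}_i^{\gamma}(f^{\ast}\sigma) \cap V(B), \qquad \mathcal{F}_i^{\gamma}(A) = \mathcal{F}_i^{\gamma}(f^{\ast}\sigma) \cap V(A), \qquad \mathcal{F}_i^{\gamma}(\rho) = \mathcal{F}_i^{\gamma}(B)/\mathcal{F}_i^{\gamma}(A).
\]
The hypothesis on $f^{\ast}\sigma$ therefore propagates to $B$, then to $A$, and hence to $\rho$; it follows that $\mathcal{F}_1 = \mathcal{F}_2$.

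The only real ingredient beyond the Tannakian formalism already developed is the subquotient property of restriction along a closed subgroup; this is classical, but it is the step one needs to spell out carefully, and it is essentially the only obstacle — once it is in hand, exactness of the $\mathcal{F}_i^{\gamma}$ makes the rest automatic.
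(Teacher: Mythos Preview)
Your proof is correct but follows a genuinely different route from the paper's. The paper exploits the metric structure developed in this section: pick a faithful $\tau\in\Rep^{\circ}(G_{2})(k)$, observe that $\tau\circ f$ is faithful for $G_{1}$ since $f$ is a monomorphism, and note that the induced filtrations on $V(\tau)=V(\tau\circ f)$ agree, so $\langle f(\mathcal{F}),f(\mathcal{F}')\rangle_{\tau}=\langle\mathcal{F},\mathcal{F}'\rangle_{\tau\circ f}$ and hence $d_{\tau}(f(\mathcal{F}),f(\mathcal{F}'))=d_{\tau\circ f}(\mathcal{F},\mathcal{F}')$. Over a field $\mathbf{Std}^{\Gamma}(G)=\mathbf{F}^{\Gamma}(G)^{2}$, so these distances are everywhere defined, and positive-definiteness of $d_{\tau\circ f}$ gives injectivity in three lines.

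Your argument stays entirely on the Tannakian side: reduce to a closed immersion, invoke the classical fact that every object of $\Rep^{\circ}(G_{1})(k)$ is a subquotient of some $f^{\ast}\sigma$, and use exactness of the $\mathcal{F}_{i}^{\gamma}$ to propagate equality of filtrations to subquotients. This is perfectly sound; the exactness you need is indeed available (over a field all short exact sequences in $\Rep^{\circ}(G_{1})(k)$ are pure, and the $\gamma$-components of a $\Gamma$-filtration on $\omega^{\circ}$ are exact). What you gain is independence from the whole scalar-product apparatus of \S\ref{sub:constscalangdist}; what you pay is having to import the subquotient property, which the paper's proof avoids entirely by working with a single faithful representation rather than all of them. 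The paper's argument is more in keeping with the surrounding section, but yours would sit naturally in Section~3.
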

\begin{proof}
Fix a faithful $\tau\in\Rep^{\circ}(G_{2})(k)$. Then $f^{\ast}\tau=\tau\circ f\in\Rep^{\circ}(G_{1})(k)$
is also faithful and for every $\mathcal{F},\mathcal{F}'\in\mathbf{F}^{\Gamma}(G_{1})$,
\[
\left\langle f(\mathcal{F}),f(\mathcal{F}')\right\rangle _{\tau}=\left\langle \mathcal{F},\mathcal{F}'\right\rangle _{f^{\ast}\tau}\quad\mbox{and}\quad d_{\tau}(f(\mathcal{F}),f(\mathcal{F}'))=d_{f^{\ast}\tau}(\mathcal{F},\mathcal{F}').
\]
Therefore $f(\mathcal{F})=f(\mathcal{F}')$ implies $\mathcal{F}=\mathcal{F}'$. \end{proof}
\begin{cor}
\label{cor:Ret}Let $P$ be a parabolic subgroup of $G$ with unipotent
radical $U$ and Levi subgroup $L$. Then $\mathbf{F}^{\Gamma}(L)$
is a fundamental domain for the action of $U(k)$ on $\mathbf{F}^{\Gamma}(G)$.
Let\nomenclature[r_P,L]{$r_{P,L}$}{Retraction $r_{P,L}:\mathbf{F}^\Gamma (G) \twoheadrightarrow \mathbf{F}^\Gamma (L)$, defined page \nomrefpage}
$r=r_{P,L}:\mathbf{F}^{\Gamma}(G)\twoheadrightarrow\mathbf{F}^{\Gamma}(L)$
be the corresponding retraction. Then 
\[
\forall x,y\in\mathbf{F}^{\Gamma}(G):\qquad d_{\tau}(rx,ry)\leq d_{\tau}(x,y).
\]

\begin{cor}
\label{cor:Dist}The function $d_{\tau}:\mathbf{F}^{\Gamma}(G)\times\mathbf{F}^{\Gamma}(G)\rightarrow\mathbb{R}_{+}$
is a distance: 
\[
\forall x,y,z\in\mathbf{F}^{\Gamma}(G):\qquad d_{\tau}(x,y)\leq d_{\tau}(x,z)+d_{\tau}(z,y).
\]

\end{cor}
\end{cor}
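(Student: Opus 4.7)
The plan is to exhibit, for any three points $x,y,z \in \mathbf{F}^{\Gamma}(G)$, a $1$-Lipschitz retraction of $\mathbf{F}^{\Gamma}(G)$ onto an apartment containing both $x$ and $y$, and then combine this with the Euclidean triangle inequality on that apartment.

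First I put $x$ and $y$ in a common apartment. By Theorem~\ref{thm:Std=00003DAlloverField}, $(P_x, P_y)$ is in standard position, so their parabolics share a maximal torus $T$ of $G$. Picking any $S \in \mathbf{S}(G)$ containing the maximal split subtorus $T_{sp}$ of $T$, both $x$ and $y$ lie in the apartment $A := \mathbf{F}^{\Gamma}(S)$.

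Next I set $L := Z_G(S)$, which is the Levi subgroup of some minimal parabolic subgroup $B$ of $G$. Corollary~\ref{cor:Ret} applied to $(B,L)$ provides a retraction $r = r_{B,L} : \mathbf{F}^{\Gamma}(G) \twoheadrightarrow \mathbf{F}^{\Gamma}(L)$ satisfying $d_\tau(r(a), r(b)) \leq d_\tau(a,b)$. The key point is that $\mathbf{F}^{\Gamma}(L) = A$: $S$ is the unique maximal split torus of $L$, since any other is $L(k)$-conjugate to $S$ by the standard conjugacy of maximal split tori in a reductive group over a field, and every element of $L(k) = Z_G(S)(k)$ centralizes $S$. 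Hence $A$ is the only apartment of $\mathbf{F}^{\Gamma}(L)$, and Theorem~\ref{thm:Std=00003DAlloverField} applied to $L$ writes every filtration on $L$ as lying in some apartment, forcing $\mathbf{F}^{\Gamma}(L) = A$.

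Finally, since $\tau$ is faithful, the restriction of $\langle -,- \rangle_\tau$ to $A$ is a positive-definite symmetric bilinear form, so $(A, d_\tau)$ is a genuine Euclidean space in which the triangle inequality holds. Because $r$ is the identity on $A$, we have $r(x)=x$, $r(y)=y$, and $r(z) \in A$, so
\[
d_\tau(x,y) \leq d_\tau(x, r(z)) + d_\tau(r(z), y) = d_\tau(r(x), r(z)) + d_\tau(r(z), r(y)) \leq d_\tau(x,z) + d_\tau(z,y),
\]
where the first step is the Euclidean triangle inequality in $A$ and the last uses the $1$-Lipschitz property of $r$. The one nontrivial ingredient is the identification $\mathbf{F}^{\Gamma}(L) = A$, which reduces to the conjugacy statement for maximal split tori in $L$; given that, everything else is formal from Theorem~\ref{thm:Std=00003DAlloverField}, Corollary~\ref{cor:Ret}, and Euclidean geometry on $A$.
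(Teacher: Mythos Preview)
Your argument for Corollary~\ref{cor:Dist} is correct and coincides with the paper's: put $x,y$ in a common apartment $A=\mathbf{F}^{\Gamma}(S)$, retract via $r=r_{B,Z_G(S)}$ for a minimal parabolic $B$, and combine the Euclidean triangle inequality on $A$ with the $1$-Lipschitz bound from Corollary~\ref{cor:Ret}. Your identification $\mathbf{F}^{\Gamma}(Z_G(S))=A$ is exactly the point the paper uses implicitly.

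One remark on the logical structure: in the paper the two corollaries are proved \emph{simultaneously} by induction on the semisimple rank, because the $1$-Lipschitz inequality for $r_{P,L}$ with a general $P$ is obtained by chopping the segment $[x,y]$ into pieces on which $r$ is an isometry and then summing via the triangle inequality on $\mathbf{F}^{\Gamma}(L)$, which is the inductive hypothesis. You invoke Corollary~\ref{cor:Ret} as a black box; that is harmless here since the instance you need has $L=Z_G(S)$ with $\mathbf{F}^{\Gamma}(L)=A$ Euclidean, so only the ordinary triangle inequality on $A$ is required and no circularity arises. If you were asked to prove the pair of statements together, you would still owe the inductive argument for the $1$-Lipschitz bound when $P$ is not minimal.
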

\begin{proof}
Fix $S_{0}\in\mathbf{S}(L)$. The $\mathsf{P}=P(k)$ and $\mathsf{L}=L(k)$
orbits of $S_{0}$ in $\mathbf{S}(G)$ are respectively equal to $\mathbf{S}(G,P)=\{S\in\mathbf{S}(G):Z_{G}(S)\subset P\}$
and $\mathbf{S}(L)$. Since any $\mathcal{F}\in\mathbf{F}^{\Gamma}(G)$
belongs to $\mathbf{F}^{\Gamma}(S)$ for some $S\in\mathbf{S}(G,P)$,
we find that with $\mathsf{U}=U(k)$, 
\[
\mathbf{F}^{\Gamma}(G)=\cup_{S\in\mathbf{S}(G,P)}\mathbf{F}^{\Gamma}(S)=\mathsf{P}\cdot\mathbf{F}^{\Gamma}(S_{0})=\mathsf{U}\cdot\cup_{S\in\mathbf{S}(L)}\mathbf{F}^{\Gamma}(S)=\mathsf{U}\cdot\mathbf{F}^{\Gamma}(L).
\]
Suppose that $\mathcal{F},u\mathcal{F}\in\mathbf{F}^{\Gamma}(L)$
for some $u\in\mathsf{U}$, and choose an $S\in\mathbf{S}(L)$ with
$\mathcal{F},u\mathcal{F}\in\mathbf{F}^{\Gamma}(S)$. Since $Z_{G}(S)\subset L\subset P$,
there is a $B\in\mathbf{B}(G)$ with $Z_{G}(S)\subset B\subset P$.
Let $C=F^{-1}(B)$ be the corresponding ($G$-)chamber in $A=\mathbf{F}^{\Gamma}(S)$.
Since $U\subset B$, $uC=C$ and $\mathcal{F},C\in A\cap u^{-1}A$.
Choose $g\in\mathsf{G}$ with $gu^{-1}A=A$, $g\mathcal{F}=\mathcal{F}$
and $gC=C$. Then $g$ belongs to $\mathsf{B}=B(k)$, thus $gu^{-1}$
belongs to $\mathsf{B}\cap\mathsf{N}_{G}(S)=\mathsf{Z}_{G}(S)$ which
acts trivially on $A$. Therefore $u\mathcal{F}=gu^{-1}u\mathcal{F}=g\mathcal{F}=\mathcal{F}$
and $\mathbf{F}^{\Gamma}(L)$ is a fundamental domain for the action
of $\mathsf{U}$ on $\mathbf{F}^{\Gamma}(G)$. 

For $A\in\mathbf{A}(G)$ containing $F^{-1}(P)$, there is a unique
$A_{L}\in\mathbf{A}(L)\cap\mathsf{U}\cdot\{A\}$ such that $r(x)=ux$
for any $x\in A$ and $u\in\mathsf{U}$ such that $uA=A_{L}$. Indeed,
there is a $p=lu$ in $\mathsf{P}=\mathsf{LU}$ such that $pA$ is
an apartment of $\mathbf{F}^{\Gamma}(L)$, then $uA=l^{-1}pA\subset\mathbf{F}^{\Gamma}(L)$
and $r(x)=ux$ for every $x\in A$. Thus for $x,y\in A$, $d_{\tau}(rx,ry)=d_{\tau}(x,y)$. 

For the remaining claims, we may assume that $\Gamma=\mathbb{R}$
and use induction on the semi-simple rank $s$ of $G$. If $s=0$
everything is obvious. If $s>0$ but $G=L$, then $r$ is the identity
thus $d_{\tau}(rx,ry)=d_{\tau}(x,y)$ for every $x,y\in\mathbf{F}^{\mathbb{R}}(G)$.
If $G\neq L$, choose an apartment $A$ in $\mathbf{F}^{\mathbb{R}}(G)$
containing $x$ and $y$, let $[x,y]$ be the corresponding segment
of $A$, and write $[x,y]=\cup_{i=0}^{n-1}[x_{i},x_{i+1}]$ for consecutive
points $x_{i}\in[x,y]$ with $x_{0}=x$, $x_{n}=y$ and $]x_{i},x_{i+1}[$
contained in a facet $F_{i}\subset A$. Then there is an apartment
containing $F^{-1}(P)$ and $\{x_{i},x_{i+1}\}\subset\overline{F}_{i}$,
thus $d_{\tau}(rx_{i},rx_{i+1})=d_{\tau}(x_{i},x_{i+1})$ for every
$i\in\{0,\cdots,n-1\}$. Since $d_{\tau}$ is a distance on $\mathbf{F}^{\mathbb{R}}(L)$
by our induction hypothesis, 
\[
d_{\tau}(rx,ry)\leq{\textstyle \sum_{i=0}^{n-1}}d_{\tau}(rx_{i},rx_{i+1})={\textstyle \sum_{i=0}^{n-1}}d_{\tau}(x_{i},x_{i+1})=d_{\tau}(x,y).
\]
Finally for $x,y,z\in\mathbf{F}^{\mathbb{R}}(G)$, choose an apartment
$\mathbf{F}^{\mathbb{R}}(S)$ containing $x$, $y$ and a chamber
$F^{-1}(B)$, let $r=r_{B,Z_{G}(S)}$ be the corresponding retraction.
Then 
\[
d_{\tau}(x,y)=d_{\tau}(rx,ry)\leq d_{\tau}(rx,rz)+d_{\tau}(rz,ry)\leq d_{\tau}(x,z)+d_{\tau}(z,y).
\]
This finishes the proof of corollaries~\ref{cor:Ret} and \ref{cor:Dist}.\end{proof}
\begin{cor}
\label{Cor:F(G)completeCAT0}If $\Gamma=\mathbb{R}$, then $(\mathbf{F}^{\mathbb{R}}(G),d_{\tau})$
is a complete CAT(0)-space. \end{cor}
\begin{proof}
Plainly, $(\mathbf{C}^{\mathbb{R}}(G),d_{\tau})$ is a complete metric
space. Let $(x_{n})$ be a Cauchy sequence in $\left(\mathbf{F}^{\mathbb{R}}(G),d_{\tau}\right)$.
Then $t(x_{n})$ is a Cauchy sequence in $(\mathbf{C}^{\mathbb{R}}(G),d_{\tau})$
by corollary~\ref{cor:tIsnonexpanding}, it thus converges to some
$y\in\mathbf{C}^{\mathbb{R}}(G)$. Now choose for each $n$ a minimal
pair $(S_{n},B_{n})\in\mathbf{SP}(G)$ corresponding to a section
$s_{n}:\mathbf{C}^{\mathbb{R}}(G)\rightarrow\mathbf{F}^{\mathbb{R}}(G)$
passing through $x_{n}$, and let $y_{n}=s_{n}(y)$. Then $d_{\tau}(x_{n},y_{n})=d_{\tau}(t(x_{n}),y)$
converges to $0$ and $y_{n}$ is also a Cauchy sequence in $\mathbf{F}^{\mathbb{R}}(G)$.
But $d_{\tau}(y_{n},y_{m})$ takes finitely many values by lemma~\ref{lem:AngleRigid},
therefore $y_{n}$ is stationary and $x_{n}$ converges to its limit.
Thus $(\mathbf{F}^{\mathbb{R}}(G),d_{\tau})$ is complete, and a geodesic
space by theorem~\ref{thm:Std=00003DAlloverField}. Finally, for
any triple $x,y,z$ in $\mathbf{F}^{\mathbb{R}}(G)$, choose a minimal
pair $(S,B)\in\mathbf{SP}(G)$ such that $x,y\in\mathbf{F}^{\mathbb{R}}(S)$
and the middle point $m$ of the segment $[x,y]$ of $\mathbf{F}^{\mathbb{R}}(S)$
belongs to $F^{-1}(B)$. Let $r=r_{B,Z_{G}(S)}:\mathbf{F}^{\mathbb{R}}(G)\rightarrow\mathbf{F}^{\mathbb{R}}(S)$
be the corresponding retraction and pick $u$ in $\mathsf{U}=U(k)$
with $uz=r(z)$, where $U$ is the unipotent radical of $B$. Then{\small{
\begin{eqnarray*}
d_{\tau}(z,m)^{2}=d_{\tau}(uz,um)^{2}=d_{\tau}(r(z),m)^{2} & = & {\textstyle \frac{1}{2}}d_{\tau}(r(z),x)^{2}+{\textstyle \frac{1}{2}}d_{\tau}(r(z),y)^{2}-{\textstyle \frac{1}{4}}d_{\tau}(x,y)^{2}\\
 & \leq & {\textstyle \frac{1}{2}}d_{\tau}(z,x)^{2}+{\textstyle \frac{1}{2}}d_{\tau}(z,y)^{2}-{\textstyle \frac{1}{4}}d_{\tau}(x,y)^{2}
\end{eqnarray*}
}}thus $\mathbf{F}^{\mathbb{R}}(G)$ is a CAT(0)-space by \cite[II.1.9]{BrHa99}.\end{proof}
\begin{cor}
\label{cor:ConcavOfScalProd}For any $\mathcal{F}\in\mathbf{F}^{\mathbb{R}}(G)$,
the function $\mathcal{G}\mapsto\left\langle \mathcal{F},\mathcal{G}\right\rangle _{\tau}$
from $\mathbf{F}^{\mathbb{R}}(G)$ to $\mathbb{R}$ is homogeneous,
concave and $\left\Vert \mathcal{F}\right\Vert _{\tau}$-Lipschitzian.\end{cor}
\begin{proof}
Homogeneity means that $\left\langle \mathcal{F},t\mathcal{G}\right\rangle _{\tau}=t\left\langle \mathcal{F},\mathcal{G}\right\rangle _{\tau}$
for all $t\in\mathbb{R}_{+}$, which is obvious from the definitions.
Concavity means that for any $\mathcal{G}_{0}$, $\mathcal{G}_{1}\in\mathbf{F}^{\mathbb{R}}(G)$
and $t\in[0,1]$, if $\mathcal{G}_{t}$ is the unique point at distance
$td_{\tau}(\mathcal{G}_{0},\mathcal{G}_{1})$ from $\mathcal{G}_{0}$
on the geodesic segment $[\mathcal{G}_{0},\mathcal{G}_{1}]$ of the
uniquely geodesic space $(\mathbf{F}^{\mathbb{R}}(G),d_{\tau})$ \cite[II.1.4]{BrHa99},
then
\[
\left\langle \mathcal{F},\mathcal{G}_{t}\right\rangle _{\tau}\geq t\left\langle \mathcal{F},\mathcal{G}_{1}\right\rangle _{\tau}+(1-t)\left\langle \mathcal{F},\mathcal{G}_{0}\right\rangle _{\tau}.
\]
Let $(0,f,g_{0},g_{1})$ be a comparison tetrahedron for $(0,\mathcal{F},\mathcal{G}_{0},\mathcal{G}_{1})$
in the Euclidean space $\mathbb{R}^{3}$, by which we mean that the
lengths of the edges containing $0$ and the angles between them are
the same for both tetrahedron. Then the lengths of the other three
edges are also the same for both tetrahedron, since every triangle
$(0,\mathcal{X},\mathcal{Y})$ in $\mathbf{F}^{\mathbb{R}}(G)$ is
flat by theorem~\ref{thm:Std=00003DAlloverField}. In particular,
$(f,g_{0},g_{1})$ is a comparison triangle for $(\mathcal{F},\mathcal{G}_{0},\mathcal{G}_{1})$,
thus $d_{\tau}(\mathcal{F},\mathcal{G}_{t})\leq d(f,g_{t})$ where
$g_{t}=tg_{1}+(1-t)g_{0}$ in $\mathbb{R}^{3}$ by the previous corollary.
Since $\left\Vert \mathcal{G}_{t}\right\Vert =\left\Vert g_{t}\right\Vert $
(because $(0,\mathcal{G}_{0},\mathcal{G}_{1})$ is flat), it follows
that 
\[
\left\langle \mathcal{F},\mathcal{G}_{t}\right\rangle _{\tau}\geq\left\langle f,g_{t}\right\rangle =t\left\langle f,g_{1}\right\rangle +(1-t)\left\langle f,g_{0}\right\rangle =t\left\langle \mathcal{F},\mathcal{G}_{1}\right\rangle _{\tau}+(1-t)\left\langle \mathcal{F},\mathcal{G}_{0}\right\rangle _{\tau}.
\]
Similarly, we find that 
\[
\left|\left\langle \mathcal{F},\mathcal{G}_{1}\right\rangle _{\tau}-\left\langle \mathcal{F},\mathcal{G}_{0}\right\rangle _{\tau}\right|=\left|\left\langle f,g_{1}\right\rangle -\left\langle f,g_{0}\right\rangle \right|\leq\left\Vert f\right\Vert \left\Vert g_{1}-g_{0}\right\Vert =\left\Vert \mathcal{F}\right\Vert _{\tau}\cdot d_{\tau}(\mathcal{G}_{0},\mathcal{G}_{1})
\]
thus $\mathcal{G}\mapsto\left\langle \mathcal{F},\mathcal{G}\right\rangle _{\tau}$
is indeed $\left\Vert \mathcal{F}\right\Vert _{\tau}$-Lipschitzian.\end{proof}
\begin{cor}
\label{cor:IneqScalProdF(G)}For any $\mathcal{F},\mathcal{G},\mathcal{H}\in\mathbf{F}^{\mathbb{R}}(G)$,
\[
\left\langle \mathcal{F},\mathcal{G}+\mathcal{H}\right\rangle _{\tau}\geq\left\langle \mathcal{F},\mathcal{G}\right\rangle _{\tau}+\left\langle \mathcal{F},\mathcal{H}\right\rangle _{\tau}.
\]
\end{cor}
\begin{proof}
Apply the previous lemma to the middle point $\mathcal{G}+\mathcal{H}$
of $[2\mathcal{G},2\mathcal{H}]$.\end{proof}
\begin{rem}
We could pursue here with many further corollaries, but our knowledgeable
readers will recognize that already with corollary~\ref{cor:AxiomeR(i)4F(G)},
we have established that $\mathbf{F}^{\mathbb{R}}(G)$, together with
its collections of apartments and facets (and the function $d_{\tau}$
for some choice of a faithful $\tau$), is a (discrete) Euclidean
building in the sense of \cite[6.1]{Ro09}. It is the vectorial (Tits)
building defined in \cite[10.6]{Ro09}. But the construction given
there singles out a pair $Z_{G}(S)\subset B$ and uses more of the
finest results from~\cite{BoTi65}: $\mathbf{F}^{\mathbb{R}}(G)$
is the building associated to the saturated Tits system $(\mathsf{G},\mathsf{B},\mathsf{N})=(G,B,N_{G}(S))(k)$.
By contrast, we may retrieve some of the results of \cite{BoTi65}
using the strongly transitive and strongly type-preserving action
of $\mathsf{G}$ on our globally constructed building $\mathbf{F}^{\mathbb{R}}(G)$,
for instance the fact that $(\mathsf{G},\mathsf{B},\mathsf{N})$ is
indeed a saturated Tits system \cite[8.6]{Ro09}. The main advantage
of our construction is however that it is plainly functorial in $G$
and $k$.
\begin{rem}
Corollary~\ref{cor:FmonoIsmono} also immediately follows from proposition~\ref{prop:GandFpreserveClosedImm}
together with \cite[VIB 1.4.2]{SGA1r}.
\end{rem}
\end{rem}

\subsection{~\label{sub:EquivDistOnF}}

If $\tau'$ is another faithful representation of $G$, the distances
$d_{\tau'}$ and $d_{\tau}$ are equivalent. One checks it first on
a fixed apartment $A$, thus obtaining constants $c,C>0$ such that
$cd_{\tau}(x,y)\leq d_{\tau'}(x,y)\leq Cd_{\tau}(x,y)$ for $x,y\in A$.
Then this holds true for every $x,y\in\mathbf{F}^{\Gamma}(G)$, since
any such pair is $\mathsf{G}$-conjugated to one in $A$. We thus
obtain a canonical metrizable $\mathsf{G}$-invariant topology on
$\mathbf{F}^{\Gamma}(G)$. The $\mathsf{G}$-invariant functions of
section~\ref{sub:constscalangdist} are continuous with respect to
the canonical topology. The apartments and the ``closed facets''
of section~\ref{sub:DefFacetsClosedOpen} are topologically closed,
being complete for the induced metrics. The canonical topology on
$\mathbf{C}^{\Gamma}(G)$ is the quotient topology of the canonical
topology on $\mathbf{G}^{\Gamma}(G)$, it is compatible with the monoid
structure on $\mathbf{C}^{\Gamma}(G)$, the sections defined by the
``closed chambers'' are homeomorphisms and the functions defined
in section~\ref{sub:angleCGamma} are continuous.

\subsection{~\label{sub:specialisationOfScalarProd}}

Suppose now that our local ring $\mathcal{O}$ is an integral domain
with fraction field $K$ and residue field $k$, giving rise to morphisms
of cartesian squares$\xyR{1pc}\xyC{1.5pc}$ 
\[
\xymatrix{\mathbf{F}^{\Gamma}(G_{K})\ar@{->>}[rd]^{t}\ar@{->>}[dd]_{F} &  & \mathbf{F}^{\Gamma}(G)\ar@{->>}[rd]^{t}\ar@{-}[d]_{F}\ar[rr]\ar[ll] &  & \mathbf{F}^{\Gamma}(G_{k})\ar@{->>}[rd]^{t}\ar@{-}[d]_{F}\\
 & \mathbf{C}^{\Gamma}(G_{K})\ar@{->>}[dd]\sb(0.3){F} & \,\ar@{->>}[d] & \mathbf{C}^{\Gamma}(G)\ar@{->>}[dd]\sb(0.3){F}\ar[rr]\ar[ll] & \,\ar@{->>}[d] & \mathbf{C}^{\Gamma}(G_{k})\ar@{->>}[dd]_{F}\\
\mathbf{P}(G_{K})\ar@{->>}[rd]^{t} & \,\ar[l] & \mathbf{P}(G)\ar@{->>}[rd]^{t}\ar@{-}[r]\ar@{-}[l] & \,\ar[r] & \mathbf{P}(G_{k})\ar@{->>}[rd]^{t}\\
 & \mathbf{O}(G_{K}) &  & \mathbf{O}(G)\ar[rr]\ar[ll] &  & \mathbf{O}(G_{k})
}
\]
We write $x\mapsto x_{K}$ for the generization maps, $x\mapsto x_{k}$
for the specialization maps. 
\begin{prop}
For any faithful $\tau\in\Rep^{\circ}(G)(\mathcal{O})$ and $x,y\in\mathbf{F}^{\Gamma}(G)$,
\begin{eqnarray*}
\left\langle x_{k},y_{k}\right\rangle _{\tau_{k}} & \geq & \left\langle x_{K},y_{K}\right\rangle _{\tau_{K}}\\
\measuredangle_{\tau_{k}}\left(x_{k},y_{k}\right) & \leq & \measuredangle_{\tau_{K}}\left(x_{K},y_{K}\right)\\
d_{\tau_{k}}\left(x_{k},y_{k}\right) & \leq & d_{\tau_{K}}(x_{K},y_{K})\\
\left\Vert x_{k}\right\Vert _{\tau_{k}} & = & \left\Vert x_{K}\right\Vert _{\tau_{K}}
\end{eqnarray*}
\end{prop}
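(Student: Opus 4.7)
The plan is to reduce the four assertions to two (the norm equality and the scalar product inequality) and then attack these via invariance of graded dimensions together with a semi-continuity argument for intersections of filtered pieces. The elementary identity $2\gamma_1\gamma_2 = \gamma_1^2 + \gamma_2^2 - (\gamma_1-\gamma_2)^2$, applied termwise to the bi-graded decomposition, yields $2\langle x,y\rangle_\tau = \|x\|_\tau^2 + \|y\|_\tau^2 - d_\tau(x,y)^2$; combined with $\cos\measuredangle_\tau(x,y) = \langle x,y\rangle_\tau/(\|x\|_\tau\|y\|_\tau)$, this shows that the norm equality together with the scalar product inequality imply both the distance and angle statements.

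For the norm equality, $x$ defines via Theorem~\ref{thm:MainTan} a $\Gamma$-filtration $\mathcal{F}_x(\tau)$ on the free $\mathcal{O}$-module $V(\tau)$, and unwinding the definitions gives $\|x\|_\tau^2 = \sum_\gamma \mathrm{rank}_{\mathcal{O}}(\mathrm{Gr}^\gamma_{\mathcal{F}_x(\tau)})\cdot\gamma^2$. By Proposition~\ref{prop:FilonLF} each $\mathrm{Gr}^\gamma_{\mathcal{F}_x(\tau)}$ is finite locally free on $\mathrm{Spec}(\mathcal{O})$, so its rank is preserved by base change to $K$ and to $k$, yielding $\|x_K\|_{\tau_K} = \|x\|_\tau = \|x_k\|_{\tau_k}$.

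The heart of the argument is the scalar product inequality. Writing $\mathcal{F} = \mathcal{F}_x(\tau)$, $\mathcal{F}' = \mathcal{F}_y(\tau)$, and $b^L(u,v) = \dim_L(\mathcal{F}^u_L \cap \mathcal{F}'^v_L)$ for $L \in \{K,k\}$, the key semi-continuity I would establish is
\[
b^k(u,v) \geq b^K(u,v) \quad\text{for all }(u,v)\in\Gamma^2.
\]
To prove this, consider the morphism of free $\mathcal{O}$-modules $\phi\colon \mathcal{F}^u\oplus\mathcal{F}'^v \to V(\tau)$, $(a,b)\mapsto a-b$; its kernel after base change to $L$ is canonically $\mathcal{F}^u_L\cap\mathcal{F}'^v_L$ and so has dimension $b^L(u,v)$. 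Since the rank of a matrix with entries in $\mathcal{O}$ can only drop under specialization from $K$ to $k$ (a nonzero minor in $K$ is a weaker condition than a minor not lying in $\mathfrak{m}$), rank-nullity gives the asserted inequality. To transfer this to $\langle\cdot,\cdot\rangle$, I would integrate the elementary identity
\[
\gamma_1\gamma_2 = \int_0^\infty\!\!\int_0^\infty (\mathbf{1}_{\gamma_1\geq u} - \mathbf{1}_{\gamma_1\leq -u})(\mathbf{1}_{\gamma_2\geq v} - \mathbf{1}_{\gamma_2\leq -v})\,du\,dv
\]
against the bi-graded dimension measure $r^{\gamma_1,\gamma_2}(L) = \dim_L \mathrm{Gr}^{\gamma_1,\gamma_2}_{\mathcal{F}_L,\mathcal{F}'_L}$. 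This expresses $\langle x_L,y_L\rangle_{\tau_L}$ as a double integral of the nonnegative combination $b^L(u,v) + b^L(u,(-v)_+) + b^L((-u)_+,v) + b^L((-u)_+,(-v)_+)$, plus correction terms linear in $\dim V(\tau)$ and in the individual graded dimensions $\dim \mathrm{Gr}^\bullet_\mathcal{F}$, $\dim \mathrm{Gr}^\bullet_{\mathcal{F}'}$. These correction terms are independent of $L$ (again by the finite-locally-free property of the graded pieces), so the pointwise inequality $b^k \geq b^K$ integrates to $\langle x_k,y_k\rangle_{\tau_k}\geq\langle x_K,y_K\rangle_{\tau_K}$.

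The step I expect to be the main obstacle is the combinatorial bookkeeping of this integral identity: one must handle negative $\gamma_i$ with the correct signs, and one must verify that the ``strict'' tails $b^L((-u)_+,\cdot)$ and $b^L(\cdot,(-v)_+)$ are unambiguous across the two fibres. This last point reduces to the observation that the jumps of $u\mapsto b^L(u,v)$ and $v\mapsto b^L(u,v)$ occur exactly at the weights appearing in $\mathrm{Gr}^\bullet_\mathcal{F}$, respectively $\mathrm{Gr}^\bullet_{\mathcal{F}'}$, and so are the same for $L=K$ and $L=k$. Once these points are settled, the inequality is a transparent consequence of the pointwise semi-continuity $b^k \geq b^K$.
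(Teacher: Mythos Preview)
Your proof is correct and follows a genuinely different route from the paper's. The paper reduces to $\Gamma=\mathbb{R}$, observes that for $(x,y)\in\mathbf{Std}^{\mathbb{R}}(G)$ all four quantities are actually \emph{equal} over $K$ and over $k$ (the bi-graded ranks being constant on $\Spec(\mathcal{O})$), and then treats a general pair by subdividing the geodesic $[x,y]\subset\mathbf{F}^{\mathbb{R}}(G)$ along facets into pieces $[x_i,x_{i+1}]$ each in standard position; the distance inequality then follows from $d_{\tau_K}(x_K,y_K)=\sum d_{\tau_k}(x_{i,k},x_{i+1,k})\geq d_{\tau_k}(x_k,y_k)$ via the triangle inequality over $k$ (Corollary~\ref{cor:Dist}), and the other two inequalities are deduced from this one. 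Your Abel-summation argument bypasses the building geometry entirely: it works $\tau$-by-$\tau$ with pure linear algebra, needs no reduction to $\Gamma=\mathbb{R}$, and does not invoke the previously established metric property of $\mathbf{F}^{\mathbb{R}}(G_k)$. One small simplification: your concern about the strict tails $b^L((-u)_+,\cdot)$ being ``unambiguous across the two fibres'' is unnecessary, since you only need the pointwise inequality $b^k\geq b^K$ at nearby arguments (which you have) together with the $L$-independence of the correction terms (which are ranks of the free $\mathcal{O}$-modules $\mathcal{F}^{\pm u}$ and $\mathcal{F}^{\pm u}_+$); also, since $b^L(-u,-v)$ itself is not integrable, phrase the final step as integrating the \emph{difference} of the two integrands, each term $b^k(\pm u,\pm v)-b^K(\pm u,\pm v)$ being nonnegative and compactly supported.
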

\begin{proof}
We may assume that $\Gamma=\mathbb{R}$. For $(x,y)\in\mathbf{Std}^{\mathbb{R}}(G)$,
one checks easily that all of the above inequalities are in fact equalities.
In particular, 
\[
\left\Vert x_{k}\right\Vert _{\tau_{k}}=\left\Vert x_{K}\right\Vert _{\tau_{K}}
\]
for all $x\in\mathbf{F}^{\Gamma}(G)$. For an arbitrary pair $(x,y)$
in $\mathbf{F}^{\mathbb{R}}(G)$, the facet decomposition of $\mathbf{F}^{\mathbb{R}}(G)$
induces a decomposition of the segment $[x,y]=\cup_{i=0}^{n-1}[x_{i},x_{i+1}]$
as in the proof of corollary~\ref{cor:Ret}, with $(x_{i},x_{i+1})\in\mathbf{Std}^{\mathbb{R}}(G)$
for every $i$. Thus 
\[
d_{\tau_{K}}(x_{K},y_{K})={\textstyle \sum}_{i=0}^{n-1}d_{\tau_{K}}(x_{i,K},x_{i+1,K})={\textstyle \sum}_{i=0}^{n-1}d_{\tau_{k}}(x_{i,k},x_{i+1,k})\geq d_{\tau_{k}}(x_{k},y_{k})
\]
and the other two inequalities easily follow.\end{proof}
\begin{rem}
On the other hand for any $x,y\in\mathbf{C}^{\Gamma}(G)$,
\[
\begin{array}{rclcrcl}
\left\langle x_{k},y_{k}\right\rangle _{\tau_{k}}^{os} & = & \left\langle x_{K},y_{K}\right\rangle _{\tau_{K}}^{os} & \mbox{and} & \measuredangle_{\tau_{k}}^{os}(x_{k},y_{k}) & = & \measuredangle_{\tau_{K}}^{os}(x_{K},y_{K}),\\
\left\langle x_{k},y_{k}\right\rangle _{\tau_{k}}^{tr} & = & \left\langle x_{K},y_{K}\right\rangle _{\tau_{K}}^{tr} & \mbox{and} & \measuredangle_{\tau_{k}}^{tr}(x_{k},y_{k}) & = & \measuredangle_{\tau_{K}}^{tr}(x_{K},y_{K}).
\end{array}
\]
However, it does happen that $D_{\tau_{k}}(x_{k},y_{k})\neq D_{\tau_{K}}(x_{K},y_{K})$.\end{rem}

\chapter{Affine $\mathbf{F}(G)$-buildings}

Let $G$ be a reductive group over a field $K$. From now on, $\Gamma=(\mathbb{R},+,\leq)$
and we drop it from our notations. We also fix a faithful finite dimensional
representation $\tau$ of $G$ and drop it from the notations of section~\ref{sub:constscalangdist}.
We use \textsf{sans-serif} fonts to denote the set of $K$-valued
points of a $K$-scheme, as in $\mathsf{G}=G(K)$, $\mathsf{P}=P(K)$
etc\ldots{} \nomenclature[G]{$\mathsf{G}$}{$\mathsf{G} = G(\mathcal{O})$ for a group scheme $G$ over a local ring $\mathcal{O}$.}

\section{The dominance order\label{sub:DomOrder}}

\subsection{~}

Since $\Gamma=\mathbb{R}$ is divisible, the weak and strong partial
dominance order on $\mathbb{C}^{\Gamma}(G)$ agree. We denote by $\leq$
the induced partial order on the commutative monoid $\mathsf{C}(G)=\mathbb{C}(G)(K)$
or its submonoid $\mathbf{C}(G)=t(\mathbf{F}(G))$. It is compatible
with the monoid structure on $\mathbf{C}(G)$ and related to the decomposition
\[
\mathbf{C}(G)=\mathbf{C}^{r}(G)\times\mathbf{G}(Z)\quad\mbox{with}\quad\mathbf{C}^{r}(G)=\mathbb{C}(G)^{r}(K)\mbox{ and }\mathbf{G}(Z)=\mathbb{C}(G)^{c}(K)
\]
of section~\ref{sub:Isogenies} as follows: for $x=(x^{r},x^{c})$
and $y=(y^{r},y^{c})$ in $\mathbf{C}^{r}(G)\times\mathbf{G}(Z)$,
\begin{eqnarray*}
x\leq y & \iff & x^{r}\leq y^{r}\quad\mbox{and}\quad x^{c}=y^{c}.
\end{eqnarray*}
The poset $(\mathbf{C}(G),\leq)$ is a lattice and $\mathbf{G}(Z)\subset\mathbf{C}(G)$
is its subset of minimal elements.

\subsection{~}

Choose a minimal pair $(S,B)$ in $\mathbf{SP}(G)$, giving rise to
the relative based root data $\mathscr{R}_{+}=(M,R,M^{\ast},R^{\ast};R_{+})$
with Weyl group $\mathsf{W}_{G}(S)=W_{G}(S)(K)$, and to the partial
dominance order $\leq$ on $\Hom^{+}(M,\mathbb{R})$ as defined in
section~\ref{sub:startcompabsoluterelativedomorder}. Let also $s:\mathbf{C}(G)\hookrightarrow\mathbf{F}(S)$
be the corresponding section of $t:\mathbf{G}(G)\twoheadrightarrow\mathbf{C}(G)$,
whose image $C=s(\mathbf{C}(G))$ equals $\Hom^{+}(M,\mathbb{R})$
inside $\Hom(M,\mathbb{R})=\mathbf{F}(S)$ by~section~\ref{sub:descFacet}.
Let finally $C^{\ast}$ be the dual cone of $C$ in $\mathbf{F}(S)$
with respect to the scalar product $\left\langle -,-\right\rangle $
on $\mathbf{F}(S)$ which is attached to our chosen $\tau$, so that
\[
C^{\ast}=\left\{ t\in\mathbf{F}(S):\forall c\in C,\,\left\langle t,c\right\rangle \geq0\right\} .
\]
Then for every $x,y\in\mathbf{C}(G)$,
\begin{eqnarray*}
x\leq y & \iff & s(x)\leq s(y)\mbox{\,\ in }\Hom^{+}(M,\mathbb{R}),\\
 & \iff & s(x)\mbox{\,\ belongs to the convex hull of }\mathsf{W}_{G}(S)\cdot s(y),\\
 & \iff & s(y)-s(x)\mbox{ belongs to the dual cone }C^{\ast},\\
 & \iff & \forall z\in\mathbf{C}(G):\quad\left\langle x,z\right\rangle ^{os}\leq\left\langle y,z\right\rangle ^{os},\\
 & \iff & \forall z\in\mathbf{C}(G):\quad\left\langle x,z\right\rangle ^{tr}\geq\left\langle y,z\right\rangle ^{tr}.
\end{eqnarray*}
The first equivalence follows from~Proposition~\ref{prop:compdomorders},
the second from section~\ref{sub:CaractDomByConvEnv}, the third
one from \cite[12.14]{AtBo83} and the last two from the formulas
of section \ref{sub:calcangleCGamma}. The equivalence of the first
and third line on the right is actually a tautology, since in fact
$C^{\ast}=\mathbb{R}_{+}R_{+}^{\ast}$ in $\mathbf{F}(S)=\Hom(M,\mathbb{R})$.
Indeed for $\alpha\in R$, let $\alpha_{v}$ be the unique element
of $\mathbf{F}(S)$ such that $\left\langle x,\alpha_{v}\right\rangle =x(\alpha)$
for every $x\in\mathbf{F}(S)$. Then $s_{\alpha}$ is the orthogonal
reflection of $\mathbf{F}(S)$ with respect to the hyperplane $\alpha_{v}^{\bot}$,
thus $\alpha^{\ast}=\frac{\alpha_{v}}{\left\langle \alpha_{v},\alpha_{v}\right\rangle }$
in $\mathbf{F}(S)$. Since $C=\left\{ x\in\mathbf{F}(S):\forall\alpha\in R_{+},\,\left\langle x,\alpha_{v}\right\rangle \geq0\right\} $,
its dual cone $C^{\ast}$ is spanned by the $\alpha_{v}$'s for $\alpha\in R_{+}$
and thus $C^{\ast}=\mathbb{R}_{+}R_{+}^{\ast}$.

\subsection{~}

For every $x,y\in\mathbf{C}(G)$, we have 
\begin{eqnarray*}
x\leq y & \Longrightarrow & \left\Vert x\right\Vert ^{2}\leq\left\langle x,y\right\rangle ^{os}\leq\left\Vert y\right\Vert ^{2}\\
x=y & \iff & x\leq y\mbox{ and }\left\Vert x\right\Vert =\left\Vert y\right\Vert .
\end{eqnarray*}
Indeed the first implication follows from the equivalence 
\[
x\leq y\iff\forall z\in\mathbf{C}(G):\quad\left\langle x,z\right\rangle ^{os}\leq\left\langle y,z\right\rangle ^{os}
\]
with $z=x$ or $y$, and with $s$ as above, it says that 
\[
x\leq y\Longrightarrow\left\Vert s(x)\right\Vert ^{2}\leq\left\Vert s(x)\right\Vert \left\Vert s(y)\right\Vert \cos\measuredangle(s(x),s(y))\leq\left\Vert s(y)\right\Vert ^{2}.
\]
Thus if $x\leq y$ and $\left\Vert x\right\Vert =\left\Vert y\right\Vert $,
$s(x)=s(y)$ and $x=y$.

\subsection{~}

The next proposition slightly refines proposition~\ref{prop:CompAdditionType}.
\begin{prop}
\label{prop:TRinF(G)}For every $\mathcal{F},\mathcal{G}\in\mathbf{F}(G)$,
we have 
\[
t(\mathcal{F}+\mathcal{G})\leq t(\mathcal{F})+t(\mathcal{G})\quad\mbox{in}\quad(\mathbf{C}(G),\leq)
\]
with equality if and only if $\mathcal{F},\mathcal{G}\in C$ for some
closed chamber $C$ of $\mathbf{F}(G)$.\end{prop}
\begin{proof}
With notations as above, we may choose $(S,B)$ such that $\mathcal{F},\mathcal{G}\in\mathbf{F}(S)$
with $\mathcal{F}+\mathcal{G}\in C$, $C=s(\mathbf{C}(G))$. Set $\mathcal{F}'=s\circ t(\mathcal{F})$
and $\mathcal{G}'=s\circ t(\mathcal{G})$, so that
\[
\mathcal{F}+\mathcal{G}=s\left(t(\mathcal{F}+\mathcal{G})\right)\quad\mbox{and}\quad\mathcal{F}'+\mathcal{G}'=s\left(t(\mathcal{F})+t(\mathcal{G})\right).
\]
The (acute) dual cone $C^{\ast}$ defines a partial order $\leq$
on $\mathbf{F}(S)$, given by 
\[
x\leq y\iff y-x\in C^{\ast}.
\]
Since $\mathcal{F}'\in\left(\mathsf{W}_{G}(S)\cdot\mathcal{F}\right)\cap C$
and $\mathcal{G}'\in(\mathsf{W}_{G}(S)\cdot\mathcal{G})\cap C$, we
have 
\[
\mathcal{F}\leq\mathcal{F}'\quad\mbox{and}\quad\mathcal{G}\leq\mathcal{G}'
\]
by \cite[VI \S 1 Proposition 18]{BoLie46} (or lemma~\ref{lem:CarDominantByDomOrder}).
Thus $\mathcal{F}+\mathcal{G}\leq\mathcal{F}'+\mathcal{G}'$ with
equality if and only if $\mathcal{F}=\mathcal{F}'$ and $\mathcal{G}=\mathcal{G}'$,
i.e.~$\mathcal{F}$ and $\mathcal{G}$ belong to $C$.
\end{proof}

\subsection{~}

The above inequality can also be established and somehow refined as
follows. For every $z\in\mathbf{C}(G)$, there is an $\mathcal{H}\in\mathbf{F}(G)$
with $t(\mathcal{H})=z$ such that $\mathcal{H}$ and $\mathcal{F}+\mathcal{G}$
are in (relative) transverse position, see~\ref{sub:angleCGamma}.
For any such $\mathcal{H}$, \ref{sub:calcangleCGamma}, lemma~\ref{lem:AngleRigid}
and corollary~\ref{cor:IneqScalProdF(G)} together imply that
\begin{eqnarray*}
\left\langle t(\mathcal{F})+t(\mathcal{G}),z\right\rangle ^{tr} & = & \left\langle t(\mathcal{F}),z\right\rangle ^{tr}+\left\langle t(\mathcal{G}),z\right\rangle ^{tr}\\
 & \leq & \left\langle \mathcal{F},\mathcal{H}\right\rangle +\left\langle \mathcal{G},\mathcal{H}\right\rangle \\
 & \leq & \left\langle \mathcal{F}+\mathcal{G},\mathcal{H}\right\rangle \\
 & = & \left\langle t(\mathcal{F}+\mathcal{G}),z\right\rangle ^{tr}.
\end{eqnarray*}
Thus indeed $t(\mathcal{F}+\mathcal{G})\leq t(\mathcal{F})+t(\mathcal{G})$
in $(\mathbf{C}(G),\leq)$.

\section{Affine $\mathbf{F}(G)$-spaces and buildings}

\subsection{~}

Affine $\mathbf{F}(G)$-spaces interact with the vectorial Tits building
$\mathbf{F}(G)$ in the same way as affine spaces do with their underlying
vector space. 
\begin{defn}
An affine $\mathbf{F}(G)$-space is a set $\mathbf{X}(G)$ equipped
with: 
\begin{itemize}
\item a left action $\mathsf{G}\times\mathbf{X}(G)\rightarrow\mathbf{X}(G)$,
written $(g,x)\mapsto g\cdot x$ or $gx$,
\item a $\mathsf{G}$-equivariant \emph{pull} map $\mathbf{X}(G)\times\mathbf{F}(G)\rightarrow\mathbf{X}(G)$,
written $(x,\mathcal{F})\mapsto x+\mathcal{F}$,
\item a $\mathsf{G}$-equivariant \emph{apartment} map $\mathbf{S}(G)\rightarrow\mathcal{P}(\mathbf{X}(G))$,
written $S\mapsto\mathbf{X}(S)$,
\end{itemize}
such that for (one or) every $S\in\mathbf{S}(G)$, the pull map sends
$\mathbf{X}(S)\times\mathbf{F}(S)$ to $\mathbf{X}(S)$ and induces
a structure of affine $\mathbf{G}(S)$-space (in the usual sense)
on $\mathbf{X}(S)$. 
\end{defn}

\subsection{~}

The group $\mathsf{N}_{G}(S)$ thus acts on $\mathbf{X}(S)$ by affine
morphisms, the vectorial part of this action equals $\nu_{S}^{v}:\mathsf{N}_{G}(S)\twoheadrightarrow\mathsf{W}_{G}(S)\subset\Aut(\mathbf{G}(S))$
and the kernel $\mathsf{Z}_{G}(S)$ of $\nu_{S}^{v}$ acts on $\mathbf{X}(S)$
by translations, through a $\mathsf{W}_{G}(S)$-equivariant morphism
\[
\nu_{\mathbf{X},S}:\mathsf{Z}_{G}(S)\rightarrow\mathbf{G}(S).
\]
For any other $S'\in\mathbf{S}(G)$, there is commutative diagram
\[
\xyC{2pc}\xyR{2pc}\xymatrix{\mathsf{Z}_{G}(S)\ar[r]^{\nu_{\mathbf{X},S}}\ar[d]_{\simeq} & \mathbf{G}(S)\ar[d]^{\simeq}\\
\mathsf{Z}_{G}(S')\ar[r]^{\nu_{\mathbf{X},S'}} & \mathbf{G}(S')
}
\]
where the vertical maps are induced by $\Int(g)$ for any $g\in\mathsf{G}$
with $\Int(g)(S)=S'$. Set $\mathsf{W}_{G}=\underleftarrow{\lim}\mathsf{W}_{G}(S)$.
The type of $\mathbf{X}(G)$ is the $\mathsf{W}_{G}$-equivariant
morphism\nomenclature[nu_X,S]{$\nu _{\mathbf{X},S}$}{Morphism $\mathsf{Z}_G (S) \rightarrow \mathbf{G}(S)$ defined page \nomrefpage}\nomenclature[nu_X]{$\nu _{\mathbf{X}}$}{Type morphism of an $\mathbf{F}(G)$-building $\mathbf{X}(G)$ defined page \nomrefpage}
\[
\nu_{\mathbf{X}}=\underleftarrow{\lim}\nu_{\mathbf{X},S}:\underleftarrow{\lim}\mathsf{Z}_{G}(S)\rightarrow\underleftarrow{\lim}\mathbf{G}(S)
\]
which is obtained from these diagrams by taking the limits over all
$S\in\mathbf{S}(G)$. We say that $\mathbf{X}(G)$ is discrete when
the image of $\nu_{\mathbf{X}}$ is a discrete subgroup of the real
vector space $\underleftarrow{\lim}\mathbf{G}(S)$. Equivalently:
$\mathbf{X}(G)$ is discrete when the image of $\nu_{\mathbf{X},S}$
is a discrete subgroup of $\mathbf{G}(S)$ for one or every $S\in\mathbf{S}(G)$.

\subsection{~}

Affine $\mathbf{F}(G)$-buildings are affine $\mathbf{F}(G)$-spaces
satisfying a long list of axioms, which shall be gradually introduced
below. The following definition picks up an (hopefully minimal) subset
of these axioms, from which all others will be derived in due time,
along with various properties.
\begin{defn}
An affine $\mathbf{F}(G)$-building is an affine $\mathbf{F}(G)$-space
which satisfies the axioms $L(s)$, $R(s)$, $R(i)$, $\mathcal{C}^{\circ}$,
$NE$, $UN$, $CO$ and $UG$ listed below. 
\end{defn}

\subsection{Example}

\label{Example:F(G)Itself}The Tits building $\mathbf{F}(G)$ itself,
equipped with its left action of $\mathsf{G}$, the addition map of
section~\ref{sub:AdditionMap} and the apartment map of section~\ref{sub:apartments}
is a discrete affine $\mathbf{F}(G)$-space with trivial type $\nu_{\mathbf{F}}=0$.
We will see that it satisfies all of the required axioms, thus $\mathbf{F}(G)=(\mathbf{F}(G),+,\mathbf{F}(-))$
is an affine $\mathbf{F}(G)$-building.

\subsection{Many apartments\label{sub:Many-apartments}}

An affine $\mathbf{F}(G)$-building $\mathbf{X}(G)$ satisfies
\begin{lyxlist}{XXXX}
\item [{$L(s)$}] For every $x\in\mathbf{X}(G)$ and $\mathcal{F}\in\mathbf{F}(G)$,
\[
\mathbf{S}(x,\mathcal{F})=\left\{ S\in\mathbf{S}(G):x\in\mathbf{X}(S)\mbox{ and }\mathcal{F}\in\mathbf{F}(S)\right\} 
\]
is not empty.
\item [{$R(s)$}] For every $x,y\in\mathbf{X}(G)$, 
\[
\mathbf{S}(x,y)=\left\{ S\in\mathbf{S}(G):x,y\in\mathbf{X}(S)\right\} 
\]
is not empty. 
\item [{$T(s)$}] For every $x,y\in\mathbf{X}(G)$, 
\[
\mathbf{F}(x,y)=\left\{ \mathcal{F}\in\mathbf{F}(G):y=x+\mathcal{F}\right\} 
\]
is not empty.
\end{lyxlist}
Note that $R(s)$ implies $T(s)$ while $L(s)$ is equivalent to 
\begin{lyxlist}{XXXX}
\item [{$L'(s)$}] For every $x\in\mathbf{X}(G)$ and every closed chamber
$C$ of $\mathbf{F}(G)$, 
\[
\mathbf{S}(x,C)=\left\{ S\in\mathbf{S}(G):x\in\mathbf{X}(S)\mbox{ and }C\subset\mathbf{F}(S)\right\} 
\]
is not empty.
\end{lyxlist}
This in turn implies that the pull map is well-behaved:
\begin{lyxlist}{XXXX}
\item [{$AC$}] (\emph{Action}) For every closed chamber $C$ of $\mathbf{F}(G)$,
the map 
\[
\mathbf{X}(G)\times C\hookrightarrow\mathbf{X}(G)\times\mathbf{F}(G)\stackrel{+}{\longrightarrow}\mathbf{X}(G)
\]
defines an action of the commutative monoid $(C,+)$ on $\mathbf{X}(G)$.
\end{lyxlist}
Thus for any $x\in\mathbf{X}(G)$ and $\mathcal{F},\mathcal{G}\in\mathbf{F}(G)$,
$x+0=x$ and 
\[
(x+\mathcal{F})+\mathcal{G}=x+(\mathcal{F}+\mathcal{G})=(x+\mathcal{G})+\mathcal{F}
\]
if $P_{\mathcal{F}}$ and $P_{\mathcal{G}}$ are in osculatory position.
In particular, 
\[
(x+\lambda\mathcal{F})+\mu\mathcal{F}=x+(\lambda+\mu)\mathcal{F}
\]
for every $\lambda,\mu\geq0$ and $\mathcal{F}\in\mathbf{F}(G)$.

\subsection{Strong transitivity\label{sub:Strong-transitivity}}

An affine $\mathbf{F}(G)$-building $\mathbf{X}(G)$ satisfies
\begin{lyxlist}{XXXX}
\item [{$L(i)$}] For every $x\in\mathbf{X}(G)$ and $\mathcal{F}\in\mathbf{F}(G)$,
\[
\mathsf{G}_{x,\mathcal{F}}=\left\{ g\in\mathsf{G}:gx=x\quad\mbox{and}\quad g\mathcal{F}=\mathcal{F}\right\} 
\]
acts transitively on $\mathbf{S}(x,\mathcal{F})$. 
\item [{$R(i)$}] For every $x,y\in\mathbf{X}(G)$, 
\[
\mathsf{G}_{x,y}=\left\{ g\in\mathsf{G}:gx=x\quad\mbox{and}\quad gy=y\right\} 
\]
acts transitively on $\mathbf{S}(x,y)$. 
\item [{$T(i)$}] For every $x,y\in\mathbf{X}(G)$, $\mathsf{G}_{x,y}$
acts transitively on $\mathbf{F}(x,y)$.
\end{lyxlist}

\subsection{~}

The labels of the $L$, $R$, or $T$-axioms reflect their equivalence
with the surjectivity or injectivity of the relevant maps in the commutative
diagram 
\[
\xymatrix{\mathsf{G}\backslash\left(\mathbf{X}(G)\times\mathbf{F}(G)\right)\ar[r]^{T} & \mathsf{G}\backslash\left(\mathbf{X}(G)\times\mathbf{X}(G)\right)\\
\mathsf{\mathsf{N}}_{G}(S)\backslash\left(\mathbf{X}(S)\times\mathbf{F}(S)\right)\ar[u]^{L}\ar[r] & \mathsf{\mathsf{N}}_{G}(S)\backslash\left(\mathbf{X}(S)\times\mathbf{X}(S)\right)\ar[u]_{R}
}
\]
which is induced by the equivariant commutative diagram 
\[
\xyC{5pc}\xymatrix{\mathbf{X}(G)\times\mathbf{F}(G)\ar[r]^{(x,\mathcal{F})\mapsto(x,x+\mathcal{F})} & \mathbf{X}(G)\times\mathbf{X}(G)\\
\mathbf{X}(S)\times\mathbf{F}(S)\ar@{^{(}->}[u]\ar[r] & \mathbf{X}(S)\times\mathbf{X}(S)\ar@{^{(}->}[u]
}
\]
The bottom map in each diagram is always bijective since $\mathbf{X}(S)$
is an affine $\mathbf{F}(S)$-space. Thus $R(i)\Rightarrow T(i)$,
and $R(i)+L(s)+T(s)$ imply all of the above axioms.

\subsection{The vectorial distance\label{sub:Vectorial-Distance}}

It follows from the axioms already introduced that for an affine $\mathbf{F}(G)$-building
$\mathbf{X}(G)$, there is a unique $\mathsf{G}$-invariant map\nomenclature[d]{$\mathbf{d}$}{Vectorial distance on an affine $\mathbf{F}(G)$-building, defined page \nomrefpage}
\[
\mathbf{d}:\mathbf{X}(G)\times\mathbf{X}(G)\rightarrow\mathbf{C}(G)
\]
such that for every $x\in\mathbf{X}(G)$ and $\mathcal{F}\in\mathbf{F}(G)$,
\[
\mathbf{d}(x,x+\mathcal{F})=t(\mathcal{F})\quad\mbox{in}\quad\mathbf{C}(G).
\]
The following properties are easily established: for $x,y\in\mathbf{X}(G)$,
\[
\mathbf{d}(y,x)=\mathbf{d}(x,y)^{\iota}\quad\mbox{and}\quad\mathbf{d}(x,y)=0\iff x=y.
\]
Moreover for $x\in\mathbf{X}(G)$, $\mathcal{F}\in\mathbf{F}(G)$
and $0\leq\lambda\leq\lambda'$, 
\[
\mathbf{d}(x+\lambda\mathcal{F},x+\lambda'\mathcal{F})=(\lambda'-\lambda)\cdot t(\mathcal{F}).
\]
This \emph{vectorial distance} $\mathbf{d}$ may also satisfy the
following properties -- and it does for affine $\mathbf{F}(G)$-buildings,
by lemma~\ref{lem:EquivAxiomsTR} and proposition~\ref{prop:distretrac}
below:
\begin{lyxlist}{XXX}
\item [{$\star$}] For every $x,y\in\mathbf{X}(G)$ and $\mathcal{F},\mathcal{G}\in\mathbf{F}(G)$,
\[
\mathbf{d}(x+\mathcal{F},y+\mathcal{G})\leq\mathbf{d}(x,y)+\mathbf{d}(\mathcal{F},\mathcal{G})\quad\mbox{in}\quad\mathbf{C}(G).
\]

\item [{$TR$}] (\emph{Triangle inequality}) For every $x,y,z\in\mathbf{X}(G)$,
\[
\mathbf{d}(x,z)\leq\mathbf{d}(x,y)+\mathbf{d}(y,z)\quad\mbox{in}\quad\mathbf{C}(G).
\]

\item [{$TR'$}] For every $y\in\mathbf{X}(G)$ and $\mathcal{F},\mathcal{G}\in\mathbf{F}(G)$,
\[
\mathbf{d}(y+\mathcal{F},y+\mathcal{G})\leq\mathbf{d}(\mathcal{F},\mathcal{G})\quad\mbox{in}\quad\mathbf{C}(G).
\]

\item [{$NE$}] (\emph{Non expanding}) For every $x,y\in\mathbf{X}(G)$
and $\mathcal{F}\in\mathbf{F}(G)$, 
\[
\mathbf{d}(x+\mathcal{F},y+\mathcal{F})\leq\mathbf{d}(x,y)\quad\mbox{in}\quad\mathbf{C}(G).
\]

\item [{$\mathcal{C}^{\circ}$}] (\emph{Continuity}) For every sequences
$(x_{n})$, $(y_{n})$ and points $x$, $y$ in $\mathbf{X}(G)$,
\[
\left(\begin{array}{c}
\mathbf{d}(x_{n},x)\rightarrow0\\
\mathbf{d}(y_{n},y)\rightarrow0
\end{array}\mbox{ in }\mathbf{C}(G)\right)\Longrightarrow\left(\mathbf{d}(x_{n},y_{n})\rightarrow\mathbf{d}(x,y)\mbox{ in }\mathbf{C}(G)\right).
\]

\end{lyxlist}
Note that $\star$ and $TR'$ also involve the vectorial distance
$\mathbf{d}$ for $\mathbf{F}(G)$ -- it follows from~\ref{sub:FiloverFieldsisTitsBuild}
that the affine $\mathbf{F}(G)$-space $\mathbf{F}(G)$ indeed satisfies
the required axioms for the existence of $\mathbf{d}$: $L(s)=R(s)$
is theorem~\ref{thm:Std=00003DAlloverField} and $L(i)=R(i)$ is
its corollary~\ref{cor:AxiomeR(i)4F(G)}. 
\begin{lem}
\label{lem:EquivAxiomsTR}The above properties of $\mathbf{d}$ are
related as follows:
\[
\star\iff TR+TR'+NE\quad\mbox{and}\quad TR\iff TR'\Longrightarrow\mathcal{C}^{\circ}.
\]
\end{lem}
\begin{proof}
($TR+TR'+NE\Rightarrow\star$). For $x,y\in\mathbf{X}(G)$ and $\mathcal{F},\mathcal{G}\in\mathbf{F}(G)$,
we find
\[
\mathbf{d}(x+\mathcal{F},y+\mathcal{G})\leq\mathbf{d}(x+\mathcal{F},y+\mathcal{F})+\mathbf{d}(y+\mathcal{F},y+\mathcal{G})\leq\mathbf{d}(x,y)+\mathbf{d}(\mathcal{F},\mathcal{G})
\]
using $TR$ for the first inequality, $NE$ and $TR'$ for the second.

($\star\Rightarrow TR+TR'+NE$). Taking $x=y$ (resp.~$\mathcal{F}=\mathcal{G}$)
in $\star$ yields $TR'$ (resp. $NE$). Taking $\mathcal{F}=0$ and
$\mathcal{G}\in\mathbf{F}(y,z)$ (using $T(s)$) yields $TR$.

($TR\Rightarrow TR'$). For $x\in\mathbf{X}(G)$, $\mathcal{F},\mathcal{G}\in\mathbf{F}(G)$
and $\lambda\in[0,1]$, set 
\[
\mathcal{F}(\lambda)=(1-\lambda)\mathcal{F}+\lambda\mathcal{G}\mbox{ in }\mathbf{F}(G)\quad\mbox{and}\quad x(\lambda)=x+\mathcal{F}(\lambda)\mbox{ in }\mathbf{X}(G).
\]
Pick $S\in\mathbf{S}(G)$ with $\mathcal{F},\mathcal{G}\in\mathbf{F}(S)$.
There is a subdivision $0=\lambda_{0}<\cdots<\lambda_{n}=1$ of $[0,1]$
and for each $i\in\{1,\cdots,n\}$, a closed chamber $C_{i}$ of $\mathbf{F}(S)$
such that $\mathcal{F}(\lambda)\in C_{i}$ for all $\lambda\in[\lambda_{i-1},\lambda_{i}]$.
By $L'(s)$, there is an $S_{i}\in\mathbf{S}(G)$ such that $x\in\mathbf{X}(S_{i})$
and $C_{i}\subset\mathbf{F}(S_{i})$, thus also $x(\lambda)\in\mathbf{X}(S_{i})$
for every $\lambda\in[\lambda_{i-1},\lambda_{i}]$. Then 
\[
\mathbf{d}(x+\mathcal{F},x+\mathcal{G})\leq{\textstyle \sum}_{i=1}^{n}\mathbf{d}(x(\lambda_{i-1}),x(\lambda_{i}))={\textstyle \sum}_{i=1}^{n}\mathbf{d}(\mathcal{F}(\lambda_{i-1}),\mathcal{F}(\lambda_{i}))=\mathbf{d}(\mathcal{F},\mathcal{G})
\]
using respectively $TR$ in $\mathbf{X}(G)$ and trivial computations
in $\mathbf{X}(S_{i})$ and $\mathbf{F}(G)$. 

($TR'\Rightarrow TR$). For $x,y,z\in\mathbf{X}(G)$, pick $\mathcal{F},\mathcal{G}\in\mathbf{F}(G)$
with 
\[
x=y+\mathcal{F}\quad\mbox{and}\quad z=y+\mathcal{G}
\]
using $T(s)$. Choose $S\in\mathbf{S}(G)$ such that $\mathcal{F},\mathcal{G}\in\mathbf{F}(S)$,
and set $\mathcal{F}'=\iota_{S}\mathcal{F}$. Then 
\[
\mathbf{d}(x,z)\leq\mathbf{d}(\mathcal{F},\mathcal{G})=t(\mathcal{F}'+\mathcal{G})\leq t(\mathcal{F}')+t(\mathcal{G})=\mathbf{d}(x,y)+\mathbf{d}(y,z)
\]
using respectively $TR'$ in $\mathbf{X}(G)$, proposition~\ref{prop:TRinF(G)}
and 
\[
t(\mathcal{F}')=t(\mathcal{F})^{\iota}=\mathbf{d}(y,x)^{\iota}=\mathbf{d}(x,y),\quad t(\mathcal{G})=\mathbf{d}(y,z).
\]

($TR\Rightarrow\mathcal{C}^{\circ}$). Suppose that $\mathbf{d}(x_{n},x)\rightarrow0$
and $\mathbf{d}(y_{n},y)\rightarrow0$ for sequences $(x_{n})$, $(y_{n})$
and points $x$, $y$ in $\mathbf{X}(G)$. Then also $\mathbf{d}(x,x_{n})\rightarrow0$
and $\mathbf{d}(y,y_{n})\rightarrow0$ in $\mathbf{C}(G)$. Let $c$
be a limit point of $\mathbf{d}(x_{n},y_{n})$ in the Alexandrov compactification
$\mathbf{C}(G)\cup\{\infty\}$ of the locally compact space $\mathbf{C}(G)$.
We have to show that $c=\mathbf{d}(x,y)$, for then $\mathbf{d}(x_{n},y_{n})\rightarrow\mathbf{d}(x,y)$
in $\mathbf{C}(G)$. By the triangle inequality $TR$, 
\begin{eqnarray*}
\mathbf{d}(x,y) & \leq & \mathbf{d}(x,x_{n})+\mathbf{d}(x_{n},y_{n})+\mathbf{d}(y_{n},y)\\
\mbox{and}\quad\mathbf{d}(x_{n},y_{n}) & \leq & \mathbf{d}(x_{n},x)+\mathbf{d}(x,y)+\mathbf{d}(y,y_{n})
\end{eqnarray*}
for every $n\ge0$, thus $c\in\mathbf{C}(G)$ and $\mathbf{d}(x,y)\leq c\leq\mathbf{d}(x,y)$,
i.e.~$c=\mathbf{d}(x,y)$. 
\end{proof}

\subsection{The classical distance\label{sub:The-classical-distance}}

These axioms imply that the composition
\[
d:\mathbf{X}(G)\times\mathbf{X}(G)\rightarrow\mathbb{R}_{+},\qquad x\mapsto\left\Vert \mathbf{d}(x)\right\Vert 
\]
of the vectorial distance $\mathbf{d}$ with the norm $\left\Vert -\right\Vert :\mathbf{C}(G)\rightarrow\mathbb{R}_{+}$
attached to our chosen $\tau$ is a genuine $\mathsf{G}$-invariant
distance on $\mathbf{X}(G)$. Its restriction to any apartment is
Euclidean and $(\mathbf{X}(G),d)$ is a geodesic space: for $x,y\in\mathbf{X}(G)$
and any apartment $\mathbf{X}(S)$ containing $x$ and $y$, the unique
geodesic from $x$ to $y$ in $\mathbf{X}(S)$ is a geodesic from
$x$ to $y$ in $\mathbf{X}(G)$. For any sequence $(x_{n})$ in $\mathbf{X}(G)$
and $x$ in $\mathbf{X}(G)$, we have 
\[
x_{n}\rightarrow x\mbox{ in }(\mathbf{X}(G),d)\iff d(x_{n},x)\rightarrow0\mbox{ in }\mathbb{R}_{+}\iff\mathbf{d}(x_{n},x)\rightarrow0\mbox{ in }\mathbf{C}(G).
\]
The induced metrizable topology on $\mathbf{X}(G)$ thus does not
depend upon $\tau$ (see also \ref{sub:EquivDistOnF}). We call it
the canonical topology of $\mathbf{X}(G)$. The apartments are closed,
being complete for the induced metric. The vectorial distance and
pull map 
\[
\mathbf{d}:\mathbf{X}(G)\times\mathbf{X}(G)\rightarrow\mathbf{C}(G)\qquad\mbox{and}\qquad+:\mathbf{X}(G)\times\mathbf{F}(G)\rightarrow\mathbf{X}(G)
\]
are continuous for the canonical topologies on $\mathbf{X}(G)$, $\mathbf{C}(G)$
and $\mathbf{F}(G)$ by $\mathcal{C}^{\circ}$ and $\star$.

\subsection{The retractions\label{sub:The-retractions}}

For an affine $\mathbf{F}(G)$-building, we also require:
\begin{lyxlist}{XXX}
\item [{$UN$}] (\emph{Unipotent}) For every $x\in\mathbf{X}(G)$, $\mathcal{F}\in\mathbf{F}(G)$
and $u\in\mathsf{U}_{\mathcal{F}}$, 
\[
\lim_{s\rightarrow+\infty}\mathbf{d}(x+s\mathcal{F},ux+s\mathcal{F})=0.
\]

\end{lyxlist}
For $\mathcal{F}\in\mathbf{F}(G)$ and any Levi subgroup $L$ of $P_{\mathcal{F}}$,
we denote by $\mathcal{F}_{L}^{\iota}$ the unique filtration opposed
to $\mathcal{F}$ with $P_{\mathcal{F}}\cap P_{\mathcal{F}_{L}^{\iota}}=L$.
Thus $\mathcal{F}_{L}^{\iota}=\Fi(\iota\mathcal{G})$ where $\mathcal{G}\in\mathbf{G}(G)$
is the unique splitting of $\mathcal{F}$ with $L_{\mathcal{G}}=L$.
We have $\mathcal{F}_{L}^{\iota}=\iota_{S}\mathcal{F}$ for any $S\in\mathbf{S}(L)$.\nomenclature[F^iota_L]{$\mathcal{F}^\iota _L$}{Filtration opposed to $\mathcal{F}$ with respect to a Levi  $L$ of $P_\mathcal{F}$, page \nomrefpage}
\begin{prop}
\label{prop:distretrac}Let $\mathbf{X}(G)$ be an affine $\mathbf{F}(G)$-space
satisfying the axioms of sections~\ref{sub:Many-apartments} and~\ref{sub:Strong-transitivity},
together with $\mathcal{C}^{\circ}$, $NE$ and $UN$. Then it also
satisfy $TR$. Moreover, for every parabolic subgroup $P$ of $G$
with Levi decomposition $P=U\rtimes L$, 
\[
\mathbf{X}(L)=\cup_{S\in\mathbf{S}(L)}\mathbf{X}(S)
\]
is a fundamental domain for the action of $\mathsf{U}$ on $\mathbf{X}(G)$
and the induced retraction\nomenclature[r_P,L]{$r_{P,L}$}{Retraction $r_{P,L}:\mathbf{X}(G) \twoheadrightarrow \mathbf{X}(L)$, defined page \nomrefpage}
\[
r_{P,L}:\mathbf{X}(G)\twoheadrightarrow\mathbf{X}(L)
\]
is non-expanding for $\mathbf{d}$: for every $x,y\in\mathbf{X}(G)$,
\[
\mathbf{d}(r_{P,L}(x),r_{P,L}(y))\leq\mathbf{d}(x,y)\quad\mbox{in}\quad\mathbf{C}(G).
\]
Finally, for any $\mathcal{F}\in F^{-1}(P)$, if $\mathcal{F}'=\mathcal{F}_{L}^{\iota}$,
then for all $x\in\mathbf{X}(G)$, 
\[
r_{P,L}(x)=\lim_{s\rightarrow\infty}\left(x+s\mathcal{F}\right)+s\mathcal{F}'\quad\mbox{in}\quad\left(\mathbf{X}(G),d\right).
\]
\end{prop}
\begin{proof}
Fix $P$, $L$, $\mathcal{F}$ and $\mathcal{F}'=\mathcal{F}_{L}^{\iota}$
as above. For any $x\in\mathbf{X}(G)$, there is by $L(s)$ an $S'\in\mathbf{S}(G)$
such that $x\in\mathbf{X}(S')$ and $\mathcal{F}\in\mathbf{F}(S')$,
i.e.~$Z_{G}(S')\subset P_{\mathcal{F}}$. Let $L'$ be the unique
levi subgroup of $P_{\mathcal{F}}$ containing $Z_{G}(S')$ and let
$u$ be the unique element of $\mathsf{U}$ such that $\Int(u)(L')=L$.
Then $S=\Int(u)(S')$ belongs to $\mathbf{S}(L)$ and $ux$ belongs
to $\mathbf{X}(S)\subset\mathbf{X}(L)$, thus $\mathsf{U}\cdot\mathbf{X}(L)=\mathbf{X}(G)$.
For $s\geq0$ and $x\in\mathbf{X}(G)$, set 
\[
r_{s}(x)=\left(x+s\mathcal{F}\right)+s\mathcal{F}'\quad\mbox{in}\quad\mathbf{X}(G).
\]
Then $x\mapsto r_{s}(x)$ is non-expanding for $\mathbf{d}$ by $NE$
and for any $u\in\mathsf{U}$, 
\[
\lim_{s\rightarrow\infty}\mathbf{d}(r_{s}(x),r_{s}(u\cdot x))=0\quad\mbox{in}\quad\mathbf{C}(G)
\]
by $UN$ and $NE$. If $x$ belongs to $\mathbf{X}(L)$, say $x\in\mathbf{X}(S)$
for some $S\in\mathbf{S}(L)$, then $\mathcal{F},\mathcal{F}'\in\mathbf{F}(S)$
with $\mathcal{F}+\mathcal{F}'=0$ in $\mathbf{F}(S)$, thus $r_{s}(x)=x$
for all $s\geq0$ since $\mathbf{X}(S)$ is an affine $\mathbf{F}(S)$-space.
If $x$ and $u\cdot x$ belong to $\mathbf{X}(L)$, $\mathbf{d}(x,u\cdot x)=\mathbf{d}(r_{s}(x),r_{s}(u\cdot x))$
for all $s\geq0$, thus $\mathbf{d}(x,u\cdot x)=0$ and $x=u\cdot x$.
In particular, $\mathbf{X}(L)$ is indeed a fundamental domain for
the action of $\mathsf{U}$ on $\mathbf{X}(G)$. Let $r:\mathbf{X}(G)\twoheadrightarrow\mathbf{X}(L)$
be the corresponding retraction. For $x\in\mathbf{X}(G)$, pick $u\in\mathsf{U}$
such that $r(x)=u\cdot x$. Then 
\[
\mathbf{d}(r_{s}(x),r(x))=\mathbf{d}(r_{s}(x),u\cdot x)=\mathbf{d}(r_{s}(x),r_{s}(u\cdot x))\rightarrow0.
\]
Applying this to $x,y\in\mathbf{X}(G)$ and using $\mathcal{C}^{\circ}$,
we find that 
\[
\lim_{s\rightarrow\infty}\mathbf{d}(r_{s}(x),r_{s}(y))=\mathbf{d}(r(x),r(y)),
\]
thus $\mathbf{d}(r(x),r(y))\leq\mathbf{d}(x,y)$ since $\mathbf{d}(r_{s}(x),r_{s}(y))\leq\mathbf{d}(x,y)$
for all $s\geq0$.

Turning now to the proof of $TR$, first note that by proposition~\ref{prop:TRinF(G)},
the triangle inequality holds whenever $x,y,z$ belong to $\mathbf{X}(S)$
for some $S\in\mathbf{S}(G)$. For a general triple $x,y,z$ in $\mathbf{X}(G)$,
choose $S\in\mathbf{S}(G)$ with $x,z\in\mathbf{X}(S)$ using $R(s)$,
pick a minimal parabolic subgroup $B$ of $G$ with Levi subgroup
$L=Z_{G}(S)$ and let $r:\mathbf{X}(G)\twoheadrightarrow\mathbf{X}(S)$
be the corresponding retraction. Then $r(x)=x$ and $r(z)=z$, thus
indeed
\[
\mathbf{d}(x,z)=\mathbf{d}(r(x),r(z))\leq\mathbf{d}(r(x),r(y))+\mathbf{d}(r(y),r(z))\leq\mathbf{d}(x,y)+\mathbf{d}(y,z)
\]
since the triangle inequality holds on $\mathbf{X}(S)$ and $r$ is
non-expanding for $\mathbf{d}$. \end{proof}
\begin{cor}
The apartment map $S\mapsto\mathbf{X}(S)$ is then uniquely determined
by the pull map $+:\mathbf{X}(G)\times\mathbf{F}(G)\rightarrow\mathbf{X}(G)$:
for every $S\in\mathbf{S}(G)$, 
\[
\mathbf{X}(S)=\left\{ x\in\mathbf{X}(G):\forall\mathcal{F},\mathcal{F}'\in\mathbf{F}(S),\,(x+\mathcal{F})+\mathcal{F}'=x+(\mathcal{F}+\mathcal{F}')\right\} .
\]
\end{cor}
\begin{proof}
Let $\mathbf{X}'(S)$ be the right hand side. Plainly, $\mathbf{X}(S)\subset\mathbf{X}'(S)$.
Conversely, pick a minimal parabolic subgroup $B$ of $G$ with Levi
$Z_{G}(S)$, let $r:\mathbf{X}(G)\twoheadrightarrow\mathbf{X}(S)$
be the corresponding retraction, choose $\mathcal{F}\in F^{-1}(B)$
and set $\mathcal{F}'=\iota_{S}\mathcal{F}$. For $x$ in $\mathbf{X}'(S)$,
$(x+\lambda\mathcal{F})+\lambda\mathcal{F}'=x$ for all $\lambda\geq0$,
thus $r(x)=x$ belongs to $\mathbf{X}(S)$.
\end{proof}

\subsection{Standard geodesics}

For $x\in\mathbf{X}(G)$ and $\mathcal{F}\in\mathbf{F}(G)$, the function
\[
[0,1]\rightarrow\mathbf{X}(G)\quad\left(\mbox{resp. }\mathbb{R}_{+}\rightarrow\mathbf{X}(G)\right)\qquad t\mapsto x+t\mathcal{F}
\]
is a geodesic segment (resp.~geodesic ray) in $(\mathbf{X}(G),d)$.
We refer to these geodesics as the \emph{standard }ones. Thus a geodesic
(segment or ray) is standard precisely when it is contained in some
apartment, and the set of all standard geodesics does not depend upon
the choice of $\tau$. If $(\mathbf{X}(G),d)$ is uniquely geodesic,
then every geodesic segment is standard, but there might still be
some non-standard geodesic rays.

\subsection{Convexity}

An affine $\mathbf{F}(G)$-building satisfies all of the above axioms,
together with the following convexity axiom:
\begin{lyxlist}{XXX}
\item [{$CO^{+}$}] For every pair of geodesics $x,y:[0,1]\rightarrow\mathbf{X}(G)$
in $(\mathbf{X}(G),d)$, the function 
\[
f:[0,1]\rightarrow\mathbf{C}(G),\qquad f(t)=\mathbf{d}(x(t),y(t))
\]
is convex, i.e.~for every $\lambda$ and $t_{1}\leq t_{2}$ in $[0,1]$,
\[
f\left((1-\lambda)t_{1}+\lambda t_{2}\right)\leq(1-\lambda)f(t_{1})+\lambda f(t_{2})\quad\mbox{in}\quad\mathbf{C}(G).
\]

\end{lyxlist}
This implies that the metric space $(\mathbf{X}(G),d)$ itself is
convex in the sense of \cite[II.1.3]{BrHa99}. In particular, it is
uniquely geodesic and for every $x\in\mathbf{X}(G)$ and $\mathcal{F},\mathcal{G}\in\mathbf{F}(G)$,
\[
x+\mathcal{F}=x+\mathcal{G}\Longrightarrow\forall t\in[0,1]:\quad x+t\mathcal{F}=x+t\mathcal{G}.
\]

\begin{prop}
\label{prop:r=00003Dr'}Let $\mathbf{X}(G)$ be an affine $\mathbf{F}(G)$-building.
Let $(P,P')$ be a pair of opposed parabolic subgroups of $G$ with
common Levi subgroup $L=P\cap P'$. Let 
\[
r,r':\mathbf{X}(G)\twoheadrightarrow\mathbf{X}(L)
\]
be the corresponding retractions, as in proposition~\ref{prop:distretrac}.
Then 
\[
\mathbf{X}(L)=\left\{ x\in\mathbf{X}(G):r(x)=r'(x)\right\} .
\]
\end{prop}
\begin{proof}
For $x\in\mathbf{X}(L)$, $r(x)=x=r'(x)$, thus $x$ belongs to 
\[
\mathbf{X}'(L)=\left\{ x\in\mathbf{X}(G):r(x)=r'(x)\right\} .
\]
Suppose conversely that $x\in\mathbf{X}'(L)$ and set $y=r(x)=r'(x)$.
Pick a pair of opposed filtrations $(\mathcal{F},\mathcal{F}')$ with
$P_{\mathcal{F}}=P$, $P_{\mathcal{F}'}=P'$ and $\left\Vert \mathcal{F}\right\Vert =\left\Vert \mathcal{F}'\right\Vert =1$.
For $t\in\mathbb{R}$, set 
\[
X(t)=\begin{cases}
x+\left|t\right|\mathcal{F} & \mbox{if }t\geq0,\\
x+\left|t\right|\mathcal{F}' & \mbox{if }t\leq0,
\end{cases}\quad\mbox{and}\quad Y(t)=\begin{cases}
y+\left|t\right|\mathcal{F} & \mbox{if }t\geq0,\\
y+\left|t\right|\mathcal{F}' & \mbox{if }t\leq0.
\end{cases}
\]
Plainly, $Y:\mathbb{R}\rightarrow\mathbf{X}(G)$ is a geodesic line
and $d(X(t),Y(t))\rightarrow0$ when $\left|t\right|\rightarrow\infty$.
Note that for any $0\leq t_{1},t_{2}\leq t$, $d\left(Y(-t),Y(t)\right)=2t$
is not greater than {\footnotesize{
\[
d\left(Y(-t),X(-t)\right)+d\left(X(-t),X(-t_{1})\right)+d\left(X(-t_{1}),X(t_{2})\right)+d\left(X(t_{2}),X(t)\right)+d\left(X(t),Y(t)\right).
\]
}}The second and fourth term sum to $2t-(t_{1}+t_{2})$, thus $t_{1}+t_{2}\leq d\left(X(-t_{1}),X(t_{2})\right)$.
Since also $d\left(X(-t_{1}),X(t_{2})\right)\leq t_{1}+t_{2}$, it
follows that $X:\mathbb{R}\rightarrow\mathbf{X}(G)$ is a geodesic
line as well. Since the metric $d$ is convex, the function $t\mapsto d(X(t),Y(t))$
is convex. Since it is also bounded, it must be constant, thus actually
trivial. In particular, $d(x,y)=d(X(0),Y(0))=0$, therefore $x=y$
belongs to $\mathbf{X}(L)$. \end{proof}
\begin{defn}
The enclosure of $x,z\in\mathbf{X}(G)$ is defined by 
\begin{eqnarray*}
\lozenge(x,z) & = & \left\{ y\in\mathbf{X}(G):\mathbf{d}(x,z)=\mathbf{d}(x,y)+\mathbf{d}(y,z)\right\} \\
 & = & \left\{ y\in\mathbf{X}(G):\mathbf{d}(x,z)\geq\mathbf{d}(x,y)+\mathbf{d}(y,z)\right\} 
\end{eqnarray*}
\end{defn}
\begin{cor}
\label{cor:eqinTR}For any $S\in\mathbf{S}(G)$ and $x,x'\in\mathbf{X}(S)$,
let $F$ and $F'$ be the pair of opposed facets in $\mathbf{F}(S)$
such that $x'\in x+F$ and $x\in x'+F'$. Then
\[
\lozenge(x,x')=(x+\overline{F})\cap(x'+\overline{F}').
\]
In particular for any $x,z\in\mathbf{X}(G)$, the enclosure $\lozenge(x,z)$
is a closed and convex subset of $\mathbf{X}(G)$ which is contained
in any apartment containing $x$ and $z$.\end{cor}
\begin{proof}
For $y\in\mathbf{X}(S)$, write $y=x+a$ and $x'=y+b$ with $a,b\in\mathbf{F}(S)$,
so that
\[
\mathbf{d}(x,x')=t(a+b),\quad\mathbf{d}(x,y)=t(a)\quad\mbox{and}\quad\mathbf{d}(y,x')=t(b).
\]
Thus $y$ belongs to $\lozenge(x,x')$ if and only if there exists
a closed chamber $C$ in $\mathbf{F}(S)$ containing $a$ and $b$
by~proposition~\ref{prop:TRinF(G)}, which occurs precisely when
$a$ and $b$ both belong to the closure $\overline{F}$ of the facet
$F$ of $\mathbf{F}(S)$ which contains $c=a+b$. Hence 
\[
\lozenge(x,x')\cap\mathbf{X}(S)=(x+\overline{F})\cap(x'+\overline{F}').
\]
In particular, the function $y\mapsto\mathbf{d}(x,y)$ is injective
on $\lozenge(x,x')\cap\mathbf{X}(S)$. Now pick a pair of opposed
minimal parabolic subgroups $(B,B')$ of $G$ with $B\cap B'=Z_{G}(S)$.
Let $r,r':\mathbf{X}(G)\twoheadrightarrow\mathbf{X}(S)$ be the corresponding
retractions. For any $y\in\lozenge(x,x')$, 
\[
\mathbf{d}(x,r(y))=\mathbf{d}(r(x),r(y))\leq\mathbf{d}(x,y)\quad\mbox{and}\quad\mathbf{d}(r(y),x')=\mathbf{d}(r(y),r(x'))\leq\mathbf{d}(y,x')
\]
since $r$ is non-expanding for $\mathbf{d}$, therefore 
\[
\mathbf{d}(x,x')\leq\mathbf{d}(x,r(y))+\mathbf{d}(r(y),x')\leq\mathbf{d}(x,y)+\mathbf{d}(y,x')=\mathbf{d}(x,x').
\]
Thus $r(y)$ belongs to $\lozenge(x,x')\cap\mathbf{X}(S)$ and $\mathbf{d}(x,r(y))=\mathbf{d}(x,y)$.
Since the same conclusion holds for $r'(y)$, we obtain $r(y)=r'(y)$.
Hence $y$ belongs to $\mathbf{X}(S)$ and indeed $\lozenge(x,x')=(x+\overline{F})\cap(x'+\overline{F}')$.
The remaining assertions easily follow.
\end{proof}

\subsection{Unique Geodesics}

It may seem that the validity of the axiom $CO^{+}$ for a given affine
$\mathbf{F}(G)$-space $\mathbf{X}(G)$ depends upon the chosen $\tau$,
but it does not. In fact, $CO^{+}$ is plainly equivalent to the conjonction
of the following two axioms:
\begin{lyxlist}{XXX}
\item [{$CO$}] For any pair of standard geodesics $x,y:[0,1]\rightarrow\mathbf{X}(G)$
in $\mathbf{X}(G)$, the function 
\[
f:[0,1]\rightarrow\mathbf{C}(G),\qquad f(t)=\mathbf{d}(x(t),y(t))
\]
is convex, i.e.~for every $\lambda$ and $t_{1}\leq t_{2}$ in $[0,1]$,
\[
f\left((1-\lambda)t_{1}+\lambda t_{2}\right)\leq(1-\lambda)f(t_{1})+\lambda f(t_{2})\quad\mbox{in}\quad\mathbf{C}(G).
\]

\item [{$UG$}] The metric space $(\mathbf{X}(G),d)$ is uniquely geodesic.
\end{lyxlist}
Now $CO$ plainly does not depend upon the choice of $\tau$, and
$UG$ also does not. Indeed, suppose that $\mathbf{X}(G)$ satisfies
all of the above axioms (using $\tau$ in $UG$) and let $\tau'$
be another faithful representation of $G$, giving rise to a distance
$d'$ on $\mathbf{X}(G)$. We have to show that every geodesic segment
$c:[0,1]\rightarrow\mathbf{X}(G)$ in $(\mathbf{X}(G),d')$ is standard,
for then $CO$ implies $UG$ for $(\mathbf{X}(G),d')$. Now for all
$t\in[0,1]$, we have
\[
\begin{array}{rrcll}
 & \mathbf{d}(c(0),c(1)) & \leq & \mathbf{d}(c(0),c(t))+\mathbf{d}(c(t),c(1)) & \mbox{in}\quad\mathbf{C}(G)\\
\mbox{and} & d'(c(0),c(1)) & = & d'(c(0),c(t))+d'(c(t),c(1)) & \mbox{in}\quad\mathbb{R}_{+}
\end{array}
\]
from which easily follows that actually 
\[
\mathbf{d}(c(0),c(1))=\mathbf{d}(c(0),c(t))+\mathbf{d}(c(t),c(1))\quad\mbox{in}\quad\mathbf{C}(G).
\]
Thus $c(t)$ belongs to $\lozenge(c(0),c(1))$ for all $t\in[0,1]$
and $c$ is standard, being indeed contained in any apartment which
contains $c(0)$ and $c(1)$ by corollary~\ref{cor:eqinTR}.

\subsection{~}

By the usual dyadic, reparametrization and triangulation tricks, it
is sufficient to test the inequalities in $CO$ or $CO^{+}$ for $(t_{1},t_{2},\lambda)=(0,1,\frac{1}{2})$,
for pairs of geodesics issuing from the same point. Thus $CO$ is
equivalent to either one of 
\begin{lyxlist}{XXX}
\item [{$CO'$}] For every $x\in\mathbf{X}(G)$, $\mathcal{F},\mathcal{G}\in\mathbf{F}(G)$
and $\lambda\in[0,1]$, 
\[
\mathbf{d}(x+\lambda\mathcal{F},x+\lambda\mathcal{G})\leq\lambda\mathbf{d}(x+\mathcal{F},x+\mathcal{G})\quad\mbox{in}\quad\mathbf{C}(G).
\]

\item [{$CO''$}] For every $x\in\mathbf{X}(G)$ and $\mathcal{F},\mathcal{G}\in\mathbf{F}(G)$,
\[
\mathbf{d}(x+{\textstyle \frac{1}{2}}\mathcal{F},y+{\textstyle \frac{1}{2}}\mathcal{G})\leq{\textstyle \frac{1}{2}}\mathbf{d}(x+\mathcal{F},x+\mathcal{G})\quad\mbox{in}\quad\mathbf{C}(G).
\]

\end{lyxlist}

\subsection{~\label{sub:segments}}

There is a unique, $\mathsf{G}$-equivariant and continuous map
\[
\mathbf{X}(G)\times\mathbf{X}(G)\times[0,1]\rightarrow\mathbf{X}(G),\quad(x,y,\lambda)\mapsto(1-\lambda)x+\lambda y
\]
such that $(1-\lambda)x+\lambda y=x+\lambda\mathcal{F}$ for any $\mathcal{F}\in\mathbf{F}(x,y)$.
We set
\[
[x,y]=\left\{ (1-\lambda)x+\lambda y:\lambda\in[0,1]\right\} 
\]
and call it the \emph{segment} between $x$ and $y$. It is contained
in the enclosure $\lozenge(x,y)$, thus also contained in any apartment
$\mathbf{X}(S)$ which contains $x$ and $y$. In particular, the
intersection of two apartments is a convex subset of both apartments.
A \emph{subdivision} of $[x,y]$ is a finite collection $x=x_{0},\cdots,x_{n}=y$
of points in $[x,y]$ such that 
\[
x_{i}=(1-\lambda_{i})x+\lambda_{i}y,\quad0\leq\lambda_{0}\leq\cdots\leq\lambda_{n}=1.
\]
Thus $[x,y]=\cup_{i=1}^{n}[x_{i-1},x_{i}]$ and 
\[
\mathbf{d}(x,y)={\textstyle \sum}_{i=1}^{n}\mathbf{d}(x_{i-1},x_{i}).
\]

\subsection{~}

For an affine $\mathbf{F}(G)$-building $\mathbf{X}(G)$, we denote
by 
\[
\mathbf{d}^{r}:\mathbf{X}(G)\times\mathbf{X}(G)\rightarrow\mathbf{C}^{r}(G)\quad\mbox{and}\quad\mathbf{d}^{c}:\mathbf{X}(G)\times\mathbf{X}(G)\rightarrow\mathbf{G}(Z)
\]
the components of $\mathbf{d}$. These are $\mathsf{G}$-invariant
functions. For $x,y,z\in\mathbf{X}(G)$, 
\[
\mathbf{d}^{r}(x,z)\leq\mathbf{d}^{r}(x,y)+\mathbf{d}^{r}(y,z)\quad\mbox{and}\quad\mathbf{d}^{c}(x,z)=\mathbf{d}^{c}(x,y)+\mathbf{d}^{c}(y,z).
\]
The function $g\mapsto\mathbf{d}^{c}(x,gx)$ thus does not depend
upon $x$ and defines a morphism
\[
\nu_{\mathbf{X}}^{c}:\mathsf{G}\rightarrow\mathbf{G}(Z).
\]

\subsection{~}

A morphism of affine $\mathbf{F}(G)$-spaces $f:\mathbf{X}(G)\rightarrow\mathbf{Y}(G)$
is a $\mathsf{G}$-equivariant map between the underlying sets which
is compatible with their structure maps: 
\[
f(\mathbf{X}(S))\subset\mathbf{Y}(S)\quad\mbox{and}\quad f(x+\mathcal{F})=f(x)+\mathcal{F}
\]
for every $S\in\mathbf{S}(G)$, $x\in\mathbf{X}(G)$ and $\mathcal{F}\in\mathbf{F}(G)$.
If $\mathbf{Y}(G)$ is an $\mathbf{F}(G)$-building, it is sufficent
to require the second condition. A morphism of affine $\mathbf{F}(G)$-buildings
is a morphism of the underlying affine $\mathbf{F}(G)$-spaces. Any
such morphism is an automorphism: it is bijective on any appartment,
thus globally bijective by $R(s)$. It is compatible with the $\mathbf{d}$-maps,
and an isometry of the underlying metric spaces.

\subsection{~\label{sub:AutoOfBuild}}

An automorphism $\theta$ of an affine $\mathbf{F}(G)$-building $\mathbf{X}(G)$
acts on the apartment $\mathbf{X}(S)$ by an $\mathsf{N}_{G}(S)$-equivariant
translation, which is thus given by a vector $\theta_{S}$ in $\mathbf{G}(Z)=\mathbf{G}(S)^{\mathsf{W}_{G}(S)}$,
where $Z=Z(G)$. The $\mathsf{G}$-equivariance of $\theta$ then
implies that $S\mapsto\theta_{S}$ is also $\mathsf{G}$-equivariant,
thus constant. It follows that 
\[
\Aut(\mathbf{X}(G))=\mathbf{G}(Z)
\]
with $\mathcal{G}\in\mathbf{G}(Z)$ acting on $\mathbf{X}(G)$ by
$x\mapsto x+\mathcal{G}$.

\subsection{~\label{sub:decer4affbuil}}

For an affine $\mathbf{F}(G)$-building $\mathbf{X}(G)$, we define
\[
\mathbf{X}^{r}(G)=\mathbf{X}(G)/\mathbf{G}(Z)\quad\mbox{and}\quad\mathbf{X}^{e}(G)=\mathbf{X}^{r}(G)\times\mathbf{G}(Z).
\]
The group $\mathsf{G}$ acts: on the quotient $\mathbf{X}^{r}(G)$
of $\mathbf{X}(G)$, on $\mathbf{G}(Z)$ by translations through the
morphism $\nu_{\mathbf{X}}^{c}:\mathsf{G}\rightarrow\mathbf{G}(Z)$,
and on $\mathbf{X}^{e}(G)$ diagonally. Then, the formulas 
\[
\mathbf{X}^{r}(S)=\mathbf{X}(S)/\mathbf{G}(Z)\quad\mbox{and}\quad\mathbf{X}^{e}(S)=\mathbf{X}^{r}(S)\times\mathbf{G}(Z)
\]
yield $\mathsf{G}$-equivariant maps $\mathbf{X}^{r}:\mathbf{S}(G)\rightarrow\mathcal{P}(\mathbf{X}^{r}(G))$
and $\mathbf{X}^{e}:\mathbf{S}(G)\rightarrow\mathcal{P}(\mathbf{X}^{e}(G))$,
the pull map on $\mathbf{X}(G)$ descends to a $\mathsf{G}$-equivariant
map $+:\mathbf{X}^{r}(G)\times\mathbf{F}^{r}(G)\rightarrow\mathbf{X}^{r}(G)$,
which together with the addition map on $\mathbf{G}(Z)$ yields a
$\mathsf{G}$-equivariant map 
\[
+:\mathbf{X}^{e}(G)\times\mathbf{F}(G)\rightarrow\mathbf{X}^{e}(G)\qquad([x],\theta)+\mathcal{F}=([x]+\mathcal{F}^{r},\theta+\mathcal{F}^{c}).
\]
The resulting triple $\mathbf{X}^{e}(G)$ is yet another affine $\mathbf{F}(G)$-building,
with $\nu_{\mathbf{X}^{e}}=\nu_{\mathbf{X}}$. In fact, any point
$x_{0}\in\mathbf{X}(G)$ defines an isomorphism of affine $\mathbf{F}(G)$-buildings
\[
\mathbf{X}(G)\simeq\mathbf{X}^{e}(G)\qquad x\mapsto([x],\mathbf{d}^{c}(x_{0},x)).
\]
Thus $\mathbf{X}^{e}(G)$ appears as a rigidified version of $\mathbf{X}(G)$:
there are no non-trivial automorphisms of $\mathbf{X}^{e}(G)$ preserving
the subspace $\mathbf{X}^{r}(G)\simeq\mathbf{X}^{r}(G)\times\{0\}$
of $\mathbf{X}^{e}(G)$. The decomposition $\mathbf{X}^{e}(G)=\mathbf{X}^{r}(G)\times\mathbf{G}(Z)$
is orthogonal in the following sense:
\[
\forall(x,\theta),(x',\theta')\in\mathbf{X}^{e}(G):\qquad d((x,\theta),(x',\theta'))^{2}=d(x,x')^{2}+d(\theta,\theta')^{2}.
\]
This follows from the analogous result for $\mathbf{F}(G)$, see \ref{sub:decisortho4F(G)}.

\subsection{~\label{sub:RenormalizationOfBuildings}}

If $\mathbf{X}(G)=(\mathbf{X}(G),\mathbf{X}(-),+)$ is an affine $\mathbf{F}(G)$-space
or building, then so is $\mathbf{X}_{\lambda}(G)=(\mathbf{X}(G),\mathbf{X}(-),+_{\lambda})$
for any $\lambda>0$ in $\mathbb{R}$, where $x+_{\lambda}\mathcal{F}=x+\lambda\mathcal{F}$.
The types $\nu_{\mathbf{X}}$ of $\mathbf{X}(G)$ and $\nu_{\mathbf{X}_{\lambda}}$
of $\mathbf{X}_{\lambda}(G)$ are related by $\nu_{\mathbf{X}}=\lambda\cdot\nu_{\mathbf{X}_{\lambda}}$.

\section{Further axioms}

Let $\mathbf{X}(G)$ be an affine $\mathbf{F}(G)$-space.

\subsection{The axiom $L(s)^{+}$~}

The following is a sharp strengthening of $L(s)$:
\begin{lyxlist}{XXXX}
\item [{$L(s)^{+}$}] \noindent For any $x\in\mathbf{X}(G)$ and $\mathcal{F},\mathcal{G}\in\mathbf{F}(G)$,
there exists $S\in\mathbf{S}(G)$ and $\epsilon>0$ such that $\mathcal{F}\in\mathbf{F}(S)$
and $x+\lambda\mathcal{G}\in\mathbf{X}(S)$ for every $\lambda\in[0,\epsilon]$.\end{lyxlist}
\begin{prop}
\label{prop:L(s)+implBuild}If $\mathbf{X}(G)$ satisfies $L(s)^{+}$,
$R(s)$, $R(i)$ and $UN$, then it is an affine $\mathbf{F}(G)$-building
and $(\mathbf{X}(G),d)$ is a $CAT(0)$-space.
\end{prop}
Suppose that $\mathbf{X}(G)$ satisfies $L(s)^{+}$, $R(s)$, $R(i)$
and $UN$. Then it already satisfies all the axioms of section~\ref{sub:Many-apartments}
and \ref{sub:Strong-transitivity}, giving rise to the vectorial distance
$\mathbf{d}$ which is the subject of the remaining axioms. We do
not yet know that $\mathbf{d}$ satisfies $TR$, thus $d=\left\Vert \mathbf{d}\right\Vert $
may not be a distance on $\mathbf{X}(G)$. But for any apartment $\mathbf{X}(S)$,
the restriction of $\mathbf{d}$ to $\mathbf{X}(S)$ satisfies $TR$
and $d$ is a Euclidean distance on $\mathbf{X}(S)$.
\begin{lem}
\label{lem:ReformL(s)+}For $x\in\mathbf{X}(G)$ and $\mathcal{F},\mathcal{G}\in\mathbf{F}(G)$,
there exists $S\in\mathbf{S}(G)$, $\mathcal{G}^{\ast}\in\mathbf{F}(G)$
and $\epsilon>0$ such that $x\in\mathbf{X}(S)$, $\mathcal{F},\mathcal{G}^{\ast}\in\mathbf{F}(S)$
and 
\[
\forall\lambda\in[0,\epsilon]:\quad x+\lambda\mathcal{G}=x+\lambda\mathcal{G}^{\ast}\in\mathbf{X}(S).
\]
\end{lem}
\begin{proof}
By $L(s)^{+}$, there exists $S\in\mathbf{S}(G)$ and $\epsilon>0$
such that $\mathcal{F}\in\mathbf{F}(S)$ and $x(\lambda)=x+\lambda\mathcal{G}\in\mathbf{X}(S)$
for $\lambda\in[0,\epsilon]$. For any $0\leq\lambda\leq\lambda'$,
$x(\lambda')=x(\lambda)+(\lambda'-\lambda)\mathcal{G}$ by $AC$,
thus $\mathbf{d}(x(\lambda),x(\lambda'))=(\lambda'-\lambda)\cdot t(\mathcal{G})$
in $\mathbf{C}(G)$ and $d(x(\lambda),x(\lambda'))=(\lambda'-\lambda)\cdot\left\Vert \mathcal{G}\right\Vert $
in $\mathbb{R}_{+}$. In particular, $x(-):[0,\epsilon]\rightarrow\mathbf{X}(S)$
is a geodesic segment in $\left(\mathbf{X}(S),d\vert_{\mathbf{X}(S)}\right)$.
There is thus a unique $\mathcal{G}^{\ast}\in\mathbf{F}(S)$ such
that $x(\lambda)=x+\lambda\mathcal{G}^{\ast}$ for $\lambda\in[0,\epsilon]$. \end{proof}
\begin{lem}
For any $x,y\in\mathbf{X}(G)$ and $\mathcal{G}\in\mathbf{F}(G)$,
the function
\[
c:\mathbb{R}_{+}\rightarrow\mathbf{C}(G),\qquad c(\lambda)=\mathbf{d}(y,x+\lambda\mathcal{G})
\]
is continuous for the canonical topologies on $\mathbb{R}_{+}$ and
$\mathbf{C}(G)$.\end{lem}
\begin{proof}
Pick $\mathcal{F}\in\mathbf{F}(G)$ with $y=x+\mathcal{F}$ using
$T(s)$. By the previous lemma, there exists $S\in\mathbf{S}(G)$,
$\mathcal{G}^{\ast}\in\mathbf{F}(S)$ and $\epsilon>0$ such that
$x\in\mathbf{X}(S)$, $\mathcal{F}\in\mathbf{F}(S)$ and $x+\lambda\mathcal{G}=x+\lambda\mathcal{G}^{\ast}$
for $\lambda\in[0,\epsilon]$. Since $x,y\in\mathbf{X}(S)$ and $\mathcal{G}^{\ast}\in\mathbf{F}(S)$,
the function $\lambda\mapsto\mathbf{d}(y,x+\lambda\mathcal{G}^{\ast})$
is plainly continuous on $\mathbb{R}_{+}$, thus $c$ is continuous
on $[0,\epsilon]$. Changing $x$ to $x+\lambda\mathcal{G}$, we find
that $c$ is right continous on $\mathbb{R}_{+}$. By $L(s)$, there
is an $S'\in\mathbf{S}(G)$ with $x\in\mathbf{X}(S')$, $\mathcal{G}\in\mathbf{F}(S')$.
Set $\mathcal{G}'=\iota_{S}\mathcal{G}$. Then for $\lambda'\geq\lambda\geq0$,
\[
\begin{array}{rrcll}
 & x(\lambda) & = & x(\lambda')+(\lambda'-\lambda)\cdot\mathcal{G}' & \mbox{in}\quad\mathbf{X}(G),\\
\mbox{thus} & c(\lambda) & = & \mathbf{d}\left(y,x(\lambda')+(\lambda'-\lambda)\cdot\mathcal{G}'\right) & \mbox{in}\quad\mathbf{C}(G).
\end{array}
\]
It follows that $c$ is also left continuous on $\mathbb{R}_{+}$.\end{proof}
\begin{lem}
For any $x\in\mathbf{X}(G)$ and $\mathcal{F},\mathcal{G}\in\mathbf{F}(G)$,
\[
x+\mathcal{F}=x+\mathcal{G}\quad\Longrightarrow\quad\forall\lambda\in[0,1]:\, x+\lambda\mathcal{F}=x+\lambda\mathcal{G}.
\]
\end{lem}
\begin{proof}
Suppose $x+\mathcal{F}=x+\mathcal{G}$, put $x(\lambda)=x+\lambda\mathcal{F}$,
$y(\lambda)=x+\lambda\mathcal{G}$ and define 
\[
\lambda_{0}=\inf\{1,\lambda\in[0,1]\mbox{ such that }x(\lambda)\neq y(\lambda)\}.
\]
Suppose that $\lambda_{0}\in[0,1[$. If $\lambda_{0}\neq0$, then
since $x(\lambda)=y(\lambda)$ for all $\lambda\in[0,\lambda_{0}[$,
\[
\mathbf{d}(x(\lambda_{0}),y(\lambda_{0}))=\lim_{s\rightarrow s_{0}^{-}}\mathbf{d}(x(\lambda_{0}),y(\lambda))=\lim_{s\rightarrow s_{0}^{-}}\mathbf{d}(x(\lambda_{0}),x(\lambda))=0
\]
by the previous lemma, thus $x(\lambda_{0})=y(\lambda_{0})$. Changing
$(x,\mathcal{F},\mathcal{G})$ to 
\[
\left(x(\lambda_{0}),(1-\lambda_{0})\mathcal{F},(1-\lambda_{0})\mathcal{G}\right),
\]
we may assume that $\lambda_{0}=0$. By lemma~\ref{lem:ReformL(s)+},
there exists $S\in\mathbf{S}(G)$, $\mathcal{G}^{\ast}\in\mathbf{F}(S)$
and $\epsilon>0$ such that $x\in\mathbf{X}(S)$, $\mathcal{F}\in\mathbf{F}(S)$
and $y(\lambda)=x+\lambda\mathcal{G}^{\ast}$ for $\lambda\in[0,\epsilon]$.
Since $x+\epsilon\mathcal{G}=x+\epsilon\mathcal{G}^{\ast}$ and $x+\mathcal{G}=x+\mathcal{F}$
in $\mathbf{X}(G)$, $t(\mathcal{G}^{\ast})=t(\mathcal{G})=t(\mathcal{F})$
in $\mathbf{C}(G)$ with $\mathcal{F},\mathcal{G}^{\ast}\in\mathbf{F}(S)$,
thus $\mathcal{G}^{\ast}=w\mathcal{F}$ for some $w\in\mathsf{W}_{G}(S)$.
In the affine $\mathbf{F}(S)$-space $\mathbf{X}(S)$, 
\[
x(1)=x+\mathcal{F}=(x+\lambda\mathcal{G}^{\ast})+(\mathcal{F}-\lambda\mathcal{G}^{\ast})=y(\lambda)+(\mathcal{F}-\lambda w\mathcal{F})
\]
for all $\lambda\in[0,\epsilon]$. Since $x(1)=y(1)$, we thus find
that for $\lambda\in[0,\epsilon]$, 
\[
t((1-\lambda)\mathcal{F})=(1-\lambda)t(\mathcal{G})=\mathbf{d}(y(\lambda),y(1))=\mathbf{d}(y(\lambda),x(1))=t(\mathcal{F}-\lambda w\mathcal{F}).
\]
Let $C$ be a closed chamber in $\mathbf{F}(S)$ such that $\mathcal{F}-\lambda w\mathcal{F}\in C$
for all $\lambda\in[0,\epsilon]$ (shrinking $\epsilon$ if necessary).
Since $t$ is injective on $C$, $(1-\lambda)\mathcal{F}=\mathcal{F}-\lambda w\mathcal{F}$
in $C\subset\mathbf{F}(S)$ for all $\lambda\in[0,\epsilon]$, thus
$\mathcal{F}=w\mathcal{F}=\mathcal{G}^{\ast}$. But then $x(\lambda)=y(\lambda)$
for all $\lambda\in[0,\epsilon]$, a contradiction. Therefore $\lambda_{0}=1$,
i.e.~$x(\lambda)=y(\lambda)$ for all $\lambda\in[0,1]$.
\end{proof}
Using $R(s)$ and the previous lemma, we may now define segments in
$\mathbf{X}(G)$ and their subdivisions as in section~\ref{sub:segments},
with $[x,y]\subset\mathbf{X}(S)$ if $x,y\in\mathbf{X}(S)$.
\begin{lem}
\label{Lem:Ls+impliesSubdiv}For every $x,y\in\mathbf{X}(G)$ and
$z\in\mathbf{X}(G)$ (resp. $\mathcal{F}\in\mathbf{F}(G)$), there
exists a subdivision $x=x_{0},\cdots,x_{n}=y$ of the segment $[x,y]$
and for $i\in\{1,\cdots,n\}$, an $S_{i}\in\mathbf{S}(G)$ such that
$[x_{i-1},x_{i}]\subset\mathbf{X}(S_{i})$ and $z\in\mathbf{X}(S_{i})$
(resp.~$\mathcal{F}\in\mathbf{F}(S_{i})$).\end{lem}
\begin{proof}
By $R(s)$, there is an $S\in\mathbf{S}(G)$ such that $x,y\in\mathbf{X}(S)$,
so that 
\[
y=x+\mathcal{G}^{+}\quad\mbox{and}\quad x=y+\mathcal{G}^{-}\quad\mbox{with}\quad\mathcal{G}^{\pm}\in\mathbf{F}(S),\quad\mathcal{G}^{+}+\mathcal{G}^{-}=0.
\]
For $\lambda\in[0,1]$, set $x(\lambda)=x+\lambda\mathcal{G}^{+}$
and choose $\mathcal{F}_{\lambda}\in\mathbf{F}(G)$ such that $z=x(\lambda)+\mathcal{F}_{\lambda}$
using $T(s)$. By $L(s)^{+}$, there exists $\epsilon_{\lambda}>0$
and $S_{\lambda}^{\pm}\in\mathbf{S}(G)$ such that $\mathcal{F}_{\lambda}\in\mathbf{F}(S_{\lambda}^{\pm})$
(resp.~$\mathcal{F}\in\mathbf{F}(S_{\lambda}^{\pm})$) and $x(\lambda)+\mu\mathcal{G}^{\pm}\in\mathbf{X}(S_{\lambda}^{\pm})$
for all $\mu\in[0,\epsilon_{\lambda}]$. Pick a finite set $\mathcal{S}\subset[0,1]$
such that $[0,1]\subset\cup_{\lambda\in\mathcal{S}}]\lambda-\epsilon_{\lambda},\lambda+\epsilon_{\lambda}[$
and let $x=x_{0},\cdots,x_{n}=y$ be the subdivision of $[x,y]$ defined
by $[x,y]\cap\{x,y,x(\lambda),x(\lambda)\pm\epsilon_{\lambda}:\lambda\in\mathcal{S}\}$.
Then for each $i\in\{1,\cdots,n\}$, there exists an $S_{i}\in\{S_{\lambda}^{\pm}:\lambda\in\mathcal{S}\}$
such that $[x_{i-1},x_{i}]\in\mathbf{X}(S_{i})$ and $z\in\mathbf{X}(S_{i})$
(resp.~$\mathcal{F}\in\mathbf{F}(S_{i})$).\end{proof}
\begin{lem}
For a minimal parabolic subgroup $B=U\rtimes Z_{G}(S)$ of $G$, the
apartment $\mathbf{X}(S)$ is a fundamental domain for the action
of $\mathsf{U}$ on $\mathbf{X}(G)$ and the corresponding retraction
$r:\mathbf{X}(G)\twoheadrightarrow\mathbf{X}(S)$ is non-expanding
for $\mathbf{d}$.\end{lem}
\begin{proof}
First, $\mathbf{X}(G)=\mathsf{U}\cdot\mathbf{X}(S)$ by $L(s)$. For
$x,y\in\mathbf{X}(S)$ and any $\mathcal{F}\in\mathbf{F}(S)$, $\mathbf{d}(x,y)=\mathbf{d}(x+\mathcal{F},y+\mathcal{F})$
in $\mathbf{C}(G)$. If $y=ux$ with $u\in\mathsf{U}$ and $P_{\mathcal{F}}=B$,
then also 
\[
\lim_{s\rightarrow\infty}\mathbf{d}(x+s\mathcal{F},y+s\mathcal{F})=0
\]
by $UN$, thus $\mathbf{d}(x,y)=0$ and $x=y$, i.e.~$\mathbf{X}(S)$
is a fundamental domain for the action of $\mathsf{U}$ on $\mathbf{X}(G)$.
Let $r:\mathbf{X}(G)\twoheadrightarrow\mathbf{X}(S)$ be the corresponding
retraction. For $x,y\in\mathbf{X}(G)$, there exists by the previous
lemma a subdivision $x=x_{0},\cdots,x_{n}=y$ of $[x,y]$ and for
each $i\in\{1,\cdots,n\}$, an $S_{i}\in\mathbf{S}(G)$ such that
$[x_{i-1},x_{i}]\subset\mathbf{X}(S_{i})$ and $Z_{G}(S_{i})\subset B$.
Then, there is a unique $u_{i}\in\mathsf{U}$ such that $\Int(u_{i})(S_{i})=S$,
in which case also $u_{i}\cdot\mathbf{X}(S_{i})=\mathbf{X}(S)$ and
$r(z)=u_{i}z$ for all $z\in\mathbf{X}(S_{i})$. We thus obtain
\begin{eqnarray*}
\mathbf{d}(x,y) & = & {\textstyle \sum}_{i=1}^{n}\mathbf{d}(x_{i-1},x_{i})\\
 & = & {\textstyle \sum}_{i=1}^{n}\mathbf{d}(u_{i}x_{i-1},u_{i}x_{i})\\
 & = & {\textstyle \sum}_{i=1}^{n}\mathbf{d}\left(r(x_{i-1}),r(x_{i})\right)\\
 & \geq & \mathbf{d}\left(r(x),r(y)\right)
\end{eqnarray*}
in $\mathbf{C}(G)$ by the known triangle inequality for $\mathbf{d}$
in $\mathbf{X}(S)$. \end{proof}
\begin{lem}
The vectorial distance $\mathbf{d}$ satisfies $TR$, $NE$, $CO$
and $(\mathbf{X}(G),d)$ is a $CAT(0)$-metric space -- thus $\mathbf{X}(G)$
also satisfies $UG$.\end{lem}
\begin{proof}
For $x,y,z\in\mathbf{X}(G)$, choose $S\in\mathbf{S}(G)$ with $x,z\in\mathbf{X}(S)$
using $R(s)$, pick a minimal parabolic subgroup $B$ of $G$ with
Levi $Z_{G}(S)$ and let $r:\mathbf{X}(G)\twoheadrightarrow\mathbf{X}(S)$
be the corresponding retraction. Then 
\[
\mathbf{d}(x,z)\leq\mathbf{d}\left(x,r(y)\right)+\mathbf{d}\left(r(y),z\right)\leq\mathbf{d}(x,y)+\mathbf{d}(y,z)
\]
by the triangle inequality in $\mathbf{X}(S)$ and the previous lemma.
This proves $TR$. 

For $x,y\in\mathbf{X}(G)$ and $\mathcal{F}\in\mathbf{F}(G)$, pick
a subdivision $x=x_{0},\cdots,x_{n}=y$ of $[x,y]$ and for each $i\in\{1,\cdots,n\}$,
an $S_{i}\in\mathbf{S}(G)$ such that $[x_{i-1},x_{i}]\in\mathbf{X}(S_{i})$
and $\mathcal{F}\in\mathbf{F}(S_{i})$, using lemma~\ref{Lem:Ls+impliesSubdiv}.
Then 
\[
\mathbf{d}(x+\mathcal{F},y+\mathcal{F})\leq{\textstyle \sum}_{i=1}^{n}\mathbf{d}(x_{i-1}+\mathcal{F},x_{i}+\mathcal{F})={\textstyle \sum}_{i=1}^{n}\mathbf{d}(x_{i-1},x_{i})=\mathbf{d}(x,y)
\]
by the triangle inequality in $\mathbf{X}(G)$ that we have just proven
and a trivial computation in the affine $\mathbf{F}(S_{i})$-space
$\mathbf{X}(S_{i})$. This proves $NE$. 

For $x\in\mathbf{X}(G)$ and $\mathcal{F},\mathcal{G}\in\mathbf{F}(G)$,
set $y=x+\mathcal{F}$, $z=x+\mathcal{G}$. By lemma~\ref{Lem:Ls+impliesSubdiv},
there is a subdivision $y=x_{0},\cdots,x_{n}=z$ of the segment $[y,z]$
and for each $i\in\{1,\cdots,n\}$, an $S_{i}\in\mathbf{S}(G)$ such
that $[x_{i-1},x_{i}]\in\mathbf{X}(S_{i})$ and $x\in\mathbf{X}(S_{i})$.
For $i\in\{0,\cdots,n\}$ and $\lambda\in[0,1]$, set $x_{i}(\lambda)=(1-\lambda)x+\lambda x_{i}$
in $[x,x_{i}]$. Then 
\[
\mathbf{d}(x_{0}(\lambda),x_{n}(\lambda))\leq{\textstyle \sum}_{i=1}^{n}\mathbf{d}(x_{i-1}(\lambda),x_{i}(\lambda))={\textstyle \sum}_{i=1}^{n}\lambda\mathbf{d}(x_{i-1},x_{i})=\lambda\mathbf{d}(y,z)
\]
by the triangle inequality in $\mathbf{X}(G)$ and a trivial computation
in the affine $\mathbf{F}(S_{i})$-space $\mathbf{X}(S_{i})$. This
proves $CO'$, from which $CO$ follows. 

To establish the $CAT(0)$-property, imagine a rigid comparison triangle
$(\tilde{x},\tilde{y},\tilde{z})$ for $(x,y,z)$, lying on a Euclidean
$2$-plane $E$. Add flex points $(\tilde{x}_{1},\cdots,\tilde{x}_{n-1})$
on the segment $[\tilde{y},\tilde{z}]$ corresponding to $(x_{1},\cdots,x_{n-1})$,
and push them (inward or outward) one by one, so that each $(\tilde{x},\tilde{x}_{i-1},\tilde{x}_{i})$
becomes a comparison triangle for $(x,x_{i-1},x_{i})$ (with $\tilde{x}_{0}=\tilde{y}$
and $\tilde{x}_{n}=\tilde{z}$). If a last outward move occurs at
the $i$-th step, then in the final configuration, the chord between
$\tilde{x}_{i-1}$ and $\tilde{x}_{i+1}$ intersects the radius between
$\tilde{x}$ and $\tilde{x}_{i}$ at some point $\tilde{y}=(1-\nu)\tilde{x}+\nu\tilde{x}_{i}$,
with $\nu\in[0,1[$. For the corresponding point $y=(1-\nu)x+\nu x_{i}$
on the segment $[x,x_{i}]\subset\mathbf{X}(S_{i})\cap\mathbf{X}(S_{i+1})$,
we would have: 
\begin{eqnarray*}
d(x_{i-1},y)+d(y,x_{i+1}) & = & d(\tilde{x}_{i-1},\tilde{y})+d(\tilde{y},\tilde{x}_{i+1})\\
 & < & d(\tilde{x}_{i-1},\tilde{x}_{i})+d(\tilde{x}_{i},\tilde{x}_{i+1})\\
 & = & d(x_{i-1},x_{i})+d(x_{i},x_{i+1})\\
 & = & d(x_{i-1},x_{i+1})
\end{eqnarray*}
which contradicts the triangle inequality for $d$ in $\mathbf{X}(G)$.
It follows that there is no last outward move, i.e.~no outward move
at all. Thus for any $\lambda\in[0,1]$, if $\tilde{x}(\lambda)$
is the point corresponding to $x(\lambda)=(1-\lambda)y+\lambda z\in[y,z]$
on the articulated segment $[\tilde{y},\tilde{z}]$ of our comparison
triangle, the distance between $\tilde{x}$ and $\tilde{x}(\lambda)$
is not greater in the final configuration than it was initially. Since
the final distance is the actual distance between $x$ and $x(\lambda)$
in $(\mathbf{X}(G),d)$, this proves the required $CAT(0)$ inequality
for $x$ and the \emph{standard }geodesic segment $x(-):[0,1]\rightarrow\mathbf{X}(G)$
from $y$ to $z$. However, we still have to check that our metric
space $(\mathbf{X}(G),d)$ is unically geodesic in the usual sense.
Suppose therefore that $x'(-):[0,1]\rightarrow\mathbf{X}(G)$ is another
geodesic segment between $y$ and $z$. For $\lambda\in[0,1]$, the
$CAT(0)$-inequality that we have just established for the point $x'(\lambda)$
and the standard geodesic $x(-):[0,1]\rightarrow\mathbf{X}(G)$ implies
that $x(\lambda)=x'(\lambda)$, thus indeed $x(-)=x'(-)$.
\end{proof}

\subsection{Discrete Buildings}

The following axiom is a strengthening of $R(i)$:
\begin{lyxlist}{MM}
\item [{$R(i)^{+}$}] For $S,S'\in\mathbf{S}(G)$, there is a $g\in\mathsf{G}$
with 
\[
\Int(g)(S)=S'\quad\mbox{and}\quad g\equiv\mathrm{Id}\mbox{ on }\mathbf{X}(S)\cap\mathbf{X}(S').
\]
\end{lyxlist}
\begin{lem}
A discrete affine $\mathbf{F}(G)$-building $\mathbf{X}(G)$ satisfies
$R(i)^{+}$.\end{lem}
\begin{proof}
We may assume that $Z=\mathbf{X}(S)\cap\mathbf{X}(S')\neq\emptyset$.
Then $Z$ is a non-empty closed convex subset of the affine $\mathbf{F}(S)$-space
$\mathbf{X}(S)$, therefore $Z$ has non-empty interior as a subset
of its affine span $A$ in $\mathbf{X}(S)$. Let $\sim$ be the equivalence
relation on $Z$ defined by $x\sim y$ if and only if $x$ and $y$
have the same stabilizer in $\mathsf{N}_{G}(S)$. Since $\mathbf{X}(G)$
is discrete, there are countably many equivalence classes, thus one
of them at least, say $E\subset Z$, has the property that the closure
of $E$ has a non-empty interior in $A$. Then $A$ is also the affine
span of $\overline{E}$ or $E$ in $\mathbf{X}(S)$. Let $C\subset\mathsf{N}_{G}(S)$
be the common stabilizer of the points of $E$. Now for any $g_{1},g_{2}\in\mathsf{G}$
such that $\Int(g_{i})(S)=S'$ and $g_{1}x=g_{2}x$ for some $x\in E$,
$g_{2}=g_{1}c$ for some $c\in C$, thus $g_{1}\equiv g_{2}$ on $E$,
$A$ and $Z$. Fix $x\in E$. Then for any $y\in Z$, there exists
by $R(i)$ some $g_{y}\in\mathsf{G}$ such that $\Int(g_{y})(S)=S'$
and $g_{y}x=x$, $g_{y}y=y$. For $y,z\in Z$, we have just seen that
$g_{y}\equiv g_{z}$ on $E$, $A$ and $Z$, thus $g_{y}z=g_{z}z=z$
and $g_{y}\equiv\mathrm{Id}$ on $Z$. \end{proof}
\begin{lem}
\label{lem:DiscImpliesComplete}The metric of a discrete affine $\mathbf{F}(G)$-building
is complete.\end{lem}
\begin{proof}
Suppose that $\mathbf{X}(G)$ is discrete and equip $\mathcal{C}=\mathsf{G}\backslash\mathbf{X}(G)$
with 
\[
\overline{d}(\alpha,\beta)=\inf D(\alpha,\beta)\quad\mbox{where}\quad D(\alpha,\beta)=\left\{ d(x,y):x\in\alpha,y\in\beta\right\} .
\]
Fix $S\in\mathbf{S}(G)$. Then by $R(i)$ and $R(s)$, also $\mathcal{C}=\mathsf{N}_{G}(S)\backslash\mathbf{X}(S)$
and 
\[
D(\alpha,\beta)=\left\{ d(a,n\cdot b):n\in\mathsf{N}_{G}(S)\right\} 
\]
if $(a,b)$ lifts $(\alpha,\beta)$ in $\mathbf{X}(S)$. Since $\mathsf{N}_{G}(S)\cdot b$
is discrete in the Euclidean space $\mathbf{X}(S)$ by assumption,
if follows that there is a constant $\epsilon(\alpha,\beta)>0$ such
that 
\[
\forall(x,y)\in\alpha\times\beta:\mbox{ }d(x,y)\leq\overline{d}(\alpha,\beta)+\epsilon(\alpha,\beta)\Longrightarrow d(x,y)=\overline{d}(\alpha,\beta).
\]
In particular, $\overline{d}$ is a distance on $\mathcal{C}$. Moreover,
$(\mathcal{C},\overline{d})$ is complete: if $(\alpha_{n})$ is a
Cauchy sequence in $\mathcal{C}$, it lifts to a bounded sequence
$(a_{n})$ in $\mathbf{X}(S)$, the latter has a subsequence $(a_{\varphi(n)})$
converging to some $a$ in $\mathbf{X}(S)$, whose image in $\mathcal{C}$
is then a limit of $(\alpha_{n})$. Let now $(x_{n})$ be a Cauchy
sequence in $(\mathbf{X}(G),d)$. Its image $(\alpha_{n})$ is a Cauchy
sequence in $(\mathcal{C},\overline{d})$, which thus converges to
some $\alpha$ in $\mathcal{C}$. For each $n$, lift $\alpha$ to
some $y_{n}\in\mathbf{X}(G)$ with $d(x_{n},y_{n})=\overline{d}(\alpha_{n},\alpha)$.
Then $(y_{n})$ is also a Cauchy sequence in $\mathbf{X}(G)$, hence
$d(y_{n},y_{m})\leq\epsilon(\alpha,\alpha)$ for $n,m\gg0$, which
implies that $(y_{n})$ is actually stationary and $(x_{n})$ converges
to its limit: $(\mathbf{X}(G),d)$ is complete.
\end{proof}

\section{Walls and tight buildings}

Let again $\mathbf{X}(G)$ be an affine $\mathbf{F}(G)$-space.

\subsection{~\label{sub:NotationsForRoots}}

For $S\in\mathbf{S}(G)$, let $\Phi(S,G)$\nomenclature[Phi(S,G)]{$\Phi (S,G)$}{Roots of $S$ in $\Lie (G)$.}
be the set of roots of $S$ in the Lie algebra 
\[
\Lie(G)=\mathfrak{g}=\mathfrak{g}_{0}\oplus\oplus_{a\in\Phi(S,G)}\mathfrak{g}_{a}.
\]
We denote by $U_{a}\subset G$\nomenclature[U_a]{$U_a$}{Root subgroup of $G$ for $a\in \Phi (G,S)$, page \nomrefpage}
the root subgroup corresponding to some $a\in\Phi(G,S)$: if $S_{a}$
denotes the neutral component of the kernel of $a:S\rightarrow\mathbb{G}_{m,K}$,
then $U_{a}$ is the unipotent radical of the unique parabolic subgroup
of $Z_{G}(S_{a})$ containing $Z_{G}(S)$ with Lie algebra $\mathfrak{g}_{0}\oplus\oplus_{b\in\mathbb{N}a\cap\Phi(S,G)}\mathfrak{g}_{b}$.
If $2a\in\Phi(S,G)$, then $U_{2a}\subset U_{a}$.

\subsection{~\label{sub:defWalls4affBuild}}

For any $u\in\mathsf{U}_{a}\setminus\{1\}$, there exists a unique
triple $(u_{1},u_{2},m(u))$ with 
\[
u_{1}uu_{2}=m(u),\quad u_{1},u_{2}\in\mathsf{U}_{-a}\setminus\{1\}\quad\mbox{and}\quad m(u)\in\mathsf{N}_{G}(S).
\]
Moreover, $\nu_{S}^{v}(m(u))$ is the symmetry $s_{a}\in\mathsf{W}_{G}(S)$
attached to $a$, given by 
\[
s_{a}:\mathbf{G}(S)\rightarrow\mathbf{G}(S),\quad s_{a}(x)=x-a(x)a^{\vee}
\]
where $a^{\vee}:\mathbb{G}_{m,K}\rightarrow S$ is the coroot corresponding
to $a$ and $a(x)=a\circ x$ in 
\[
\mathbb{R}=\Hom(\mathbb{D}_{K}(\mathbb{R}),\mathbb{G}_{m,K}).
\]
This follows from \cite[\S 5]{BoTi65} by \cite[6.1.2.2 \& 6.1.3.c]{BrTi72}.
Considering the action of $m(u)$ on $\mathbf{X}(S)$, we thus obtain
a unique affine hyperplane $\mathbf{X}(S,u)$ in $\mathbf{X}(S)$
which is preserved by $m(u)$. The underlying vector space is the
fixed point set of $s_{a}$, namely 
\[
\mathbf{G}(S_{a})=\{x\in\mathbf{G}(S):s_{a}(x)=x\}=\{x\in\mathbf{G}(S):a(x)=0\}
\]
and $m(u)$ acts on $\mathbf{X}(S,u)$ by $x\mapsto x+\nu_{\mathbf{X}}(S,u)$
for some $v_{\mathbf{X}}(S,u)\in\mathbf{G}(S_{a})$.
\begin{example}
For the affine $\mathbf{F}(G)$-space $\mathbf{X}(G)=\mathbf{F}(G)$,
$\mathbf{X}(S,u)=\mathbf{G}(S_{a})$ is the fixed point set of $m(u)$
acting on $\mathbf{G}(S)=\mathbf{F}(S)$ and $\nu_{\mathbf{X}}(S,u)=0$. 
\end{example}

\subsection{~}

Of course $m(u)$ fixes $\mathbf{X}(S,u)$ if and only if $\nu_{\mathbf{X}}(S,u)=0$,
and this happens when $m(u)$ already has finite order in $\mathsf{N}_{G}(S)$,
which holds true for any $u\in\mathsf{U}_{a}\setminus\{1\}$ if $2a\notin\Phi(S,G)$.
Indeed, set $\Phi'(S,G)=\{b\in\Phi(S,G):2b\notin\Phi(S,G)\}$. This
is again a root system and $U_{b}\simeq\mathbb{G}_{a,K}^{n(b)}$ for
some $n(b)\geq1$ for all $b\in\Phi'(S,G)$. Choose a set of simple
roots $\Delta'$ of $\Phi'(S,G)$ containing $a$ and choose for each
$b\in\Delta'$ a $1$-dimensional $K$-subspace $\mathsf{U}'_{b}$
in $\mathsf{U}_{b}\simeq K^{n(b)}$, with $u\in\mathsf{U}'_{a}$.
Then by~\cite[7.2]{BoTi65}, there is a unique split reductive subgroup
$G'$ of $G$ containing $S$ with $\Phi(S,G')=\Phi'(S,G)$ such that
the root subgroup $U'_{b}$ of $b\in\Delta'$ in $G'$ is the subgroup
of $U_{b}$ determined by $\mathsf{U}'_{b}$, i.e.~$\mathsf{U}'_{b}=U'_{b}(K)$.
Then $(Z_{G'}(S_{a}),S,a)$ is an elementary system in the sense of
\cite[XX 1.3]{SGA3.3r} by~\cite[XIX 3.9]{SGA3.3r}. Let $f:S_{\mathcal{L}}\rightarrow Z_{G'}(S_{a})$
be the corresponding morphism constructed in~\cite[XX 5.8]{SGA3.3r}
and let $X\neq0$ be the unique element of $\mathcal{L}=\Lie(U'_{a})$
with $f(\begin{smallmatrix}1 & X\\
0 & 1
\end{smallmatrix})=u$. Since
\[
\left(\begin{array}{cc}
1 & 0\\
-X^{-1} & 1
\end{array}\right)\left(\begin{array}{cc}
1 & X\\
0 & 1
\end{array}\right)\left(\begin{array}{cc}
1 & 0\\
-X^{-1} & 1
\end{array}\right)=\left(\begin{array}{cc}
0 & X\\
-X^{-1} & 0
\end{array}\right)
\]
in $S_{\mathcal{L}}(K)$, we find that 
\[
m(u)=f\left(\begin{array}{cc}
0 & X\\
-X^{-1} & 0
\end{array}\right),\quad m(u)^{2}=f\left(\begin{array}{cc}
-1 & 0\\
0 & -1
\end{array}\right)\quad\mbox{and}\quad m(u)^{4}=1.
\]
On the other hand if $2a\in\Phi(G,S)$, then \cite[1.15]{Ti79} provides
examples where $m(u)$ has infinite order. Note also that $m(u)$
fixes $\mathbf{X}(S,u)$ when there is a $z\in\mathsf{Z}_{G}(S)$
such that $zuz^{-1}=u^{-1}$: since $m(u^{-1})=m(u)^{-1}$ and $m(zuz^{-1})=zm(u)z^{-1}$,
\[
\begin{array}{rcl}
\mathbf{X}(S,u^{-1}) & = & \mathbf{X}(S,u)\\
\nu_{\mathbf{X}}(S,u^{-1}) & = & -\nu_{\mathbf{X}}(S,u)
\end{array}\qquad\mbox{and}\qquad\begin{array}{rcl}
\mathbf{X}(S,zuz^{-1}) & = & \mathbf{X}(S,u)+\nu_{\mathbf{X},S}(z)\\
\nu_{\mathbf{X}}(S,zuz^{-1}) & = & \nu_{\mathbf{X}}(S,u)
\end{array}
\]
therefore $zuz^{-1}=u^{-1}$ implies $\nu_{\mathbf{X}}(S,u)=0$. Note
also that since
\[
\left(m(u)u_{2}m(u)^{-1}\right)u_{1}u=m(u)\quad\mbox{and}\quad uu_{2}\left(m(u)^{-1}u_{1}m(u)\right)=m(u)
\]
we find that $m(u_{1})=m(u_{2})=m(u)$, thus 
\[
\mathbf{X}(S,u_{1})=\mathbf{X}(S,u_{2})=\mathbf{X}(S,u)\quad\mbox{and}\quad\nu_{\mathbf{X}}(S,u_{1})=\nu_{\mathbf{X}}(S,u_{2})=\nu_{\mathbf{X}}(S,u).
\]

\subsection{~}

For a subset $\Omega\neq\emptyset$ of $\mathbf{X}(S)$, we denote
by $\mathsf{G}_{\Omega}$\nomenclature[G_Omega]{$\mathsf{G}_\Omega$}{Pointwise stabilizer of $\Omega$ in $\mathsf{G}$, page \nomrefpage}
the pointwise stabilizer of $\Omega$ in $\mathsf{G}$ and by $\mathsf{G}_{S,\Omega}$\nomenclature[G_S,Omega]{$\mathsf{G}_{S,\Omega}$}{Apartment based avatar of $\mathsf{G}_{\Omega}$, page \nomrefpage}
the subgroup of $\mathsf{G}$ spanned by $\mathsf{N}_{G}(S)_{\Omega}=\mathsf{G}_{\Omega}\cap\mathsf{N}_{G}(S)$
and 
\[
\left\{ u\in\mathsf{U}_{a}\setminus\{1\}:a\in\Phi(G,S),\,\Omega\subset\mathbf{X}^{+}(S,u)\right\} 
\]
where for any $a\in\Phi(G,S)$ and $u\in\mathsf{U}_{a}\setminus\{1\}$,
\[
\mathbf{X}^{+}(S,u)=\mathbf{X}(S,u)+\left\{ \mathcal{F}\in\mathbf{F}(S):a(\mathcal{F})\geq0\right\} .
\]
When $\Omega=\{x\}$, we simply write $\mathsf{G}_{x}=\mathsf{G}_{\{x\}}$
and $\mathsf{G}_{S,x}=\mathsf{G}_{S,\{x\}}$. Thus 
\[
\mathsf{G}_{\Omega}=\cap_{x\in\Omega}\mathsf{G}_{x}\quad\mbox{and}\quad\mathsf{G}_{S,\Omega}\subset\cap_{x\in X}\mathsf{G}_{S,x}.
\]
For $\emptyset\neq\Omega'\subset\Omega\subset\mathbf{X}(S)$, $\mathsf{G}_{\Omega}\subset\mathsf{G}_{\Omega'}$
and $\mathsf{G}_{S,\Omega}\subset\mathsf{G}_{S,\Omega'}$. Finally
for $g\in\mathsf{G}$ and $S'=\Int(g)(S)$, $\Omega'=g\cdot\Omega$,
one checks easily that 
\[
\Int(g)(\mathsf{G}_{\Omega})=\mathsf{G}_{\Omega'}\quad\mbox{and}\quad\Int(g)(\mathsf{G}_{S,\Omega})=\mathsf{G}_{S',\Omega'}.
\]

\subsection{Example\label{Example:F(G)tight}}

For the affine $\mathbf{F}(G)$-space $\mathbf{X}(G)=\mathbf{F}(G)$
and $u\in\mathsf{U}_{a}\setminus\{1\}$, 
\[
\mathbf{X}^{+}(S,u)=\left\{ \mathcal{F}\in\mathbf{F}(S):a(\mathcal{F})\geq0\right\} .
\]
For $\mathcal{F}\in\mathbf{F}(S)$, $\mathsf{G}_{\mathcal{F}}=\mathsf{P}_{\mathcal{F}}$
and $\mathsf{G}_{S,\mathcal{F}}$ is therefore the group spanned by
$\mathsf{N}_{G}(S)\cap\mathsf{P}_{\mathcal{F}}$ and the $\mathsf{U}_{a}$'s
for $a\in\Phi(G,S)$, $a(\mathcal{F})\geq0$. The $\mathsf{U}_{a}$'s
with $a(\mathcal{F})>0$ span $\mathsf{U}_{\mathcal{F}}$ by \cite[3.11]{BoTi65}.
Moreover, the group $\mathsf{N}_{G}(S)\cap\mathsf{P}_{\mathcal{F}}=\mathsf{N}_{L}(S)$
and the $\mathsf{U}_{a}$'s with $a(\mathcal{F})=0$ together span
$\mathsf{L}=L(K)$ where $L$ is the Levi subgroup of $P_{\mathcal{F}}$
which contains $Z_{G}(S)$, by the Bruhat decomposition of $\mathsf{L}$,
see~\cite[5.15]{BoTi65}. Therefore 
\[
\mathsf{G}_{S,\mathcal{F}}=\mathsf{P}_{\mathcal{F}}=\mathsf{G}_{\mathcal{F}}.
\]

\subsection{~}

We next consider the following axioms:
\begin{lyxlist}{MMM}
\item [{$ST$}] (\emph{Stabilizers}) For some (or every) $S\in\mathbf{S}(G)$,
\[
\forall x\in\mathbf{X}(S):\quad\mathsf{G}_{S,x}=\mathsf{G}_{x}.
\]

\item [{$ST^{-}$}] For some (or every) $S\in\mathbf{S}(G)$, 
\[
\forall x\in\mathbf{X}(S):\quad\mathsf{G}_{S,x}\subset\mathsf{G}_{x}.
\]

\item [{$ST_{1}^{-}$}] For some (or every) $S\in\mathbf{S}(G)$, 
\[
\forall\emptyset\neq\Omega\in\mathbf{X}(S):\quad\mathsf{G}_{S,\Omega}\subset\mathsf{G}_{\Omega}.
\]

\item [{$ST_{2}^{-}$}] For some (or every) $S\in\mathbf{S}(G)$ and any
$a\in\Phi(G,S)$, $u\in\mathsf{U}_{a}\setminus\{1\}$, 
\[
\exists x\in\mathbf{X}(S,u):\qquad ux=x.
\]

\item [{$UN^{+}$}] For $x\in\mathbf{X}(G)$, $\mathcal{F}\in\mathbf{F}(G)$
and $u\in\mathsf{U}_{\mathcal{F}}$, 
\[
\forall t\gg0:\qquad u(x+t\mathcal{F})=x+t\mathcal{F}.
\]
\end{lyxlist}
\begin{lem}
\label{lem:RelationBetweenAxiomsST}These axioms are related as follows:
\[
\begin{array}{cc}
 & ST\Longrightarrow ST^{-}\iff ST_{1}^{-}\iff ST_{2}^{-}\\
\mbox{and}\quad & ST^{-}+L(s)\,\Longrightarrow\, UN^{+}\,\Longrightarrow UN.
\end{array}
\]
Under $ST^{-}$, for every $S\in\mathbf{S}(G)$, $a\in\Phi(G,S)$
and $u\in\mathsf{U}_{a}\setminus\{1\}$, $\nu_{\mathbf{X}}(S,u)=0$
and 
\[
\mathbf{X}^{+}(S,u)=\left\{ x\in\mathbf{X}(S):ux=x\right\} =\left\{ x\in\mathbf{X}(S):ux\in\mathbf{X}(S)\right\} .
\]
\end{lem}
\begin{proof}
Plainly, $ST\Rightarrow ST^{-}$, $ST^{-}\Leftrightarrow ST_{1}^{-}$
and $UN^{+}\Rightarrow UN$. Fix $S\in\mathbf{S}(G)$, $a\in\Phi(G,S)$,
$u\in\mathsf{U}_{a}\setminus\{1\}$. Since $x\in\mathbf{X}(S,u)$
implies $u\in\mathsf{G}_{S,x}$, $ST^{-}\Rightarrow ST_{2}^{-}$.
On the other hand if $u$ fixes some $x\in\mathbf{X}(S)$, it also
fixes $x+\mathcal{F}$ for every $\mathcal{F}\in\mathbf{F}(S)$ with
$a(\mathcal{F})\geq0$ because $a(\mathcal{F})\geq0\iff U_{a}\subset P_{\mathcal{F}}$,
thus $ST_{2}^{-}\Rightarrow ST^{-}$. Under $ST_{2}^{-}$, 
\[
\mathbf{X}^{+}(S,u)\subset\left\{ x\in\mathbf{X}(S):ux=x\right\} \subset\left\{ x\in\mathbf{X}(S):ux\in\mathbf{X}(S)\right\} .
\]
Applying this to $u_{1},u_{2}\in\mathsf{U}_{-a}\setminus\{1\}$, we
obtain: $u_{1}\equiv u_{2}\equiv\mathrm{Id}$ on $\mathbf{X}(S)\setminus\mathbf{X}^{+}(S,u)$.
If $x$ and $ux$ belong to $\mathbf{X}(S)$ but $x$ does not belong
to $\mathbf{X}^{+}(S,u)$, then $ux$ also does not belong to $\mathbf{X}^{+}(S,u)$,
however $m(u)x=u_{1}uu_{2}x=u_{1}ux=ux$ does, a contradiction. This
proves the required displayed equality, and $\nu_{\mathbf{X}}(S,u)=0$
since $m(u)=u_{1}uu_{2}$ with $u,u_{1},u_{2}\equiv\mathrm{Id}$ on
$\mathbf{X}(S,u)=\mathbf{X}(S,u_{1})=\mathbf{X}(S,u_{2})$. Suppose
finally that $ST^{-}$ and $L(s)$ hold. For $x\in\mathbf{X}(G)$,
$\mathcal{F}\in\mathbf{F}(G)$ and any $u\in\mathsf{U}_{\mathcal{F}}$
with $u\neq1$, pick $S\in\mathbf{S}(G)$ with $x\in\mathbf{X}(S)$
and $\mathcal{F}\in\mathbf{F}(S)$ using $L(s)$. Since $\mathsf{U}_{\mathcal{F}}$
is spanned by the $\mathsf{U}_{a}$'s with $a\in\Phi(G,S)$, $a(\mathcal{F})>0$
(as in Example~\ref{Example:F(G)tight}), we may write $u=u_{1}\cdots u_{n}$
with $u_{i}\in\mathsf{U}_{a_{i}}\setminus\{1\}$ for some $a_{i}\in\Phi(G,S)$,
$a_{i}(\mathcal{F})>0$. For any sufficiently large $t\geq0$, $x+t\mathcal{F}\in\mathbf{X}(S)$
then belongs to $\mathbf{X}^{+}(S,u_{i})$ for all $i\in\{1,\cdots,n\}$,
thus $u_{i}(x+t\mathcal{F})=x+t\mathcal{F}$ by $ST^{-}$ and $u(x+t\mathcal{F})=x+t\mathcal{F}$,
which proves $UN^{+}$. 
\end{proof}

\subsection{~}

The next axiom is related to alcove-based retractions, see \cite[1.4]{Pa99}.
\begin{lyxlist}{MMM}
\item [{$HA$}] (\emph{Half-apartments}) For $S_{1},S_{2},S_{3}\in\mathbf{S}(G)$,
if $\mathbf{X}(S_{i})\cap\mathbf{X}(S_{j})$ contains an half-subspace
of $\mathbf{X}(S_{i})$ for every pair $(i,j)$ in $\{1,2,3\}^{2}$,
then 
\[
\mathbf{X}(S_{1})\cap\mathbf{X}(S_{2})\cap\mathbf{X}(S_{3})\neq\emptyset.
\]
\end{lyxlist}
\begin{lem}
\label{lem:3appandtightness}The axioms $UN^{+}$ and $HA$ imply
$ST^{-}$.\end{lem}
\begin{proof}
Fix $S\in\mathbf{S}(G)$, $a\in\Phi(G,S)$ and $u\in\mathsf{U}_{a}\setminus\{1\}$
and first note that 
\[
\left\{ x\in\mathbf{X}(S):ux\in\mathbf{X}(S)\right\} =\left\{ x\in\mathbf{X}(S):ux=x\right\} .
\]
Indeed if $x$ and $ux$ belong to $\mathbf{X}(S)$, pick $\mathcal{F}\in\mathbf{F}(S)$
with $a(\mathcal{F})>0$. Then $U_{a}\subset U_{\mathcal{F}}$, thus
$ux+t\mathcal{F}=u(x+t\mathcal{F})=x+t\mathcal{F}$ for $t\gg0$ by
$UN^{+}$ and $ux=x$ since $\mathbf{X}(S)$ is an affine $\mathbf{F}(S)$-space.
For every $t\in\mathbb{R}$, we define 
\begin{eqnarray*}
\mathbf{X}(S,u,t) & = & \mathbf{X}(S,u)+\{\mathcal{F}\in\mathbf{F}(S):a(\mathcal{F})=t\}\\
\mathbf{X}^{+}(S,u,t) & = & \mathbf{X}(S,u,t)+\left\{ \mathcal{F}\in\mathbf{F}(S):a(\mathcal{F})\geq0\right\} \\
\mathbf{X}^{-}(S,u,t) & = & \mathbf{X}(S,u,t)+\left\{ \mathcal{F}\in\mathbf{F}(S):a(\mathcal{F})\leq0\right\} 
\end{eqnarray*}
If $u$ fixes some $x\in\mathbf{X}(S,u,t)$, then also $u\equiv\mathrm{Id}$
on $\mathbf{X}^{+}(S,u,t)$ since 
\[
\forall\mathcal{F}\in\mathbf{F}(S):\qquad a(\mathcal{F})\geq0\iff U_{a}\subset P_{\mathcal{F}}.
\]
By \textbf{$UN^{+}$}, $u$ fixes some point in $\mathbf{X}(S)$,
thus $u\equiv\mathrm{Id}$ on $\mathbf{X}^{+}(S,u,t)$ for $t\gg0$.
Let us now write $u_{1}uu_{2}=m(u)$ with $u_{1},u_{2}\in\mathsf{U}_{-a}\setminus\{1\}$.
Then similarly for $i\in\{1,2\}$, 
\[
\left\{ x\in\mathbf{X}(S):u_{i}x\in\mathbf{X}(S)\right\} =\left\{ x\in\mathbf{X}(S):u_{i}x=x\right\} 
\]
and $u_{i}$ fixes $\mathbf{X}^{+}(S,u_{i},t)=\mathbf{X}^{-}(S,u,-t)$
for $t\gg0$. Choose $T>0$ such that 
\[
u\equiv\mathrm{Id}\mbox{ on }\mathbf{X}^{+}(S,u,T)\quad\mbox{and}\quad u_{1}\equiv u_{2}\equiv\mathrm{Id}\mbox{ on }\mathbf{X}^{-}(S,u,-T).
\]
Then: $\mathbf{X}(S)$ and $u\mathbf{X}(S)$ contain the half-subspace
$\mathbf{X}^{+}(S,u,T)$, $\mathbf{X}(S)$ and $u_{1}^{-1}\mathbf{X}(S)$
contain $\mathbf{X}^{-}(S,u,-T)$, while $u\mathbf{X}(S)$ and $u_{1}^{-1}\mathbf{X}(S)$
contain 
\[
u\mathbf{X}^{-}(S,u,-T)=uu_{2}\mathbf{X}^{-}(S,u,-T)=u_{1}^{-1}m(u)\mathbf{X}^{-}(S,u,-T)=u_{1}^{-1}\mathbf{X}^{+}(S,u,T).
\]
Thus by $HA$, there is a point $x\in\mathbf{X}(S)\cap u\mathbf{X}(S)\cap u_{1}^{-1}\mathbf{X}(S)$.
Any such point is fixed by $u^{-1}$ and $u_{1}$, thus also by $m(u)u_{2}^{-1}=u_{1}u$.
In particular $u_{2}^{-1}(x)=m(u)^{-1}(x)$ also belongs to $\mathbf{X}(S)$,
so that again $x$ is fixed by $u_{2}$, as well as $m(u)=u_{1}uu_{2}$.
But then $x$ belongs to $\mathbf{X}(S,u)=\{x\in\mathbf{X}(S):m(u)(x)=x\}$
and it is fixed by $u$, which proves $ST_{2}^{-}$, from which $ST$
follows by the previous lemma.
\end{proof}

\subsection{~}

An affine $\mathbf{F}(G)$-building is \emph{tight} if it satisfies
$ST$. It then also satisfies the conclusion of lemma~\ref{lem:RelationBetweenAxiomsST},
and it is determined by its type. More precisely:
\begin{lem}
\label{lem:UnicityTight}Suppose that $\mathbf{X}(G)$ is a tight
affine $\mathbf{F}(G)$-building and $\mathbf{Y}(G)$ is an affine
$\mathbf{F}(G)$-building which satisfies $ST^{-}$. Then $\nu_{\mathbf{X}}=\nu_{\mathbf{Y}}\iff\mathbf{X}(G)\simeq\mathbf{Y}(G)$.\end{lem}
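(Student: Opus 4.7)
The reverse implication is immediate: any isomorphism of affine $\mathbf{F}(G)$-buildings restricts to $\mathsf{N}_G(S)$-equivariant bijections $\mathbf{X}(S)\simeq\mathbf{Y}(S)$ of affine $\mathbf{G}(S)$-spaces for every $S\in\mathbf{S}(G)$, whence $\nu_{\mathbf{X},S}=\nu_{\mathbf{Y},S}$.

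For the forward direction, the plan is to fix $S_0\in\mathbf{S}(G)$, first produce an $\mathsf{N}_G(S_0)$-equivariant isomorphism of affine $\mathbf{G}(S_0)$-spaces $\phi_0:\mathbf{X}(S_0)\to\mathbf{Y}(S_0)$, and then extend it globally to $\phi:\mathbf{X}(G)\to\mathbf{Y}(G)$ by $\mathsf{G}$-equivariance. For $\phi_0$, after choosing origins, the two $\mathsf{N}_G(S_0)$-actions will be encoded by cocycles $\tau_{\mathbf{X}},\tau_{\mathbf{Y}}\in Z^1(\mathsf{N}_G(S_0),\mathbf{G}(S_0))$ and the existence of $\phi_0$ amounts to $[\tau_{\mathbf{X}}]=[\tau_{\mathbf{Y}}]$ in $H^1$. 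The inflation--restriction sequence for $\mathsf{Z}_G(S_0)\triangleleft\mathsf{N}_G(S_0)$, combined with the vanishing of $H^1(\mathsf{W}_G(S_0),\mathbf{G}(S_0))$ (finite Weyl group acting on a $\mathbb{Q}$-vector space), will embed $H^1(\mathsf{N}_G(S_0),\mathbf{G}(S_0))$ into $\Hom(\mathsf{Z}_G(S_0),\mathbf{G}(S_0))$ via restriction; under this injection $[\tau_{\mathbf{X}}]$ maps to $\nu_{\mathbf{X},S_0}$, so our hypothesis produces the required equality of classes.

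To extend $\phi_0$ globally, for $x\in\mathbf{X}(G)$ I would pick any $g\in\mathsf{G}$ with $gx\in\mathbf{X}(S_0)$ (which exists since $\mathsf{G}$ acts transitively on $\mathbf{S}(G)$ and $\mathbf{X}(G)=\cup_S\mathbf{X}(S)$) and set $\phi(x):=g^{-1}\phi_0(gx)$. The hard part will be well-definedness: two choices of $g$ differ by an element $g'\in\mathsf{G}_{gx}$, and I must show $g'\cdot\phi_0(gx)=\phi_0(gx)$. This is precisely where tightness is used. By axiom $\mathbf{A8}$, $\mathsf{G}_{gx}=\mathsf{G}_{S_0,gx}$ is generated by $\mathsf{N}_G(S_0)_{gx}$ together with the root unipotents $u\in\mathsf{U}_a\setminus\{1\}$ with $gx\in\mathbf{X}^+(S_0,u)$. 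Generators of the first kind will fix $\phi_0(gx)$ by the $\mathsf{N}_G(S_0)$-equivariance of $\phi_0$. For those of the second kind, both $\nu_{\mathbf{X}}(S_0,u)$ and $\nu_{\mathbf{Y}}(S_0,u)$ will vanish (by tightness on the $\mathbf{X}$-side via Lemma~\ref{lem:3appandtightness}, and by hypothesis on the $\mathbf{Y}$-side), so $\mathbf{X}(S_0,u)$ and $\mathbf{Y}(S_0,u)$ are the fixed-point sets of $m(u)\in\mathsf{N}_G(S_0)$. The $\mathsf{N}_G(S_0)$-equivariance of $\phi_0$ then yields $\phi_0(\mathbf{X}(S_0,u))=\mathbf{Y}(S_0,u)$, and its $\mathbf{G}(S_0)$-equivariance upgrades this to $\phi_0(\mathbf{X}^+(S_0,u))=\mathbf{Y}^+(S_0,u)$. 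The conclusion of Lemma~\ref{lem:3appandtightness} applied to $\mathbf{Y}$ then forces $u\cdot\phi_0(gx)=\phi_0(gx)$.

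Once $\phi$ is well-defined, $\mathsf{G}$-equivariance is built into its construction, the relation $\phi(\mathbf{X}(S))=\mathbf{Y}(S)$ follows from $\phi_0(\mathbf{X}(S_0))=\mathbf{Y}(S_0)$ by equivariance, and compatibility with $+$ reduces via $\mathsf{G}$-equivariance to the apartment-level $+$-compatibility already built into $\phi_0$. Thus $\phi$ will be a morphism of affine $\mathbf{F}(G)$-spaces, hence an isomorphism of affine $\mathbf{F}(G)$-buildings by the earlier remark that any such morphism is bijective on apartments and therefore globally.
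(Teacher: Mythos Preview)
Your overall strategy matches the paper's: build an $\mathsf{N}_G(S)$-equivariant apartment isomorphism from the equality of types, then use $\mathbf{A8}$ on $\mathbf{X}$ together with the conclusion of Lemma~\ref{lem:3appandtightness} on $\mathbf{Y}$ to show that point-stabilizers are preserved, and extend globally. Your cohomological construction of $\phi_0$ (inflation--restriction plus $H^1(\mathsf{W}_G(S_0),\mathbf{G}(S_0))=0$) is exactly an unpacking of the paper's citation of Rousseau that $\mathsf{N}_G(S)$ is an ``inessential extension'' of $\mathsf{W}_G(S)$; the paper instead works on $\mathbf{X}^e(G)$ so as to make the apartment isomorphism $\theta_S$ \emph{unique} (by the normalization $\mathbf{X}^r(S)\mapsto\mathbf{Y}^r(S)$), builds one for every $S$, and glues on overlaps. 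Your single-apartment-plus-transport version is equivalent.

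There is, however, a genuine slip in the well-definedness step. The assertion ``two choices of $g$ differ by an element $g'\in\mathsf{G}_{gx}$'' is false: if $g_1x,g_2x\in\mathbf{X}(S_0)$ with $g_1x\neq g_2x$, then $g_2g_1^{-1}$ does not fix $g_1x$. What you are missing is an application of $\mathbf{A3}$. Since $x\in\mathbf{X}(\Int(g_1^{-1})S_0)\cap\mathbf{X}(\Int(g_2^{-1})S_0)$, axiom $\mathbf{A3}$ for $\mathbf{X}(G)$ produces $k\in\mathsf{G}_x$ with $n:=g_1kg_2^{-1}\in\mathsf{N}_G(S_0)$. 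Then $g_1x=n\,g_2x$, so $\phi_0(g_1x)=n\,\phi_0(g_2x)$ by $\mathsf{N}_G(S_0)$-equivariance, and
\[
g_1^{-1}\phi_0(g_1x)=k\,g_2^{-1}\phi_0(g_2x),
\]
so the question becomes whether $g_2kg_2^{-1}\in\mathsf{G}_{g_2x}^{\mathbf{X}}$ fixes $\phi_0(g_2x)$ in $\mathbf{Y}(G)$. That is precisely the claim $\mathsf{G}_{y}^{\mathbf{X}}\subset\mathsf{G}_{\phi_0(y)}^{\mathbf{Y}}$ for $y\in\mathbf{X}(S_0)$, which you then prove correctly via $\mathbf{A8}$: the generators in $\mathsf{N}_G(S_0)_y$ are handled by equivariance, and for $u\in\mathsf{U}_a\setminus\{1\}$ your argument that $\phi_0(\mathbf{X}^+(S_0,u))=\mathbf{Y}^+(S_0,u)$ (using that $\mathbf{X}(S_0,u)$ and $\mathbf{Y}(S_0,u)$ are the $m(u)$-fixed loci, since $\nu(S_0,u)=0$ on both sides) is exactly right. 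So the proof goes through once you insert the $\mathbf{A3}$-step; the paper carries out the identical reduction, phrased as ``$\mathsf{G}_{S,x}=\mathsf{G}_{S,\theta_S(x)}$ by definition''.
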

\begin{proof}
We have to show that $\nu_{\mathbf{X}}=\nu_{\mathbf{Y}}$ implies
$\mathbf{X}^{e}(G)\simeq\mathbf{Y}^{e}(G)$. Suppose therefore that
$\nu_{\mathbf{X}}=\nu_{\mathbf{Y}}$. Pick $S\in\mathbf{S}(G)$. By
\cite[2.1.9]{Ro77}, there is a finite subgroup of $\mathsf{N}_{G}(S)$
which maps surjectively onto $\mathsf{W}_{G}(S)$, and which thus
has unique fixed points $x_{S}$ in $\mathbf{X}^{r}(S)$ and $y_{S}$
in $\mathbf{Y}^{r}(S)$. Let $\theta_{S}:\mathbf{X}^{e}(G)\rightarrow\mathbf{Y}^{e}(G)$
be the unique isomorphism of affine $\mathbf{F}(S)$-spaces mapping
$(x_{S},0)$ to $(y_{S},0)$. Then $\theta_{S}$ is $\mathsf{N}_{G}(S)$-equivariant,
and it is the unique $\mathsf{N}_{G}(S)$-equivariant isomorphism
of affine $\mathbf{F}(S)$-spaces from $\mathbf{X}^{e}(S)$ to $\mathbf{Y}^{e}(S)$
mapping $\mathbf{X}^{r}(S)$ to $\mathbf{Y}^{r}(S)$. If $\Int(g)(S)=S'$,
then $g\circ\theta_{S}=\theta_{S'}\circ g$. For $x\in\mathbf{X}^{e}(S)\cap\mathbf{X}^{e}(S')$,
there is such a $g$ in $\mathsf{G}_{x}$ by $R(i)$ for $\mathbf{X}(G)$.
Thus $g$ belongs to $\mathsf{G}_{S,x}$ by $ST$ for $\mathbf{X}(G)$,
which equals $\mathsf{G}_{S,\theta_{S}(x)}$ by definition. Then $g\in\mathsf{G}_{\theta_{S}(x)}$
by our assumption on $\mathbf{Y}(G)$, thus $\theta_{S'}(x)=\theta_{S'}(gx)=g\theta_{S}(x)=\theta_{S}(x)$.
Our isomorphisms $\theta_{S}$ therefore glue to $\theta:\mathbf{X}^{e}(G)\rightarrow\mathbf{Y}^{e}(G)$,
which is the desired isomorphism. \end{proof}
\begin{rem}
\label{Rk:ReconstBuildFromAppt}A tight affine $\mathbf{F}(G)$-building
$\mathbf{X}(G)$ can be retrieved from any apartment $\mathbf{X}(S)$
together with its $\mathsf{N}_{G}(S)$-action. It is the quotient
of $\mathsf{G}\times\mathbf{X}(S)$ for the equivalence relation $\sim$
induced by $(g,x)\mapsto gx$, which indeed only depends upon the
apartment: $(g,x)\sim(g',x')$ if and only if $g'=gkn$ and $x'=n^{-1}x$
for some $k\in\mathsf{G}_{S,x}$ and $n\in\mathsf{N}_{G}(S)$.
\end{rem}

\section{Metric properties}

Let $\mathbf{X}(G)$ be an affine $\mathbf{F}(G)$-building. We shall
here relate our mostly algebraic formalism to various notions pertaining
to the non-canonical metric $d=d_{\tau}$: rays, tangent spaces and
Busemann functions. For simplicity, we furthermore assume that $(\mathbf{X}(G),d)$
is a $CAT(0)$-space.

\subsection{~\label{sub:Rays}}

There is a $\mathsf{G}$-equivariant commutative diagram
\[
\xyR{1pc}\xymatrix{\mathbf{X}(G)\times\mathbf{F}(G)\ar@{^{(}->}[r]\sp(0.56){\alpha}\ar@(u,u)[rr]^{\mathrm{Id}\times\iota}\ar@(d,l)[dr]^{+} & \mathbf{R}\mathbf{X}(G)\ar@{^{(}->}[r]\sp(0.36){\beta}\ar[d]^{\mathrm{ev}_{1}} & \mathbf{X}(G)\times\mathcal{C}(\partial\mathbf{X}(G))\\
 & \mathbf{X}(G)
}
\]
where the various new sets and maps are defined as follows:\nomenclature[RX(G)]{$\mathbf{RX}(G)$}{Rays in $\mathbf{X}(G)$, page \nomrefpage}\nomenclature[dX(G)]{$\partial \mathbf{X}(G)$}{Visual boundary of $\mathbf{X}(G)$, page \nomrefpage}\nomenclature[C(dX(G))]{$\mathcal{C}(\partial \mathbf{X}(G))$}{Cone on the visual boundary of $\mathbf{X}(G)$, page \nomrefpage}
\begin{itemize}
\item $\mathbf{R}\mathbf{X}(G)$ is the set of all functions $f:\mathbb{R}^{+}\rightarrow\mathbf{X}(G)$
such that 
\[
\exists c_{f}\geq0\mbox{ s.t. }\forall t,u\in\mathbb{R}^{+}:\qquad d(f(t),f(u))=c_{f}\left|u-t\right|.
\]

\item $\partial\mathbf{X}(G)$ is the visual boundary $\left\{ f\in\mathbf{R}\mathbf{X}(G):c_{f}=1\right\} /\sim$
of $\mathbf{X}(G)$, where the equivalence relation $\sim$ is defined
on the whole of $\mathbf{R}\mathbf{X}(G)$ by 
\[
f\sim g\iff t\mapsto d\left(f(t),g(t)\right)\mbox{ is bounded}.
\]

\item $\mathcal{C}(\partial\mathbf{X}(G))$ is the cone $\left(\mathbb{R}^{+}\times\partial\mathbf{X}(G)\right)/\approx$
where the equivalence relation $\approx$ just collapses $\{0\}\times\partial\mathbf{X}(G)$
to a single point $0\in\mathcal{C}(\partial\mathbf{X}(G))$, so that
\[
f\mapsto[f]=\begin{cases}
(c_{f},\mbox{class of }f(c_{f}^{-1}-)) & \mbox{if }c_{f}\neq0,\\
0 & \mbox{if }c_{f}=0,
\end{cases}
\]
identifies the quotient $\mathbf{R}\mathbf{X}(G)/\sim$ with the cone
$\mathcal{C}(\partial\mathbf{X}(G))$.
\item $\alpha(x,\mathcal{F})(t)=x+t\mathcal{F}$, $\beta(f)=(f(0),[f])$
and $\mathrm{ev}_{1}(f)=f(1)$.
\end{itemize}
By the axiom $NE$ for $\mathbf{X}(G)$, $\beta\circ\alpha=\mathrm{Id}_{\mathbf{X}(G)}\times\iota$
for some $\mathsf{G}$-equivariant map 
\[
\iota:\mathbf{F}(G)\hookrightarrow\mathcal{C}(\partial\mathbf{X}(G)).
\]
The latter is injective: suppose that $x+t\mathcal{F}\sim y+t\mathcal{G}$
and pick $z\in\mathbf{X}(S)$ for some $S\in\mathbf{S}(G)$ such that
$\mathcal{F},\mathcal{G}\in\mathbf{F}(S)$. Then $z+t\mathcal{F}\sim x+t\mathcal{F}\sim y+t\mathcal{G}\sim z+t\mathcal{G}$,
thus $\mathcal{F}=\mathcal{G}$ since $z+t\mathcal{F}\sim z+t\mathcal{G}$
in the affine $\mathbf{F}(S)$-space $\mathbf{X}(S)$. It follows
that $\alpha$ is also injective. Finally $\beta$ is injective by
convexity of the CAT(0)-distance $d$, and it is also surjective when
$(\mathbf{X}(G),d)$ is complete \cite[II.8.2]{BrHa99} (for instance
in the discrete case, by lemma~\ref{lem:DiscImpliesComplete}). By
the axiom $L(s)$, $\alpha$ is bijective precisely when every geodesic
ray in $\mathbf{X}(G)$ is standard, in which case $\iota$ and $\beta$
are also bijective.
\begin{rem}
The injectivity of $\iota$ implies that the apartment map $S\mapsto\mathbf{X}(S)$
is injective: $\mathbf{X}(S)$ determines $\mathcal{C}(\partial\mathbf{X}(S))=\iota(\mathbf{F}(S))$,
thus also $\mathbf{F}(S)$ and $S\in\mathbf{S}(G)$. 
\end{rem}

\subsection{~\label{sub:settingforanglesinbuild}}

Fix $x\in\mathbf{X}(G)$ and $0\neq\mathcal{F},\mathcal{G}\in\mathbf{F}(G)$,
set $y=x+\mathcal{F}$, $z=x+\mathcal{G}$. We may then define the
following five different types of angles
\[
0\leq\measuredangle_{x}(\mathcal{F},\mathcal{G})\leq\measuredangle_{x}(\overrightarrow{xy},\mathcal{G})\leq\measuredangle_{x}^{c}(y,z)\leq\measuredangle(\overrightarrow{xy},\mathcal{G})\leq\measuredangle^{x}(\mathcal{F},\mathcal{G})\leq\pi.
\]
First, $\measuredangle_{x}^{c}(y,z)$\nomenclature[<(y,z)_x^c]{$\measuredangle_{x}^{c}(y,z)$}{Angle at $x$ in a comparison Euclidean triangle for $(x,y,z)$.}
is the angle at $x$ in a comparison triangle for $(x,y,z)$, so that
\[
d(y,z)=\left(d(x,y)^{2}+d(x,z)^{2}-2d(x,y)d(x,z)\cos\measuredangle_{x}^{c}(y,z)\right)^{1/2}.
\]
More generally for every $(t,u)\in\mathbb{R}_{+}$, the distance $d(x+t\mathcal{F},x+u\mathcal{G})$
equals 
\[
\left(t^{2}\left\Vert \mathcal{F}\right\Vert ^{2}+u^{2}\left\Vert \mathcal{G}\right\Vert ^{2}-2tu\left\Vert \mathcal{F}\right\Vert \left\Vert \mathcal{G}\right\Vert \cos\measuredangle_{x}^{c}(x+t\mathcal{F},x+u\mathcal{G})\right)^{1/2}.
\]
By~\cite[II.3.1]{BrHa99}, the comparison angle function 
\[
(t,u)\in\mathbb{R}_{>}^{2}\mapsto\measuredangle_{x}^{c}(x+t\mathcal{F},x+u\mathcal{G})\in[0,\pi]
\]
is non-decreasing in both variables. We define\nomenclature[<(F,G)_x]{$\measuredangle_{x}(\mathcal{F},\mathcal{G})$}{Alexandrov angle at $x$ between $x+t \mathcal{F}$ and $x+t \mathcal{G}$, page \nomrefpage}\nomenclature[<(y,z,G)]{$\measuredangle(\overrightarrow{xy},\mathcal{G})$}{Busemann angle at $x$ between $y$ and $x+t \mathcal{G}$, page \nomrefpage}{\small{
\[
\begin{array}{lclcl}
\measuredangle_{x}(\mathcal{F},\mathcal{G}) & = & \inf\left\{ \measuredangle_{x}^{c}(x+t\mathcal{F},x+u\mathcal{G}):t,u>0\right\}  & = & \lim_{t,u\rightarrow0}\measuredangle_{x}^{c}(x+t\mathcal{F},x+u\mathcal{G})\\
\measuredangle_{x}(\overrightarrow{xy},\mathcal{G}) & = & \inf\left\{ \measuredangle_{x}^{c}(y,x+u\mathcal{G}):u>0\right\}  & = & \lim_{u\rightarrow0}\measuredangle_{x}^{c}(y,x+u\mathcal{G})\\
\measuredangle(\overrightarrow{xy},\mathcal{G}) & = & \sup\left\{ \measuredangle_{x}^{c}(y,x+u\mathcal{G}):u>0\right\}  & = & \lim_{u\rightarrow\infty}\measuredangle_{x}^{c}(y,x+u\mathcal{G})\\
\measuredangle^{x}(\mathcal{F},\mathcal{G}) & = & \sup\left\{ \measuredangle_{x}^{c}(x+t\mathcal{F},x+u\mathcal{G}):t,u>0\right\}  & = & \lim_{t,u\rightarrow\infty}\measuredangle_{x}^{c}(x+t\mathcal{F},x+u\mathcal{G})
\end{array}
\]
}}We will also use the notations $\measuredangle_{x}(y,z)=\measuredangle_{x}(\mathcal{F},\mathcal{G})=\measuredangle(\mathcal{F}_{x},\mathcal{G}_{x}).$

\subsection{~}

Let us immediately observe that:
\begin{lem}
\label{lem:AnglesAreEqualWhenGcentral}If $\mathcal{G}$ belongs to
$\mathbf{G}(Z)\subset\mathbf{F}(G)$, then 
\[
\measuredangle_{x}(\mathcal{F},\mathcal{G})=\measuredangle_{x}(\overrightarrow{xy},\mathcal{G})=\measuredangle_{x}^{c}(y,z)=\measuredangle(\overrightarrow{xy},\mathcal{G})=\measuredangle^{x}(\mathcal{F},\mathcal{G}).
\]
\end{lem}
\begin{proof}
Pick $S\in\mathbf{S}(G)$ with $x\in\mathbf{X}(S)$, $\mathcal{F}\in\mathbf{F}(S)$
using the axiom $L(s)$ for $\mathbf{X}(G)$. Then also $\mathcal{G}\in\mathbf{F}(S)$,
thus everything stays in the flat Euclidean affine $\mathbf{F}(S)$-space
$\mathbf{X}(S)$ on which all of our angles plainly agree.
\end{proof}

\subsection{~\label{sub:TangentSpaceInBuild}}

The smallest of these angles, also denoted by $\measuredangle(\mathcal{F}_{x},\mathcal{G}_{x})$,
is the Alexandrov angle at $x$ between the rays $x+t\mathcal{F}$
and $x+t\mathcal{G}$ \cite[I.12]{BrHa99}. It satisfies a triangle
inequality: if $\mathcal{H}\in\mathbf{F}(G)$ is yet another nonzero
filtration, then by \cite[I.14]{BrHa99}, 
\[
\measuredangle_{x}(\mathcal{F},\mathcal{H})\leq\measuredangle_{x}(\mathcal{F},\mathcal{G})+\measuredangle_{x}(\mathcal{G},\mathcal{H}).
\]
The tangent cone at $x$ is the quotient $\mathbf{T}_{x}\mathbf{X}(G)=\mathbf{F}(G)/\sim_{x}$\nomenclature[T_xX(G)]{$\mathbf{T}_{x}\mathbf{X}(G)$}{Tangent space at $x$ in $\mathbf{X}(G)$, defined page \nomrefpage},
where $\mathcal{F}\sim_{x}\mathcal{G}$ if and only if $\left\Vert \mathcal{F}\right\Vert =\left\Vert \mathcal{G}\right\Vert $
and $\measuredangle_{x}(\mathcal{F},\mathcal{G})=0$. This definition
agrees with \cite[II.3.18]{BrHa99} by the axiom $R(s)$ for $\mathbf{X}(G)$.
We denote by $\loc_{x}(\mathcal{F})=\mathcal{F}_{x}$\nomenclature[loc_x]{$\loc _x$}{Projection $\mathbf{F}(G) \twoheadrightarrow \mathbf{T}_x \mathbf{X}(G)$, page \nomrefpage}
the class of $\mathcal{F}$ in $\mathbf{T}_{x}\mathbf{X}(G)$. The
norm $\left\Vert -\right\Vert $ and Alexandrov angle $\measuredangle_{x}(-,-)$
on $\mathbf{F}(G)$ descend to a norm and angle on $\mathbf{T}_{x}\mathbf{X}(G)$,
thereby justifying our notation $\measuredangle(\mathcal{F}_{x},\mathcal{G}_{x})=\measuredangle_{x}(\mathcal{F},\mathcal{G})$.
We also define a scalar product and a distance function on $\mathbf{T}_{x}\mathbf{X}(G)$
by the usual formulas:
\begin{eqnarray*}
\left\langle \mathcal{F}_{x},\mathcal{G}_{x}\right\rangle  & = & \left\Vert \mathcal{F}_{x}\right\Vert \left\Vert \mathcal{G}_{x}\right\Vert \cos\measuredangle(\mathcal{F}_{x},\mathcal{G}_{x})\\
d(\mathcal{F}_{x},\mathcal{G}_{x}) & = & \sqrt{\left\Vert \mathcal{F}_{x}\right\Vert ^{2}+\left\Vert \mathcal{G}_{x}\right\Vert ^{2}-2\left\langle \mathcal{F}_{x},\mathcal{G}_{x}\right\rangle }.
\end{eqnarray*}
By definition of the Alexandrov angle, 
\begin{eqnarray*}
d(\mathcal{F}_{x},\mathcal{G}_{x}) & = & \lim_{t\rightarrow0}{\textstyle \frac{1}{t}}d(x+t\mathcal{F},x+t\mathcal{G}).
\end{eqnarray*}
These formulas for $d$ respectively show that $d(\mathcal{F}_{x},\mathcal{G}_{x})=0$
if and only if $\mathcal{F}_{x}=\mathcal{G}_{x}$, and that $d(\mathcal{F}_{x},\mathcal{H}_{x})\leq d(\mathcal{F}_{x},\mathcal{G}_{x})+d(\mathcal{G}_{x},\mathcal{H}_{x})$.
Thus $d$ is indeed a distance on $\mathbf{T}_{x}\mathbf{X}(G)$ and
$\mathcal{F}\sim_{x}\mathcal{G}$ if and only if $\lim_{t\rightarrow0}\frac{1}{t}d(x+t\mathcal{F},x+t\mathcal{G})=0$.

\subsection{~}

By the very definition of $\mathbf{T}_{x}\mathbf{X}(G)$, there is
a commutative diagram 
\[
\xyR{2pc}\xymatrix{\mathbf{F}(G)\ar@{->>}[rr]^{\mathcal{F}\mapsto x+\mathcal{F}}\ar@{->>}[dr]_{\loc_{x}} &  & \mathbf{X}(G)\ar@{->>}[dl]^{\loc_{x}^{a}}\\
 & \mathbf{T}_{x}\mathbf{X}(G)
}
\]
We may thus also define\nomenclature[loc_x^a]{$\loc_x^a$}{Localization $\mathbf{X}(G) \twoheadrightarrow \mathbf{T}_x \mathbf{X}(G)$, page \nomrefpage}
\begin{eqnarray*}
\measuredangle_{x}(y,z) & = & \measuredangle\left(\loc_{x}^{a}(y),\loc_{x}^{a}(z)\right),\\
\left\langle y,z\right\rangle _{x} & = & \left\langle \loc_{x}^{a}(y),\loc_{x}^{a}(z)\right\rangle ,\\
d_{x}(y,z) & = & d\left(\loc_{x}^{a}(y),\loc_{x}^{a}(z)\right).
\end{eqnarray*}

\subsection{~}

Our second smallest angle actually equals the first one by \cite[I.1.16]{BrHa99}.
Thus for $y=x+\mathcal{F}$, we have $\measuredangle_{x}(\overrightarrow{xy},\mathcal{G})=\measuredangle(\mathcal{F}_{x},\mathcal{G}_{x})$.
Since
\[
\lim_{t\rightarrow0}{\textstyle \frac{1}{t}}\left(d(y,x)-d(y,x+t\mathcal{G})\right)=\left\Vert \mathcal{F}\right\Vert \left\Vert \mathcal{G}\right\Vert \cos\measuredangle_{x}(\overrightarrow{xy},\mathcal{G})
\]
by definition of $\measuredangle_{x}(\overrightarrow{xy},\mathcal{G})$,
it follows that 
\[
\lim_{t\rightarrow0}{\textstyle \frac{1}{t}}\left(d(y,x)-d(y,x+t\mathcal{G})\right)=\left\langle \loc_{x}^{a}(y),\loc_{x}\mathcal{G}\right\rangle .
\]

\subsection{~}

By definition of our largest angle $\measuredangle^{x}(\mathcal{F},\mathcal{G})$,
we have 
\[
\lim_{t\rightarrow\infty}{\textstyle \frac{1}{t}}d(x+t\mathcal{F},x+t\mathcal{G})=\sqrt{\left\Vert \mathcal{F}\right\Vert ^{2}+\left\Vert \mathcal{G}\right\Vert ^{2}-2\left\Vert \mathcal{F}\right\Vert \left\Vert \mathcal{G}\right\Vert \cos\measuredangle^{x}(\mathcal{F},\mathcal{G})}.
\]
For $z_{1},z_{2}\in\mathbf{X}(G)$, $x+t\mathcal{F}\sim z_{1}+t\mathcal{F}$
and $x+t\mathcal{G}\sim z_{2}+t\mathcal{G}$, thus also 
\[
\lim_{t\rightarrow\infty}{\textstyle \frac{1}{t}}d(z_{1}+t\mathcal{F},z_{2}+t\mathcal{G})=\sqrt{\left\Vert \mathcal{F}\right\Vert ^{2}+\left\Vert \mathcal{G}\right\Vert ^{2}-2\left\Vert \mathcal{F}\right\Vert \left\Vert \mathcal{G}\right\Vert \cos\measuredangle^{x}(\mathcal{F},\mathcal{G})}.
\]
In particular, $\measuredangle^{x}(\mathcal{F},\mathcal{G})$ is independent
of $x$. Taking $x\in\mathbf{X}(S)$ for some $S\in\mathbf{S}(G)$
with $\mathcal{F},\mathcal{G}\in\mathbf{F}(S)$, we find that $\measuredangle^{x}(\mathcal{F},\mathcal{G})=\measuredangle(\mathcal{F},\mathcal{G})=\measuredangle_{x}(\mathcal{F},\mathcal{G})$.
Thus 
\[
d(\mathcal{F},\mathcal{G})=\lim_{t\rightarrow\infty}{\textstyle \frac{1}{t}}d(z_{1}+t\mathcal{F},z_{2}+t\mathcal{G})
\]
for every $z_{1},z_{2}\in\mathbf{X}(G)$ and 
\begin{eqnarray*}
\measuredangle(\mathcal{F},\mathcal{G}) & = & \max\{\measuredangle(\mathcal{F}_{x},\mathcal{G}_{x}):x\in\mathbf{X}(G)\},\\
\left\langle \mathcal{F},\mathcal{G}\right\rangle  & = & \min\{\left\langle \mathcal{F}_{x},\mathcal{G}_{x}\right\rangle :x\in\mathbf{X}(G)\},\\
d(\mathcal{F},\mathcal{G}) & = & \max\{d(\mathcal{F}_{x},\mathcal{G}_{x}):x\in\mathbf{X}(G)\}.
\end{eqnarray*}

\subsection{~\label{sub:BusemannFunctions}}

Recall from~\cite[II.8.18-20]{BrHa99} that for any $y,z\in\mathbf{X}(G)$,
the function 
\[
t\mapsto d(y,z+t\mathcal{G})-t\left\Vert \mathcal{G}\right\Vert 
\]
is non-increasing and bounded, the functions 
\[
y\mapsto d(y,z+t\mathcal{G})-t\left\Vert \mathcal{G}\right\Vert 
\]
converge uniformly on bounded subsets of $\mathbf{X}(G)$ as $t\rightarrow\infty$
to
\[
y\mapsto b_{z,\mathcal{G}}(y)=\lim_{t\rightarrow\infty}\left(d(y,z+t\mathcal{G})-t\left\Vert \mathcal{G}\right\Vert \right),
\]
and (for $\mathcal{G}\neq0$) the Busemann function in two variables
\[
(x,y)\mapsto b_{\mathcal{G}}(x,y)=b_{z,\mathcal{G}}(y)-b_{z,\mathcal{G}}(x)
\]
does not depend upon $z$. Note that the proof of this last statement
in \emph{loc.~cit.}, which only uses the ``if'' part of \cite[II.8.19]{BrHa99},
does indeed not require the ambient CAT(0)-space to be complete. For
any $\mathcal{G}\in\mathbf{F}(G)$ and $x,y\in\mathbf{X}(G)$, we
set\nomenclature[<xy,G>]{$\left\langle \overrightarrow{xy},\mathcal{G}\right\rangle $}{Busemann scalar product between $[x,y]$ and $x+t\mathcal{G}$, defined page \nomrefpage }
\[
\left\langle \overrightarrow{xy},\mathcal{G}\right\rangle =\left\Vert \mathcal{G}\right\Vert \cdot\lim_{t\rightarrow\infty}\left(d(x,z+t\mathcal{G})-d(y,z+t\mathcal{G})\right)
\]
which is thus well-defined, independent of $z$, and equal to $\left\Vert \mathcal{G}\right\Vert \cdot b_{\mathcal{G}}(y,x)$
if $\mathcal{G}\neq0$. For $x,y,z\in\mathbf{X}(G)$, we have $\left\langle \overrightarrow{xz},\mathcal{G}\right\rangle =\left\langle \overrightarrow{xy},\mathcal{G}\right\rangle +\left\langle \overrightarrow{yz},\mathcal{G}\right\rangle $,
$\left\langle \overrightarrow{yx},\mathcal{G}\right\rangle +\left\langle \overrightarrow{xy},\mathcal{G}\right\rangle =0$.
Taking $z=x$ in the formula defining $\left\langle \overrightarrow{xy},\mathcal{G}\right\rangle $,
we find that 
\[
\left\langle \overrightarrow{xy},\mathcal{G}\right\rangle =d(x,y)\cdot\left\Vert \mathcal{G}\right\Vert \cdot\cos\measuredangle(\overrightarrow{xy},\mathcal{G})
\]
by definition of our second largest angle $\measuredangle(\overrightarrow{xy},\mathcal{G})$.
The function
\[
y\mapsto\left\langle \overrightarrow{xy},\mathcal{G}\right\rangle =-\left\langle \overrightarrow{yx},\mathcal{G}\right\rangle 
\]
is $\left\Vert \mathcal{G}\right\Vert $-Lipschitzian and concave
(by convexity of $d$).

\subsection{~}

Returning to $y=x+\mathcal{F}$ and $z=x+\mathcal{G}$, we obtain
\[
\left\langle \mathcal{F},\mathcal{G}\right\rangle \leq\left\langle \overrightarrow{xy},\mathcal{G}\right\rangle \leq{\textstyle \frac{1}{2}}\left(d(x,y)^{2}+d(x,z)^{2}-d(y,z)^{2}\right)\leq\left\langle \mathcal{F}_{x},\mathcal{G}_{x}\right\rangle 
\]
with absolute values bounded by $\left\Vert \mathcal{F}\right\Vert \left\Vert \mathcal{G}\right\Vert $,
as well as 
\[
d(\mathcal{F}_{x},\mathcal{G}_{x})\leq d(y,z)\leq\left(d(x,y)^{2}+\left\Vert \mathcal{G}\right\Vert ^{2}-2\left\langle \overrightarrow{xy},\mathcal{G}\right\rangle \right)^{1/2}\leq d(\mathcal{F},\mathcal{G}).
\]
In particular, the localization functions
\[
\begin{array}{ccccc}
\left(\mathbf{F}(G),d\right) & \rightarrow & \left(\mathbf{X}(G),d\right) & \rightarrow & \left(\mathbf{T}_{x}\mathbf{X}(G),d\right)\\
\mathcal{F} & \mapsto & x+\mathcal{F}=y & \mapsto & y_{x}=\mathcal{F}_{x}
\end{array}
\]
are non-expanding. For $S\in\mathbf{S}(G)$ with $x\in\mathbf{X}(S)$,
they restrict to isometries
\[
\begin{array}{ccccc}
\left(\mathbf{F}(S),d\right) & \simeq & \left(\mathbf{X}(S),d\right) & \simeq & \left(\mathbf{T}_{x}\mathbf{X}(S),d\right)\end{array}
\]
where $\mathbf{T}_{x}\mathbf{X}(S)=\loc_{x}^{a}\mathbf{X}(S)=\loc_{x}\mathbf{F}(S)$.
We refer to $\mathbf{T}_{x}\mathbf{X}(S)$ as the apartment of $S$
in $\mathbf{T}_{x}\mathbf{X}(G)$. It is a (complete thus) closed
subset of $\mathbf{T}_{x}\mathbf{X}(G)$.

\subsection{~\label{sub:GoodPropOfTangSpace}}

Suppose that any two germs of geodesic segments in $\mathbf{X}(G)$
issuing from the same point are contained in some apartment of $\mathbf{X}(G)$.
This is for instance the case when the strengthening $L(s)^{+}$ of
$L(s)$ holds for $\mathbf{X}(G)$. Then: 
\begin{enumerate}
\item The axiom $R(s)$ holds for $\mathbf{T}_{x}\mathbf{X}(G)$, i.e.~any
two elements of $\mathbf{T}_{x}\mathbf{X}(G)$ belong to $\mathbf{T}_{x}\mathbf{X}(S)$
for some $S$ in $\mathbf{S}(x)=\left\{ S\in\mathbf{S}(G):x\in\mathbf{X}(S)\right\} $;
\item $\mathcal{F}_{x}=\mathcal{G}_{x}$ if and only if $x+t\mathcal{F}=x+t\mathcal{G}$
for all sufficiently small $t\geq0$; 
\item $(\mathbf{T}_{x}\mathbf{X}(G),d)$ is a CAT(0)-space; and 
\item $v_{2}\mapsto\left\langle v_{1},v_{2}\right\rangle $ is homogeneous,
concave and $\left\Vert v_{1}\right\Vert $-Lipschitzian on $\mathbf{T}_{x}\mathbf{X}(G)$. 
\end{enumerate}
The first two properties are easy. To establish $(3)$, first note
that $(\mathbf{T}_{x}\mathbf{X}(G),d)$ is a geodesic space by $(1)$,
so it remains to establish the $CAT(0)$-inequality. Let thus $v,v_{0},v_{1}$
be three points of $\mathbf{T}_{x}\mathbf{X}(G)$ and choose $S\in\mathbf{S}(x)$
such that $v_{0},v_{1}$ belong to $\mathbf{T}_{x}\mathbf{X}(S)$.
Lift $v_{i}$ to $\mathcal{F}_{i}\in\mathbf{F}(S)$ and lift $v$
to some $\mathcal{F}\in\mathbf{F}(G)$. For $u\in[0,1]$, let $\mathcal{F}_{u}=(1-u)\mathcal{F}_{0}+u\mathcal{F}_{1}$
be the point at distance $ud(\mathcal{F}_{0},\mathcal{F}_{1})$ from
$\mathcal{F}_{0}$ on the segment $[\mathcal{F}_{0},\mathcal{F}_{1}]$
of $\mathbf{F}(S)$. Then $x_{u,t}=x+t\mathcal{F}_{u}$ is the point
at distance $ud(x_{0,t},x_{1,t})$ from $x_{0,t}$ on the segment
$[x_{0,t},x_{1,t}]$ of $\mathbf{X}(S)$ while $v_{u}=\loc_{x}\mathcal{F}_{u}$
is the point at distance $ud(v_{0},v_{1})$ from $v_{0}$ on the segment
$[v_{0},v_{1}]$ of $\mathbf{T}_{x}\mathbf{X}(S)$. Set $x_{t}=x+t\mathcal{F}$.
By the $CAT(0)$-inequality in $\mathbf{X}(G)$ applied to the triangle
$(x_{t},x_{0,t},x_{1,t})$, 
\[
d(x_{t},x_{u,t})^{2}\leq(1-u)\cdot d(x_{t},x_{0,t})^{2}+u\cdot d(x_{t},x_{1,t})^{2}-u(1-u)\cdot d(x_{0,t},x_{1,t})^{2}.
\]
Dividing by $t^{2}$ and taking the limit as $t\rightarrow0$ gives
\[
d(v,v_{u})^{2}\leq(1-u)\cdot d(v,v_{0})^{2}+u\cdot d(v,v_{1})^{2}-u(1-u)\cdot d(v_{0},v_{1})^{2}
\]
which is the $CAT(0)$-inequality for $\mathbf{T}_{x}\mathbf{X}(G)$.
Given $(3)$, the proof of $(4)$ is entirely similar to that of corollary~\ref{cor:ConcavOfScalProd}.
\begin{rem}
By $(2)$, the quotient $\mathbf{T}_{x}\mathbf{X}(G)$ of $\mathbf{F}(G)$
does not depend upon the chosen metric (i.e.~chosen $\tau$) for
buildings satisfying the above condition on germs. Assuming instead
that $(\mathbf{X}(G),d)$ is complete, \cite[II.3.19]{BrHa99} shows
that the completion of $(\mathbf{T}_{x}\mathbf{X}(G),d)$ is always
a CAT(0)-space. 
\end{rem}

\subsection{~}

If the axiom $L(s)^{+}$ holds for $\mathbf{X}(G)$, then $\mathcal{G}\mapsto\left\langle \overrightarrow{xy},\mathcal{G}\right\rangle $
is $d(x,y)$-Lipschitzian. Indeed, for $\mathcal{G}_{1},\mathcal{G}_{2}\in\mathbf{F}(G)$,
there is a subdivision $x=x_{0},\cdots,x_{n}=y$ of the segment $[x,y]$
of $\mathbf{X}(G)$ and for each $i\in\{1,\cdots,n\}$, tori $S_{i,1},S_{i,2}\in\mathbf{S}(G)$
such that $[x_{i-1},x_{i}]\subset\mathbf{X}(S_{i,j})$ and $\mathcal{G}_{j}\in\mathbf{F}(S_{i,j})$
for $j\in\{1,2\}$ by lemma~\ref{Lem:Ls+impliesSubdiv}. Then 
\[
\left\langle \overrightarrow{xy},\mathcal{G}_{1}\right\rangle -\left\langle \overrightarrow{xy},\mathcal{G}_{2}\right\rangle ={\textstyle \sum}_{i=0}^{n-1}\left\langle \overrightarrow{x_{i}x_{i+1}},\mathcal{G}_{1}\right\rangle -\left\langle \overrightarrow{x_{i}x_{i+1}},\mathcal{G}_{2}\right\rangle 
\]
with $\left\langle \overrightarrow{x_{i}x_{i+1}},\mathcal{G}_{j}\right\rangle =\left\langle \loc_{x_{i}}^{a}(x_{i+1}),\loc_{x_{i}}(\mathcal{G}_{j})\right\rangle $.
Thus 
\[
\left|\left\langle \overrightarrow{xy},\mathcal{G}_{1}\right\rangle -\left\langle \overrightarrow{xy},\mathcal{G}_{2}\right\rangle \right|\leq{\textstyle \sum}_{i=0}^{n-1}\left\Vert \loc_{x_{i}}^{a}(x_{i+1})\right\Vert \cdot d\left(\loc_{x_{i}}(\mathcal{G}_{1}),\loc_{x_{i}}(\mathcal{G}_{2})\right)
\]
because $v_{2}\mapsto\left\langle v_{1},v_{2}\right\rangle $ is $\left\Vert v_{1}\right\Vert $-Lipschitzian
on $\mathbf{T}_{x_{i}}\mathbf{X}(G)$. Since 
\[
\left\Vert \loc_{x_{i}}^{a}(x_{i+1})\right\Vert =d(x_{i},x_{i+1})\quad\mbox{and}\quad d\left(\loc_{x_{i}}(\mathcal{G}_{1}),\loc_{x_{i}}(\mathcal{G}_{2})\right)\leq d(\mathcal{G}_{1},\mathcal{G}_{2})
\]
we obtain the desired inequality:
\[
\left|\left\langle \overrightarrow{xy},\mathcal{G}_{1}\right\rangle -\left\langle \overrightarrow{xy},\mathcal{G}_{2}\right\rangle \right|\leq{\textstyle \sum}_{i=0}^{n-1}d(x_{i},x_{i+1})\cdot d(\mathcal{G}_{1},\mathcal{G}_{2})=d(x,y)\cdot d(\mathcal{G}_{1},\mathcal{G}_{2}).
\]

\subsection{Convex projections}

Let $C$ be a closed convex subset of $\mathbf{X}(G)$ which is complete
in the induced topology. Then for every $x\in\mathbf{X}(G)$, there
is a unique point $p(x)$ in $C$ such that $d(x,p(x))=d(x,C)=\inf\left\{ d(x,y):y\in C\right\} $.
We call 
\[
p:\mathbf{X}(G)\twoheadrightarrow C
\]
the convex projection onto $C$. It is non-expanding, constant on
the segment $[x,p(x)]$, the map $H:\mathbf{X}(G)\times[0,1]\rightarrow\mathbf{X}(G)$
associating to $(x,t)$ the unique point at distance $td(x,p(x))$
from $x$ on the segment $[x,p(x)]$ is a continuous homotopy from
$\mathrm{Id}_{\mathbf{X}(G)}$ to $p$, and $\measuredangle_{p(x)}(x,y)\geq\frac{\pi}{2}$
for every $y\in C$ by \cite[II.2.4]{BrHa99}, thus also 
\[
\left\langle x,y\right\rangle _{p(x)}\leq0.
\]
For any $\mathcal{F}\in\mathbf{F}(G)$ such that $p(x)+t\mathcal{F}$
belongs to $C$ for all sufficiently small $t>0$, {\small{
\[
\left\langle \overrightarrow{p(x)x},\mathcal{F}\right\rangle \leq{\textstyle \frac{1}{2}}\left(d(x,C)^{2}+\left\Vert \mathcal{F}\right\Vert ^{2}-d(x,p(x)+\mathcal{F})^{2}\right)\leq\left\langle \loc_{p(x)}^{a}(x),\loc_{p(x)}(\mathcal{F})\right\rangle \leq0.
\]
}}{\small \par}

\section{The affine $\mathbf{F}(P/U)$-space $\mathbf{T}_{P}^{\infty}\mathbf{X}(G)$}

\nomenclature[T_P^ooX(G)]{$\mathbf{T}_{P}^{\infty}\mathbf{X}(G)$}{Quotient of $\mathbf{X}(G)$ by the unipotent radical $\mathsf{U}$ of $\mathsf{P}$, page \nomrefpage}Let
$\mathbf{X}(G)$ be an affine $\mathbf{F}(G)$-building. Fix a parabolic
subgroup $P$ of $G$ and let $U$ be the unipotent radical of $P$.

\subsection{~}

We have already seen that a Levi subgroup $L$ of $P$ determines: 
\begin{enumerate}
\item a parabolic subgroup $P_{L}^{\iota}$ of $G$ opposed to $P$ with
$P\cap P_{L}^{\iota}=L$,
\item a splitting map $F^{-1}(P)\hookrightarrow\mathbf{G}(L)$,
\item an opposition map $F^{-1}(P)\ni\mathcal{F}\mapsto\mathcal{F}_{L}^{\iota}\in F^{-1}(P_{L})$,
\item a section $\mathbf{F}(P/U)\ni\mathcal{H}\mapsto\mathcal{H}_{L}\in\mathbf{F}(L)$
of $\Gr_{P}:\mathbf{F}(G)\twoheadrightarrow\mathbf{F}(P/U)$,
\item a fundamental domain $\mathbf{X}(L)=\cup_{S\in\mathbf{S}(L)}\mathbf{X}(S)$
for the $\mathsf{U}$-action on $\mathbf{X}(G)$,
\item an $\mathsf{L}$-equivariant, $\mathsf{U}$-invariant retraction $\mathbf{X}(G)\ni x\mapsto x_{L}\in\mathbf{X}(L)$.
\end{enumerate}
The splitting map takes $\mathcal{F}$ to its unique splitting $\mathcal{G}$
with $Z_{G}(\mathcal{G})=L$. We have $\mathcal{F}_{L}^{\iota}=\Fi(\iota\mathcal{G})$
and $P_{\mathcal{F}_{L}^{\iota}}=P_{L}^{\iota}$. The $\Gr_{P}$-map
and its section are discussed in \ref{sub:StdAndApartments}, and
$\Gr_{P}$ is defined everywhere by theorem~\ref{thm:Std=00003DAlloverField}.
Finally $(5)$ and $(6)$ come from proposition~\ref{prop:distretrac},
which also says that for any\emph{ }$\mathcal{F}$ in the facet $F^{-1}(P)$,
\[
x_{L}=\lim_{t\rightarrow\infty}(x+t\mathcal{F})+t\mathcal{F}_{L}^{\iota}\quad\mbox{in }\mathbf{X}(G).
\]

\subsection{~\label{sub:orbitsOfU}}

For any $x,y\in\mathbf{X}(G)$, the following conditions are equivalent:
\begin{enumerate}
\item $\mathsf{U}\cdot x=\mathsf{U}\cdot y$,
\item $\lim_{t\rightarrow\infty}\mathbf{d}(x+t\mathcal{F},y+t\mathcal{F})=0$
for some (or every) $\mathcal{F}\in F^{-1}(P)$,
\item $\lim_{t\rightarrow\infty}d(x+t\mathcal{F},y+t\mathcal{F})=0$ for
some (or every) $\mathcal{F}\in F^{-1}(P)$,
\item $x_{L}=y_{L}$ for some (or every) Levi subgroup $L$ of $P$
\end{enumerate}
Indeed $(1)\Rightarrow(2)$ by the axiom $UN$, $(2)\Rightarrow(3)$
is trivial, $(3)\Rightarrow(4)$ because 
\[
(3)\stackrel{NE}{\Longrightarrow}\lim_{t\rightarrow\infty}d\left(\left(x+t\mathcal{F}\right)+t\mathcal{F}_{L}^{\iota},\left(y+t\mathcal{F}\right)+t\mathcal{F}_{L}^{\iota}\right)=0\Longrightarrow(4)
\]
and $(4)\Rightarrow(1)$ is obvious. If $\mathbf{X}(G)$ satisfies
$UN^{+}$, they are also equivalent to: 

$\,\,\,(5)$ $x+t\mathcal{F}=y+t\mathcal{F}$ for $t\gg0$. 

\noindent Indeed $(1)\Rightarrow(5)$ by $UN^{+}$ and plainly $(5)\Rightarrow(3)$.

\subsection{~}

For any $\overline{S}\in\mathbf{S}(P/U)$, there is a $\mathsf{U}$-equivariant
bijection between the set $\mathbf{S}(P,\overline{S})$ of all $S\in\mathbf{S}(G)$
with $Z_{G}(S)\subset P$ such that $P\rightarrow P/U$ induces an
isomorphism from $S$ to $\overline{S}$, and the set of all Levi
subgroups $L$ of $P$. It maps $S$ to the unique Levi subgroup $L_{S}$
containing $Z_{G}(S)$ and $L$ to the unique lift $S_{L}$ of $\overline{S}$
in $L\simeq P/U$, see lemma~\ref{lem:LeviAndAppart}. In particular,
$\mathbf{S}(P,\overline{S})$ is a $\mathsf{U}$-torsor.

\subsection{~\label{sub:TangentAtInftyInBuild}}

There is a structure of affine $\mathbf{F}(P/U)$-space on $\mathsf{U}\backslash\mathbf{X}(G)$,
\[
\mathbf{T}_{P}^{\infty}\mathbf{X}(G)=\left(\mathsf{U}\backslash\mathbf{X}(G),+,\mathbf{T}_{P}^{\infty}\mathbf{X}(-)\right).
\]
The $\mathsf{P}/\mathsf{U}$-equivariant apartment map is defined
by 
\begin{eqnarray*}
\mathbf{T}_{P}^{\infty}\mathbf{X}(\overline{S}) & = & \mathsf{U}\backslash\cup_{S\in\mathbf{S}(P,\overline{S})}\mathbf{X}(S)\\
 & = & \mbox{image of }\mathbf{X}(S)\mbox{ in }\mathsf{U}\backslash\mathbf{X}(G)\mbox{\,\ for any }S\in\mathbf{S}(P,\overline{S}).
\end{eqnarray*}
The $\mathsf{P}/\mathsf{U}$-equivariant pull map takes $x\in\mathsf{U}\backslash\mathbf{X}(G)$
and $\mathcal{H}\in\mathbf{F}(P/U)$ to
\[
x+\mathcal{H}=\mbox{image of }x_{L}+\mathcal{H}_{L}\mbox{ in }\mathsf{U}\backslash\mathbf{X}(G)
\]
where $L$ is any Levi subgroup of $P$: if $L'$ is another one,
there is a unique $u\in\mathsf{U}$ such that $\Int(u)(L)=L'$. Then
$x_{L'}=ux_{L}$, $\mathcal{H}_{L'}=u\mathcal{H}_{L}$, thus $x_{L'}+\mathcal{H}_{L'}=u(x_{L}+\mathcal{H}_{L})$
and $x_{L}+\mathcal{H}_{L}$ have the same image in $\mathsf{U}\backslash\mathbf{X}(G)$.
This defines an affine $\mathbf{F}(P/U)$-space: for any $\overline{S}\in\mathbf{S}(P/U)$,
$x\mapsto x_{L}$ and $\mathcal{H}\mapsto\mathcal{H}_{L}$ yield bijections
$\mathbf{T}_{P}^{\infty}\mathbf{X}(\overline{S})\rightarrow\mathbf{X}(S_{L})$
and $\mathbf{F}(\overline{S})\rightarrow\mathbf{F}(S_{L})$, thus
$+$ indeed induces a structure of affine $\mathbf{F}(\overline{S})$-space
on $\mathbf{T}_{P}^{\infty}\mathbf{X}(\overline{S})$.

\subsection{~\label{sub:defDistOnTangentSpaceAtInfty}}

There is a $\mathsf{P}/\mathsf{U}$-invariant distance on $\mathbf{T}_{P}^{\infty}\mathbf{X}(G)$,
given by the formulas:
\begin{eqnarray*}
d(x,y) & = & d(x_{L},y_{L})\\
 & = & \lim_{t\rightarrow\infty}d(x_{0}+t\mathcal{F},y_{0}+t\mathcal{F})\\
 & = & \inf\left\{ d(x',y'):(x',y')\in\mathbf{X}(G)^{2},\mbox{ }(x',y')\mapsto(x,y)\right\} 
\end{eqnarray*}
In the first formula, $L$ is any Levi subgroup of $P$. In the second
formula, $\mathcal{F}\in\mathbf{F}(G)$ belongs to the facet $F^{-1}(P)$
and $(x_{0},y_{0})\in\mathbf{X}(G)^{2}$ lifts $(x,y)$. The three
formulas agree: writing $d_{i}$ for the function defined by the $i$-th
formula, first note that $d_{2}$ is well-defined (by $NE$), independent
of the chosen lift (by $UN$), and not greater than $d(x_{0},y_{0})$
(by $NE$). Therefore $d_{2}(x,y)\leq d_{3}(x,y)\leq d_{1}(x,y)$.
But
\begin{eqnarray*}
d_{1}(x,y) & = & d\left(\lim_{t\rightarrow\infty}\left((x_{0}+t\mathcal{F})+t\mathcal{F}_{L}^{\iota}\right),\lim_{t\rightarrow\infty}\left((y_{0}+t\mathcal{F})+t\mathcal{F}_{L}^{\iota}\right)\right)\\
 & = & \lim_{t\rightarrow\infty}d\left(\left((x_{0}+t\mathcal{F})+t\mathcal{F}_{L}^{\iota}\right),\left((y_{0}+t\mathcal{F})+t\mathcal{F}_{L}^{\iota}\right)\right)\\
 & \leq & \lim_{t\rightarrow\infty}d\left(x_{0}+t\mathcal{F},y_{0}+t\mathcal{F}\right)=d_{2}(x,y)
\end{eqnarray*}
by $NE$, so that indeed $d=d_{1}=d_{2}=d_{3}$. It is obviously a
$\mathsf{P}/\mathsf{U}$-invariant distance, and it restricts to a
Euclidean norm on any apartment $\mathbf{T}_{P}^{\infty}\mathbf{X}(\overline{S})$.
The projection
\[
\Gr_{P}^{\infty}:\mathbf{X}(G)\twoheadrightarrow\mathbf{T}_{P}^{\infty}\mathbf{X}(G)
\]
is non-expanding and restricts to an isometry on $\mathbf{X}(L)$.\nomenclature[Gr_P^oo]{$\Gr_{P}^{\infty}$}{Projection $\mathbf{X}(G) \twoheadrightarrow \mathbf{T}_{P}^{\infty}\mathbf{X}(G)$, page \nomrefpage}

\subsection{~}

If $\mathbf{T}_{P}^{\infty}\mathbf{X}(G)$ satisfies $R(s)$ and $(\mathbf{X}(G),d)$
is a $CAT(0)$-metric space, then so is $(\mathbf{T}_{P}^{\infty}\mathbf{X}(G),d)$.
Indeed, it is a geodesic space by $R(s)$, so it remains to establish
the $CAT(0)$-inequality. Let thus $v,v_{0},v_{1}$ be three points
of $\mathbf{T}_{P}^{\infty}\mathbf{X}(G)$ and choose $\overline{S}\in\mathbf{S}(P/U)$
such that $v_{0},v_{1}$ belong to $\mathbf{T}_{P}^{\infty}\mathbf{X}(\overline{S})$.
Lift $\overline{S}$ to $S\in\mathbf{S}(P,\overline{S})$ and $v_{i}$
to $x_{i}\in\mathbf{X}(S)$, and lift $v$ to $x\in\mathbf{X}(G)$.
For $u\in[0,1]$, let $x_{u}=(1-u)x_{0}+ux_{1}$ be the point at distance
$ud(x_{0},x_{1})$ from $x_{0}$ on the segment $[x_{0},x_{1}]$ of
$\mathbf{X}(S)$. Fix $\mathcal{F}\in\mathbf{F}(S)$ with $P_{\mathcal{F}}=P$.
Then for every $t\geq0$, $x_{u,t}=x_{u}+t\mathcal{F}$ is the point
at distance $ud(x_{0,t},x_{1,t})$ from $x_{0,t}$ on the segment
$[x_{0,t},x_{1,t}]$ of $\mathbf{X}(S)$ and $v_{u}=\Gr_{P}^{\infty}(x_{u})$
is the point at distance $ud(v_{0},v_{1})$ from $v_{0}$ on the segment
$[v_{0},v_{1}]$ of $\mathbf{T}_{P}^{\infty}\mathbf{X}(\overline{S})$.
Set $x_{t}=x+t\mathcal{F}$. By the $CAT(0)$-inequality in $\mathbf{X}(G)$,
\[
d(x_{t},x_{u,t})^{2}\leq(1-u)\cdot d(x_{t},x_{0,t})^{2}+u\cdot d(x_{t},x_{1,t})^{2}-u(1-u)\cdot d(x_{0,t},x_{1,t})^{2}.
\]
Taking the limit as $t\rightarrow\infty$ gives
\[
d(v,v_{u})^{2}\leq(1-u)\cdot d(v,v_{0})^{2}+u\cdot d(v,v_{1})^{2}-u(1-u)\cdot d(v_{0},v_{1})^{2}
\]
which is the $CAT(0)$-inequality for $\mathbf{T}_{P}^{\infty}\mathbf{X}(G)$.
\begin{rem}
Suppose that for any $x,y\in\mathbf{X}(G)$ and $\mathcal{F}\in\mathbf{F}(G)$
there is an $S\in\mathbf{S}(G)$ such that $x+t\mathcal{F}$ and $y+t\mathcal{F}$
belong to $\mathbf{X}(S)$ for $t\gg0$. Then also $\mathcal{F}\in\mathbf{F}(S)$
and the axiom $R(s)$ holds for $\mathbf{T}_{P}^{\infty}\mathbf{X}(G)$.
If $\mathbf{X}(G)$ satisfies $UN^{+}$, this condition on $\mathbf{X}(G)$
is actually equivalent to the axiom $R(s)$ for $\mathbf{T}_{P}^{\infty}\mathbf{X}(G)$.
\end{rem}

\subsection{~}

We shall always equip $\mathbf{F}(P/U)$ with the scalar product,
distance, norm\ldots{} which are induced by the representation $\Gr_{\mathcal{F}}^{\bullet}(\tau)$
of $P/U$. Here $\mathcal{F}$ is any filtration in the facet $F^{-1}(P)$,
and we view $\Gr_{\mathcal{F}}^{\bullet}(\tau)=\oplus_{\gamma}\Gr_{\mathcal{F}}^{\gamma}(\tau)$
as a representation of $P/U$. If $L$ is a Levi subgroup of $P$
and $\mathcal{G}$ is the corresponding splitting of $\mathcal{F}$,
the restriction of $\tau$ to $L$ splits as $\tau\vert L=\oplus\tau_{\gamma}$
with $V(\tau_{\gamma})=\mathcal{G}_{\gamma}(\tau)$ and the isomorphism
$L\simeq P/U$ maps $\tau_{\gamma}$ to the representation $\Gr_{\mathcal{F}}^{\gamma}(\tau)$
of $P/U$, thus $\tau\vert L$ to $\Gr_{\mathcal{F}}^{\bullet}(\tau)$.
In particular, $\Gr_{\mathcal{F}}^{\bullet}(\tau)$ is indeed a faithful
representation of $P/U$ and its isomorphism class $\Gr_{P}^{\bullet}(\tau)$
does not depend upon $\mathcal{F}$. It follows from this conventions
that the isomorphism $\mathbf{F}(L)\simeq\mathbf{F}(P/U)$ is compatible
with the scalar products, distances, norms\ldots{} which are induced
on $\mathbf{F}(L)$ and $\mathbf{F}(P/U)$ by the chosen faithful
representation $\tau$ of $G$.

\subsection{~\label{sub:QuotientAndBuildMetricAgree}}

If $\mathbf{T}_{P}^{\infty}\mathbf{X}(G)$ satisfies $L(s)$, then
$d(x,x+\mathcal{H})=\left\Vert \mathcal{H}\right\Vert $ for $x\in\mathbf{T}_{P}^{\infty}\mathbf{X}(G)$
and $\mathcal{H}\in\mathbf{F}(P/U)$. Indeed, choose $\overline{S}\in\mathbf{S}(P/U)$
with $x\in\mathbf{T}_{P}^{\infty}\mathbf{X}(\overline{S})$ and $\mathcal{H}\in\mathbf{F}(\overline{S})$.
Then $x_{L}\in\mathbf{X}(S_{L})$ and $\mathcal{H}_{L}\in\mathbf{F}(S_{L})$,
thus also $x_{L}+\mathcal{H}_{L}\in\mathbf{X}(S_{L})$. In particular,
$(x+\mathcal{H})_{L}=x_{L}+\mathcal{H}_{L}$, thus $d(x,x+\mathcal{H})=d(x_{L},x_{L}+\mathcal{H}_{L})=\left\Vert \mathcal{H}_{L}\right\Vert =\left\Vert \mathcal{H}\right\Vert $:
if the affine $\mathbf{F}(P/U)$-space $\mathbf{T}_{P}^{\infty}\mathbf{X}(G)$
actually is an affine $\mathbf{F}(P/U)$-building, its ``quotient''
distance defined above agrees with its ``building'' distance defined
in section~\ref{sub:The-classical-distance}.

\subsection{~\label{sub:ComputingBusemanAtInfinity}}

Suppose again that $(\mathbf{X}(G),d)$ is a $CAT(0)$-space. The
Busemann scalar product of section~\ref{sub:BusemannFunctions} on
$\mathbf{X}(G)^{2}\times\mathbf{F}(G)$, namely 
\[
\left\langle \overrightarrow{xy},\mathcal{F}\right\rangle =\left\Vert \mathcal{F}\right\Vert \cdot\lim_{t\rightarrow\infty}\left(d(x,z+t\mathcal{F})-d(y,z+t\mathcal{F})\right)
\]
induces a function on $\mathbf{T}_{P}^{\infty}\mathbf{X}(G)^{2}\times F^{-1}(P)$.
Indeed for $u,v\in\mathsf{U}$ and $\mathcal{F}\in F^{-1}(P)$, {\small{
\begin{eqnarray*}
\lim_{t\rightarrow\infty}\left(d(ux,z+t\mathcal{F})-d(vy,z+t\mathcal{F})\right) & = & \lim_{t\rightarrow\infty}\left(d(x,u^{-1}z+t\mathcal{F})-d(y,v^{-1}z+t\mathcal{F})\right)\\
 & = & \lim_{t\rightarrow\infty}\left(d(x,z+t\mathcal{F})-d(y,z+t\mathcal{F})\right)
\end{eqnarray*}
}}by the triangle inequality and the axiom $UN$ for $\mathbf{X}(G)$.
If $\mathbf{T}_{P}^{\infty}\mathbf{X}(G)$ moreover satisfies $R(s)$,
the resulting function depends only on the image $\overline{\mathcal{F}}=\Gr_{P}\mathcal{F}$
of $\mathcal{F}$ in 
\[
\mathbf{G}(\overline{R}(P))=\mathbf{G}(Z(P/U))\subset\mathbf{F}(P/U).
\]
In fact, it is simply the corresponding Busemann scalar product
\[
\left\langle \overrightarrow{xy},\overline{\mathcal{F}}\right\rangle =\left\Vert \overline{\mathcal{F}}\right\Vert \cdot\lim_{t\rightarrow\infty}\left(d(x,z+t\overline{\mathcal{F}})-d(y,z+t\mathcal{\overline{F}})\right)
\]
on the CAT(0)-space $(\mathbf{T}_{P}^{\infty}\mathbf{X}(G),d)$. Indeed,
pick $\overline{S}\in\mathbf{S}(P/U)$ with $x,z\in\mathbf{T}_{P}^{\infty}\mathbf{X}(\overline{S})$.
Then $x_{L},z_{L}\in\mathbf{X}(S_{L})$, $\mathcal{F}$ equals $(\mathcal{\overline{F}})_{L}$
and belongs to $\mathbf{F}(S_{L})$, $z_{L}+t\mathcal{F}$ belongs
to $\mathbf{X}(S_{L})$, therefore $(z+t\overline{\mathcal{F}})_{L}=(z_{L}+t(\overline{\mathcal{F}})_{L})_{L}=(z_{L}+t\mathcal{F})_{L}=z_{L}+t\mathcal{F}$,
and finally
\[
d(x,z+t\overline{\mathcal{F}})=d(x_{L},z_{L}+t\mathcal{F})
\]
for all $t\geq0$, which proves our claim since also $\left\Vert \mathcal{F}\right\Vert =\left\Vert (\mathcal{\overline{F}})_{L}\right\Vert =\left\Vert \overline{\mathcal{F}}\right\Vert $.
However, $z\mapsto z+t\overline{\mathcal{F}}$ is now a Clifford translation
of $\mathbf{T}_{P}^{\infty}\mathbf{X}(G)$ \cite[II 6.14]{BrHa99}:
we have just seen that it corresponds to $z_{L}\mapsto z_{L}+t\mathcal{F}$
on the isometric space $\mathbf{X}(L)$, and the latter map is non-expanding
by $NE$ with non-expanding inverse $z_{L}\mapsto z_{L}+t\mathcal{F}_{L}^{\iota}$.
It is therefore an isometry, and a Clifford translation since $d(z_{L},z_{L}+t\mathcal{F})=t\left\Vert \mathcal{F}\right\Vert $
for all $z_{L}\in\mathbf{X}(L)$. If $\mathcal{F}\neq0$, it now follows
from~\cite[II 6.15]{BrHa99} that the comparison angle
\[
t\mapsto\measuredangle_{x}^{c}(y,x+t\overline{\mathcal{F}})
\]
is constant. Taking $z=x$ in the defining formula for $\left\langle \overrightarrow{xy},\overline{\mathcal{F}}\right\rangle $,
we thus find that
\[
\left\langle \overrightarrow{xy},\overline{\mathcal{F}}\right\rangle =d(x,y)\cdot\left\Vert \mathcal{F}\right\Vert \cdot\cos\left(\lim_{t\rightarrow\infty}\measuredangle_{x}^{c}(y,x+t\overline{\mathcal{F}})\right)
\]
equals
\[
\left\langle \overrightarrow{xy},\overline{\mathcal{F}}_{x}\right\rangle =d(x,y)\cdot\left\Vert \mathcal{F}\right\Vert \cdot\cos\left(\lim_{t\rightarrow0}\measuredangle_{x}^{c}(y,x+t\overline{\mathcal{F}})\right).
\]
Note that using~\cite[I.1.16]{BrHa99} again, this is also equal
to 
\[
\left\langle \loc_{x}^{a}(y),\overline{\mathcal{F}}_{x}\right\rangle =d(x,y)\cdot\left\Vert \mathcal{F}\right\Vert \cdot\cos\left(\lim_{t\rightarrow0}\measuredangle_{x}^{c}(y_{t},x+t\overline{\mathcal{F}})\right)
\]
where $y_{t}=ty+(1-t)x$ is the point at distance $td(x,y)$ from
$x$ on the segment $[x,y]$ of the $CAT(0)$-space $\mathbf{T}_{P}^{\infty}\mathbf{X}(G)$.
In any case, we obtain yet another series of formulas for the relevant
Busemann function on $\mathbf{T}_{P}^{\infty}\mathbf{X}(G)$ or $\mathbf{X}(G)$. 
\begin{rem}
If the affine $\mathbf{F}(P/U)$-space $\mathbf{T}_{P}^{\infty}\mathbf{X}(G)$
is an affine $\mathbf{F}(P/U)$-building, we may also directly apply
lemma~\ref{lem:AnglesAreEqualWhenGcentral} to $\mathcal{G}=\overline{\mathcal{F}}=\Gr_{P}(\mathcal{F})$,
thereby obtaining
\[
\left\langle \overrightarrow{xy},\mathcal{F}\right\rangle =\left\langle \loc_{\Gr_{P}^{\infty}(x)}^{a}\left(\Gr_{P}^{\infty}(y)\right),\loc_{\Gr_{P}^{\infty}(x)}\left(\Gr_{P}(\mathcal{F})\right)\right\rangle 
\]
where the second scalar product is in the tangent space $\mathbf{T}_{\Gr_{P}(x)}\left(\mathbf{T}_{P}^{\infty}\mathbf{X}(G)\right)$.
\end{rem}

\subsection{~}

If $\mathbf{X}(L)+\mathbf{F}(L)\subset\mathbf{X}(L)$, then $\mathbf{X}(L)$
inherits from $\mathbf{X}(G)$ a structure of affine $\mathbf{F}(L)$-space,
and the restriction of $\Gr_{P}^{\infty}:\mathbf{X}(G)\rightarrow\mathbf{T}_{P}^{\infty}\mathbf{X}(G)$
to $\mathbf{X}(L)$ is an isomorphism of affine $\mathbf{F}(L)$-spaces
-- viewing $\mathbf{T}_{P}^{\infty}\mathbf{X}(G)$ as an affine $\mathbf{F}(L)$-space
through $\mathbf{F}(L)\simeq\mathbf{F}(P/U)$. Most of the above discussion
then becomes much easier.

\section{Example: $\mathbf{F}(G)$ as a tight affine $\mathbf{F}(G)$-building}

\subsection{~}

Recall from example~\ref{Example:F(G)Itself} that $(\mathbf{F}(G),+,\mathbf{F}(-))$
is a discrete affine $\mathbf{F}(G)$-space with trivial type. Under
the identification $\mathbf{F}(G)=\mathbf{F}(\omega_{K}^{\circ})$,
the pull map may be computed as follows: for $\mathcal{F}_{1},\mathcal{F}_{2}\in\mathbf{F}(G)$,
$\rho\in\Rep^{\circ}(G)(K)$ and $\gamma\in\mathbb{R}$, 
\[
(\mathcal{F}_{1}+\mathcal{F}_{2})^{\gamma}(\rho)=\sum_{\gamma_{1}+\gamma_{2}=\gamma}\mathcal{F}_{1}^{\gamma_{1}}(\rho)\cap\mathcal{F}_{2}^{\gamma_{2}}(\rho).
\]
We have already mentioned that $\mathbf{F}(G)$ satisfies $L(s)=R(s)$
by~theorem~\ref{thm:Std=00003DAlloverField} and $L(i)=R(i)$ by
corollary~\ref{cor:AxiomeR(i)4F(G)}. Actually for $\mathcal{F},\mathcal{G}\in\mathbf{F}(G)$,
choosing $S$ in $\mathbf{S}(G)$ with $\mathcal{F},\mathcal{G}\in\mathbf{F}(S)$,
we find using~\ref{sub:descFacet} that $\mathcal{F}+\eta\mathcal{G}$
belongs to a fixed closed chamber of $\mathbf{F}(S)$ for all sufficiently
small $\eta\geq0$, from which the stronger axiom $L(s)^{+}$ easily
follows. We have also seen in example~\ref{Example:F(G)tight} that
$\mathbf{F}(G)$ satisfies the axiom $ST$. It thus satisfies $UN^{+}$
and $UN$ by lemma~\ref{lem:RelationBetweenAxiomsST}, and it is
therefore a (tight) affine $\mathbf{F}(G)$-building by proposition~\ref{prop:L(s)+implBuild}.
It also trivially satisfies the axiom $HA$, because every apartment
contains the origin $0\in\mathbf{F}(G)$. The latter is fixed by $\mathsf{G}$,
and it follows from lemma~\ref{lem:UnicityTight} that $\mathbf{F}(G)$
is, up to isomorphism, the unique affine $\mathbf{F}(G)$-building
with a point fixed by $\mathsf{G}$. Indeed, any such building has
trivial type and satisfies $ST_{2}^{-}$, thus also $ST^{-}$ by lemma~\ref{lem:RelationBetweenAxiomsST}.

\subsection{~}

The retractions of corollary~\ref{cor:Ret} and proposition~\ref{prop:distretrac}
agree, and so do the decompositions of~sections~\ref{sub:Isogenies}
and \ref{sub:decer4affbuil} (with base point $0\in\mathbf{F}(G)$).

\subsection{~}

The distance $d=d_{\tau}$ of section~\ref{sub:FiloverFieldsisTitsBuild}
is equal to the corresponding distance on the affine $\mathbf{F}(G)$-building
$\mathbf{F}(G)$ defined in section~\ref{sub:The-classical-distance}.
For $\mathcal{F},\mathcal{G}\in\mathbf{F}(G)$ and $t\in[0,1]$, the
unique point at distance $td(\mathcal{F},\mathcal{G})$ from $\mathcal{F}$
on the segment $[\mathcal{F},\mathcal{G}]$ in the CAT(0)-space $(\mathbf{F}(G),d)$
is equal to the sum $t\mathcal{G}+(1-t)\mathcal{F}$, as defined above,
of the rescaled filtrations $t\mathcal{G}$ and $(1-t)\mathcal{F}$
of $\mathbf{F}(G)$: this is obvious in any apartment.

\subsection{~\label{sub:Gr_PforF(G)}}

For a parabolic subgroup $P=U\rtimes L$ of $G$ with unipotent radical
$U$ and Levi subgroup $L$, there is a commutative diagram
\[
\xyR{2pc}\xyC{2pc}\xymatrix{ & \mathbf{F}(L)\ar@<1ex>@{^{(}->}[d]^{\iota_{L,G}}\ar@(r,u)[ddr]^{\simeq}\ar@(l,u)[ddl]_{\simeq}^{\iota_{L}}\\
 & \mathbf{F}(G)\ar@<1ex>@{->>}[u]^{r_{P,L}}\ar@{->>}[dl]_{\Gr_{P}}\ar@{->>}[dr]^{\Gr_{P}^{\infty}}\\
\mathbf{F}(P/U)\ar[rr]^{\simeq} &  & \mathbf{T}_{P}^{\infty}\mathbf{F}(G)
}
\]
where $\iota_{L}:\mathbf{F}(L)\simeq\mathbf{F}(P/U)$ and $\iota_{L,G}:\mathbf{F}(L)\hookrightarrow\mathbf{F}(G)$
are the $\mathsf{L}$-equivariant maps functorially induced by the
isomorphism $L\simeq P/U$ and the embedding $L\hookrightarrow G$,
$r_{P,L}:\mathbf{F}(G)\twoheadrightarrow\mathbf{F}(L)$ is the $\mathsf{U}$-invariant
$\mathsf{L}$-equivariant retraction of corollary~\ref{cor:Ret},
$\Gr_{P}$ is the $\mathsf{P}$-equivariant morphism of section~\ref{sub:defofGrMap}
(which is defined on the whole of $\mathbf{F}(G)$ by theorem~\ref{thm:Std=00003DAlloverField})\nomenclature[Gr_P]{$\Gr _P$}{$\mathsf{P}$-equivariant map $\mathbf{F}(G) \twoheadrightarrow \mathbf{F}(P/U)$, page \nomrefpage}
and $\Gr_{P}^{\infty}:\mathbf{F}(G)\twoheadrightarrow\mathbf{T}_{P}^{\infty}\mathbf{F}(G)$
is the $\mathsf{P}$-equivariant quotient map onto $\mathbf{T}_{P}^{\infty}\mathbf{F}(G)=\mathsf{U}\backslash\mathbf{F}(G)$.
Since $\Gr_{P}$ is $\mathsf{P}$-equivariant (thus $\mathsf{U}$-invariant)
and $\Gr_{P}\circ\iota_{L,G}=\iota_{L}$, also $\Gr_{P}=\iota_{L}\circ r_{P,L}$.
The right hand side triangles are plainly commutative, and this implies
the existence of the bottom map bijection. One checks easily that
it is an isomorphism of affine $\mathbf{F}(P/U)$-spaces. In particular:
$\mathbf{T}_{P}^{\infty}\mathbf{F}(G)$ is an affine $\mathbf{F}(P/U)$-building,
its ``quotient'' and ``building'' metric agree by~\ref{sub:QuotientAndBuildMetricAgree},
thus $\mathbf{F}(P/U)\rightarrow\mathbf{T}_{P}^{\infty}\mathbf{F}(G)$
is an isometry while $\Gr_{P}:\mathbf{F}(G)\twoheadrightarrow\mathbf{F}(P/U)$
is non-expanding. This gives the following formula: for any $\mathcal{F},\mathcal{G}_{1},\mathcal{G}_{2}\in\mathbf{F}(G)$,
\[
\lim_{t\rightarrow\infty}{\textstyle d\left(\mathcal{G}_{1}+t\mathcal{F},\mathcal{G}_{2}+t\mathcal{F}\right)=d\left(\Gr_{\mathcal{F}}(\mathcal{G}_{1}),\Gr_{\mathcal{F}}(\mathcal{G}_{2})\right)}\leq d(\mathcal{G}_{1},\mathcal{G}_{2})
\]
where $\Gr_{\mathcal{F}}=\Gr_{P_{\mathcal{F}}}:\mathbf{F}(G)\twoheadrightarrow\mathbf{F}(P_{\mathcal{F}}/U_{\mathcal{F}})$.
Also, 
\begin{eqnarray*}
\left\langle \Gr_{P}(\mathcal{G}_{1}),\Gr_{P}(\mathcal{G}_{2})\right\rangle  & \geq & \left\langle \mathcal{G}_{1},\mathcal{G}_{2}\right\rangle \\
\measuredangle\left(\Gr_{P}(\mathcal{G}_{1}),\Gr_{P}(\mathcal{G}_{2})\right) & \leq & \measuredangle\left(\mathcal{G}_{1},\mathcal{G}_{2}\right)
\end{eqnarray*}
since $\Gr_{P}$ contracts the distances and preserves the norms. 
\begin{rem}
Here is a more direct proof of the fact that $\Gr_{P}$ is non-expanding:
starting with $\mathcal{G}_{1},\mathcal{G}_{2}\in\mathbf{F}(G)$,
cut the segment $[\mathcal{G}_{1},\mathcal{G}_{2}]$ along its facet
decomposition, going from $\mathcal{H}_{0}=\mathcal{G}_{1}$ to $\mathcal{H}_{n}=\mathcal{G}_{2}$
with $F$ constant on $]\mathcal{H}_{i-1},\mathcal{H}_{i}[$. Then
observe that $\Gr_{P}$ restrict to an isometry on any facet $F^{-1}(Q)$,
$Q\in\mathbf{P}(G)$: it restricts to an isometry on any apartment
containing $F^{-1}(P)$, and there is at least one such apartment
which also contains $F^{-1}(Q)$ (along with its closure). Thus 
\begin{eqnarray*}
d\left(\mathcal{G}_{1},\mathcal{G}_{2}\right) & = & {\textstyle \sum}_{i=1}^{n}d\left(\mathcal{H}_{i-1},\mathcal{H}_{i}\right)\\
 & = & {\textstyle \sum}_{i=1}^{n}d\left(\Gr_{P}(\mathcal{H}_{i-1}),\Gr_{P}(\mathcal{H}_{i})\right)\\
 & \geq & d\left(\Gr_{P}(\mathcal{G}_{1}),\Gr_{P}(\mathcal{G}_{2})\right)
\end{eqnarray*}
by the triangle inequality in $\mathbf{F}(P/U)$. One can also probably
establish the inequalities using the explicit formulas for the scalar
products, but this involves playing around with three filtrations.
In any case, these approaches do not yield an exact formula relating
the distances on $\mathbf{F}(P/U)$ and $\mathbf{F}(G)$. 
\end{rem}

\subsection{~}

For $\mathcal{F},\mathcal{G}\in\mathbf{F}(G)$, choose $S\in\mathbf{S}(G)$
with $\mathcal{F},\mathcal{G}\in\mathbf{F}(S)$. Then by~\ref{sub:descFacet},
there is a facet $F\subset\mathbf{F}(S)$ of $\mathbf{F}(G)$ such
that $\mathcal{F}+t\mathcal{G}$ belongs to $F$ for every sufficiently
small $t>0$. If $P\supset Z_{G}(S)$ is the corresponding parabolic
subgroup, then $P\subset P_{\mathcal{F}}$ since $\mathcal{F}$ belongs
to the closure of $F$, thus also $U_{\mathcal{F}}\subset U$ where
$U$ is the unipotent radical of $P$. In particular, $\mathcal{F}+tu\mathcal{G}=u(\mathcal{F}+t\mathcal{G})=\mathcal{F}+t\mathcal{G}$
for every $u\in\mathsf{U}_{\mathcal{F}}$, thus 
\[
\loc_{\mathcal{F}}:\mathbf{F}(G)\twoheadrightarrow\mathbf{T}_{\mathcal{F}}\mathbf{F}(G)\qquad\mathcal{G}\mapsto\mbox{germ of }(t\mapsto\mathcal{F}+t\mathcal{G})
\]
is $\mathsf{U}_{\mathcal{F}}$-invariant. Since $\Gr_{\mathcal{F}}$
induces a bijection $\mathsf{U}_{\mathcal{F}}\backslash\mathbf{F}(G)\simeq\mathbf{F}(P_{\mathcal{F}}/U_{\mathcal{F}})$,
it follows that there is a canonical $\mathsf{P}_{\mathcal{F}}$-equivariant
commutative diagram
\[
\xyC{2pc}\xymatrix{ & \mathbf{F}(G)\ar@{->>}[dl]_{\Gr_{\mathcal{F}}}\ar@{->>}[dr]^{\loc_{\mathcal{F}}}\\
\mathbf{F}(P_{\mathcal{F}}/U_{\mathcal{F}})\ar[rr]^{\varphi} &  & \mathbf{T}_{\mathcal{F}}\mathbf{F}(G)
}
\]
For $S\in\mathbf{S}(G)$ with $\mathcal{F}\in\mathbf{F}(S)$, it restrict
to a commutative diagram of isometries
\[
\xymatrix{ & (\mathbf{F}(S),d)\ar[dl]_{\simeq}\ar[dr]^{\simeq}\\
(\mathbf{F}(\overline{S}),d)\ar[rr]^{\simeq} &  & (\mathbf{T}_{\mathcal{F}}\mathbf{F}(S),d)
}
\]
where $\overline{S}$ is the image of $S$ in $P_{\mathcal{F}}/U_{\mathcal{F}}$.
Since any two elements $x,y\in\mathbf{F}(P_{\mathcal{F}}/U_{\mathcal{F}})$
are contained in one such $\mathbf{F}(\overline{S})$, it follows
that $\varphi:\mathbf{F}(P_{\mathcal{F}}/U_{\mathcal{F}})\rightarrow\mathbf{T}_{\mathcal{F}}\mathbf{F}(G)$
is an isometry. It is therefore also compatible with the relevant
norms, angles and scalar products. This gives the following explicit
formulas: 
\[
d\left(\Gr_{\mathcal{F}}(\mathcal{G}_{1}),\Gr_{\mathcal{F}}(\mathcal{G}_{2})\right)=\lim_{t\rightarrow0}{\textstyle \frac{1}{t}}{\textstyle d(\mathcal{G}_{1}+t\mathcal{F},\mathcal{G}_{2}+t\mathcal{F})}\leq d(\mathcal{G}_{1},\mathcal{G}_{2})
\]
and 
\[
\lim_{t\rightarrow0}{\textstyle \frac{1}{t}\left(d(\mathcal{F}+\mathcal{G}_{1},\mathcal{F})-d(\mathcal{F}+\mathcal{G}_{1},\mathcal{F}+t\mathcal{G}_{2})\right)}=\left\langle \Gr_{\mathcal{F}}(\mathcal{G}_{1}),\Gr_{\mathcal{F}}(\mathcal{G}_{2})\right\rangle 
\]
for every $\mathcal{G}_{1},\mathcal{G}_{2}\in\mathbf{F}(G)$, with
$\left\Vert \Gr_{\mathcal{F}}(\mathcal{G})\right\Vert =\left\Vert \mathcal{G}\right\Vert $
and 
\[
\left\langle \Gr_{\mathcal{F}}(\mathcal{G}_{1}),\Gr_{\mathcal{F}}(\mathcal{G}_{2})\right\rangle ={\textstyle \sum_{\gamma}}\left\langle \Gr_{\mathcal{F}}^{\gamma}(\mathcal{G}_{1},\tau),\Gr_{\mathcal{F}}^{\gamma}(\mathcal{G}_{2},\tau)\right\rangle 
\]
Also: $\loc_{\mathcal{F}}(\mathcal{G}_{1})=\loc_{\mathcal{F}}(\mathcal{G}_{2})$
if and only if $\mathsf{U}_{\mathcal{F}}\cdot\mathcal{G}_{1}=\mathsf{U}_{\mathcal{F}}\cdot\mathcal{G}_{2}$.
\begin{rem}
The previous results yield a $\mathsf{P}_{\mathcal{F}}$-equivariant
isometry between the tangent space $\mathbf{T}_{\mathcal{F}}\mathbf{F}(G)$
at $\mathcal{F}$ viewed as a point in the affine building and the
``tangent space'' $\mathbf{T}_{\mathcal{F}}^{\infty}\mathbf{F}(G)$
at $\mathcal{F}$ viewed as a boundary point. This reflects the homogeneity
of the vectorial Tits building $\mathbf{F}(G)$: our isometry is induced
by 
\[
(t\,\mbox{small})\quad\mathcal{F}+t\mathcal{G}\mapsto t^{-1}(\mathcal{F}+t\mathcal{G})=\mathcal{G}+t^{-1}\mathcal{F}\quad(t^{-1}\,\mbox{large})
\]

\end{rem}

\subsection{~}

There is also the localization map $\loc_{\mathcal{F}}^{a}:\mathbf{F}(G)\rightarrow\mathbf{T}_{\mathcal{F}}\mathbf{F}(G)$,
which sends $\mathcal{G}$ to $\loc_{\mathcal{F}}^{a}(\mathcal{G})=\loc_{\mathcal{F}}(\mathcal{H})$
if $\mathcal{G}=\mathcal{F}+\mathcal{H}$. Define $\Gr_{\mathcal{F}}^{a}=\varphi^{-1}\circ\loc_{\mathcal{F}}^{a}$,
so that 
\[
\xymatrix{ & \mathbf{F}(G)\ar@{->>}[dl]_{\Gr_{\mathcal{F}}^{a}}\ar@{->>}[dr]^{\loc_{\mathcal{F}}^{a}}\\
\mathbf{F}(P_{\mathcal{F}}/U_{\mathcal{F}})\ar[rr]^{\varphi} &  & \mathbf{T}_{\mathcal{F}}\mathbf{F}(G)
}
\]
One checks easily in $\mathbf{F}(S)\ni\mathcal{F},\mathcal{G}$ that
$\Gr_{\mathcal{F}}^{a}(\mathcal{G})+\overline{\mathcal{F}}=\Gr_{\mathcal{F}}(\mathcal{G})$,
where 
\[
\overline{\mathcal{F}}=\Gr_{\mathcal{F}}(\mathcal{F})\in\mathbf{G}(Z(P_{\mathcal{F}}/U_{\mathcal{F}}))=\Aut\left(\mathbf{F}(P_{\mathcal{F}}/U_{\mathcal{F}})\right)
\]
is the automorphism determined by $\mathcal{F}$ (see~\ref{sub:DefFondDiag},
\ref{sub:DomOrder} and \ref{sub:AutoOfBuild}). Thus
\begin{eqnarray*}
\left\langle \Gr_{\mathcal{F}}^{a}(\mathcal{G}),\Gr_{\mathcal{F}}(\mathcal{H})\right\rangle  & = & \left\langle \Gr_{\mathcal{F}}(\mathcal{G}),\Gr_{\mathcal{F}}(\mathcal{H})\right\rangle -\left\langle \overline{\mathcal{F}},\Gr_{\mathcal{F}}(\mathcal{H})\right\rangle \\
 & = & {\textstyle \sum_{\gamma}}\left\langle \Gr_{\mathcal{F}}^{\gamma}(\mathcal{G},\tau),\Gr_{\mathcal{F}}^{\gamma}(\mathcal{H},\tau)\right\rangle -\gamma\deg(\Gr_{\mathcal{F}}^{\gamma}(\mathcal{H},\tau))
\end{eqnarray*}
This gives an explicit formula for 
\[
\lim_{t\rightarrow0}{\textstyle \frac{1}{t}\left(d(\mathcal{G},\mathcal{F})-d(\mathcal{G},\mathcal{F}+t\mathcal{H})\right)}=\left\langle \Gr_{\mathcal{F}}^{a}(\mathcal{G}),\Gr_{\mathcal{F}}(\mathcal{H})\right\rangle .
\]

\subsection{~}

It is finally very easy to compute the Busemann functions:
\[
b_{0,\mathcal{G}}(\mathcal{F})=\lim_{t\rightarrow\infty}\left(d(\mathcal{F},t\mathcal{G})-t\left\Vert \mathcal{G}\right\Vert \right)=-\left\Vert \mathcal{F}\right\Vert \cos\measuredangle(\mathcal{F},\mathcal{G}).
\]
It follows that for any $\mathcal{F},\mathcal{G},\mathcal{H}\in\mathbf{F}(G)$,
\[
\left\langle \overrightarrow{\mathcal{F}\mathcal{G}},\mathcal{H}\right\rangle =\left\langle \mathcal{G},\mathcal{H}\right\rangle -\left\langle \mathcal{F},\mathcal{H}\right\rangle .
\]
Thus if $C$ is a closed convex subset of $\mathbf{F}(G)$, $\mathcal{F}\in C$
is the convex projection of some $\mathcal{G}\in\mathbf{F}(G)$ and
$\mathcal{H}\in\mathbf{F}(G)$ satisfies $\mathcal{F}+t\mathcal{H}\in C$
for all sufficiently $t>0$, then 
\[
\left\langle \mathcal{G},\mathcal{H}\right\rangle \leq\left\langle \mathcal{F},\mathcal{H}\right\rangle .
\]

\section{Example: a symmetric space\label{sub:ExampSymSpac}}

Let $K=\mathbb{R}$ and $G=GL(V)$, where $V$ is an $\mathbb{R}$-vector
space of dimension $n\in\mathbb{N}$.

\subsection{~}

By corollary~\ref{cor:GL(V)case}, the tautological representation
$V$ of $G$ identifies $\mathbf{G}(G)$ and $\mathbf{F}(G)$ with
the sets $\mathbf{G}(V)$ and $\mathbf{F}(V)$ of all $\mathbb{R}$-graduations
and $\mathbb{R}$-filtrations on $V$. Similarly, the action of $\mathsf{G}$
on the set $\mathbb{P}^{1}(V)$ of $\mathbb{R}$-lines in $V$ identifies
$\mathbf{S}(G)$ with 
\[
\mathbf{S}(V)=\left\{ \mathcal{S}\subset\mathbb{P}^{1}(V):V=\oplus_{L\in\mathcal{S}}L\right\} .
\]
We denote by $\mathbf{F}(\mathcal{S})$ the apartment of $\mathbf{F}(V)$
corresponding to $\mathcal{S}\in\mathbf{S}(V)$. An $\mathbb{R}$-filtration
$\mathcal{F}$ on $V$ thus belongs to  $\mathbf{F}(\mathcal{S})$
if and only if 
\[
\forall\gamma\in\mathbb{R}:\qquad\mathcal{F}^{\gamma}=\oplus_{L\in\mathcal{S},\,\mathcal{F}^{\sharp}(L)\geq\gamma}L\quad\mbox{where}\quad\mathcal{F}^{\sharp}(L)=\sup\{\lambda:L\subset\mathcal{F}^{\lambda}\}.
\]
We also identify $\mathbf{C}(G)$ with $\mathbb{R}_{\leq}^{n}=\{\gamma_{1}\leq\cdots\leq\gamma_{n}:\gamma_{i}\in\mathbb{R}\}$
by the map which sends $t(\mathcal{F})$ to $\underline{t}(\mathcal{F})=(t_{i}(\mathcal{F}))_{i=1}^{n}$,
with $\sharp\{i:t_{i}(\mathcal{F})=\gamma\}=\dim_{\mathbb{R}}\Gr_{\mathcal{F}}^{\gamma}(V)$
for $\gamma\in\mathbb{R}$. The dominance order on $\mathbf{C}(G)$
defined in section~\ref{sub:DomOrder} corresponds to 
\[
(\gamma_{i})_{i=1}^{n}\leq(\gamma'_{i})_{i=1}^{n}\iff\begin{cases}
\sum_{j=1}^{n}\gamma_{j}=\sum_{j=1}^{n}\gamma'_{j} & \mbox{and}\\
\sum_{j=i}^{n}\gamma_{j}\leq\sum_{j=i}^{n}\gamma'_{j} & \mbox{for }2\leq i\leq n.
\end{cases}
\]
The length $\left\Vert -\right\Vert :\mathbf{C}(G)\rightarrow\mathbb{R}_{+}$
attached to the tautological faithful representation $V$ of $G$
in~\ref{sub:constscalangdist} corresponds to the function $\left\Vert -\right\Vert :\mathbb{R}_{\leq}^{n}\rightarrow\mathbb{R}_{+}$
given by 
\[
\left\Vert \gamma_{1}\leq\cdots\leq\gamma_{n}\right\Vert =\sqrt{\gamma_{1}^{2}+\cdots+\gamma_{n}^{2}}.
\]

\subsection{~}

The exponential $\exp:\mathbb{R}\rightarrow\mathbb{R}^{\times}$ defines
an $\mathbb{R}$-valued section $\exp$ of the multiplicative group
$\mathbb{D}(\mathbb{R})$, whose evaluation at the character $\gamma\in\mathbb{R}$
is given by 
\[
\gamma(\exp)=\exp(\gamma)\in\mathbb{R}^{\times}.
\]
For $\mathcal{G}\in\mathbf{G}(V)$, we denote by $\mathcal{G}^{\flat}$
the endomorphism of $V$ which acts by $\gamma\in\mathbb{R}$ on the
direct summand $\mathcal{G}_{\gamma}$ of $V$. Viewing $\mathcal{G}$
as an $\mathbb{R}$-morphism $\mathbb{D}(\mathbb{R})\rightarrow G$,
we thus have $\exp(\mathcal{G}^{\flat})=\mathcal{G}(\exp)$ in $\mathsf{G}=G(\mathbb{R})$.
The maps $\mathcal{G}\mapsto\mathcal{G}^{\flat}\mapsto\exp(\mathcal{G}^{\flat})$
yield $\mathsf{G}$-equivariant bijections between $\mathbf{G}(V)$,
the set of diagonalizable endomorphisms of $V$ and the set of diagonalizable
elements of $\mathsf{G}$ with positive eigenvalues.

\subsection{~}

Let $\mathbf{B}(V)$ be the space of all Euclidean norms $\alpha$
on $V$, i.e.~functions 
\[
\alpha:V\rightarrow\mathbb{R}_{+}
\]
whose square $\alpha^{2}$ is a positive definite quadratic form on
$V$ -- thus $\mathbf{B}(V)$ may also be viewed as the space of all
scalar products on $V$, or as the space of all ellipsoids in $V$.
The group $\mathsf{G}$ acts transitively on $\mathbf{B}(V)$ by $(g\cdot\alpha)(v)=\alpha(g^{-1}v)$
and the stabilizer of $\alpha$ is the orthogonal group $\mathsf{O}(\alpha)\subset\mathsf{G}$,
thus $\mathbf{B}(V)\simeq\mathsf{G}/\mathsf{O}(\alpha)$ is a smooth
variety. We denote by $\mathbf{G}(V,\alpha)\subset\mathbf{G}(V)$
the set of all $\alpha$-orthogonal $\mathbb{R}$-graduations on $V$,
by $\mathbf{Sym}(V,\alpha)=\mathbf{G}(V,\alpha)^{\flat}$ the set
of all $\alpha$-symmetric endomorphisms of $V$, and by $\mathsf{G}(\alpha)=\exp(\mathbf{Sym}(V,\alpha))$
the set of all $\alpha$-symmetric automorphisms of $V$ with positive
eigenvalues. Thus $\mathbf{Sym}(V,\alpha)$ is the tangent space of
$\mathbf{B}(V)$ at $\alpha$ and the polar decomposition in $\mathsf{G}$
yields $\mathsf{G}=\mathsf{G}(\alpha)\cdot\mathsf{O}(\alpha)$. For
$\mathcal{F}\in\mathbf{F}(V)$ and $\gamma\in\mathbb{R}$, we denote
by $\mathcal{G}_{\alpha}(\mathcal{F})_{\gamma}$ the $\alpha$-orthogonal
complement of $\mathcal{F}_{+}^{\gamma}$ in $\mathcal{F}^{\gamma}$.
Then $\mathcal{G}_{\alpha}(\mathcal{F})$ is the unique splitting
of $\mathcal{F}$ in $\mathbf{G}(V,\alpha)$ and $\mathcal{G}_{\alpha}:\mathbf{F}(V)\rightarrow\mathbf{G}(V,\alpha)$
is an $\mathsf{O}(\alpha)$-equivariant section of $\Fi:\mathbf{G}(V)\twoheadrightarrow\mathbf{F}(V)$.
We obtain a sequence of $\mathsf{O}(\alpha)$-equivariant bijections
\[
\xymatrix{\mathbf{F}(V)\ar[r]\sp(0.45){\mathcal{G}_{\alpha}} & \mathbf{G}(V,\alpha)\ar[r]\sp(0.44){\flat} & \mathbf{Sym}(V,\alpha)\ar[r]\sp(0.58){\exp} & \mathsf{G}(\alpha)\ar[r]\sp(0.45){-\cdot\alpha} & \mathbf{B}(V).}
\]
We set $g_{\alpha}(\mathcal{F})=\exp(\mathcal{G}_{\alpha}(\mathcal{F})^{\flat})=\mathcal{G}_{\alpha}(\mathcal{F})(\exp)\in\mathsf{G}(\alpha)$
and define 
\[
\alpha+\mathcal{F}=g_{\alpha}(\mathcal{F})\cdot\alpha\quad\mbox{in}\quad\mathbf{B}(V).
\]
Thus for any $\alpha\in\mathbf{B}(V)$, $\mathcal{F}\in\mathbf{F}(G)$
and $v\in V$,
\[
(\alpha+\mathcal{F})(v)=\alpha\left({\textstyle \sum}_{\gamma}e^{-\gamma}v_{\gamma}\right):\quad v={\textstyle \sum_{\gamma}}v_{\gamma},\quad v_{\gamma}\in\mathcal{G}_{\alpha}(\mathcal{F})_{\gamma}.
\]
For $\mathcal{S}\in\mathbf{S}(V)$, we denote by $\mathbf{B}(\mathcal{S})$
the set of $\alpha$'s in $\mathbf{B}(V)$ for which $V=\oplus_{L\in\mathcal{S}}L$
is an orthogonal decomposition. Thus for $\alpha\in\mathbf{B}(\mathcal{S})$
and $\mathcal{F}\in\mathbf{F}(\mathcal{S})$, we find that 
\[
(\alpha+\mathcal{F})^{2}(v)={\textstyle \sum_{L\in\mathcal{S}}}(\alpha+\mathcal{F})^{2}(v_{L})={\textstyle \sum_{L\in\mathcal{S}}}(e^{-\mathcal{F}^{\sharp}(L)}\alpha)^{2}(v_{L})
\]
where $v=\sum_{L\in\mathcal{S}}v_{L}$ with $v_{L}\in L$, therefore
also $\alpha+\mathcal{F}\in\mathbf{B}(\mathcal{S})$.

\subsection{~}

The above formulas show that $\mathbf{B}(V)=\left(\mathbf{B}(V),\mathbf{B}(-),+\right)$
is an affine $\mathbf{F}(V)$-space. It is well-known that it satisfies
$R(s)$, and $L(s)$ follows from the existence of the $\alpha$-orthogonal
splittings. Moreover for any $\alpha\in\mathbf{B}(V)$, the pull map
\[
\mathbf{F}(V)\rightarrow\mathbf{B}(V),\qquad\mathcal{F}\mapsto\alpha+\mathcal{F}
\]
is an $\mathsf{O}(\alpha)$-equivariant bijection. The Fischer-Courant
theory tells us that the orbits of the diagonal action of $\mathsf{G}$
on $\mathbf{B}(V)\times\mathbf{B}(V)$ are classified by a $\mathsf{G}$-equivariant
map
\[
\underline{\mathbf{d}}:\mathbf{B}(V)\times\mathbf{B}(V)\rightarrow\mathbb{R}_{\leq}^{n}
\]
whose $i$-th component $\mathbf{d}_{i}:\mathbf{B}(V)\times\mathbf{B}(V)\rightarrow\mathbb{R}$
is given by
\[
\mathbf{d}_{i}(\alpha,\beta)=-\log\left(\max\left\{ \min\left\{ \frac{\beta(x)}{\alpha(x)}:x\in W\setminus\{0\}\right\} :W\subset V,\,\dim_{\mathbb{R}}W=i\right\} \right).
\]
Suppose that $\alpha,\beta\in\mathbf{B}(\mathcal{S})\cap\mathbf{B}(\mathcal{S}')$
and choose $\mathbb{R}$-basis $e=(e_{i})_{i=1}^{n}$ and $e'=(e'_{i})_{i=1}^{n}$
of $V$ such that $\mathcal{S}=\{\mathbb{R}e_{i}:i=1,\cdots,n\}$,
$\mathcal{S}'=\{\mathbb{R}e'_{i}:i=1,\cdots,n\}$, $e$ and $e'$
are orthonormal for $\alpha$, and $\beta(e_{1})\geq\cdots\geq\beta(e_{n})$,
$\beta(e'_{1})\geq\cdots\geq\beta(e'_{n})$. Then necessarily 
\[
\forall i\in\{1,\ldots,n\}:\qquad\beta(e_{i})=\exp\left(-\mathbf{d}_{i}(\alpha,\beta)\right)=\beta(e'_{i})
\]
The element $g\in\mathsf{G}$ mapping $e$ to $e'$ satisfies $g\mathcal{S}=\mathcal{S}'$,
$g\alpha=\alpha$ and $g\beta=\beta$, which proves $R(i)$. The resulting
vectorial distance $\mathbf{d}$ equals $\underline{\mathbf{d}}$
under the identification $\mathbf{C}(G)\simeq\mathbb{R}_{\leq}^{n}$,
i.e.~for every $\alpha\in\mathbf{B}(V)$ and $\mathcal{F}\in\mathbf{F}(V)$,
$\underline{\mathbf{d}}(\alpha,\alpha+\mathcal{F})=\underline{t}(\mathcal{F}).$
Indeed for any $X\in\mathbf{Sym}(V,\alpha)$ and $\beta=\exp(X)\cdot\alpha$,
we have 
\[
\underline{\mathbf{d}}(\alpha,\beta)=(\gamma_{1},\cdots,\gamma_{n})\quad\mbox{in}\quad\mathbb{R}_{\leq}^{n}
\]
where $\gamma_{1}\leq\cdots\leq\gamma_{n}$ are the eigenvalues of
$X$ counted with multiplicities.

\subsection{~}

Define $\mathbf{d}^{i}(\alpha,\beta)=\sum_{j=0}^{i-1}\mathbf{d}_{n-j}(\alpha,\beta)$,
so that
\begin{eqnarray*}
\mathbf{d}^{i}(\alpha,\beta) & = & \max\left\{ \mathbf{d}^{i}(\alpha\vert W,\beta\vert W):W\subset V,\,\dim_{\mathbb{R}}W=i\right\} \\
 & = & \log\max\left\{ \frac{\Lambda^{i}(\alpha)(v)}{\Lambda^{i}(\beta)(v)}:v\in\Lambda^{i}(V)\setminus\{0\}\right\} 
\end{eqnarray*}
where $\Lambda^{i}(\alpha)$ is the Euclidean norm on $\Lambda^{i}(V)$
induced by $\alpha$. We have 
\[
\mathbf{d}^{n}(\alpha,\beta)=\log\left(\frac{\int_{\beta(v)\leq1}dv}{\int_{\alpha(v)\leq1}dv}\right)
\]
for any Borel measure $dv$ on $V$, therefore 
\begin{eqnarray*}
\mathbf{d}^{n}(\alpha,\gamma) & = & \mathbf{d}^{n}(\alpha,\beta)+\mathbf{d}^{n}(\beta,\gamma),\\
\mathbf{d}^{n}(\alpha,g\alpha) & = & \log\left|\det(g)\right|,\\
\mathbf{d}^{n}(\alpha,\alpha+\mathcal{F}) & = & {\textstyle \sum}_{\gamma}\gamma\dim_{\mathbb{R}}\Gr_{\mathcal{F}}^{\gamma}.
\end{eqnarray*}
In particular, if $\mathbf{d}^{i}(\alpha,\gamma)=\mathbf{d}^{i}(\alpha\vert W,\gamma\vert W)$
for some $W\subset V$, $\dim_{\mathbb{R}}W=i$, then
\[
\mathbf{d}^{i}(\alpha,\gamma)=\mathbf{d}^{i}(\alpha\vert W,\beta\vert W)+\mathbf{d}^{i}(\beta\vert W,\gamma\vert W)\leq\mathbf{d}^{i}(\alpha,\beta)+\mathbf{d}^{i}(\beta,\gamma)
\]
i.e.~$\underline{\mathbf{d}}$ satisfies the triangle inequality
$TR$.

\subsection{~}

We next show that for any $\alpha\in\mathbf{B}(V)$ and $\mathcal{F},\mathcal{G}\in\mathbf{F}(V)$,
\[
2\cdot\underline{\mathbf{d}}(\alpha+\mathcal{F},\alpha+\mathcal{G})\leq\underline{\mathbf{d}}(\alpha+2\mathcal{F},\alpha+2\mathcal{G})\quad\mbox{in}\quad\mathbb{R}_{\leq}^{n}.
\]
Put $f=g_{\alpha}(\mathcal{F})$, $g=g_{\alpha}(\mathcal{G})$. Then
$f^{2}=g_{\alpha}(2\mathcal{F})$, $g^{2}=g_{\alpha}(2\mathcal{G})$
and we have to show
\[
2\cdot\underline{\mathbf{d}}(f\alpha,g\alpha)\leq\underline{\mathbf{d}}(f^{2}\alpha,g^{2}\alpha).
\]
Let $h\mapsto h^{\ast}$ be the involution of $\mathsf{G}$ defined
by $\alpha$, so that $f^{\ast}=f$, $g^{\ast}=g$ and $f^{-2}g^{2}$
is conjugated to $gf^{-2}g=(gf^{-1})(gf^{-1})^{\ast}$. For $1\leq i\leq n$
and $h\in\mathsf{G}$, write $\lambda_{i}(h)$ for the largest real
eigenvalue of $h$ acting on $\Lambda^{i}(V)$ and denote by $\left\langle -,-\right\rangle _{\alpha,i}$
the scalar product on $\Lambda^{i}(V)$ attached to its Euclidean
norm $\Lambda^{i}(\alpha)$. Then
\begin{eqnarray*}
\exp\left(\mathbf{d}^{i}(f^{2}\alpha,g^{2}\alpha)\right) & = & \max\left\{ \frac{\Lambda^{i}(\alpha)(f^{-2}v)}{\Lambda^{i}(\alpha)(g^{-2}v)}:v\in\Lambda^{i}(V)\setminus\{0\}\right\} \\
 & = & \max\left\{ \frac{\Lambda^{i}(\alpha)(f^{-2}g^{2}v)}{\Lambda^{i}(\alpha)(v)}:v\in\Lambda^{i}(V)\setminus\{0\}\right\} \\
 & \geq & \lambda_{i}(f^{-2}g^{2})\quad=\quad\lambda_{i}(gf^{-2}g)\\
 & = & \max\left\{ \frac{\left\langle gf^{-2}gx,x\right\rangle _{\alpha,i}}{\left\langle x,x\right\rangle _{\alpha,i}}:x\in\Lambda^{i}(V)\setminus\{0\}\right\} \\
 & = & \max\left\{ \frac{\left\langle f^{-1}x,f^{-1}x\right\rangle _{\alpha,i}}{\left\langle g^{-1}x,g^{-1}x\right\rangle _{\alpha,i}}:x\in\Lambda^{i}(V)\setminus\{0\}\right\} \\
 & = & \exp\left(2\mathbf{d}^{i}(f\alpha,g\alpha)\right)
\end{eqnarray*}
with equality for $i=n$, which proves our claim. Thus $\underline{\mathbf{d}}$
satisfies $CO''$ and $CO$.

\subsection{~\label{sub:A5forEuclideanNorms}}

For $\alpha\in\mathbf{B}(V)$ and $\mathcal{F}\in\mathbf{F}(V)$,
we denote by $\Gr_{\mathcal{F}}(\alpha)$ the Euclidean norm on $\Gr_{\mathcal{F}}^{\bullet}(V)$
induced by $\alpha$ through the isomorphism $V\simeq\Gr_{\mathcal{F}}(V)$
provided by the $\alpha$-orthogonal splitting $\mathcal{G}_{\alpha}(\mathcal{F})$
of $\mathcal{F}$. We claim that for every $\alpha,\beta\in\mathbf{B}(V)$,
\[
\lim_{t\rightarrow\infty}\underline{\mathbf{d}}(\alpha+t\mathcal{F},\beta+t\mathcal{F})=\underline{\mathbf{d}}(\Gr_{\mathcal{F}}(\alpha),\Gr_{\mathcal{F}}(\beta))\quad\mbox{in}\quad\mathbb{R}_{\leq}^{n}.
\]
Indeed, choosing an isomorphism $\left(\mathcal{G}_{\alpha}(\mathcal{F})_{\gamma},\alpha\vert\mathcal{G}_{\alpha}(\mathcal{F})_{\gamma}\right)\simeq\left(\mathcal{G}_{\beta}(\mathcal{F})_{\gamma},\beta\vert\mathcal{G}_{\beta}(\mathcal{F})_{\gamma}\right)$
for every $\gamma\in\mathbb{R}$, we obtain an element $g\in\mathsf{G}$
which fixes $\mathcal{F}$ and maps $\alpha$ to $\beta$. It then
also maps $\alpha+t\mathcal{F}=g_{\alpha}(t\mathcal{F})\cdot\alpha$
to $\beta+t\mathcal{F}=gg_{\alpha}(t\mathcal{F})\cdot\alpha$, so
that 
\[
\underline{\mathbf{d}}(\alpha+t\mathcal{F},\beta+t\mathcal{F})=\underline{\mathbf{d}}\left(\alpha,g_{\alpha}^{-1}(t\mathcal{F})gg_{\alpha}(t\mathcal{F})\cdot\alpha\right).
\]
Let $L_{\alpha}(\mathcal{F})$ be the centralizer of $\mathcal{G}_{\alpha}(\mathcal{F})$,
so that $P_{\mathcal{F}}=U_{\mathcal{F}}\rtimes L_{\alpha}(\mathcal{F})$.
Write $g=u\cdot\ell$ with $u\in\mathsf{U}_{\mathcal{F}}$ and $\ell\in\mathsf{L}_{\alpha}(\mathcal{F})$,
so that $g_{\alpha}^{-1}(t\mathcal{F})gg_{\alpha}(t\mathcal{F})=g_{\alpha}^{-1}(t\mathcal{F})ug_{\alpha}(t\mathcal{F})\cdot\ell$.
Let then $\mathfrak{u}_{\mathcal{F}}=\oplus_{\gamma>0}\mathfrak{u}_{\gamma}$
be the weight decomposition of $\mathfrak{u}_{\mathcal{F}}=\Lie(U_{\mathcal{F}})(\mathbb{R})$
induced by 
\[
\mathrm{ad}\circ\mathcal{G}_{\alpha}(\mathcal{F}):\mathbb{D}(\mathbb{R})\rightarrow G\rightarrow GL(\mathfrak{g})
\]
where $\mathfrak{g}=\Lie(G)(\mathbb{R})$. Then $g_{\alpha}(t\mathcal{F})$
acts on $\mathfrak{u}_{\gamma}$ by $\exp(t\gamma)$, from which easily
follows that $g_{\alpha}^{-1}(t\mathcal{F})ug_{\alpha}(t\mathcal{F})$
converges to $1$ in $\mathsf{U}_{\mathcal{F}}$ (for the real topology).
It follows that
\[
\lim_{t\rightarrow\infty}\underline{\mathbf{d}}(\alpha+t\mathcal{F},\beta+t\mathcal{F})=\underline{\mathbf{d}}\left(\alpha,\ell\alpha\right)=\underline{\mathbf{d}}\left(\Gr_{\mathcal{F}}(\alpha),\Gr_{\mathcal{F}}(\beta)\right).
\]
Taking $\beta=u\alpha$ with $u\in\mathsf{U}_{\mathcal{F}}$, we obtain
$UN$. On the other hand for any $\beta$, since 
\[
\mathbb{R}_{+}\ni t\mapsto\underline{\mathbf{d}}(\alpha+t\mathcal{F},\beta+t\mathcal{F})\in\mathbb{R}_{\leq}^{n}
\]
is convex and bounded, it is non-increasing, which proves $NE$.

\subsection{~\label{sub:B(V)isCAT(0)}}

Let $d=\left\Vert \underline{\mathbf{d}}\right\Vert $ be the $\mathsf{G}$-invariant
distance on $\mathbf{B}(V)$ attached to the faithful representation
$V$ of $\mathsf{G}$, as in~\ref{sub:The-classical-distance}. We
claim that the metric space $(\mathbf{B}(V),d)$ is $CAT(0)$. In
particular, it is uniquely geodesic, thus $\mathbf{B}(V)$ also satisfies
$UG$. To establish our claim, fix $\alpha\in\mathbf{B}(V)$, choose
an $\alpha$-orthonormal basis $(e_{i})_{i=1}^{n}$ of $V$ and use
it to identify $\mathsf{G}$ with $GL(n,\mathbb{R})$, $\mathbf{Sym}(V,\alpha)$
with the vector space $S(n,\mathbb{R})$ of symmetric matrices in
$M(n,\mathbb{R})$ and $\mathsf{G}(\alpha)$ with the open cone $P(n,\mathbb{R})\subset S(n,\mathbb{R})$
of positive definite matrices. Let $\left\langle -,-\right\rangle $
be the scalar product on $V$ attached to $\alpha$ and $g\mapsto g^{\ast}$
the corresponding involution of $\mathsf{G}$. For $p\in\mathsf{G}(\alpha)$,
$g\in\mathsf{G}$ and $v\in V$, set $g\cdot p=gpg^{\ast}$ and $\alpha_{p}(v)=\left\langle pv,v\right\rangle ^{1/2}$.
This defines an action of $\mathsf{G}$ on $\mathsf{G}(\alpha)$ and
$p\mapsto\alpha_{p}$ is an isomorphism of differentiable manifold
$\mathsf{G}(\alpha)\rightarrow\mathbf{B}(V)$ such that 
\[
\alpha_{g\cdot p}=(g^{\ast})^{-1}\cdot\alpha_{p}\quad\mbox{and}\quad\alpha+\mathcal{F}=\alpha_{g_{\alpha}(\mathcal{F})^{-2}}\quad\mbox{in}\quad\mathbf{B}(V)
\]
for any $g\in\mathsf{G}$, $p\in\mathsf{G}(\alpha)$ and $\mathcal{F}\in\mathbf{F}(V)$.
In \cite[II.10.31]{BrHa99}, $\mathsf{G}(\alpha)$ is equipped with
a $\mathsf{G}$-invariant Riemannian structure. Let $d_{\alpha}$
be the corresponding $\mathsf{G}$-invariant Riemannian metric on
$\mathsf{G}(\alpha)$ or $\mathbf{B}(V)$. For $X\in\mathbf{Sym}(V,\alpha)$
and $p=\exp(X)\in\mathsf{G}(\alpha)$, 
\[
d_{\alpha}^{2}(\alpha,\alpha_{p})=\mathrm{Tr}(X^{2})
\]
by \cite[II.10.42.(2)]{BrHa99}. Thus for any $\mathcal{F}\in\mathbf{F}(V)$,
\[
d_{\alpha}^{2}(\alpha,\alpha+\mathcal{F})=4\mathrm{Tr}\left(\left(\mathcal{G}_{\alpha}(\mathcal{F})^{\flat}\right)^{2}\right)=4\left\Vert \mathcal{F}\right\Vert ^{2}=4d^{2}(\alpha,\alpha+\mathcal{F})
\]
since $\alpha+\mathcal{F}=\alpha_{p}$ with $p=\exp(-2\mathcal{G}_{\alpha}(\mathcal{F})^{\flat})$
in $\mathsf{G}(\alpha)$. Therefore $d_{\alpha}(\alpha,\beta)=2d(\alpha,\beta)$
for any $\beta\in\mathbf{B}(V)$ by $R(s)$ and $d_{\alpha}=2d$ on
$\mathbf{B}(V)$ since $\mathsf{G}$ acts transitively on $\mathbf{B}(V)$.
Since the metric space $(\mathbf{B}(V),d_{\alpha})$ is $CAT(0)$
by \cite[II.10.39]{BrHa99}, so is $(\mathbf{B}(V),d)$.

\subsection{~}

We have thus established that $\left(\mathbf{B}(V),\mathbf{B}(-),+\right)$
is an affine $\mathbf{F}(V)$-building. If $S\in\mathbf{S}(G)$ corresponds
to $\mathcal{S}\in\mathbf{S}(V)$, the type map $\nu_{\mathbf{B},S}:\mathsf{S}\rightarrow\mathbf{G}(S)$
maps $s\in\mathsf{S}$ to the unique morphism $\mathbb{D}_{\mathbb{R}}(\mathbb{R})\rightarrow S$
whose composite with the character $\chi_{L}$ through which $S$
acts on $L\in\mathcal{S}$ is the character $\log\left|\chi_{L}(s)\right|\in\mathbb{R}$
of $\mathbb{D}(\mathbb{R})$.

\subsection{~}

The computations of section~\ref{sub:A5forEuclideanNorms} show that
$\Gr_{\mathcal{F}}$ induces an isometry
\[
\Gr_{\mathcal{F}}:\mathbf{T}_{P_{\mathcal{F}}}^{\infty}\mathbf{B}(V)\simeq{\textstyle \prod_{\gamma}}\mathbf{B}\left(\Gr_{\mathcal{F}}^{\gamma}(V)\right).
\]
On the other hand, the tangent space $\mathbf{T}_{\alpha}\mathbf{B}(V)$
as defined in section~\ref{sub:TangentSpaceInBuild} is equal to
the corresponding tangent space $\mathbf{Sym}(V,\alpha)$ of the differential
manifold $\mathbf{B}(V)$, its scalar product is given by $\left\langle X,Y\right\rangle =\mathrm{Tr}(XY)$
and the localization map
\[
\loc_{\alpha}:\mathbf{F}(V)\twoheadrightarrow\mathbf{T}_{\alpha}\mathbf{B}(V)
\]
maps $\mathcal{F}\in\mathbf{F}(V)$ to the $\alpha$-symmetric endomorphism
\[
\loc_{\alpha}(\mathcal{F})=\left(\frac{d}{dt}g_{\alpha}(t\mathcal{F})\right)_{t=0}=\mathcal{G}_{\alpha}(\mathcal{F})^{\flat}\quad\mbox{in}\quad\mathbf{Sym}(V,\alpha).
\]

\chapter{Bruhat-Tits buildings}

We first keep the assumptions and notations of the previous chapter.
Thus $G$ will be a reductive group over a field $K$, $\mathsf{G}=G(K)$
and $\Gamma=(\mathbb{R},+,\leq)$. In addition, we assume that $K$
is equipped with a non-trivial, non-archimedean absolute value
\[
\left|-\right|:K\rightarrow\mathbb{R}^{+}.
\]
However, we will eventually return to the setting of chapter~\ref{chap:VectTitsBuild},
with $G$ a reductive group over the valuation ring $\mathcal{O}_{K}=\{x\in K:\left|x\right|\leq1\}$
of $K$ and $\Gamma=\mathbb{R}$. Note that then $\mathbf{P}(G)=\mathbf{P}(G_{K})$,
$\mathbf{F}(G)=\mathbf{F}(G_{K})$ but $\mathbf{S}(G)\subsetneq\mathbf{S}(G_{K})$
and $\mathbf{G}(G)\subsetneq\mathbf{G}(G_{K})$.

\section{The Bruhat-Tits building of $GL(V)$\label{sub:BruhatGL(V)}}

\subsection{~}

Let $G=GL(V)$, where $V\neq0$ is a $K$-vector space of dimension
$n\in\mathbb{N}$. As in section~\ref{sub:ExampSymSpac}, we thus
have $\mathsf{G}$-equivariant bijections
\[
\begin{array}{rcccl}
\mathbf{S}(G) & \simeq & \mathbf{S}(V) & = & \left\{ \mathcal{S}\subset\mathbb{P}^{1}(V)(K):V=\oplus_{L\in\mathcal{S}}L\right\} ,\\
\mathbf{F}(G) & \simeq & \mathbf{F}(V) & = & \left\{ \mathbb{R}-\mbox{filtrations on }V\right\} ,\\
\mathbf{G}(G) & \simeq & \mathbf{G}(V) & = & \left\{ \mathbb{R}-\mbox{graduations on }V\right\} .
\end{array}
\]

\subsection{~}

A $K$-norm (or simply: norm) on $V$ is a function $\alpha:V\rightarrow\mathbb{R}^{+}$
such that 
\begin{enumerate}
\item $\alpha(v)=0$ if and only if $v=0$,
\item $\alpha(\lambda v)=\left|\lambda\right|\alpha(v)$ for every $\lambda\in K$
and $v\in V$, and 
\item $\alpha(u+v)\leq\max\left\{ \alpha(u),\alpha(v)\right\} $ for every
$u,v\in V$. 
\end{enumerate}
The $K$-norm $\alpha$ is split by $\mathcal{S}\in\mathbf{S}(V)$
if and only if 
\[
\forall v\in V:\quad\alpha(v)=\max\left\{ \alpha(v_{L}):L\in\mathcal{S}\right\} \quad\mbox{where }v={\textstyle \sum}_{L\in\mathcal{S}}v_{L},\, v_{L}\in L.
\]
It is splittable if it is split by $\mathcal{S}$ for some $\mathcal{S}\in\mathbf{S}(V)$.
If $K$ is locally compact, every $K$-norm on $V$ is splittable
by~\cite[Proposition 1.1]{GoIw63}.

\subsection{~}

We denote by $\mathbf{B}(V)$ the set of all splittable $K$-norms
on $V$, by $\mathbf{B}(\mathcal{S})$ the subset of all $K$-norms
split by $\mathcal{S}$. We let $\mathsf{G}$ act on $\mathbf{B}(V)$
by $(g\cdot\alpha)(v)=\alpha(g^{-1}v)$, and define the pull map $+:\mathbf{B}(V)\times\mathbf{F}(V)\rightarrow\mathbf{B}(V)$
by 
\[
(\alpha+\mathcal{F})(v)=\min\left\{ \max\left\{ e^{-\gamma}\alpha(v_{\gamma}):\gamma\in\mathbb{R}\right\} :v={\textstyle \sum_{\gamma\in\mathbb{R}}}v_{\gamma},\, v_{\gamma}\in\mathcal{F}^{\gamma}\right\} 
\]
where the sums $\sum_{\gamma\in\mathbb{R}}v_{\gamma}$ have finite
support. We have to verify that this operation is well-defined. Note
first that the axiom $L(s)$ follows from the second proof of \cite[1.5.ii]{BrTi84b}:
for any $\alpha\in\mathbf{B}(V)$ and $\mathcal{F}\in\mathbf{F}(V)$,
there is an $\mathcal{S}\in\mathbf{S}(V)$ with $\alpha\in\mathbf{B}(\mathcal{S})$
and $\mathcal{F}\in\mathbf{F}(\mathcal{S})$. Let us then identify
$\mathbf{F}(\mathcal{S})$ with $\mathbb{R}^{\mathcal{S}}$ by $\mathcal{F}\mapsto\mathcal{F}^{\sharp}$
where
\[
\mathcal{F}^{\sharp}(L)=\max\left\{ \gamma\in\mathbb{R}:L\subset\mathcal{F}^{\gamma}\right\} ,\qquad\mathcal{F}^{\gamma}=\oplus_{L:\mathcal{F}^{\sharp}(L)\geq\gamma}L.
\]
Then for $v=\sum_{L\in\mathcal{S}}v_{L}$ in $V=\oplus_{L\in\mathcal{S}}L$,
we find that
\[
\inf\left\{ \max\left\{ e^{-\gamma}\alpha(v_{\gamma}):\gamma\in\mathbb{R}\right\} \left|\begin{smallmatrix}v=\sum_{\gamma}v_{\gamma}\\
v_{\gamma}\in\mathcal{F}^{\gamma}
\end{smallmatrix}\right.\right\} =\max\left\{ e^{-\mathcal{F}^{\sharp}(L)}\alpha(v_{L}):L\in\mathcal{S}\right\} .
\]
Indeed for $v=\sum_{\gamma}v_{\gamma}$ with $v_{\gamma}=\sum_{L}v_{\gamma,L}$,
$v_{\gamma,L}\in L$ and $v_{\gamma,L}=0$ if $\gamma>\mathcal{F}^{\sharp}(L)$,
\begin{eqnarray*}
\max\left\{ e^{-\gamma}\alpha(v_{\gamma}):\gamma\in\mathbb{R}\right\}  & = & \max\left\{ e^{-\gamma}\alpha(v_{\gamma,L}):\gamma\in\mathbb{R},\, L\in\mathcal{S}\right\} \\
 & \geq & \max\left\{ e^{-\mathcal{F}^{\sharp}(L)}\alpha(v_{\gamma,L}):\gamma\in\mathbb{R},\, L\in\mathcal{S}\right\} \\
 & \geq & \max\left\{ e^{-\mathcal{F}^{\sharp}(L)}\alpha(v_{L}):L\in\mathcal{S}\right\} 
\end{eqnarray*}
since $\alpha\in\mathbf{B}(\mathcal{S})$ (for the first equality)
and $v_{L}=\sum_{\gamma}v_{\gamma,L}$ (for the last inequality),
which provides the non-trivial required inequality in the displayed
formula. Thus
\begin{equation}
(\alpha+\mathcal{F})(v)=\max\left\{ e^{-\mathcal{F}^{\sharp}(L)}\alpha(v_{L}):L\in\mathcal{S}\right\} \label{eq:formulaAlpha+Ftori}
\end{equation}
from which follows that $\alpha+\mathcal{F}$ is well-defined and
again belongs to $\mathbf{B}(\mathcal{S})$.

\subsection{~}

The apartment and pull maps are plainly $\mathsf{G}$-equivariant,
and the above formula shows that the latter turns $\mathbf{B}(\mathcal{S})$
into an affine $\mathbf{F}(\mathcal{S})$-space, thus 
\[
\mathbf{B}(V)=(\mathbf{B}(V),+,\mathbf{B}(-))
\]
 is an affine $\mathbf{F}(G)$-space. If $S\in\mathbf{S}(G)$ corresponds
to $\mathcal{S}\in\mathbf{S}(V)$, the type map 
\[
\nu_{\mathbf{B},S}:\mathsf{S}\rightarrow\mathbf{G}(S)
\]
maps $s$ to the unique $\mathcal{F}\in\mathbf{F}(S)$ with $\gamma_{L}(\mathcal{F})=\log\left|\chi_{L}(s)\right|$
for all $L\in\mathcal{S}$, where $\chi_{L}:S\rightarrow\mathbb{G}_{m,k}$
is the character through which $S$ acts on $L$.

\subsection{~}

In~\cite[\S 3]{Pa99}, Parreau shows that the closely related set
$\Delta=\mathbb{R}_{+}^{\times}\backslash\mathbf{B}(V)$ is an affine
building in the sense of \cite[1.1]{Pa99} (see also \cite{BrTi84b,GoIw63}).
The axioms $R(s)$, $R(i)^{+}$, $HA$ and $L(s)^{+}$ for $\mathbf{B}(V)$
respectively follow from the axioms $A3$, $A2$, $A5$ and proposition~$1.8$
for $\Delta$ in \cite{Pa99}. For $\alpha\in\mathbf{B}(V)$, $\mathcal{F}\in\mathbf{F}(V)$
and $u\in\mathsf{U}_{\mathcal{F}}$, pick $\mathcal{S}\in\mathbf{S}(V)$
such that $\alpha\in\mathbf{B}(\mathcal{S})$ and $\mathcal{F}\in\mathbf{F}(\mathcal{S})$
using $L(s)$. Write $\mathcal{S}=\{Kv_{1},\cdots,Kv_{n}\}$ with
$i\mapsto\gamma_{i}=\mathcal{F}^{\sharp}(Kv_{i})$ non-increasing
and identify $\mathbf{B}(\mathcal{S})$ with $\mathbb{R}^{n}$ by
$\alpha\mapsto(\alpha_{1},\cdots,\alpha_{n})$ where $\alpha_{i}=-\log(\alpha(v_{i}))$.
Then for $t\geq0$, $t\mathcal{F}\in\mathbf{F}(\mathcal{S})$ acts
on $\mathbf{B}(\mathcal{S})\simeq\mathbb{R}^{n}$ by 
\[
(\alpha_{1},\cdots,\alpha_{n})\mapsto(\alpha_{1}+t\gamma_{1},\cdots,\alpha_{n}+t\gamma_{n})
\]
and the matrix $(u_{i,j})$ of $u\in\mathsf{U_{\mathcal{F}}}$ in
the basis $(v_{1},\cdots,v_{n})$ of $V$ satisfies $u_{i,i}=1$ and
$u_{i,j}\neq0$ if and only if $\gamma_{i}>\gamma_{j}$ for $i\neq j$.
Moreover, $u$ fixes $\alpha$ if and only if $\alpha_{j}-\alpha_{i}\leq-\log\left|u_{i,j}\right|$
for all $1\leq i,j\leq n$ by~\cite[3.5]{Pa99}. Therefore $u$ fixes
$\alpha+t\mathcal{F}$ for all $t\gg0$, i.e. $\mathbf{B}(V)$ satisfies
$UN^{+}$. Thus by proposition~\ref{prop:L(s)+implBuild}, $\mathbf{B}(V)$
is an affine $\mathbf{F}(G)$-building whose underlying metric space
is $CAT(0)$.

\subsection{~}

Let $Z\simeq\mathbb{G}_{m,K}$ be the center of $G$, so that $\mathbf{G}(Z)\simeq\mathbb{R}$
by the isomorphism which maps $\mathcal{G}:\mathbb{D}_{K}(\mathbb{R})\rightarrow Z$
to the unique weight $\mathcal{G}^{\sharp}\in\mathbb{R}$ of the corresponding
representation of $\mathbb{D}_{K}(\mathbb{R})$ on $V$. For $S\in\mathbf{S}(G)$
corresponding to $\mathcal{S}\in\mathbf{S}(V)$, the projection from
$\mathbf{F}(S)=\mathbf{G}(S)$ to $\mathbf{G}(Z)$ then maps $\mathcal{F}\in\mathbf{F}(S)$
to the unique $\mathcal{G}$ with 
\[
\mathcal{G}^{\sharp}={\textstyle \frac{1}{n}\sum_{L\in\mathcal{S}}}\mathcal{F}^{\sharp}(L).
\]
It follows that the projection 
\[
\mathbf{d}^{c}:\mathbf{B}(V)\times\mathbf{B}(V)\rightarrow\mathbf{G}(Z)
\]
of the distance $\mathbf{d}:\mathbf{B}(V)\times\mathbf{B}(V)\rightarrow\mathbf{C}(G)$
maps $(\alpha,\beta)$ to the unique $\mathcal{G}$ with 
\[
\mathcal{G}^{\sharp}={\textstyle \frac{1}{n}\sum_{i=1}^{n}}\log\alpha(v_{i})-\log\beta(v_{i})
\]
for any $K$-basis $(v_{1},\cdots,v_{n})$ of $V$ such that $\alpha,\beta\in\mathbf{B}(\mathcal{S})$
with $\mathcal{S}=\{Kv_{1},\cdots,Kv_{n}\}$. From~\cite[3.2]{Pa99},
we then deduce that the morphism 
\[
\nu_{\mathbf{B}}^{c}:\mathsf{G}\rightarrow\mathbf{G}(Z)
\]
maps $g$ to the unique $\mathcal{G}$ with $\mathcal{G}^{\sharp}=\frac{1}{n}\log\left|\det g\right|$.
In particular, $\left|\det\mathsf{G}_{\alpha}\right|=1$ for every
$\alpha\in\mathbf{B}(V)$, and then \cite[3.5]{Pa99} implies $ST$:
$\mathsf{G}_{\alpha}=\mathsf{G}_{S,\alpha}$ for all $\alpha\in\mathbf{B}(S)$,
$S\in\mathbf{S}(G)$. Therefore $\mathbf{B}(V)$ is a tight affine
$\mathbf{F}(G)$-building.

\subsection{~}

If the valuation of $K$ is discrete, the map 
\[
\alpha\mapsto L=\{x\in V:\alpha(x)\leq1\}
\]
identifies the subset $\mathbf{B}^{\circ}(V)\subset\mathbf{B}(V)$
of $K$-norms $\alpha$ on $V$ such that 
\[
\alpha(V\setminus\{0\})=\left|K^{\times}\right|
\]
with the set $\mathcal{L}(V)$ of $\mathcal{O}_{K}$-lattices in $V$.
Then $\mathbf{B}^{\circ}(V)$ is stable under the action of $\mathsf{G}$
and of the subset $\mathbf{F}^{\log\left|K^{\times}\right|}(V)\subset\mathbf{F}(V)$
of $\log\left|K^{\times}\right|$-filtrations on $V$. It is then
convenient to either normalize the valuation by requiring that $\left|K^{\times}\right|=e^{\mathbb{Z}}$,
or to rescale the pull map as in~\ref{sub:RenormalizationOfBuildings}.
Then $\mathbf{F}^{\mathbb{Z}}(V)$ acts on $\mathbf{B}^{\circ}(V)\simeq\mathcal{L}(V)$
by 
\[
L+\mathcal{F}={\textstyle \sum_{i\in\mathbb{Z}}}\pi^{-i}L\cap\mathcal{F}^{i}
\]
for $L\in\mathcal{L}(V)$ and $\mathcal{F}\in\mathbf{F}^{\mathbb{Z}}(V)$,
where $\pi\in\mathcal{O}_{K}$ is any uniformizer.

\subsection{~}

The space $\mathbf{B}(V)$ is known to be a realization of the Bruhat-Tits
building of $G$; for a more general case, see \cite{BrTi84b}.

\section{The Bruhat-Tits building of $G$\label{sub:BTbuildIsAffine}}

\subsection{~\label{sub:Assumption4ExistBT}}

For a general reductive group $G$ over $K$, we have to make some
assumption on the triple $(G,K,\left|-\right|)$: the existence of
a \emph{valuation} on the root datum $(\mathsf{Z}_{G}(S),(\mathsf{U}_{a})_{a\in\Phi(G,S)})$
of $\mathsf{G}=G(K)$, in the sense of \cite[6.2.1]{BrTi72}. Here
$S$ is a fixed element of $\mathbf{S}(G)$ and the notations are
taken from section~\ref{sub:NotationsForRoots}.

\subsection{~}

Let then $\mathbf{B}^{r}(G)$ and $\mathbf{B}^{e}(G)=\mathbf{B}^{r}(G)\times\mathbf{G}(Z)$\nomenclature[B^r(G)]{$\mathbf{B}^r (G)$}{Reduced Bruhat-Tits building of $G$, page \nomrefpage}\nomenclature[B^e(G)]{$\mathbf{B}^e (G)$}{Extended Bruhat-Tits building of $G$, page \nomrefpage}
be respectively the reduced and extended Bruhat-Tits buildings of
$G$, as defined in \cite[\S 7]{BrTi72} and \cite[4.2.16 \& 5.1.29]{BrTi84}.
These two sets have compatible actions of $\mathsf{G}$, they are
covered by apartments $\mathbf{B}^{r}(S)$ and $\mathbf{B}^{e}(S)=\mathbf{B}^{r}(S)\times\mathbf{G}(Z)$
which are $\mathsf{G}$-equivariantly parametrized by $\mathbf{S}(G)$,
$\mathbf{B}^{e}(S)$ is an affine $\mathbf{F}(S)$-space on which
$\mathsf{N}_{G}(S)$ acts by affine transformations with linear part
$\nu_{S}^{v}:\mathsf{N}_{G}(S)\twoheadrightarrow\mathsf{W}_{G}(S)$
and the resulting action of $\mathsf{Z}_{G}(S)$ is given by a morphism
$\nu_{\mathbf{B},S}:\mathsf{Z}_{G}(S)\rightarrow\mathbf{G}(S)$ which
is uniquely characterized by the following property: for every morphism
$\chi:Z_{G}(S)\rightarrow\mathbb{G}_{m,K}$, the induced morphism
\[
\mathbf{G}(\chi\vert_{S})\circ\nu_{\mathbf{B},S}:\mathsf{Z}_{G}(S)\longrightarrow\mathbf{G}(\mathbb{G}_{m,K})
\]
maps $z$ in $\mathsf{Z}_{G}(S)$ to $\log\left|\chi(z)\right|$ in
$\mathbb{R}=\mathbf{G}(\mathbb{G}_{m,K})$. Similarly, the action
of $\mathsf{G}$ on $\mathbf{G}(Z)$ is given by a morphism $\nu_{\mathbf{B}}^{c}:\mathsf{G}\rightarrow\mathbf{G}(Z)$
which is uniquely characterized by the following property: for every
morphism $\chi:G\rightarrow\mathbb{G}_{m,K}$, the induced morphism
\[
\mathbf{G}(\chi\vert Z)\circ\nu_{\mathbf{B}}^{c}:\mathsf{G}\rightarrow\mathbf{G}(\mathbb{G}_{m,K})
\]
maps $g$ in $\mathsf{G}$ to $\log\left|\chi(g)\right|$ in $\mathbb{R}=\mathbf{G}(\mathbb{G}_{m,K})$.
There is a $\mathsf{G}$-invariant distance 
\[
d:\mathbf{B}^{e}(G)\times\mathbf{B}^{e}(G)\rightarrow\mathbb{R}^{+}
\]
inducing a Euclidean distance on each apartment, which turns $\mathbf{B}^{e}(G)$
into a CAT(0)-space. Finally, $\mathbf{B}^{e}(G)$ already satisfies
our axiom $R(s)$ by \cite[7.4.18.i]{BrTi72} as well as the following
strengthening of $ST$ and $R(i)^{+}$:
\begin{quote}
For every subset $\Omega\neq\emptyset$ of $\mathbf{B}^{e}(S)$, the
pointwise stabilizer $\mathsf{G}_{\Omega}\subset\mathsf{G}$ of $\Omega$
equals $\mathsf{G}_{S,\Omega}$ by \cite[7.4.4]{BrTi72}, and it acts
transitively on the set of apartments containing $\Omega$ by \cite[7.4.9]{BrTi72}.
\end{quote}
We denote by $+_{S}:\mathbf{B}^{e}(S)\times\mathbf{F}(S)\rightarrow\mathbf{B}^{e}(S)$
the given structure of affine $\mathbf{F}(S)$-space on $\mathbf{B}^{e}(S)$.
These maps are compatible in the following sense:
\[
g\cdot(x+_{S}\mathcal{F})=(g\cdot x+_{g\cdot S}g\cdot\mathcal{F}).
\]

\subsection{~}

Let us first prove $L(s)$: starting with $x\in\mathbf{B}^{e}(G)$
and $\mathcal{F}\in\mathbf{F}(G)$, choose a minimal parabolic subgroup
$B'\subset P_{\mathcal{F}}$ with Levi $Z_{G}(S')$, pick $c\in\mathbf{B}^{e}(S')$
and form the sector $C'=c'+_{S'}F^{-1}(B')$ in $\mathbf{B}^{e}(S')$.
By \cite[7.4.18.ii]{BrTi72}, there is another apartment $\mathbf{B}^{e}(S)$
containing $x$ and a subsector $C$ of $C'$, which \emph{a priori}
is of the form $C=c+_{S}F^{-1}(B)$ for some minimal parabolic $B$
with Levi $Z_{G}(S)$. Since $C\subset\mathbf{B}^{e}(S)\cap\mathbf{B}^{e}(S')$,
there is a $g\in\mathsf{G}$ fixing $C$ and mapping $S$ to $S'$.
Then $g\in\mathsf{B}$ since $\mathsf{G}_{C}=\mathsf{G}_{S,C}\subset\mathsf{B}\cap\mathsf{G}_{c}$,
thus $\Int(g)(B)=B$ and $Z_{G}(S')\subset B$. Moreover 
\[
C=gC=g(c+_{S}F^{-1}(B))=c+_{S'}F^{-1}(B)
\]
thus actually $B=B'$ because 
\[
C=c+_{S'}F^{-1}(B)\subset c'+_{S'}F^{-1}(B')=C'
\]
in the affine $\mathbf{F}(S')$-space $\mathbf{B}^{e}(S')$. Now $x\in\mathbf{B}^{e}(S)$
and also $\mathcal{F}\in\mathbf{F}(S)$ since 
\[
Z_{G}(S)\subset B=B'\subset P_{\mathcal{F}}.
\]
This proves \textbf{$L(s)$}. Note also that for any $\mathcal{G}\in\mathbf{F}(S)\cap\mathbf{F}(S')$
in the closure of $F^{-1}(B)$, 
\[
c+_{S}\mathcal{G}=g(c+_{S}\mathcal{G})=c+_{S'}\mathcal{G}
\]
since $g$ fixes $c$, $c+_{S}\mathcal{G}$ and $\mathcal{G}$. In
particular, $c+_{S}t\mathcal{F}=c+_{S'}t\mathcal{F}$ for all $t\in\mathbb{R}^{+}$.

\subsection{~}

Suppose now that $x\in\mathbf{B}^{e}(S_{1})\cap\mathbf{B}^{e}(S_{2})$
and $\mathcal{F}\in\mathbf{F}(S_{1})\cap\mathbf{F}(S_{2})$ with $S_{1},S_{2}\in\mathbf{S}(G)$.
We now show that $x+_{S_{1}}\mathcal{F}=x+_{S_{2}}\mathcal{F}$ in
$\mathbf{B}^{e}(G)$. For $t\geq0$, put 
\[
x_{i}(t)=x+_{S_{i}}t\mathcal{F}\quad\mbox{in}\quad\mathbf{B}^{e}(S_{i}).
\]
The $CAT(0)$-property of $d$ implies that $t\mapsto d(x_{1}(t),x_{2}(t))$
is a convex function, and it is therefore sufficient to show that
it is also bounded. Let us choose minimal parabolic subgroups $Z_{G}(S_{i})\subset B_{i}\subset P_{\mathcal{F}}$,
and form the corresponding sectors 
\[
C_{i}=x+_{S_{i}}F^{-1}(B_{i})\quad\mbox{in}\quad\mathbf{B}^{e}(S_{i}).
\]
By \cite[7.4.18.iii]{BrTi72}, there is another apartment $\mathbf{B}^{e}(S)$
which contains subsectors 
\[
C'_{1}\subset C_{1}\quad\mbox{and}\quad C'_{2}\subset C_{2}.
\]
We have just seen that then $Z_{G}(S)\subset B_{i}$ (thus $\mathcal{F}\in\mathbf{F}(S)$)
and $C'_{i}=y_{i}+_{S}F^{-1}(B_{i})$ in $\mathbf{B}^{e}(S)$ for
some $y_{i}$'s in $\mathbf{B}^{e}(S)$, with moreover 
\[
y_{i}(t)=y_{i}+_{S}t\mathcal{F}=y_{i}+_{S_{i}}t\mathcal{F}
\]
for $t\geq0$. Then $t\mapsto d(y_{1}(t),y_{2}(t))$ and $t\mapsto d(x_{i}(t),y_{i}(t))$
are constant by elementary computations in $\mathbf{B}^{e}(S)$ and
$\mathbf{B}^{e}(S_{i})$ respectively, thus $t\mapsto d(x_{1}(t),x_{2}(t))$
is indeed bounded by the triangle inequality in $(\mathbf{B}^{e}(G),d)$.

\subsection{~}

We may at last define our pull map: for $x\in\mathbf{B}^{e}(G)$ and
$\mathcal{F}\in\mathbf{F}(G)$, choose $S\in\mathbf{S}(G)$ with $x\in\mathbf{B}^{e}(S)$
and $\mathcal{F}\in\mathbf{F}(S)$ and set $x+\mathcal{F}=x+_{S}\mathcal{F}$
in $\mathbf{B}^{e}(G)$: this does not depend upon the chosen $S$.
Our pull map is plainly $\mathsf{G}$-equivariant, and induces the
given structure of affine $\mathbf{F}(S)$-space on $\mathbf{B}^{e}(S)$.
Therefore 
\[
\mathbf{B}^{e}(G)=\left(\mathbf{B}^{e}(G),+,\mathbf{B}^{e}(-)\right)
\]
is an affine $\mathbf{F}(S)$-space.

\subsection{~}

For $x\in\mathbf{B}^{e}(G)$ and $\mathcal{F},\mathcal{G}\in\mathbf{F}(G)$,
choose $S\in\mathbf{S}(G)$ with $x\in\mathbf{B}^{e}(S)$, $\mathcal{F}\in\mathbf{F}(S)$,
let $F$ be the ``facet'' in $\mathbf{B}^{e}(S)$ denoted by $\gamma(x,E)$
in \cite[7.2.4]{BoTi65} with $E=\{t\mathcal{F}:t>0\}$, let $C$
be a ``chamber'' of $\mathbf{B}^{e}(S)$ containing $F$ in its
closure. Using \cite[7.4.18.ii]{BrTi72} as above, we find that there
is an apartment $\mathbf{B}^{e}(S')$ containing $C$ with $\mathcal{G}\in\mathbf{F}(S')$.
It then also contains $F$ by \cite[7.4.8]{BrTi72}, which means that
for some $\epsilon>0$, it contains $x+\eta\mathcal{G}$ for every
$\eta\in[0,\epsilon]$: this proves $L(s)^{+}$.

\subsection{~}

We already have the axioms $R(s)$, $R(i)^{+}$, $L(s)^{+}$ and $ST$,
thus $\mathbf{B}^{e}(G)$ is a tight affine $\mathbf{F}(G)$-building
by proposition~\ref{prop:L(s)+implBuild} and lemma~\ref{lem:RelationBetweenAxiomsST}.
Note that the axiom $HA$ also holds for $\mathbf{B}^{e}(G)$ by \cite[1.4]{Pa99}
and \cite[7.4.19]{BrTi72}. If $G=GL(V)$, then $\mathbf{B}^{e}(G)\simeq\mathbf{B}(V)$
by lemma~\ref{lem:UnicityTight} (see also \cite{GoIw63,BrTi84b,Pa99}).

\subsection{~}

The $CAT(0)$-distance $d$ used above may be chosen to be one of
our $d_{\tau}$'s, for some faithful representation $\tau$ of $G$.
The affine $\mathbf{F}(G)$-space $\mathbf{B}^{e}(G)$ is discrete
when $(K,\left|-\right|)$ is discrete, in which case $(\mathbf{B}^{e}(G),d)$
is a complete metric space by lemma~\ref{lem:DiscImpliesComplete}
or \cite[2.5.12]{BrTi72}. If $(K,\left|-\right|)$ is complete, then
every geodesic ray or line in $\mathbf{B}^{e}(G)$ is contained in
some apartment by \cite[2.3.8]{Ro77} and $R(s)$. Thus with the notations
of section~\ref{sub:Rays}, $\mathbf{F}(G)\simeq\mathcal{C}(\partial\mathbf{B}^{e}(G))$
if $(K,\left|-\right|)$ is complete.

\subsection{~\label{sub:changevalforBT}}

The Bruhat-Tits building $\mathbf{B}^{e}(G)=\mathbf{B}^{e}(G,\left|-\right|)$
depends upon the choice of the valuation $\left|-\right|$ on $K$,
and so does its structure of affine $\mathbf{F}(G)$-building. However
for $\nu>0$ , there is a $G(K)$-equivariant commutative diagram
\[
\begin{array}{ccccc}
\mathbf{B}^{e}(G,\left|-\right|) & \times & \mathbf{F}(G_{K}) & \stackrel{+}{\longrightarrow} & \mathbf{B}^{e}(G,\left|-\right|)\\
a\downarrow &  & b\downarrow &  & a\downarrow\\
\mathbf{B}^{e}(G,\left|-\right|^{\nu}) & \times & \mathbf{F}(G_{K}) & \stackrel{+}{\longrightarrow} & \mathbf{B}^{e}(G,\left|-\right|^{\nu})
\end{array}
\]
where $a$ is a canonical $G(K)$-equivariant map and $b(\mathcal{F})=\nu\mathcal{F}$.

\section{Functoriality for Bruhat-Tits buildings\label{sub:FunctorialityForBTBuildings}}

\subsection{~}

Suppose for this section that the valuation ring of $(K,\left|-\right|)$,
namely 
\[
\mathcal{O}_{K}=\{x\in K:\left|x\right|\leq1\}
\]
is Henselian. Then for every algebraic extension $L$ of $K$, there
is a unique absolute value $\left|-\right|:L\rightarrow\mathbb{R}^{+}$
on $L$ which extends $\left|-\right|:K\rightarrow\mathbb{R}^{+}$,
and its valuation ring 
\[
\mathcal{O}_{L}=\{x\in L:\left|x\right|\leq1\}
\]
is the integral closure of $\mathcal{O}_{K}$ in $L$, also Henselian.
We say that $L/K$ has a geometric property $\mathcal{P}$ over $\mathcal{O}_{K}$
if the corresponding morphism $\Spec(\mathcal{O}_{L})\rightarrow\Spec(\mathcal{O}_{K})$
does. 
\begin{prop}
\label{prop:ConstrCanPtBT}Let $G$ be a reductive group over $\mathcal{O}_{K}$.
\begin{lyxlist}{MM}
\item [{$(1)$}] There is an extension $L/K$, finite étale and Galois
over $\mathcal{O}_{K}$, splitting $G$. 
\item [{$(2)$}] The Bruhat-Tits building $\mathbf{B}^{e}(G_{K})$ exists
and contains a canonical point\nomenclature[o_G^e]{$\circ ^e _{G}$}{Canonical point of $\mathbf{B}^e (G_K)$ attached to $G$ over $\mathcal{O}_K$, page \nomrefpage}\nomenclature[o_G^r]{$\circ ^r _{G}$}{Canonical point of $\mathbf{B}^r (G_K)$ attached to $G$ over $\mathcal{O}_K$, page \nomrefpage}
\[
\circ_{G,K}^{e}=\circ_{G}^{e}=(\circ_{G}^{r},0)\in\mathbf{B}^{e}(G_{K})=\mathbf{B}^{r}(G_{K})\times\mathbf{G}(Z(G_{K}))
\]
with stabilizer $G(\mathcal{O}_{K})$ in $G(K)$. The projection $\circ_{G}^{r}$
of $\circ_{G}^{e}$ is the unique fixed point of $G(\mathcal{O}_{K})$
in $\mathbf{B}^{r}(G_{K})$ if the residue field of $\mathcal{O}_{K}$
is neither $\mathbb{F}_{2}$ nor $\mathbb{F}_{3}$. 
\item [{$(3)$}] The apartments of $\mathbf{B}^{e}(G_{K})$ containing
$\circ_{G}^{e}$ are the $\mathbf{B}^{e}(S_{K})$'s for $S\in\mathbf{S}(G)$.
\end{lyxlist}
\end{prop}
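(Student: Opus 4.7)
The plan is to address the three parts in sequence, leveraging the Bruhat-Tits formalism of \cite{BrTi72,BrTi84}. For (1), apply Proposition~\ref{pro:LocalUnibImpliesIsotriv}: since $\mathcal{O}$ is Henselian local, hence geometrically unibranch, the reductive group $G/\mathcal{O}$ is isotrivial, so some connected finite étale cover $\mathcal{O}'/\mathcal{O}$ splits $G$. Because $\mathcal{O}$ is Henselian, finite étale $\mathcal{O}$-algebras are equivalent to finite étale $k$-algebras and we may pass to a Galois closure $\mathcal{O}_L/\mathcal{O}$. Setting $L=\mathrm{Frac}(\mathcal{O}_L)$ gives the desired finite Galois extension that is finite étale over $\mathcal{O}$.

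For (2), the existence of $\mathbf{B}^e(G_K)$ itself: since $G_L$ is split and $L/K$ is unramified in the sense of the preamble to the proposition, the root datum of $\mathsf{G}_K=G(K)$ admits a valuation obtained by restricting the canonical valuation on the split root datum of $G_L$ over $L$, so the theory of section~\ref{sub:BTbuildIsAffine} applies. Construct $\circ_G^e$ first in the split case: choose a torus-Borel pair $T\subset B$ of $G/\mathcal{O}$, so $T_K\in\mathbf{S}(G_K)$, and take in the apartment $\mathbf{B}^e(T_K)$ the ``Chevalley origin'' $\circ_G^e$ characterized by the property that for every root $a\in\Phi(T_K,G_K)$ and every $u\in\mathsf{U}_a\setminus\{1\}$, $u$ fixes $\circ_G^e$ iff $u\in U_a(\mathcal{O})$. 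The description $\nu_{\mathbf{B},T_K}(z)(\chi)=\log|\chi(z)|$ shows $T(\mathcal{O})\subset\ker\nu_{\mathbf{B},T_K}$ fixes $\circ_G^e$, and $G(\mathcal{O})$ is generated by $T(\mathcal{O})$ and the $U_a(\mathcal{O})$, so $G(\mathcal{O})\subset\mathsf{G}_{\circ_G^e}$; conversely the Cartan decomposition $\mathsf{G}=G(\mathcal{O})N_G(T)(K)G(\mathcal{O})$ combined with lemma~\ref{lem:3appandtightness} gives the reverse inclusion. Any $\mathcal{O}$-character $\chi:G\to\mathbb{G}_{m,\mathcal{O}}$ satisfies $|\chi(g)|=1$ on $G(\mathcal{O})$, so $\nu_{\mathbf{B}}^c$ kills $G(\mathcal{O})$ and the $\mathbf{G}(Z)$-component of $\circ_G^e$ is $0$.

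For general $G$, apply the split construction to $G_{\mathcal{O}_L}$ to get $\circ_{G_L}^e\in\mathbf{B}^e(G_L)$. Its canonical construction makes it $\mathrm{Gal}(L/K)$-invariant, and the unramified Galois descent for buildings (see \cite[5.1.25, 5.1.28]{BrTi84}) produces the required point $\circ_G^e\in\mathbf{B}^e(G_K)$ with stabilizer $G(\mathcal{O}_L)^{\mathrm{Gal}(L/K)}=G(\mathcal{O})$. The uniqueness assertion, when the residue field is not $\mathbb{F}_2$ or $\mathbb{F}_3$, rests on the Bruhat-Tits fact that $\circ_G^e$ is \emph{hyperspecial}: the fixed point set of its parahoric on the local spherical building is then the building of the reductive quotient $\overline{G}_{\circ_G^e}$ over $k$, which has no global fixed point since every panel meets at least three chambers (lemma in~\ref{sub:descFacet}) — this is where the exclusion of $\mathbb{F}_2,\mathbb{F}_3$ enters.

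For (3), if $S\in\mathbf{S}(G)$, the characterization of $\nu_{\mathbf{B},S_K}$ together with $|\chi(z)|=1$ for $z\in Z_G(S)(\mathcal{O})$, $\chi\in\Hom_\mathcal{O}(Z_G(S),\mathbb{G}_m)$, shows $Z_G(S)(\mathcal{O})$ acts trivially on $\mathbf{B}^e(S_K)$, hence fixes $\circ_G^e$, so $\circ_G^e\in\mathbf{B}^e(S_K)$. Conversely, given an apartment $\mathbf{B}^e(S')$ containing $\circ_G^e$ with $S'\in\mathbf{S}(G_K)$, the stabilizer of $\circ_G^e$ in $Z_{G_K}(S')(K)$ equals $Z_{G_K}(S')(K)\cap G(\mathcal{O})$; this is precisely the group $Z_G(S)(\mathcal{O})$ for a unique reductive $\mathcal{O}$-subgroup $Z_G(S)\subset G$, and its split radical provides a maximal split $\mathcal{O}$-torus $S\in\mathbf{S}(G)$ with $S_K=S'$. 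The main obstacle in the plan is this last identification and the uniqueness in (2), both requiring careful invocation of the fine structure of parahoric group schemes at hyperspecial points; everything else is a direct transcription of Bruhat-Tits theory into the formalism developed in sections \ref{sub:FiltrationsGroup}--\ref{sub:AffineBuildings}.
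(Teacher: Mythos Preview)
Your treatment of part~(1) is essentially the paper's argument. For parts~(2) and~(3), however, there are genuine gaps.

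\textbf{Part~(2), uniqueness.} Your explanation for the exclusion of $\mathbb{F}_2$ and $\mathbb{F}_3$ is incorrect. You attribute it to the requirement that every panel in the residual spherical building meet at least three chambers, but the paper itself shows (in the discussion following section~\ref{sub:descFacet}) that this thickness property holds for \emph{any} residue field, citing \cite[XXVI 5.11]{SGA3.3r}. The actual restriction comes from \cite[5.1.39]{BrTi84}, which concerns when the full group $G(\mathcal{O})$ (as opposed to the connected parahoric) has no other fixed points in $\mathbf{B}^r$; this is a statement about the size of $G(k)$ relative to the affine Weyl group, not about thickness. The paper does not reprove this but simply cites it, first for $G_L$ and then for $G_K$, after arranging that the residue field of $L$ also avoids $\mathbb{F}_2,\mathbb{F}_3$.

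\textbf{Part~(3).} Your forward direction is a non sequitur: you argue that $Z_G(S)(\mathcal{O})$ acts trivially on $\mathbf{B}^e(S_K)$, hence fixes $\circ_G^e$, ``so $\circ_G^e\in\mathbf{B}^e(S_K)$''. But a point fixed by $Z_G(S)(\mathcal{O})$ need not lie in the apartment $\mathbf{B}^e(S_K)$; many points outside it are fixed too. Your converse is also only a wish: you assert that $Z_{G_K}(S')(K)\cap G(\mathcal{O})$ equals $Z_G(S)(\mathcal{O})$ for some $S\in\mathbf{S}(G)$ with $S_K=S'$, but producing an integral model of $S'$ from this intersection is precisely the content to be proved. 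The paper's argument is entirely different and much shorter: by construction $\circ_G^e$ lies in $\mathbf{B}^e(S_K)$ for the \emph{one} $S\in\mathbf{S}(G)$ chosen at the outset, and then one invokes two transitivity statements simultaneously --- $G(\mathcal{O})$ acts transitively on $\mathbf{S}(G)$ (SGA3), and $\mathsf{G}_{\circ_G^e}=G(\mathcal{O})$ acts transitively on the apartments of $\mathbf{B}^e(G_K)$ containing $\circ_G^e$ (\cite[7.4.9]{BrTi72}) --- to conclude that these two $G(\mathcal{O})$-orbits coincide.
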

\begin{proof}
Let $S$ be a maximal split torus of $G$ and let $T$ be a maximal
torus of $Z_{G}(S)$ \cite[XIV 3.20]{SGA3.2}. Then $G$ and $T$
are isotrivial by proposition~\ref{pro:LocalUnibImpliesIsotriv},
thus split by a finite étale cover of $\Spec(\mathcal{O}_{K})$ which
we may assume to be connected and Galois, i.e.~of the form $\Spec(\mathcal{O}_{L})\rightarrow\Spec(\mathcal{O}_{K})$
where $\mathcal{O}_{L}$ is the normalization of $\mathcal{O}_{K}$
in a finite étale Galois extension $L/K$ over $\mathcal{O}_{K}$
by \cite[18.10.12]{EGA4.4}. Since $\mathcal{O}_{K}$ is Henselian,
$\mathcal{O}_{L}$ is also the valuation ring of $(L,\left|-\right|)$.
Let $(x_{\alpha})$ be a Chevalley system for $(G_{\mathcal{O}_{L}},T_{\mathcal{O}_{L}})$,
as defined in \cite[XXIII 6.2]{SGA3.3r}, giving rise to a Chevalley
valuation $\varphi_{L}$ for $G_{L}$, as explained in \cite[6.2.3.b]{BrTi72}
and \cite[4.2.1]{BrTi84}, thus also to the reduced Bruhat-Tits building
$\mathbf{B}^{r}(G_{L})$ with its distinguished apartment $\mathbf{B}^{r}(T_{L})$
and the distinguished point $\circ_{G}^{r}\equiv\varphi_{L}$ in $\mathbf{B}^{r}(T_{L})$,
as defined in \cite[\S 7]{BrTi72}. For $f=0$, the group schemes
$\mathfrak{G}_{f}^{0}\subset\mathfrak{G}_{f}\subset\hat{\mathfrak{G}}_{f}\subset\mathfrak{G}_{f}^{\dagger}$
constructed in \cite[4.3-6]{BrTi84} are all equal to $G_{\mathcal{O}_{L}}$
\cite[4.6.22]{BrTi84}. Thus by~\cite[4.6.28]{BrTi84}, $G(\mathcal{O}_{L})$
is the stabilizer of the distinguished point $\circ_{G}^{e}=(\circ_{G}^{r},0)$
of $\mathbf{B}^{e}(T_{L})\subset\mathbf{B}^{e}(G_{L})$ in $G(L)$,
and $\circ_{G}^{r}$ is the unique fixed point of $G(\mathcal{O}_{L})$
in $\mathbf{B}^{r}(G_{L})$ by \cite[5.1.39]{BrTi84} if the residue
field of $\mathcal{O}_{L}$ is not equal to $\mathbb{F}_{2}$ or $\mathbb{F}_{3}$,
which we can always assume. 

The pair $(G_{K},K)$ satisfies the conditions of the pair denoted
by $(H,K^{\natural})$ in \cite[5.1.1]{BrTi84}. The Galois group
$\Sigma=\Gal(L/K)$ acts compatibly on $G(L)$ and $\mathbf{B}^{e}(G_{L})$.
It therefore fixes $\circ_{G}^{e}$, which thus belongs to $\mathbf{B}^{e}(T_{L})^{\Sigma}=\mathbf{B}^{e}(S_{K})$.
Applying this to $Z_{G}(S)$ instead of $G$, we see that $(G_{K},K)$
also satisfies the assumption $(DE)$ of \cite[5.1.5]{BrTi84}. Then
by \cite[5.1.20]{BrTi84}, the valuation $\varphi_{L}$ descends to
a valuation $\varphi$ for $G_{K}$. The corresponding building $\mathbf{B}^{e}(G_{K})$
is the fixed point set of $\Sigma$ in $\mathbf{B}^{e}(G_{L})$ by
\cite[5.1.25]{BrTi84}. The stabilizer of $\circ_{G}^{e}\in\mathbf{B}^{e}(G_{K})$
in $G(K)$ equals $G(\mathcal{O}_{K})=G(K)\cap G(\mathcal{O}_{L})$
and again by \cite[5.1.39]{BrTi84}, $\circ_{G}^{r}$ is the unique
fixed point of $G(\mathcal{O}_{K})$ in $\mathbf{B}^{r}(G_{K})$ if
the residue field of $\mathcal{O}_{K}$ is not equal to $\mathbb{F}_{2}$
or $\mathbb{F}_{3}$. By construction, $\circ_{G}^{e}$ belongs to
$\mathbf{B}^{e}(S_{K})$. Therefore \cite[7.4.9]{BrTi72} proves our
last claim, since $G(\mathcal{O}_{K})$ also acts transitively on
$\mathbf{S}(G)$.
\end{proof}

\subsection{~}

We denote by $\mathbf{B}^{e}(G,K,\left|-\right|)$\nomenclature[B^e(G,K)]{$\mathbf{B}^e (G,K)$}{Pointed extended Bruhat-Tits building for $G$ over $\mathcal{O}_K$, page \nomrefpage}
the pointed affine $\mathbf{F}(G_{K})$-building 
\[
\mathbf{B}^{e}(G,K,\left|-\right|)=(\mathbf{B}^{e}(G_{K}),\circ_{G}^{e})
\]
attached to a reductive group $G$ over $\mathcal{O}_{K}$. It easily
follows from \cite[5.1.41]{BrTi84} that this construction is functorial
in the Henselian pair $(K,\left|-\right|)$. More precisely, let $\HV$\nomenclature[HV]{$\HV$}{Category of Henselian valued fields, page \nomrefpage}
be the category whose objects are pairs $(K,\left|-\right|)$ where
$K$ is a field and $\left|-\right|:K\rightarrow\mathbb{R}^{+}$ is
a non-trivial, non-archimedean absolute value whose valuation ring
$\mathcal{O}_{K}$ is Henselian. Then for every morphism $f:(K,\left|-\right|)\rightarrow(L,\left|-\right|)$
in $\HV$ and every reductive group $G$ over $\mathcal{O}_{K}$,
there is a canonical morphism $f:\mathbf{B}^{e}(G_{K})\rightarrow\mathbf{B}^{e}(G_{L})$
such that 
\[
f(\circ_{G}^{e})=\circ_{G}^{e},\quad f(gx)=f(g)f(x)\quad\mbox{and}\quad f(x+\mathcal{F})=f(x)+f(\mathcal{F})
\]
for every $x\in\mathbf{B}^{e}(G_{K})$, $g\in G(K)$ and $\mathcal{F}\in\mathbf{F}(G_{K})$.
The first and last property already determine this morphism uniquely:
by the axiom $T(s)$ for $\mathbf{B}^{e}(G_{K})$, any element $x$
of $\mathbf{B}^{e}(G_{K})$ equals $\circ_{G}^{e}+\mathcal{F}$ for
some for $\mathcal{F}\in\mathbf{F}(G_{K})$. 
\begin{rem}
The above functoriality amounts to saying that the mapping 
\[
\mathbf{B}^{e}(G_{K})\ni\circ_{G}^{e}+\mathcal{F}\mapsto\circ_{G}^{e}+f(\mathcal{F})\in\mathbf{B}^{e}(G_{L})
\]
is well-defined and equivariant with respect to $G(K)\rightarrow G(L)$.
This indeed implies the equivariance with respect to $f:\mathbf{F}(G_{K})\rightarrow\mathbf{F}(G_{L})$
as follows. For $S\in\mathbf{S}(G)$ mapping into $S'\in\mathbf{S}(G_{\mathcal{O}_{L}})$,
the above mapping restricts to a well-defined map $\mathbf{B}^{e}(S_{K})\rightarrow\mathbf{B}^{e}(S'_{L})$
which is equivariant with respect to $f:\mathbf{F}(S_{K})\rightarrow\mathbf{F}(S'_{K})$;
by the axiom $L(s)$ for $\mathbf{B}^{e}(G_{K})$ and proposition~\ref{prop:Generisation},
any pair $(x,\mathcal{F})$ in $\mathbf{B}^{e}(G_{K})\times\mathbf{F}(G_{K})$
is conjugated by some $g\in G(K)$ to one in $\mathbf{B}^{e}(S_{K})\times\mathbf{F}(S_{K})$,
thus
\[
f\left(x+\mathcal{F}\right)=f(g^{-1})f\left(gx+g\mathcal{F}\right)=f(g^{-1})\left(f(gx)+f(g\mathcal{F})\right)=f(x)+f(\mathcal{F}).
\]
\end{rem}
\begin{thm}
\label{thm:FonctBuild}The pointed affine $\mathbf{F}(G)$-building
$\mathbf{B}^{e}(G,K,\left|-\right|)$ is also functorial in the reductive
group $G$ over $\mathcal{O}_{K}$: for every morphism $f:G\rightarrow H$
of reductive groups over $\mathcal{O}_{K}$, there is a unique morphism
$f:\mathbf{B}^{e}(G_{K})\rightarrow\mathbf{B}^{e}(H_{K})$ such that
\[
f(\circ_{G}^{e})=\circ_{H}^{e},\quad f(gx)=f(g)f(x)\quad\mbox{and}\quad f(x+\mathcal{F})=f(x)+f(\mathcal{F})
\]
for every $x\in\mathbf{B}^{e}(G_{K})$, $g\in G(K)$ and $\mathcal{F}\in\mathbf{F}(G_{K})$. 
\end{thm}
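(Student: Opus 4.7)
The plan is to reduce both uniqueness and existence to the tannakian description of the Bruhat-Tits building developed in the last subsection of the paper. Uniqueness is formal: given $x \in \mathbf{B}^{e}(G_{K})$, axiom $\mathbf{A1}$ applied to $\{\circ_{G}^{e}, x\}$ together with part $(3)$ of Proposition~\ref{prop:ConstrCanPtBT} produces an $S \in \mathbf{S}(G)$ over $\mathcal{O}$ such that both points lie in $\mathbf{B}^{e}(S_{K})$; since the latter is an affine $\mathbf{F}(S_{K})$-space, there exists $\mathcal{F} \in \mathbf{F}(S_{K}) \subset \mathbf{F}(G_{K})$ with $x = \circ_{G}^{e} + \mathcal{F}$. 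Thus $\mathbf{B}^{e}(G_{K}) = \circ_{G}^{e} + \mathbf{F}(G_{K})$, and the three requested compatibilities force $f(x) = \circ_{H}^{e} + f(\mathcal{F})$, where $f : \mathbf{F}(G_{K}) \to \mathbf{F}(H_{K})$ is the morphism induced by functoriality of the scheme $\mathbb{F}^{\Gamma}$ in the reductive group.

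For existence I would set $f(\circ_{G}^{e} + \mathcal{F}) = \circ_{H}^{e} + f(\mathcal{F})$ and verify well-definedness, after which the three compatibilities are automatic. The main obstacle is precisely this well-definedness. The naive attempt goes: if $\circ_{G}^{e} + \mathcal{F}_{1} = \circ_{G}^{e} + \mathcal{F}_{2}$, invoke $\mathbf{A3}$ for $\mathbf{B}^{e}(G_{K})$ to produce $g \in G(\mathcal{O})$ with $g\mathcal{F}_{1} = \mathcal{F}_{2}$ and $g$ fixing $\circ_{G}^{e} + \mathcal{F}_{1}$, and try to conclude $\circ_{H}^{e} + f(\mathcal{F}_{2}) = \circ_{H}^{e} + f(g)\,f(\mathcal{F}_{1}) = f(g)\bigl(\circ_{H}^{e} + f(\mathcal{F}_{1})\bigr) = \circ_{H}^{e} + f(\mathcal{F}_{1})$. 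The last equality demands that $f(g) \in H(\mathcal{O})$ fix $\circ_{H}^{e} + f(\mathcal{F}_{1})$, which is essentially the functoriality of pointwise stabilizers and is what we are trying to prove; any direct stabilizer-chase is thus circular, and one needs a global construction of $f$ that does not depend on the choice of representative $\mathcal{F}$.

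Such a construction is provided by the tannakian description established later in the paper: the canonical $G(K)$-equivariant isomorphism of affine $\mathbf{F}(G_{K})$-buildings $\boldsymbol{\alpha} : \mathbf{B}^{e}(G_{K}) \xrightarrow{\sim} \mathbf{B}(\omega_{G}^{\circ}, K) = \alpha_{G} + \mathbf{F}(G_{K})$ sending $\circ_{G}^{e}$ to $\alpha_{G}$. A morphism $f : G \to H$ induces a restriction functor $f^{\ast} : \Rep^{\circ}(H)(\mathcal{O}) \to \Rep^{\circ}(G)(\mathcal{O})$, $\tau \mapsto \tau \circ f$, which preserves the underlying $\mathcal{O}$-module of each representation. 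Precomposition with $f^{\ast}$ therefore sends any $K$-norm on $\omega_{G}^{\circ}$ to a $K$-norm on $\omega_{H}^{\circ}$; it sends $\alpha_{G}$ to $\alpha_{H}$ because both are defined pointwise from the same $\mathcal{O}$-lattice $V(\tau) \subset V_{K}(\tau)$; it is equivariant for $G(K) \to H(K)$; and it intertwines the $\mathbf{F}$-actions thanks to the functoriality of $\mathbb{F}^{\Gamma}$. Transporting the resulting map back through $\boldsymbol{\alpha}$ on both sides yields the desired $f : \mathbf{B}^{e}(G_{K}) \to \mathbf{B}^{e}(H_{K})$, with no choice of representative filtration ever made. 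Combined with the uniqueness above, this yields the theorem.
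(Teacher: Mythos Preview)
Your proposal is correct and matches the paper's approach exactly: the paper explicitly reduces uniqueness to the equality $\mathbf{B}^{e}(G_{K})=\circ_{G}^{e}+\mathbf{F}(G_{K})$, then derives existence as an immediate corollary of the Tannakian isomorphism $\boldsymbol{\alpha}$ of Theorem~\ref{thm:TanDescBuild} together with the functoriality of $\mathbf{B}(\omega_{G}^{\circ},K)$ under precomposition by $f^{\ast}:\Rep^{\circ}(H)(\mathcal{O})\to\Rep^{\circ}(G)(\mathcal{O})$. Your observation that a direct stabilizer argument is circular, and that the Tannakian description bypasses this by making the assignment $\alpha\mapsto\alpha\circ f^{\ast}$ representative-free, is precisely the point.
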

\noindent This essentially follows from Landvogt's work in \cite{La00},
which has no assumptions on the reductive groups over $K$ but requires
$(K,\left|-\right|)$ to be quasi-local, in particular discrete. The
main difficulty there is the construction of base points with good
properties, which is here trivialized by the given points $\circ_{G}^{e}$
and $\circ_{H}^{e}$. Note that again, the uniqueness of $f:\mathbf{B}^{e}(G_{K})\rightarrow\mathbf{B}^{e}(H_{K})$
follows from the first and last displayed requirements, and its existence
amounts to showing that the mapping
\[
\mathbf{B}^{e}(G_{K})\ni\circ_{G}^{e}+\mathcal{F}\mapsto\circ_{H}^{e}+f(\mathcal{F})\in\mathbf{B}^{e}(H_{K})
\]
is well-defined and equivariant with respect to $f:G(K)\rightarrow H(K)$.
Given the identification $\mathbf{B}^{e}(GL(V))\simeq\mathbf{B}(V)$,
this theorem is closely related to the Tannakian theorem~\ref{thm:TanDescBuild}
below. We will prove the former as a corollary of the latter.

\subsection{~\label{sub:Gr_PforBTBuildings}}

Assuming theorem~\ref{thm:FonctBuild}, we may work out an analog
of the discussion of section~\ref{sub:Gr_PforF(G)} for the pointed
Bruhat-Tits building $\mathbf{B}^{e}(G,K)$. First, recall that $\mathbf{P}(G_{K})=\mathbf{P}(G)$
since $\mathbb{P}(G)$ is projective over $\mathcal{O}_{K}$. Let
thus $P\in\mathbf{P}(G)$ be a parabolic subgroup of $G$ with unipotent
radical $U$. For every Levi subgroup $L$ of $P$, there is a canonical
commutative diagram 
\[
\xyC{1pc}\xymatrix{ & \mathbf{B}^{e}(L,K)=\mathbf{B}^{e}(L_{K})\ar@<1ex>@{^{(}->}[d]^{\iota_{L,G}}\ar@(r,u)[ddr]^{\simeq}\ar@(l,u)[ddl]_{\simeq}\\
 & \mathbf{B}^{e}(G,K)=\mathbf{B}^{e}(G_{K})\ar@<1ex>@{->>}[u]^{r_{P,L}}\ar@{->>}[dl]_{\Gr_{P}}\ar@{->>}[dr]^{\Gr_{P}^{\infty}}\\
\mathbf{B}^{e}(P/U,K)\ar[rr]_{\simeq}^{\psi} &  & \mathbf{T}_{P}^{\infty}\mathbf{B}^{e}(G_{K})
}
\]
where $\iota_{L}:\mathbf{B}^{e}(L,K)\simeq\mathbf{B}^{e}(P/U,K)$
and $\iota_{L,G}:\mathbf{B}^{e}(L,K)\hookrightarrow\mathbf{B}^{e}(G,K)$
are the $L(K)$-equivariant maps functorially induced by $L\simeq P/U$
and $L\hookrightarrow G$, $r_{P,L}$ is the $U(K)$-invariant, $L(K)$-equivariant
retraction of proposition~\ref{prop:distretrac} onto the image $\cup_{S\in\mathbf{S}(L_{K})}\mathbf{B}^{e}(S)$
of $\iota_{L,G}$, $\Gr_{P}=\iota_{L}\circ r_{P,L}$ is a $P(K)$-equivariant
map\nomenclature[Gr_P]{$\Gr _P$}{$P(K)$-equivariant map $\mathbf{B}^e(G,K) \twoheadrightarrow \mathbf{B}^e(P/U,K)$, page \nomrefpage},
and the right hand side triangle comes from~\ref{sub:TangentAtInftyInBuild}.
Both $\Gr_{P}$ and $\Gr_{P}^{\infty}$ identify there codomain with
$U(K)\backslash\mathbf{B}^{e}(G_{K})$, which yields the existence
and unicity of the $P(K)$-equivariant bijection $\psi:\mathbf{B}^{e}(P/U,K)\simeq\mathbf{T}_{P}^{\infty}\mathbf{B}^{e}(G_{K})$
at the bottom of our diagram. Neither $\psi$ nor $\Gr_{P}$ depends
upon the choice of $L$: if $L'$ is another Levi subgroup of $P$,
there is a $u\in U(\mathcal{O}_{K})$ such that $L'=uLu^{-1}$. The
automorphism $\Int(u):G\rightarrow G$ then induces by functoriality
a commutative diagram
\[
\xyC{2pc}\xyR{2pc}\xymatrix{\mathbf{B}^{e}(G,K)\ar[d]^{\Int(u)}\ar@{->>}[r]\sp(0.40){r_{P,L}} & \cup_{S\in\mathbf{S}(L_{K})}\mathbf{B}^{e}(S)\ar[d]^{\Int(u)} & \mathbf{B}^{e}(L,K)\ar[l]\sb(0.4){\iota_{L,G}}\ar[d]^{\Int(u)}\ar[r]\sp(0.45){\iota_{L}} & \mathbf{B}^{e}(P/U,K)\ar[d]^{\mathrm{Id}}\\
\mathbf{B}^{e}(G,K)\ar@{->>}[r]\sp(0.38){r_{P,L'}} & \cup_{S'\in\mathbf{S}(L'_{K})}\mathbf{B}^{e}(S') & \mathbf{B}^{e}(L',K)\ar[l]\sb(0.4){\iota_{L',G}}\ar[r]\sp(0.45){\iota_{L'}} & \mathbf{B}^{e}(P/U,K)
}
\]
The first vertical map is also equal to the multiplication by $u$
map on $\mathbf{B}^{e}(G,K)$: 
\[
\Int(u)(\circ_{G}^{e}+\mathcal{F})=\circ_{G}^{e}+u\mathcal{F}=u(\circ_{G}^{e}+\mathcal{F})
\]
for all $\mathcal{F}\in\mathbf{F}(G)$ since $u\in G(\mathcal{O}_{K})$
fixes $\circ_{G}^{e}$. Thus $\Gr_{P}$ and $\psi$ indeed do not
depend upon the choice of $L$. One checks easily that $\psi$ is
an isomorphism of affine $\mathbf{F}(P_{K}/U_{K})$-spaces. In particular:
$\mathbf{T}_{P}^{\infty}\mathbf{B}^{e}(G_{K})$ is an affine $\mathbf{F}(P_{K}/U_{K})$-building,
its ``quotient'' and ``building'' metric agree by~\ref{sub:QuotientAndBuildMetricAgree},
thus $\psi:\mathbf{B}^{e}(P/U,K)\rightarrow\mathbf{T}_{P}^{\infty}\mathbf{B}^{e}(G_{K})$
is an isometry while $\Gr_{P}:\mathbf{B}^{e}(G,K)\twoheadrightarrow\mathbf{B}^{e}(P/U,K)$
is non-expanding when everyone is equipped with the metrics induced
by a chosen faithful representation $\tau$ of $G_{K}$. This gives
the following formula: for every $x,y\in\mathbf{B}^{e}(G_{K})$ and
$\mathcal{F}\in\mathbf{F}(G)$, 
\[
\lim_{t\rightarrow\infty}d_{\tau}(x+t\mathcal{F},y+t\mathcal{F})=d_{\Gr_{\mathcal{F}}^{\bullet}(\tau)}(\Gr_{\mathcal{F}}(x),\Gr_{\mathcal{F}}(y))\leq d_{\tau}(x,y)
\]
where $\Gr_{\mathcal{F}}=\Gr_{P_{\mathcal{F}}}:\mathbf{B}^{e}(G,K)\twoheadrightarrow\mathbf{B}^{e}(P_{\mathcal{F}}/U_{\mathcal{F}},K)$.
Also:
\[
\left\langle \overrightarrow{xy},\mathcal{F}\right\rangle =\left\langle \overrightarrow{\Gr_{\mathcal{F}}(x)\Gr_{\mathcal{F}}(y)},\overline{\mathcal{F}}\right\rangle 
\]
for every $x,y\in\mathbf{B}^{e}(G,K)$, with $\overline{\mathcal{F}}=\Gr_{\mathcal{F}}(\mathcal{F})$
in $\mathbf{G}(Z(P_{\mathcal{F}}/U_{\mathcal{F}}))=\mathbf{G}(\overline{R}(P_{\mathcal{F}}))$.

\subsection{~}

We may also establish some partial functoriality results when no base
point is given, as in Landvogt's work. Fix a quasi-local (discrete,
Henselian) pair $(K,\left|-\right|)$. For any reductive group $G$
over $K$, there is a finite Galois extension $L$ of $K$ splitting
$G$ such that the reduced building $\mathbf{B}^{r}(G_{L})$ contains
a special point $\circ$ fixed by $\Gal(L/K)$. Indeed, let first
$L_{1}$ be a finite Galois extension of $K$ splitting $G$, and
choose a facet $F$ of $\mathbf{B}^{r}(G_{L_{1}})$ fixed by $\Gal(L_{1}/K)$,
for instance one which intersects $\mathbf{B}^{r}(G_{K})$. Then the
barycenter $\circ$ of $F$ is also fixed by $\Gal(L_{1}/K)$, and
just like any barycenter of a facet of the Bruhat-Tits building of
a split group, it becomes special over a sufficiently ramified extension
$L$ of $L_{1}$, which we may assume to be Galois over $K$. Write
$\circ_{G}^{e}=(\circ,0)$ for the corresponding $\Gal(L/K)$-invariant
point of $\mathbf{B}^{e}(G_{L})$ and let $G_{\circ}$ be the reductive
group over $\mathcal{O}_{L}$ with generic fiber $G_{L}$ such that
$G_{\circ}(\mathcal{O}_{L})$ is the stabilizer of $\circ_{G}^{e}$
in $G(L)$. Since $\circ_{G}^{e}$ is fixed by $\Gal(L/K)$, the Hopf
$\mathcal{O}_{L}$-sub-algebra $\mathcal{A}(G_{\circ})$ of $\mathcal{A}(G_{L})=\mathcal{A}(G)_{L}$
is fixed by the action of $\Gal(L/K)$. 

Let now $\tau$ be a finite dimensional $K$-representation of $G$,
corresponding to a morphism $f:G\rightarrow H$, with $H=GL(V)$,
$V=V(\tau)$. By~\cite[1.5]{Se68b}, every finitely generated $\mathcal{O}_{L}$-submodule
$M$ of $V_{L}$ is contained in some $\mathcal{A}(G_{\circ})$-sub-comodule
$F$ of $V_{L}$ which is finitely generated (hence free) over $\mathcal{O}_{L}$.
Since $\mathcal{A}(G_{\circ})$ is flat over $\mathcal{O}_{L}$, there
is a smallest such $F$, which we denote by $F(M)$. Since $\mathcal{A}(G_{\circ})$
is stabilized by $\Gal(L/K)$, the map $M\mapsto F(M)$ is $\Gal(L/K)$-equivariant.
Thus starting with a $\Gal(L/K)$-stable $\mathcal{O}_{L}$-lattice
$M$ of $V_{L}$, for instance the base change of an $\mathcal{O}_{K}$-lattice
of $V$, we obtain an $\mathcal{O}_{L}$-model $f:G_{\circ}\rightarrow H_{\circ}$
of $f_{L}:G_{L}\rightarrow H_{L}$, with $H_{\circ}=GL(F(M))$, such
that the point $\circ_{H}^{e}=(\circ,0)$ corresponding to $H_{\circ}$
in $\mathbf{B}^{e}(H_{L})$ is also fixed by $\Gal(L/K)$. Applying
now the previous functoriality results to this $\mathcal{O}_{L}$-morphism
$f:G_{\circ}\rightarrow H_{\circ}$, we obtain: for every extension
$(L',\left|-\right|)$ of $(L,\left|-\right|)$ in $\HV$, there is
a unique morphism $f:\mathbf{B}^{e}(G_{L'})\rightarrow\mathbf{B}^{e}(H_{L'})$
such that 
\[
f(\circ_{G}^{e})=\circ_{H}^{e},\quad f(gx)=f(g)f(x)\quad\mbox{and}\quad f(x+\mathcal{F})=f(x)+f(\mathcal{F})
\]
for every $x\in\mathbf{B}^{e}(G_{L'})$, $g\in G(L')$ and $\mathcal{F}\in\mathbf{F}(G_{L'})$.
Moreover, for every $K$-linear morphism $\sigma:(L',\left|-\right|)\rightarrow(L'',\left|-\right|)$
between two such extensions, 
\[
f(\sigma x)=\sigma f(x)\quad\mbox{in}\quad\mathbf{B}^{e}(G_{L''})
\]
for every $x\in\mathbf{B}^{e}(G_{L'})$. Indeed if $x=\circ_{G}^{e}+\mathcal{F}$
with $\mathcal{F}\in\mathbf{F}(G_{L'})$, then 
\begin{eqnarray*}
f(\sigma x) & = & f(\sigma\circ_{G}^{e}+\sigma\mathcal{F})=f(\circ_{G}^{e}+\sigma\mathcal{F})=\circ_{H}^{e}+f(\sigma\mathcal{F})\\
 & = & \sigma\circ_{H}^{e}+\sigma f(\mathcal{F})=\sigma(\circ_{H}^{e}+f(\mathcal{F}))=\sigma f(x).
\end{eqnarray*}

\section{A Tannakian formalism for Bruhat-Tits buildings}

\subsection{~}

Let again $(K,\left|-\right|)$ be a field with a non-trivial, non-archimedean
absolute value $\left|-\right|:K\rightarrow\mathbb{R}^{+}$, with
valuation ring $\mathcal{O}_{K}$ and residue field $k$. We denote
by $\Norm^{\circ}(K,\left|-\right|)$\nomenclature[Norm]{$\Norm ^\circ (K)$}{Category of splittable normed finite $K$-vector spaces, page \nomrefpage}
the category whose objects are pairs $(V,\alpha)$ where $V$ is a
finite dimensional $K$-vector space and $\alpha:V\rightarrow\mathbb{R}^{+}$
is a splittable $K$-norm on $V$. A morphism $f:(V,\alpha)\rightarrow(V',\alpha')$
is a $K$-linear morphism $f:V\rightarrow V'$ such that $\alpha'(f(x))\leq\alpha(x)$
for every $x\in V$. This defines an $\mathcal{O}_{K}$-linear rigid
$\otimes$-category with neutral object $1_{K}=(K,\left|-\right|)$.
The $\otimes$-products, inner homs and duals 
\begin{eqnarray*}
\left(V_{1},\alpha_{1}\right)\otimes\left(V_{2},\alpha_{2}\right) & = & \left(V_{1}\otimes V_{2},\alpha_{1}\otimes\alpha_{2}\right)\\
\Hom\left(\left(V_{1},\alpha_{1}\right),\left(V_{2},\alpha_{2}\right)\right) & = & \left(\Hom\left(V_{1},V_{2}\right),\Hom\left(\alpha_{1},\alpha_{2}\right)\right)\\
\left(V,\alpha\right)^{\ast} & = & \left(V^{\ast},\alpha^{\ast}\right)
\end{eqnarray*}
are respectively given by : $\alpha_{1}\otimes\alpha_{2}=\Hom(\alpha_{1}^{\ast},\alpha_{2})$
under $V_{1}\otimes V_{2}=\Hom(V_{1}^{\ast},V_{2})$, 
\begin{eqnarray*}
\Hom\left(\alpha_{1},\alpha_{2}\right)(f) & = & \sup\left\{ \frac{\alpha_{2}(f(x))}{\alpha_{1}(x)}:x\in V_{1}\setminus\{0\}\right\} ,\\
\alpha^{\ast}(f) & = & \sup\left\{ \frac{\left|f(x)\right|}{\alpha(x)}:x\in V\setminus\{0\}\right\} .
\end{eqnarray*}
In addition, $\Norm^{\circ}(K,\left|-\right|)$ is an exact category
in Quillen's sense: a short sequence
\[
(V_{1},\alpha_{1})\stackrel{f_{1}}{\longrightarrow}(V_{2},\alpha_{2})\stackrel{f_{2}}{\longrightarrow}(V_{3},\alpha_{3})
\]
is exact precisely when the underlying sequence of $K$-vector spaces
is exact and 
\[
\alpha_{1}(x)=\alpha_{2}(f_{1}(x)),\qquad\alpha_{3}(z)=\inf\left\{ \alpha_{2}(y):y\in f_{2}^{-1}(z)\right\} 
\]
for every $x\in V_{1}$ and $z\in V_{3}$. For $\gamma\in\mathbb{R}$
and $(V,\alpha)\in\Norm^{\circ}(K,\left|-\right|)$, we set\nomenclature[B(alpha,gamma)]{$B(\alpha,\gamma)$}{Open ball of radius $\exp (-\gamma)$ for $\alpha$, page \nomrefpage}\nomenclature[B(alpha,gamma)b]{$\overline{B}(\alpha,\gamma)$}{Closed ball of radius $\exp (-\gamma)$ for $\alpha$, page \nomrefpage}
\begin{eqnarray*}
B(\alpha,\gamma) & = & \left\{ x\in V:\alpha(x)<\exp(-\gamma)\right\} \}\\
\overline{B}(\alpha,\gamma) & = & \left\{ x\in V:\alpha(x)\leq\exp(-\gamma)\right\} \}
\end{eqnarray*}
These are $\mathcal{O}_{K}$-submodules of $V$ and the functors $(V,\alpha)\mapsto B(\alpha,\gamma)$
are easily seen to be exact. However, $(V,\alpha)\mapsto\overline{B}(\alpha,\gamma)$
is \emph{also }exact, because in fact every exact sequence in $\Norm^{\circ}(K)$
is split by \cite[1.5.ii + Appendix]{BrTi84b}! If $M$ is an $\mathcal{O}_{K}$-lattice
in $V$ (by which we mean a finitely generated, thus free, $\mathcal{O}_{K}$-submodule
spanning $V$), we denote by $\alpha_{M}$ the splittable $K$-norm
on $V$ with $\overline{B}(\alpha_{M},0)=M$ defined by
\[
\alpha_{M}(x)=\inf\left\{ \left|\lambda\right|:\lambda\in K,\, x\in\lambda M\right\} =\min\left\{ \left|\lambda\right|:\lambda\in K,\, x\in\lambda M\right\} .
\]

\subsection{~}

For $(K,\left|-\right|)\rightarrow(L,\left|-\right|)$, there is an
exact $\mathcal{O}_{K}$-linear $\otimes$-functor
\[
-\otimes L:\Norm^{\circ}(K,\left|-\right|)\rightarrow\Norm^{\circ}(L,\left|-\right|)
\]
defined by $(V,\alpha)\otimes L=(V_{L},\alpha_{L})$ where $V_{L}=V\otimes L$
and 
\begin{eqnarray*}
\alpha_{L}(v) & = & \inf\left\{ \max\{\left|x_{k}\right|\alpha(v_{k})\}:v={\textstyle \sum}v_{k}\otimes x_{k},\mbox{ }v_{k}\in V,\, x_{k}\in L\right\} ,\\
 & = & \min\left\{ \max\{\left|x_{k}\right|\alpha(v_{k})\}:v={\textstyle \sum}v_{k}\otimes x_{k},\mbox{ }v_{k}\in V,\, x_{k}\in L\right\} .
\end{eqnarray*}
For $(V,\alpha)\in\Norm^{\circ}(K,\left|-\right|)$, $\gamma\in\mathbb{R}$
and $x\in V$, 
\[
B(\alpha_{L},\gamma)=B(\alpha,\gamma)\otimes\mathcal{O}_{L},\quad\overline{B}(\alpha_{L},\gamma)=\overline{B}(\alpha,\gamma)\otimes\mathcal{O}_{L}\quad\mbox{and}\quad\alpha=\alpha_{L}\vert V.
\]
If $M$ is an $\mathcal{O}_{K}$-lattice in $V$, then $\alpha_{M,L}=\alpha_{M\otimes\mathcal{O}_{L}}$.

\subsection{~}

We shall also consider the category $\Norm'(K)$\nomenclature[Norm(K)']{$\Norm '(K)$}{Category of normed $K$-spaces with a lattice, defined page \nomrefpage}
whose objects are triples $(V,\alpha,M)$ where $(V,\alpha)$ is an
object of $\Norm^{\circ}(K)$ and $M$ is an $\mathcal{O}_{K}$-lattice
in $V$, with the obvious morphisms. It is again an $\mathcal{O}_{K}$-linear
$\otimes$-category. The formula
\[
\loc^{\gamma}(V,\alpha,M)=\mbox{image of }\overline{B}(\alpha,\gamma)\cap M\mbox{ in }M_{k}=M\otimes_{\mathcal{O}_{K}}k
\]
defines an $\mathcal{O}_{K}$-linear $\otimes$-functor with values
in $\Fil(k)=\Fil^{\mathbb{R}}\LF(k)$,\nomenclature[loc]{$\loc$}{Functor $\Norm '(K) \rightarrow \Fil (k)$, defined page \nomrefpage}
\[
\loc:\Norm'(K)\rightarrow\Fil(k).
\]
Indeed by the axiom $R(s)$ for $\mathbf{B}(V)$, there is an $\mathcal{O}_{K}$-basis
$(e_{1},\cdots,e_{n})$ of $M$ adapted to $\alpha$, thus $\alpha(\sum x_{i}e_{i})=\max\left\{ \left|x_{i}\right|e^{-\gamma_{i}}\right\} $
where $\gamma_{i}=-\log\alpha(e_{i})$ and 
\[
\loc^{\gamma}(V,\alpha,M)=\oplus_{\gamma_{i}\geq\gamma}ke_{i}
\]
from which easily follows that $\loc$ is well-defined and compatible
with $\otimes$-products.

\subsection{~}

For an extension $(K,\left|-\right|)\rightarrow(L,\left|-\right|)$
and a reductive group $G$ over $\mathcal{O}_{K}$, we denote by $\mathbf{B}'(\omega_{G}^{\circ},L,\left|-\right|)$
or simply $\mathbf{B}'(\omega_{G}^{\circ},L)$\nomenclature[B(omega_G^o,L)']{$\mathbf{B}'(\omega ^\circ _G ,L)$}{Space of all $L$-norms on $\omega ^\circ _{G,L}$, defined page \nomrefpage}
the set of all factorizations
\[
\Rep^{\circ}(G)(\mathcal{O}_{K})\stackrel{\alpha}{\longrightarrow}\Norm^{\circ}(L,\left|-\right|)\stackrel{\mathrm{forg}}{\longrightarrow}\Vect(L)
\]
of the fiber functor $\omega_{G,L}^{\circ}$ through an $\mathcal{O}_{K}$-linear
$\otimes$-functor $\alpha$. For $\tau\in\Rep^{\circ}(G)(\mathcal{O}_{K})$
and $\alpha\in\mathbf{B}'(\omega_{G}^{\circ},L)$, we denote by $\alpha(\tau)$
the corresponding $L$-norm on $V_{L}(\tau)$.

\subsection{~}

For $g\in G(L)$ and $\mathcal{F}\in\mathbf{F}(G_{L})$, the following
formulas
\[
(g\cdot\alpha)(\tau)=\tau_{L}(g)\cdot\alpha(\tau)\quad\mbox{and}\quad(\alpha+\mathcal{F})(\tau)=\alpha(\tau)+\mathcal{F}(\tau)
\]
respectively define an action of $G(L)$ on $\mathbf{B}'(\omega_{G}^{\circ},L)$
and a $G(L)$-equivariant map
\[
+:\mathbf{B}'(\omega_{G}^{\circ},L)\times\mathbf{F}(G_{L})\rightarrow\mathbf{B}'(\omega_{G}^{\circ},L).
\]

\subsection{~}

We define the canonical $L$-norm $\alpha_{G,L}$\nomenclature[alpha_G,L]{$\alpha _{G,L}$}{Canonical $L$-norm on $\omega ^\circ _{G,L}$ defined page \nomrefpage}
on $\omega_{G,L}^{\circ}$ by the formula 
\[
\alpha_{G,L}(\tau)=\alpha_{V_{\mathcal{O}_{L}}(\tau)}=\alpha_{V(\tau),L}.
\]
By~propositions~\ref{Pro:Aut(w0)G(w0)F(w0)} and \ref{pro:LocalUnibImpliesIsotriv},
$G(\mathcal{O}_{L})$ is the stabilizer of $\alpha_{G,L}$ in $G(L)$.
We set\nomenclature[B(omega_G^o,L)]{$\mathbf{B}(\omega ^\circ _G ,L)$}{Space of all good $L$-norms on $\omega ^\circ _{G,L}$, defined page \nomrefpage}
\[
\mathbf{B}(\omega_{G}^{\circ},L)\stackrel{\mathrm{def}}{=}\alpha_{G,L}+\mathbf{F}(G_{L}).
\]
This is a $G(\mathcal{O}_{L})$-stable subset of $\mathbf{B}'(\omega_{G}^{\circ},L)$
equipped with a $G(\mathcal{O}_{L})$-equivariant map\nomenclature[can]{$\can$}{Map $\mathbf{F}(G_L) \twoheadrightarrow \mathbf{B}(\omega ^\circ _G ,L)$, defined page \nomrefpage}
\[
\can:\mathbf{F}(G_{L})\twoheadrightarrow\mathbf{B}(\omega_{G}^{\circ},L),\qquad\can(\mathcal{F})=\alpha_{G,L}+\mathcal{F}.
\]

\subsection{~}

Any $L$-norm $\alpha$ on $\omega_{G,L}^{\circ}$ induces an $\mathcal{O}_{K}$-linear
$\otimes$-functor 
\[
\alpha':\Rep^{\circ}(G)(\mathcal{O}_{K})\rightarrow\Norm'(L)
\]
by the formula $\alpha'(\tau)=\left(V_{L}(\tau),\alpha(\tau),V_{\mathcal{O}_{L}}(\tau)\right)$,
thus also an $\mathcal{O}_{K}$-linear $\otimes$-functor 
\[
\loc(\alpha):\Rep^{\circ}(G)(\mathcal{O}_{K})\rightarrow\Fil(k_{L}),\quad\loc(\alpha)=\loc\circ\alpha'
\]
where $k_{L}$ is the residue field of $\mathcal{O}_{L}$. We may
thus define\nomenclature[B(omega_G^o,L)?]{$\mathbf{B}^{?}(\omega ^\circ _G ,L)$}{Space of all nice $L$-norms on $\omega ^\circ _{G,L}$, defined page \nomrefpage}
\[
\mathbf{B}^{?}(\omega_{G}^{\circ},L)=\left\{ \alpha\in\mathbf{B}'(\omega_{G}^{\circ},L):\loc(\alpha)\mbox{\,\ is exact}\right\} .
\]
This is a $G(\mathcal{O}_{L})$-stable subset of $\mathbf{B}'(\omega_{G}^{\circ},L)$
equipped with a $G(\mathcal{O}_{L})$-equivariant map\nomenclature[loc]{$\loc$}{Map $\mathbf{B}^{?}(\omega ^\circ _G ,L) \rightarrow \mathbf{F}(G_{k_L})$, defined page \nomrefpage}
\[
\loc:\mathbf{B}^{?}(\omega_{G}^{\circ},L)\rightarrow\mathbf{F}(G_{k_{L}}).
\]

\subsection{~ }

All of the above constructions are functorial in $G$, $(K,\left|-\right|)$
and $(L,\left|-\right|)$, using pre- or post-composition with the
obvious exact $\otimes$-functors
\[
\begin{array}{rcl}
\Rep^{\circ}(G_{2})(\mathcal{O}_{K}) & \longrightarrow & \Rep^{\circ}(G_{1})(\mathcal{O}_{K})\\
\Rep^{\circ}(G)(\mathcal{O}_{K_{1}}) & \longrightarrow & \Rep^{\circ}(G)(\mathcal{O}_{K_{2}})\\
\Norm^{\circ}(L_{1},\left|-\right|_{1}) & \longrightarrow & \Norm^{\circ}(L_{2},\left|-\right|_{2})
\end{array}\mbox{ for }\begin{array}{rcl}
G_{1} & \rightarrow & G_{2}\\
(K_{1},\left|-\right|_{1}) & \rightarrow & (K_{2},\left|-\right|_{2})\\
(L_{1},\left|-\right|_{1}) & \rightarrow & (L_{2},\left|-\right|_{2})
\end{array}
\]

\begin{lem}
\label{lem:loccan}For any reductive group $G$ over $\mathcal{O}_{K}$,
we have 
\[
\mathbf{B}(\omega_{G}^{\circ},L)\subset\mathbf{B}^{?}(\omega_{G}^{\circ},L)\subset\mathbf{B}'(\omega_{G}^{\circ},L)
\]
and the composition $\loc\circ\can:\mathbf{F}(G_{L})\rightarrow\mathbf{F}(G_{k_{L}})$
is the reduction map 
\[
\mathbf{F}(G_{L})\stackrel{\simeq}{\longleftarrow}\mathbf{F}(G_{\mathcal{O}_{L}})\stackrel{\mathrm{red}}{\longrightarrow}\mathbf{F}(G_{k_{L}}).
\]
For any $S\in\mathbf{S}(G_{\mathcal{O}_{L}})$, the functorial map
$\mathbf{B}'(\omega_{S}^{\circ},L)\rightarrow\mathbf{B}'(\omega_{G}^{\circ},L)$
is injective.\end{lem}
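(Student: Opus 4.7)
The plan is to prove part (2) — the identification $\loc\circ\can=\mathrm{red}$ — first and deduce part (1) from it, then handle part (3) by reducing it to a weight-theoretic statement about finite-dimensional representations of $G$. The second inclusion of part (1) is tautological.

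For part (2), given $\mathcal{F}\in\mathbf{F}(G_L)$, Proposition \ref{prop:Generisation} produces a unique lift $\tilde{\mathcal{F}}\in\mathbf{F}(G_{\mathcal{O}_L})$ (valid because $\mathcal{O}_L$ is a valuation ring). The group $G_{\mathcal{O}_L}$ is isotrivial by Proposition \ref{pro:LocalUnibImpliesIsotriv}, so Theorem \ref{thm:MainTan} identifies $\mathbf{F}(G_{\mathcal{O}_L})$ with $\mathbb{F}^{\mathbb{R}}(\omega^\circ)(\mathcal{O}_L)$, and since $\Spec(\mathcal{O}_L)$ is affine, \ref{sub:DescParEqConj} lets me further lift to a cocharacter $\chi:\mathbb{D}_{\mathcal{O}_L}(\mathbb{R})\to G_{\mathcal{O}_L}$ with $\Fi(\chi)=\tilde{\mathcal{F}}$, factoring through a maximal split subtorus $S$. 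For any $\tau\in\Rep^\circ(G)(\mathcal{O}_K)$ the $S$-weight decomposition $V_{\mathcal{O}_L}(\tau)=\oplus_m V_{\mathcal{O}_L}(\tau)_m$ simultaneously splits the canonical lattice norm (by additivity of $\alpha_M$ along direct sums of lattices) and the filtration $\tilde{\mathcal{F}}(\tau)^\gamma=\oplus_{\chi^\ast(m)\geq\gamma}V_{\mathcal{O}_L}(\tau)_m$, so the formula (\ref{eq:formulaAlpha+Ftori}) of section~\ref{sub:BruhatGL(V)} — refined from one-dimensional to orthogonal summands — yields
$$(\alpha_{G,L}+\mathcal{F})(\tau)\Bigl({\textstyle\sum}v_m\Bigr)=\max_{m}e^{-\chi^\ast(m)}\alpha_{V_{\mathcal{O}_L}(\tau)_m}(v_m).$$
For $v=\sum v_m\in V_{\mathcal{O}_L}(\tau)$ the condition $(\alpha_{G,L}+\mathcal{F})(\tau)(v)\leq e^{-\gamma}$ is vacuous on $v_m$ when $\chi^\ast(m)\geq\gamma$ and forces $v_m\in\mathfrak{m}_L V_{\mathcal{O}_L}(\tau)_m$ when $\chi^\ast(m)<\gamma$; hence the image of this intersection in $V_{k_L}(\tau)$ is exactly $\oplus_{\chi^\ast(m)\geq\gamma}V_{k_L}(\tau)_m=\mathrm{red}(\tilde{\mathcal{F}})^\gamma(\tau)$, which proves part (2). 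Since $\mathrm{red}(\tilde{\mathcal{F}})$ is by construction an $\mathbb{R}$-filtration on $\omega_{G,k_L}^\circ$ (its $\gamma$-components automatically satisfy the exactness axiom (F3)), this also yields the first inclusion of part (1).

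For part (3), an $\mathcal{O}_L$-linear $\otimes$-functor $\beta\in\mathbf{B}'(\omega_S^\circ,L)$ commutes with the canonical character decomposition $\rho=\oplus_{m\in M}\rho_m^{\oplus n_m}$ of each $\rho\in\Rep^\circ(S)(\mathcal{O}_L)$, where $M=X^\ast(S)$ and $\rho_m$ is the one-dimensional character representation. As direct sums in $\Norm^\circ(L)$ carry the max norm, $\beta$ is determined by the family $\{\beta(\rho_m)\}_{m\in M}$ of scalar norms on $L$. Injectivity thus reduces to showing that each $m\in M$ occurs as an $S$-weight of $\tau_{\mathcal{O}_L}$ for some $\tau\in\Rep^\circ(G)(\mathcal{O}_K)$: restricting the max norm $\beta(\tau_{\mathcal{O}_L}|_S)$ to the corresponding non-zero weight space recovers $\beta(\rho_m)^{\oplus n_m(\tau)}$, hence $\beta(\rho_m)$. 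Lemma \ref{lem:AssLin}(3) produces a faithful $\tau_0\in\Rep^\circ(G)(\mathcal{O}_K)$; the resulting closed immersion $S\hookrightarrow GL(V_{\mathcal{O}_L}(\tau_0))$ factors through a maximal torus and thus forces the $S$-weights of $V_{\mathcal{O}_L}(\tau_0)$ to generate $X^\ast(S)$ as an abelian group, so every $m\in M$ occurs in a suitable tensor product $\tau_0^{\otimes a}\otimes(\tau_0^\vee)^{\otimes b}\in\Rep^\circ(G)(\mathcal{O}_K)$.

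The main obstacle is the computation in part (2): one must execute two successive lifts of the filtration (from $L$ to $\mathcal{O}_L$, then to a global cocharacter) so as to diagonalize the canonical lattice norm and the filtration in the same weight decomposition; once this is achieved, the identification $\loc\circ\can=\mathrm{red}$ is a direct weight-by-weight check, and the remaining statements follow from standard manipulations of $\otimes$-functors together with the Tannakian framework of section~3.
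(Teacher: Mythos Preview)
Your proof is correct and follows essentially the same approach as the paper: reduce to a maximal split torus $S\in\mathbf{S}(G_{\mathcal{O}_L})$ containing the given filtration (via Proposition~\ref{prop:Generisation}) and compute everything through the weight decomposition. The paper packages the weight computation into the subsequent Lemma~\ref{lem:ApptNorm} and leaves the injectivity claim almost entirely implicit, whereas you spell out both the explicit norm formula and the argument (via a faithful $\tau_0$) that every character of $S$ occurs as an $S$-weight of some $\tau\in\Rep^{\circ}(G)(\mathcal{O}_K)$.
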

\begin{proof}
By proposition~\ref{prop:Generisation}, any $\mathcal{F}\in\mathbf{F}(G_{L})$
belongs to $\mathbf{F}(S_{L})$ for some $S$ in $\mathbf{S}(G_{\mathcal{O}_{L}})$.
Pre-composing with $\Rep^{\circ}(G)(\mathcal{O}_{K})\rightarrow\Rep^{\circ}(S)(\mathcal{O}_{L})$
yields the vertical maps of the commutative diagram 
\[
\xymatrix{\mathbf{F}(S_{L})\ar@{->>}[r]\sp(0.4){\can}\ar@{_{(}->}[d] & \mathbf{B}(\omega_{S}^{\circ},L)\ar@{^{(}->}[r]\ar[d] & \mathbf{B}'(\omega_{S}^{\circ},L)\ar[d] & \mathbf{B}^{?}(\omega_{S}^{\circ},L)\ar@{_{(}->}[l]\ar[r]^{\loc}\ar[d] & \mathbf{F}(S_{k_{L}})\ar@{^{(}->}[d]\\
\mathbf{F}(G_{L})\ar@{->>}[r]\sp(0.4){\can} & \mathbf{B}(\omega_{G}^{\circ},L)\ar@{^{(}->}[r] & \mathbf{B}'(\omega_{G}^{\circ},L) & \mathbf{B}^{?}(\omega_{G}^{\circ},L)\ar@{_{(}->}[l]\ar[r]^{\loc} & \mathbf{F}(G_{k_{L}})
}
\]
which reduces us to the case $K=L$, $G=S$ treated below.\end{proof}
\begin{lem}
\label{lem:ApptNorm}Suppose that $G=S$ is a split torus. Then all
maps in
\[
\xymatrix{\mathbf{F}(S_{L})\ar@{->>}[r]\sp(0.4){\can} & \mathbf{B}(\omega_{S}^{\circ},L)\ar@{^{(}->}[r] & \mathbf{B}'(\omega_{S}^{\circ},L) & \mathbf{B}^{?}(\omega_{S}^{\circ},L)\ar@{_{(}->}[l]\ar[r]^{\loc} & \mathbf{F}(S_{k_{L}})}
\]
are isomorphisms of pointed affine $\mathbf{G}(S)$-spaces. Moreover,
$S(L)$ acts on
\[
\mathbf{B}(\omega_{S}^{\circ},L)=\mathbf{B}^{?}(\omega_{S}^{\circ},L)=\mathbf{B}'(\omega_{S}^{\circ},L)
\]
by translations through the morphism 
\[
\nu_{\mathbf{B},S}:S(L)\rightarrow\mathbf{G}(S)
\]
which maps $s\in S(L)$ to the unique morphism $\nu_{\mathbf{B},S}(s):\mathbb{D}_{\mathcal{O}_{K}}(\mathbb{R})\rightarrow S$
whose composition with any character $\chi$ of $S$ is the character
$\log\left|\chi(s)\right|\in\mathbb{R}$ of $\mathbb{D}_{\mathcal{O}_{K}}(\mathbb{R})$. \end{lem}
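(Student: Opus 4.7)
\emph{Proof plan.} Let $M = \Hom(S, \mathbb{G}_{m,\mathcal{O}})$ be the character group of the split torus $S$, so that $\mathbf{F}(S_L) = \mathbf{G}(S)(L) = \Hom(M,\mathbb{R}) = \mathbf{F}(S_{k_L})$. For $m \in M$ let $\rho_m$ be the rank-one $S$-representation of weight $m$; every $\tau \in \Rep^\circ(S)(\mathcal{O})$ carries the canonical weight decomposition $V(\tau) = \oplus_{m \in M} V(\tau)_m$, each summand being free over $\mathcal{O}$ of rank $r_m$ with $S$ acting through $m$. My plan is to parametrize all four sets in the diagram by $\Hom(M,\mathbb{R})$ in such a way that every map becomes the identity.

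The central step produces a bijection $\Psi: \mathbf{B}'(\omega_S^\circ, L) \to \Hom(M,\mathbb{R})$. Given $\alpha \in \mathbf{B}'(\omega_S^\circ, L)$, the $\otimes$-axioms force $\alpha(1_K) = 1_L$ and $\alpha(\rho_m) = e^{-\phi_\alpha(m)}\,|\cdot|$ for a unique group homomorphism $\phi_\alpha: M \to \mathbb{R}$, and I set $\Psi(\alpha) = \phi_\alpha$. To see $\Psi$ is injective, write $V(\tau)_m = \rho_m \otimes_{\mathcal{O}} V(\tau)_m^0$ with $V(\tau)_m^0$ equipped with the trivial $S$-action, pick any $\mathcal{O}$-basis $(e_j)$ of $V(\tau)_m^0$, and observe that each $\mathcal{O} e_j \subset V(\tau)_m^0$ is a trivial subrepresentation so that $\alpha(\mathcal{O} e_j) = 1_L$; combining the $\alpha$-images of the $S$-equivariant inclusions $\mathcal{O} e_j \hookrightarrow V(\tau)_m^0$ with those of the projections $V(\tau)_m^0 \twoheadrightarrow \mathcal{O} e_j$ and the ultrametric inequality yields $\alpha(V(\tau)_m^0) = \alpha_{V(\tau)_m^0 \otimes \mathcal{O}_L}$. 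Tensor compatibility then gives $\alpha(V(\tau)_m) = e^{-\phi_\alpha(m)}\alpha_{V(\tau)_m \otimes \mathcal{O}_L}$, and repeating the inclusion/projection argument on the whole weight decomposition forces
\[
\alpha(\tau)(v) \;=\; \max_m e^{-\phi_\alpha(m)} \alpha_{V(\tau)_m \otimes \mathcal{O}_L}(v_m), \qquad v = \sum v_m.
\]
For surjectivity, given $\phi \in \Hom(M,\mathbb{R})$ let $\mathcal{F}_\phi \in \mathbf{F}(S_L)$ be the corresponding filtration: the explicit formula~(\ref{eq:formulaAlpha+Ftori}) for $\can(\mathcal{F}_\phi) = \alpha_{G,L} + \mathcal{F}_\phi$ coincides with the displayed expression, so $\can(\mathcal{F}_\phi)$ is mapped to $\phi$ under $\Psi$. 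Hence $\mathbf{B}(\omega_S^\circ,L) = \mathbf{B}'(\omega_S^\circ,L)$ and both are identified via $\Psi$ with $\mathbf{F}(S_L)$ as pointed affine $\mathbf{G}(S)$-spaces, based at $\alpha_{G,L} \leftrightarrow 0$.

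For $\loc$, a direct computation on $\rho_m$ (when $\alpha \leftrightarrow \phi$) shows $\loc^\gamma(\alpha)(\rho_m) = k_L$ if $\gamma \leq \phi(m)$ and $0$ otherwise, which is precisely the $\mathbb{R}$-filtration in $\mathbf{F}(S_{k_L})$ associated with $\phi$. By tensor-multiplicativity, $\loc(\alpha)$ equals the filtration induced by the cocharacter $\phi$ on every $\tau$; this is automatically $\otimes$-compatible and exact, so $\mathbf{B}^?(\omega_S^\circ, L) = \mathbf{B}'(\omega_S^\circ, L)$ and $\loc$ is the identity under the two identifications with $\Hom(M,\mathbb{R})$. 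For the $S(L)$-action, I compute $(s \cdot \alpha)(\rho_m)(1) = \alpha(\rho_m)(m(s)^{-1}) = |m(s)|^{-1}\,e^{-\phi(m)}$, whence $\phi_{s\cdot\alpha}(m) = \phi(m) + \log|m(s)|$, i.e.\ translation by the homomorphism $\nu_{\mathbf{B},S}(s) \in \mathbf{G}(S)$ as defined in the statement.

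The main obstacle is the rigidity step: extracting from the a priori weak axioms of an $\mathcal{O}$-linear $\otimes$-functor into $\Norm^\circ(L)$ enough structure to pin down $\alpha$ uniquely from its values on the rank-one $\rho_m$'s. This hinges on the non-archimedean ultrametric inequality (to sharpen the natural inclusion/projection inequalities into the max equality displayed above), together with the fact that every trivial $S$-representation in $\Rep^\circ(S)(\mathcal{O})$ decomposes as a direct sum of copies of $1_K$; no discreteness of the valuation or further hypothesis on $L$ is required.
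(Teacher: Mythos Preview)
Your proof is correct and follows the same approach as the paper: both identify every set in the diagram with $\Hom(M,\mathbb{R})$ via the values on the rank-one characters $\rho_m$, and then verify that each map becomes the identity under this identification. Your argument is in fact more detailed than the paper's at the key rigidity step---the paper simply asserts that the formula $\alpha(\tau)(x)=\max_m e^{-\alpha^\sharp(m)}\alpha_{V_{\mathcal{O}_L}(\tau_m)}(x_m)$ yields a bijection $\mathbf{B}'(\omega_S^\circ,L)\simeq\Hom(M,\mathbb{R})$, whereas you actually derive it from the $\otimes$-functor axioms via the inclusion/projection maps and the ultrametric inequality.
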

\begin{proof}
Put $M=\Hom(S,\mathbb{G}_{m,\mathcal{O}_{K}})$ and let $\rho_{m}$
be the representation of $S$ on $\mathcal{O}_{K}$ given by the character
$m\in M$. For $\tau\in\Rep^{\circ}(S)(\mathcal{O}_{K})$, let $\tau=\oplus\tau_{m}$
be the weight decompositions of $\tau$. Recall from~section~\ref{sub:Splitting:caseoftori}
that the formulas
\[
\mathcal{F}^{\gamma}(\tau)=\oplus_{\mathcal{F}^{\sharp}(m)\geq\gamma}V(\tau_{m}),\quad\mathcal{F}^{\sharp}(m)=\sup\{\gamma:\mathcal{F}^{\gamma}(\rho_{m})\neq0\}
\]
yield isomorphisms between $\mathbf{F}(S)=\mathbf{G}(S)$ and $\Hom(M,\mathbb{R})$.
Similarly, the formulas
\[
\alpha(\tau)(x)=\max\left\{ e^{-\alpha^{\sharp}(m)}\alpha_{V_{\mathcal{O}_{L}}(\tau_{m})}(x_{m}):m\in M\right\} ,\quad\alpha^{\sharp}(m)=-\log\alpha(\rho_{m})(1_{\mathcal{O}_{K}})
\]
where $x=\sum x_{m}$ is the decomposition of $x$ in $V_{L}(\tau)=\oplus V_{L}(\tau_{m})$
yield isomorphisms between $\mathbf{B}'(\omega_{S}^{\circ},L)$ and
$\Hom(M,\mathbb{R})$. One then checks easily that 
\[
\alpha_{S,L}^{\sharp}=0,\quad(\alpha+\mathcal{F})^{\sharp}=\alpha^{\sharp}+\mathcal{F}^{\sharp}\quad\mbox{and}\quad s\cdot\alpha=\alpha+\nu_{\mathbf{B},S}(s)
\]
as well as $\loc^{\gamma}(\alpha)(\tau)=\oplus_{\alpha^{\sharp}(m)\geq\lambda}V_{k_{L}}(\tau_{m})$,
from which the lemma follows.
\end{proof}

\subsection{~}

For $S\in\mathbf{S}(G_{\mathcal{O}_{L}})$, we identify $\mathbf{B}(\omega_{S}^{\circ},L)$
with its image in $\mathbf{B}(\omega_{G}^{\circ},L)$ and call it
the apartment attached to $S$. The pull map on $\mathbf{B}'(\omega_{G}^{\circ},L)$
thus induces a structure of affine $\mathbf{F}(S_{L})$-space on $\mathbf{B}(\omega_{S}^{\circ},L)$,
and the action of $G(L)$ on $\mathbf{B}'(\omega_{G}^{\circ},L)$
restricts to an action of $S(L)$ on $\mathbf{B}(\omega_{S}^{\circ},L)$,
by translations through the above morphism $\nu_{\mathbf{B},S}:S(L)\rightarrow\mathbf{G}(S_{L})$.

\subsection{~}

We now restrict our attention to Henselian fields, so that $\mathbf{B}^{e}(G,L,\left|-\right|)$
is also well-defined, functorial in $(L,\left|-\right|)$, and equal
to $\circ_{G}^{e}+\mathbf{F}(G_{L})$ by $T(s)$. Given the functorial
properties of $\mathbf{B}(\omega_{G}^{\circ},L)$, theorem~\ref{thm:FonctBuild}
immediately follows from:
\begin{thm}
\label{thm:TanDescBuild}The formula $\circ_{G}^{e}+\mathcal{F}\mapsto\alpha_{G,L}+\mathcal{F}$
defines a functorial bijection\nomenclature[alpha]{$\boldsymbol{\alpha}$}{Functorial isomorphism $\mathbf{B}^{e}(G,L) \rightarrow \mathbf{B} (\omega_{G}^{\circ},L)$ defined page \nomrefpage}
\[
\boldsymbol{\alpha}:\mathbf{B}^{e}(G,L,\left|-\right|)\rightarrow\mathbf{B}(\omega_{G}^{\circ},L,\left|-\right|)
\]
such that for every $x\in\mathbf{B}^{e}(G_{L}),$ $g\in G(L)$ and
$\mathcal{F}\in\mathbf{F}(G_{L})$, 
\[
\boldsymbol{\alpha}(\circ_{G}^{e})=\alpha_{G},\quad\boldsymbol{\alpha}(g\cdot x)=g\cdot\boldsymbol{\alpha}(x)\quad\mbox{and}\quad\boldsymbol{\alpha}(x+\mathcal{F})=\boldsymbol{\alpha}(x)+\mathcal{F}.
\]
\end{thm}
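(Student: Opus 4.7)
The plan is to construct $\boldsymbol{\alpha}$ apartment by apartment, glue, and then derive everything else formally; the hard point is a stabilizer comparison for which I would reduce to the $GL(V)$ case. Write $\mathbf{S}^{\circ}:=\mathbf{S}(G_{\mathcal{O}_{L}})$. By Proposition~\ref{prop:ConstrCanPtBT}(3) and Lemma~\ref{lem:ApptNorm}, $\mathbf{S}^{\circ}$ indexes exactly the apartments of $\mathbf{B}^{e}(G_{L})$ through $\circ_{G}^{e}$ and the apartments of $\mathbf{B}(\omega_{G}^{\circ},L)$ through $\alpha_{G,L}$. For each $S\in\mathbf{S}^{\circ}$ define
\[
\boldsymbol{\alpha}_{S}\colon\mathbf{B}^{e}(S_{L})\longrightarrow\mathbf{B}(\omega_{S}^{\circ},L)\subset\mathbf{B}(\omega_{G}^{\circ},L),\qquad\circ_{G}^{e}+\mathcal{F}\longmapsto\alpha_{G,L}+\mathcal{F}.
\]
This is an affine isomorphism of principal homogeneous $\mathbf{F}(S_{L})$-spaces, and it is $\mathsf{N}_{G}(S)(L)$-equivariant: on both sides the $\mathsf{Z}_{G}(S)(L)$-translation part is the same morphism $\nu_{\mathbf{B},S}$ (Lemma~\ref{lem:ApptNorm} on the norm side, section~\ref{sub:BTbuildIsAffine} on the building side), while $\mathsf{N}_{G}(S)(\mathcal{O}_{L})$ fixes both base points.

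Axiom~$\mathbf{A1}$ applied to $\circ_{G}^{e}$ and any $x\in\mathbf{B}^{e}(G_{L})$ yields $S\in\mathbf{S}^{\circ}$ with $x\in\mathbf{B}^{e}(S_{L})$, so the $\boldsymbol{\alpha}_{S}$ cover $\mathbf{B}^{e}(G_{L})$. To see they glue, fix $S,S'\in\mathbf{S}^{\circ}$ and $x\in\mathbf{B}^{e}(S_{L})\cap\mathbf{B}^{e}(S'_{L})$, writing $x=\circ_{G}^{e}+\mathcal{F}_{S}=\circ_{G}^{e}+\mathcal{F}_{S'}$. The strong form of $\mathbf{A3}$ from section~\ref{sub:BTbuildIsAffine} yields $g\in G(L)$ with $gS'_{L}=S_{L}$ fixing $\mathbf{B}^{e}(S_{L})\cap\mathbf{B}^{e}(S'_{L})$ pointwise; since $g\circ_{G}^{e}=\circ_{G}^{e}$, Proposition~\ref{prop:ConstrCanPtBT}(2) forces $g\in G(\mathcal{O}_{L})$, which also stabilises $\alpha_{G,L}$. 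Transferring $x$ into $\mathbf{B}^{e}(S_{L})$ through $g$ shows $\mathcal{F}_{S}=g\mathcal{F}_{S'}$, and therefore
\[
\boldsymbol{\alpha}_{S}(x)\;=\;\alpha_{G,L}+g\mathcal{F}_{S'}\;=\;g\cdot\bigl(\alpha_{G,L}+\mathcal{F}_{S'}\bigr)\;=\;g\cdot\boldsymbol{\alpha}_{S'}(x),
\]
so gluing reduces to showing that $g$ also fixes $\alpha_{G,L}+\mathcal{F}_{S'}$.

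This stabilizer comparison is the main obstacle. Axiom~$\mathbf{A8}$ for $\mathbf{B}^{e}(G_{L})$ gives $\mathsf{G}_{x}=\mathsf{G}_{S',x}$, generated by $\mathsf{N}_{G}(S')(L)_{x}$ and by root-subgroup elements $u\in\mathsf{U}_{a}(L)$ with $x\in\mathbf{X}^{+}(S',u)$; the first kind of generators fix $\boldsymbol{\alpha}_{S'}(x)$ by the equivariance established above, so only the $u$'s remain. Using Lemma~\ref{lem:AssLin} to pick a faithful $\tau\in\Rep^{\circ}(G)(\mathcal{O}_{K})$ with $\det\tau=1$ and $G\hookrightarrow GL(V(\tau))$ a closed immersion, I would show that the image of $u$ in $GL(V_{L}(\tau))$ stabilises the $L$-norm $(\alpha_{G,L}+\mathcal{F}_{S'})(\tau)=\alpha_{V_{\mathcal{O}_{L}}(\tau)}+\mathcal{F}_{S'}(\tau)$ by the explicit description of norm stabilisers in \cite[3.5]{Pa99} recalled in section~\ref{sub:BruhatGL(V)}; performing this for every $\tau$ and invoking $G=\Aut^{\otimes}(\omega_{G}^{\circ})$ (Theorem~\ref{thm:MainTan}) gives stabilization of $\boldsymbol{\alpha}_{S'}(x)$ as a functor on $\Rep^{\circ}(G)(\mathcal{O}_{K})$, completing the gluing.

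With $\boldsymbol{\alpha}$ well-defined, the remaining claims are formal. It is $G(L)$-equivariant and compatible with $+$ by inspection of the formula, using $\mathbf{A2}$ to put all data into a common apartment. Surjectivity is the tautology $\mathbf{B}(\omega_{G}^{\circ},L)=\alpha_{G,L}+\mathbf{F}(G_{L})$; for injectivity, given $\boldsymbol{\alpha}(x_{1})=\boldsymbol{\alpha}(x_{2})$, axiom~$\mathbf{A1}$ produces $S\in\mathbf{S}(G_{L})$ with $x_{1},x_{2}\in\mathbf{B}^{e}(S_{L})$, and the restriction of $\boldsymbol{\alpha}$ to this apartment --- an affine isomorphism onto its image, since any such apartment is a $G(L)$-translate of some $\mathbf{B}^{e}(S''_{L})$ with $S''\in\mathbf{S}^{\circ}$ on which $\boldsymbol{\alpha}=\boldsymbol{\alpha}_{S''}$ is bijective --- forces $x_{1}=x_{2}$. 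Finally, functoriality in the Henselian pair $(K,|-|)$ follows by combining the functoriality of $\mathbf{B}^{e}(G,-,-)$ recalled after Proposition~\ref{prop:ConstrCanPtBT} with the manifestly functorial construction of $\mathbf{B}(\omega_{G}^{\circ},-)$.
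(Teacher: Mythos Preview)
Your overall architecture matches the paper's: define $\boldsymbol{\alpha}_{S}$ on each apartment through $\circ_{G}^{e}$, glue via $\mathbf{A3}$, and identify the stabilizer comparison as the crux. But your proposed resolution of that comparison has a real gap. You want: if $u\in\mathsf{U}_{a}(L)$ fixes $x=\circ_{G}^{e}+\mathcal{F}_{S'}$ in $\mathbf{B}^{e}(G_{L})$, then $\tau(u)$ fixes the norm $\alpha_{V_{\mathcal{O}_{L}}(\tau)}+\mathcal{F}_{S'}(\tau)$. The criterion \cite[3.5]{Pa99} is in terms of the matrix entries of $\tau(u)$ in a basis adapted to the norm, whereas your hypothesis $x\in\mathbf{X}^{+}(S',u)$ is a condition on the Bruhat--Tits valuation $\varphi_{a}(u)$. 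Bridging these---bounding $|\tau(u)_{ij}|$ in terms of $\varphi_{a}(u)$---is precisely what would establish functoriality $\mathbf{B}^{e}(G_{L})\to\mathbf{B}^{e}(GL(V(\tau))_{L})$, which is Theorem~\ref{thm:FonctBuild}, a \emph{consequence} of the result you are proving. You have not supplied the direct computation; it is feasible in the split case (using that the Chevalley isomorphism $u_{a}:\mathbb{G}_{a}\to U_{a}$ is defined over $\mathcal{O}_{L}$, so the relevant matrix entries lie in $t^{k}\mathcal{O}_{L}$), but you have not reduced to that case, and for relative root groups the verification is substantially harder.

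The paper avoids this entirely by a density trick (Proposition~\ref{prop:StabNorms}): after reducing to split $G$, pass to an extension $L'$ with $\left|L'^{\times}\right|=\mathbb{R}$. Then every point of the apartment is $s\cdot\circ_{G}^{e}$ for some $s\in S(L')$, and at such points both stabilizers equal $sG(\mathcal{O}_{L'})s^{-1}$; one descends via the injectivity of $\mathbf{B}'(\omega_{G}^{\circ},L)\to\mathbf{B}'(\omega_{G}^{\circ},L')$. This is the key idea you are missing. A secondary issue: your claim that $G(L)$-equivariance and $+$-compatibility follow ``by inspection of the formula, using $\mathbf{A2}$'' is too quick. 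Your gluing only runs over $S\in\mathbf{S}(G_{\mathcal{O}_{L}})$, which gives $G(\mathcal{O}_{L})$-equivariance directly, but for general $g\in G(L)$ the point $g\circ_{G}^{e}$ need not lie in any such apartment, and $\mathbf{A2}$ for a pair $(y,\mathcal{F})$ does not produce an apartment through $\circ_{G}^{e}$. The paper handles this by extending $\boldsymbol{\alpha}_{S}$ from a single apartment to all of $\mathbf{B}^{e}(G_{L})$ via the reconstruction of tight buildings in Remark~\ref{Rk:ReconstBuildFromAppt}, which again rests on the stabilizer comparison.
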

\begin{proof}
Fix an extension $(L,\left|-\right|)\rightarrow(L',\left|-\right|)$
such that $G'=G_{\mathcal{O}_{L'}}$ splits and consider the following
diagram, where $\mathcal{F}\in\mathbf{F}(G_{L})$ and $\mathcal{F}'\in\mathbf{F}(G_{L'})=\mathbf{F}(G'_{L'})$:
\[
\xymatrix{\circ_{G}^{e}+\mathcal{F}\ar@{.>}[d]_{?} & \mathbf{B}^{e}(G,L)\ar@{.>}[d]_{\boldsymbol{\alpha}}\ar@{.>}[dr]^{\boldsymbol{\beta}}\ar[rr] &  & \mathbf{B}^{e}(G',L')\ar@{.>}[d]^{\boldsymbol{\alpha}'}\ar@{.>}[dl]_{\boldsymbol{\beta}'} & \circ_{G'}^{e}+\mathcal{F}'\ar@{.>}[d]^{?}\\
\alpha_{G,L}+\mathcal{F} & \mathbf{B}'(\omega_{G}^{\circ},L)\ar[r]^{-\otimes L'} & \mathbf{B}'(\omega_{G}^{\circ},L') & \mathbf{B}'(\omega_{G'}^{\circ},L')\ar[l]_{\mathrm{Res}} & \alpha_{G',L'}+\mathcal{F}'
}
\]
The bottom maps are respectively induced by post and pre-composition
with
\[
-\otimes L':\Norm^{\circ}(L)\rightarrow\Norm^{\circ}(L')\quad\mbox{and}\quad-\otimes\mathcal{O}_{L'}:\Rep^{\circ}(G)(\mathcal{O}_{K})\rightarrow\Rep^{\circ}(G')(\mathcal{O}_{L'}).
\]
If $\boldsymbol{\alpha}'$ is well-defined and equivariant with respect
to the operations of $G(L')$ and $\mathbf{F}(G_{L'})$, so is $\boldsymbol{\beta}'$.
Then $\boldsymbol{\beta}$ is well-defined and equivariant with respect
to the operations of $G(L)$ and $\mathbf{F}(G_{L})$. But $\mathbf{B}'(\omega_{G}^{\circ},L)\rightarrow\mathbf{B}'(\omega_{G}^{\circ},L')$
is injective, thus $\boldsymbol{\alpha}$ is also well-defined and
equivariant with respect to the operations of $G(L)$ and $\mathbf{F}(G_{L})$.
Its image equals $\mathbf{B}(\omega_{G}^{\circ},L)$ by definition,
which is thus stable under the operations of $G(L)$ and $\mathbf{F}(G_{L})$
on $\mathbf{B}'(\omega_{G}^{\circ},L)$. Since $\loc(\alpha_{G,L}+\mathcal{F})=\mathcal{F}_{k_{L}}$
for every $\mathcal{F}\in\mathbf{F}(G_{L})$, the restriction of $\boldsymbol{\alpha}$
to any apartment $\mathbf{B}^{e}(S_{L})=\circ_{G}^{e}+\mathbf{F}(S_{L})$
for $S\in\mathbf{S}(G_{\mathcal{O}_{L}}$) is injective. Since any
pair of points in $\mathbf{B}^{e}(G,L)$ is $G(L)$-conjugated to
one in such an apartment by the axiom $R(s)$ for $\mathbf{B}^{e}(G,L)$,
$\boldsymbol{\alpha}:\mathbf{B}^{e}(G,L)\rightarrow\mathbf{B}(\omega_{G}^{\circ},L)$
is a bijection. This reduces us to the case where $G$ is split over
$\mathcal{O}_{K}$ and $K=L$. 

Suppose that $\circ_{G}^{e}+\mathcal{F}_{1}=\circ_{G}^{e}+\mathcal{F}_{2}=x$
in $\mathbf{B}^{e}(G_{K})$ for some $\mathcal{F}_{1},\mathcal{F}_{2}\in\mathbf{F}(G_{K})$,
choose $S_{i}\in\mathbf{S}(G_{K})$ such that $\mathcal{F}_{i}\in\mathbf{F}(S_{i})$
and $\circ_{G}^{e}\in\mathbf{B}^{e}(S_{i})$ using $L(s)$ for $\mathbf{B}^{e}(G_{K})$,
and then choose $g\in G(K)$ fixing $\circ_{G}^{e}$ and $x$ such
that $\Int(g)(S_{1})=S_{2}$ using $R(i)$ for $\mathbf{B}^{e}(G_{K})$.
Then $S_{i}\in\mathbf{S}(G)$ and $g\in G(\mathcal{O}_{K})$ by proposition~\ref{prop:ConstrCanPtBT},
moreover $g\mathcal{F}_{1}=\mathcal{F}_{2}$ since $\mathbf{B}^{e}(S_{2})$
is an affine $\mathbf{F}(S_{2})$-space. Thus $g(\alpha_{G}+\mathcal{F}_{1})=\alpha_{G}+\mathcal{F}_{2}$
in $\mathbf{B}'(\omega_{G}^{\circ},K)$, since $G(\mathcal{O}_{K})$
fixes $\alpha_{G}$. But $g$ fixes the point $x=\circ_{G}^{e}+\mathcal{F}_{1}$
of $\mathbf{B}^{e}(S_{1})$, thus $g$ fixes $\alpha_{G}+\mathcal{F}_{1}$
in $\mathbf{B}'(\omega_{G}^{\circ},K)$ by lemma~\ref{lem:StabNorms}
below, therefore $\alpha_{G}+\mathcal{F}_{1}=\alpha_{G}+\mathcal{F}_{2}$
and our map $\boldsymbol{\alpha}:\mathbf{B}^{e}(G,K)\rightarrow\mathbf{B}'(\omega_{G}^{\circ},K)$
is indeed well-defined. 

It is plainly $G(\mathcal{O}_{K})$-equivariant. For any $S\in\mathbf{S}(G)$,
the $G(K)$-equivariant map $\boldsymbol{\alpha}_{S}$ of lemma~\ref{lem:StabNorms}
below coincides with $\boldsymbol{\alpha}$ on $\mathbf{B}^{e}(S_{K})$,
thus $\boldsymbol{\alpha}$ equals $\boldsymbol{\alpha}_{S}$ everywhere
since every point of $\mathbf{B}^{e}(G_{K})$ is conjugated to one
in $\mathbf{B}^{e}(S_{K})$ by some element in $G(\mathcal{O}_{K})$.
Therefore $\boldsymbol{\alpha}$ is $G(K)$-equivariant. Since every
pair in $\mathbf{B}^{e}(G_{K})\times\mathbf{F}(G_{K})$ is conjugated
to one in $\mathbf{B}^{e}(S_{K})\times\mathbf{F}(S_{K})$ by some
element in $G(K)$, our $\boldsymbol{\alpha}$ is also compatible
with the operations of $\mathbf{F}(G_{K})$.\end{proof}
\begin{lem}
\label{lem:StabNorms}Suppose that $G$ is split over $\mathcal{O}_{K}$
and let $(K,\left|-\right|)\rightarrow(L,\left|-\right|)$ be any
extension in $\HV$. Then for any $S\in\mathbf{S}(G)$, there is a
unique map 
\[
\boldsymbol{\alpha}_{S}:\mathbf{B}^{e}(S_{L})\rightarrow\mathbf{B}'(\omega_{S}^{\circ},L)
\]
such that for all $x\in\mathbf{B}^{e}(S_{L})$ and $\mathcal{F}\in\mathbf{F}(S_{L})$,
\[
\boldsymbol{\alpha}_{S}(\circ_{G}^{e})=\alpha_{G,L}\quad\mbox{and}\quad\boldsymbol{\alpha}_{S}(x+\mathcal{F})=\boldsymbol{\alpha}_{S}(x)+\mathcal{F}
\]
Moreover, it extends uniquely to a $G(L)$-equivariant map 
\[
\boldsymbol{\alpha}_{S}:\mathbf{B}^{e}(G_{L})\rightarrow\mathbf{B}'(\omega_{G}^{\circ},L).
\]
\end{lem}
\begin{proof}
The uniqueness of both maps is obvious. Since $\mathbf{B}^{e}(S_{L})$
and $\mathbf{B}(\omega_{S}^{\circ},L)$ are affine $\mathbf{G}(S_{L})$-spaces
on which $S(L)$ acts by translations through the same morphism $\nu_{\mathbf{B},S}:S(L)\rightarrow\mathbf{G}(S_{L})$,
the unique isomorphism of affine $\mathbf{G}(S_{L})$-spaces 
\[
\boldsymbol{\alpha}_{S}:\mathbf{B}^{e}(S_{L})\rightarrow\mathbf{B}(\omega_{S}^{\circ},L)
\]
mapping $\circ_{G}^{e}\in\mathbf{B}^{e}(S_{L})$ to $\alpha_{G,L}\in\mathbf{B}(\omega_{S}^{\circ},L)$
is $S(L)$-equivariant. Since $G(\mathcal{O}_{L})$ fixes $\circ_{G}^{e}\in\mathbf{B}^{e}(G_{L})$
and $\alpha_{G,L}\in\mathbf{B}'(\omega_{G}^{\circ},L)$, the induced
embedding 
\[
\boldsymbol{\alpha}_{S}:\mathbf{B}^{e}(S_{L})\rightarrow\mathbf{B}'(\omega_{G}^{\circ},L)
\]
is also equivariant for the actions of $N_{G}(S)(L)=N_{G}(S)(\mathcal{O}_{L})\cdot S(L)$.
To extend the latter map to a $G(L)$-equivariant morphism on the
whole tight building $\mathbf{B}^{e}(G_{L})$, it remains to establish
the following claim -- see remark~\ref{Rk:ReconstBuildFromAppt}: 
\begin{quote}
For every $x\in\mathbf{B}^{e}(S_{L})$, the $G(L)$-stabilizer of
$x\in\mathbf{B}^{e}(G_{L})$ is contained in the $G(L)$-stabilizer
of $\boldsymbol{\alpha}_{S}(x)\in\mathbf{B}'(\omega_{G}^{\circ},L)$. 
\end{quote}
This is true for $x=\circ_{G}^{e}$, where both stabilizers equal
$G(\mathcal{O}_{L})$. This is therefore also true for any $x$ in
$S(L)\cdot\circ_{G}^{e}=\circ_{G}^{e}+\nu_{\mathbf{B},S}(S(L))$ since
$\boldsymbol{\alpha}_{S}$ is $S(L)$-equivariant. To clarify the
proof, note that the base change maps from $K$ to $L$ identify 
\[
\begin{array}{rclcrcl}
F & = & \mathbf{F}(S_{K}) & \mbox{with} & \mathbf{F}(S_{L}) & \subset & \mathbf{F}(G_{L})\\
A & = & \mathbf{B}^{e}(S_{K}) & \mbox{with} & \mathbf{B}^{e}(S_{L}) & \subset & \mathbf{B}^{e}(G_{L})\\
B & = & \mathbf{B}(\omega_{S}^{\circ},K) & \mbox{with} & \mathbf{B}(\omega_{S}^{\circ},L) & \subset & \mathbf{B}'(\omega_{G}^{\circ},L)
\end{array}
\]
and the isomorphism of affine $F$-space $\boldsymbol{\alpha}_{S}:A\rightarrow B$
also does not depend upon $L$. What depends upon $L$ is the subset
$\Lambda(L)=\circ_{G}^{e}+\nu_{\mathbf{B},S}(\mathbf{S}(L))$ of $A$
on which we know the validity of our claim. So let us fix $x$ and
$\alpha=\boldsymbol{\alpha}_{S}(x)$ as above, as well as some $g\in G(L)$
such that $gx=x$. By lemma~\ref{lem:ExistGoodExtensionValued} below,
there is an extension $(L,\left|-\right|)\rightarrow(L',\left|-\right|)$
in $\HV$ such that $\log\left|L^{\prime\times}\right|=\mathbb{R}$.
Then $\Lambda(L')=A$, thus $g\alpha=\alpha$ in $\mathbf{B}'(\omega_{G}^{\circ},L')$
since $gx=x$ in $\mathbf{B}^{e}(G_{L'})$. But $\mathbf{B}'(\omega_{G}^{\circ},L)\rightarrow\mathbf{B}'(\omega_{G}^{\circ},L')$
is injective and $G(L)$-equivariant, thus also $g\alpha=\alpha$
in $\mathbf{B}'(\omega_{G}^{\circ},L)$, which proves our claim.\end{proof}
\begin{lem}
\label{lem:ExistGoodExtensionValued}Let $L$ be a field with a non-archimedean
absolute value $\left|-\right|$. There is an extension $(L',\left|-\right|)$
of $(L,\left|-\right|)$ with $L'$ algebraically closed and $\log\left|L^{\prime\times}\right|=\mathbb{R}$.\end{lem}
\begin{proof}
By \cite[VI, \S 8, Proposition 9]{BoAC56}, we may assume that $L$
is algebraically closed. Then $\log\left|L^{\times}\right|$ is a
divisible subgroup of $\mathbb{R}$, i.e. a $\mathbb{Q}$-vector space.
Let $(\delta_{i})_{i\in I}$ be a $\mathbb{Q}$-basis of $\mathbb{R}/\log\left|L^{\times}\right|$
and lift each $\delta_{i}$ to $d_{i}\in\mathbb{R}$. Let $(t_{i})_{i\in I}$
be independent variables and let $L'$ be an algebraic closure of
the purely transcendental extension $M=K((t_{i})_{i\in I})$ of $K$.
By~Zorn's lemma and \cite[VI, \S 10, Proposition 1]{BoAC56}, there
is a unique extension of $\left|-\right|$ to a non-archimedean absolute
value on $M$ such that $\log\left|t_{i}\right|=d_{i}$ for every
$i\in I$. The latter again extends to $L'$, and then $\log\left|L^{\prime\times}\right|$
equals $\mathbb{R}$, being a divisible subgroup of $\mathbb{R}$
which contains $\log\left|L^{\times}\right|$ and all $d_{i}$'s.
\end{proof}

\subsection{~}

The theorem implies various properties of $\mathbf{B}(\omega_{G}^{\circ},K)$,
for instance: $\mathbf{B}(\omega_{G}^{\circ},K)$ is a tight affine
$\mathbf{F}(G_{K})$-building. For an extension $(K,\left|-\right|)\rightarrow(L,\left|-\right|)$,
the map $\mathbf{B}(\omega_{G_{\mathcal{O}_{L}}}^{\circ},L)\rightarrow\mathbf{B}(\omega_{G}^{\circ},L)$
is an isomorphism of affine $\mathbf{F}(G_{L})$-buildings. For a
closed immersion $G_{1}\hookrightarrow G_{2}$, the map $\mathbf{B}(\omega_{G_{1}}^{\circ},K)\rightarrow\mathbf{B}(\omega_{G_{2}}^{\circ},K)$
is injective. For a central isogeny $G_{1}\twoheadrightarrow G_{2}$,
the map $\mathbf{B}(\omega_{G_{1}}^{\circ},K)\rightarrow\mathbf{B}(\omega_{G_{2}}^{\circ},K)$
is an isomorphism. Thus $\mathbf{B}(\omega_{G}^{\circ},K)$ has canonical
decompositions analogous to those of section~\ref{sub:Isogenies}.
This last property also follows from~\ref{sub:decer4affbuil}.

\subsection{~}

Fix a faithful representation $\tau$ in $\Rep^{\circ}(G)(\mathcal{O}_{K})$
and drop it from the notations for the induced distances, angles,
scalar products\ldots{} For $x,y\in\mathbf{B}^{e}(G,K)$, 
\[
d(x,y)=d\left(\boldsymbol{\alpha}(x),\boldsymbol{\alpha}(y)\right)=d\left(\boldsymbol{\alpha}(x)(\tau),\boldsymbol{\alpha}(y)(\tau)\right)
\]
where the last distance is computed in the space of $K$-norms on
$V_{K}(\tau)$.

\subsection{~}

Fix $\mathcal{F}_{1},\mathcal{F}_{2}\in\mathbf{F}(G_{K})$. Suppose
that for some $\epsilon>0$, 
\[
\forall t\in[0,\epsilon]:\qquad\alpha_{G}+t\mathcal{F}_{1}=\alpha_{G}+t\mathcal{F}_{2}\quad\mbox{in}\quad\mathbf{B}(\omega_{G}^{\circ},K).
\]
Then the reductions $\mathcal{F}_{1,k}$ and $\mathcal{F}_{2,k}$
are equal in $\mathbf{F}(G_{k})$ by lemma~\ref{lem:loccan}. Suppose
conversely that $\mathcal{F}_{1,k}=\mathcal{F}_{2,k}$, and choose
an apartment $\mathbf{B}^{e}(S)$ in $\mathbf{B}^{e}(G_{K})$ containing
the germs of $t\mapsto\circ_{G}^{e}+t\mathcal{F}_{i}$ for $i\in\{1,2\}$
-- in particular, $S$ belongs to $\mathbf{S}(G)$ since $\circ_{G}^{e}$
belongs to $\mathbf{B}^{e}(S)$. Then there are unique $\mathcal{F}_{i}^{\ast}$
in $\mathbf{F}(S)$ such that, for some $\epsilon>0$, $\circ_{G}^{e}+t\mathcal{F}_{i}=\circ_{G}^{e}+t\mathcal{F}_{i}^{\ast}$
in $\mathbf{B}^{e}(G_{K})$ for all $t\in[0,\epsilon]$. But then
also $\alpha_{G}+t\mathcal{F}_{i}=\alpha_{G}+t\mathcal{F}_{i}^{\ast}$
in $\mathbf{B}(\omega_{G}^{\circ},K)$, thus $\mathcal{F}_{i,k}=\mathcal{F}_{i,k}^{\ast}$
in $\mathbf{F}(G_{k})$, therefore $\mathcal{F}_{1,k}^{\ast}=\mathcal{F}_{2,k}^{\ast}$
and $\mathcal{F}_{1}^{\ast}=\mathcal{F}_{2}^{\ast}$ since the reduction
map is injective on $\mathbf{F}(S)$, thus again $\alpha_{G}+t\mathcal{F}_{1}=\alpha_{G}+t\mathcal{F}_{2}$
for all $t\in[0,\epsilon]$. This yields canonical identifications
\[
\xymatrix{ & \mathbf{F}(G_{K})\ar@{->>}@(l,u)[dl]_{\loc_{\circ_{G}^{e}}}\ar@{->>}[d]^{\loc_{\alpha_{G}}}\ar@{->>}@(r,u)[dr]^{\mathrm{red}}\\
\mathbf{T}_{\circ_{G}^{e}}\mathbf{B}^{e}(G_{K})\ar[r]\sp(0.45){\simeq}\ar@(d,d)[rr]_{\kappa}^{\simeq} & \mathbf{T}_{\alpha_{G}}\mathbf{B}(\omega_{G}^{\circ},K)\ar[r]\sp(0.58){\simeq} & \mathbf{F}(G_{k})
}
\]
between the localization maps of \ref{sub:TangentSpaceInBuild} and
the reduction map on $\mathbf{F}(G_{K})$. By restriction to an apartment
$\mathbf{F}(S_{K})$ with $S\in\mathbf{S}(G)$, one checks that the
isomorphism 
\[
\kappa:\mathbf{T}_{\circ_{G}^{e}}\mathbf{B}^{e}(G_{K})\stackrel{\simeq}{\longrightarrow}\mathbf{F}(G_{k})
\]
is compatible with the distances, scalar products etc\ldots{} attached
to our chosen $\tau$ as in \ref{sub:TangentSpaceInBuild} and \ref{sub:specialisationOfScalarProd},
and also that $\kappa$ fits in a commutative diagram 
\[
\xymatrix{\mathbf{B}^{e}(G,K)\ar@{->>}[d]_{\loc_{\circ_{G}^{e}}^{a}}\ar[r]^{\boldsymbol{\alpha}} & \mathbf{B}(\omega_{G}^{\circ},K)\ar@{->>}[d]^{\loc}\\
\mathbf{T}_{\circ_{G}^{e}}\mathbf{B}^{e}(G_{K})\ar[r]^{\kappa} & \mathbf{F}(G_{k})
}
\]
Thus for every $\mathcal{F},\mathcal{G}\in\mathbf{F}(G)$ and $x,y\in\mathbf{B}^{e}(G_{K})$,
\[
\measuredangle_{\circ}(\mathcal{F},\mathcal{G})=\measuredangle(\mathcal{F}_{k},\mathcal{G}_{k})\quad\mbox{and}\quad\measuredangle_{\circ}(x,y)=\measuredangle(\loc\circ\boldsymbol{\alpha}(x),\loc\circ\boldsymbol{\alpha}(y))
\]
where we have abbreviated $\circ_{G}^{e}=\circ$. In particular, 
\begin{eqnarray*}
\lim_{t\rightarrow0}{\textstyle \frac{1}{t}}d(\circ+t\mathcal{F},\circ+t\mathcal{G}) & = & d(\mathcal{F}_{k},\mathcal{G}_{k})\\
\lim_{t\rightarrow0}{\textstyle \frac{1}{t}}\left(d(x,\circ+t\mathcal{F})-d(x,\circ)\right) & = & \left\langle \loc\circ\boldsymbol{\alpha}(x),\mathcal{F}_{k}\right\rangle 
\end{eqnarray*}
As for the vector valued distance $\mathbf{d}:\mathbf{B}^{e}(G_{K})\times\mathbf{B}^{e}(G_{K})\rightarrow\mathbf{C}(G_{K})$,
we have
\[
\mathbf{d}(\circ,x)=t(\loc\circ\boldsymbol{\alpha}(x))\quad\mbox{in}\quad\mathbf{C}(G_{K})=\mathbf{C}(G_{k}).
\]

\subsection{~}

For a parabolic subgroup $P$ of $G$ with unipotent radical $U$,
the $\Gr_{P}$-map of section~\ref{sub:Gr_PforBTBuildings} induces
an analogous $P(K)$-equivariant map\nomenclature[Gr_P(alpha)]{$\Gr _P (\alpha)$}{$K$-norm on $\omega ^\circ _{P/U}$ induced by a $K$-norm $\alpha$ on $\omega ^\circ _G$, page \nomrefpage}
\[
\Gr_{P}:\mathbf{B}(\omega_{G}^{\circ},K)\rightarrow\mathbf{B}(\omega_{P/U}^{\circ},K).
\]
For $\mathcal{F}\in\mathbf{F}(G)$, set $\Gr_{\mathcal{F}}=\Gr_{P_{\mathcal{F}}}$.
For $\rho\in\Rep^{\circ}(G)(\mathcal{O}_{K})$, $\gamma\in\mathbb{R}$
and $\alpha\in\mathbf{B}'(\omega_{G}^{\circ},K)$, let $\Gr_{\mathcal{F}}^{\gamma}(\alpha,\rho)$
be the $K$-norm on $\Gr_{\mathcal{F}}^{\gamma}(\rho)_{K}$ induced
by $\alpha(\rho)$ on $V_{K}(\rho)$, i.e.
\[
\Gr_{\mathcal{F}}^{\gamma}(\alpha,\rho)(\overline{x})=\inf\left\{ \alpha(\rho)(x):x\in\mathcal{F}_{K}^{\gamma}(\rho),\, x\equiv\overline{x}\bmod\mathcal{F}_{+,K}^{\gamma}(\rho)\right\} 
\]
for every $\overline{x}$ in $\Gr_{\mathcal{F}}^{\gamma}(\rho)_{K}=\mathcal{F}_{K}^{\gamma}(\rho)/\mathcal{F}_{+,K}^{\gamma}(\rho)$.
By the axiom $L(s)$ for the $\mathbf{F}(V_{K}(\rho))$-building $\mathbf{B}(V_{K}(\rho))$
of splittable $K$-norms on $V_{K}(\rho)$, $\Gr_{\mathcal{F}}^{\gamma}(\alpha,\rho)$
is a splittable $K$-norm on $\Gr_{\mathcal{F}}^{\gamma}(\rho)_{K}$.
Viewing $\Gr_{\mathcal{F}}^{\gamma}(\rho)$ as a representation of
$P_{\mathcal{F}}/U_{\mathcal{F}}$, we have:
\[
\forall\alpha\in\mathbf{B}(\omega_{G}^{\circ},K):\qquad\Gr_{\mathcal{F}}(\alpha)\left(\Gr_{\mathcal{F}}^{\gamma}(\rho)\right)=\Gr_{\mathcal{F}}^{\gamma}(\alpha,\rho).
\]
Indeed, both sides only depend upon the $U_{\mathcal{F}}(K)$-orbit
of $\alpha$, and we may thus assume that $\alpha$ belongs to the
image of $\mathbf{B}(\omega_{L}^{\circ},K)\rightarrow\mathbf{B}(\omega_{G}^{\circ},K)$
for some fixed Levi subgroup $L$ of $P_{\mathcal{F}}$, i.e.~$\alpha=\alpha_{G}+\mathcal{H}$
for some $\mathcal{H}\in\mathbf{F}(L)$. Then $r_{P_{\mathcal{F}},L}(\alpha)=\alpha$,
thus $\Gr_{\mathcal{F}}(\alpha)=\alpha_{P/U}+\overline{\mathcal{H}}$
where $\overline{\mathcal{H}}$ is the image of $\mathcal{H}$ in
$\mathbf{F}(P_{\mathcal{F}}/U_{\mathcal{F}})$. On the other hand,
the chosen $L$ gives a splitting $\mathcal{G}\in\mathbf{G}(Z(L))$
of $\mathcal{F}$, thus also a splitting $\rho\vert_{L}=\oplus_{\gamma}\rho_{\gamma}$
with $\rho_{\gamma}\in\Rep^{\circ}(L)(\mathcal{O})$, $V(\rho_{\gamma})=\mathcal{G}_{\gamma}(\rho)$.
Since $\alpha$ is the image of $\alpha_{L}+\mathcal{H}$ in $\mathbf{B}^{e}(\omega_{G}^{\circ},K)$,
this splitting is adapted to $\alpha$: $\alpha(\rho)=\oplus\alpha_{\gamma}(\rho)$
where $\alpha_{\gamma}(\rho)=\alpha_{L}(\rho_{\gamma})+\mathcal{H}(\rho_{\gamma})$.
Thus $\Gr_{\mathcal{F}}^{\gamma}(\alpha,\rho)\simeq\alpha_{\gamma}(\rho)$
under the $K$-linear isomorphism $V_{K}(\rho_{\gamma})\simeq\Gr_{\mathcal{F}}^{\gamma}(\rho)_{K}$
induced by the $(L\rightarrow P_{\mathcal{F}}/U_{\mathcal{F}})$-equivariant
isomorphism $\rho_{\gamma}\simeq\Gr_{\mathcal{F}}^{\gamma}(\rho)$.
It follows that indeed 
\[
\Gr_{\mathcal{F}}^{\gamma}(\alpha,\rho)=\alpha_{P/U}(\Gr_{\mathcal{F}}^{\gamma}(\rho))+\overline{\mathcal{H}}(\Gr_{\mathcal{F}}^{\gamma}(\rho))=\Gr_{\mathcal{F}}(\alpha)(\Gr_{\mathcal{F}}^{\gamma}(\rho)).
\]
For the distances attached to our chosen $\tau$, we thus obtain
\[
\lim_{t\rightarrow\infty}d_{\tau}(x+t\mathcal{F},y+t\mathcal{F})=d\left(\Gr_{\mathcal{F}}^{\bullet}(\boldsymbol{\alpha}(x),\tau),\Gr_{\mathcal{F}}^{\bullet}(\boldsymbol{\alpha}(y),\tau)\right)
\]
for every $x,y\in\mathbf{B}^{e}(G_{K})$, $\mathcal{F}\in\mathbf{F}(G)$.

\subsection{~}

Combining the previous two computations, we also obtain a formula
for the Busemann scalar product on $\mathbf{B}^{e}(G_{K})$. Recall
from section~\ref{sub:Gr_PforBTBuildings} (and \ref{sub:ComputingBusemanAtInfinity})
that for any $x,y\in\mathbf{B}^{e}(G_{K})$ and $\mathcal{F}\in\mathbf{F}(G)$,
we have 
\[
\left\langle \overrightarrow{xy},\mathcal{F}\right\rangle =\left\langle \overrightarrow{\Gr_{\mathcal{F}}(x)\Gr_{\mathcal{F}}(y)},\overline{\mathcal{F}}\right\rangle =\left\langle \loc_{\Gr_{\mathcal{F}}(x)}^{a}\left(\Gr_{\mathcal{F}}(y)\right),\loc_{\Gr_{\mathcal{F}}(x)}\left(\overline{\mathcal{F}}\right)\right\rangle 
\]
where the second and third scalar product are respectively the Busemann
scalar product on $\mathbf{B}^{e}(P_{K}/U_{K})$ and the scalar product
on its tangent space at $\Gr_{\mathcal{F}}(x)$, with $(P,U)=(P_{\mathcal{F}},U_{\mathcal{F}})$.
For $x=\circ_{G}^{e}$, $\Gr_{\mathcal{F}}(x)=\circ_{P/U}^{e}$ and
we thus obtain 
\[
\left\langle \overrightarrow{xy},\mathcal{F}\right\rangle =\left\langle \loc\left(\Gr_{\mathcal{F}}(\boldsymbol{\alpha}(y))\right),\overline{\mathcal{F}}_{k}\right\rangle 
\]
with the scalar product of $\mathbf{F}(P_{k}/U_{k})$ attached to
the faithful representation 
\[
\Gr_{\mathcal{F}}^{\bullet}(\tau)=\oplus_{\gamma}\Gr_{\mathcal{F}}^{\gamma}(\tau)
\]
of $P/U$. Since $\overline{\mathcal{F}}(\Gr_{\mathcal{F}}^{\gamma}(\tau))$
is the $\mathbb{R}$-filtration with a single jump at $\gamma$, 
\[
\left\langle \overrightarrow{xy},\mathcal{F}\right\rangle ={\textstyle \sum_{\gamma}}\gamma\cdot\deg\left(\loc\left(\Gr_{\mathcal{F}}(\boldsymbol{\alpha}(y))\right)\left(\Gr_{\mathcal{F}}^{\gamma}(\tau)\right)\right).
\]
By definition of the morphism $\loc:\mathbf{B}(\omega_{G}^{\circ},K)\rightarrow\mathbf{F}(G_{k})$,
\[
\loc\left(\Gr_{\mathcal{F}}(\boldsymbol{\alpha}(y))\right)\left(\Gr_{\mathcal{F}}^{\gamma}(\tau)\right)=\loc\left(\Gr_{\mathcal{F}}^{\gamma}(\tau)_{K},\Gr_{\mathcal{F}}^{\gamma}(\boldsymbol{\alpha}(y),\tau),\Gr_{\mathcal{F}}^{\gamma}(\tau)\right).
\]
The degree of this filtration is the degree of its determinant. Since
the functors 
\[
\loc:\Norm'(K)\rightarrow\Fil(k)\quad\mbox{and}\quad\Gr_{\mathcal{F}}^{\bullet}\left(\boldsymbol{\alpha}(y)\right)':\Rep^{\circ}(P/U)(\mathcal{O}_{K})\rightarrow\Norm'(K)
\]
are exact $\otimes$-functors, they both commute with the determinant.
The degrees which occur in the last displayed formula for $\left\langle \overrightarrow{xy},\mathcal{F}\right\rangle $
are therefore given by 
\[
\deg\left(\loc\left(\Lambda_{\mathcal{F}}^{\gamma}(\tau)_{K},\Lambda_{\mathcal{F}}^{\gamma}(\boldsymbol{\alpha}(y),\tau),\Lambda_{\mathcal{F}}^{\gamma}(\tau)\right)\right)
\]
where $\Lambda_{\mathcal{F}}^{\gamma}(\tau)=\det\left(\Gr_{\mathcal{F}}^{\gamma}(\tau)\right)$
is a rank one representation of $P/U$ and 
\[
\Lambda_{\mathcal{F}}^{\gamma}(\boldsymbol{\alpha}(y),\tau)=\det\left(\Gr_{\mathcal{F}}^{\gamma}(\boldsymbol{\alpha}(y),\tau)\right)=\Gr_{\mathcal{F}}^{\bullet}\left(\boldsymbol{\alpha}(y)\right)\left(\Lambda_{\mathcal{F}}^{\gamma}(\tau)\right)
\]
is a $K$-norm on $\Lambda_{\mathcal{F}}^{\gamma}(\tau)_{K}$. For
a rank one object $(V,\alpha,L)$ in $\Norm'(K)$, the degree of $\loc(V,\alpha,L)$
is simply the largest $\gamma\in\mathbb{R}$ such that $L\subset\overline{B}(\alpha,\gamma)$.
Equivalently, 
\[
\deg\left(\loc(V,\alpha,L)\right)=-\log\left(\sup\left\{ \alpha(\ell):\ell\in L\right\} \right)=-\log\left(\alpha(\ell_{0})\right)
\]
where $L=\mathcal{O}_{K}\cdot\ell_{0}$. Thus, still assuming that
$x=\circ_{G}^{e}$, we finally obtain
\begin{eqnarray*}
\left\langle \overrightarrow{xy},\mathcal{F}\right\rangle  & = & -{\textstyle \sum_{\gamma}}\gamma\cdot\log\left(\sup\left\{ \Lambda_{\mathcal{F}}^{\gamma}\left(\boldsymbol{\alpha}(y),\tau\right)\vert\Lambda_{\mathcal{F}}^{\gamma}(\tau)\right\} \right)\\
 & = & -{\textstyle \sum_{\gamma}}\gamma\cdot\log\left(\Lambda_{\mathcal{F}}^{\gamma}\left(\boldsymbol{\alpha}(y),\tau\right)(e_{1}^{\gamma}\wedge\cdots\wedge e_{r_{\gamma}}^{\gamma})\right)
\end{eqnarray*}
where $(e_{1}^{\gamma},\cdots,e_{r_{\gamma}}^{\gamma})$ is an $\mathcal{O}_{K}$-basis
of $\Gr_{\mathcal{F}}^{\gamma}(\tau)$. For a general $x$ in $\mathbf{B}^{e}(G_{K})$,
we find:
\[
\left\langle \overrightarrow{xy},\mathcal{F}\right\rangle ={\textstyle \sum_{\gamma}}\gamma\cdot\log\left(\frac{\Lambda_{\mathcal{F}}^{\gamma}\left(\boldsymbol{\alpha}(x),\tau\right)}{\Lambda_{\mathcal{F}}^{\gamma}\left(\boldsymbol{\alpha}(y),\tau\right)}\left(e_{1}^{\gamma}\wedge\cdots\wedge e_{r_{\gamma}}^{\gamma}\right)\right).
\]
Note that if we are given some $\mathcal{G}\in\mathbf{F}(P/U)$ with
$\Gr_{\mathcal{F}}(y)=\Gr_{\mathcal{F}}(x)+\mathcal{G}$, then simply
\[
\left\langle \overrightarrow{xy},\mathcal{F}\right\rangle =\left\langle \mathcal{G},\overline{\mathcal{F}}\right\rangle ={\textstyle \sum_{\gamma}}\gamma\cdot\deg\left(\mathcal{G}\left(\Gr_{\mathcal{F}}^{\gamma}(\tau)\right)\right).
\]

\subsection{~}

For every $\nu>0$, there is a $G(K)$-equivariant commutative diagram
\[
\begin{array}{ccccc}
\mathbf{B}(\omega_{G}^{\circ},K,\left|-\right|) & \times & \mathbf{F}(G_{K}) & \stackrel{+}{\longrightarrow} & \mathbf{B}(\omega_{G}^{\circ},K,\left|-\right|)\\
a\downarrow &  & b\downarrow &  & a\downarrow\\
\mathbf{B}(\omega_{G}^{\circ},K,\left|-\right|^{\nu}) & \times & \mathbf{F}(G_{K}) & \stackrel{+}{\longrightarrow} & \mathbf{B}(\omega_{G}^{\circ},K,\left|-\right|^{\nu})
\end{array}
\]
where $a(\alpha)=\alpha^{\nu}$ and $b(\mathcal{F})=\nu\mathcal{F}$.
It is compatible with the analogous diagram of section~\ref{sub:changevalforBT}
via the relevant $\boldsymbol{\alpha}$-maps.

\subsection{~}

For $x\in\mathbf{B}^{e}(G_{K})$, the $K$-norm $\boldsymbol{\alpha}(x)\in\mathbf{B}(\omega_{G}^{\circ},K)$
is exact and extends to a $K$-norm on $\omega'_{G}$ as in \ref{sub:extF}.
Thus by proposition~\ref{Pro:Wedhorn}, it yields a $K$-norm $\boldsymbol{\alpha}(x)(\rho)$
on $V_{K}(\rho)$ for any representation $\rho$ of $G$ on a flat
$\mathcal{O}_{K}$-module $V(\rho)$. We set 
\[
\boldsymbol{\alpha}_{\mathrm{ad}}(x)=\boldsymbol{\alpha}(x)(\rho_{\mathrm{ad}}),\quad\boldsymbol{\alpha}_{\mathrm{reg}}(x)=\boldsymbol{\alpha}(x)(\rho_{\mathrm{reg}})\quad\mbox{and}\quad\boldsymbol{\alpha}_{\mathrm{adj}}(x)=\boldsymbol{\alpha}(x)(\rho_{\mathrm{adj}}).
\]

\begin{prop}
Suppose that $(K,\left|-\right|)$ is discrete, say $\left|K^{\times}\right|=q^{\mathbb{Z}}$
with $q>1$. Let $(\mathfrak{g}_{x,r})_{r\in\mathbb{R}}$ be the Moy-Prasad
filtration attached to $x$ on $\mathfrak{g}_{K}=\Lie(G_{K})$. Then
\[
\forall x\in\mathbb{R}:\qquad\mathfrak{g}_{x,r}=\left\{ v\in\mathfrak{g}_{K}:\boldsymbol{\alpha}_{\mathrm{ad}}(x)(v)\leq q^{-r}\right\} .
\]
\end{prop}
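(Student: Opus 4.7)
The plan is to localize to an apartment and check the equality termwise on weight spaces.

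Both $\mathfrak{g}_{x,r}$ and the closed ball $\{v:\boldsymbol{\alpha}_{\mathrm{ad}}(x)(v)\le q^{-r}\}$ are $G(K)$-equivariant in $x$: the former by the construction of the Moy--Prasad filtration, the latter because $\boldsymbol{\alpha}$ is $G(K)$-equivariant by Theorem~\ref{thm:TanDescBuild}. By axiom $\mathbf{A1}$ for $\mathbf{B}^{e}(G_{K})$ together with Proposition~\ref{prop:Generisation}, every point of $\mathbf{B}^{e}(G_{K})$ is $G(K)$-conjugate to a point of an apartment $\mathbf{B}^{e}(S_{K})$ for some $S \in \mathbf{S}(G)$, so I may assume $x = \circ_{G}^{e} + \mathcal{F}$ with $\mathcal{F} \in \mathbf{F}(S_{K}) = \Hom(X^{\ast}(S),\mathbb{R})$; write $\mathcal{F}^{\sharp}$ for the corresponding linear form.

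Unfolding the right-hand side, Theorem~\ref{thm:TanDescBuild} gives $\boldsymbol{\alpha}(x) = \alpha_{G} + \mathcal{F}$, hence $\boldsymbol{\alpha}_{\mathrm{ad}}(x) = \alpha_{\mathfrak{g}_{\mathcal{O}}} + \mathcal{F}(\rho_{\mathrm{ad}})$. The $S$-action on $\mathfrak{g}$ gives a weight decomposition $\mathfrak{g}_{\mathcal{O}} = \mathfrak{g}_{0,\mathcal{O}} \oplus \bigoplus_{a \in \Phi(S,G)}\mathfrak{g}_{a,\mathcal{O}}$, and because $\mathcal{F}$ factors through $S$, formula~(\ref{eq:formulaAlpha+Ftori}) of Section~\ref{sub:BruhatGL(V)} yields
\[
\boldsymbol{\alpha}_{\mathrm{ad}}(x)\Bigl({\textstyle \sum_{a}v_{a}}\Bigr) \;=\; \max_{a \in \Phi(S,G)\cup\{0\}}\, e^{-\mathcal{F}^{\sharp}(a)}\,\alpha_{\mathfrak{g}_{\mathcal{O}}}(v_{a}).
\]
Since $\alpha_{\mathfrak{g}_{\mathcal{O}}}(v_{a}) = q^{-v_{K}(v_{a})}$, the inequality $\boldsymbol{\alpha}_{\mathrm{ad}}(x)(v) \le q^{-r}$ is equivalent to $v_{a} \in \pi^{\lceil r - \mathcal{F}^{\sharp}(a)/\log q\rceil}\mathfrak{g}_{a,\mathcal{O}}$ for each $a$. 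I would then match this against the Moy--Prasad description in the apartment: since $\circ_{G}^{e}$ is hyperspecial with $\Stab_{G(K)}(\circ_{G}^{e}) = G(\mathcal{O})$ (Proposition~\ref{prop:ConstrCanPtBT}), one has $\mathfrak{g}_{\circ_{G}^{e},r} = \pi^{\lceil r\rceil}\mathfrak{g}_{\mathcal{O}}$, and the standard translation formula along the apartment (\cite[\S 3.2]{MoPr94}, \cite[\S 4.2]{BrTi84}) gives
\[
\mathfrak{g}_{x,r} \;=\; \bigoplus_{a \in \Phi(S,G)\cup\{0\}} \pi^{\lceil r - \mathcal{F}^{\sharp}(a)/\log q\rceil}\,\mathfrak{g}_{a,\mathcal{O}},
\]
matching the condition obtained above term by term.

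The main obstacle is pinning down the normalizations. The key conversion is that when $\mathcal{F} = \nu_{\mathbf{B},S}(s)$ for $s \in S(K)$, the definition in Section~\ref{sub:BTbuildIsAffine} gives $\mathcal{F}^{\sharp}(a) = \log|a(s)| = -v_{K}(a(s))\log q$, so the factor $\log q$ precisely absorbs the scale discrepancy between the real-valued Moy--Prasad index $r$ and the discrete value group $q^{\mathbb{Z}}$. A subsidiary point is to transport the weight-space formula for $\mathfrak{g}_{x,r}$ from apartments of maximal tori over an unramified splitting extension (the usual setting of Moy--Prasad) down to the maximal split $K$-torus $S$ by Galois descent, which is legitimate because Proposition~\ref{prop:ConstrCanPtBT} furnishes such a splitting extension $L/K$ and realizes $S$ as the maximal split subtorus of a maximal $\mathcal{O}$-torus of $G$.
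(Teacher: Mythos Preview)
Your proof is correct and follows essentially the same approach as the paper: localize to an apartment, decompose $\mathfrak{g}$ into root spaces, and match the resulting norm formula against the explicit Moy--Prasad description. The paper streamlines this slightly by first reducing to the split case via \'etale descent (so that $S$ is already a maximal torus and your final Galois-descent paragraph becomes unnecessary) and by rescaling the absolute value to make $q=e$, which absorbs the $\log q$ factors you track by hand.
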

\begin{proof}
Given the definition of $\mathfrak{g}_{x,r}$ (by étale descent from
the quasi-split case) and proposition~\ref{prop:ConstrCanPtBT},
we may assume that $G$ splits over $\mathcal{O}_{K}$. Changing $\left|-\right|$
to $\left|-\right|^{\nu}$ with $\nu=\frac{1}{\log q}$, we may also
assume that $q=e$. Fix $S\in\mathbf{S}(G)$ with $x$ in $\mathbf{B}^{e}(S_{K})$
and write $x=\circ_{G}^{e}+\mathcal{F}$ for some $\mathcal{F}\in\mathbf{F}(S_{K})$,
so that also $\boldsymbol{\alpha}(x)=\alpha_{G}+\mathcal{F}$. Let
$\mathfrak{g}=\mathfrak{g}_{0}\oplus\oplus_{\beta\in\Phi(G,S)}\mathfrak{g}_{\beta}$
be the weight decomposition of $\mathfrak{g}$ and $\mathcal{F^{\sharp}}:M\rightarrow\mathbb{R}$
the morphism corresponding to $\mathcal{F}$, where $M=\Hom(S,\mathbb{G}_{m,\mathcal{O}_{K}})$.
Then for every $r\in\mathbb{R}$, 
\[
\overline{B}\left(\boldsymbol{\alpha}_{\mathrm{ad}}(x),r\right)=\mathfrak{g}_{0,r}\oplus\oplus_{\beta\in\Phi(G,S)}\mathfrak{g}_{\beta,r}
\]
where $\mathfrak{g}_{\beta,r}=\overline{B}\left(\alpha_{\mathfrak{g}_{\beta}},r-\mathcal{F}^{\sharp}(\beta)\right)$
for $\beta\in\Phi(G,S)\cup\{0\}$. For $r=0$, this is the Lie algebra
$\mathfrak{g}_{x}$ of the group scheme $\mathfrak{G}_{x}$ over $\mathcal{O}_{K}$
attached to $x$ in \cite{BrTi84}. Comparing now this formula with
the definition of $\mathfrak{g}_{x,r}$ in \cite[2.1.3]{AdDeB02}
proves our claim.
\end{proof}
\noindent Let $G_{K}^{\mathrm{an}}$ be the analytic Berkovich space
attached to $G_{K}$. In \cite[2.2]{ReThWe10}, the authors construct
a canonical map $\vartheta:\mathbf{B}^{e}(G_{K})\rightarrow G_{K}^{\mathrm{an}}$,
thus attaching to every $x\in\mathbf{B}^{e}(G_{K})$ a multiplicative
$K$-semi-norm $\vartheta(x)$ on $\mathcal{A}(G_{K})$. 
\begin{prop}
For every $x\in\mathbf{B}^{e}(G_{K})$, $\boldsymbol{\alpha}_{\mathrm{adj}}(x)=\vartheta(x)$.
In particular, the $K$-norm $\boldsymbol{\alpha}_{\mathrm{adj}}(x)$
on $\mathcal{A}(G_{K})$ is multiplicative and $\vartheta(x)$ is
a norm.\end{prop}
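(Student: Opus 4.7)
The plan is to prove the equality $\boldsymbol{\alpha}_{\mathrm{adj}}=\vartheta$ pointwise on $\mathbf{B}^{e}(G_{K})$ by combining $G(K)$-equivariance with an explicit apartment-level comparison. Both sides are $G(K)$-equivariant maps into the space of $K$-valued seminorms on $\mathcal{A}(G_{K})$, where $G(K)$ acts by conjugation: this is the $G(K)$-equivariance of $\boldsymbol{\alpha}$ from Theorem~\ref{thm:TanDescBuild} evaluated on $\rho_{\mathrm{adj}}$ on our side, and a basic property of the R\'emy-Thuillier-Werner construction for $\vartheta$. Using Proposition~\ref{prop:ConstrCanPtBT} together with axiom $\mathbf{A1}$ for $\mathbf{B}^{e}(G_{K})$, every point is of the form $g\cdot(\circ_{G}^{e}+\mathcal{F})$ with $g\in G(K)$, $S\in\mathbf{S}(G)$ and $\mathcal{F}\in\mathbf{F}(S_{K})$, so it suffices to check agreement on a single apartment through $\circ_{G}^{e}$.

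First I would verify that $\boldsymbol{\alpha}_{\mathrm{adj}}(\circ_{G}^{e})=\vartheta(\circ_{G}^{e})$. On our side, $\boldsymbol{\alpha}_{\mathrm{adj}}(\circ_{G}^{e})=\alpha_{G,K}(\rho_{\mathrm{adj}})$ is by definition the gauge $K$-norm $\alpha_{\mathcal{A}(G)}$ of the integral lattice $\mathcal{A}(G)\subset\mathcal{A}(G_{K})$. On the R\'emy-Thuillier-Werner side, $\vartheta(\circ_{G}^{e})$ is the Shilov point of the affinoid subgroup of $G_{K}^{\mathrm{an}}$ attached to the hyperspecial point $\circ_{G}^{e}$, which is the analytic generic fiber of the smooth affine $\mathcal{O}$-scheme $G$; the unique point of its Shilov boundary is precisely $\alpha_{\mathcal{A}(G)}$, whence agreement at the base point.

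Next I would reduce to the case of a split maximal torus by functoriality. Using an extension $(K,|{-}|)\to(L,|{-}|)$ in $\HV$ over which $G$ splits, together with the injectivity of the base change $\mathbf{B}'(\omega_{G}^{\circ},K)\hookrightarrow\mathbf{B}'(\omega_{G}^{\circ},L)$ from Lemma~\ref{lem:loccan} and the analogous functoriality of $\vartheta$ in \cite{ReThWe10}, I may assume $S\in\mathbf{S}(G)$ is split with character group $M=\mathrm{Hom}(S,\mathbb{G}_{m,\mathcal{O}})$. Conjugation by $S$ decomposes $\mathcal{A}(G)=\bigoplus_{m\in M}\mathcal{A}(G)_{m}$ by weights, and Lemma~\ref{lem:ApptNorm} applied to $\rho_{\mathrm{adj}}\vert_{S}$, combined with the $\mathbf{F}(G_{K})$-equivariance of $\boldsymbol{\alpha}$, gives for $\mathcal{F}\in\mathbf{F}(S_{K})$ corresponding to $\mathcal{F}^{\sharp}\in\mathrm{Hom}(M,\mathbb{R})$ and $a=\sum_{m}a_{m}\in\mathcal{A}(G_{K})$ the formula
\[
\boldsymbol{\alpha}_{\mathrm{adj}}(\circ_{G}^{e}+\mathcal{F})(a)=\max_{m\in M}\bigl\{e^{-\mathcal{F}^{\sharp}(m)}\alpha_{\mathcal{A}(G)_{m}}(a_{m})\bigr\}.
\]

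The main obstacle is to recognize the very same formula in $\vartheta(\circ_{G}^{e}+\mathcal{F})$. This should be the explicit description of $\vartheta$ on the apartment of a hyperspecial vertex from \cite{ReThWe10}: passing from $\circ_{G}^{e}$ to $\circ_{G}^{e}+\mathcal{F}$ amounts to twisting the integral structure $\mathcal{A}(G)$ by the one-parameter subgroup $\chi_{\mathcal{F}}$ attached to $\mathcal{F}$, scaling each weight component $\mathcal{A}(G)_{m}$ by $e^{-\mathcal{F}^{\sharp}(m)}$. The delicate point is a careful comparison of the weight conventions, sign normalizations, and the passage from $S(K)$-translates of $\alpha_{G,K}$ to the RTW parametrization of the apartment by $\nu_{\mathbf{B},S}$; once the two formulas are matched, equality follows on all of $\mathbf{B}^{e}(G_{K})$ by $G(K)$-equivariance, and the multiplicativity of $\boldsymbol{\alpha}_{\mathrm{adj}}(x)$ (and the fact that $\vartheta(x)$ is a norm rather than a seminorm) is then automatic since $\vartheta(x)$ is by construction a multiplicative seminorm while $\boldsymbol{\alpha}_{\mathrm{adj}}(x)$ is a $K$-norm.
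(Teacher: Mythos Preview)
Your approach is correct in outline and shares the paper's opening moves: reduce to the split case by field extension and compatibility of both maps with base change, use $G(K)$-equivariance (for the conjugation action on $\mathcal{A}(G_{K})$), and verify agreement at the base point $\circ_{G}^{e}$ by identifying both sides with the gauge norm $\alpha_{\mathcal{A}(G)}$ of the integral model.

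Where you diverge is in handling the rest of the apartment. You propose to compute $\boldsymbol{\alpha}_{\mathrm{adj}}(\circ_{G}^{e}+\mathcal{F})$ explicitly via the weight decomposition and then match it against an explicit apartment formula for $\vartheta$ from \cite{ReThWe10}, which you yourself flag as the delicate step. The paper avoids this entirely by a second field-extension trick: after reducing to the split case, pass to a further Henselian extension $L$ with $\log\left|L^{\times}\right|=\mathbb{R}$. Then $S(L)$ acts transitively on the apartment $\mathbf{B}^{e}(S_{L})$ via $\nu_{\mathbf{B},S}$, so every point of the apartment is an $S(L)$-translate of $\circ_{G}^{e}$, and the $G(L)$-equivariance of both $\vartheta$ and $\boldsymbol{\alpha}_{\mathrm{adj}}$ reduces the whole comparison to the single base-point check already done. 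No explicit RTW apartment formula is needed. Your route would also work, but it trades a one-line density argument for a citation-dependent computation and the attendant sign and convention bookkeeping.
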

\begin{proof}
Equip $G_{K}^{\mathrm{an}}$ with the action of $G(K)$ induced by
$\rho_{\mathrm{adj}}$. Then $x\mapsto\vartheta(x)$ is $G(K)$-equivariant
and compatible with extensions $(K,\left|-\right|)\rightarrow(L,\left|-\right|)$
in the sense that $\vartheta(x)=\vartheta(x_{L})\vert\mathcal{A}(G_{K})$
for every $x\in\mathbf{B}^{e}(G_{K})$ \cite[Proposition 2.8]{ReThWe10}.
The map $x\mapsto\boldsymbol{\alpha}_{\mathrm{adj}}(x)$ has the same
properties. We may thus assume that $G$ splits over $\mathcal{O}_{K}$,
and again choosing $L$ with $\log\left|L^{\times}\right|=\mathbb{R}$,
we merely have to show that $\vartheta(\circ_{G}^{e})=\boldsymbol{\alpha}_{\mathrm{adj}}(\circ_{G}^{e})=\alpha_{\mathcal{A}(G)}$.
By definition: $\{\vartheta(x)\}$ is the Shilov boundary of a $K$-affinoid
subgroup $G_{x}$ of $G_{K}^{\mathrm{an}}$. For $x=\circ_{G}^{e}$,
$G_{x}$ is the affinoid group $G^{\mathrm{an}}$ attached to $G$,
and its Shilov boundary is the gauge norm attached to $\mathcal{A}(G)$,
i.e.~$\alpha_{\mathcal{A}(G)}$. 
\end{proof}
\noindent Since the multiplication on $\mathcal{A}(G)$ is a morphism
$\rho_{\mathrm{reg}}\otimes\rho_{\mathrm{reg}}\rightarrow\rho_{\mathrm{reg}}$
in $\Rep'(G)(\mathcal{O}_{K})$, the $K$-norm $\boldsymbol{\alpha}_{\mathrm{reg}}(x)$
on $\mathcal{A}(G_{K})$ is sub-multiplicative. Since for $\tau\in\Rep^{\circ}(G)(\mathcal{O}_{K})$,
the co-module map $V(\tau)\rightarrow V(\tau)\otimes\mathcal{A}(G)$
is a pure monomorphism $\tau\hookrightarrow\tau_{0}\otimes\rho_{\mathrm{reg}}$
in $\Rep'(G)(\mathcal{O}_{K})$, $\boldsymbol{\alpha}(x)(\tau)$ is
the restriction of $\alpha_{V(\tau_{0})}\otimes\boldsymbol{\alpha}_{\mathrm{reg}}(x)$
to $V_{K}(\tau)$, thus $\boldsymbol{\alpha}_{\mathrm{reg}}(x)$ determines
$\boldsymbol{\alpha}(x)$ and $\boldsymbol{\alpha}_{\mathrm{reg}}$
is a $G(K)$-equivariant embedding of $\mathbf{B}^{e}(G_{K})$ into
the space of sub-multiplicative $K$-norms on $\mathcal{A}(G)$ (equipped
with the regular action).

\subsection{~}

Some final remarks:

$(1)$ We have not given an intrinsic characterization of the subset
$\mathbf{B}(\omega_{G}^{\circ},K)$ of $\mathbf{B}'(\omega_{G}^{\circ},K)$.
We expect that $\mathbf{B}(\omega_{G}^{\circ},K)=\mathbf{B}^{?}(\omega_{G}^{\circ},K)$,
or perhaps even that $\mathbf{B}(\omega_{G}^{\circ},K)$ is equal
to the $G(K)$-stable subset of exact norms in $\mathbf{B}'(\omega_{G}^{\circ},K)$. 

$(2)$ Suppose that $\mathcal{O}$ is a valuation ring of height $>1$
with fraction field $K$. Then $\Gamma=K^{\times}/\mathcal{O}^{\times}$
is a totally ordered commutative group which can not be embedded into
$\mathbb{R}$. Let $G$ be a reductive group over $\mathcal{O}$.
Replacing $\mathbb{R}$ with $\Gamma$ in the above constructions,
it might be possible to define a ``Bruhat-Tits'' building $\mathbf{B}(\omega_{G}^{\circ},K)$
with compatible actions of $G(K)$ and $\mathbf{F}^{\Gamma}(G_{K})$,
made of factorizations of the fiber functor $\omega_{G,K}^{\circ}:\Rep^{\circ}(G)(\mathcal{O})\rightarrow\Vect(K)$
through a suitable category of ``$\Gamma$-norms''. The type maps
should be the tautological morphisms $\nu:S(K)\rightarrow\mathbf{G}^{\Gamma}(S)$
mapping $s\in S(K)$ to the unique morphism $\nu(s):\mathbb{D}_{K}(\Gamma)\rightarrow S$
whose composite with a character $\chi$ of $S$ is the image of $\chi(s)$
in $\Gamma=K^{\times}/\mathcal{O}^{\times}$.

$(3)$ There might also be a similar Tannakian formalism for the symmetric
spaces of reductive groups over $\mathbb{R}$, with factorizations
of fiber functors through a category of Euclidean spaces, using compact
forms of the adjoint groups as base point. 

\printnomenclature[2cm]{}

\bibliographystyle{amsplain}
\bibliography{MyBib}

\providecommand{\bysame}{\leavevmode\hbox to3em{\hrulefill}\thinspace}
\providecommand{\MR}{\relax\ifhmode\unskip\space\fi MR }
% \MRhref is called by the amsart/book/proc definition of \MR.
\providecommand{\MRhref}[2]{%
  \href{http://www.ams.org/mathscinet-getitem?mr=#1}{#2}
}
\providecommand{\href}[2]{#2}
\begin{thebibliography}{10}

\bibitem{SGA3.2}
\emph{{Sch{\'e}mas en groupes. {II}: {G}roupes de type multiplicatif, et
  structure des sch{\'e}mas en groupes g{\'e}n{\'e}raux}}, {S{\'e}minaire de
  G{\'e}om{\'e}trie Alg{\'e}brique du Bois Marie 1962/64 (SGA 3). Dirig{\'e}
  par M. Demazure et A. Grothendieck. Lecture Notes in Mathematics, Vol. 152},
  Springer-Verlag, Berlin, 1970. \MR{0274459 (43 \#223b)}

\bibitem{SGA1r}
\emph{{Rev{\^e}tements {\'e}tales et groupe fondamental ({SGA} 1)}}, {Documents
  Math{\'e}matiques (Paris) [Mathematical Documents (Paris)], 3},
  Soci{\'e}t{\'e} Math{\'e}matique de France, Paris, 2003, S{\'e}minaire de
  g{\'e}om{\'e}trie alg{\'e}brique du Bois Marie 1960--61. [Algebraic Geometry
  Seminar of Bois Marie 1960-61], Directed by A. Grothendieck, With two papers
  by M. Raynaud, Updated and annotated reprint of the 1971 original [Lecture
  Notes in Math., 224, Springer, Berlin; MR0354651 (50 \#7129)]. \MR{2017446
  (2004g:14017)}

\bibitem{AdDeB02}
Jeffrey~D. Adler and Stephen DeBacker, \emph{{Some applications of
  {B}ruhat-{T}its theory to harmonic analysis on the {L}ie algebra of a
  reductive {$p$}-adic group}}, Michigan Math. J. \textbf{50} (2002), no.~2,
  263--286. \MR{1914065 (2003g:22016)}

\bibitem{AtBo83}
M.~F. Atiyah and R.~Bott, \emph{{The {Y}ang-{M}ills equations over {R}iemann
  surfaces}}, Philos. Trans. Roy. Soc. London Ser. A \textbf{308} (1983),
  no.~1505, 523--615. \MR{702806 (85k:14006)}

\bibitem{BoTi65}
Armand Borel and Jacques Tits, \emph{{Groupes r{\'e}ductifs}}, Inst. Hautes
  {\'E}tudes Sci. Publ. Math. (1965), no.~27, 55--150. \MR{0207712 (34 \#7527)}

\bibitem{BoAC56}
N.~Bourbaki, \emph{{{\'E}l{\'e}ments de math{\'e}matique. {F}asc. {XXX}.
  {A}lg{\`e}bre commutative. {C}hapitre 5: {E}ntiers. {C}hapitre 6:
  {V}aluations}}, {Actualit{\'e}s Scientifiques et Industrielles, No. 1308},
  Hermann, Paris, 1964. \MR{0194450 (33 \#2660)}

\bibitem{BoLie46}
\bysame, \emph{{{\'E}l{\'e}ments de math{\'e}matique. {F}asc. {XXXIV}.
  {G}roupes et alg{\`e}bres de {L}ie. {C}hapitre {IV}: {G}roupes de {C}oxeter
  et syst{\`e}mes de {T}its. {C}hapitre {V}: {G}roupes engendr{\'e}s par des
  r{\'e}flexions. {C}hapitre {VI}: syst{\`e}mes de racines}}, {Actualit{\'e}s
  Scientifiques et Industrielles, No. 1337}, Hermann, Paris, 1968. \MR{0240238
  (39 \#1590)}

\bibitem{BrHa99}
M.~R. Bridson and A.~Haefliger, \emph{{Metric spaces of non-positive
  curvature}}, {Grundlehren der Mathematischen Wissenschaften [Fundamental
  Principles of Mathematical Sciences]}, vol. 319, Springer-Verlag, Berlin,
  1999. \MR{1744486 (2000k:53038)}

\bibitem{BrTi72}
F.~Bruhat and J.~Tits, \emph{{Groupes r{\'e}ductifs sur un corps local}}, Inst.
  Hautes {\'E}tudes Sci. Publ. Math. (1972), no.~41, 5--251. \MR{0327923 (48
  \#6265)}

\bibitem{BrTi84}
\bysame, \emph{{Groupes r{\'e}ductifs sur un corps local. {II}. {S}ch{\'e}mas
  en groupes. {E}xistence d'une donn{\'e}e radicielle valu{\'e}e}}, Inst.
  Hautes {\'E}tudes Sci. Publ. Math. (1984), no.~60, 197--376. \MR{756316
  (86c:20042)}

\bibitem{BrTi84b}
\bysame, \emph{{Sch{\'e}mas en groupes et immeubles des groupes classiques sur
  un corps local}}, Bull. Soc. Math. France \textbf{112} (1984), no.~2,
  259--301. \MR{788969 (86i:20064)}

\bibitem{BrTi87}
\bysame, \emph{{Sch{\'e}mas en groupes et immeubles des groupes classiques sur
  un corps local. {II}. {G}roupes unitaires}}, Bull. Soc. Math. France
  \textbf{115} (1987), no.~2, 141--195. \MR{919421}

\bibitem{CoGaPr10}
Brian Conrad, Ofer Gabber, and Gopal Prasad, \emph{{Pseudo-reductive groups}},
  {New Mathematical Monographs}, vol.~17, Cambridge University Press,
  Cambridge, 2010. \MR{2723571 (2011k:20093)}

\bibitem{Co15}
Christophe Cornut, \emph{{Mazur's Inequality and Laffaille's theorem,
  Preprint}},  (2014).

\bibitem{Co13}
\bysame, \emph{{A fixed point theorem in {E}uclidean buildings}}, Adv. Geom.
  \textbf{16} (2016), no.~4, 487--496. \MR{3595184}

\bibitem{CoNi16}
Christophe Cornut and Marc-Hubert Nicole, \emph{{Cristaux et immeubles}}, Bull.
  Soc. Math. France \textbf{144} (2016), no.~1, 125--143. \MR{3481264}

\bibitem{DaOrRa10}
J.-F. Dat, S.~Orlik, and M.~Rapoport, \emph{{Period domains over finite and
  {$p$}-adic fields}}, {Cambridge Tracts in Mathematics}, vol. 183, Cambridge
  University Press, Cambridge, 2010. \MR{2676072 (2012a:22026)}

\bibitem{De90}
P.~Deligne, \emph{{Cat{\'e}gories tannakiennes}}, {The {G}rothendieck
  {F}estschrift, {V}ol.\ {II}}, {Progr. Math.}, vol.~87, Birkh{\"a}user Boston,
  Boston, MA, 1990, pp.~111--195. \MR{1106898 (92d:14002)}

\bibitem{FoRa05}
Jean-Marc Fontaine and Michael Rapoport, \emph{{Existence de filtrations
  admissibles sur des isocristaux}}, Bull. Soc. Math. France \textbf{133}
  (2005), no.~1, 73--86. \MR{2145020 (2005m:14032)}

\bibitem{SGA3.1r}
Philippe Gille and Patrick Polo (eds.), \emph{{Sch{\'e}mas en groupes ({SGA}
  3). {T}ome {I}. {P}ropri{\'e}t{\'e}s g{\'e}n{\'e}rales des sch{\'e}mas en
  groupes}}, {Documents Math{\'e}matiques (Paris) [Mathematical Documents
  (Paris)], 7}, Soci{\'e}t{\'e} Math{\'e}matique de France, Paris, 2011,
  S{\'e}minaire de G{\'e}om{\'e}trie Alg{\'e}brique du Bois Marie 1962--64.
  [Algebraic Geometry Seminar of Bois Marie 1962--64], A seminar directed by M.
  Demazure and A. Grothendieck with the collaboration of M. Artin, J.-E.
  Bertin, P. Gabriel, M. Raynaud and J-P. Serre, Revised and annotated edition
  of the 1970 French original. \MR{2867621}

\bibitem{SGA3.3r}
Philippe Gille and Patrick Polo (eds.), \emph{{Sch{\'e}mas en groupes ({SGA}
  3). {T}ome {III}. {S}tructure des sch{\'e}mas en groupes r{\'e}ductifs}},
  {Documents Math{\'e}matiques (Paris) [Mathematical Documents (Paris)]},
  vol.~8, Soci{\'e}t{\'e} Math{\'e}matique de France, Paris, 2011,
  S{\'e}minaire de G{\'e}om{\'e}trie Alg{\'e}brique du Bois Marie 1962--64.
  [Algebraic Geometry Seminar of Bois Marie 1962--64], A seminar directed by M.
  Demazure and A. Grothendieck with the collaboration of M. Artin, J.-E.
  Bertin, P. Gabriel, M. Raynaud and J-P. Serre, Revised and annotated edition
  of the 1970 French original. \MR{2867622}

\bibitem{GoIw63}
O.~Goldman and N.~Iwahori, \emph{{The space of {$p$}-adic norms}}, Acta Math.
  \textbf{109} (1963), 137--177. \MR{0144889 (26 \#2430)}

\bibitem{EGA1}
A.~Grothendieck, \emph{{{\'E}l{\'e}ments de g{\'e}om{\'e}trie alg{\'e}brique.
  {I}. {L}e langage des sch{\'e}mas}}, Inst. Hautes {\'E}tudes Sci. Publ. Math.
  (1960), no.~4, 228. \MR{MR0163908 (29 \#1207)}

\bibitem{EGA2}
\bysame, \emph{{{\'E}l{\'e}ments de g{\'e}om{\'e}trie alg{\'e}brique. {II}.
  {\'E}tude globale {\'e}l{\'e}mentaire de quelques classes de morphismes}},
  Inst. Hautes {\'E}tudes Sci. Publ. Math. (1961), no.~8, 222. \MR{MR0163909
  (29 \#1208)}

\bibitem{EGA4.1}
\bysame, \emph{{{\'E}l{\'e}ments de g{\'e}om{\'e}trie alg{\'e}brique. {IV}.
  {\'E}tude locale des sch{\'e}mas et des morphismes de sch{\'e}mas. {I}}},
  Inst. Hautes {\'E}tudes Sci. Publ. Math. (1964), no.~20, 259. \MR{MR0173675
  (30 \#3885)}

\bibitem{EGA4.2}
\bysame, \emph{{{\'E}l{\'e}ments de g{\'e}om{\'e}trie alg{\'e}brique. {IV}.
  {\'E}tude locale des sch{\'e}mas et des morphismes de sch{\'e}mas. {II}}},
  Inst. Hautes {\'E}tudes Sci. Publ. Math. (1965), no.~24, 231. \MR{MR0199181
  (33 \#7330)}

\bibitem{EGA4.3}
\bysame, \emph{{{\'E}l{\'e}ments de g{\'e}om{\'e}trie alg{\'e}brique. {IV}.
  {\'E}tude locale des sch{\'e}mas et des morphismes de sch{\'e}mas. {III}}},
  Inst. Hautes {\'E}tudes Sci. Publ. Math. (1966), no.~28, 255. \MR{MR0217086
  (36 \#178)}

\bibitem{EGA4.4}
\bysame, \emph{{{\'E}l{\'e}ments de g{\'e}om{\'e}trie alg{\'e}brique. {IV}.
  {\'E}tude locale des sch{\'e}mas et des morphismes de sch{\'e}mas {IV}}},
  Inst. Hautes {\'E}tudes Sci. Publ. Math. (1967), no.~32, 361. \MR{MR0238860
  (39 \#220)}

\bibitem{KlLe97}
Bruce Kleiner and Bernhard Leeb, \emph{{Rigidity of quasi-isometries for
  symmetric spaces and {E}uclidean buildings}}, Inst. Hautes {\'E}tudes Sci.
  Publ. Math. (1997), no.~86, 115--197 (1998). \MR{1608566 (98m:53068)}

\bibitem{Ko85}
R.~E. Kottwitz, \emph{{Isocrystals with additional structure}}, Compositio
  Math. \textbf{56} (1985), no.~2, 201--220. \MR{809866 (87i:14040)}

\bibitem{Laf80}
Guy Laffaille, \emph{{Groupes {$p$}-divisibles et modules filtr{\'e}s: le cas
  peu ramifi{\'e}}}, Bull. Soc. Math. France \textbf{108} (1980), no.~2,
  187--206. \MR{606088 (82i:14028)}

\bibitem{La00}
E.~Landvogt, \emph{{Some functorial properties of the {B}ruhat-{T}its
  building}}, J. Reine Angew. Math. \textbf{518} (2000), 213--241. \MR{1739403
  (2001g:20029)}

\bibitem{Le42}
F.~W. Levi, \emph{{Ordered groups}}, Proc. Indian Acad. Sci., Sect. A.
  \textbf{16} (1942), 256--263. \MR{0007779 (4,192b)}

\bibitem{Ma89}
Hideyuki Matsumura, \emph{{Commutative ring theory}}, second ed., {Cambridge
  Studies in Advanced Mathematics}, vol.~8, Cambridge University Press,
  Cambridge, 1989, Translated from the Japanese by M. Reid. \MR{1011461
  (90i:13001)}

\bibitem{MoPr94}
Allen Moy and Gopal Prasad, \emph{{Unrefined minimal {$K$}-types for {$p$}-adic
  groups}}, Invent. Math. \textbf{116} (1994), no.~1-3, 393--408. \MR{1253198
  (95f:22023)}

\bibitem{Pa10}
Anne Parreau, \emph{{La distance vectorielle dans les immeubles affines et les
  espaces sym{\'e}triques}}.

\bibitem{Pa99}
\bysame, \emph{{Immeubles affines : construction par les normes et {\'e}tude
  des isom{\'e}tries.}}, {In Crystallographic groups and their generalizations
  (Kortrijk, 1999)}, {Contemp. Math.}, vol. 262, Amer. Math. Soc., Providence,
  RI, 2000, pp.~263--302.

\bibitem{ReThWe10}
B.~R{\'e}my, A.~Thuillier, and A.~Werner, \emph{{Bruhat-{T}its theory from
  {B}erkovich's point of view. {I}. {R}ealizations and compactifications of
  buildings}}, Ann. Sci. {\'E}c. Norm. Sup{\'e}r. (4) \textbf{43} (2010),
  no.~3, 461--554. \MR{2667022 (2011j:20075)}

\bibitem{Ro77}
Guy Rousseau, \emph{{Immeubles des groupes r{\'e}ductifs sur les corps
  locaux}}, U.E.R. Math{\'e}matique, Universit{\'e} Paris XI, Orsay, 1977,
  Th{\`e}se de doctorat, Publications Math{\'e}matiques d'Orsay, No. 221-77.68,
  \url{http://www.iecl.univ-lorraine.fr/~Guy.Rousseau/Textes/}. \MR{0491992 (58
  \#11158)}

\bibitem{Ro09}
\bysame, \emph{{Euclidean buildings}}, {G{\'e}om{\'e}tries {\`a} courbure
  n{\'e}gative ou nulle, groupes discrets et rigidit{\'e}s}, {S{\'e}min.
  Congr.}, vol.~18, Soc. Math. France, Paris, 2009, pp.~77--116. \MR{2655310
  (2011m:20072)}

\bibitem{SaRi72}
Neantro {Saavedra Rivano}, \emph{{Cat{\'e}gories {T}annakiennes}}, {Lecture
  Notes in Mathematics, Vol. 265}, Springer-Verlag, Berlin, 1972. \MR{0338002
  (49 \#2769)}

\bibitem{Sc12}
Daniel Sch{\"a}ppi, \emph{{A characterization of categories of coherent sheaves
  of certain algebraic stacks, Preprint}},  (2012).

\bibitem{Sc13}
\bysame, \emph{{The formal theory of {T}annaka duality}}, Ast{\'e}risque
  (2013), no.~357, viii+140. \MR{3185459}

\bibitem{Se68b}
Jean-Pierre Serre, \emph{{Groupes de {G}rothendieck des sch{\'e}mas en groupes
  r{\'e}ductifs d{\'e}ploy{\'e}s}}, Inst. Hautes {\'E}tudes Sci. Publ. Math.
  (1968), no.~34, 37--52. \MR{0231831 (38 \#159)}

\bibitem{St98}
John~R. Stembridge, \emph{{The partial order of dominant weights}}, Adv. Math.
  \textbf{136} (1998), no.~2, 340--364. \MR{1626860 (2000a:06038)}

\bibitem{Ti74}
Jacques Tits, \emph{{Buildings of spherical type and finite {BN}-pairs}},
  {Lecture Notes in Mathematics, Vol. 386}, Springer-Verlag, Berlin-New York,
  1974. \MR{0470099}

\bibitem{Ti79}
\bysame, \emph{{Reductive groups over local fields}}, {Automorphic forms,
  representations and {$L$}-functions ({P}roc. {S}ympos. {P}ure {M}ath.,
  {O}regon {S}tate {U}niv., {C}orvallis, {O}re., 1977), {P}art 1}, {Proc.
  Sympos. Pure Math., XXXIII}, Amer. Math. Soc., Providence, R.I., 1979,
  pp.~29--69. \MR{546588 (80h:20064)}

\bibitem{Vi05}
Angelo Vistoli, \emph{{Grothendieck topologies, fibered categories and descent
  theory}}, {Fundamental algebraic geometry}, {Math. Surveys Monogr.}, vol.
  123, Amer. Math. Soc., Providence, RI, 2005, pp.~1--104. \MR{2223406}

\bibitem{We04}
Torsten Wedhorn, \emph{{On {T}annakian duality over valuation rings}}, J.
  Algebra \textbf{282} (2004), no.~2, 575--609. \MR{2101076 (2005j:18007)}

\bibitem{Wi10}
Jr~Kevin~Michael Wilson, \emph{{A {T}annakian description for parahoric
  {B}ruhat-{T}its group schemes}}, ProQuest LLC, Ann Arbor, MI, 2010, Thesis
  (Ph.D.)--University of Maryland, College Park. \MR{2941465}

\bibitem{Zi12}
Paul Ziegler, \emph{{Graded and filtered fiber functors on {T}annakian
  categories}}, J. Inst. Math. Jussieu \textbf{14} (2015), no.~1, 87--130.
  \MR{3284480}

\end{thebibliography}

\end{document}